\renewcommand{\sectionmark}[1]{\markright{#1}}
\renewcommand{\subsectionmark}[1]{}
\newtheorem{thm}{Theorem}[section]
\newtheorem{cor}[thm]{Corollary} 
\newtheorem{lem}[thm]{Lemma} 
\newtheorem{prop}[thm]{Proposition} 
\newtheorem*{claim}{Claim}
\theoremstyle{definition} 
\newtheorem{defn}[thm]{Definition}
\theoremstyle{remark}
\newenvironment{remark}
  {\pushQED{\qed}\remarkx}
  {\popQED\endremarkx}
\numberwithin{equation}{chapter}
\numberwithin{thm}{chapter}
\newcommand{\R}{\mathbb{R}}
\newcommand{\Z}{\mathbb{Z}}
\newcommand{\N}{\mathbb{N}}
\newcommand{\Sph}{\mathbb{S}}
\newcommand{\abs}[1]{ \lvert #1 \rvert}
\newcommand{\diam}{\operatorname{diam}}
\newcommand{\dist}{\operatorname{dist}}
\newcommand{\supp}{\operatorname{supp}}
\renewcommand{\div}{\operatorname{div}}
\newcommand{\PV}{\mathrm{P.V.}}
\DeclareMathOperator*{\esssup}{ess\,sup}
\DeclareMathOperator*{\essinf}{ess\,inf}
\newcommand{\RE}{\operatorname{Re}}  
\newcommand{\dd}{\mathop{}\!d} 
\renewcommand{\H}{\mathcal H}
\def\XXint#1#2#3{{\setbox0=\hbox{$#1{#2#3}{\int}$}
\vcenter{\hbox{$#2#3$}}\kern-.5\wd0}}
\newenvironment{PDE}
	{ \left \{
	\begin{array}{r@{ \ }l @{\quad \: \;} l}
	}
	{
	\end{array} \right . 
}
\newcommand{\Anorm}[1]{\| #1 \|_{\mathscr{A}_s(\R^n)}}
\renewcommand{\leq}{\leqslant}
\renewcommand{\geq}{\geqslant}
\renewcommand{\epsilon}{\varepsilon}
\newcommand{\pa}{\partial}
\newcommand{\ol}{\overline}
\newcommand{\Om}{\Omega}
\newcommand{\Ga}{\Gamma}
\newcommand{\na}{\nabla}
\newcommand{\al}{\alpha}
\newcommand{\ga}{\gamma} 
\newcommand{\chf}{\mathds{1}}
\newcommand{\de}{\delta}
\newcommand{\ka}{\kappa}
\newcommand{\la}{\lambda}
\newcommand{\La}{\Lambda}
\newcommand{\ul}{\underline}
\newcommand{\sS}{\mathcal{S}}
\newcommand{\e}{\varepsilon}
\newcommand{\HH}{\mathcal{H}^{n-1}}
\newcommand{\sph}{\Sph}
\newcommand{\fourier}{\mathscr{F}}
\renewcommand\mathcal[1]{\text{\usefont{OMS}{cmsy}{m}{n}#1}}
\begin{document}

\frontmatter

\begin{titlingpage} 
\thispagestyle{titlingpage}

\centering
	\includegraphics[scale=0.4]{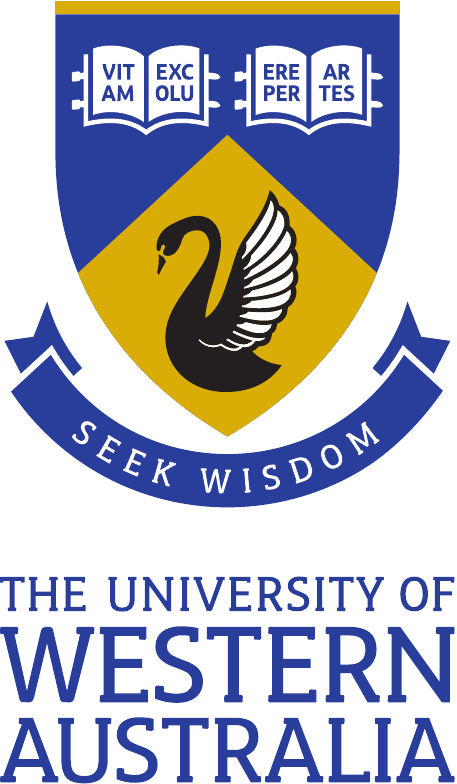}\par\vspace{1cm}
	{
	}
	{\huge\scshape Stability results for nonlocal Serrin-type problems, antisymmetric Harnack inequalities, and geometric estimates \par} 										
	\vspace{2cm}
	{\Large\scshape Jack Thompson}

	\vfill

	{\scshape  This thesis is presented for the degree of Doctor of Philosophy of The University of Western Australia \par
School of Physics, Mathematics and Computing \par 
Mathematics
 \par \par  \large 2024\par} 								

\end{titlingpage}




\chapter*{Acknowledgements\markright{Acknowledgements}} \addcontentsline{toc}{chapter}{Acknowledgements}

This research is supported by an Australian Government Research Training Program Scholarship. 

First and foremost, I am deeply indebted to my supervisors, Serena Dipierro and Enrico Valdinoci. Throughout my PhD, you've both taught me so much, not to mention given me so many opportunities, always going above and beyond to support my career. I would also like to thank to my close collaborator, Giorgio Poggesi, whose knowledge and insight helped me write several of the papers that are in this thesis. Moreover, I am extremely grateful to Xavier Ros-Oton for the support and hospitality he gave me when I visited Barcelona and for his help with my research. I would also like to thank Xavier Cabré, Marvin, and everyone else I have met during conferences and my travels. 

Next, I have to thank all my family and friends who have been with me over the years. To Giovanni Giacomin, when I look back on these past three and a half years, the first thing I will always think of is all the fun and silly memories you helped create, whether it's your bursts of chaotic energy (particularly in the presence of music that my grandparents listen to), questionable medical conclusions about common and curable diseases, or the solidarity we had from ignoring our responsibilities together. And, of course, I can't forget your epic pastas (and not-so-epic parmigiana). A special thanks also to Francesco De Pas, Tommaso Di Ubaldo, and David Perella for all the coffee trips, beers we've shared, and interesting (or, particularly in Francesco's case, comically confusing) conversations we've had. I’d also like to thank my colleagues at UWA: Caterina, David Pfefferlé, Edoardo, Jiwen, Jo{\~a}o, Kooper, Kurt, Mary, Riccardo, and Sandra. 

To my friends from Perth, thank you for all the experiences we've shared and the memories we've made. Thank you also to my friends from Brisbane---in particular, Ben, Kate, Kelsey, Harry, and Izzy---who I can always rely on to bring the best of times whenever I come home. 

To Adam, thank you for everything---all the adventures we've had, our long conversations about maths, and the whisky we've drunk. I also can't forget Monty who might not be the best at ignoring possums in the middle of the night but is the best when it comes to cheering me up.

To Mum and Dad, I will be forever grateful for the opportunities you've given me in my life and the support you provide me. You continue to be the basis on which I model myself and I love you both very much (you too Adam and Monty).

\cleardoublepage
\setcounter{tocdepth}{2}
\tableofcontents

\mainmatter

\chapter*{Introduction\markright{Introduction}}  \addcontentsline{toc}{chapter}{Introduction}

In this thesis, we explore several related topics broadly regarding the symmetry and geometric properties of nonlocal partial differential equations (\textsc{PDE}). Nonlocal \textsc{PDE}, also known as integro-differential equations or fractional \textsc{PDE}, have become increasingly popular over the past few decades due to their ability to accurately model real-world phenomena. The prototypical example of a nonlocal operator is the \emph{fractional Laplacian}, defined according to the following singular integral: \begin{align*}
    (-\Delta)^s u(x) = c_{n,s} \PV \int_{\R^n} \frac{u(x)-u(y)}{\vert x - y \vert^{n+2s}} \dd y . 
\end{align*} Here \(s\in (0,1)\) is the fractional parameter, \(\PV\) stands for principle value, and \(c_{n,s}>0\) is an appropriate normalisation constant. Nonlocal \textsc{PDE} arise naturally in the study of stochastic processes and can be viewed as generalisations of partial differential equations (PDE)---as \(s\to 1^-\), \((-\Delta)^s \to -\Delta\) where \(\Delta\) is the usual Laplacian---which allow the possibility of long-range interactions or forces. Some examples where nonlocal \textsc{PDE} arise in the applied sciences are: 
crystal dislocation  \cite{MR3469920}; 
elasticity \cite{Kroner1967-elasticity,Eringen1972-nonlocal-elasticity};
ecology \cite{Viswanathan-etal1996-wandering-albatrosses,ReynoldsRhodes2009-levy-random-search,Humphries-etal2010-levy-brownian-marine-predators,MR3579567}; 
mathematical finance \cite{Nolan1999-fitting-data,Pham1997-optimal-stopping-american-option,Levendorskii2004-pricing-american-put,Merton1976-option-pricing-discontinuous}; 
the surface quasi-geostrophic equation in fluid mechanics\footnote{In fact, the most fundamental equation in fluid mechanics, the incompressible Navier-Stokes equation, are an example of a nonlocal \textsc{PDE} in disguise. This is due to the presence of the pressure term which depends on the velocity field in a nonlocal way.} \cite{Constantin2006-Euler-NavierStokes-turbulence}; 
image processing \cite{GilboaOsher2008-image-processing}; 
the physics of plasmas and flames \cite{MancinelliVergniVulpiani2003-front-propagation-anomalous-diffusion,MetzlerKlafter2000-fractional-dynamics};
magnetised plasmas \cite{Blazevski2013-shear-magnetic-fields}; 
and quantum mechanics \cite{Laskin2000-quantum-mechanics,Laskin2002-fractional-schrodinger}.

This thesis is split into three parts. In the first part, we study two overdetermined problems, namely Serrin's problem and the parallel surface problem, driven by the fractional Laplacian.

In Chapter~\ref{1XMNCzrH}, we use a Hopf-type lemma for antisymmetric super-solutions to the Dirichlet problem for the fractional Laplacian with zero-th order terms,
in combination with the method of moving planes, to prove symmetry for the \emph{semilinear fractional parallel surface problem}. That is, we prove that non-negative solutions to semilinear Dirichlet problems for the fractional Laplacian in a bounded open set $\Omega \subset \R^n$ must be radially symmetric if one of their level surfaces is parallel to the boundary of $\Omega$; in turn, $\Omega$ must be a ball.

Furthermore, we discuss maximum principles and the Harnack inequality for antisymmetric functions in the fractional setting and provide counter-examples to these theorems when only `local' assumptions are imposed on the solutions. 
The construction of these counter-examples relies on an approximation result that states that
`all antisymmetric functions are locally antisymmetric and \(s\)-harmonic up to a small error'.

In Chapter~\ref{Fw81drHU}, we analyze the stability of the parallel surface problem for semilinear equations driven by the fractional Laplacian. 
We prove a quantitative stability result that goes beyond the one previously obtained in \cite{MR4577340}. 

Moreover, we discuss in detail several techniques and challenges in obtaining the optimal exponent in this stability result. In particular, this includes an upper bound on the exponent via an explicit computation involving a family of ellipsoids. 
We also sharply investigate a technique that was proposed in \cite{MR3836150} to obtain the optimal stability exponent in the quantitative estimate for the nonlocal Alexandrov's soap bubble theorem,
obtaining accurate estimates to be compared with a new, explicit example.

In Chapter~\ref{yP1bfxWn}, we establish quantitative stability for the nonlocal Serrin overdetermined problem, via the method of the moving planes.
Interestingly, our stability estimate is even better than those obtained so far in the classical setting (i.e., for the classical Laplacian) via the method of the moving planes.

A crucial ingredient is the construction of a new antisymmetric barrier, which allows a unified treatment of the moving planes method. This strategy allows us to establish a new general quantitative nonlocal maximum principle for antisymmetric functions, leading to new quantitative nonlocal versions of both the Hopf lemma and the Serrin corner point lemma.

All these tools -- i.e., the new antisymmetric barrier, the general quantitative nonlocal maximum principle, and the quantitative nonlocal versions of both the Hopf lemma and the Serrin corner point lemma -- are of independent interest. 

In the second part, we study the Harnack inequality for solutions to nonlocal \textsc{PDE} which are antisymmetric, that is, the have an odd symmetry with respect to reflections across some hyperplane. This topic has a strong motivation coming from proving quantitative stability estimates for nonlocal overdetermined problems which we explain in more detail in the introduction to this part.  

In Chapter~\ref{RYsUIqSr}, we prove the Harnack inequality for antisymmetric \(s\)-harmonic functions, and more generally for
solutions of fractional equations with zero-th order terms, in a general domain. This may be used in conjunction with the method of moving planes to obtain quantitative stability results for symmetry and overdetermined problems for semilinear equations driven by the fractional Laplacian.

The proof is split into two parts: an interior Harnack inequality away from the plane of symmetry, and a boundary Harnack inequality close to the plane of symmetry. We prove these results by first
establishing the weak Harnack inequality for super-solutions and local boundedness for sub-solutions in both the interior and boundary case.

En passant, we also obtain a new mean value formula for antisymmetric $s$-harmonic functions.

In Chapter~\ref{OXXm8GiN}, we revisit the Harnack inequality for antisymmetric functions that has been established in Chapter~\ref{RYsUIqSr} for the fractional Laplacian and we extend it to more general nonlocal elliptic operators. 

The new approach to deal with these problems that we propose in this chapter leverages Bochner’s relation, allowing one to relate a one-dimensional Fourier transform of an odd function with a three-dimensional Fourier transform of a radial function.

In this way, Harnack inequalities for odd functions, which are essentially Harnack inequalities of boundary type, are reduced to interior Harnack inequalities.

In the third part, we prove several geometric identities and inequalities involving the fractional mean curvature. 

In Chapter~\ref{oyZ0zmYT}, inspired by a classical identity proved by James Simons, we establish a new geometric formula
in a nonlocal, possibly fractional, setting.

Our formula also recovers the classical case in the limit, thus providing an approach to Simons' work
that does not heavily rely on differential geometry. 

In Chapter~\ref{1KsVju8j}, we prove that measurable sets \(E\subset \R^n\) with locally finite perimeter and zero \(s\)-mean curvature satisfy the surface density estimates: \begin{align*}
    \operatorname{Per} (E; B_R(x)) \geq CR^{n-1}
\end{align*} for all \(R>0\), \(x\in \partial^\ast E\). The constant \(C\) depends only on \(n\) and \(s\), and remains bounded as \(s\to 1^-\). As an application, we prove that the fractional Sobolev inequality holds on the boundary of sets with zero \(s\)-mean curvature.

\part{Stability results for nonlocal Serrin-type overdetermined problems} \label{Part1}

\chapter{Introduction to Part I}  

The first part of this thesis is on the rigidity and stability of two related overdetermined problems, namely Serrin's problem and the parallel surface problem, in the nonlocal context. Overdetermined problems are boundary value problems for \textsc{PDE} where `too many' boundary conditions are specified. For general regions, such a problem is ill-posed, that is, there is no solution to the set of equations, so the objective of overdetermined problems is to identify regions for which a solution does exist. In applications, overdetermined problems often appear as a well-posed boundary value problem, modelling a given physical situation, accompanied by an additional boundary condition, often referred to as the overdetermined condition, which encodes the `optimal configuration' in the physical situation. As such, overdetermined problems have a close relationship with the calculus of variations and shape optimisation.

\section{Rigidity in local and nonlocal Serrin-type overdetermined problems} 
\subsection{The local problem}
Let \(\Omega\subset \R^n\) be a bounded domain\footnote{Here a domain is an open connected set.}. In the celebrated paper \cite{MR333220}, Serrin considered the following well-posed PDE
\begin{align}
    \begin{PDE}
-\Delta u &= 1, &\text{in } \Omega \\
u&=0, &\text{on } \partial \Omega
    \end{PDE} \label{6gDv6VCY}
\end{align} 
paired with the overdetermined condition\footnote{Note that if \(u\in C^2(\Omega) \cap C^1(\overline \Omega)\)  satisfies~\eqref{6gDv6VCY} and \(-\partial_\nu u= c\) on \(\partial \Omega\) for some \(c\in \R\) then integration by parts implies that \begin{align*}
    \vert \Omega \vert = - \int_\Omega \Delta u \dd x &= - \int_{\partial \Omega} \partial_\nu u \dd \mathcal H^{n-1} = c\vert \partial \Omega \vert,
\end{align*} so, necessarily, \(c= \frac{\vert \Omega \vert}{\vert \partial \Omega \vert} \).}
\begin{align}
    -\partial_\nu u = \text{const.} \qquad \text{on } \partial \Omega . \label{Gg6QbqFP}
\end{align} It is clear that if \(\Omega = B_R(x_0)\) for some \(x_0\in \R^n\) and \(R>0\) then~\eqref{6gDv6VCY}-\eqref{Gg6QbqFP} admit a solution, namely, \(u(x) = \frac1 {2n} \big(R^2 -\vert x-x_0\vert^2\big ) \). In \cite{MR333220}, Serrin addressed the reverse problem: \begin{align}
    \text{If there exists }u\in C^2(\Omega) \cap C^1(\overline \Omega) \text{ satisfying~\eqref{6gDv6VCY}-\eqref{Gg6QbqFP}, is } \Omega \text{ a ball?} \label{kLWIZTg3}
\end{align} Serrin proved that, assuming \(\Omega\) has \(C^2\) boundary, the answer to~\eqref{kLWIZTg3} is yes, that is, there exists a solution to~\eqref{6gDv6VCY}-\eqref{Gg6QbqFP} if and only if \(\Omega\) is a ball. In fact, more generally, Serrin proved in \cite{MR333220} that if \(f\in C^{0,1}_{\mathrm{loc}}(\R)\) and \(u\) satisfies \begin{align}
    \begin{PDE}
-\Delta u &= f(u), &\text{in } \Omega \\
u&=0, &\text{on } \partial \Omega\\
u&>0, &\text{in } \Omega
    \end{PDE}\label{E6TedKUX}
\end{align} and~\eqref{Gg6QbqFP} then \(\Omega\) is a ball. We also mention that not long after Serrin's paper, an alternative proof of the affirmative answer to~\eqref{kLWIZTg3} was given by Weinberger \cite{MR333221} that made use of a special auxiliary function (known as the \(P\)-function) and integral identities.  

Serrin's motivation for the question~\eqref{kLWIZTg3} came from a mathematical model for finding the shape of a pipe, containing a flowing fluid, which maximises the pipe's time to failure based solely on shape (i.e. independent of material, etc). Indeed, let the pipe be modelled by \(\Omega \times \R\) where \(\Omega\subset \R^2\) is the cross-section of the pipe and assume that the fluid is viscous and incompressible, and the flow of the fluid is laminar. In this physical situation, it is known that the so-called flow velocity function \(u\) depends only on \(x_1\) and \(x_2\) (not \(x_3\) nor time \(t\)) and \(u\) satisfies, up to normalisation constants,~\eqref{6gDv6VCY}. A non-uniform load on the boundary of the pipe will introduce a bending stress, so if the pipe is subject to a consistent non-uniform load then it could fail sooner than if the load were uniform. Since a non-uniform load induces a non-uniform tangential stress on the pipe (and vice-versa), it is reasonable to postulate that the pipe that maximises the time to failure should be the one on which the fluid exerts a constant tangential stress per unit area on the boundary of the pipe. Furthermore, the tangential stress per unit area on the pipe is given mathematically by \(-\partial_\nu u\) (again up to a multiplicative constant), so assuming a uniform load amounts to assumption~\eqref{Gg6QbqFP}. Thus, Serrin's result gives a mathematical justification that cylindrical pipes (i.e. pipes whose cross-section is a disk) maximise the pipe's time to failure. 

From a purely mathematical viewpoint, one of the reasons the paper~\cite{MR333220} has become so influential is due to the technique Serrin employed to prove his theorem, known today as the \emph{method of moving planes}. This technique is a refinement of a reflection principle conceived by Alexandrov in \cite{MR0150710} to prove the so-called \emph{soap bubble theorem}, which states that the only connected closed hypersurfaces embedded in a space form with everywhere constant mean curvature are spheres. The method of moving planes was also famously employed by Gidas, Ni, and Nirenberg to prove several symmetry results for semilinear elliptic equations, see \cite{MR634248,MR544879}. See \cite{MR3932952} for a survey on the many applications of the method of moving planes.

A problem closely related to Serrin's overdetermined problem is the so-called parallel surface problem. Suppose that \(G\) is an open bounded subset of~\(\R^n\) and  let~\(\Omega  = G+B_R\) for some~\(R>0\), where~\(A+B\) is the Minkowski sum of sets defined by \begin{align*}
A+B = \{ a+b \text{ s.t. } a\in A, b \in B\}. 
\end{align*}  Then, the parallel surface problem asks: if there exists a function \(u\) that satisfies the equation~\eqref{E6TedKUX} 
 as well as the overdetermined condition \begin{align}
u = \text{const.} \qquad \text{on } \partial G, \label{JVG7Q5lS}
\end{align} then is \(\Omega\) necessarily a ball? This problem was introduced in \cite{MR2629887} in the context of invariant isothermic surfaces of a nonlinear non-degenerate fast diffusion equation, see also \cite{MR3420522,MR2916825}, and gets its name since the overdetermined condition~\eqref{JVG7Q5lS} fixes the value of \(u\) on the (hyper)surface \(\partial G\) which is parallel to the boundary of \(\Omega\). Another motivation for considering this problem is it can be viewed as a `discrete' analogue of Serrin's problem. Indeed, suppose that \(\{\Gamma_k\}\) is a countably infinite family of hypersurfaces that are parallel to \(\partial\Omega\) with \(\operatorname{dist}(\Gamma_k,\partial\Omega)=\frac 1 k\) and let \(\{c_k\}\) be such that \(c_k>0\) and \(\lim_{k\to+\infty}kc_k<+\infty\). Assuming \(u\in C^1(\overline \Omega)\) if \(u\) satisfies~\eqref{E6TedKUX} as well as~\(u= c_k>0\) on each \(\Gamma_k\) then, for each \(x\in \partial\Omega\), \begin{align*}
    -\partial_\nu u(x) = \lim_{h\to 0^+} \frac{u(x-h\nu(x))}h = \lim_{k\to +\infty } k c_k = \text{const.},
\end{align*} so \(u\) satisfies~\eqref{Gg6QbqFP}. Hence, if \(u\) is prescribed on an infinite number of parallel surfaces then \(\Omega\) is a ball by Serrin's result; in the parallel surface problem, we only consider a single parallel surface, but this still turns out to be enough to conclude \(\Omega\) is a ball. Indeed, it was proven in \cite{MR2629887,MR3420522,MR2916825} that if \(u\in C^2(\Omega)\) satisfies~\eqref{E6TedKUX} and the overdetermined condition~\eqref{JVG7Q5lS} then \(\Omega\) is a ball.

\subsection{The nonlocal problem}

There have been many directions in which Serrin's problem has been generalised. We have already mentioned that the parallel surface problem may be viewed as a discrete version of Serrin's problem; furthermore,  some of the contexts in which Serrin's problem has been explored include: alternative/incomplete overdetermined conditions~\cite{MR1616562, MR2436831, MR3231971, MR4230553}, nonlinear elliptic equations~\cite{MR980297, MR2293958, MR2366129, MR2448319, MR2764863, MR3040677, MR3385189, AlessandriniGarofalo1989symmetry, reichel1996radial, MR1674355, MR3977217}, and on manifolds~\cite{MR3959271,MR3663321}. A natural generalisation of Serrin's problem is to replace the Laplacian with a nonlocal operator, in particular, the fractional Laplacian. Let \(s\in (0,1)\), and consider the Dirichlet problem \begin{align}
    \begin{PDE}
(-\Delta)^su &=f(u), &\text{in }\Omega \\
u&=0, &\text{in } \R^n \setminus \Omega \\
u&>0,&\text{in }\Omega.
    \end{PDE} \label{AIgLTPmL}
\end{align} Recall, that due to the nonlocal nature of the fractional Laplacian, one must prescribe `boundary data' in the complement of \(\Omega\) to obtain a well-posed boundary value problem.

To our knowledge, the first paper on an overdetermined problem for the fractional Laplacian was \cite{MR3395749} where the authors considered solutions to~\eqref{AIgLTPmL} with an overdetermined condition analogous~\eqref{Gg6QbqFP} in Serrin's problem. To understand their overdetermined condition, it is important to understand that the boundary regularity for equations involving the fractional Laplacian differs from the classical case. Indeed, the solution to \begin{align*}
    \begin{PDE}
(-\Delta)^s u &= 1, &\text{in } B_R \\
u&=0, &\text{in }\R^n \setminus B_R
    \end{PDE}
\end{align*} is given  by \begin{align*}
    u(x) = \gamma_{n,s} \big(R^2-\vert x \vert^2 \big )^s
\end{align*} for some \(\gamma_{n,s}>0\) and \(u\) is no better than \(C^s(\overline B_R)\) even though all the `given data' is smooth (recall, one obtains \(u\) is smooth up to the boundary for the corresponding situation in the classical case), see, for example~\cite{MR3168912}. As such,~\eqref{Gg6QbqFP} no longer makes sense in this context since, in general, we expect \(\vert \partial_\nu u \vert=+\infty\). Instead, one considers \begin{align*}
    \partial_\nu^s u (x) &:=-\lim_{h \to 0^+} \frac{u(x-h\nu(x))-u(x)}{h^s} \qquad \text{ for }x\in \partial\Omega.
\end{align*} This fractional analogue of the Neumann derivative also appears naturally in the context of the nonlocal Pohozaev identity, see \cite{MR3211861}. Then, the main result in \cite{MR3395749} is as follows. 

\begin{thm}[{\cite[Theorem 1.2]{MR3395749}}]
Let \(c \in \R\) and \(\Omega \subset  \R^n\) be a bounded open set with \(C^2\) boundary. Furthermore, let \(f\in C^{0,1}_{\mathrm{loc}}(\R)\) and assume that there exists a function \(u \in C^s(\overline \Omega)\) satisfying~\eqref{AIgLTPmL} and the overdetermined condition \begin{align*}
    \partial_\nu^s u = c \qquad \text{on }\partial \Omega. 
\end{align*} Then \(\Omega\) is a ball.
\end{thm}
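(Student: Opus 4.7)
The plan is to adapt Serrin's method of moving planes to the fractional setting. By rotational invariance, fix the direction $e_1$ and, for $\la \in \R$, define $T_\la := \{x_1 = \la\}$, $\Sigma_\la := \{x \in \Om : x_1 > \la\}$, the reflection $x^\la := (2\la - x_1, x_2, \dots, x_n)$, and the reflected cap $\Sigma_\la' := \{x^\la : x \in \Sigma_\la\}$. Introduce the antisymmetric function $w_\la(x) := u(x) - u(x^\la)$ on $\Sigma_\la'$. Since $f$ is locally Lipschitz and $u$ is bounded, $w_\la$ solves a linear equation $(-\Delta)^s w_\la + c_\la(x) w_\la = 0$ in $\Sigma_\la'$ with $c_\la \in L^\infty$, and $w_\la$ is antisymmetric across $T_\la$. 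The Dirichlet data $u = 0$ on $\R^n \setminus \Om$ ensure that $w_\la \geq 0$ on $\{x_1 < \la\} \setminus \Sigma_\la'$ as long as $\Sigma_\la' \subset \Om$.

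For $\la$ close to $\sup_\Om x_1$, the cap $\Sigma_\la$ is thin, $\Sigma_\la' \subset \Om$, and a weak maximum principle for antisymmetric super-solutions in narrow domains gives $w_\la \geq 0$ in $\Sigma_\la'$. One then slides $\la$ down to the critical value $\la^* := \inf \{\la : \Sigma_\mu' \subset \Om \text{ and } w_\mu \geq 0 \text{ in } \Sigma_\mu' \text{ for all } \mu > \la\}$. By continuity, $w_{\la^*} \geq 0$ in $\Sigma_{\la^*}'$, and a standard geometric dichotomy forces one of the following at $\la^*$: either (i) the reflected boundary $\partial \Sigma_{\la^*}'$ is internally tangent to $\partial \Om$ at some interior point $P$, or (ii) $T_{\la^*}$ meets $\partial \Om$ orthogonally at some point $Q$.

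Suppose toward a contradiction that $w_{\la^*} \not\equiv 0$. A strong maximum principle for antisymmetric $s$-super-solutions then gives $w_{\la^*} > 0$ in $\Sigma_{\la^*}'$. In case (i), a Hopf-type lemma for antisymmetric functions at $P$ yields $\pa_\nu^s w_{\la^*}(P) > 0$. However, both $P$ and $P^{\la^*}$ lie on $\partial \Om$, the outward normals to $\partial \Om$ at these two points are reflections of one another across $T_{\la^*}$, and the overdetermined condition $\pa_\nu^s u \equiv c$ forces $\pa_\nu^s w_{\la^*}(P) = c - c = 0$, a contradiction. Case (ii) is handled by a nonlocal antisymmetric analogue of Serrin's corner-point lemma, producing the same contradiction. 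Hence $w_{\la^*} \equiv 0$, so $\Om$ is symmetric across $T_{\la^*}$. Since $e_1$ was arbitrary, $\Om$ is symmetric across some hyperplane in every direction, and is therefore a ball.

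The main obstacle is developing, in the fractional framework, the antisymmetric Hopf and corner-point lemmas that replace their classical counterparts. Unlike the local case, $(-\Delta)^s w_{\la^*}(x)$ depends on the values of $w_{\la^*}$ throughout $\R^n$, so the nonlocal tail must be controlled. The antisymmetry across $T_{\la^*}$ is the essential structural feature: it recasts the equation on $\Sigma_{\la^*}'$ with the strictly positive modified kernel $K_{\la^*}(x,y) = \abs{x-y}^{-n-2s} - \abs{x - y^{\la^*}}^{-n-2s}$, which is what enables the maximum principles and barrier constructions analogous to those of the local theory. Antisymmetric barriers modelled on products of the form $(\dist(x, \partial \Om))^s (\la^* - x_1)$ then deliver the required boundary behaviour, in particular the sign of $\pa_\nu^s w_{\la^*}(P)$, closing the argument. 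These ingredients are precisely the tools developed systematically in Chapter~\ref{1XMNCzrH}.
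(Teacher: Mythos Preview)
Your sketch is correct and captures the approach of \cite{MR3395749}, which is precisely the method of moving planes with antisymmetric maximum principles, a Hopf-type lemma for case~(i), and a Serrin corner-point lemma for case~(ii). Note, however, that this theorem is not proved in the thesis---it is cited as a known result---so there is no proof in the paper to compare against beyond the one-paragraph description in the introduction to Part~I, which your outline matches.

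One small correction: the antisymmetric Hopf lemma (Proposition~\ref{lem:FLIFu}) is indeed in Chapter~\ref{1XMNCzrH}, but that chapter treats the parallel surface problem, not Serrin's problem, and does not contain a corner-point lemma. The nonlocal Serrin corner lemma you invoke for case~(ii) is developed later in the thesis, in Chapter~\ref{yP1bfxWn} (Theorem~\ref{thm:quantitative corner lemma}); the original qualitative version is Lemma~4.4 of \cite{MR3395749}. Your barrier description ``$(\dist(x,\partial\Omega))^s(\lambda^*-x_1)$'' is morally right for case~(i), but the actual barriers used in both \cite{MR3395749} and the thesis are somewhat more delicate antisymmetric constructions (see Lemma~\ref{SaBD4} and Theorem~\ref{prop:newantisymmetricbarrier}).
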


An interesting remark here is that \cite[Theorem 1.2]{MR3395749} does not require \(\Omega\) to be connected unlike in the local case since the nonlocality of the fractional Laplacian allows it to `see' disconnected components. The proof of \cite[Theorem 1.2]{MR3395749} is similar in principle to that of Serrin's proof---the authors make use of the method of moving planes, and require a Hopf-type corner lemma,---however, several nontrivial adjustments must be made in the nonlocal situation, particularly to the maximum principle, to make the method of moving planes work. We explain this in more detail in the introduction to Part II. 

For the nonlocal analogue of the parallel surface problem, the setting is analogous to the classical situation, with the Laplacian replaced with the fractional Laplacian. Indeed, one now considers solutions to the fractional Dirichlet problem~\eqref{AIgLTPmL} where \(\Omega = G+B_R\) for some open bounded set \(G\) and enforces the same overdetermined condition as in the classical setting~\eqref{JVG7Q5lS}. The first result in this direction was in \cite{ciraolo2021symmetry} where the authors considered the particular case \(f=1\). Here the authors recover the same result as in the previous examples, that the only \(\Omega\) for which~\eqref{AIgLTPmL} (with \(f=1\)) and~\eqref{JVG7Q5lS} admit a solution is a ball. They also establish a stability result for this problem which we will discuss in the following section. In Chapter~\ref{yP1bfxWn}, we generalise the result of~\cite{ciraolo2021symmetry} to the case \(f\in C^{0,1}_{\mathrm{loc}}(\R)\).

\section{Stability in local and nonlocal Serrin-type overdetermined problems} 

\subsection{The local problem}

So far, we have only discussed the \emph{rigidity} of Serrin's problem and the parallel surface, that is, the validity of the statement ``if \(u\) satisfies~\eqref{E6TedKUX} or~\eqref{AIgLTPmL} and the respective overdetermined condition then \(\Omega\) is a ball". An interesting follow-up problem in the contemporary literature is regarding the \emph{stability} of overdetermined problems, that is, heuristically, if \(u\) satisfies~\eqref{E6TedKUX} or~\eqref{AIgLTPmL} and `almost' satisfies the overdetermined condition then is \(\Omega\) `almost' a ball? Of course, to make this statement rigorous, one needs to specify what is meant by `almost'. One strategy for proving stability estimates for overdetermined problems is to apply the method of moving planes, as in the proof of rigidity, but replace any `qualitative arguments` with `quantitative estimates'. For example, the method of moving planes relies heavily on the strong maximum principle which suffices to prove rigidity---for stability, one replaces the strong maximum principle with its quantitative analogue, the Harnack inequality. In this thesis, we will focus primarily on this strategy; however, it is important to mention that another common strategy for proving rigidity and stability results for overdetermined problems is via integral identities, see~\cite{payne1989duality, MR980297, MR2448319, cianchi2009overdetermined, MR3663321, MR3959271, MR4054869, MR4124125, CavallinaPoggesi2024}. Both strategies have their pros and cons. Indeed, for a given overdetermined problem, using integral identities requires one to first find such an identity. This can be, in principle, very challenging while the method of moving planes is more versatile in its application. However, the method of moving planes relies heavily on the linear structure of \(\R^n\), so it doesn't generalise well to manifolds while integral identities, provided they can be found, naturally generalise to manifolds. Moreover, the method of moving planes lends itself to stability estimates in terms of \(L^\infty\) and Hölder norms since the argument is pointwise in nature while using integral identities better lends itself to estimates in terms of \(L^p\) norms.

The first paper to address the stability of overdetermined problems was \cite{MR1729395} which proved a stability result for Serrin's overdetermined problem. Here the authors define \begin{align*}
    \rho (\Omega) = \inf\{ R- r \text{ s.t. } B_r(x) \subset \Omega \subset B_R(x) \text{ for some } x\in \Omega \} 
\end{align*} which measures how close \(\Omega\) is to a ball in an `$L^\infty$ sense`. The main result of \cite{MR1729395} is as follows.

\begin{thm}[{\cite[Theorem 1]{MR1729395}}] Let \(\Omega\subset \R^n\) be a bounded \(C^{2,\alpha}\) domain and \(f\in C_{\mathrm{loc}}^{0,1}(\R)\) with \(f(0)\geq 0\). If \(u\in C^2(\overline \Omega)\) satisfies~\eqref{E6TedKUX} with \(\vert \partial_\nu u \vert \geq d_0>0\) on \(\partial\Omega\) then there exists \(\varepsilon\in (0,1)\) such that \begin{align*}
    \| \partial_\nu u - c \|_{C^1(\partial\Omega)} < \varepsilon
\end{align*} implies that \begin{align*}
    \rho(\Omega) \leq C \vert \log \| \partial_\nu u - c \|_{C^1(\partial\Omega)} \vert^{-\frac 1 n }.
\end{align*} The constants \(\varepsilon\) and \(C\) depend only on the \(C^{2,\alpha}\) regularity of \(\Omega\), and on upper bounds for \(\diam \Omega\), \(\|u\|_{L^\infty(\Omega)}\), \(\|f\|_{C^{0,1}([0,\|u\|_{L^\infty(\Omega)}])}\), and \(d_0^{-1}\). 
\end{thm}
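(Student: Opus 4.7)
The plan is to carry out Serrin's moving planes argument quantitatively, tracking how each qualitative step degrades when $\|\partial_\nu u-c\|_{C^1(\partial\Omega)}$ is small but nonzero. For each direction $e\in\Sph^{n-1}$ and $\lambda\in\R$, let $T^e_\lambda=\{x\cdot e=\lambda\}$, $\Sigma^e_\lambda=\{x\in\Omega:x\cdot e<\lambda\}$, and $u^e_\lambda(x)=u(x^e_\lambda)$ where $x^e_\lambda$ is the reflection of $x$ across $T^e_\lambda$. Let $\lambda^*(e)$ denote the critical value at which the moving planes procedure terminates; at this value the function $w^e:=u^e_{\lambda^*}-u$ is nonnegative on $\Sigma^e_{\lambda^*(e)}$ and satisfies a linear equation $-\Delta w^e= c^e(x)\,w^e$ with $\|c^e\|_{L^\infty}\le\|f\|_{C^{0,1}([0,\|u\|_\infty])}$. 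Moreover, the terminal configuration is one of Serrin's two standard types: either (i) $\partial\Omega$ and the reflected cap meet tangentially at an interior point $P$ off the plane, or (ii) $T^e_{\lambda^*}$ is orthogonal to $\partial\Omega$ at some boundary point $Q\in T^e_{\lambda^*}$.

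\textbf{Boundary smallness of $w^e$ from the overdetermined data.} On $T^e_{\lambda^*}\cap\Omega$ one has $w^e\equiv 0$. On the remaining portion of $\partial\Sigma^e_{\lambda^*}$, which lies on $\partial\Omega$, the function $u$ vanishes but $u^e_{\lambda^*}$ need not. I would use the nondegeneracy $|\partial_\nu u|\ge d_0$ together with $C^{2,\alpha}$ boundary regularity to write, in a tubular neighbourhood of $\partial\Omega$,
\begin{align*}
u(x)=-\partial_\nu u(\pi(x))\,\dist(x,\partial\Omega)+O\bigl(\dist(x,\partial\Omega)^2\bigr),
\end{align*}
and compare the expansion at a boundary point $x\in\partial\Omega$ with the expansion at the reflected point $x^e_{\lambda^*}$. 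Since the two normal derivatives differ by at most $\|\partial_\nu u-c\|_{C^0(\partial\Omega)}<\varepsilon$, and the tangential derivatives differ by at most $\|\partial_\nu u-c\|_{C^1(\partial\Omega)}<\varepsilon$ once one uses the Dirichlet condition $u=0$ to reconstruct the full gradient on $\partial\Omega$, one obtains $|w^e|\le C\varepsilon$ on the boundary portion of $\Sigma^e_{\lambda^*}$ lying in $\partial\Omega$.

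\textbf{Quantitative Hopf/corner lemmas and Harnack propagation.} At the terminal configuration one uses a quantitative version of the Hopf lemma (case (i)) or of the Serrin corner-point lemma (case (ii)) applied to $w^e$ at $P$ or $Q$: if $w^e$ were of size $\delta$ at an interior point at fixed distance from $P$ (resp.\ $Q$), then its interior normal derivative (resp.\ its mixed second-order derivative) at that boundary point would exceed a definite multiple of $\delta$, contradicting the boundary estimate above unless $\delta\le C\varepsilon$. Thus $w^e$ is of order $\varepsilon$ at a seed point of fixed interior distance. I would then chain the interior Harnack inequality along a Harnack chain covering $\Sigma^e_{\lambda^*}$: since each link multiplies the bound by a fixed factor $\Lambda>1$, $N$ links give a bound $\Lambda^N\varepsilon$, and covering a set of unit diameter requires $N$ of order $1/r$ where $r$ is the fixed radius of the Harnack chain. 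Inverting this relation yields
\begin{align*}
\|w^e\|_{L^\infty(\Sigma^e_{\lambda^*})}\le C\,|\log\varepsilon|^{-1}.
\end{align*}

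\textbf{From approximate reflection symmetry to approximate ball.} Applying the above to $n$ mutually orthogonal directions $e_1,\ldots,e_n$ gives $n$ hyperplanes across each of which $\Omega$ is symmetric up to an error controlled by $|\log\varepsilon|^{-1}$. A purely geometric argument (as in~\cite{MR1729395}) then produces a point $x_0$ lying near the intersection of these planes such that $B_r(x_0)\subset\Omega\subset B_R(x_0)$ with $R-r\le C|\log\varepsilon|^{-1/n}$, the $n$-th root arising from combining the one-dimensional symmetry defects along $n$ independent directions.

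\textbf{Main obstacle.} The logarithmic rate is intrinsic to propagating smallness through an open set via the Harnack inequality, and improving it would require very different methods. The delicate technical point is rather to ensure that the quantitative Hopf and corner-point lemmas hold with constants depending only on $\|f\|_{C^{0,1}}$, $\|u\|_{L^\infty}$, $d_0^{-1}$, $\diam\Omega$ and the $C^{2,\alpha}$ character of $\partial\Omega$: the nondegeneracy hypothesis $|\partial_\nu u|\ge d_0>0$ is exactly what prevents degenerate boundary behaviour of $w^e$ and turns the pointwise boundary bound on $\partial_\nu u-c$ into a genuine pointwise bound on $w^e$ in a boundary layer, which is the input needed to run the Harnack chain.
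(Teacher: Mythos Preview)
The paper does not give its own proof of this statement; it is cited verbatim from \cite{MR1729395} as background for the discussion of stability estimates. So there is no in-paper proof to compare against, and I evaluate your sketch on its own.

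Your overall architecture --- quantitative moving planes, quantitative Hopf and corner-point lemmas at the terminal touching point, Harnack propagation of smallness --- is indeed the one used in \cite{MR1729395}. But two of your steps contain genuine errors.

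\textbf{The boundary smallness claim in Step 2 is false.} You assert $|w^e|\le C\varepsilon$ on the whole arc $\partial\Sigma^e_{\lambda^*}\cap\partial\Omega$. At a generic point $x$ on that arc the reflection $x^e_{\lambda^*}$ lies strictly inside $\Omega$; then $w^e(x)=u(x^e_{\lambda^*})\ge d_0\,\dist(x^e_{\lambda^*},\partial\Omega)$ by the Hopf lemma applied to $u$ itself, and this distance is comparable to the local asymmetry of $\Omega$ --- precisely the quantity you are trying to bound, so it is not $O(\varepsilon)$ a priori. Your Taylor comparison is legitimate only at the isolated touching point $P$, where both $x$ and $x^e_{\lambda^*}$ sit on $\partial\Omega$; there it yields smallness of $\partial_\nu w^e(P)$ (or of the mixed second derivative at $Q$ in case (ii)), which is the correct input to the quantitative Hopf/corner lemma. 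It does not give a global boundary bound on $w^e$.

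\textbf{The logarithm does not arise the way you describe.} If the Harnack chain radius $r$ is fixed and $N\sim 1/r$ is a fixed integer, then $\Lambda^N\varepsilon$ is simply $C\varepsilon$ --- a linear bound, with nothing to ``invert''. In \cite{MR1729395} the chain length is not fixed: near the terminal touching configuration the cap $\Sigma^e_{\lambda^*}$ is thin, the Harnack balls must shrink, and the number of balls needed grows inversely in the (unknown) asymmetry. Closing this into a self-consistent inequality is what produces the logarithm, and the exponent $1/n$ comes from the dimensional structure of that thinning region (in particular, from the quantitative corner lemma in case (ii)), not from combining $n$ orthogonal directions as you say in Step 4. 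Combining directions only locates an approximate centre once the per-direction bound is in hand; it does not change the exponent.
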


For the special case that \(u\) satisfies~\eqref{6gDv6VCY} instead of~\eqref{E6TedKUX} (i.e. the case \(f=1\)), \cite[Theorem 1]{MR1729395} has been strengthened by making use of integral estimates \`a la Weinberger's proof of Serrin's problem. Moreover, the logarithmic modulus of continuity appearing in \cite[Theorem 1]{MR1729395} was improved in~\cite{MR3522349} to a Hölder modulus of continuity under the assumption \(\Omega\) is convex. 

For the parallel surface problem, the stability was established in \cite{MR3481178}. In this paper, the following results were proven 

\begin{thm}[{\cite[Theorem 1.1]{MR3481178}}]
Let \(G\) be a bounded \(C^{2,\alpha}\) domain in \(\R^n\) and let \(\Omega=G+B_R\) for some \(R > 0\). Let \(u \in C^2(\Omega) \cap C(\overline \Omega)\) satisfy~\eqref{E6TedKUX} with \(-\partial_\nu u \geq d_0>0\) on \(\partial\Omega\). If \(R < \frac 12 d_0 \| u\|_{C^2(\overline \Omega)}^{-1}\) then there exists \(\varepsilon>0\) such that \begin{align*}
    [u]_{\partial G}:= \sup_{x,y\in \partial G} \frac{\vert u(x) - u(y) \vert}{\vert x - y \vert} < \varepsilon
\end{align*} implies \begin{align*}
    \rho(\Omega) \leq C [u]_{\partial G}.
\end{align*} The constants \(\varepsilon\) and \(C\) depend only on \(n\), \(R\), the \(C^{2,\alpha}\)-regularity of \(\partial G\), and on upper bounds for \(\diam G\), \(\|u\|_{L^\infty(\Omega)}\), and  \(\|f\|_{C^{0,1}([0,\|u\|_{L^\infty(\Omega)}])}\).
\end{thm}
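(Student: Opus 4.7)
The plan is to implement a quantitative version of the method of moving planes used in the rigidity result for the parallel surface problem. For each direction $e \in \Sph^{n-1}$ and $\lambda \in \R$, set $T^e_\lambda := \{x\cdot e = \lambda\}$, denote by $\Omega^e_\lambda$ the cap $\{x \in \Omega : x\cdot e > \lambda\}$, and write $u^e_\lambda$ for the reflection of $u$ about $T^e_\lambda$. Let $\lambda_*(e)$ be the critical value in the moving planes procedure, namely the infimum of those $\lambda$ for which the reflected cap lies inside $\Omega$ and $u \geq u^e_\mu$ on it for every $\mu > \lambda$. Let $\lambda_{\mathrm{sym}}(e)$ denote the hyperplane in direction $e$ realising the critical geometric condition for $\Omega$ alone. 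The target is to establish
\[ |\lambda_*(e) - \lambda_{\mathrm{sym}}(e)| \leq C [u]_{\pa G} \qquad \text{for every } e\in\Sph^{n-1}. \]
A standard geometric fitting argument (in the spirit of Aleksandrov) then converts this directional control into the desired bound $\rho(\Omega) \leq C[u]_{\pa G}$.

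The central object is the deficit $w := u - u^e_{\lambda_*}$ on the reflected cap. By the choice of $\lambda_*$, $w \geq 0$ and $w = 0$ on $T^e_{\lambda_*}$, and since $f \in C^{0,1}_{\mathrm{loc}}(\R)$, an elementary linearisation yields $-\Delta w + c(x) w = 0$ with $\|c\|_{L^\infty}$ bounded by the Lipschitz constant of $f$ on $[0,\|u\|_{L^\infty(\Omega)}]$. The crucial observation is that on the portion of $\pa G$ sitting in the reflected cap one has $|w| \leq C[u]_{\pa G}$, because $u$ is almost constant on $\pa G$ and the reflection of a point of $\pa G$ across $T^e_{\lambda_*}$ still lies on $\pa G$ (up to an error controlled by the $C^{2,\alpha}$-regularity of $\pa G$). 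Combining this boundary smallness with interior Harnack estimates and suitable barriers, one propagates the bound $w\leq C[u]_{\pa G}$ throughout the slab $\Omega\setminus\ol G$; the hypothesis $R < \tfrac12 d_0 \|u\|_{C^2(\ol\Omega)}^{-1}$ is exactly what guarantees $-\pa_\nu u \geq \tfrac12 d_0 > 0$ on this entire slab, so that $u$ is quantitatively monotone along inward normals and the slab is a geometrically "good" region.

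At the critical position one of two scenarios occurs: either (a) the reflected cap is internally tangent to $\pa\Omega$ at some point $P \notin T^e_{\lambda_*}$, or (b) $T^e_{\lambda_*}$ meets $\pa\Omega$ orthogonally at some point $P$. Applying a quantitative Hopf lemma in case (a), and a quantitative Serrin corner-point lemma in case (b), to the deficit $w$ and combining with the non-degeneracy $-\pa_\nu u \geq d_0$, one obtains the directional estimate $|\lambda_*(e) - \lambda_{\mathrm{sym}}(e)| \leq C[u]_{\pa G}$. The constants depend only on the quantities listed in the statement because the $C^{2,\alpha}$-regularity of $\pa G$ (and hence of $\pa\Omega = \pa G + B_R$, in some weaker sense) controls the geometry of the reflected caps near $\pa\Omega$, while the Lipschitz bound on $f$ controls $c(x)$ and hence the constants in Harnack and in the boundary lemmas.

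The hard part will be the step propagating the deficit $w$ across reflected caps whose geometry degenerates: one needs a Harnack chain whose length is controlled by $\diam G$, and in case (b) a barrier tailored to the corner formed by $T^e_{\lambda_*}$ and $\pa\Omega$, so that the quantitative Serrin corner-point lemma yields linear (rather than logarithmic) dependence on $[u]_{\pa G}$. A secondary subtlety is checking uniform non-degeneration of the smallness threshold $\epsilon$ and the constant $C$ as $[u]_{\pa G} \to 0$, which is exactly where the assumptions $-\pa_\nu u\geq d_0$ and $R < \tfrac{1}{2}d_0\|u\|_{C^2(\ol\Omega)}^{-1}$ are used, since they force the critical-position geometry to remain uniformly controlled.
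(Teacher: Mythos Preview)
The theorem in question is not proved in this paper; it is quoted from \cite{MR3481178} as background in the Introduction to Part~I. Nonetheless, the paper's own treatment of the parallel surface problem (the rigidity argument for the semilinear fractional version in Chapter~2, and the discussion surrounding \cite{MR3481178}) makes the correct mechanism clear, and your proposal contains a conceptual misstep worth flagging.

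Your assertion that ``the reflection of a point of $\partial G$ across $T^e_{\lambda_*}$ still lies on $\partial G$'' is false for generic points of $\partial G$ in the cap. What is actually true --- and what drives the parallel surface problem --- is that since $\Omega = G + B_R$, the surfaces $\partial G$ and $\partial\Omega$ are parallel, so the moving-planes critical position for $\Omega$ is simultaneously the critical position for $G$. Hence in case~(a) there is a \emph{single} point $q\in\partial G$ whose reflection $q'$ also lies on $\partial G$, giving $|w(q)|\le [u]_{\partial G}\,|q-q'|$; in case~(b) there is a point $q\in\partial G\cap T^e_{\lambda_*}$ where $T^e_{\lambda_*}$ meets $\partial G$ orthogonally, giving $|\partial_e w(q)|\le C[u]_{\partial G}$.

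The decisive point you miss is that this $q$ is an \emph{interior} point of $\Omega$. No boundary Hopf lemma or Serrin corner lemma is needed; the quantitative tool is the \emph{interior} Harnack inequality for $w$ (together with a Harnack chain whose length is controlled by $\diam G$), applied near $q$. This is exactly why the parallel surface problem yields the linear rate $\rho(\Omega)\le C[u]_{\partial G}$, whereas Serrin's overdetermined problem --- which genuinely is a boundary problem and does require the Hopf and corner lemmas you invoke --- gives only logarithmic stability via moving planes in \cite{MR1729395}. The hypothesis $R<\tfrac12 d_0\|u\|_{C^2(\overline\Omega)}^{-1}$ serves to keep $q$ at a uniformly positive distance from $\partial\Omega$ and to ensure the Harnack constants along the chain stay uniform. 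Your proposal, by importing the boundary machinery, conflates the two settings and would not deliver the linear exponent.
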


Observe that \cite[Theorem 1]{MR1729395} imposes stricter conditions with respect to the convergence back to the corresponding rigidity problem compared to \cite[Theorem 1.1]{MR3481178} in the sense that \cite[Theorem 1]{MR1729395} requires \(\partial_\nu u \vert_{\partial\Omega}\) to be close to a constant in \(C^1\) while \cite[Theorem 1.1]{MR3481178} only requires \(u \vert_{\partial G}\) to be close a constant in \(C^1\). Moreover, the modulus of continuity in \cite[Theorem 1]{MR1729395} ( \(t\mapsto \vert \log t \vert^{-\frac1n}\)) decays much slower than the linear modulus of continuity present in \cite[Theorem 1.1]{MR3481178}. The reason for these differences is that \cite[Theorem 1.1]{MR3481178} is an interior problem while \cite[Theorem 1]{MR1729395} is a boundary problem; as such, the estimates required for  \cite[Theorem 1]{MR1729395} are more delicate than the ones required for \cite[Theorem 1.1]{MR3481178}. 

Furthermore, the result of \cite[Theorem 1.1]{MR3481178} is sharp in general since if one takes \(\Omega_\varepsilon\) to be the 1-parameter family of ellipsoids \( \{(1+\varepsilon)^{-2}x_1^2 +x_2^2 +\dots+x_n^2 <1\}\), \(G_\varepsilon\) to be such that \(\Omega_\varepsilon = G_\varepsilon + B_{1/2}\), and \(u_\varepsilon\) to be the corresponding solution to~\eqref{E6TedKUX} when \(f=1\) then a direct computation shows that $$ \lim_{\varepsilon \to 0^+} \frac{[u_\varepsilon]_{\partial G_\varepsilon}}{\rho(\Omega_\varepsilon)}=C(n)>0,$$ see Section~\ref{z1jzKMXt} in Chapter~\ref{Fw81drHU}. By a direct computation, for the same 1-parameter family if \(\nu_\varepsilon\) is the outward point unit normal to \(\partial\Omega_\varepsilon\) and\footnote{Note, the choice of evaluating \(\partial_{\nu_\varepsilon} u_\varepsilon\) at \((1+\varepsilon,0,\dots,0)\) in the definition of \(c\) is not significant since the symmetry present in the 1-parameter family forces \(\lim_{\varepsilon \to 0^+} \partial_{\nu_\varepsilon} u_\varepsilon(x)\) to be independent of \(x\).} \(c=\lim_{\varepsilon \to 0^+} \partial_{\nu_\varepsilon} u_\varepsilon (1+\varepsilon,0,\dots,0)\) then \begin{align*}
  \lim_{\varepsilon \to 0^+}  \frac{\| \partial_{\nu_\varepsilon} u_\varepsilon - c \|_{C^1(\partial \Omega_\varepsilon)}}{\rho(\Omega_\varepsilon)} = C(n)>0,
\end{align*} so this family does not prove that \cite[Theorem 1]{MR1729395} is sharp. In fact, to the author's knowledge, it is still an open problem to establish whether or not the estimate in \cite[Theorem 1]{MR1729395} is optimal.

\subsection{The nonlocal problem}

The subject of Part I of this thesis is establishing rigidity and stability estimates for the nonlocal Serrin's problem and parallel surface problem. Before our results, we were only aware of one paper in this direction, \cite{ciraolo2021symmetry}, which considered the stability of the nonlocal parallel surface problem in the particular case \(f=1\). Here the authors prove the following result. 

\begin{thm}[{\cite[Theorem 1.2]{ciraolo2021symmetry}}] \thlabel{pQsFqMH1}
Let \(G\) be an open and bounded set of \(\R^n\) with \(C^1\) boundary, and let \(\Omega = G + B_R\).
Furthermore, assume that the boundary of \(\Omega\) is \(C^2\) and \(u\in C^2(\Omega) \cap C(\R^n)\) satisfies~\eqref{AIgLTPmL} with \(f=1\).  Then \begin{align*}
    \rho(\Omega) \leq C [u]_{\partial G}^{\frac 1{s+2}} . 
\end{align*} The constant \(C\) depends only on \(n\), \(s\), \(R\), and \(\diam \Omega\). 
\end{thm}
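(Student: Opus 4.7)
My plan is to mimic the quantitative method of moving planes used for the classical parallel-surface problem in \cite{MR3481178}, adapted to the fractional setting of the rigidity result of \cite{MR3395749}. In other words, one replays the moving-planes argument that yields rigidity and, at each step, upgrades the qualitative strong maximum principle and Hopf lemma to a quantitative statement in which the smallness is measured by the Lipschitz seminorm $[u]_{\partial G}$ encoded in the almost-overdetermined datum $u \approx \text{const}$ on $\partial G$.

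Fix a unit direction $e \in \Sph^{n-1}$ and slide the hyperplane $T_\lambda = \{x \cdot e = \lambda\}$ from $+\infty$ downwards, denoting the cap by $\Omega_\lambda = \Omega \cap \{x\cdot e > \lambda\}$ and its reflection across $T_\lambda$ by $\Omega_\lambda^*$. As in the proof of the nonlocal Serrin theorem in \cite{MR3395749}, we reach a critical value $\bar\lambda = \bar\lambda(e)$ at which either (i) an internal tangency between $\partial\Omega_{\bar\lambda}^*$ and $\partial\Omega$ occurs at some point $\bar p \notin T_{\bar\lambda}$, or (ii) $T_{\bar\lambda}$ becomes orthogonal to $\partial\Omega$ at some boundary point on $T_{\bar\lambda}$. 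The function $w(x) = u(x^{\bar\lambda}) - u(x)$ is antisymmetric across $T_{\bar\lambda}$; since $f\equiv 1$ the constant right-hand sides cancel and $(-\Delta)^s w = 0$ in $\Omega_{\bar\lambda}^*$, and the antisymmetric maximum principle yields $w \geq 0$ throughout $\Omega_{\bar\lambda}^*$.

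The core of the stability step is to extract from $[u]_{\partial G}$ a quantitative upper bound on $w$. For a point $q \in \partial G \cap \Omega_{\bar\lambda}^*$ whose reflection $q^{\bar\lambda}$ also lies on $\partial G$, the almost-overdetermined datum immediately gives
\[
|w(q)| = |u(q^{\bar\lambda}) - u(q)| \leq [u]_{\partial G}\,|q - q^{\bar\lambda}|.
\]
Starting from this pointwise information on the restriction of $w$ to $\partial G \cap \Omega_{\bar\lambda}^*$, I would then propagate the smallness into the bulk via an explicit antisymmetric barrier in the half-space that simultaneously encodes the $s$-power decay of $u$ like $\dist(\cdot,\partial\Omega)^s$ near $\partial\Omega$ and the linear vanishing of an antisymmetric function across $T_{\bar\lambda}$. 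This barrier plays the role of a quantitative Hopf lemma for antisymmetric $s$-harmonic functions, and it converts the boundary bound into an estimate of the form $w(x) \leq C[u]_{\partial G}\,\varphi(x)$ on the relevant subdomain.

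At the contact point $\bar p$ one finally combines two effects: the $C^2$-Taylor expansion of $\partial\Omega$ at $\bar p$ contributes a quadratic factor $\rho(\Omega)^2$ (an analogue of the classical estimate of \cite{MR3481178}), while the fractional boundary behaviour $u(x) \sim \dist(x,\partial\Omega)^s$ contributes an additional factor $\rho(\Omega)^s$. Putting these together yields an inequality of the form $[u]_{\partial G} \gtrsim \rho(\Omega)^{s+2}$, which after minimising over $e \in \Sph^{n-1}$ produces the claimed bound $\rho(\Omega) \leq C [u]_{\partial G}^{1/(s+2)}$. The principal obstacle is the construction of the quantitative antisymmetric barrier in the propagation step: in the rigidity proof, any strictly positive barrier suffices to trigger a contradiction, whereas here one must tune the profile carefully enough that exactly the exponent $1/(s+2)$ drops out. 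This is the essentially new nonlocal difficulty and it explains why the exponent in \thref{pQsFqMH1} is worse than the linear one available in the classical case.
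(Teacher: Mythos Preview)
Your overall framework is right---moving planes, the antisymmetric function $w=v_{\bar\lambda}$, a quantitative replacement for the Hopf lemma, and reading off $[u]_{\partial G}$ from the contact point on $\partial G$. But the mechanism you describe for extracting the exponent $1/(s+2)$ is not the one that actually works, and the barrier step runs in the wrong direction.

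In the proof (both in \cite{ciraolo2021symmetry} and in the thesis's generalisation \thref{oAZAv7vy}), the antisymmetric barrier is used to produce a \emph{lower} bound on $w$ near the critical hyperplane: a quantitative Hopf-type estimate of the form
\[
\frac{w(p)}{\operatorname{dist}(p,T_{\bar\lambda})}\;\geq\; C\int_{(\Omega\cap H'_{\bar\lambda})\setminus \Omega'_{\bar\lambda}} \delta_{\pi_{\bar\lambda}}(y)\,u(y)\,dy,
\]
where the integral is over the part of $\Omega$ not captured by the reflection. The overdetermined datum then gives the \emph{upper} bound $w(p)/\operatorname{dist}(p,T_{\bar\lambda})\leq 2[u]_{\partial G}$, so the two combine to control that integral by $[u]_{\partial G}$. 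You have this reversed: you try to push the smallness of $w$ on $\partial G$ outward via a barrier, but what is needed is to pull the mass of $w$ away from the hyperplane inward to the touching point.

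The exponent $1/(s+2)$ then arises from a measure-theoretic step, not from a pointwise Taylor expansion at $\bar p$. One replaces $u$ by $C\,\delta^s_{\partial\Omega}$ (the Hopf lower bound for the original equation), so the controlled quantity is $\int \delta_{\pi_{\bar\lambda}}\,\delta^s_{\partial\Omega}$ over the non-reflected piece. A Chebyshev argument plus the splitting $\{\delta_{\pi_{\bar\lambda}}\delta^s_{\partial\Omega}\leq\gamma\}\subset\{\delta_{\pi_{\bar\lambda}}<\gamma^{1/(s+1)}\}\cup\{\delta_{\partial\Omega}<\gamma^{1/(s+1)}\}$, optimised over $\gamma$, yields $|\Omega\triangle\Omega'|\leq C[u]_{\partial G}^{1/(s+2)}$. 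A separate covering argument converts this symmetric-difference bound into a bound on $\rho(\Omega)$. Your heuristic ``$\rho^2$ from the $C^2$ expansion of $\partial\Omega$ and $\rho^s$ from $u\sim\delta^s$'' does not match this computation; in particular note that at $s=1$ your heuristic would predict $\rho^3\lesssim[u]_{\partial G}$, whereas the classical result is linear.
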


In Chapter~\ref{tH4AETfY}, we extend \cite[Theorem 1.2]{ciraolo2021symmetry} to the general case \(f\in C^{0,1}_{\mathrm{loc}}(\R)\) and weaken some of the generality assumptions on the domains: 

\begin{thm}[{\thref{oAZAv7vy} in Chapter~\ref{tH4AETfY}}] \label{sge2H8Ql}
    Let \(G\) be an open bounded subset of~\( \R^n\) and \(\Omega = G+B_R\) for some \(R>0\). Furthermore, let \(\Omega\) and \(G\) have \(C^1\) boundary,
%
%
and let~\(f \in C^{0,1}_{\mathrm{loc}}(\R)\) be such that \(f(0)\geqslant 0\). Suppose that \(u\) satisfies~\eqref{AIgLTPmL} in the weak sense. Then \begin{align*}
\rho(\Omega) &\leqslant C [u]_{\partial G}^{\frac 1 {s+2}} 
\end{align*} The constant \(C\) depends only on\footnote{Recall \begin{align*}
    \|u\|_{L_s(\R^n)} = \int_{\R^n} \frac{\vert u(x)\vert}{1+\vert x \vert^{n+2s}} \dd x. 
\end{align*}} \(n\), \(s\), \(R\), \([f]_{C^{0,1}(\overline \Omega \times [0,\|u\|_{L^\infty(\Omega)}])}\), \(f(0)\), \(\|u\|_{L_s(\R^n)}\), \(\diam \Omega\), and \(\vert \Omega\vert\).
\end{thm}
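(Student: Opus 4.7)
\medskip

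\noindent \textbf{Proof proposal.} The plan is to run the method of moving planes quantitatively, direction by direction, replacing the qualitative tools used in the rigidity proof (strong maximum principle, Hopf lemma, Serrin corner lemma) by their quantitative antisymmetric counterparts developed in Chapter~\ref{yP1bfxWn} and Part~II. Fix a unit direction $e \in \Sph^{n-1}$, set $T_\lambda = \{x \cdot e = \lambda\}$, let $x_\lambda$ be the reflection of $x$ across $T_\lambda$, and write $\Omega_\lambda = \{x \in \Omega : x\cdot e > \lambda\}$ and $u_\lambda(x) = u(x_\lambda)$. Let $\lambda^\ast = \lambda^\ast(e)$ denote the usual critical value at which the reflection is no longer admissible. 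The obstruction at $\lambda^\ast$ is of one of the two classical types: either (i) the reflected cap becomes internally tangent to $\partial\Omega$ at some point, or (ii) the plane $T_{\lambda^\ast}$ becomes orthogonal to $\partial\Omega$ at some point.

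In either case, I would set $w = u - u_{\lambda^\ast}$ on $\Omega_{\lambda^\ast}$. Since $f \in C^{0,1}_{\mathrm{loc}}$ and $u \in L^\infty(\Omega)$, the function $w$ solves
\[
(-\Delta)^s w + c(x) w = 0 \quad \text{in } \Omega_{\lambda^\ast},
\qquad \|c\|_{L^\infty} \leq [f]_{C^{0,1}(\overline\Omega \times [0,\|u\|_{L^\infty}])},
\]
and is antisymmetric with respect to $T_{\lambda^\ast}$. The next step is to bound $w$ from the data. On $\partial G \cap \Omega_{\lambda^\ast}$, both $x$ and $x_{\lambda^\ast}$ lie on $\partial G$ (by the definition of $G$ and the admissibility of the reflection), so that $|w(x)| = |u(x) - u(x_{\lambda^\ast})| \leq [u]_{\partial G}\, |x-x_{\lambda^\ast}| \leq 2\,[u]_{\partial G}\, \diam\Omega$; on $\R^n \setminus \Omega$ one has $u = 0$ and a symmetric contribution controlled by the $C^{0,s}$ boundary behaviour of $u$. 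Feeding these boundary values into the quantitative antisymmetric maximum principle of Chapter~\ref{yP1bfxWn} gives $\|w\|_{L^\infty(\Omega_{\lambda^\ast})} \leq C\, [u]_{\partial G}$, with $C$ depending only on the quantities listed in the statement.

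Next I would convert this into a geometric estimate on the position of $T_{\lambda^\ast}$. In case (i), at the tangency point~$p$ the reflected cap and $\Omega$ meet from one side; the Minkowski sum structure $\Omega = G + B_R$ forces a uniform interior ball condition of radius $R$ at $p$, in which I can apply the quantitative antisymmetric Hopf lemma. This returns a lower bound on $w$ in a small interior ball of radius $\delta$ of the form $w \geq c\, \delta^s \cdot \mathrm{asymmetry}$, where the asymmetry measures the displacement of $T_{\lambda^\ast}$ from the candidate diameter. Combining this with the $C^1$ (in fact $C^{1,1}$ by the Minkowski structure) geometry of $\partial\Omega$, whose quadratic departure contributes a factor $\delta^2$, and equating with the $L^\infty$ bound $\|w\|_\infty \lesssim [u]_{\partial G}$ yields $\mathrm{asymmetry} \leq C\,[u]_{\partial G}^{1/(s+2)}$. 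In case (ii) the same conclusion follows through the quantitative nonlocal corner lemma of Chapter~\ref{yP1bfxWn}, applied instead at the orthogonality point. Running the argument over all $e \in \Sph^{n-1}$ and choosing a common reference center then produces balls $B_r(x_0) \subset \Omega \subset B_{R'}(x_0)$ with $R' - r \leq C\, [u]_{\partial G}^{1/(s+2)}$, which is the claimed bound on $\rho(\Omega)$.

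The main obstacle I anticipate is getting the exponent $\tfrac{1}{s+2}$ sharply and with the claimed dependence of the constants (in particular, only on $C^1$ regularity of $\partial\Omega$ and on $\|u\|_{L_s(\R^n)}$, not on higher norms). The exponent itself is of mixed origin: the fractional Hopf bound contributes an exponent $s$ in the distance $\delta$ from the contact point, while the geometric departure of $\partial\Omega$ from its tangent hyperplane contributes a factor $\delta^2$, and balancing these two against a linear-in-$[u]_{\partial G}$ defect forces the power $1/(s+2)$. Making all constants uniform requires exploiting the automatic uniform interior/exterior ball condition of radius $R$ provided by the decomposition $\Omega = G + B_R$, so that the quantitative Hopf lemma applies without invoking any second-order regularity of $\partial\Omega$, and using the $L_s$-weighted norm of $u$ to absorb the nonlocal tail contributions generated when estimating the boundary values of the antisymmetric function $w$ outside $\Omega$.
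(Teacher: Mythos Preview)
Your proposal contains a genuine error at an early step. The claim that ``on $\partial G \cap \Omega_{\lambda^\ast}$, both $x$ and $x_{\lambda^\ast}$ lie on $\partial G$'' is false: admissibility of the reflection only guarantees that the reflected cap $G'_{\lambda^\ast}$ sits inside $G$, so for $x \in \partial G$ with $x\cdot e > \lambda^\ast$ the reflection $x_{\lambda^\ast}$ lies on $\partial G'_{\lambda^\ast}$, which is generically in the \emph{interior} of $G$. The only place where both a point and its reflection lie on $\partial G$ is the single critical contact point $p$ in Case~(i) (or its analogue on $\pi_{\lambda^\ast}$ in Case~(ii)). Consequently your route to an $L^\infty$ bound $\|w\|_{L^\infty(\Omega_{\lambda^\ast})}\le C[u]_{\partial G}$ collapses, and in fact no such bound is obtained in the paper.

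What the paper actually does is different in both the quantity controlled and the mechanism producing the exponent. At the contact point $p$ one applies the quantitative antisymmetric maximum principle (\thref{TV1cTSyn}) with $K=(\Omega\cap H'_{\lambda})\setminus\Omega'_\lambda$ to obtain the \emph{weighted integral} bound
\[
\int_K \delta_{\pi_\lambda}(y)\,u(y)\,dy \;\le\; C\,\frac{v_\lambda(p)}{p_1}\;\le\;C\,[u]_{\partial G},
\]
using $p,p'_\lambda\in\partial G$ only at this one point. One then invokes the barrier bound $u\ge C\,\delta^s_{\partial\Omega}$ (\thref{zAPw0npM}) to turn this into $\int_K x_1\,\delta^s_{\partial\Omega}(x)\,dx\le C[u]_{\partial G}$, and applies Chebyshev with threshold $\gamma$: the superlevel set has measure $\lesssim\gamma^{-1}[u]_{\partial G}$, while the sublevel set $\{x_1\delta^s_{\partial\Omega}\le\gamma\}$ is covered by $\{x_1<\gamma^{1/(s+1)}\}\cup\{\delta_{\partial\Omega}<\gamma^{1/(s+1)}\}$, each of measure $\lesssim\gamma^{1/(s+1)}$. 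Optimising $\gamma=[u]_{\partial G}^{(s+1)/(s+2)}$ yields $|\Omega\triangle\Omega'|\le C[u]_{\partial G}^{1/(s+2)}$, and a further argument converts this into the $\rho(\Omega)$ bound. Thus the exponent $\tfrac1{s+2}$ arises from the interplay of the weight $x_1\delta^s_{\partial\Omega}$ in a Chebyshev decomposition, not from balancing a boundary Hopf factor $\delta^s$ against a quadratic tangency $\delta^2$ as you suggest; indeed, the parallel surface problem is an \emph{interior} problem and no quadratic departure of $\partial\Omega$ enters the argument.
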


Furthermore, in Chapter~\ref{yP1bfxWn}, we prove the corresponding theorem for the nonlocal Serrin's problem: 

\begin{thm}[{\thref{thm:Main Theorem} in Chapter~\ref{yP1bfxWn}}] \label{T7e81SDc}
    Let~\( \Omega \) be an open bounded subset of~$ \R^n$ with~$C^2$ boundary satisfying the uniform interior sphere condition with radius~$ r_\Omega >0$.
Let~$f \in C^{0,1}_{\mathrm{loc}}(\R)$ be such that~$f(0)\geq 0$.
Let~\(u\) be a weak solution of~\eqref{AIgLTPmL} such that \(\dist(\cdot, \R^n \setminus \Omega)^{-s}u \in C^1(\overline \Omega)\). Then
\begin{equation*}
\rho(\Omega) \leq C [ \pa_\nu^s u]_{\partial \Om}^{\frac 1 {s+2}} 
\end{equation*} with \begin{align*}
    [ \pa_\nu^s u]_{\partial \Om} &= \sup_{\substack{x,y\in \pa \Om \\ x\neq y}}\frac{\vert \pa_\nu^s u(x) - \pa_\nu^s u(y) \vert}{\vert x - y \vert }.
\end{align*}
The constant \(C\) depends only on \(n\), \(s\), \(r_\Omega\), \([f]_{C^{0,1}([0,\|u\|_{L^\infty(\Omega)}])}\), \(f(0)\), \(\|u\|_{L_s(\R^n)}\), and \(\diam \Omega\).
\end{thm}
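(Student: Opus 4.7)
The plan is to run a quantitative method of moving planes, replacing each qualitative ingredient (strong maximum principle, Hopf lemma, Serrin corner point lemma) by a quantitative counterpart tailored to antisymmetric functions in the fractional setting, all unified through a single new antisymmetric barrier. For a fixed direction $e \in \Sph^{n-1}$ I would slide the hyperplane $T_\lambda = \{x\cdot e = \lambda\}$ from $+\infty$ toward $-\infty$ until reaching the first critical position $\lambda_e$, at which either (a) the reflected cap becomes internally tangent to $\partial\Omega$ at some interior contact point $P$, or (b) $T_{\lambda_e}$ becomes orthogonal to $\partial\Omega$ at some boundary point $Q$. Writing $x^\lambda$ for the reflection of $x$ across $T_\lambda$ and $w_\lambda(x) = u(x) - u(x^\lambda)$, the Lipschitzianity of $f$ implies that $w_{\lambda_e}$ is antisymmetric across $T_{\lambda_e}$ and satisfies $(-\Delta)^s w_{\lambda_e} + c(x)\,w_{\lambda_e} = 0$ in the cap $\Sigma_{\lambda_e}$ with $\|c\|_\infty \leq [f]_{C^{0,1}}$, while on the boundary of the cap the overdetermined defect reads $|\partial_\nu^s u(x) - \partial_\nu^s u(x^{\lambda_e})| \leq \eta\,|x - x^{\lambda_e}|$, where $\eta := [\partial_\nu^s u]_{\partial\Omega}$. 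In the rigidity case this would force $w_{\lambda_e} \equiv 0$; here I want instead to quantify $w_{\lambda_e}$ by a power of $\eta$ and convert the bound into a geometric estimate on $\Omega$.

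\textbf{Quantitative estimate on $w_{\lambda_e}$.} The analytic core would be the construction of an antisymmetric barrier in a neighborhood of $P$ (or $Q$) which, combined with a general quantitative nonlocal maximum principle for antisymmetric supersolutions, simultaneously produces a quantitative nonlocal Hopf lemma and a quantitative nonlocal Serrin corner point lemma. This barrier must carry the sharp $s$-Hopf scaling $\dist(\cdot,\partial\Omega)^s$, be antisymmetric across $T_{\lambda_e}$, and have a pointwise controllable fractional Laplacian despite the long-range interactions between the two half-spaces. Comparing $w_{\lambda_e}$ with a suitable multiple of this barrier and absorbing the boundary defect $\eta\,|x - x^{\lambda_e}|$ should then yield
$$|w_{\lambda_e}(x)| \leq C\eta\, |x - x^{\lambda_e}|$$
in a fixed neighborhood of $P$ (resp.\ $Q$), with constant uniform in $e \in \Sph^{n-1}$ thanks to the uniform interior sphere condition with radius $r_\Omega$.

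\textbf{From the quantitative bound to the geometric estimate.} I would then couple this upper bound with the sharp lower behavior $u(x) \geq c\,\dist(x,\partial\Omega)^s$ (which follows from the regularity assumption $\dist(\cdot,\R^n\setminus\Omega)^{-s}u \in C^1(\overline\Omega)$ together with the strict positivity of $u$ in $\Omega$) at carefully chosen interior test points. Specifically, picking $x$ inside $\Omega$ whose reflection $x^{\lambda_e}$ lies outside $\Omega$ gives $w_{\lambda_e}(x) = u(x)$ and hence
$$c\,\dist(x,\partial\Omega)^s \leq w_{\lambda_e}(x) \leq C\eta\,|x - x^{\lambda_e}|.$$
Since the quadratic geometry of $\partial\Omega$ near the tangency point makes $\dist(x,\partial\Omega)$ comparable to the square of the cap-defect parameter in direction $e$, whereas $|x - x^{\lambda_e}|$ is linear in the same parameter (compare with the explicit ellipsoid computation developed in Chapter~\ref{Fw81drHU}), balancing the two sides forces a defect of size at most $C\eta^{1/(s+2)}$ in direction $e$. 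Letting $e$ vary over $\Sph^{n-1}$ and selecting a common center by a standard geometric argument would then close the proof with $\rho(\Omega) \leq C\,[\partial_\nu^s u]_{\partial\Omega}^{1/(s+2)}$.

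\textbf{Main obstacle.} The hardest part will be the construction of the new antisymmetric barrier and the proof of the quantitative nonlocal Hopf and Serrin corner point lemmas. In the local setting one can get away with polynomial barriers such as $d - d^{3/2}$, but in the fractional setting a single barrier must simultaneously be antisymmetric across $T_{\lambda_e}$, exhibit the sharp boundary rate $d^s$, and have a pointwise controllable $(-\Delta)^s$ on both sides of the plane, in spite of the nonlocal interactions that couple the two half-spaces. The sharpness of this barrier is precisely what should produce the exponent $1/(s+2)$; any cruder choice would either worsen the exponent or destroy the uniformity in $e$ that the final geometric step requires.
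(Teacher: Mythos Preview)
Your high-level outline is right—antisymmetric barrier, quantitative Hopf and corner lemmas, moving planes—but the direction of your barrier comparison is backwards, and the mechanism you describe for producing the exponent $1/(s+2)$ is not the one that works.

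The barrier (in the paper, the explicit antisymmetric function $\varphi(x)=x_1\big(\psi_{B_\rho(a)}(x)+\psi_{B_\rho(a_\ast)}(x)\big)$ built from torsion functions) is a \emph{subsolution}, so it sits \emph{below} $w_{\lambda_e}$. The comparison yields
\[
w_{\lambda_e}(x)\ \geq\ C\,\|\delta_{\pi_{\lambda_e}} w_{\lambda_e}\|_{L^1(K)}\ \delta_{\pi_{\lambda_e}}(x)\,\big(\rho^2-|x-a|^2\big)_+^{s},
\]
where $K=(\Omega\cap H'_{\lambda_e})\setminus\Omega'_{\lambda_e}$ is the reflection-defect region; the long-range tail of the fractional operator is precisely what makes the lower bound depend on an \emph{integral} of $w_{\lambda_e}$ over $K$. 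Taking the $s$-normal derivative at the contact point $p$ (or carrying out the analogous Taylor analysis at the corner point using the hypothesis $u/\delta^s\in C^1(\overline\Omega)$) and using $|\partial_\nu^s w_{\lambda_e}(p)|\leq 2\,p_1\,\eta$ gives
\[
\int_K \delta_{\pi_{\lambda_e}}(x)\,u(x)\,dx\ \leq\ C\,\eta.
\]
This integral inequality is the correct replacement for your claimed pointwise bound $|w_{\lambda_e}(x)|\leq C\eta\,|x-x^{\lambda_e}|$, which does not follow from the barrier and which you have not otherwise justified.

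The exponent $1/(s+2)$ then comes from a Chebyshev/level-set argument applied to this integral, not from balancing quadratic-versus-linear geometry at a test point. Using $u\geq c\,\delta_{\partial\Omega}^s$ one obtains $\int_K \delta_{\pi_{\lambda_e}}\,\delta_{\partial\Omega}^s\leq C\eta$; splitting $K$ according to whether $\delta_{\pi_{\lambda_e}}\,\delta_{\partial\Omega}^s>\gamma$ (Chebyshev: measure $\leq C\eta/\gamma$) or $\leq\gamma$ (sublevel set: measure $\leq C\gamma^{1/(s+1)}$), and optimising over $\gamma$, yields $|\Omega\triangle\Omega'|\leq C\eta^{1/(s+2)}$, from which $\rho(\Omega)\leq C\eta^{1/(s+2)}$ follows by the standard argument. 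By contrast, your proposed balance $c\,\dist(x,\partial\Omega)^s\leq C\eta\,|x-x^{\lambda_e}|$ with $\dist\sim\delta^2$ and $|x-x^{\lambda_e}|\sim\delta$ would give $\delta\leq C\eta^{1/(2s-1)}$, which is the wrong exponent and is not even meaningful for $s\leq 1/2$.
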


See also \cite{MR4546078} where similar results are obtained for nonlocal overdetermined problems in exterior and annular domains.

The proofs of Theorems~\ref{pQsFqMH1}-\ref{T7e81SDc} rely on the method of moving planes and quantitative versions of nonlocal maximum principles/Hopf-type lemmas. Though one must adapt each proof to the relevant overdetermined condition, the overarching strategy for each of the three results is the same which justifies why we obtain the same modulus of continuity (\(t\mapsto t^{\frac 1 {s+2}}\)) in each result. However, setting \(s=1\) in Theorems \ref{pQsFqMH1}-\ref{sge2H8Ql} (formally, since a careful analysis of the proofs reveals that constant blows up as \(s\to 1^-\)), we see that we don't recover the (sharp) modulus of continuity that was obtained in \cite[Theorem 1.1]{MR3481178}. Conceptually, the reason for this is that the techniques used in the proof are fundamentally nonlocal, having no corresponding local analogue `in the limit'. Since the interior regularity for fractional Laplacian is essentially the same as in the local case and the nonlocal parallel surface problem is an interior problem, we expect that the sharp modulus of continuity should be linear. If we consider the 1-parameter family of ellipsoids described at the end of the previous subsection, we obtain a linear modulus of continuity which also supports this claim, see Section~\ref{z1jzKMXt} in Chapter~\ref{Fw81drHU}. Proving this, however, is still an open problem. 

Interestingly, setting \(s=1\) in Theorem~\ref{T7e81SDc} (again, formally), we obtain a better modulus of continuity compared to the results obtained in \cite[Theorem 1]{MR1729395} and in~\cite{MR3522349} (even though~\cite{MR3522349} imposes that \(\Omega\) is convex which we do not require). Furthermore, leveraging a fine analysis performed in Section~\ref{c8w8u7Hn} of Chapter~\ref{Fw81drHU}, we are able to improve the moduli of continuity in Theorem~\ref{sge2H8Ql} and Theorem~\ref{T7e81SDc} under the assumption that the boundary of \(\Omega\) is\footnote{Here, provided that \(\alpha\in (0,+\infty) \setminus \Z\), we use the notation \(C^\alpha = C^{k,\beta}\) where \(k = [\alpha]\) is the integer part of \(\alpha\), \(\beta = \alpha - [\alpha] \in (0,1)\).}  `uniformly \(C^\alpha\)' for \(\alpha\in (0,+\infty) \setminus \Z\), which we rigorously define in Chapter~\ref{tH4AETfY}. Indeed, under this extra boundary assumption, we obtain \begin{align*}
    \rho(\Omega) \leq C [u]_{\partial G}^{\frac{\alpha}{1+\alpha(s+1)}} \text{ and }  \rho(\Omega) \leq C [ \pa_\nu^s u]_{\partial \Om}^{\frac{\alpha}{1+\alpha(s+1)}} 
\end{align*} in Theorem~\ref{sge2H8Ql} and Theorem~\ref{T7e81SDc} respectively. Though these estimates are an improvement, when we formally send \(\alpha\to+\infty\) and \(s\to 1^-\), we still do not recover the sharp exponents from the local case, so we do not expect that the above estimates are sharp.  The analysis of the optimal exponents in stability estimates for local and nonlocal overdetermined problems remains an active and ongoing area of research.

\chapter{The role of antisymmetric functions in nonlocal equations}  \label{1XMNCzrH}

We use a Hopf-type lemma for antisymmetric super-solutions to the Dirichlet problem for the fractional Laplacian with zero-th order terms,
in combination with the method of moving planes, to prove symmetry for the \emph{semilinear fractional parallel surface problem}. That is, we prove that non-negative solutions to semilinear Dirichlet problems for the fractional Laplacian in a bounded open set $\Omega \subset \R^n$ must be radially symmetric if one of their level surfaces is parallel to the boundary of $\Omega$; in turn, $\Omega$ must be a ball.

Furthermore, we discuss maximum principles and the Harnack inequality for antisymmetric functions in the fractional setting and provide counter-examples to these theorems when only `local' assumptions are imposed on the solutions. 
The construction of these counter-examples relies on an approximation result that states that
`all antisymmetric functions are locally antisymmetric and \(s\)-harmonic up to a small error'.

\section{Introduction and results}
In this paper, we are concerned with maximum principles for antisymmetric solutions to linear, nonlocal partial differential equations (\textsc{PDE}) where the nonlocal term is given by the fractional Laplacian and with the application of such maximum principles to the method of moving planes.

Since its introduction by Alexandrov \cite{MR0150709,MR143162} and the
subsequent refinement by Serrin~\cite{MR333220}, the method of moving planes has become a standard technique for the analysis of overdetermined \textsc{PDE}, and for proving symmetry and monotonicity results for solutions, see \cite{MR544879,MR634248,MR2293958,MR2366129,MR3040677,MR3189604,MR3385189,MR3454619}. In recent years, nonlocal \textsc{PDE}, particularly the ones involving the fractional Laplacian, have received a lot of attention, due to their consolidated ability to model complex physical phenomena, as well as their rich analytic theory. The method of moving planes has been also adapted in several works so that it may be applied to nonlocal problems, see
\cite{MR3395749,MR3827344,MR3881478,MR3836150,MR4313576,ciraolo2021symmetry}.

The method of moving planes typically leads to 
considering the difference between a given solution and its reflection, and such a difference is an antisymmetric function. However, in the local case, `standard' maximum principles
can be directly applied to the moving plane method,
without specifically relying on the notion of
antisymmetric functions, since the ingredients required
are all of `local' nature and can disregard the `global' geometric
properties of the objects considered. Instead,
in the nonlocal world, new maximum principles need to be developed which take into account the extra symmetry, to
compensate for global control of the function under consideration,
for example see \cite{MR3395749}. 

We will now state our main results.
Let \(s\in (0,1)\) and~\(n\geqslant 1\) be an integer. The fractional Laplacian \((-\Delta)^s\) is defined by \begin{align*}
(-\Delta)^s u(x) &= c_{n,s}\PV \int_{\R^n} \frac{u(x)-u(y)}{\vert x -y \vert^{n+2s}} \dd y
\end{align*} where \begin{align}
c_{n,s} = \frac{s 4^s \Gamma \big (\frac{n+2s}{2} \big )}{\Gamma(1-s) } \label{tK7sb}
\end{align}  is a positive normalisation constant and \(\PV\) refers to the Cauchy principle value. Moreover, let~\(\R^n_+ = \{ x\in \R^n \text{ s.t. } x_1>0\}\) and \(B_1^+ = B_1 \cap \R^n_+\). 

Our first result proves that `all antisymmetric functions are locally antisymmetric and \(s\)-harmonic up to a small error'. This is the analogue of \cite[Theorem 1]{MR3626547} for antisymmetric functions. 

\begin{thm} \label{u7dPW} Suppose that \(k \in \N\) and that \(f \in C^k(\overline{B_1})\) is \(x_1\)-antisymmetric. For all \(\varepsilon>0\), there exist a smooth \(x_1\)-antisymmetric function \(u:\R^n \to \R\) and a real number \(R>1\) such that \(u\) is \(s\)-harmonic in \(B_1\), \(u=0\) in \(\R^n\setminus B_R\), and \begin{align*}
\| u - f \|_{C^k(B_1)}<\varepsilon .
\end{align*}
\end{thm}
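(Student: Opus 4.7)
The plan is to reduce the statement to the ``all functions are locally $s$-harmonic'' approximation result of Dipierro--Savin--Valdinoci (which is \cite[Theorem 1]{MR3626547}, already cited in the excerpt), and then to restore the antisymmetry via a reflection-averaging trick. Concretely, I would invoke that theorem with the given $f$ and the given $\varepsilon$ to produce a smooth function $v:\R^n\to\R$ and some $R>1$ with $v\equiv 0$ outside $B_R$, $(-\Delta)^s v = 0$ in $B_1$, and $\|v-f\|_{C^k(B_1)}<\varepsilon$. Of course, there is no reason for this $v$ to be $x_1$-antisymmetric.

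Next, I would define the antisymmetrization
\begin{equation*}
u(x_1,x') := \tfrac{1}{2}\bigl( v(x_1,x') - v(-x_1,x') \bigr),\qquad x=(x_1,x')\in\R\times\R^{n-1}.
\end{equation*}
By construction $u$ is smooth and $x_1$-antisymmetric. Since $B_R$ is symmetric under the reflection $x_1\mapsto -x_1$, the reflected function $x\mapsto v(-x_1,x')$ is supported in $B_R$, hence so is $u$. Moreover, the fractional Laplacian commutes with rigid motions, so $x\mapsto v(-x_1,x')$ is $s$-harmonic in the reflection of $B_1$, which is $B_1$ itself; therefore $(-\Delta)^s u = 0$ in $B_1$.

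For the $C^k$-estimate, I would exploit that $f$ is already $x_1$-antisymmetric, so $f(x)=\tfrac12\bigl(f(x_1,x')-f(-x_1,x')\bigr)$, and hence
\begin{equation*}
u(x)-f(x) = \tfrac12\bigl[(v-f)(x_1,x') - (v-f)(-x_1,x')\bigr].
\end{equation*}
Because reflection in the $x_1$-variable is an isometry that maps $B_1$ onto itself, the $C^k(B_1)$-norm of $(v-f)(-x_1,x')$ equals that of $v-f$, and the triangle inequality yields $\|u-f\|_{C^k(B_1)}\leq \|v-f\|_{C^k(B_1)}<\varepsilon$, as desired.

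There is no real obstacle here: the argument is a symmetrization of a known result. The only small point to be careful about is the verification that the two ingredients used on $v$, namely $s$-harmonicity in $B_1$ and compact support in $B_R$, are both preserved under the reflection $x_1\mapsto -x_1$; both follow from the invariance of $(-\Delta)^s$ under isometries together with the fact that $B_1$ and $B_R$ are themselves invariant under this reflection. Everything else is a line of triangle inequality.
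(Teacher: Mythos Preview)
Your proposal is correct and follows essentially the same approach as the paper: invoke the Dipierro--Savin--Valdinoci approximation theorem to get a (not necessarily antisymmetric) approximant $v$, then antisymmetrize via $u(x)=\tfrac12(v(x)-v(x_\ast))$ and use the antisymmetry of $f$ together with the triangle inequality. The only point the paper handles with slightly more care is that the cited result produces $v\in H^s(\R^n)\cap C^s(\R^n)$ rather than a smooth function, so a short mollification argument (applied on a slightly larger ball) is inserted to upgrade $v$ to $C^\infty_0(\R^n)$ before antisymmetrizing.
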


When we say that \(u\) is \(x_1\)-antisymmetric we mean that \( u(-x_1,\dots,x_n)=-u(x_1,\dots,x_n) \) for all~\(x\in \R^n\). In \S\ref{9Hmeh} we will give a more general definition of antisymmetry for an arbitrary plane. As an application of Theorem \ref{u7dPW}, we can construct some counter-examples to the local Harnack inequality and strong maximum principle for antisymmetric \(s\)-harmonic functions, as follows:

\begin{cor}\label{X96XH}
There exist \(\Omega \subset \R_+^n\) and \(\Omega'\subset \subset \Omega\) such that, for all \(C>0\), there exists an \(x_1\)-antisymmetric function \(u:\R^n \to \R\) that is non-negative and \(s\)-harmonic in \(\Omega\), and \begin{align*}
\sup_{\Omega'} u > C \inf_{\Omega'} u .
\end{align*}
\end{cor}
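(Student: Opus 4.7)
The strategy is to invoke Theorem~\ref{u7dPW} to find, for each given $C > 0$, a \emph{fixed} antisymmetric smooth target $f$ on $\overline{B_1}$ whose value at one point of $\Omega'$ dominates its value at another point, and then to approximate $f$ in $C^0(B_1)$ by an antisymmetric $s$-harmonic function $u$; provided the approximation error is small relative to $\inf_\Omega f$, the function $u$ will inherit non-negativity on $\Omega$ and the large oscillation of $f$ on $\Omega'$. Concretely, I fix once and for all $\Omega \subset\subset B_1^+$ with $\overline{\Omega} \subset \R^n_+$ (for instance, $\Omega = B_{1/4}\bigl((\tfrac{1}{2},0,\dots,0)\bigr)$), a subdomain $\Omega' \subset\subset \Omega$, two distinct points $p_1, p_2 \in \Omega'$, and the positive constant $d := \inf_{x\in\overline{\Omega}} x_1 > 0$.

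For a parameter $M \geq 1$ (to be chosen large depending on $C$), I construct by a standard bump-function/partition-of-unity argument a smooth function $\psi_M \colon \R^n \to \R$ which is even in $x_1$, everywhere positive, satisfies $\psi_M \geq 1$ on $\overline{\Omega}$, $\psi_M(p_1) = M$, and $\psi_M(p_2) = 1$ (this is possible because $p_1, p_2$ and their $x_1$-reflections lie in disjoint small balls, and one can sum smoothly a constant background with a symmetrised bump centred at $\pm p_1$). I then define
\[
f_M(x) := x_1 \, \psi_M(x).
\]
The evenness of $\psi_M$ in $x_1$ makes $f_M$ a smooth $x_1$-antisymmetric function on $\R^n$, with $f_M \geq d$ on $\Omega$, $f_M(p_1) = (p_1)_1 M$, and $f_M(p_2) = (p_2)_1$.

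Finally, I apply Theorem~\ref{u7dPW} to the restriction $f_M|_{\overline{B_1}}$ with $k = 0$ and $\varepsilon := d/2$ (crucially, independent of $M$), obtaining an $x_1$-antisymmetric $u_M \colon \R^n \to \R$ which is $s$-harmonic in $B_1 \supset \Omega$ and satisfies $\|u_M - f_M\|_{C^0(B_1)} < d/2$. This immediately yields $u_M \geq f_M - d/2 \geq d/2 > 0$ on $\Omega$, so $u_M$ is non-negative and $s$-harmonic on $\Omega$, and
\[
\frac{\sup_{\Omega'} u_M}{\inf_{\Omega'} u_M} \;\geq\; \frac{u_M(p_1)}{u_M(p_2)} \;\geq\; \frac{(p_1)_1 \, M - d/2}{(p_2)_1 + d/2}.
\]
The right-hand side tends to $+\infty$ as $M \to +\infty$; given $C > 0$, choosing $M$ so that this quotient exceeds $C$ completes the construction. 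The only delicate point — and the main obstacle to keep in mind — is that Theorem~\ref{u7dPW} only controls $u_M - f_M$ pointwise, so $\inf_\Omega f_M$ must stay bounded below by a \emph{fixed} positive constant as $M$ grows; this is exactly what the factorisation $f_M = x_1\,\psi_M$, together with $\overline{\Omega} \subset \R^n_+$, is designed to guarantee.
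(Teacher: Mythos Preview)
Your proof is correct and follows essentially the same strategy as the paper: construct an $x_1$-antisymmetric target function with large oscillation ratio on $\Omega'$ but uniform positivity on $\Omega$, then approximate it by an $s$-harmonic antisymmetric function via Theorem~\ref{u7dPW}. The implementations are dual---the paper fixes $\sup_{\Omega'} f$ and sends $\inf_{\Omega'} f$ to zero (with the approximation tolerance shrinking accordingly), whereas you fix $\inf_{\Omega} f$ and send $\sup_{\Omega'} f$ to infinity with a \emph{fixed} tolerance $d/2$; your version also works directly in $\R^n$ rather than reducing first to $n=1$, which is a little cleaner.
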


\begin{cor} \label{KTBle} There exist \(\Omega \subset \R^n_+\) and an \(x_1\)-antisymmetric function \(u:\R^n \to \R\) that is non-negative and \(s\)-harmonic in \(\Omega\), is equal to zero at a point in \(\Omega\), but is not identically zero in \(\Omega\).
\end{cor}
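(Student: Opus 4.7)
The strategy is to apply Theorem~\ref{u7dPW} twice to produce two antisymmetric $s$-harmonic functions on $B_1$ with opposite sign profiles on a fixed small domain $\Omega_0 \subset \R^n_+$, and then to take a specific linear combination of them so that the resulting function vanishes at an interior point of $\Omega_0$ while remaining non-negative on $\Omega_0$.

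The first step is the construction of the target profiles. Fix a point $p_0 \in \R^n_+$ and a small radius $\rho > 0$ so that $\Omega_0 := B_\rho(p_0)$ satisfies $\overline{\Omega_0} \subset B_1^+$, and choose a smooth bump function $g \colon \R^n \to [0, \infty)$ supported in $\Omega_0$ with a unique strict global maximum at $p_0$. Write $x^* := (-x_1, x')$, and build two smooth $x_1$-antisymmetric functions $f_0, f_1 \in C^k(\overline{B_1})$ via the standard antisymmetrization trick: $f_0(x) := \chi_0(x) - \chi_0(x^*)$ and $f_1(x) := -\chi_1(x) + \chi_1(x^*)$, where $\chi_0, \chi_1$ are smooth non-negative functions supported in $\R^n_+$ with $\chi_0 \equiv 1$ and $\chi_1 \equiv 1+g$ on $\overline{\Omega_0}$. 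On $\overline{\Omega_0}$ this gives $f_0 \equiv 1$ and $f_1 \equiv -(1+g)$, so that the ratio $f_0/|f_1| = 1/(1+g)$ attains its strict minimum $1/(1+g(p_0))$ at the interior point $p_0$, while being identically $1$ on $\partial\Omega_0$.

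Next, apply Theorem~\ref{u7dPW} separately to $f_0$ and $f_1$ with a small parameter $\varepsilon > 0$ to obtain $x_1$-antisymmetric functions $u_0, u_1 \colon \R^n \to \R$, both $s$-harmonic in $B_1$, with $\|u_i - f_i\|_{C^0(B_1)} < \varepsilon$. For $\varepsilon \leq 1/2$, one then has $u_0 \geq 1/2$ and $u_1 \leq -1/2$ on $\overline{\Omega_0}$, so the quotient $u_0/|u_1|$ is continuous there and $C^0$-close to $1/(1+g)$. Set $r := \min_{\overline{\Omega_0}} u_0/|u_1|$, which is a positive real number; by continuity, its argmin $p^*$ lies in the interior $\Omega_0$ once $\varepsilon$ is chosen small enough, since on $\partial\Omega_0$ the ratio is close to $1$ while the minimum is close to $1/(1+g(p_0)) < 1$. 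Finally, define $u := u_0 + r u_1$. By linearity $u$ is $x_1$-antisymmetric and $s$-harmonic in $B_1$, satisfies $u \geq 0$ on $\overline{\Omega_0}$ by the defining property of $r$, and vanishes at $p^* \in \Omega_0$.

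The last step is to verify non-triviality. Observe that $f_0 + r f_1 = 1 - r(1+g)$ attains a value close to $g(p_0)/(1+g(p_0)) > 0$ near $\partial\Omega_0$ (where $g \equiv 0$), so $\|f_0 + r f_1\|_{C^0(\Omega_0)}$ is bounded below by a positive constant independent of $\varepsilon$. Combined with $\|u - (f_0 + r f_1)\|_{C^0(\Omega_0)} \leq (1+r)\varepsilon$ from the triangle inequality, this forces $u \not\equiv 0$ on $\Omega_0$ for $\varepsilon$ sufficiently small. Setting $\Omega := \Omega_0 \subset \R^n_+$ completes the proof. The main obstacle, and the reason for building the bump $g$ into the target profile $f_1$, is precisely to force the vanishing point $p^*$ of $u$ strictly inside the open set $\Omega$: a simpler construction in which $f_0/|f_1|$ were constant on $\overline{\Omega_0}$ would leave $p^*$ potentially on $\partial\Omega_0$, and hence outside $\Omega$, breaking the conclusion.
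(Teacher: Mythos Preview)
Your proof is correct, and it takes a genuinely different route from the paper's. Both arguments use Theorem~\ref{u7dPW} to approximate a chosen antisymmetric profile by an antisymmetric $s$-harmonic function, and both arrange for an interior zero by combining two $s$-harmonic functions with opposite signs on $\Omega$. The difference is in how the second function is produced. The paper invokes Theorem~\ref{u7dPW} only once, to build a single function $v$ close to a carefully designed odd polynomial $f$ on $(-4,4)$; the second ingredient is an \emph{explicit} odd $s$-harmonic barrier $\zeta_R$ obtained from the Poisson kernel (Lemma~\ref{oujju}), and the right multiple $t_\star$ is found by an intermediate value argument on the map $t\mapsto \min_{[1,3]} (v-t\zeta)$. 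This is carried out in dimension $n=1$ and then lifted to $\R^n$. Your approach instead calls Theorem~\ref{u7dPW} twice, manufacturing both $u_0$ and $u_1$ from target profiles, and determines the coefficient $r$ directly as $\min_{\overline{\Omega_0}} u_0/|u_1|$; the bump $g$ built into $f_1$ forces the minimizer into the interior. This is more self-contained---you avoid the Poisson representation and the auxiliary Lemmas~\ref{IyXUO} and~\ref{oujju} entirely---and it works in $\R^n$ without a dimension reduction. The trade-off is that the paper's construction produces $\Omega=(0,3)$, an interval whose closure meets $\{x_1=0\}$, whereas your $\Omega_0$ sits compactly inside $\R^n_+$; for the bare statement of the corollary this makes no difference.
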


For our second result, we use
a Hopf-type lemma (see the forthcoming Proposition~\ref{lem:FLIFu}) in combination with the method of moving planes to establish symmetry for the \emph{semilinear fractional parallel surface problem}, which is described in what follows. 
%
%
Suppose that \(G\subset \R^n \) is open and bounded, and let~\(B_R\) denote the ball of radius \(R>0\) centered at 0. Let \(\Omega = G+B_R\), where, given sets \(A,B \subset \R^n\), \(A+B\) denotes the `Minkowski sum' of \(A\) and \(B\), defined as \begin{align*}
A+B = \{a+b \text{ s.t. } a\in A, b\in B\}. 
\end{align*} Consider the semilinear fractional equation \begin{align}
\begin{PDE}
(-\Delta)^s u &= f(u) &\text{in }\Omega, \\
u&=0 &\text{in }\R^n \setminus \Omega,\\
u&\geqslant 0 &\text{in } \Omega ,
\end{PDE} \label{rzZmb}
\end{align} with the overdetermined condition \begin{align}
u = c_0 \qquad \text{on } \partial G, \label{lnARf}
\end{align} for some (given) \(c_0\geqslant 0\). The \emph{semilinear fractional parallel surface problem} asks the following question:
for which~\(G\) does the problem~\eqref{rzZmb}-\eqref{lnARf} admit a non-trivial solution? The answer is that \(G\) must be a ball. More specifically, we have the following
result:

\begin{thm} \label{CccFw} Suppose that \(G\) is a bounded open set in \(\R^n\) with \(C^1\) boundary. Let~\(\Omega=G+B_R\), \(f:\R \to \R\) be locally Lipschitz, and \(c_0 \geqslant 0\). Furthermore, assume that there exists a non-negative function \(u \in C^s(\R^n)\) that is not identically zero and satisfies, in the pointwise sense,\begin{align} 
\begin{PDE}
(-\Delta)^s u &= f(u) &\text{in } \Omega,\\ u&=0 &\text{in } \R^n \setminus \Omega,  \\ u &= c_0 & \text{on } \partial G .
\end{PDE} \label{zcm6a}
\end{align} Then \(u\) is radially symmetric, \(u>0\) in \(\Omega\), and \(\Omega\) (and hence \(G\)) is a ball. 
\end{thm}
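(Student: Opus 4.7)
The plan is to apply the method of moving planes in an arbitrary direction, exploiting the parallel surface condition on $\partial G$ as the overdetermined input. Fix $e_1$ and, for $\lambda\in\R$, set $T_\lambda=\{x_1=\lambda\}$, $H_\lambda=\{x_1>\lambda\}$, $x^\lambda=(2\lambda-x_1,x_2,\dots,x_n)$, $\Sigma_\lambda=\Omega\cap H_\lambda$, and
\[
v_\lambda(x):=u(x^\lambda)-u(x),
\]
which is antisymmetric with respect to $T_\lambda$. Since $f$ is locally Lipschitz and $u$ is bounded, $v_\lambda$ solves a linear equation of the form $(-\Delta)^s v_\lambda + c_\lambda v_\lambda=0$ in $\Sigma_\lambda$ with $c_\lambda\in L^\infty$. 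Because $u\geq 0$ and $u\equiv 0$ outside $\Omega$, the inequality $v_\lambda\geq 0$ in $\Sigma_\lambda$ automatically extends to the whole half-space $H_\lambda$; this global sign control is precisely the feature that is missing in Corollary~\ref{KTBle} and that makes a nonlocal strong maximum principle available here.

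I start the moving planes with $\lambda$ so large that $\Sigma_\lambda=\emptyset$. Using the antisymmetric weak maximum principle together with the Hopf-type lemma of Proposition~\ref{lem:FLIFu}, I decrease $\lambda$ and maintain $v_\lambda\geq 0$ down to the critical value
\[
\lambda^* := \inf\{\lambda\in\R \,:\, v_\mu\geq 0 \text{ in } \Sigma_\mu \text{ for every }\mu\geq\lambda\}.
\]
By continuity, $v_{\lambda^*}\geq 0$ in $\Sigma_{\lambda^*}$ and $\Omega_{\lambda^*}:=\{x^{\lambda^*}:x\in\Sigma_{\lambda^*}\}\subset\overline{\Omega}$. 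A preliminary application of the strong maximum principle to $u$ itself also yields $u>0$ in $\Omega$.

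The core step is to show that $\Omega$ is symmetric with respect to $T_{\lambda^*}$. Otherwise, at the critical position there is either an internal tangency point $p\in\partial\Omega\cap\partial\Omega_{\lambda^*}$, $p\notin T_{\lambda^*}$, or a point $p\in T_{\lambda^*}\cap\partial\Omega$ where $T_{\lambda^*}$ is tangent to $\partial\Omega$. In the internal-tangency case, the outward unit normals of $\partial\Omega$ and $\partial\Omega_{\lambda^*}$ at $p$ coincide, and the parallel surface hypothesis $\Omega=G+B_R$ then places $q:=p-R\,\nu(p)$ simultaneously on $\partial G$ and on $\partial G_{\lambda^*}$. The overdetermined condition gives $u(q)=c_0=u(q^{\lambda^*})$, so $v_{\lambda^*}(q)=0$, and $q$ sits in the interior of $H_{\lambda^*}\cap\Omega$. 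Splitting $(-\Delta)^s v_{\lambda^*}(q)$ along the two half-spaces and using antisymmetry gives
\[
0=(-\Delta)^s v_{\lambda^*}(q)= -c_{n,s}\int_{H_{\lambda^*}} v_{\lambda^*}(y)\left[\frac{1}{|q-y|^{n+2s}}-\frac{1}{|q-y^{\lambda^*}|^{n+2s}}\right] dy,
\]
with both $v_{\lambda^*}\geq 0$ and the bracket $\geq 0$ on $H_{\lambda^*}$. This forces $v_{\lambda^*}\equiv 0$ in $H_{\lambda^*}$, hence the symmetry of $u$ and of $\Omega$ with respect to $T_{\lambda^*}$. The tangency-to-$T_{\lambda^*}$ case is treated similarly by locating a suitable touching point on $\partial G\cap\partial G_{\lambda^*}$ off $T_{\lambda^*}$, or, if necessary, by means of a Serrin-type corner argument tailored to antisymmetric functions.

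Repeating this for every direction forces $\partial G$ (and hence $\Omega=G+B_R$) to be invariant under every hyperplane reflection through a common center, so $\Omega$ must be a ball and $u$ radially symmetric. The main obstacle lies in the geometric analysis at the critical position: one must verify that the parallel point $q$ does belong to $\partial G\cap\partial G_{\lambda^*}$ and sits strictly inside $H_{\lambda^*}$, so that an interior strong maximum principle can be invoked rather than a more delicate boundary Hopf-type argument. This requires a careful use of the $C^1$ regularity of $\partial G$ and of the parallel surface relation with $\partial\Omega$.
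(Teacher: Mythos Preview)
Your overall structure matches the paper's, and your treatment of Case~1 (internal tangency) is essentially correct. But there are two concrete issues.

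First, Case~2 has a genuine gap. When $T_{\lambda^*}$ is orthogonal to $\partial\Omega$ at $p$, the normal $\nu(p)$ lies in $T_{\lambda^*}$, so the parallel point $q=p-R\nu(p)\in\partial G$ lies \emph{on} $T_{\lambda^*}$, not off it; there is no interior touching point to locate. The antisymmetric function $v_{\lambda^*}$ automatically vanishes on $T_{\lambda^*}$, so this gives no information by itself. The contradiction comes from the derivative: since $u\equiv c_0$ on $\partial G$ and $e_1$ is tangent to $\partial G$ at $q$, one gets $\partial_1 u(q)=0$, hence $\partial_1 v_{\lambda^*}(q)=0$. This is precisely where Proposition~\ref{lem:FLIFu} is invoked in the paper --- it forces $\liminf_{h\to0}v_{\lambda^*}(q+he_1)/h>0$, yielding the contradiction. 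Your ``Serrin-type corner argument'' remark gestures at this but does not identify the mechanism.

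Second, you misplace the Hopf lemma. It is not used to maintain $v_\lambda\geq 0$ while sliding the plane; that continuation step relies on a narrow-domain maximum principle (the first Dirichlet eigenvalue of $\Omega'_\lambda$ blows up as $|\Omega'_\lambda|\to 0$, eventually dominating $\|c_\lambda\|_{L^\infty}$), combined with the antisymmetric strong maximum principle to upgrade $v_\lambda\geq 0$ to $v_\lambda>0$ and then a set-shrinking argument at $\tilde m$. Proposition~\ref{lem:FLIFu} enters only at the critical position, specifically to handle Case~2.
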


In the local setting (i.e., \(s=1\)), symmetry and
%
%
stability results for \eqref{zcm6a} were obtained in \cite{MR3420522} and \cite{MR3481178}.
%
%
Such analysis was motivated by the study of invariant isothermic surfaces of a nonlinear nondegenerate fast diffusion equation (see \cite{MR2629887}). 

More recently in \cite{ciraolo2021symmetry}, the nonlocal case \(s\in (0,1)\) was addressed 
and the analogue to Theorem \ref{CccFw}, as well as its stability generalization, were proved in the particular case \(f \equiv 1\); those results were obtained by making use of the method of moving planes as well as a (quantitative) Hopf-type lemma (see \cite[formula~(3.3)]{ciraolo2021symmetry}), which could be obtained as an application of the boundary Harnack inequality for antisymmetric $s$-harmonic functions proved in \cite[Lemma~2.1]{ciraolo2021symmetry}. The arguments used in~\cite{ciraolo2021symmetry} to establish such a boundary Harnack inequality rely on the explicit knowledge of the fractional Poisson kernel for the ball. However, due to the general nonlinear term~\(f\) in \eqref{zcm6a}, here the method of moving planes leads to considering a linear equation involving the fractional Laplacian but with zero-th order terms for which no Poisson formula is available. To overcome this conceptual difficulty, we provide the following Hopf-type lemma which allows zero-th order terms.

\begin{prop} \label{lem:FLIFu}  Suppose that \(c \in L^\infty(B_1^+)\),  \(u \in  H^s(\R^n) \cap C(B_1)\) is \(x_1\)-antisymmetric, and satisfies \begin{align}
\begin{PDE}
(-\Delta)^su +cu &\geqslant 0 &\text{in } B_1^+, \\
u&\geqslant 0 &\text{in } \R^n_+ ,\\
u&>0 &\text{in } B_1^+. 
\end{PDE} \label{YEL36}
\end{align} Then \begin{align}
\liminf_{h\to 0} \frac{u(he_1)} h > 0 . \label{FLBHT}
\end{align} 
\end{prop}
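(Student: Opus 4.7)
\medskip

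\noindent\emph{Proof proposal.} The plan is to run a comparison--barrier argument adapted to the antisymmetric setting. First, I would reduce the inequality in \eqref{YEL36} to one with a sign-definite zero-th order term: setting $M := \|c\|_{L^\infty(B_1^+)}$, since $u\geq 0$ in $\R^n_+$ one has $(M-c)u\geq 0$ in $B_1^+$, whence
\[
(-\Delta)^s u + M u \;=\; \bigl[(-\Delta)^s u + c u\bigr] + (M-c)u \;\geq\; 0 \qquad \text{in } B_1^+.
\]
It therefore suffices to establish \eqref{FLBHT} for antisymmetric super-solutions of $(-\Delta)^s + M$, for which an \emph{antisymmetric small-volume maximum principle} is available (as in~\cite{MR3395749}) on domains of sufficiently small measure contained in $\R^n_+$.

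Next, I would extract quantitative positivity and set up the barrier. By continuity of $u$ on $B_1$ and $u>0$ in $B_1^+$, there exist a compact set $K\Subset B_1^+$ with $\dist(K,\{x_1=0\})>0$ and a constant $m>0$ such that $u\geq m$ on $K$; by antisymmetry, $u\leq -m$ on $K^*:=\{(-x_1,x_2,\ldots,x_n):x\in K\}$. Choose $r_0>0$ small enough that $\overline{B_{r_0}}\cap(K\cup K^*)=\emptyset$ and the antisymmetric maximum principle for $(-\Delta)^s+M$ applies in $B_{r_0}^+$, and define $\phi$ as the unique solution of
\[
\begin{PDE}
(-\Delta)^s\phi + M\phi &= 0, & \text{in } B_{r_0}, \\
\phi &= m(\chi_K-\chi_{K^*}), & \text{in } \R^n\setminus B_{r_0}.
\end{PDE}
\]
Uniqueness together with the $x_1$-symmetry of the operator and the antisymmetry of the datum forces $\phi$ to be $x_1$-antisymmetric, and the strong antisymmetric maximum principle yields $\phi>0$ in $B_{r_0}^+$. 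Applying the same maximum principle to $v:=u-\phi$ — which is antisymmetric, satisfies $(-\Delta)^s v+Mv\geq 0$ in $B_{r_0}^+$, and is non-negative on $\R^n_+\setminus B_{r_0}^+$ by the choice of $K$ and $m$ — gives the key comparison
\[
u \;\geq\; \phi \qquad \text{in } B_{r_0}^+.
\]

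The main obstacle is then Step~4: proving the \emph{linear} (rather than $h^s$) lower bound
\[
\liminf_{h\to 0^+}\frac{\phi(h e_1)}{h}\;>\;0.
\]
This is the correct vanishing rate for antisymmetric positive functions at the symmetry plane, which is not a true boundary; the obstruction is that we must extract this rate for the perturbed operator $(-\Delta)^s+M$, where no explicit Poisson formula is directly available. My plan would be to reduce to the unperturbed case $M=0$, where the rate follows from the explicit antisymmetric Poisson kernel of the ball (cf.\ the boundary Harnack type estimates used in~\cite{ciraolo2021symmetry}). Concretely, I would either (i) write $\phi$ via a Neumann series around the $s$-harmonic antisymmetric Poisson representation on $B_{r_0}$, which converges for $r_0$ small because $M$ acts as a small compact perturbation, and read off the leading linear contribution from the datum supported on $K$; or (ii) construct from below an unperturbed antisymmetric sub-barrier $\phi_0$ (solving $(-\Delta)^s\phi_0=0$ in a slightly smaller ball $B_{r_0/2}$ with comparable antisymmetric exterior data), prove the linear rate for $\phi_0$ via the explicit Poisson kernel, and absorb the difference $\phi-\phi_0$ via maximum-principle bounds in terms of $M\,r_0^{2s}$, which is negligible for small $r_0$. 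Either route yields $\phi(h e_1)\geq c_* h$ for some $c_*>0$ and $h$ small, and combined with $u\geq \phi$ in $B_{r_0}^+$ this proves~\eqref{FLBHT}.
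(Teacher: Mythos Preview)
Your overall architecture—reduce to a nonnegative (or constant) zero-th order coefficient, build an antisymmetric barrier, and compare—matches the paper's. The essential divergence is in how the barrier is obtained, and that is precisely where your proposal is incomplete.

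The paper does \emph{not} define the barrier as a PDE solution and then try to extract its linear rate a posteriori. Instead it constructs the barrier explicitly (Lemma~\ref{SaBD4}): one takes $\varphi=\zeta+\alpha\tilde\zeta$, where $\zeta$ is a smooth antisymmetric cut-off supported in $B_2$ with $\partial_1\zeta(0)>0$ (e.g.\ $x_1$ times a mollifier), and $\tilde\zeta$ is a smooth antisymmetric bump supported near $e_1$. The bump contributes $(-\Delta)^s\tilde\zeta(x)\le -C x_1$ in $B_2^+\setminus B_{1/2}(e_1)$, so for $\alpha$ large one gets $(-\Delta)^s\varphi+c\varphi\le0$ there. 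The comparison with $u$ is then run on this \emph{annular} region, using that $\varphi\le1$ on $B_{1/2}(e_1)$ (where $u$ is strictly positive by continuity) and $\varphi=0$ outside $B_2^+$. Crucially, the linear growth $\partial_1\varphi(0)>0$ is immediate from the explicit smooth construction of $\zeta$—no Poisson-kernel analysis, no perturbation argument.

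Your Step~4 is where the gap lies. Route (ii) has a sign problem: with $\phi$ solving $(-\Delta)^s\phi+M\phi=0$ and $\phi_0$ $s$-harmonic with nonnegative antisymmetric data, one has $(-\Delta)^s(\phi-\phi_0)+M(\phi-\phi_0)=-M\phi_0\le0$ in $B_{r_0/2}^+$, so $\phi-\phi_0$ is a \emph{sub}solution, and the antisymmetric maximum principle yields an upper bound on $\phi-\phi_0$, not the lower bound $\phi\ge\phi_0$ you need. Route (i) is plausible in principle but underspecified: controlling the linear rate at the origin for each Neumann iterate $\phi_k$ (solving $(-\Delta)^s\phi_k=-M\phi_{k-1}$ with zero exterior data) amounts to a Green's-function estimate for antisymmetric functions near the symmetry plane, which is not obviously simpler than the original problem. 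The paper's explicit barrier sidesteps all of this.
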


This result has previously been obtained in \cite[Proposition 2.2]{MR3937999} though the proof uses a different barrier. 

Proposition \ref{lem:FLIFu} establishes that antisymmetric solutions to \eqref{YEL36} must be bounded from below by a positive constant multiple of \(x_1\) close to the origin. At a first glance this is surprising as solutions to \eqref{YEL36} are in general only \(s\)-H\"older continuous up to the boundary, as proven in \cite{MR3168912}. Hence, it would appear more natural to 
consider \(\liminf_{h\to 0} \frac{u(he_1)} {h^s}\) instead of \eqref{FLBHT}, see for example \cite[Proposition 3.3]{MR3395749}. However, the (anti)symmetry of \(u\) means that Proposition~\ref{lem:FLIFu} is better understood as an interior estimate rather than a boundary estimate. 
Indeed, we stress that, differently from the classical Hopf lemma,
Proposition~\ref{lem:FLIFu} is not concerned with the growth of a solution from a boundary point, but mostly with the growth from a reflection point in the antisymmetric setting (notice specifically that~\eqref{FLBHT} provides a linear growth from the origin, which is
an interior point). In this sense, the combination of the antisymmetric geometry
of the solution and the fractional nature of the equation
leads to the two structural differences between Proposition~\ref{lem:FLIFu} and several other Hopf-type lemmata available in the literature for the nonlocal
case, namely the linear (instead of \(s\)-H\"older) growth
and the interior (instead of boundary) type of statement.

A similar result to Proposition~\ref{lem:FLIFu} was obtained in \cite[Theorem 1]{MR3910421} for the entire half-space instead of \(B_1^+\). This entails that~\cite[Theorem 1]{MR3910421} was only applicable (via the method of moving planes) to symmetry problems posed in all of \(\R^n\) whereas our result can be applied to problems in bounded domains. Moreover, \cite[Theorem 1]{MR3910421} is proven by contradiction while the proof of Proposition~\ref{lem:FLIFu}, in a similar vein to the original Hopf lemma, relies on the construction of an appropriate barrier from below, see Lemma \ref{SaBD4}. Hence, using a barrier approach also allows us to obtain a quantitative version of Proposition~\ref{lem:FLIFu} leading to quantitative estimates for the stability of Theorem \ref{CccFw}, which we plan to address in an upcoming paper.

It is also natural ask whether the converse to Theorem~\ref{CccFw} holds. We recall that if $\Omega$ is known to be a ball, i.e., $\Omega:=B_\rho$, then the radial symmetry of a bounded positive solution $u$ to
\begin{equation}\label{eq:Classical Dirichlet}
\begin{PDE}
-\Delta u &= f(u) & \text{in }  B_\rho\\
u&=0 &\text{on } \partial B_\rho
\end{PDE}
\end{equation}
is guaranteed for a large class of nonlinearities $f$. For instance, this is the case for $f$ locally Lipschitz, by the classical result obtained via the method of moving planes by Gidas-Ni-Nirenberg \cite{MR634248,MR544879}. Many extensions of Gidas-Ni-Nirenberg result can be found in the literature, including generalizations to the fractional setting (see, e.g., \cite{MR2114412}).
We also recall that an alternative method pioneered by P.-L. Lions in \cite{MR653200} provides symmetry of nonnegative bounded solutions to \eqref{eq:Classical Dirichlet} for (possibly discontinuous) nonnegative nonlinearities; we refer to \cite{MR3003296,MR1382205,MR2019179,MR4380032} for several generalizations.

\medskip

The paper is organised as follows. In Section \ref{eBxsh} we recall some standard notation, as well as provide some alternate proofs for the weak and strong maximum principle for antisymmetric functions. Moreover, we will prove Theorem \ref{u7dPW} and subsequently prove Corollary \ref{X96XH} and Corollary \ref{KTBle}. In Section \ref{R9GxY}, we will prove Proposition~\ref{lem:FLIFu} and in Section \ref{Svlif} we will prove Theorem \ref{CccFw}. In Appendix \ref{ltalz} we give some technical lemmas needed in the paper. 

\section{Maximum principles and counter-examples} \label{eBxsh}

\subsection{Definitions and notation} \label{9Hmeh}
Let \(n \geqslant 1\) be an integer, \(s\in (0,1)\), and \(\Omega\subset \R^n\) be a bounded open set.  The fractional Sobolev space \(H^s(\R^n)\) is defined as \begin{align*}
H^s(\R^n) = \bigg \{ u \in L^2(\R^n) \text{ s.t. } [ u ]_{H^s(\R^n)} < \infty \bigg \}
\end{align*} where \([ u ]_{H^s(\R^n)}\) is the Gagliardo semi-norm
\begin{align*}
[ u ]_{H^s(\R^n)} &= \bigg ( \int_{\R^n} \int_{\R^n} \frac{\vert u(x)-u(y)\vert^2}{\vert x -y \vert^{n+2s}} \dd x \dd y \bigg )^{\frac12}. 
\end{align*} As usual we identify functions that are equal except on a set of measure zero. It will also be convenient to introduce the space \begin{align*}
\mathcal H^s_0(\Omega) = \{ u \in H^s (\R^n) \text{ s.t. } u = 0 \text{ in } \R^n \setminus \Omega \}. 
\end{align*} Suppose that \(c:\Omega \to \R\) is a measurable function and that \(c\in L^\infty(\Omega)\).  A function \(u \in H^s(\R^n)\) is a \emph{weak} or \emph{generalised} solution of \((-\Delta)^s u +cu =0\) (resp. \(\geqslant 0, \leqslant 0\)) in \(\Omega\) if \begin{align*}
\mathfrak L (u,v) := \frac{c_{n,s}}2 \iint_{\R^n \times \R^n} \frac{(u(x)-u(y))(v(x)-v(y))}{\abs{x-y}^{n+2s}} \dd x \dd y  \\
\qquad + \int_{\Omega} c(x) u(x) v(x) \dd x =0 \, (\geqslant 0, \leqslant 0)
\end{align*} for all \(v \in \mathcal H^s_0(\Omega) \), \(v \geqslant 0\). Also, it will be convenient to use the notation \begin{align*}
\mathcal E (u,v) = \frac{c_{n,s}}2 \iint_{\R^n \times \R^n} \frac{(u(x)-u(y))(v(x)-v(y))}{\abs{x-y}^{n+2s}} \dd x \dd y
\end{align*} for each \(u,v \in H^s(\R^n)\). 

A function \(u :\R^n \to \R\) is \emph{antisymmetric} with respect to a plane \(T\) if \begin{align*}
u(Q_T(x)) = -u(x) \qquad \text{for all } x\in \R^n
\end{align*} where \(Q_T:\R^n \to \R^n\) is the reflection of \(x\) across \(T\). A function \(u\) is \emph{\(x_1\)-antisymmetric} if it is antisymmetric with respect to the plane \(T=\{x_1=0\}\). In the case \(T=\{x_1=0\}\), \(Q_T\) is given explicitly by \(Q_T(x) = x-2x_1e_1\). When it is clear from context what \(T\) is, we will also write~\(Q(x)\) or~\(x_\ast\) to mean~\(Q_T(x)\). 

We will also make use of standard notation: the upper and lower half-planes are given by \(\R^n_+= \{ x \in \R^n \text{ s.t } x_1>0\}\) and \(\R^n_-= \{ x \in \R^n \text{ s.t } x_1<0\}\) respectively, and, for all \(r>0\), the half-ball of radius \(r\) is given by \(B_r^+ = \R^n_+ \cap B_r\). We also denote the positive and negative part of a function~\(u\) by~\(u^+(x) = \max \{u(x),0\}\)
and~\(u^-(x) = \max \{-u(x),0\}\). Furthermore, the characteristic function of a set \(A\) is given by \begin{align*}
\chi_A(x) &= \begin{cases}
1, &\text{if } x\in A, \\
0,&\text{if } x\not\in A. 
\end{cases}
\end{align*}

\subsection{Maximum principles}

We now present some results of maximum principle type for antisymmetric functions.

\begin{prop}[A weak maximum principle] \label{aKht4} Suppose that \(\Omega\) is a strict open subset of \(\R^n_+\), and~\(c :\Omega \to \R\) ia a measurable and non-negative function. Let \(u \in H^s(\R^n)\) be \(x_1\)-antisymmetric. If \(u\) satisfies \((-\Delta)^su+cu\leqslant0\) in \(\Omega\) then \begin{align*}
\sup_{\R^n_+} u \leqslant \sup_{\R^n_+\setminus \Omega} u^+ . 
\end{align*} Likewise, if \(u\) satisfies \((-\Delta)^su+cu\geqslant0\) in \(\Omega\) then \begin{align*}
\inf_{\R^n_+} u \geqslant - \sup_{\R^n_+ \setminus \Omega } u^- . 
\end{align*}
\end{prop}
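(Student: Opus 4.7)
The plan is to prove the first inequality; the second follows at once by replacing $u$ with $-u$. Set $M := \sup_{\R^n_+ \setminus \Omega} u^+$; I may assume $M < +\infty$, and note $M \geq 0$. The strategy is to test the weak formulation against the truncation
\[ w := (u - M)^+ \chi_{\R^n_+}, \]
exploit the $x_1$-antisymmetry of $u$ to split $\mathcal E(u, w)$ into two manifestly non-negative pieces, and thereby force $w \equiv 0$.

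First I would check that $w$ is admissible, i.e.\ $w \in \mathcal H^s_0(\Omega)$ and $w \geq 0$. Positivity is obvious, and the support of $w$ lies in $\Omega$ because on $\R^n_+ \setminus \Omega$ one has $u \leq u^+ \leq M$, so $(u-M)^+ = 0$ there, while on $\R^n_-$ the characteristic function annihilates $w$. The $H^s$ regularity follows from the fact that the positive part is an $H^s$-contraction, combined with the vanishing trace of $(u-M)^+$ on $\{x_1 = 0\}$ (since $u = 0$ there by antisymmetry and $M \geq 0$). Inserting $w$ into the weak formulation gives $\mathcal E(u, w) + \int_\Omega c u w \dd x \leq 0$; since $c, w \geq 0$ and $u > M \geq 0$ on $\{w > 0\}$, the zero-th order term is non-negative, so $\mathcal E(u, w) \leq 0$.

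The heart of the proof is the complementary inequality $\mathcal E(u, w) \geq 0$, with equality forcing $w \equiv 0$. I would split the double integral defining $\mathcal E(u, w)$ into the four quadrants $\R^n_\pm \times \R^n_\pm$; the $\R^n_- \times \R^n_-$ contribution is zero, and in each mixed quadrant the reflection $Q$ together with $u \circ Q = -u$ and $w \equiv 0$ on $\R^n_-$ rewrites the integral as one over $\R^n_+ \times \R^n_+$. Applying the elementary pointwise identity
\[
(a - b)(c - d) K_1 + (a + b)(c + d) K_2 = (a - b)(c - d)(K_1 - K_2) + 2(ac + bd) K_2,
\]
with $K_1(x, y) := |x - y|^{-(n+2s)}$, $K_2(x, y) := |x - Q(y)|^{-(n+2s)}$, $a = u(x)$, $b = u(y)$, $c = w(x)$, $d = w(y)$, the energy rearranges into
\[
\mathcal E(u, w) = \frac{c_{n,s}}{2} \iint_{\R^n_+ \times \R^n_+} (u(x) - u(y))(w(x) - w(y))\, \kappa(x, y) \dd x \dd y + \int_{\R^n_+} \bar c_0(x) u(x) w(x) \dd x,
\]
where $\kappa := K_1 - K_2$ and $\bar c_0(x) := 2 c_{n,s} \int_{\R^n_+} K_2(x, y) \dd y$. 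Since $|x - Q(y)| \geq |x - y|$ for $x, y \in \R^n_+$, one has $\kappa \geq 0$, and a direct scaling argument shows $\bar c_0 > 0$ throughout $\R^n_+$. Combined with the standard truncation inequality $(u(x) - u(y))(w(x) - w(y)) \geq (w(x) - w(y))^2$, both summands are non-negative. Together with $\mathcal E(u, w) \leq 0$, they must each vanish, and the strict positivity of $\bar c_0 u$ on $\{w > 0\}$ then forces $w = 0$ almost everywhere, yielding $u \leq M$ a.e.\ on $\R^n_+$.

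The main obstacle I anticipate is the rigorous handling of the quadrant splitting and reflection substitution in the presence of the non-integrable diagonal singularity of $K_1$, together with the Fubini step that isolates the positive zero-th order coefficient $\bar c_0$ from the still-singular kernel $\kappa$; once the formal identity is in place, the rest of the argument is elementary.
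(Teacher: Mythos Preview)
Your proof is correct and uses the same test function and quadrant--reflection technique as the paper. The only difference is the endgame: the paper bounds $\mathcal E(u,w)\geq\mathcal E(w,w)$ and concludes $w\equiv 0$ from $\mathcal E(w,w)=0$ via the rigidity of the Gagliardo seminorm, whereas your decomposition isolates a strictly positive potential term $\int_{\R^n_+}\bar c_0\,uw$ (the bilinear-form version of the pointwise identity~\eqref{u9I7E} used later for the strong maximum principle), which forces $w=0$ directly without that step.
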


In our proof, we use that $c \ge 0$. We refer, e.g., to \cite[Proposition 3.1]{MR3395749} for a weak maximum principle where the non-negativity of $c$ is not required.

\begin{proof}[Proof of Proposition \ref{aKht4}]
It is enough to prove the case \((-\Delta)^su+cu\leqslant0\) in \(\Omega\). Suppose that \(\ell := \sup_{\R^n_+ \setminus \Omega} u^+ <+\infty\), otherwise we are done.

For each \(v\in H^s_0(\Omega)\), \(v \geqslant 0\),  rearranging  \(\mathfrak L (u,v) \leqslant 0\) gives \begin{align}
\mathcal E(u,v) \leqslant - \int_\Omega c(x) u(x) v(x) \dd x \leqslant 0  \label{FQBj4}
\end{align} provided that \(uv \geqslant 0\) in \(\Omega\). 

Let \(w(x):=u(x)-\ell\) and \(v (x) := (u(x)-\ell)^+\chi_{\R^n_+}(x)=w^+(x)\chi_{\R^n_+}(x)\). Note that for all \(x\in \Omega\),\begin{align*}
u(x)v(x) &=  ((u(x)-\ell)^+)^2 + \ell (u(x)-\ell)^+ \geqslant 0. 
\end{align*} Here we used that \((u-\ell)^+(u-\ell)^-=0\). To prove the proposition, it is enough to show that \(v=0\) in \(\R^n\). As \(u(x)-u(y)=w(x)-w(y)\), expanding then using that \(w^+ (x) w^-(x)=0\) gives  \begin{align*}
(u(x)-u(y))(v(x)-v(y)) &= ((v(x)-v(y))^2 +v(x)(v(y)-w(y)) +v(y) (v(x)-w(x))
\end{align*} for all \(x,y \in \R^n\). Hence, \begin{align*}
\mathcal E (u,v) &= \mathcal E (v,v) + c_{n,s} \iint_{\R^n\times \R^n} \frac{v(x)(v(y)-w(y))}{\vert x- y \vert^{n+2s} } \dd x \dd y.
\end{align*} Observe that\begin{align*}
\iint_{\R^n\times \R^n} \frac{v(x)(v(y)-w(y))}{\vert x- y \vert^{n+2s} } \dd x \dd y &= \iint_{\R^n_+\times \R^n_+} \frac{w^+(x)w^-(y)}{\vert x- y \vert^{n+2s} } \dd x \dd y\\
&\qquad -\iint_{\R^n_-\times \R^n_+} \frac{w^+(x)w(y)}{\vert x- y \vert^{n+2s} } \dd x \dd y.
\end{align*} Making the change of variables \(y \to y_\ast\), where \(y_\ast\) denotes the reflection of \(y\) across the plane \(\{x_1=0\}\), then writing \(w=w^+-w^-\) gives \begin{align*}
&\hspace{-2em}\iint_{\R^n\times \R^n} \frac{v(x)(v(y)-w(y))}{\vert x- y \vert^{n+2s} } \dd x \dd y \\
&=\iint_{\R^n_+\times \R^n_+} \frac{w^+(x)w^-(y)}{\vert x- y \vert^{n+2s} } \dd x \dd y+\iint_{\R^n_+\times \R^n_+} \frac{w^+(x)(w(y)+2\ell)}{\vert x- y_\ast \vert^{n+2s} } \dd x \dd y \\
&= \iint_{\R^n_+\times \R^n_+} w^+(x)w^-(y) \bigg ( \frac1{\vert x- y \vert^{n+2s}}-\frac1{\vert x- y_\ast \vert^{n+2s}} \bigg )\dd x \dd y \\
& \qquad + \iint_{\R^n_+\times \R^n_+} \frac{w^+(x)(w^+(y)+2\ell)}{\vert x- y_\ast \vert^{n+2s} } \dd x \dd y \\
&\geqslant 0. 
\end{align*} Thus, \begin{align}
\mathcal E(u,v) \geqslant \mathcal E  (v,v) \geqslant 0 . \label{vndMm}
\end{align} Combining \eqref{FQBj4} and \eqref{vndMm} gives that \(\mathcal E(v,v) =0\) which implies that \(v\) is a constant in \(\R^n\). Since \(v=0\) in \(\R^n_- \), we must have that \(v=0\) as required. 
\end{proof}

\begin{remark}
It follows directly from Proposition \ref{aKht4} that if \(u\) satisfies \((-\Delta)^s u +cu =0\) in \(\Omega\) then \begin{align*}
\sup_{\R^n_+} \vert u \vert \leqslant \sup_{\R^n_+\setminus \Omega} \vert  u \vert . 
\end{align*}
\end{remark}

Next, we prove the strong maximum principle for antisymmetric functions. This result is not new in the literature, see \cite[Corollary 3.4]{MR3395749}; however, Proposition \ref{fyoaW} provides an alternate elementary proof. 

\begin{prop}[A strong maximum principle] \label{fyoaW} Let \(\Omega\subset \R^n_+\) and \(c :\Omega \to \R\). Suppose that \(u :\R^n \to \R\) is \(x_1\)-antisymmetric and \((-\Delta)^su\) is defined pointwise in \(\Omega\). If \(u\) satisfies \((-\Delta)^su+cu \geqslant 0\) in \(\Omega\), and \(u\geqslant 0\) in \(\R^n_+\) then either \(u>0\) in \(\Omega\) or \(u\) is zero almost everywhere in \(\R^n\). 
\end{prop}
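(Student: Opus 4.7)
The plan is to argue by contradiction: assume there is a point $x_0 \in \Omega$ with $u(x_0) = 0$, and then show that $u$ must vanish almost everywhere in $\R^n$. The key observation is that by antisymmetry, $u \geq 0$ in $\R^n_+$ forces $u \leq 0$ in $\R^n_-$, and so the quantity $(-\Delta)^s u(x_0)$ can be expressed as a weighted integral of $u$ against a kernel that is sign-definite on the upper half-space.

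First, I would rewrite the pointwise fractional Laplacian at $x_0$ by splitting the integral over $\R^n_+$ and $\R^n_-$ and using the reflection $y \mapsto y_\ast := y - 2y_1 e_1$, which pairs points across $\{x_1=0\}$ and flips the sign of $u$ by antisymmetry. Since $u(x_0)=0$, the standard identity becomes
\begin{align*}
(-\Delta)^s u(x_0) = -c_{n,s} \int_{\R^n_+} u(y) \left( \frac{1}{|x_0 - y|^{n+2s}} - \frac{1}{|x_0 - y_\ast|^{n+2s}} \right) dy.
\end{align*}
Because $x_0 \in \R^n_+$, for every $y \in \R^n_+$ we have $|x_0 - y| \leq |x_0 - y_\ast|$, with strict inequality when $y_1 > 0$. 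This derivation should be part of Appendix~\ref{ltalz} or else handled carefully since the principal value needs to be interpreted, but the antisymmetric cancellation near $y = x_0$ makes the integrand locally integrable once $u$ is assumed smooth enough for $(-\Delta)^s u(x_0)$ to exist.

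Next, from the differential inequality at $x_0$ combined with $u(x_0)=0$, I get $(-\Delta)^s u(x_0) \geq -c(x_0) u(x_0) = 0$. Comparing this with the representation above, whose right-hand side is manifestly $\leq 0$ since the integrand is a product of a non-negative function $u \geq 0$ on $\R^n_+$ and a non-negative kernel, I conclude $(-\Delta)^s u(x_0) = 0$ and that the integrand vanishes for a.e.\ $y \in \R^n_+$. Since the kernel is strictly positive on $\{y_1>0\}$, this forces $u = 0$ a.e.\ in $\R^n_+$, and then antisymmetry yields $u = 0$ a.e.\ in $\R^n$.

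The main technical subtlety is justifying the splitting of the principal value integral and the identity above at the point $x_0$; here one needs that $u$ is regular enough around $x_0$ for $(-\Delta)^s u(x_0)$ to be defined pointwise (as assumed), and to control the singular behaviour near the diagonal using the antisymmetry to cancel the singular contribution from the reflected half-space. I expect this rewriting lemma to be the one nontrivial step and would appeal to (or prove in the appendix) an identity of the form $(-\Delta)^s u(x_0) = c_{n,s}\int_{\R^n_+} u(y)\bigl(|x_0-y_\ast|^{-n-2s} - |x_0-y|^{-n-2s}\bigr)\,dy$ valid for antisymmetric $u$ vanishing at $x_0$. Once this is in hand, the rest of the argument is a short sign-comparison.
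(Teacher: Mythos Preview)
Your proposal is correct and follows essentially the same approach as the paper: rewrite $(-\Delta)^s u(x_0)$ via the antisymmetric splitting to get a sign-definite integral over $\R^n_+$, then compare with the differential inequality at the touching point $x_0$ to force $u=0$ a.e. The only cosmetic difference is that the paper derives the antisymmetric representation formula inline (equation~\eqref{u9I7E}) rather than deferring it to an appendix, and it writes the kernel as $|x_\ast - y|^{-n-2s}$ rather than your equivalent $|x - y_\ast|^{-n-2s}$.
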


\begin{remark}Note that in Proposition~\ref{fyoaW}, as in \cite[Corollary 3.4]{MR3395749}, no sign assumption is required on \(c\). The reason Proposition~\ref{fyoaW} holds without this assumption is due to the sign assumption on \(u\) which is not usually present in the statement of the strong maximum principle.
\end{remark}

\begin{proof}[Proof of Proposition~\ref{fyoaW}]
Due to the antisymmetry of \(u\), by a change of variables we may write \begin{align}
\begin{aligned}
(-\Delta)^s u(x) &= c_{n,s} \PV \int_{\R^n_+} \bigg ( \frac 1 {\vert x - y \vert^{n+2s}} - \frac 1 {\vert x_\ast - y \vert^{n+2s}} \bigg ) (u(x)-u(y)) \dd y  \\
&\qquad \qquad + 2c_{n,s}\int_{\R^n_+} \frac{u(x)}{\vert x_\ast - y \vert^{n+2s}} \dd y
\end{aligned} \label{u9I7E}
\end{align} where \(x_\ast=x-2x_1e_1\) is the reflection of \(x\) across \(\{x_1=0\}\). Suppose that there exists \(x_\star\in \Omega\) such that \(u(x_\star)=0\). On one hand,  \eqref{u9I7E} implies that \begin{align}
(-\Delta)^s u(x_\star) &= -c_{n,s} \PV \int_{\R^n_+} \bigg ( \frac 1 {\vert x_\star - y \vert^{n+2s}} - \frac 1 {\vert (x_\star)_\ast - y \vert^{n+2s}} \bigg )u(y) \dd y \label{XXP2e}
\end{align} where we used that \(u\) is antisymmetric. Since \begin{align*}
\frac 1 {\vert x_\ast - y \vert^{n+2s}} < \frac 1 {\vert x - y \vert^{n+2s}}, \qquad \text{for all } x,y\in \R^n_+, \, x\neq y,
\end{align*} equation \eqref{XXP2e} implies that \((-\Delta)^s u(x_\star) \leqslant 0\) with equality if and only if \(u\) is identically zero almost everywhere in \(\R^n_+\). On the other hand, \((-\Delta)^su(x_\star) = (-\Delta)^s u(x_\star) +c(x_\star) u(x_\star) \geqslant 0 \). Hence, \((-\Delta)^su(x_\star)=0\) so \(u\) is identically zero a.e. in \(\R^n_+\).
\end{proof}

\subsection{Counter-examples}
The purpose of this subsection is to provide counter-examples to the classical Harnack inequality and the strong maximum principle for antisymmetric \(s\)-harmonic functions. A useful tool to construct such functions is the following antisymmetric analogue of Theorem 1 in \cite{MR3626547}. This proves that `all antisymmetric functions are locally antisymmetric and \(s\)-harmonic up to a small error'. 

\begin{proof}[Proof of Theorem \ref{u7dPW}]
Due to \cite[Theorem 1.1]{MR3626547} there exist \(R>1\) and \(v\in H^s (\R^n) \cap C^s(\R^n)\) such that \((-\Delta)^sv=0\) in \(B_1\), \(v=0\) in \(\R^n \setminus B_R\), and \( \| v - f\|_{C^k(B_1)} < \varepsilon \). In fact, by a mollification argument, we may take \(v\) to be smooth, see Remark \ref{BQkXm} here below.

Let \(x_\ast\) denote the reflection of \(x\) across \( \{ x_1=0 \}\), and \begin{align*}
u(x) = \frac12 \big ( v(x) - v(x_\ast) \big ) . 
\end{align*} It is easy to verify that \((-\Delta)^s u = 0\) in \(B_1\) (and of course \(u=0\) in \(\R^n \setminus B_R\)). By writing \begin{align*}
u(x) -f(x) &= \frac12 \big (v(x) - f(x) \big ) + \frac12 \big ( f(x_\ast) - v(x_\ast) \big ) \qquad\text{for all } x \in B_1
\end{align*} we also obtain \begin{equation*}
\| u - f\|_{C^k(B_1)} \leqslant \| v - f\|_{C^k(B_1)} < \varepsilon . \qedhere
\end{equation*}
\end{proof}

\begin{remark} \label{BQkXm}
Fix \(\varepsilon>0\). If \(f \in C^k (\overline{B_1} )\) there exists \(\mu >0\) such that \(f\in C^k(B_{1+2\mu})\). Rescaling then applying \cite[Theorem 1.1]{MR3626547}, there exist \(\tilde R>1+\mu\) and \(\tilde v\in H^s (\R^n) \cap C^s(\R^n)\) such that \((-\Delta)^s \tilde v=0\) in \(B_{1+\mu}\), \(\tilde v=0\) in \(\R^n \setminus B_{\tilde R }\), and \( \| \tilde v - f\|_{C^k(B_{1+\mu})} < \varepsilon/2 \).

Let \(\eta \in C^\infty_0(\R^n)\) be the standard mollifier \begin{align*}
\eta (x) &= \begin{cases}
C e^{- \frac 1 {\vert x \vert^2-1}} , &\text{if } x \in B_1 ,\\
0, &\text{if } x \in \R^n \setminus B_1,
\end{cases}
\end{align*} with \(C>0\) chosen so that \(\int_{\R^n} \eta(x) \dd x =1\). For each \(\delta>0\), let \(\eta_\delta(x) = \delta^{-n} \eta (x/\delta))\) and  \begin{align*}
\tilde{v}^{(\delta)}(x) := (\tilde v \ast \eta_\delta)(x) = \int_{\R^n} \eta_\delta (x-y) \tilde v (y) \dd y. 
\end{align*} By the properties of mollifiers, \begin{align*}
\| \tilde v^{(\delta)} - f \|_{C^k(B_1)} \leqslant \| \tilde v^{(\delta)} - \tilde v \|_{C^k(B_1)} + \| \tilde v - f \|_{C^k(B_1)} < \varepsilon
\end{align*} provided \(\delta\) is sufficiently small. Since \(\tilde v \in H^s(\R^n) \subset L^2(\R^n)\), it follows from \cite[Proposition 4.18]{MR2759829} that \(\supp \tilde{v}^{(\delta)}(x) \subset  \overline{B_{\tilde{R}+\delta}}\). Moreover, via the Fourier transform, \((-\Delta)^s \tilde{v}^{(\delta)} = \big ( (-\Delta)^s \tilde{v} \big ) \ast \eta_\delta\), so \cite[Proposition 4.18]{MR2759829} again implies that \( \supp \big ( (-\Delta)^s \tilde{v}^{(\delta)}\big ) \subset \R^n \setminus B_{1+\mu-\delta}\).

Hence, setting \(R=\tilde R+\delta\), \(v=\tilde v^{(\delta)}\), we have constructed a function \(v  \in C^\infty_0(\R^n)\) such that \(v\) is \(s\)-harmonic in \(B_1\), \(v=0\) outside \(B_R\) and \(\| v - f \|_{C^k(B_1)}<\varepsilon\). Moreover, 
$$ v(x_\ast) = \int_{\R^n} \eta_\delta (y) \tilde v (x_\ast- y) \dd y =-\int_{\R^n} \eta_\delta (y) \tilde v (x- y_\ast) \dd y =-v(x)$$ via the change of variables \(z=y_\ast\) and using that \(\eta_\delta(y_\ast) = \eta_\delta (y)\). 
\end{remark}

It is well known that the classical Harnack inequality fails for \(s\)-harmonic functions, see \cite{Kassmann2007clas} for a counter-example. The counter-example provided in \cite{Kassmann2007clas} is not antisymmetric; however, Corollary \ref{X96XH} proves that even with this extra symmetry the classical Harnack inequality does not hold for \(s\)-harmonic functions. Moreover, the construction of the counter-example in Corollary \ref{X96XH} is entirely different to the one in \cite{Kassmann2007clas}. 
 
 \begin{proof}[Proof of Corollary \ref{X96XH}] We will begin by proving the case \(n=1\). Suppose that \(\varepsilon\in (0,1)\), \(I'=(1,2)\),  and \(I=(1/2,5/2)\). Let \(\{ f^{(\varepsilon)}\} \subset C^\infty (\R)\) be a family of odd functions that depend smoothly on the parameter \(\varepsilon\). Moreover, suppose that \begin{align}
 \sup_{I'} f^{(\varepsilon)} = 4 \label{UJd6j}
\end{align}  and \begin{align}
\inf_{I} f^{(\varepsilon)} =\inf_{I'} f^{(\varepsilon)} =2\varepsilon . \label{BR8WJ}
\end{align}  For example, such a family of functions is \begin{align*}
 f^{(\varepsilon)}(x) =  ax +bx^3+cx^5 +dx^7 
 \end{align*} where \begin{align*}
 a = \frac 5{54} (64+5\varepsilon), \quad b= -\frac 1{72}(128+73\varepsilon), \quad c=\frac 1 {36} (-8+23\varepsilon), \quad d=\frac 1 {216}(16-19\varepsilon).
 \end{align*} The functions \(f^{(\varepsilon)}\) are plotted for several values of \(\varepsilon\) in Figure \ref{2FAHv}. 
 
 After a rescaling, it follows from Theorem \ref{u7dPW} that there exists a family \(v^{(\varepsilon)}\) of odd functions that are \(s\)-harmonic in \((-5/2,5/2)\supset I \) and satisfy \begin{align}
 \| v^{(\varepsilon)} - f^{(\varepsilon)} \|_{C((-5/2,5/2))} < \varepsilon . \label{pA7Na}
 \end{align} It follows from \eqref{BR8WJ} and \eqref{pA7Na} that \begin{align*}
\inf_{I} v^{\varepsilon}  \geqslant \varepsilon>0 . 
\end{align*}  Moreover, from \eqref{UJd6j}-\eqref{BR8WJ} and \eqref{pA7Na} , we have that \begin{align*}
\sup_{I'} v^{(\varepsilon)}\geqslant 4-\varepsilon \qquad \text{and} \qquad \inf_{ I'} v^{(\varepsilon)}\leqslant   3\varepsilon . 
\end{align*} Hence, \begin{align*}
\frac{\sup_{I'} v^{(\varepsilon)}}{\inf_{I'} v^{(\varepsilon)}} \geqslant \frac{4-\varepsilon}{3\varepsilon} \to + \infty
\end{align*} as \(\varepsilon \to 0^+\). Setting \(\Omega=I\) and \(\Omega'=I'\) proves the statement in the case \(n=1\).

To obtain the case \(n >1\), set \(\Omega'=I'\times (-1,1)^{n-1}\), \(\Omega=I \times (-2,2)^{n-1}\), and \(u^{(\varepsilon)} (x) = v^{(\varepsilon)} (x_1) \). Using that \begin{align*}
(-\Delta)^s u^{(\varepsilon)} (x) = C (-\Delta)^s_{\R} v^{(\varepsilon)} (x_1) \qquad \text{for all } x\in \R^n 
\end{align*}where \((-\Delta)^s_{\R} \) denotes the fractional Laplacian in one dimension and \(C\) is some constant, see \cite[Lemma 2.1]{MR3536990}, all of the properties of \(v^{(\varepsilon)} \) carry directly over to \(u^{(\varepsilon)} \). 
 \end{proof}

\begin{figure}[ht]
\centering
\includegraphics[scale=1]{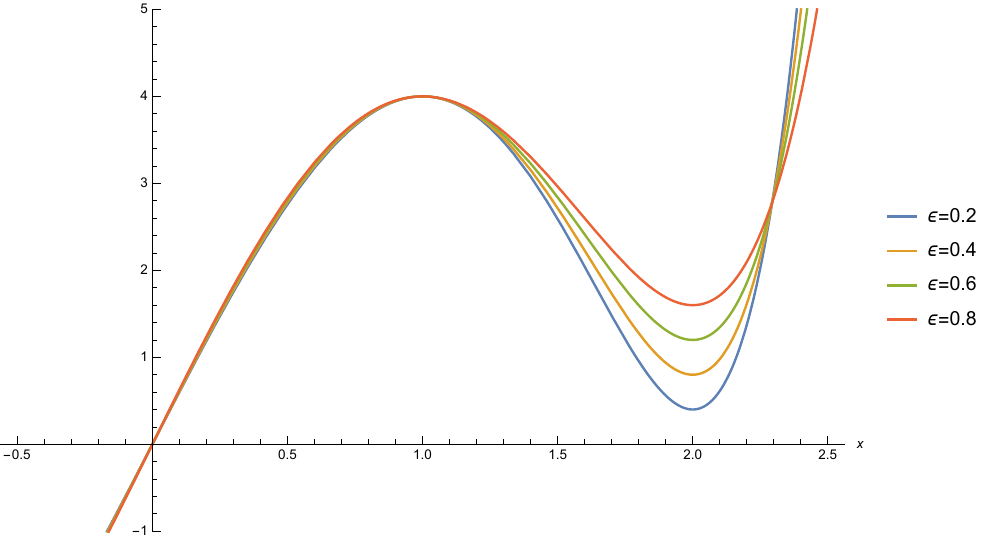} 
\caption{Plot of \(f^{(\varepsilon)}(x) \) for \(\varepsilon=0.2,0.4,0.6,0.8\).}
\label{2FAHv}
\end{figure}

Observe that the family of functions in the proof of Corollary \ref{X96XH} does not violate the classical strong maximum principle. In our next result, Corollary \ref{KTBle}, we use Theorem \ref{u7dPW} to provide a counter-example to the classical strong maximum principle for antisymmetric \(s\)-harmonic functions. This is slightly more delicate to construct than the counter-example in Corollary \ref{X96XH}. Indeed, Theorem \ref{u7dPW} only gives us an \(s\)-harmonic function \(u\) that is \(\varepsilon\)-close to a given function \(f\), so how can we guarantee that \(u=0\) at some point but \(u\) is not negative? 

The idea is to begin with an antisymmetric function \(f\) that has a non-zero minimum away from zero, see Figure \ref{73Iyk}, use Theorem \ref{u7dPW} to get an antisymmetric \(s\)-harmonic function with analogous properties then minus off a known antisymmetric \(s\)-harmonic function until the function touches zero. The proof of Corollary \ref{KTBle} relies on two technical results, Lemma \ref{IyXUO} and Lemma \ref{oujju}, both of which are included in Appendix \ref{ltalz}.

\begin{proof}[Proof of Corollary \ref{KTBle}] By the same argument at the end of the proof of Corollary~\ref{X96XH}, it is enough to take \(n=1\). Let \(\Omega=(0,3)\) and \(f :\R \to \R\) be a smooth, odd function such that \begin{align}
f(x) &\geqslant 1, \text{ for all } x\in [1,3];  \label{cv7Fh}\\
f(x) &\geqslant 3x,\text{ for all } x\in [0,1];  \label{iHz0V}\\
f(2) & =1; \text{ and }  \label{QEb8m} \\
f(3)&= 5. \label{ugncP}
\end{align} For example, such a function is  \begin{align*}
f(x) = -\frac{371 }{43200}x^9+\frac{167 }{1440}x^7-\frac{2681 }{14400}x^5-\frac{4193 }{2160}x^3+\frac{301 }{50} x. 
\end{align*} See Figure \ref{73Iyk} for a plot of \(f\).

By Theorem \ref{u7dPW} with \(\varepsilon =1\), there exists an odd function \(v \in C^\infty_0(\R)\) such that \(v\) is \(s\)-harmonic in~\((-4,4)\) and \begin{align}
\| v-f\|_{C^1((-4,4))} < 1 . \label{wjbkE}
\end{align} Let \(c_0,\zeta_R\) be as in Lemma \ref{oujju} and set \(\zeta := \frac 1 {c_0} \zeta_R. \) By choosing \(R>3\) sufficiently large, we have that  \begin{align}
\frac 3 4 x \leqslant \zeta (x) \leqslant \frac 5 4 x \qquad \text{in }(0,4) . \label{rU1A4}
\end{align} Define \begin{align*}
\phi_t(x) = v(x) - t \zeta (x), \qquad {\mbox{for all~$x\in(-3,3)$ and~$t >0$.}}
\end{align*} We have that, for all \(t>0\), \begin{align*}
(-\Delta)^s \phi_t(x) = 0  \qquad \text{in } (-3,3)
\end{align*} where \((-\Delta)^s=(-\Delta)^s_x \) is the fractional Laplacian with respect to \(x\), and \begin{align*}
x \mapsto \phi_t(x) \text{ is odd.} 
\end{align*} As in Lemma \ref{IyXUO}, let \begin{align*}
m(t) = \min_{x \in [1,3]} \phi_t(x) . 
\end{align*}  From \eqref{cv7Fh} and \eqref{wjbkE} we have that \( \phi_0(x) = v(x) > 0\) in \([1,3]\). Hence, \begin{align*}
m(0) >0. 
\end{align*} Moreover, by \eqref{QEb8m} and \eqref{wjbkE}, \(v(2)<2\). It follows from \eqref{rU1A4} that \begin{align*}
m(4/3) \leqslant \phi_{4/3}(2)=v(2) - \frac 4 3 \zeta (2) <0. 
\end{align*} Since \(m\) is continuous, as shown in Lemma \ref{IyXUO}, the intermediate value theorem implies the existence of \(t_\star \in (0,4/3)\) such that \begin{align*}
m(t_\star) = 0. 
\end{align*} Let \(u := \phi_{t_\star}\). By construction \(u \geqslant 0\) in \([1,3]\). Moreover, since \(u\) is continuous, there exists some \(x_\star \in [1,3]\) such that \(u(x_\star)=0\). In fact, we have that \(x_\star \neq 3\). Indeed, \eqref{ugncP} implies that \(v(3)>4\) and, since \(t_\star < 4/3\) and \(\zeta (3) <9/4\), we obtain \(u(3) >0\). 

All that is left to be shown is that \(u \geqslant 0\) in \((0,1)\). By \eqref{wjbkE}, we have that \begin{align*}
v'(x) > f'(x) - 1, \qquad {\mbox{for all }} x \in (-4,4). 
\end{align*} Since \(v(0)=f(0)=0\), it follows from the fundamental theorem of calculus that \begin{equation}\begin{split}
u(x) &= \int_0^x v'(\tau ) \dd \tau -t_\star \zeta (x) \\
&\geqslant  \int_0^x \big ( f'(\tau)-1 \big ) \dd \tau  - t_\star \zeta (x) \\
&=f(x)-x - t_\star \zeta (x) \label{A0ZNc}
\end{split}\end{equation} provided that~\(x\in (0,3)\). By \eqref{A0ZNc}, \eqref{iHz0V} and \eqref{rU1A4}, \begin{align*}
u(x) \geqslant  \frac 1 3 x \geqslant 0 \qquad \text{in } [0,1] ,
\end{align*}
as desired.
\end{proof}

\begin{figure}[ht]
\centering
\includegraphics[scale=1]{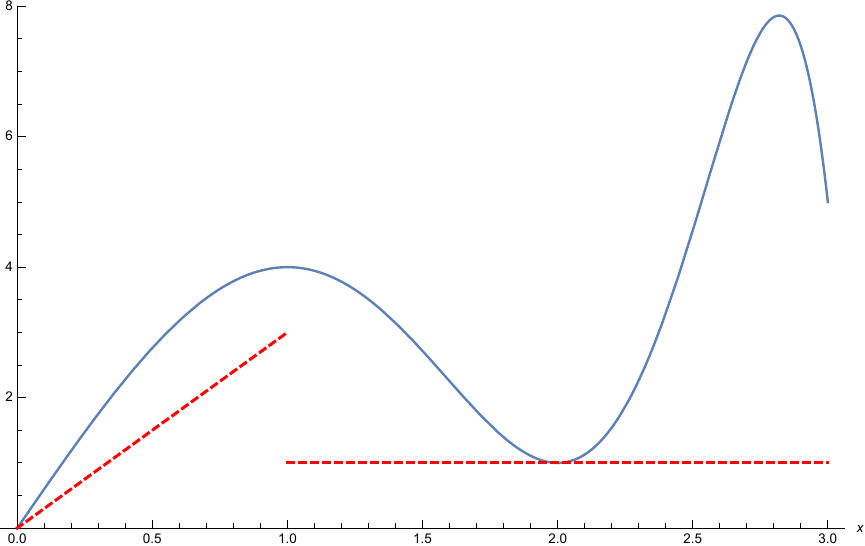} 
\caption{Plot of \(f \) as in the proof of Corollary \ref{KTBle}.}\label{73Iyk}
\end{figure}

\section{A Hopf lemma} \label{R9GxY}
In this section, we prove Proposition~\ref{lem:FLIFu}. The main step in the proof is the construction of the following barrier. 

\begin{lem} \label{SaBD4}
Suppose that \(c \in L^\infty (B_2^+)\). Then there exists an \(x_1\)-antisymmetric function \(\varphi \in C^\infty(\R^n)\) such that \begin{align*}
\begin{PDE}
(-\Delta)^s \varphi +c \varphi &\leqslant 0 &\text{in }B_2^+\setminus B_{1/2}(e_1), \\
\varphi &=0 &\text{in } \R^n_+ \setminus B_2^+ ,\\
\varphi &\leqslant 1 &\text{in } B_{1/2}(e_1),\\
\partial_1 \varphi (0) &>0 . 
\end{PDE}
\end{align*} 
\end{lem}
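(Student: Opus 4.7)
The plan is to build $\varphi$ as a linear combination $\varphi = \lambda(A\varphi_1 + \varphi_2)$ of two smooth $x_1$-antisymmetric ingredients with complementary roles. I would fix a nonnegative bump $\chi \in C^\infty_c(B_{1/4})$ with $\chi \not\equiv 0$ and set $\varphi_1(x) := \chi(x-e_1) - \chi(x+e_1)$. By construction $\varphi_1$ is smooth, antisymmetric, and supported in $B_{1/4}(e_1)\cup B_{1/4}(-e_1)$, so in particular $\varphi_1 \equiv 0$ on $B_2^+\setminus B_{1/2}(e_1)$, and $\partial_1\varphi_1(0)=0$ because the supports of $\chi(\cdot \pm e_1)$ are bounded away from the origin. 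I would then fix a radial cutoff $\eta \in C^\infty_c(B_2)$ with $\eta\equiv 1$ on $B_1$ and $0\leq \eta \leq 1$, and set $\varphi_2(x):= \eta(x)\,x_1$; this is smooth, antisymmetric, supported in $B_2$, with $|\varphi_2(x)|\leq x_1$ on $B_2^+$ and $\partial_1\varphi_2(0)=1$.

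The crux of the argument is the quantitative linear bound
\begin{equation*}
(-\Delta)^s \varphi_1(x) \leq -c_1\,x_1 \qquad\text{for all } x \in B_2^+\setminus B_{1/2}(e_1),
\end{equation*}
for some $c_1 = c_1(n,s,\chi)>0$. Since $\varphi_1$ vanishes on this set, the antisymmetric representation of the fractional Laplacian (analogous to the one exploited in the proof of Proposition~\ref{fyoaW}) gives
\begin{equation*}
(-\Delta)^s\varphi_1(x) = c_{n,s}\int_{\R^n_+}\chi(z-e_1)\left[|x-z_*|^{-(n+2s)} - |x-z|^{-(n+2s)}\right]dz,
\end{equation*}
where $z_*$ denotes reflection across $\{x_1=0\}$. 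Using the identity $|x-z_*|^2-|x-z|^2 = 4x_1 z_1$ together with $z_1 \geq 3/4$ on the support of $\chi(\cdot - e_1)$, and uniform upper bounds on $|x-z|$ and $|x-z_*|$ over the compact region under consideration, a mean-value expansion of $t\mapsto t^{-(n+2s)}$ delivers the claimed estimate. The linear rate in $x_1$ is the sharp one, because $(-\Delta)^s\varphi_1$ must vanish on $\{x_1=0\}$ by antisymmetry.

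Once the key estimate is in hand, everything else is bookkeeping. Since $\varphi_2 \in C^\infty_c(\R^n)$, the function $(-\Delta)^s\varphi_2$ lies in $C^\infty(\R^n)\cap L^\infty(\R^n)$; its antisymmetry forces it to vanish on $\{x_1=0\}$, and smoothness then yields $|(-\Delta)^s\varphi_2(x)| \leq C_2\,x_1$ on $B_2^+$. Because $\varphi_1\equiv 0$ on $B_2^+\setminus B_{1/2}(e_1)$,
\begin{equation*}
(-\Delta)^s(A\varphi_1+\varphi_2)(x) + c(x)(A\varphi_1+\varphi_2)(x) \leq \bigl(-Ac_1 + C_2 + \|c\|_{L^\infty(B_2^+)}\bigr)x_1,
\end{equation*}
which is nonpositive once $A \geq (C_2+\|c\|_{L^\infty(B_2^+)})/c_1$. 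For such $A$, $\partial_1(A\varphi_1+\varphi_2)(0) = 1 > 0$, and rescaling by a sufficiently small $\lambda>0$ preserves the differential inequality (which is linear in $\varphi$) and the sign of $\partial_1\varphi(0)$, while enforcing $\varphi \leq 1$ on $B_{1/2}(e_1)$. The main obstacle is the quantitative linear estimate on $-(-\Delta)^s\varphi_1$ with the correct vanishing rate as $x_1\to 0^+$, uniformly over the annular region; this hinges on the interplay between the factor $4x_1z_1$ produced by antisymmetry and the positive separation between $\operatorname{supp}\chi(\cdot-e_1)$ and $B_2^+\setminus B_{1/2}(e_1)$.
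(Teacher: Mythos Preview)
Your proposal is correct and is essentially the same construction as the paper's: your $\varphi_2(x)=\eta(x)x_1$ is the paper's $\zeta$, your bump $\varphi_1$ is the paper's $\tilde\zeta$, and your large parameter $A$ is the paper's $\alpha$; the key linear-in-$x_1$ estimates for each piece are obtained in the same way (antisymmetric kernel identity plus the factor $4x_1z_1$ for the bump, smoothness and vanishing on $\{x_1=0\}$ for the other). The only difference is that you explicitly add the final rescaling by $\lambda$ to enforce $\varphi\le 1$ on $B_{1/2}(e_1)$, a normalization the paper's proof leaves implicit.
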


\begin{proof}
Let \(\zeta\) be a smooth \(x_1\)-antisymmetric cut-off function such that the support of \(\zeta\) is contained in \(B_2\),  \(\zeta \geqslant 0\) in \(B_2^+\), and \(\partial_1\zeta (0)>0\). For example, such a function is \( x_1\eta(x)\) where \(\eta\) is the standard mollifier defined in Remark \ref{BQkXm}. Since \(\zeta\) is smooth with compact support, we have that~\((-\Delta)^s \zeta \in C^\infty (\R^n)\). 

Moreover, \((-\Delta)^s \zeta \) is \(x_1\)-antisymmetric since \(\zeta\) is \(x_1\)-antisymmetric, so it follows that there exists~\(C>0\) such that \begin{align}
(-\Delta)^s \zeta (x) +c \zeta (x) \leqslant C(1+\| c \|_{L^\infty(B_2^+)}) x_1 \qquad \text{in } B_2^+  \label{I6ftM}
\end{align} using that \(c\in L^\infty(B_2^+)\).

Next, let \(\tilde \zeta \) be a smooth \(x_1\)-antisymmetric function such that \(\tilde \zeta  \equiv 1\) in \(B_{1/4}(e_1)\), \(\tilde \zeta  \equiv 0\) in \(\R^n_+ \setminus B_{3/8}(e_1)\), and \(0\leqslant \tilde \zeta  \leqslant 1\) in \(\R^n_+\). Recall that, given an \(x_1\)-antisymmetric function \(u\), the fractional Laplacian of \(u\) can be written as \begin{align*}
(-\Delta)^su(x) &= c_{n,s} \int_{\R^n_+} \bigg ( \frac 1 {\vert x - y\vert^{n+2s}} -  \frac 1 {\vert x_\ast -y\vert^{n+2s}} \bigg ) (u(x)-u(y) ) \dd y \\
&\qquad + 2 c_{n,s} u(x) \int_{\R^n_+} \frac{\dd y } {\vert x_\ast - y \vert^{n+2s}}.
\end{align*} Hence, for each \(x\in B_2^+ \setminus B_{1/2}(e_1)\),  \begin{align*}
(-\Delta)^s \tilde \zeta (x)&=-c_{n,s}  \int_{B_{3/8}(e_1)} \bigg ( \frac 1 {\vert x - y \vert^{n+2s}} - \frac 1 {\vert x_\ast - y \vert^{n+2s}} \bigg ) \tilde \zeta (y) \dd y.
\end{align*} By the fundamental theorem of calculus, for all \(x\in B_2^+ \setminus B_{1/2}(e_1)\) and~\(y\in B_{3/8}(e_1)\),\begin{align*}
\frac 1 {\vert x - y \vert^{n+2s}} - \frac 1 {\vert x_\ast - y \vert^{n+2s}} &= \frac{n+2s}{2} \int_{\vert x - y \vert^2}^{\vert x_\ast - y \vert^2} \frac{ \dd \tau } {\tau^{\frac{n+2s+2}{2}}}\\
&\geqslant \frac{Cx_1y_1}{\vert x_\ast - y \vert^{n+2s+2}} \\
&\geqslant Cx_1  
\end{align*} with \(C\) depending only on \(n\) and \(s\). Hence, \( (-\Delta)^s\tilde \zeta (x) \leqslant -Cx_1\) in \( B_2^+ \setminus B_{1/2}(e_1)\). Then the required function is given by \(\varphi(x) := \zeta (x) + \alpha \tilde \zeta (x)\) for all \(x\in \R^n\) with \(\alpha>0\) to be chosen later. Indeed, from~\eqref{I6ftM}, we have that \begin{align*}
(-\Delta)^s \varphi +c \varphi \leqslant C (1-\alpha+ \| c \|_{L^\infty(B_2^+)})x_1 \leqslant 0
\qquad\text{in } B_2^+ \setminus B_{1/2}(e_1)
\end{align*} provided that \(\alpha \) is large enough. 
\end{proof}

{F}rom Lemma \ref{SaBD4} the proof of Proposition~\ref{lem:FLIFu} follows easily:

\begin{proof}[Proof of Proposition~\ref{lem:FLIFu}]
Since \(u >0\) in \(B_1^+\), we have that,
for all \(v\in \mathcal H^s_0(B_1^+)\) with~\(v\geqslant0\), \begin{align*}
0\leqslant \mathcal E(u,v) + \int_{B_1^+} c(x) u(x) v(x) \dd x \leqslant\mathcal E(u,v) + \int_{B_1^+} c^+(x) u(x) v(x) \dd x
\end{align*} and thereofore \begin{align*}
(-\Delta)^su +c^+u \geqslant 0 \qquad \text{in } B_1^+.
\end{align*} Hence, it suffices to prove Proposition~\ref{lem:FLIFu} with \(c \geqslant 0\). Let \(\rho>0\) be such that \(B_{2\rho} \subset B_1\) and  \(\varphi_\rho (x) = \varphi(x/\rho)\) where \(\varphi\) is as in Lemma \ref{SaBD4}. Provided that \(\varepsilon\) is sufficiently small, we have \begin{align*}
(-\Delta)^s(u-\varepsilon \varphi_\rho) +c(u-\varepsilon \varphi_\rho)\geqslant 0  \qquad \text{in }  B_{2\rho} \setminus B_{\rho/2}(\rho e_1)
\end{align*} and \(u-\varepsilon \varphi_\rho\geqslant 0\) in \((\R^n_+\setminus B_{2\rho})\cup B_{\rho/2}(\rho e_1)\). It follows from Proposition~\ref{aKht4} that \begin{align*}
u \geqslant \varepsilon \varphi_\rho \qquad \text{in } \R^n_+
\end{align*} where we used that \(c\geqslant0\). Since \(u(0)=\varphi_\rho(0)=0\), we conclude that \begin{align*}
\liminf_{h\to 0} \frac{u(he_1)}{h} \geqslant \varepsilon \partial_1 \varphi_\rho(0) >0,
\end{align*}
as desired.
\end{proof}

\section{Symmetry for the semilinear fractional parallel surface problem}\label{Svlif}

In this section, we will give the proof of Theorem \ref{CccFw}. For simplicity and the convenience of the reader, we will first state the particular case of Proposition~3.1 in \cite{MR3395749}, which we make use of several times in the proof of Theorem \ref{CccFw}. Note that \(c\) in Proposition \ref{QawmgWdG} corresponds to \(-c\) in \cite{MR3395749}---we made this change so that the notation of Proposition \ref{QawmgWdG} would agree with the notation in the proof of Theorem \ref{CccFw}.

\begin{prop}[Proposition~3.1 in \cite{MR3395749}] \label{QawmgWdG} Let \(H\) be a halfspace, \(\Omega \subset H\) be any open, bounded set, and \(c\in L^\infty (\Omega)\) be such that \(-c \leqslant c_\infty<\lambda_1(\Omega)\) in \(\Omega\) for some \(c_\infty \geqslant 0\),
where~\(\lambda_1(\Omega)\) is the first Dirichlet eigenvalue of \((-\Delta)^s\) in \(\Omega\). If \(u \in H^s(\R^n)\) satisfies \((-\Delta)^s u +cu \geqslant 0\) in \(\Omega\) and \(u\) is antisymmetric in \(\R^n\) then \(u \geqslant 0\) almost everywhere in \(\Omega\). 
\end{prop}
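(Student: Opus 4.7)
The plan is to argue by contradiction using the method of testing against $u^-$ combined with the variational characterization of $\lambda_1(\Omega)$. After a translation and rotation one can assume $H=\R^n_+$, so that $u$ is $x_1$-antisymmetric. Suppose, for contradiction, that $u^-:=\max\{-u,0\}$ is not a.e.\ zero on $\Omega$; then $v:=u^-\chi_\Omega$ is a non-trivial non-negative element of $\mathcal H^s_0(\Omega)$, hence admissible in the weak formulation, and therefore
\begin{equation*}
\mathfrak L(u,v)=\mathcal E(u,v)+\int_\Omega c\,u\,v\,\dd x\geqslant 0.
\end{equation*}
Since $uv=-(u^-)^2$ a.e.\ on $\Omega$, the assumption $-c\leqslant c_\infty$ gives $\int_\Omega c\,u\,v\,\dd x\leqslant c_\infty\|v\|_{L^2(\Omega)}^2$.

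The central step is to establish, using antisymmetry of $u$, the energy bound
\begin{equation*}
\mathcal E(u,v)\leqslant -\mathcal E(v,v).
\end{equation*}
To this end, let $\tilde v$ be the $x_1$-antisymmetric extension of $v$, i.e.\ $\tilde v=v$ on $\R^n_+$ and $\tilde v(x)=-v(x_\ast)$ on $\R^n_-$. A reflection change of variables $(x,y)\mapsto(x_\ast,y_\ast)$ in the Gagliardo double integral, together with the antisymmetry of $u$, yields $\mathcal E(u,\tilde v)=2\mathcal E(u,v)$. Splitting the integral defining $\mathcal E(u,\tilde v)$ into the four regions $\R^n_\pm\times\R^n_\pm$ and exploiting the antisymmetry of both $u$ and $\tilde v$ then produces the antisymmetric representation
\begin{equation*}
\mathcal E(u,\tilde v)=c_{n,s}\iint_{\R^n_+\times\R^n_+}K(x,y)\big(u(x)-u(y)\big)\big(v(x)-v(y)\big)\,\dd x\,\dd y-4c_{n,s}\int_\Omega (u^-)^2\kappa\,\dd x,
\end{equation*}
with the strictly positive kernel $K(x,y):=|x-y|^{-(n+2s)}-|x-y_\ast|^{-(n+2s)}$ and weight $\kappa(x):=\int_{\R^n_+}|x-y_\ast|^{-(n+2s)}\,\dd y$. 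The convexity-type inequality
\begin{equation*}
(u(x)-u(y))(u^-(x)-u^-(y))\leqslant -(u^-(x)-u^-(y))^2 \qquad \text{on } \Omega\times\Omega,
\end{equation*}
combined with the positivity of $K$ and the strictly negative zeroth-order term $-4c_{n,s}\int_\Omega (u^-)^2\kappa\,\dd x$, yields the bound once one handles the mixed region $\Omega\times(\R^n_+\setminus\Omega)$ by using the $\kappa$-term to absorb the possibly unsigned cross interactions.

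Combining everything gives $\mathcal E(v,v)\leqslant c_\infty\|v\|_{L^2(\Omega)}^2$. The variational characterization of the first Dirichlet eigenvalue of $(-\Delta)^s$ in $\Omega$ supplies $\mathcal E(v,v)\geqslant \lambda_1(\Omega)\|v\|_{L^2(\Omega)}^2$, so $(\lambda_1(\Omega)-c_\infty)\|v\|_{L^2(\Omega)}^2\leqslant 0$. Since $c_\infty<\lambda_1(\Omega)$ by assumption, this forces $v\equiv 0$, contradicting $u^-\not\equiv 0$ on $\Omega$, and hence $u\geqslant 0$ a.e.\ in $\Omega$.

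The main obstacle is the antisymmetric energy bound $\mathcal E(u,v)\leqslant -\mathcal E(v,v)$: because $v=u^-\chi_\Omega$ is not itself antisymmetric and $u$ may have arbitrary sign on $\R^n_+\setminus\Omega$, a direct region-by-region argument in the Gagliardo double integral does not close. The antisymmetric kernel reformulation, together with the reflection identity $\mathcal E(u,\tilde v)=2\mathcal E(u,v)$, are the essential nonlocal ingredients that fold the problematic cross terms into sign-definite quantities controlled by $\kappa$.
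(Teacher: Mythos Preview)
The paper does not supply a proof of this proposition; it is quoted as a particular case of Proposition~3.1 in \cite{MR3395749} and used as a black box, so there is no in-paper argument to compare against. Your overall scheme---test against $v=u^-\chi_\Omega$, derive an energy inequality of the form $\mathcal E(v,v)\leqslant c_\infty\|v\|_{L^2(\Omega)}^2$, and contrast with the Rayleigh quotient bound $\mathcal E(v,v)\geqslant\lambda_1(\Omega)\|v\|_{L^2(\Omega)}^2$---is exactly the strategy of the cited paper.

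However, the heart of your argument has a genuine gap. Your antisymmetric kernel identity for $\mathcal E(u,\tilde v)$ is correct, but the phrase ``yields the bound once one handles the mixed region $\Omega\times(\R^n_+\setminus\Omega)$ by using the $\kappa$-term to absorb the possibly unsigned cross interactions'' is not a proof: the $\kappa$-term is an integral of $(u^-)^2$ over $\Omega$, whereas the unsigned cross terms involve $u(y)$ for $y\in\R^n_+\setminus\Omega$, and there is no mechanism linking the two quantitatively. In fact the proposition as restated in the paper omits the hypothesis $u\geqslant 0$ in $H\setminus\Omega$, which is present in the original Fall--Jarohs statement and is satisfied in every application here (since $v_\lambda=u\geqslant 0$ on $(\Omega\cap H_\lambda')\setminus\Omega_\lambda'$). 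With that hypothesis the mixed-region contribution has the correct sign directly: for $x\in\Omega$ and $y\in\R^n_+\setminus\Omega$ one has $(u(x)-u(y))v(x)= -(u^-(x))^2-u(y)u^-(x)\leqslant -(v(x)-v(y))^2$, and no absorption is needed. Without that hypothesis the statement is actually false---by placing large negative mass of $u$ in $H\setminus\Omega$ (hence, by antisymmetry, large positive mass in the reflected region, which lies farther from $\Omega$) one can arrange $(-\Delta)^s u\geqslant 0$ in $\Omega$ with $c\equiv 0$ while $u<0$ throughout $\Omega$---so your proposed absorption argument cannot succeed in the generality you claim.
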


Now we will prove Theorem \ref{CccFw} in the case \(n\geqslant2\). We prove the case \(n=1\) later in the section.

\begin{proof}[Proof of Theorem \ref{CccFw} for \(n\geqslant2\)] 
Fix a direction \( e \in \Sph^{n-1}\). Without loss of generality, we may assume that~\(e=e_1\). Let \(T_\lambda = \{x\in \R^n \text{ s.t. } x_1 = \lambda\}\)---this will be our `moving plane' which we will vary by decreasing the value of \(\lambda\). Since \(\Omega\) is bounded, we may let \(M = \sup_{x\in \Omega}x_1\) which is the first value of \(\lambda\) for which \(T_\lambda\) intersects \(\overline{\Omega}\). Moreover, let \(H_\lambda = \{ x \in \R^n \text{ s.t. } x_1 >\lambda\}\), \(\Omega_\lambda = H_\lambda \cap \Omega\), and \(Q_\lambda:\R^n \to \R^n\) be given by \(x \mapsto x- 2x_1+2\lambda e_1\). Geometrically, \(Q_\lambda(x)\) is the reflection of \(x\) across the hyperplane~\(T_\lambda\). 

Since \(\partial \Omega \) is \(C^1\) there exists some \(\mu <M\) such that \(\Omega_\lambda' := Q_\lambda(\Omega_\lambda) \subset \Omega\) for all \(\lambda\in (\mu,M)\), see for example \cite{MR1751289}. Let \(m\) be the smallest such \(\mu\), that is, let \begin{align*}
m = \inf \{ \mu<M \text{ s.t. } \Omega_\lambda' \subset \Omega \text{ for all } \lambda \in  (\mu,M)\}.
\end{align*} Note that \(\Omega_m' \subset \Omega\). Indeed, if this were not the case then there would exist some \(\varepsilon>0\) such that \(\Omega_{m+\varepsilon}' \not\subset \Omega\), which would contradict the definition of~\(m\). 

As is standard in the method of moving planes, we will consider the function \begin{align*}
v_\lambda (x) &= u(x) - u(Q_\lambda(x)), \qquad x\in \R^n
\end{align*}  for each \(\lambda \in [m,M)\). It follows that \(v_\lambda\) is antisymmetric and satisfies \begin{align*}
\begin{PDE}
(-\Delta)^s v_\lambda +c_\lambda v_\lambda &= 0 &\text{in } \Omega_\lambda' ,\\
v_\lambda &\geqslant 0 &\text{in } H_\lambda'\setminus \Omega_\lambda' ,
\end{PDE}
\end{align*} where \begin{align*}
c_\lambda (x) &= \begin{cases}
\frac{f((u(x))-f(u(Q_\lambda(x)))}{u(x) - u(Q_\lambda(x))}, &\text{if } u(x) \neq u(Q_\lambda(x)), \\
0,& \text{if } u(x) = u(Q_\lambda(x)),
\end{cases}
\end{align*} and \(H_\lambda' := Q_\lambda (H_\lambda )\). We claim that \begin{align}
 v_m \equiv 0 \qquad \text{in } \R^n. \label{VxU8s}
\end{align} 

Before we show \eqref{VxU8s}, let us first prove the weaker statement \begin{align}
v_m \geqslant 0 \qquad \text{in } H_m' . \label{g3zkO}
\end{align}  Since \(f \in C^{0,1}_{\textrm{loc}} (\R)\), it follows that \(c_\lambda \in L^\infty (\Omega_\lambda')\) and that \begin{align*}
\|c_\lambda \|_{L^\infty (\Omega_\lambda')} \leqslant  [ f ]_{C^{0,1}([0,\| u \|_{L^\infty(\Omega)}])} .
\end{align*} Here, as usual, \begin{align*}
[ f ]_{C^{0,1}([0,a])} &= \sup_{\substack{x,y\in [0,a]\\x\neq y} } \frac{\vert f(x)-f(y)\vert }{\vert x - y \vert}.
\end{align*}  We cannot directly apply Proposition~\ref{QawmgWdG} as \([ f ]_{C^{0,1}([0,\| u \|_{L^\infty(\Omega)}])}\) might be large. However,
%
%
by Proposition~\ref{ayQzh}
we have that \begin{align*}
\lambda_1 (\Omega_\lambda' ) \geqslant C \vert \Omega_\lambda' \vert^{-\frac{2s}n} \to \infty
\quad \text{ as } \, \lambda \to M^- ,
\end{align*} 
%
%
and since \(\|c_\lambda \|_{L^\infty (\Omega_\lambda')}\) is uniformly bounded with respect to \(\lambda\), Proposition~\ref{QawmgWdG} implies that there exists some \(\mu \in [m,M)\) such that \(v_\lambda \geqslant 0\) in \(\Omega_\lambda'\) for all \(\lambda \in [\mu,M)\). In fact, since \(u\) is not identically zero, after possibly increasing the value of \(\mu\) (still with \(\mu<M\)) we claim that \(v_\lambda > 0\) in \(\Omega_\lambda'\) for all \(\lambda \in [\mu,M)\). Indeed, there exists \(x_0\in \Omega\) such that \(u(x_0)>0\) so, provided \(\mu\) is close to \(M\), \(Q_\lambda (x_0) \not\in \Omega\). Then, for all \(\lambda \in [\mu,M)\), \begin{align*}
v_\lambda (x_0) = u(x_0) >0
\end{align*} so it follows from the strong maximum principle Proposition~\ref{fyoaW} that \(v_\lambda>0\) in \(\Omega_\lambda'\).

This allows us to define \begin{align*}
\tilde{m} =\inf \{ \mu\in [m,M) \text{ s.t. } v_\lambda > 0 \text{ in } \Omega_\lambda' \text{ for all } \lambda \in [\mu,M)\}.
\end{align*} We claim that \(\tilde{m} = m\). For the sake of contradiction, suppose that we have \(\tilde{m} >m\). By continuity in \(\lambda\), \(v_{\tilde{m}} \geqslant 0\) in \(H_{\tilde{m}}'\) and then, as above, Proposition~\ref{fyoaW} implies that \(v_{\tilde{m}} >0\) in \(\Omega_{\tilde{m}}'\). Due to the definition of \(\tilde{m}\), for all \(0<\varepsilon < \tilde{m}-m\), the set \(\{ v_{\tilde m-\varepsilon} \leqslant 0\} \cap \Omega_{\tilde m-\varepsilon}' \) is non-empty. Let \(\Pi_\varepsilon \subset\Omega_{\tilde m-\varepsilon}'\) be an open set such that \( \{ v_{\tilde m-\varepsilon} \leqslant 0\} \cap \Omega_{\tilde m-\varepsilon}'  \subset \Pi_\varepsilon\). By making \(\varepsilon\) smaller we may choose \(\Pi_\varepsilon\) such that \(\vert \Pi_\varepsilon\vert \) is arbitrarily close to zero. Hence, applying Proposition~\ref{QawmgWdG} then Proposition~\ref{fyoaW} gives that \(v_{\tilde m -\varepsilon} >0\) in \(\Pi_\varepsilon\) which is a contradiction. This proves \eqref{g3zkO}. 

Since \(v_m \geqslant 0\) in \(H_m'\), Proposition~\ref{fyoaW} implies that either \eqref{VxU8s} holds or \(v_m>0\) in \(\Omega_m'\). For the sake of contradiction, let us suppose that \begin{align}
v_m>0 \qquad \text{in } \Omega_m'. \label{JQNtW}
\end{align} By definition, \(m\) is the minimum value of \(\lambda\) for which \(\Omega_\lambda' \subset \Omega\). There are only two possible cases that can occur at this point. 

\begin{quotation}
Case 1: There exists \(p \in ( \Omega_m' \cap \partial \Omega) \setminus T_m\).\\
Case 2: There exists \(p \in T_m \cap \partial \Omega\) such that \(e_1\) is tangent to \(\partial \Omega\) at \(p\).
\end{quotation} 

If we are in Case 1 then there is a corresponding point \(q\in ( G_m' \cap \partial G) \setminus T_m \subset \Omega_m'\) since \(\partial G\) is parallel to \(\partial \Omega\). But \(u\) is constant on \(\partial G\), so we have \begin{align*}
v_m(q) = u(q) - u(Q_m (q)) = 0
\end{align*} which contradicts \eqref{JQNtW}. 

If we are in Case 2 then there exists \(q \in T_m \cap \partial G \) such that \(e_1\) is tangent to \(\partial G\) at \(q\). Since \(u\) is a constant on \(\partial G\), the gradient of \(u\) is perpendicular to \(\partial G\) and so \(\partial_1 u(q) = 0. \) Moreover, by the chain rule, \begin{align*}
\partial_1 (u\circ Q_m)(q) = - \partial_1u(q) = 0. 
\end{align*} Hence, \begin{align*}
\partial_1 v_m (q) = 0. 
\end{align*} However, this contradicts Proposition~\ref{lem:FLIFu}. Thus, in both Case 1 and Case 2 we have shown that the assumption \eqref{JQNtW} leads to a contradiction, so we conclude that \eqref{VxU8s} is true. 

This concludes the main part of the proof. The final two steps are to show that \(u\) is radially symmetric and that \(\supp u = \overline{\Omega}\). Due to the previous arguments we know that for all \(e\in \Sph^{n-1}\) and \(\lambda \in [m(e), M(e))\), \begin{align*}
u(x) - u(Q_{\lambda,e}(x))\geqslant 0 \qquad \text{in } H_{\lambda,e}'
\end{align*} where \(Q_{\lambda,e} =Q_{\lambda}\), \(H'_{\lambda,e} =H_{\lambda}' \) as before but we've included the subscript \(e\) to emphasise that the result is true for each \(e\). It follows that for every halfspace we either have \(u(x) - u(Q_{\partial H}(x)) \geqslant 0\) for all \(x\in H\) or \(u(x) - u(Q_{\partial H}(x)) \leqslant 0\) for all \(x\in H\). By \cite[Proposition 2.3]{MR2722502}, we can conclude that there exists some \(z\in \R^n\) such that \(x \mapsto u(x-z)\) is radially symmetric. Moreover, since \(u\geqslant0\) in \(\Omega\) and not identically zero, \cite[Proposition 2.3]{MR2722502} also implies that \(x \mapsto u(x-z)\) is non-increasing in the radial direction. 

It follows that \(\supp u\) is a closed ball. (Note that we are using the convention \\\(\supp u = \overline{\{x \in \R^n \text{ s.t. } u(x)\neq 0 \}}\)). We claim that \(\supp u = \overline{\Omega}\). For the sake of contradiction, suppose that this is not the case. Then there exists a direction \(e\in \Sph^{n-1}\) and \(\lambda \in (m(e),M(e))\) such that \(\Omega'_\lambda \cap \supp u = \varnothing \). It follows that \(v_\lambda \equiv 0\) in \(\Omega_\lambda'\). This is a contradiction since we previously showed that \(v_\lambda>0\) in \( \Omega_\lambda'\) for all \(\lambda \in (m(e),M(e))\).
\end{proof}

For $n=1$, Theorem \ref{CccFw} reads as follows.

\begin{thm}  Suppose that \(G\) is a bounded open set in \(\R\),  \(\Omega=G+B_R\), \(f:\R \to \R\) is locally Lipschitz, and \(c_0\in \R\). Furthermore, assume that there exists a non-negative function \(u \in C^s(\R)\) that is not identically zero and satisfies\begin{align} 
\begin{PDE}
(-\Delta)^s u &= f(u) &\text{in } \Omega , \\ u&=0 &\text{in } \R \setminus \Omega , \\ u &= c_0 & \text{on } \partial G .
\end{PDE} 
\end{align} Then, up to a translation, \(u\) is even, \(u>0\) in \(\Omega\), and \(G=(a,b)\) for some \(a<b\). 
\end{thm}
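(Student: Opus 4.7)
The one-dimensional case follows by adapting the method of moving planes used for $n \geq 2$, with substantial simplifications coming from the discrete nature of $\partial \Omega$. I first apply the moving planes in the direction $e_1 = +1$: set $Q_\lambda(x) = 2\lambda - x$, $\Omega_\lambda = \Omega \cap (\lambda, +\infty)$, $\Omega'_\lambda = Q_\lambda(\Omega_\lambda)$, $M = \sup_{x \in \Omega} x$, and
\[
m := \inf\bigl\{\mu < M : \Omega'_\lambda \subset \Omega \text{ for all } \lambda \in (\mu, M)\bigr\}.
\]
For $v_\lambda(x) := u(x) - u(Q_\lambda(x))$, the local Lipschitz continuity of $f$ yields $(-\Delta)^s v_\lambda + c_\lambda v_\lambda = 0$ in $\Omega'_\lambda$ with $\|c_\lambda\|_{L^\infty}$ uniformly bounded by $[f]_{C^{0,1}([0,\|u\|_{L^\infty(\Omega)}])}$.

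Starting from $\lambda$ just below $M$---where $|\Omega'_\lambda|$ is small enough that Proposition \ref{QawmgWdG} applies in combination with the eigenvalue lower bound Proposition \ref{ayQzh}---and using the strong antisymmetric maximum principle Proposition \ref{fyoaW} together with continuity in $\lambda$, I slide the plane down to $\lambda = m$ and obtain $v_m \geq 0$ in $(-\infty, m)$, with the alternative $v_m \equiv 0$ or $v_m > 0$ in $\Omega'_m$ provided by Proposition \ref{fyoaW}. This step is verbatim the same as in the higher-dimensional case.

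The key simplification in one dimension occurs in the analysis at $\lambda = m$, where one must rule out the possibility $v_m > 0$ in $\Omega'_m$. The tangency case (Case 2 in the $n \geq 2$ proof) cannot occur, since $\partial \Omega$ is a finite set of isolated points and the direction $e_1$ is never tangent to such a set; in particular, the Hopf-type lemma Proposition~\ref{lem:FLIFu} is not invoked here. Only Case 1 remains: there exists $p \in \partial \Omega$ with $p < m$ and $Q_m(p) \in \partial \Omega$. The parallel structure $\Omega = G + B_R$ then supplies a point $q \in \partial G$, at distance $R$ from $p$ on the side of $\Omega$, which lies in $\Omega'_m$ and whose reflection $Q_m(q)$ also lies in $\partial G$. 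The overdetermined condition $u \equiv c_0$ on $\partial G$ then gives $v_m(q) = c_0 - c_0 = 0$, contradicting $v_m > 0$ in $\Omega'_m$. Hence $v_m \equiv 0$, so $u$ is symmetric about $m$.

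To conclude, I repeat the whole argument in the direction $-e_1$, obtaining symmetry of $u$ about a possibly different point $m'$. Two distinct centers of symmetry would force $u$ to be periodic, incompatible with the compact support of $u$; therefore $m = m'$ and, after a translation, $u$ is even. The moving planes argument also provides monotonicity of $u$, and the final step of the $n \geq 2$ proof---which rules out $\supp u \subsetneq \overline \Omega$ by observing that otherwise some plane $\lambda \in (m, M)$ would yield $\Omega'_\lambda \cap \supp u = \varnothing$, forcing $v_\lambda \equiv 0$ in $\Omega'_\lambda$ contrary to the strict positivity $v_\lambda > 0$ already established there---shows that $\Omega$ must be a single symmetric interval $(-L, L)$ with $L > R$ and $u > 0$ on $\Omega$, whence $G = (R - L, L - R) = (a, b)$. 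The most delicate point, as in the multi-dimensional case, is the identification of the point $q \in \partial G$ in Case 1, which here is made transparent by the finite, discrete structure of $\partial \Omega$.
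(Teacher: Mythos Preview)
Your proof is correct and follows the same moving-plane strategy as the paper. The only real difference is in how connectedness of $\Omega$ is obtained. The paper identifies the rightmost component of $\Omega$ as $(a-R,b+R)$, computes $m=\tfrac{a+b}{2}$ explicitly, and observes that $v_m\equiv 0$ forces $u\equiv 0$ on $\R\setminus(a-R,b+R)$; hence a left-to-right sweep would annihilate $u$ on $(a-R,b+R)$ as well if another component existed, contradicting $u\not\equiv 0$. You instead argue that two centers of symmetry would make $u$ periodic, hence $m=m'$, and then appeal to the $n\ge 2$ support argument. Note that $m=m'$ already forces $\Omega$ to be a single interval (since $m$ and $m'$ are the centers of the rightmost and leftmost components, which can coincide only if those components are the same), so your support argument is really only delivering $u>0$ on $\Omega$---which in 1D also follows directly from the strict monotonicity $v_\lambda>0$ for $\lambda\in(m,M)$. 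Both routes are sound; the paper's is a touch more direct, while yours keeps closer to the abstract $n\ge 2$ template and correctly notes that Case~2 (and with it Proposition~\ref{lem:FLIFu}) plays no role here.
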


\begin{proof}
In one dimension the moving plane \(T_\lambda\) is just a point \(\lambda\) and there are only two directions \(T_\lambda\) can move--from right to left or from left to right. We will begin by considering the case \(T_\lambda\) is moving from right to left. Let \((a-R, b +R)\), \(a<b\), be the connected component of \(\Omega\) such that \(\sup \Omega=b+R\). Using the same notation as in the proof of Theorem \ref{CccFw}, it is clear that \(M =b+R\) and \(m = \frac{a+b} 2 \). For all \(\lambda \in ( \frac{a+b} 2, b+R)\), let  \begin{align*}
v_\lambda (x) &= u(x) - u(-x+2 \lambda ), \qquad x\in \R. 
\end{align*} Arguing as in the proof of Theorem \ref{CccFw}, we obtain \begin{align}
v_\lambda &> 0 \qquad \text{in } (-\infty , \lambda) \label{8e2sD}
\end{align} for all \(\lambda \in  ( \frac{a+b} 2, b+R)\) and \begin{align*}
v_{\frac{a+b}2} &\geqslant 0 \qquad \text{in } (-\infty , (a+b)/2).
\end{align*} Then the overdetermined condition \(u(a)=u(b)=c_0\) implies that \begin{align*}
v_{\frac{a+b}2} (b)&= u(b) - u(a) =0.
\end{align*} Hence, Proposition~\ref{fyoaW} gives that \begin{align}
v_{\frac{a+b}2} &\equiv 0 \qquad \text{in } \R, \label{os0w4}
\end{align}that is, \(u\) is an even function about the point \(\frac{a+b}2\). Moreover, \eqref{os0w4} implies that \(u\equiv 0\) in \(\R\setminus (a-R,b+R)\). 

Now suppose that we move \(T_\lambda\) from left to right. If \(\Omega\) is made up of at least two connected components then repeating the argument above we must also have that \(u \equiv 0\) in \((a-R,b+R)\). However, \(u\) is not identically zero, so \(\Omega\) can only have one connected component. Finally, \eqref{8e2sD} implies that \(u\) is strictly monotone in \((\frac{a+b} 2 , b+R)\) which further implies the positivity of \(u\).
\end{proof}

\section{Appendix A: Technical lemmas} \label{ltalz}

In this appendix, we list several lemmas that are used throughout the paper.

\begin{lem} \label{IyXUO}
Let \(I \subset \R\) and \(\Omega \subset \R^n\) be open and bounded sets in \(\R\) and \( \R^n\) respectively. Suppose that \(\phi_t(x): \overline{\Omega}\times \overline{I} \to \R\) is continuous in \(  \overline{\Omega} \times \overline{I} \) and define \begin{align*}
m(t) = \min_{x \in \overline{\Omega}} \phi_t(x) . 
\end{align*} Then \(m \in C(\overline{I})\). 
\end{lem}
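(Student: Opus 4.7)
The plan is to invoke uniform continuity of \(\phi\) on the compact product \(\overline{\Omega}\times\overline{I}\), which will immediately yield that the family \(\{\phi_t(\cdot)\}_{t\in\overline{I}}\) is equicontinuous in the parameter \(t\), and then deduce continuity of the pointwise minimum by a standard sandwich estimate. No deeper ingredient seems to be needed.

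First I would observe that, since \(\Omega\subset\R^n\) and \(I\subset\R\) are bounded, their closures are closed and bounded subsets of Euclidean space, hence compact by the Heine--Borel theorem. Consequently \(\overline{\Omega}\times\overline{I}\) is compact, and the hypothesis that \(\phi\) is continuous on this set upgrades automatically to uniform continuity. Thus, for every \(\varepsilon>0\) there exists \(\delta>0\) such that
\begin{equation*}
|\phi_t(x)-\phi_s(x)|<\varepsilon\qquad\text{for all } x\in\overline{\Omega} \text{ and all } s,t\in\overline{I} \text{ with } |s-t|<\delta.
\end{equation*}
Note that for each fixed \(t\in\overline{I}\), the function \(x\mapsto\phi_t(x)\) is continuous on the compact set \(\overline{\Omega}\), so it attains its minimum and the definition of \(m(t)\) is meaningful.

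Next I would fix \(s,t\in\overline{I}\) with \(|s-t|<\delta\) and pick minimizers \(x_s,x_t\in\overline{\Omega}\) of \(\phi_s,\phi_t\), respectively. From the defining property of the minimum and the uniform estimate above,
\begin{equation*}
m(t)-m(s)=\phi_t(x_t)-\phi_s(x_s)\leq \phi_t(x_s)-\phi_s(x_s)<\varepsilon,
\end{equation*}
and by the symmetric argument \(m(s)-m(t)<\varepsilon\). Therefore \(|m(t)-m(s)|<\varepsilon\), which establishes the continuity of \(m\) on \(\overline{I}\).

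The only subtle point is to ensure that the openness and boundedness assumptions suffice for \(\overline{\Omega}\) and \(\overline{I}\) to be compact, which is immediate in \(\R^n\); no finer regularity of \(\Omega\) or \(I\) is required. I do not foresee any substantial obstacle: the argument is the classical continuity-of-the-minimum principle, and the compactness of the parameter space is what drives it.
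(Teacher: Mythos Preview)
Your proof is correct and follows essentially the same approach as the paper: both invoke uniform continuity of $\phi$ on the compact product $\overline{\Omega}\times\overline{I}$ to obtain a $\delta$ that controls $|\phi_t(x)-\phi_s(x)|$ uniformly in $x$, and then use a sandwich argument (you via explicit minimizers $x_s,x_t$, the paper via minimizing over $\bar{x}$) to conclude $|m(t)-m(s)|<\varepsilon$.
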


\begin{proof}
Since \(  \overline{\Omega} \times  \overline{I}\) is compact, \(\phi\) is uniformly continuous. Fix \(\varepsilon >0\) and let \((\bar{x},\bar{t}) \in \overline{\Omega} \times  \overline{I}\) be arbitrary. There exists some \(\delta>0\) (indepedent of \(\bar{t}\) and \(\bar{x}\)) such that if \((x,t) \in \overline{\Omega} \times  \overline{I}\) and \( \abs{(x,t)-(\bar{x},\bar{t})} < \delta \) then \begin{align*}
\abs{\phi_t(x)-\phi_{\bar{t}}(\bar{x})} < \varepsilon .
\end{align*} In particular, we may take \(x=\bar{x}\) to conclude that if \(t \in \overline{I}\) and \(\abs{t-\bar{t}} < \delta\) then \begin{align*}
\abs{\phi_t(\bar{x})-\phi_{\bar{t}}(\bar{x})} < \varepsilon 
\end{align*} for any \(\bar{x} \in \overline{\Omega}\). Consequently, \begin{align*}
\phi_t(\bar{x}) > \phi_{\bar{t}}(\bar{x}) - \varepsilon \geqslant m(\bar{t}) -\varepsilon \qquad \text{for all } \bar{x} \in \overline{\Omega}
\end{align*} Minimising over \(\bar{x}\) we obtain \begin{align*}
m(t) \geqslant  m(\bar{t}) -\varepsilon .
\end{align*} Similarly, we also have that \begin{align*}
m(\bar{t}) \geqslant  m(t) -\varepsilon . 
\end{align*} Thus, we have shown that if \(t \in \overline{I}\) and \(\abs{t-\bar{t} } < \delta\) then \begin{equation*}
\abs{m(t) -m(\bar{t} )} \leqslant \varepsilon . \qedhere
\end{equation*}
\end{proof}

\begin{lem} \label{oujju}
Let \(R>0\) and \(\zeta_R: \R \to \R \) be the solution to \begin{align}
\begin{PDE}
(-\Delta)^s \zeta_R &=0 &\text{in } (-R,R) ,\\
\zeta_R &= g_R &\text{in } \R \setminus (-R,R),
\end{PDE} \label{acLUt}
\end{align} where \begin{align*}
g_R(x) &= \begin{cases}
R, &\text{if }x >R ,\\
-R, &\text{if } x<-R . 
\end{cases}
\end{align*} Then \begin{align}
x \mapsto \frac{\zeta_R(x)}{x} \qquad \text{is  defined at }x=0 \label{JQM6p}
\end{align} and there exists a constant \(c_0=c_0(s)>0\) such that, as \(R\to \infty\), \begin{align*}
\frac{\zeta_R(x)}{x} \to c_0  \qquad \text{in } C_{\textrm{loc}} (\R).
\end{align*} 
\end{lem}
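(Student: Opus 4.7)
The plan is to exploit the scaling invariance of the fractional Laplacian to reduce the family $\{\zeta_R\}_{R>0}$ to the single function $\zeta_1$, and then to analyse $\zeta_1$ near the origin via the interior regularity of $s$-harmonic functions combined with the Hopf-type lemma Proposition~\ref{lem:FLIFu}.

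First, since $g_R$ is odd, so is $\zeta_R$ by uniqueness of~\eqref{acLUt}. Using the homogeneity identity $(-\Delta)^s[\zeta_1(\cdot/R)](x) = R^{-2s}((-\Delta)^s\zeta_1)(x/R)$ together with the boundary identity $R\,g_1(x/R) = g_R(x)$, one checks that $R\,\zeta_1(x/R)$ solves~\eqref{acLUt}; hence by uniqueness
\begin{equation*}
\zeta_R(x) = R\,\zeta_1(x/R)\qquad\text{for all } x\in\R,
\end{equation*}
so $\zeta_R(x)/x = \zeta_1(x/R)/(x/R)$. Since $\zeta_1$ is $s$-harmonic in $(-1,1)$ with bounded data, interior regularity yields $\zeta_1\in C^\infty((-1,1))$; oddness gives $\zeta_1(0)=0$, so the map $t\mapsto\zeta_1(t)/t$ extends continuously to $t=0$ with value $\zeta_1'(0)$. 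Consequently $\zeta_R(x)/x$ extends continuously to $x=0$ with value $\zeta_1'(0)$, proving~\eqref{JQM6p}. Setting $c_0:=\zeta_1'(0)$, for any compact set $K\subset\R$ and $R$ large enough that $K/R\subset(-1/2,1/2)$, the continuity of $t\mapsto\zeta_1(t)/t$ at the origin forces $\sup_{x\in K}|\zeta_R(x)/x - c_0|\to 0$ as $R\to\infty$, which is exactly the claimed $C_{\mathrm{loc}}(\R)$-convergence.

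The remaining point is strict positivity of $c_0$. Since $g_1\ge 0$ on $\R_+\setminus(-1,1)$ and $g_1\not\equiv 0$, the maximum principles of \S\ref{eBxsh} yield $\zeta_1\ge 0$ in $\R_+$ and then $\zeta_1>0$ in $(0,1)$ via Proposition~\ref{fyoaW}; Proposition~\ref{lem:FLIFu} with $c\equiv 0$ then delivers $\liminf_{h\to 0^+}\zeta_1(h)/h>0$, i.e., $c_0>0$. I expect this to be the main technical hurdle, because Proposition~\ref{lem:FLIFu} is stated for $u\in H^s(\R^n)$, whereas $\zeta_1\to\pm 1$ at infinity. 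This is bypassed either by verifying that the proof of Proposition~\ref{lem:FLIFu} extends to $u$ merely in the weighted space $L^1_s(\R)$ (the energy $\mathcal E(u,v)$ being well defined whenever $v$ has compact support in $B_1^+$), or, more concretely, by the explicit Poisson representation on $(-1,1)$, which after exploiting antisymmetry gives
\begin{equation*}
\frac{\zeta_1(h)}{h} = 2c_{1,s}(1-h^2)^s\int_1^\infty\frac{dy}{(y^2-1)^s(y^2-h^2)}\qquad\text{for }h\in(0,1),
\end{equation*}
whose limit as $h\to 0^+$ is a manifestly positive convergent integral.
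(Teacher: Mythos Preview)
Your proof is correct and shares the paper's core reduction: both use the scaling identity $\zeta_R(x)=R\,\zeta_1(x/R)$ to reduce everything to the behaviour of $\zeta_1(t)/t$ near $t=0$, and both ultimately appeal to the explicit Poisson representation on $(-1,1)$. The difference is in emphasis. The paper writes down the Poisson integral for $\zeta_1$ from the outset, obtains the closed-form expression
\[
\zeta_1(x)=2a_s\,x(1-x^2)^s\int_1^\infty\frac{dt}{(t^2-x^2)(t^2-1)^s},
\]
and proves both \eqref{JQM6p} and the $C_{\mathrm{loc}}$ convergence by direct dominated-convergence estimates on this integral; positivity of $c_0$ is then automatic. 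You instead argue \eqref{JQM6p} and the $C_{\mathrm{loc}}$ convergence softly, via interior $C^\infty$ regularity of $s$-harmonic functions and continuity of $t\mapsto\zeta_1(t)/t$, and only invoke the Poisson formula at the end to secure $c_0>0$ (correctly flagging that the Hopf-lemma route would require extending Proposition~\ref{lem:FLIFu} beyond $H^s$). Your approach is cleaner for the convergence statement and avoids the explicit error bounds the paper carries out; the paper's approach has the advantage of giving a quantitative rate and an explicit value for $c_0$. One cosmetic point: the constant in your Poisson formula should be the Poisson-kernel constant (denoted $a_s$ in the paper), not the fractional-Laplacian constant $c_{1,s}$.
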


\begin{proof} By the scale-invariance property of the fractional Laplacian, we may write \begin{align}
\zeta_R(x) &= R \zeta_1(x/R) \label{DkRUV}
\end{align} where \(\zeta_1\) is the solution to \eqref{acLUt} with \(R=1\). 
The function \(\zeta_1\) is given explicitly via the Poisson kernel, (for more details see \cite[Section 15]{MR3916700}) \begin{align*}
\zeta_1 (x) &= a_s\big ( 1 -x^2 \big)^s \int_{\R \setminus (-1,1)} \frac{g_1(t)}{(t^2-1)^s\vert x-t \vert} \dd t, \qquad x \in (-1,1)  .
\end{align*} where \(a_s\) is a positive normalisation constant. Using the definition of \(g_1\) and a change of variables, we obtain  \begin{align}
\zeta_1( x) = 2a_s x\big (1-x^2 \big )^s \int_1^\infty \frac{\dd t }{(t^2-x^2)(t^2-1)^s} . \label{XLPcN}
\end{align} Provided \(\vert x\vert <1/2\), \begin{align*}
\frac{1}{(t^2-x^2)(t^2-1)^s} \leqslant \frac 1{(t^2-(1/2)^2)(t^2-1)^s} \in L^1((1,\infty))
\end{align*} so by \eqref{XLPcN} and the dominated convergence theorem \begin{align*}
\lim_{x\to 0} \frac{\zeta_R(x)}{x} &= R\lim_{x\to 0} \frac{\zeta_1(x/R)}{x}= 2a_s  \int_1^\infty \frac{\dd t }{t^2(t^2-1)^s} =:c_0.
\end{align*} This proves \eqref{JQM6p}. 

Moreover, \begin{align}
\bigg \vert \frac{\zeta_R(x)}{x} -  c_0\bigg \vert &= 2a_s\bigg \vert    \int_1^\infty \bigg ( \frac {\big (1-(x/R)^2 \big )^s} {t^2-(x/R)^2} - \frac 1 {t^2} \bigg ) \frac{\dd t }{(t^2-1)^s} \bigg \vert \nonumber \\
&\leqslant 2a_s   \int_1^\infty \bigg \vert  \frac {t^2\big ( (1-(x/R)^2  )^s-1\big ) +(x/R)^2 \big )} {t^2\big (t^2-(x/R)^2 \big )}  \bigg \vert \frac{\dd t }{(t^2-1)^s} \nonumber \\
&\leqslant 2a_s \int_1^\infty \bigg \vert \frac{\big (1-(x/R)^2\big )^s -1}{t^2-(x/R)^2} \bigg \vert \frac{\dd t }{(t^2-1)^s} \nonumber\\
&\qquad + \frac{2a_s\vert x \vert^2}{R^2}\int_1^\infty \frac{\dd t}{t^2 \vert t^2-(x/R)^2 \vert (t^2-1)^s} . \label{ZRXKr}
\end{align} Suppose that \(x \in  \Omega' \subset \subset \R\). Then for \(R>0\) sufficiently large, \begin{align}
\frac 1 {\vert t^2-(x/R)^2  \vert } \leqslant C \qquad \text{for all } t>1 \label{RZqcI}
\end{align} and, by Bernoulli's inequality, \begin{align}
\vert \big (1-(x/R)^2\big )^s -1 \vert &\leqslant \frac C {R^2}. \label{zHcKY}
\end{align} Combining \eqref{ZRXKr}, \eqref{RZqcI}, and \eqref{zHcKY}, we conclude that \begin{align*}
\bigg \vert \frac{\zeta_R(x)}{x} - c_0\bigg \vert &\leqslant \frac C {R^2}\bigg ( \int_1^\infty  \frac{\dd t }{(t^2-1)^s}
+ \int_1^\infty \frac{\dd t}{t^2(t^2-1)^s} \bigg ) \to 0
\end{align*} as \(R \to \infty\).
\end{proof}

\begin{prop} \label{ayQzh}
Suppose that \(\Omega\) is a bounded open subset of \(\R^n\) and that \(\lambda(\Omega)\) is a Dirichlet eigenvalue of \((-\Delta)^s\) in \(\Omega\). Then \begin{align*}
\lambda(\Omega)\geqslant \frac{n}{2s} \vert B_1 \vert^{1+2s/n} c_{n,s}\vert \Omega \vert^{- \frac{2s}n } .
\end{align*} where \(c_{n,s}\) is the defined in \eqref{tK7sb}.
\end{prop}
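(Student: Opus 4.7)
The plan is to combine the energy identity for an eigenfunction with a simple bathtub-type rearrangement on the complement of $\Omega$. Let $u \in \mathcal{H}^s_0(\Omega)$, $u \not\equiv 0$, satisfy $(-\Delta)^s u = \lambda u$ in $\Omega$ in the weak sense. Testing the equation against $u$ itself (which is admissible since $u \in \mathcal{H}^s_0(\Omega)$) gives
\begin{equation*}
\lambda \|u\|_{L^2(\Omega)}^2 = \mathcal{E}(u,u) = \frac{c_{n,s}}{2} \iint_{\R^n \times \R^n} \frac{(u(x)-u(y))^2}{|x-y|^{n+2s}} \dd x \dd y.
\end{equation*}
I would split the domain of integration according to whether each of $x,y$ lies in $\Omega$ or in $\R^n \setminus \Omega$. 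Since $u\equiv 0$ outside $\Omega$, the piece over $(\R^n\setminus\Omega)\times(\R^n\setminus\Omega)$ vanishes, the two mixed pieces are equal, and the piece over $\Omega\times\Omega$ is nonnegative. Dropping the latter,
\begin{equation*}
\lambda \|u\|_{L^2(\Omega)}^2 \geq c_{n,s} \int_\Omega u(x)^2 \left( \int_{\R^n \setminus \Omega} \frac{\dd y}{|x-y|^{n+2s}} \right) \dd x.
\end{equation*}

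The key step is then to bound the inner tail integral from below, uniformly in $x\in\Omega$, by its counterpart over the complement of the ball $B_{R_0}(x)$ of the same volume as $\Omega$, namely $R_0 := (|\Omega|/|B_1|)^{1/n}$. This is a one-line rearrangement: the sets $A := \Omega \setminus B_{R_0}(x)$ and $A' := B_{R_0}(x) \setminus \Omega$ have equal measure (since $|\Omega| = |B_{R_0}(x)|$ implies $|A|=|A'|$), while on $A$ the kernel $|y-x|^{-(n+2s)}$ is at most $R_0^{-(n+2s)}$ and on $A'$ it is at least $R_0^{-(n+2s)}$; therefore $\int_{A} |y-x|^{-(n+2s)} \dd y \leq \int_{A'} |y-x|^{-(n+2s)} \dd y$, which rearranges to
\begin{equation*}
\int_{\R^n \setminus \Omega} \frac{\dd y}{|x-y|^{n+2s}} \geq \int_{\R^n \setminus B_{R_0}(x)} \frac{\dd y}{|x-y|^{n+2s}}.
\end{equation*}

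A direct computation in polar coordinates gives
\begin{equation*}
\int_{\R^n \setminus B_{R_0}(x)} \frac{\dd y}{|x-y|^{n+2s}} = n|B_1| \int_{R_0}^\infty r^{-2s-1} \dd r = \frac{n|B_1|}{2s} R_0^{-2s} = \frac{n}{2s}\,|B_1|^{1+2s/n}\,|\Omega|^{-2s/n},
\end{equation*}
which is independent of $x\in\Omega$. Inserting this back and cancelling $\|u\|_{L^2(\Omega)}^2>0$ yields the claimed bound. There is no serious obstacle: the only care needed is in the splitting of the Gagliardo seminorm (where one uses $u\equiv 0$ on $\R^n\setminus\Omega$ to identify the boundary contribution with the nonlocal tail) and in checking the elementary rearrangement comparison above; both are short.
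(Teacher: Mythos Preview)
Your proof is correct and follows essentially the same approach as the paper's: test the eigenvalue equation against the eigenfunction, drop the $\Omega\times\Omega$ contribution to the Gagliardo energy, and bound the tail integral $\int_{\R^n\setminus\Omega}|x-y|^{-n-2s}\,dy$ from below via comparison with the complement of the equimeasurable ball. The only cosmetic difference is that you write out the bathtub/rearrangement step explicitly, whereas the paper cites it as \cite[Lemma~6.1]{MR2944369}.
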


The result in Proposition \ref{ayQzh} is not new (see \cite{MR3824213,MR3063552}); however, to the authors' knowledge, the proof is original and simple.

\begin{proof}[Proof of Proposition \ref{ayQzh}]
Let \(u \in L^2(\Omega)\) satisfy \begin{align*}
\begin{PDE}
(-\Delta)^s u &= \lambda(\Omega) u &\text{in } \Omega, \\
u&= 0 &\text{in }\R^n \setminus \Omega, \\
\| u \|_{L^2(\Omega)}&=1. 
\end{PDE}
\end{align*} The existence of such a function can be proved via semi-group theory, see for example \cite[Chapter 4]{MR2569321}.

Using the integration by parts formula for the fractional Laplacian, see \cite[Lemma 3.3]{Dipierro2017Neum}, we have
\begin{align*}
\lambda(\Omega) = \frac{c_{n,s}}{2} \int_{\R^{2n} \setminus (\Omega^c)^2} \frac{\vert u(x)-u(y) \vert^2}{\vert x- y \vert^{n+2s}} \dd y \dd x \geqslant c_{n,s} \int_\Omega \int_{ \R^n \setminus \Omega } \frac{\vert u(x)\vert^2}{\vert x- y \vert^{n+2s}} \dd y \dd x. 
\end{align*}For \(x\in \Omega\), if \(B_r(x)\) is the ball such that \(\vert B_r(x)\vert = \vert \Omega\vert \) then by~\cite[Lemma 6.1]{MR2944369}, \begin{align*}
\int_{ \R^n \setminus \Omega } \frac{ \dd y }{\vert x- y \vert^{n+2s}} \geqslant \int_{ \R^n \setminus B_r(x)} \frac{ \dd y }{\vert x- y \vert^{n+2s}} =\frac{n}{2s} \vert B_1 \vert^{1+2s/n} \vert \Omega\vert^{-\frac{2s}{n}}.
\end{align*} Hence, \begin{equation*}
\lambda(\Omega) \geqslant \frac{n}{2s} \vert B_1 \vert^{1+2s/n}c_{n,s}\vert \Omega\vert^{-\frac{2s}{n}} \| u \|_{L^2(\Omega)}^2 =  \frac{n}{2s} \vert B_1 \vert^{1+2s/n}c_{n,s} \vert \Omega\vert^{-\frac{2s}{n}}.\qedhere
\end{equation*}
\end{proof}

\section*{Acknowledgments}

All the authors are members of AustMS. SD is
supported by the Australian Research Council DECRA DE180100957 “PDEs, free boundaries and
applications”.
GP is supported by the Australian Research Council (ARC) Discovery Early Career Researcher Award (DECRA) DE230100954 “Partial Differential Equations: geometric aspects and applications”, and is member of INdAM/GNAMPA.
GP and EV are supported by the Australian Laureate Fellowship FL190100081 “Minimal surfaces, free boundaries and partial differential equations”.
JT is supported by an Australian Government Research Training Program Scholarship.

\chapter{Quantitative stability for overdetermined nonlocal problems with parallel surfaces and investigation of the stability exponents} \label{Fw81drHU}

In this article, we analyze the stability of the parallel surface problem for semilinear equations driven by the fractional Laplacian. 
We prove a quantitative stability result that goes beyond the one previously obtained in \cite{MR4577340}. 

Moreover, we discuss in detail several techniques and challenges in obtaining the optimal exponent in this stability result. In particular, this includes an upper bound on the exponent via an explicit computation involving a family of ellipsoids. 
We also sharply investigate a technique that was proposed in \cite{MR3836150} to obtain the optimal stability exponent in the quantitative estimate for the nonlocal Alexandrov's soap bubble theorem,
obtaining accurate estimates to be compared with a new, explicit example.

\section{Introduction and main results} \label{tH4AETfY}

\subsection{The long-standing tradition of overdetermined problems}
Overdetermined problems are a broad class of partial differential equations (\textrm{PDEs}) where `too many conditions' are imposed on the solution. Since not every region will admit a solution which satisfies all the conditions, the objective in the study of overdetermined problems is to classify those regions which do admit solutions. 

Often, overdetermined problems arise naturally in applications as a combination of a well-posed partial differential equation (\textsc{PDE}) which describes the dynamics of a given physical system, as well as an extra condition, often referred to as the overdetermined condition, which describes a property or an optimal quality you would like the solution to possess. As such, overdetermined problems have a close relationship with optimization, free boundary problems and calculus of variations, particularly shape optimization, as well as many other applications including fluid mechanics, solid mechanics, thermodynamics, and electrostatics, see \cite{MR333220,MR2436831,MR3791463,MR4230553}.

The study of overdetermined problems began in the early 1970's with the celebrated paper of Serrin~\cite{MR333220}. In this influential paper, Serrin proved that, given a bounded domain \(\Omega \subset \R^n\) with~\(C^2\) boundary and \(f\in C^{0,1}_{\text{loc}}(\R)\), if there exists a positive solution \(u\in C^2(\overline \Omega )\) that satisfies the Dirichlet boundary value problem \begin{align}
\begin{PDE} 
-\Delta u &= f(u) &\text{in } \Omega, \\
u&=0 &\text{on } \partial \Omega,
\end{PDE} \label{570WVdsY}
\end{align} as well as the overdetermined condition \begin{align}
\partial_\nu u &= c \qquad  \text{on } \partial \Omega \label{gdBJQCnF}
\end{align} for some constant \(c\in \R \setminus \{0\}\), then~\(\Omega\) must be a ball. Here~\(\nu\) is the unit outward pointing normal to~\(\partial\Omega\) and~\(\partial_\nu u = \nabla u \cdot \nu\).
The proof relies on a powerful technique, now known as the \emph{method of moving planes}, a refinement of a reflection principle conceived by Alexandrov in \cite{MR0150710} to prove the so-called \emph{soap bubble theorem}, which states that the only connected closed hypersurfaces embedded in a space form with everywhere constant mean curvature are spheres.

Since the work of Alexandrov and Serrin, the analysis of overdetermined problems and the method of moving planes has seen intense research activity. Some of the literature on the method of moving planes, overdetermined problems, and symmetry in \textsc{PDE} include: \begin{itemize}
\item \emph{Alternate/incomplete overdetermined conditions:} One can consider elliptic equations such as~\eqref{570WVdsY} or otherwise with an alternate overdetermined condition to~\eqref{gdBJQCnF},
see~\cite{MR1616562, MR2436831, MR3231971, MR4230553}. The parallel surface problem fits into this category, see below for references. 
\item \emph{Integral identities:} Integral identities such as the Pohozaev identity are often employed in the analysis of overdetermined problems, providing an alternative approach to the method of moving planes. Such an alternative approach was pioneered in \cite{MR333221} and then further developed in, e.g., \cite{payne1989duality, MR980297, MR2448319, cianchi2009overdetermined, MR3663321, MR3959271, MR4054869, MR4124125, CavallinaPoggesi2024}. 
Also, it was successfully employed to study problems in the mixed boundary value setting, see \cite{MR2776063,MR4109828,MR4054862,MR4053600,PoggesiSB2022,MR4010636,MR4682805}.
\item \emph{Nonlinear elliptic equations:} The original paper of Serrin \cite{MR333220} dealt with uniformly elliptic quasilinear equations including equations of mean curvature type. Other (possibly degenerate) quasilinear and fully nonlinear equations have been analyzed \cite{MR980297, MR2293958, MR2366129, MR2448319, MR2764863, MR3040677, MR3385189, AlessandriniGarofalo1989symmetry, reichel1996radial, MR1674355, MR3977217}. 
\item \emph{Symmetry:} The method of moving planes was famously used to prove symmetry results for solutions to semilinear \textsc{PDE} in domains with symmetry in \cite{MR544879, MR634248} (see also~\cite{MR1159383, MR1190345, MR1025886}). For this type of results, an alternative approach which combines the Pohozaev identity and isoperimetric-type inequalities was pioneered in \cite{MR653200} and then further developed in \cite{MR1382205, MR3003296, MR4380032}. 
\item \emph{Nonlocal operators:} For overdetermined problems and symmetry for nonlocal equations, see \cite{MR3395749,MR3881478,MR3836150,MR3937999,RoleAntisym2022,MR4577340,MR3189604,MR3827344}.
\end{itemize}

We also mention that various symmetry-breaking results were established and can be found e.g. in~\cite{MR4046014,MR3954957,MR4451480,Ruiz2022,CavallinaPoggesi2024,CiraoloPacellaPolvara2023}.

\subsection{The nonlocal parallel surface problem and the
main results of this paper}
In this paper, we are concerned with the nonlocal parallel surface problem. The context of this problem is as follows: Let \(n\) be a positive integer and \(s\in (0,1)\). Suppose that \(G\) is an open bounded subset of~\(\R^n\) and 
let~\(\Omega  = G+B_R\) for some~\(R>0\), where~\(A+B\) is the Minkowski sum of sets defined by\footnote{Here and throughout this article the abbreviation `\(\textrm{s.t.}\)' always stands for `such that'. Though this notation is unconventional, we believe it leads to less ambiguity compared to the two standard symbols for `such that': the colon (:) and the vertical line (\(\vert\)).} \begin{align*}
A+B = \{ a+b \text{ s.t. } a\in A, b \in B\}. 
\end{align*}  Moreover, let \((-\Delta)^s\) denote the fractional Laplacian defined by \begin{align*}
(-\Delta)^s u(x) &= c_{n,s}\PV \int_{\R^n} \frac{u(x)-u(y)}{\vert x - y \vert^{n+2s}} \dd y 
\end{align*} where \(c_{n,s}\) is a positive normalization constant and \(\PV\) denotes the Cauchy principle value. Also, assume that \(f:\R \to \R\) is locally Lipschitz and satisfies \(f(0)\geqslant 0\).  Then, the nonlocal parallel surface problem asks: if there exists a function \(u\) that satisfies (in an appropriate sense) the equation \begin{align}
\begin{PDE}\label{problem00}
(-\Delta)^s u &=f(u) &\text{in } \Omega ,\\
u&=0 &\text{in } \R^n \setminus \Omega, \\
u&\geqslant 0 &\text{in } \R^n ,
\end{PDE}
\end{align} as well as the overdetermined condition \begin{align}
u = \text{constant} \qquad \text{on } \partial G, \label{k2Zr4V52}
\end{align} then is \(\Omega\) necessarily a ball? 

This question is referred to as the rigidity problem for the parallel surface problem. Furthermore, one can ask about the stability of this problem, that is, heuristically, if \(u\) satisfies~\eqref{problem00} and `almost' satisfies~\eqref{k2Zr4V52} then is \(\Omega\) `almost' a ball? This is the subject of the current article. Of course, one must be precise by what one means by `almost'---this will be made clear in the proceeding paragraphs as we describe the literature and our main results. 

The local analogue of the parallel surface problem (i.e. the case \(s=1\)) was introduced in \cite{MR2629887} in the context of invariant isothermic surfaces of a nonlinear non-degenerate fast diffusion equation, and in \cite{MR2916825} as a discrete analogue of the original Serrin's problem. Moreover, it also appears in~\cite{MR3420522}.

To see why the parallel surface problem can be
viewed as a discrete analogue of Serrin's problem, consider that~\(u\in C^2(\overline \Omega)\) satisfies~\eqref{570WVdsY} as well as~\(u= c_k>0\) on a countably infinite family of parallel surfaces~\(\Gamma_k\) that are a distance~\(1/k\) from the boundary of~\(\Omega\). Then~\(kc_k\) necessarily converges to some~\(c\) by regularity assumptions on~\(u\) and~\(u\)  satisfies~\eqref{gdBJQCnF}. Consequently, \(\Omega\) must be a ball.

To reiterate, in the parallel surface problem there is only a single parallel surface (not a family as just described); regardless, this was enough to prove in \cite{MR2916825,MR3420522} the rigidity result: if there exists a solution to~\eqref{570WVdsY} satisfying~\eqref{k2Zr4V52} then \(\Omega\) must be a ball. 

Subsequently, in \cite{MR3481178}, the stability of~\eqref{problem00} for the local problem was addressed. In that article, the authors used the shape functional \begin{align*}
\rho(\Omega) = \inf \big\{  R-r \text{ s.t. } B_r(x) \subset \Omega \subset B_R(x) \text{ for some } x\in \Omega \big\} 
\end{align*} to quantify how close \(\Omega\) is to being a ball and the semi-norm \begin{align*}
[u]_{\partial G} = \sup_{\substack{x,y\in \partial G \\ x\neq y}} \bigg \{ \frac{\vert u(x)-u(y) \vert}{\vert x - y \vert } \bigg \} 
\end{align*}
to quantify how close \(u\) is to being constant on \(\partial G \). They showed, under reasonable assumptions on~\(\Omega\), that \begin{align}
\rho(\Omega) \leqslant C [u]_{\partial G} . \label{dKQORoXN}
\end{align}

Moreover, for the nonlocal parallel surface problem, the rigidity problem was answered affirmatively in \cite{MR4577340} for the case \(f=1\) and \cite{RoleAntisym2022} for general \(f\). Moreover, \cite{MR4577340} also addressed the stability problem (still for the case \(f=1\)) and showed that \begin{align}
\rho(\Omega) \leqslant C [u]_{\partial G}^{\frac 1 {s+2}} . \label{svfQTEHq}
\end{align} It is interesting to observe that~\eqref{svfQTEHq} is sub-optimal in the sense that it does not recover the estimate~\eqref{dKQORoXN} when \(s=1\). This is due to the nonlocality of the fractional Laplacian which caused contributions from mass `far away' to have a significant effect on the analysis. 

This leads us to our main result. 
%
%
\begin{thm} \thlabel{oAZAv7vy}
Let \(G\) be an open bounded subset of~\( \R^n\) and \(\Omega = G+B_R\) for some \(R>0\). Furthermore, let \(\Omega\) and \(G\) have \(C^1\) boundary,
%
%
and let~\(f \in C^{0,1}_{\mathrm{loc}}(\R)\) be such that \(f(0)\geqslant 0\). Suppose that \(u\) satisfies~\eqref{problem00} in the weak sense. 

Then, \begin{align}
\rho(\Omega) &\leqslant C [u]_{\partial G}^{\frac 1 {s+2}} \label{srHxBYlS}
\end{align} with  \begin{align*}
&C:= \\ &C(n,s) \bigg [ \frac{(\max\{1, \diam \Omega \} )^{n+2s}\big(1+(\diam \Omega)^{2s} [f]_{C^{0,1}(\overline \Omega \times [0,\|u\|_{L^\infty(\Omega)}])} \big)(\diam \Omega)^{n+2s+2} } {R_0^{2s}R^s\big( f(0)+\|u\|_{L_s(\R^n)}\big)}  \\
&\hspace{20em} + (\diam \Omega)^{n-1} + \frac{\vert \Omega \vert} R  \bigg) \bigg ] \frac{(\diam \Omega)}{\vert \Omega \vert }
\end{align*} and \begin{align}
R_0=\min \big \{ R, [f]_{C^{0,1}([0,\|u\|_{L^\infty(\Omega)}])}^{-\frac 1{2s} } \big \} \label{K57sa7Sk}
\end{align}


\end{thm}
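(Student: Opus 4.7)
The plan is to upgrade the qualitative moving planes argument of Theorem \ref{CccFw} to a quantitative one. Fix a direction $e\in\Sph^{n-1}$, consider the moving plane $T_\lambda$, the reflection $Q_\lambda$, and the antisymmetric difference $v_\lambda(x)=u(x)-u(Q_\lambda(x))$, which satisfies $(-\Delta)^s v_\lambda + c_\lambda v_\lambda = 0$ in $\Omega'_\lambda=Q_\lambda(\Omega_\lambda)$ with $\|c_\lambda\|_{L^\infty}\leqslant [f]_{C^{0,1}([0,\|u\|_{L^\infty(\Omega)}])}$, as in the proof of Theorem \ref{CccFw}. The rigidity proof eventually concludes $v_m\equiv 0$ at the critical position $m=m(e)$. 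For stability we must instead quantitatively estimate how small $v_m$ can be on $\partial G\cap \Omega'_m$, where $v_m$ is controlled from above by $[u]_{\partial G}$ via the overdetermined condition, and then convert that smallness into a bound on the asymmetry of $\Omega$ in the direction $e$.

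The essential technical ingredient is a quantitative antisymmetric barrier. In the spirit of Lemma \ref{SaBD4}, I would construct $\varphi\in C^\infty(\R^n)$, $x_1$-antisymmetric and nonnegative in $\R^n_+$, with $(-\Delta)^s \varphi + c\varphi \leqslant 0$ in an annular half-region, $\partial_1\varphi(0)>0$, and $\varphi(x_1e_1)\gtrsim x_1$ for small $x_1>0$, but with all constants tracked explicitly in terms of $R$, $[f]_{C^{0,1}}$, $\diam\Omega$ and $\|u\|_{L_s(\R^n)}$. Rescaling $\varphi$ at scale $\rho>0$ and applying the antisymmetric weak maximum principle (Proposition \ref{aKht4}) to $v_m-\varepsilon\varphi_\rho$ then produces a quantitative linear lower bound of the form $v_m(x)\geqslant \varepsilon\,\dist(x,T_m)$ on a ball of radius $\rho$, where $\varepsilon$ is proportional to the infimum of $v_m$ on a suitable reference compact set $K$ bounded away from $T_m$ (which in turn is controlled from below by standard interior estimates applied to $u$ and its reflection).

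The geometric analysis at the critical position then follows the dichotomy from the proof of Theorem \ref{CccFw}. Either (Case 1) there is $p\in(\partial\Omega \cap \Omega'_m)\setminus T_m$, which by the parallel surface hypothesis corresponds to some $q\in \partial G\cap \Omega'_m$ with $Q_m(q)\in \partial G$, so that $|v_m(q)|=|u(q)-u(Q_m(q))|\leqslant [u]_{\partial G}\,|q-Q_m(q)|$; or (Case 2) $T_m$ is tangent to $\partial\Omega$ at a boundary point, where a quantitative Hopf-type estimate for $\partial_1 v_m$ (again derived from the same barrier) applies. In both cases, combining the upper bound on $v_m$ from the overdetermined condition with the barrier lower bound yields, after optimizing in $\rho$, an inequality of the form $\dist(q,T_m)\leqslant C[u]_{\partial G}^{1/(s+2)}$, which bounds the asymmetry of $\Omega$ across $T_m$.

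The exponent $1/(s+2)$ is the delicate point: it arises from balancing three contributions, namely the linear growth of the barrier (contributing the factor $1$), the $s$-exponent coming from the localized weak maximum principle for the fractional Laplacian at scale $\rho$, and an additional unit coming from the fact that the inhomogeneous term in the barrier equation scales like $x_1$, so that the amplitude $\varepsilon$ itself degenerates with $\rho$. The main obstacle in the proof is to carefully track all these constants --- including the dependencies on $R_0$ from \eqref{K57sa7Sk}, which enters precisely through the scaling $\rho \sim [f]_{C^{0,1}}^{-1/(2s)}$ needed to absorb the zero-th order term into the barrier --- so that the explicit constant $C$ in the statement is obtained. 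The final step is standard: the above estimate holds for every direction $e\in\Sph^{n-1}$, and a geometric lemma in the spirit of \cite{MR3481178, MR4577340} converts this directional asymmetry into the bound $\rho(\Omega)\leqslant C[u]_{\partial G}^{1/(s+2)}$ on the difference between the radii of the circumscribed and inscribed balls of $\Omega$.
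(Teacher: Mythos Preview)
Your overall strategy (quantitative moving planes, antisymmetric barrier, Case~1/Case~2 dichotomy, final geometric lemma) matches the paper's, but there is a genuine gap in the core step, and your explanation of where the exponent $\tfrac{1}{s+2}$ comes from is not right.

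You write that the barrier gives $v_m(x)\geqslant \varepsilon\,\dist(x,T_m)$ with $\varepsilon$ proportional to $\inf_K v_m$ on some fixed compact set $K$, and that this $\varepsilon$ is ``controlled from below by standard interior estimates applied to $u$ and its reflection''. But in a stability setting you cannot get a universal lower bound on $\inf_K v_m$ this way: as $[u]_{\partial G}\to 0$ the set $\Omega$ approaches a ball and $v_m\to 0$ everywhere, so any such lower bound collapses. The paper avoids this by using the barrier (a version of Lemma~\ref{SaBD4}, namely Proposition~\ref{TV1cTSyn}) to compare $v_m(p)/p_1$ not with a pointwise infimum but with the weighted $L^1$ mass $\int_{K} y_1\,v_m(y)\,dy$, where crucially $K=(\Omega\cap H'_\lambda)\setminus\Omega'_\lambda$ is the \emph{asymmetry region}. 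On that region $v_m=u$, and the quantitative Hopf lemma for $u$ itself (Corollary~\ref{zAPw0npM}) gives $u\geqslant C\,\delta^s_{\partial\Omega}$. Combined with $v_m(p)/p_1\leqslant [u]_{\partial G}$ this yields the key integral inequality
\[
\int_{(\Omega\cap H'_\lambda)\setminus\Omega'_\lambda} \delta_{\pi_\lambda}(x)\,\delta^s_{\partial\Omega}(x)\,dx \;\leqslant\; C\,[u]_{\partial G}.
\]

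The exponent $\tfrac{1}{s+2}$ then has nothing to do with ``balancing the linear growth of the barrier against the $s$-scaling of the fractional Laplacian''. It comes from a Chebyshev/level-set argument: split the asymmetry region into $\{x_1\delta^s_{\partial\Omega}>\gamma\}$ (controlled by the integral above, size $\lesssim [u]_{\partial G}/\gamma$) and $\{x_1\delta^s_{\partial\Omega}\leqslant\gamma\}$ (controlled by tubular volume bounds, size $\lesssim\gamma^{1/(s+1)}$), then optimise $\gamma=[u]_{\partial G}^{(s+1)/(s+2)}$ to obtain $|\Omega\triangle\Omega'|\leqslant C[u]_{\partial G}^{1/(s+2)}$. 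This is Proposition~\ref{7TQmUHhl}, and only after this does the geometric lemma convert the symmetric-difference bound into a bound on $\rho(\Omega)$. Your scheme, which tries to directly bound $\dist(q,T_m)$ from the barrier, never connects to the volume of the asymmetry region, and the claimed optimisation in $\rho$ does not produce the stated exponent.
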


For the precise definition of a solution satisfying~\eqref{problem00} in the weak sense, see Section~\ref{xxtYVHnv}. 

\thref{oAZAv7vy} is a direct extension of \cite{MR4577340} to the case \(f \neq 1\). Moreover, \thref{oAZAv7vy} relaxes some of the regularity assumptions on \(\Omega\) appearing in \cite{MR4577340}.
It is clear that the dependence on the volume $| \Omega |$ in the constant of Theorem \ref{oAZAv7vy}  can be
removed by means of the following bounds
$$
|B_1| R^n \le | \Omega | \le |B_1| ( \diam \Omega)^n
,
\text{ where } | B_1 | \text{ is the volume of the unit ball in } \R^n
,
$$
which hold true in light of the monotonicity of the volume with respect to inclusion.

Currently, an important open problem for the nonlocal parallel surface problem is to understand the optimality of \thref{oAZAv7vy}. Indeed, let \(\overline\beta(s)\) be the optimal exponent in~\eqref{srHxBYlS} defined as the supremum over \(\beta \in \R\) such that if \(u=u_f\) is a weak solution of~\eqref{problem00} for some \(f\) satisfying the assumptions of \thref{oAZAv7vy} then \(\rho(\Omega) \leqslant C [u]_{\partial G}^{\beta}\). In this framework, an explicit expression for~\(\overline\beta\) as a function of~\(s\) is still unknown. \thref{oAZAv7vy} implies that~\(\overline\beta(s) \geqslant \frac 1 {s+2}\) and \cite{MR3481178} establishes that~\(\overline\beta(1)=1\),
therefore we believe it is an interesting problem to detect optimal stability exponents in the nonlocal setting, also to recover, whenever possible, the classical exponent of the local cases in the limit.

In Section~\ref{z1jzKMXt}, we explicitly construct a family of domains \(G_\varepsilon\) that are small perturbations of a ball and corresponding solutions \(u_\varepsilon\) to \eqref{problem00} when \(f=1\) which satisfy \([u_\varepsilon]_{\partial G_\varepsilon} \simeq C \rho(\Omega_\varepsilon)\) as \(\varepsilon \to 0^+\). This entails the following result: 

\begin{thm} \thlabel{O4zCF33o}
Let \(G\) be an open bounded subset of \(\R^n\) and \(\Omega = G+B_R\) for some \(R>0\). Furthermore, let \(\Omega\)
and \(G\) have \(C^1\) boundary.

Then, we have that~\(\overline\beta(s) \leqslant 1\). 
\end{thm}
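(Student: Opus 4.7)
The plan is to exhibit an explicit one-parameter family $\{(\Omega_\varepsilon, G_\varepsilon, u_\varepsilon)\}_{\varepsilon > 0}$ of admissible configurations for which $\rho(\Omega_\varepsilon)$ is of order $\varepsilon$ while $[u_\varepsilon]_{\partial G_\varepsilon}$ is at most of order $\varepsilon$. Any candidate exponent $\beta > 1$ would then violate $\rho(\Omega) \leq C\,[u]_{\partial G}^\beta$ along this family since $\varepsilon \leq C\varepsilon^\beta$ fails as $\varepsilon \to 0^+$, hence $\overline{\beta}(s) \leq 1$. Following the local construction mentioned in the introduction, I take the ellipsoids
\[
\Omega_\varepsilon := \{ x \in \R^n \text{ s.t. } (1+\varepsilon)^{-2} x_1^2 + x_2^2 + \cdots + x_n^2 < 1 \}.
\]
From the inclusions $B_1 \subset \Omega_\varepsilon \subset B_{1+\varepsilon}$, combined with rotational symmetry in the $(x_2, \ldots, x_n)$-variables and the reflection $x_1 \mapsto -x_1$ (which force any optimal pair of concentric balls to be centred at the origin), one obtains $\rho(\Omega_\varepsilon) = \varepsilon$. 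Fix $R \in (0, 1/2)$ and define $G_\varepsilon := \{ x \in \Omega_\varepsilon \text{ s.t. } \dist(x, \R^n \setminus \Omega_\varepsilon) > R \}$; for $\varepsilon$ sufficiently small this is a $C^\infty$ bounded open set with $\Omega_\varepsilon = G_\varepsilon + B_R$, and as $\varepsilon \to 0^+$ it converges smoothly to $G_0 = B_{1-R}$.

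\textbf{Perturbation analysis.} Let $u_\varepsilon$ denote the weak solution of~\eqref{problem00} in $\Omega_\varepsilon$ with $f \equiv 1$. At $\varepsilon = 0$ the solution is explicit, $u_0(x) = \gamma_{n,s}(1 - \vert x \vert^2)^s$, and in particular is radial, hence constant on $\partial G_0$. To track the first-order behaviour in $\varepsilon$, I pull $u_\varepsilon$ back to the unit ball through the linear map $T_\varepsilon(x) := ((1+\varepsilon)x_1, x_2, \ldots, x_n)$ (which sends $B_1$ onto $\Omega_\varepsilon$) and set $\tilde u_\varepsilon := u_\varepsilon \circ T_\varepsilon$. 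The identity
\[
\vert T_\varepsilon(x) - T_\varepsilon(y) \vert^2 = \vert x - y \vert^2 + (2\varepsilon + \varepsilon^2)(x_1 - y_1)^2
\]
together with a Taylor expansion of the kernel $\vert\cdot\vert^{-(n+2s)}$ leads to a perturbed nonlocal equation on $B_1$ of the form $(1+\varepsilon)(-\Delta)^s \tilde u_\varepsilon - \varepsilon \mathcal{A}\tilde u_\varepsilon = 1 + O(\varepsilon^2)$, where
\[
\mathcal{A}\varphi(x) := (n+2s)\, c_{n,s} \PV \int_{\R^n} \frac{(\varphi(x) - \varphi(y))(x_1-y_1)^2}{\vert x-y\vert^{n+2s+2}} \dd y
\]
is an explicit anisotropic operator arising from the linearisation. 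Substituting the ansatz $\tilde u_\varepsilon = u_0 + \varepsilon v + O(\varepsilon^2)$ and matching the orders in $\varepsilon$, the first-order correction $v$ solves the linear problem $(-\Delta)^s v = \mathcal{A} u_0 - 1$ in $B_1$ with $v = 0$ in $\R^n \setminus B_1$.

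\textbf{Upper bound on the oscillation and main obstacle.} Since $u_0$ is constant on $\partial G_0$ and $\partial G_\varepsilon$ is an $O(\varepsilon)$ deformation of $\partial G_0$ in $C^1$, the expansion $\tilde u_\varepsilon = u_0 + \varepsilon v + O(\varepsilon^2)$ combined with the $C^\infty$ interior regularity of $v$ in a neighbourhood of $\partial G_0$ (which lies at positive distance from $\partial B_1$) yields $[\tilde u_\varepsilon]_{T_\varepsilon^{-1}(\partial G_\varepsilon)} \leq C(n,s,R)\,\varepsilon$. Since $T_\varepsilon$ is bi-Lipschitz with constants $1 + O(\varepsilon)$, this transfers to $[u_\varepsilon]_{\partial G_\varepsilon} \leq C'(n,s,R)\,\varepsilon$. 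Together with $\rho(\Omega_\varepsilon) = \varepsilon$, this forces $\overline{\beta}(s) \leq 1$. The chief technical difficulty is the rigorous justification of the $O(\varepsilon^2)$ remainder in the ansatz: $\tilde u_\varepsilon$ has only $C^s$ regularity up to $\partial B_1$, so the expansion cannot be read directly as a classical Taylor expansion on the whole ball. This is handled by combining global energy bounds in $\mathcal H^s_0(B_1)$ for the difference $\tilde u_\varepsilon - u_0 - \varepsilon v$ (obtained by testing the perturbed equation against itself and using coercivity) with standard interior Schauder-type estimates away from $\partial B_1$, which is sufficient because $\partial G_\varepsilon$ stays uniformly separated from $\partial \Omega_\varepsilon$.
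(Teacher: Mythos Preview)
Your strategy is correct and would work, but it is considerably more laborious than the paper's route. The paper avoids the entire perturbation machinery by exploiting a fact you did not use: for the fractional torsion problem on an ellipsoid, the solution is known \emph{explicitly}. Namely,
\[
u_\varepsilon(x) = \gamma_{n,s,\varepsilon}\left(1 - \frac{x_1^2}{(1+\varepsilon)^2} - |x'|^2\right)^s_+,
\]
with $\gamma_{n,s,\varepsilon}$ an explicit constant involving a hypergeometric function (see \cite{MR4181195}). With this formula in hand, the paper parametrises $\partial G_\varepsilon$ explicitly, writes out $(u_\varepsilon\circ\phi_\varepsilon)(r)$ as a concrete expression, and through a sequence of elementary (if technically delicate) calculus lemmata obtains the sharp limit
\[
\lim_{\varepsilon\to 0^+}\frac{[u_\varepsilon]_{\partial G_\varepsilon}}{\rho(\Omega_\varepsilon)} = s\gamma_{n,s}\Big(\tfrac34\Big)^{s-1},
\]
which is strictly more than the upper bound $[u_\varepsilon]_{\partial G_\varepsilon}\leqslant C\varepsilon$ you are after.

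Your approach, by contrast, does not rely on the ellipsoid being special and would in principle extend to other one-parameter perturbations of the ball or to other nonlinearities $f$; it is more robust but less precise. The cost is that the remainder control you sketch at the end---energy estimates plus interior Schauder to get the $O(\varepsilon^2)$ term in $C^1_{\mathrm{loc}}$---is genuine work that you would need to write out carefully, whereas the paper never faces this issue because everything is explicit. Both approaches are valid; the paper's is shorter given the explicit solution, yours is more flexible.
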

 
As far as the authors are aware, these are the only known estimates for~\(\overline\beta(s)\). Furthermore, by considering the case in which~\(\Omega = \Omega_\varepsilon\) is a small perturbation of a ball and exploting
interior regularity for the fractional Laplacian, one would expect that~\(\rho(\Omega_\varepsilon) \simeq [u_\varepsilon]_{\partial G_\varepsilon} \simeq \varepsilon\) (up to a constant) as~\(\varepsilon \to 0^+\) which suggests that~\(\overline\beta(s)=1\) for all~\(s\in (0,1]\).  In Section~\ref{fEsBEcuv}, we give a broad discussion on some of the challenges that
the nonlocality of the fractional Laplacian presents in obtaining this result. In particular, by way of an example via the Poisson representation formula, we show that estimates for a singular integral involving the reciprocal of the distance to the boundary function play a key role in obtaining the anticipated optimal result. This suggests, surprisingly, that fine geometric estimates for the distance function close to the boundary are required to obtain the optimal exponent. 

\subsection{Scrutiny of the stability exponent} In the recent literature,
some inventive methods have been introduced to improve
the stability results obtained via the moving plane method.
A remarkable one was put forth in~\cite{MR3836150}.
Roughly speaking, the setting considered in~\cite[Proposition~3.1(b)]{MR3836150} focused
on the critical hyperplane~$\pi_\lambda=\{x_n=\lambda\}$ for the moving
plane technique for a set~$\Omega$ which contains the ball~$B_r$ and is contained in the ball~$B_R$, looked at the symmetric difference between $\Omega$ and its reflection~$\Omega'$ and obtained a bound on the measure of the set\begin{equation}\label{pijwdled-2}
\big\{ x\in \Omega \triangle \Omega ' \text{ s.t. } \dist(x,\pi_\lambda) \leqslant \gamma \big\} \end{equation}
which was linear in both~$\gamma$ and~$R-r$.

This constituted a fundamental ingredient in~\cite{MR3836150} to achieve an optimal stability exponent.

Unfortunately, we believe that, in the very broad generality in which the result is stated in~\cite{MR3836150}, this statement may not be true, and we present an explicit counter-example.

Nevertheless, in our opinion, a weaker version of~\cite[Proposition 3.1(b)]{MR3836150} does hold true. We state and prove this new result and check its optimality against the counter-example mentioned above. This construction plays a decisive role in improving the stability exponent.

Though we refer the reader to Section~\ref{c8w8u7Hn} for full details
of this strategy, let us anticipate here some important details.

More specifically, in the forthcoming Theorem~\ref{m9q9X2hE}, under the additional assumption that the set~$\Omega$ is of class~$C^\alpha$, with~$\alpha>1$, we will bound the measure of the set in~\eqref{pijwdled-2}, up to constants,
by\begin{equation}\label{piqfk0uwefhiovwegui}
\gamma (R-r)^{1-\frac 1 \alpha }.\end{equation}
We stress that this bound formally recovers exactly the one stated in~\cite[Proposition 3.1(b)]{MR3836150} when~$\alpha=+\infty$.

However, we believe that the bound in~\eqref{piqfk0uwefhiovwegui} is optimal and cannot be improved (in particular, while the dependence in~$\gamma$ of the estimate above is linear, the dependence in~$R-r$, surprisingly, is not!). This will be shown by an explicit counter-example put forth in Theorem~\ref{example61}---while the analytical details
of this counter-example are very delicate, the foundational idea behind
it is sketched in Figure~\ref{Fig1} (roughly speaking, the example
is obtained by a very small and localized modification of a ball  to induce
a critical situation at the maximal location allowed by the regularity of the set; the construction is technically demanding since the constraint for the set of ``being between two balls'' induces two different scales, in the horizontal and in the vertical directions, which in principle could provide different contributions).

As a consequence of the bound~\eqref{piqfk0uwefhiovwegui} for the measure of the set in~\eqref{pijwdled-2}, as given in \thref{m9q9X2hE}, the stability exponent~$1/(s+2)$ of \thref{oAZAv7vy} can be improved to~$\alpha/( 1 + \alpha (s+1))$ provided that~$\Omega$ is of class~$C^\alpha$ for~$\alpha >1$: we refer to Section~\ref{subsec:new improvement exponent} and \thref{thm:improvement} for details.

\subsection{Organization of paper}

The paper is organized as follows. In Section~\ref{xxtYVHnv}, we summarize the notation and basic definitions used throughout the article. In Section~\ref{S7DGIjUf}, we give several quantitative maximum principles---in both the non-antisymmetric and antisymmetric situations---that are required in the proof of \thref{oAZAv7vy} and, in Section~\ref{sec:stabest}, we give the proof of \thref{oAZAv7vy}.

The remaining sections are broadly focused on the techniques and challenges in obtaining the optimal stability exponent.
In Section~\ref{fEsBEcuv}, we discuss the surprising role that
fine geometric estimates for the distance function close to the boundary play in the attainment of the optimal exponent. In Section~\ref{c8w8u7Hn}, we accurately discuss the possibility of obtaining the optimal exponent
and comment about some criticalities in the existing literature,
and, in Section~\ref{z1jzKMXt}, we construct an explicit family of solutions which implies \thref{O4zCF33o}.

\section*{Acknowledgements} 

All the authors are members of AustMS.
GP is supported by
the Australian Research Council (ARC) Discovery Early Career Researcher Award (DECRA) DE230100954 ``Partial Differential Equations: geometric aspects and applications'' and is member of the Gruppo Nazionale Analisi Matematica Probabilit\`a e Applicazioni (GNAMPA) of the Istituto Nazionale di Alta Matematica (INdAM).
JT is supported by an Australian Government Research Training Program Scholarship.
EV is supported by the Australian Laureate Fellowship FL190100081
``Minimal surfaces, free boundaries and partial differential equations''.

\section{Preliminaries and notation} \label{xxtYVHnv}

In this section, we fix the notation that
we will use throughout the article and give some relevant definitions. Let~\(n\geqslant 1\), ~\(s\in (0,1)\). The fractional Sobolev space~\(H^s(\R^n)\) is defined as \begin{align*}
H^s(\R^n)  = \big\{ u \in L^2(\R^n) \text{ such that } [u]_{H^s(\R^n)} <+\infty \big\} 
\end{align*} where \([\cdot ]_{H^s(\R^n)}\) is the Gagliardo semi-norm given by \begin{align*}
[u ]_{H^s(\R^n)} = \frac{c_{n,s}} 2 \int_{\R^n} \int_{\R^n} \frac{\vert u(x) - u(y) \vert^2}{\vert x-y\vert^{n+2s}} \dd y \dd x 
\end{align*} and \(c_{n,s}\) is the same constant appearing in the definition of the fractional Laplacian. Moreover, given an open, bounded set~\(\Omega \subset \R^n\), define \begin{align*}
\mathcal Q(\Omega) = \big ( \R^n \times \R^n \big ) \setminus \big ( \Omega^c \times \Omega^c\big )
\end{align*} with \(\Omega^c = \R^n \setminus \Omega\) and \begin{align*}
\mathcal E (u,v)= \frac{c_{n,s}}{2} \iint_{\mathcal Q(\Omega) }\frac{( u(x) - u(y) )(v(x)- v(y)) }{\vert x-y\vert^{n+2s}} \dd y \dd x .
\end{align*} 

Now, let \(c\in L^\infty(\Omega)\) and \(g\in L^2(\Omega)\). We say that a function~\(u\in L^1_{\mathrm{loc} } (\R^n) \) such that \(\mathcal E(u,u)<+\infty\) satisfies~\((-\Delta)^s u + c  u \geqslant g\) (respectively, \(\leqslant\)) in the \emph{weak sense} if \begin{align*}
\mathcal E(u,v) + \int_\Omega c uv \dd x \geqslant \int_\Omega g v \dd x  \qquad (\text{respectively, } \leqslant )
\end{align*} for all \(v\in H^s(\R^n)\) with \(v\geqslant 0\) in \(\R^n\) and \(v=0\) in \(\R^n\setminus \Omega\). In this case, we also say that~\(u\) is a supersolution (respectively,
subsolution) of~\((-\Delta)^s u + c  u = g\) in the weak sense. Moreover, we say a function~\(u\in L^1_{\mathrm{loc} } (\R^n) \) such that \(\mathcal E(u,u)<+\infty\) satisfies~\((-\Delta)^s u + c  u = g\) in the \emph{weak sense} if \begin{align*}
\mathcal E(u,v) + \int_\Omega c uv \dd x = \int_\Omega g v \dd x
\end{align*} for all \(v\in H^s(\R^n)\) with \(v=0\) in \(\R^n\setminus \Omega\). This is equivalent to~\(u\) being both a weak supersolution and a weak subsolution. 

We also define the following weighted \(L^1\) norm via \begin{align*}
\| u\|_{L_s(\R^n)}  = \int_{\R^n} \frac{\vert u(x) \vert}{1+\vert x \vert^{n+2s} } \dd x
\end{align*} and the space \(L_s(\R^n)\) as \begin{align*}
L_s(\R^n)  = \big\{ L^1_{\mathrm{loc}} (\R^n) \text{ such that } \| u\|_{L_s(\R^n)} <+\infty \big\}.
\end{align*}

\medskip

Next, we describe some notation regarding antisymmetric functions. This is closely related to the notation of the method of moving planes; however, we will defer our explanation of the method of moving planes until Section~\ref{sec:stabest} in the interests of simplicity. 

A function \(v: \R^n \to \R\) is said to be \emph{antisymmetric} with respect to a plane \(T\) if \begin{align*}
v(Q_T(x)) = -v(x) \qquad \text{for all } x\in \R^n
\end{align*} where \(Q_T : \R^n \to \R^n\) is the function that reflects~\(x\) across~\(T\). Often it will suffice to consider the case~\(T=\{x_1=0\}\), in which case~\(Q_T(x) = x-2x_1e_1\). 

For simplicity, we will refer to~\(v\) as \emph{antisymmetric} if it is antisymmetric with respect to the plane~\(\{x_1=0\}\). Moreover, we sometimes write~\(x'\) to denote~\(Q_T(x)\) when it is clear from context what~\(T\) is.

\medskip

Finally, some other notation we will employ through the article is:
given a set~\(A\subset \R^n\), the function~\(\chi_A : \R^n \to \R \) denotes the characteristic function of~\(A\), given by \begin{align*}
\chi_A(x) &= \begin{cases}
1, &\text{if } x\in A ,\\
0, &\text{if } x\not \in A,
\end{cases}
\end{align*} and \(\delta_A : \R^n \to [0,+\infty]\) denotes the distance function to \(A\), given by \begin{align*}
\delta_A(x) = \inf_{y \in A} \vert x-y\vert. 
\end{align*}

We will denote by~$\R^n_+:=\{x=(x_1,\dots,x_n)\in\R^n\;{\mbox{s.t.}}\; x_1>0\}$ and, given~$A\subseteq\R^n$, we will denote by~$A^+:=
A\cap\R^n_+$.

Moreover, if \(A\) is open and bounded with sufficiently regular boundary (we only use the case in which~\(A\) has smooth boundary), we will denote by \(\psi_A\) the (unique) function in \(C^\infty(A) \cap C^s(\R^n)\) such that \(\psi_A\) satisfies \begin{align} \label{bNuXHq1g}
\begin{PDE}
(-\Delta)^s \psi_A &=1 &\text{in } A ,\\
\psi_A&=0 &\text{in } \R^n \setminus A. 
\end{PDE}
\end{align}
When \(A=B_r(x_0)\), \(\psi_A\) is known explicitly and is given by \begin{align}
\psi_{B_r(x_0)}(x) &= C \big ( r^2 - \vert x- x_0\vert^2 \big )^s_+, \qquad \text{for all } x\in \R^n \label{v2FT4ISn}
\end{align} with \(C\) a positive normalisation constant depending only on \(n\) and \(s\). Also, we will use \(\lambda_1(A)\) to denote the first Dirichlet eigenvalue of the fractional Laplacian. 

Throughout this article, positive constants will be denoted by \(C\) or \(C(a_1,\dots,a_k)\) when we want to emphasise that \(C\) depends only on \(a_1,\dots,a_k\). Moreover, the explicit value of \(C\) or \(C(a_1,\dots,a_k)\) may change from line to line.

\section{Quantitative maximum principles up to the boundary} \label{S7DGIjUf}

The basic idea of the proof of \thref{oAZAv7vy} is to apply the method of moving planes, as in the proof of the analogous rigidity result \cite[Theorem 1.4]{RoleAntisym2022}, but replace `qualitative' maximum principles, such as the strong maximum principles, with `quantitative' maximum principles, such as the Harnack inequality.

The purpose of this section is to prove several such quantitative maximum principles both in the non-antisymmetric and the antisymmetric setting. In particular, we require that these maximum principles hold up to the boundary of regions with very little boundary regularity. Moreover, we are careful to keep track of precisely how constants depend on relevant quantities since we believe that this may be useful in future analyses of the nonlocal parallel surface problem. 

The section is split into two parts: maximum principles for equations without any antisymmetry assumptions and maximum principles for equations with antisymmetry assumptions. 

\subsection{Equations without antisymmetry}

In this subsection, we will give several maximum principles for linear equations with zero-th order terms without any antisymmetry assumptions. This will culminate in a quantitative analogue of the Hopf lemma for non-negative supersolutions of general semilinear equations (see \thref{zAPw0npM} below). 

Our first result is as follows: 

\begin{prop} \thlabel{YCl8lL0I}
Let \(\Omega\) be an bounded open subset of \(\R^n\), \(c\in L^\infty(\Omega)\), and \(g\in L^2(\Omega)\) with \(g\geqslant 0\). Suppose that \(u\in L_s(\R^n)\) satisfies \begin{align*}
\begin{PDE}
(-\Delta)^s u+cu &\geqslant g &\text{in } \Omega, \\
u&\geqslant 0 &\text{in } \R^n
\end{PDE}
\end{align*} in the weak sense. 

Then,
\begin{align*}
u(x) &\geqslant  C\Big(\essinf_\Omega g + \| u\|_{L_s(\R^n)}\Big) \delta_{\partial \Omega}^{2s}(x) \qquad \text{for a.e. } x\in \Omega
\end{align*} with \(C:=C(n,s)(\max\{1, \diam \Omega \} )^{-n-2s}\big(1+(\diam \Omega)^{2s}\|c^+\|_{L^\infty(\Omega)}\big)^{-1}\).
\end{prop}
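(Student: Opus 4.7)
The plan is to establish the inequality by a weak comparison argument against a carefully chosen barrier on a subball of $\Omega$: the two terms on the right-hand side, $\essinf_\Omega g$ and $\|u\|_{L_s(\R^n)}$, will arise naturally from the source of the equation and from the nonlocal exterior data, respectively. First, since $u\geq 0$ one has $(c^+-c)u\geq 0$, hence $(-\Delta)^s u+c^+ u\geq g$ still holds in the weak sense, so one may assume from the outset that $c\geq 0$. Fix an interior point $x_0\in\Omega$, set $r:=\delta_{\partial\Omega}(x_0)$, and choose $r_0:=\min\{r,R_0\}$ with $R_0=c_{\ast}(n,s)\,\|c\|_\infty^{-1/(2s)}$ small enough that $\|c\|_\infty\,\|\psi_{B_{r_0}(x_0)}\|_\infty\leq 1/2$. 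By \thref{ayQzh}, this choice places $\|c\|_\infty$ strictly below $\lambda_1(B_0)$ with $B_0:=B_{r_0}(x_0)$, so the weak maximum principle (cf.\ \thref{QawmgWdG}) holds for $L:=(-\Delta)^s+c$ on $B_0$, and $L$ admits a positive Green function $G^L_{B_0}$ and Poisson kernel $P^L_{B_0}$. An elementary case split on whether $r\leq R_0$ or not gives
\[
r_0^{2s}\;\geq\;\frac{r^{2s}}{1+C(n,s)\,\|c^+\|_\infty\,(\diam\Omega)^{2s}},
\]
which is exactly what contributes the factor $(1+(\diam\Omega)^{2s}\|c^+\|_\infty)^{-1}$ in the stated constant.

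Next, let $\varphi$ solve $L\varphi=g$ in $B_0$ with $\varphi=u$ outside $B_0$; weak comparison applied to $u-\varphi$ (an $L$-supersolution vanishing outside $B_0$) yields $u\geq\varphi$ in $B_0$. Decompose $\varphi=\varphi_1+\varphi_2$, where $\varphi_1$ has zero exterior data and source $g$, while $\varphi_2$ is $L$-harmonic in $B_0$ with exterior data $u$. A further max-principle argument against $(\essinf_\Omega g)\,\psi^L_{B_0}$ gives $\varphi_1(x_0)\geq (\essinf_\Omega g)\,\psi^L_{B_0}(x_0)$, while the standard Poisson representation gives $\varphi_2(x_0)=\int_{B_0^c}P^L_{B_0}(x_0,y)\,u(y)\,dy$. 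Both perturbation comparisons $\psi^L_{B_0}\geq\tfrac{1}{2}\psi_{B_0}$ and $P^L_{B_0}\geq\tfrac{1}{2}P_{B_0}$ follow by applying the Green representation for $(-\Delta)^s$ to the respective differences (whose sources are $c\,\psi^L_{B_0}$ and $c\,P^L_{B_0}$) and invoking the choice of $r_0$. Combined with the explicit identity $\psi_{B_{r_0}(x_0)}(x_0)=C_0(n,s)\,r_0^{2s}$ and the classical lower bound $P_{B_0}(x_0,y)\geq C(n,s)\,r_0^{2s}\,(r_0+|y-x_0|)^{-n-2s}$ for $|y-x_0|\geq r_0$, this yields
\[
\varphi_1(x_0)\geq C(n,s)\,(\essinf_\Omega g)\,r_0^{2s}, \qquad \varphi_2(x_0)\geq C(n,s)\,r_0^{2s}\int_{B_0^c}\frac{u(y)}{(r_0+|y-x_0|)^{n+2s}}\,dy.
\]
The elementary weight inequality $(r_0+|y-x_0|)^{n+2s}\leq C(n,s)\,(\max\{1,\diam\Omega\})^{n+2s}\,(1+|y|^{n+2s})$, valid for $x_0\in\Omega$ and $r_0\leq\diam\Omega$ (up to translating so that, say, $0\in\overline{\Omega}$), turns the $\varphi_2$ integral into $\|u\|_{L_s(\R^n)}$; substituting the lower bound for $r_0^{2s}$ from the first step then produces the desired estimate and accounts for the factor $(\max\{1,\diam\Omega\})^{-n-2s}$ in the claim.

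The principal technical hurdle is the perturbation step, namely the kernel comparisons $\psi^L\geq\tfrac{1}{2}\psi$ and $P^L\geq\tfrac{1}{2}P$, together with a precise control on the size of $r_0$ so that the comparability constants remain universal. This is exactly why $r_0$ must be truncated at the \emph{spectral scale} $\|c\|_\infty^{-1/(2s)}$, and it is also what forces the factor $(1+(\diam\Omega)^{2s}\|c^+\|_\infty)^{-1}$ in the stated constant. Once this perturbation analysis is in place, the rest of the proof reduces to standard explicit estimates on the torsion function and Poisson kernel of the fractional Laplacian on a ball, and the a.e.\ claim follows because the pointwise bound is valid at every interior $x_0\in\Omega$.
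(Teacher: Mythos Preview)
Your decomposition via the Green and Poisson kernels of $L=(-\Delta)^s+c$ has a genuine gap in the $\varphi_2$ step. The representation $\varphi_2(x_0)=\int_{B_0^c}P^L_{B_0}(x_0,y)\,u(y)\,dy$ only integrates $u$ over $\R^n\setminus B_0$, so after your weight comparison you can at best bound it below by $(\max\{1,\diam\Omega\})^{-n-2s}\int_{B_0^c}\frac{u(y)}{1+|y|^{n+2s}}\,dy$, which is \emph{not} comparable to $\|u\|_{L_s(\R^n)}$ in general. Concretely, take $c\equiv0$, $\Omega=B_r(x_0)$ (so $r_0=r$ and $B_0=\Omega$), and let $u$ solve $(-\Delta)^s u=g$ in $\Omega$ with $u=0$ outside, where $g\geqslant0$ is supported in a tiny ball so that $\essinf_\Omega g=0$. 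Then $u\equiv0$ on $B_0^c$, both terms of your lower bound vanish, and your argument yields only $u(x_0)\geqslant0$; yet $\|u\|_{L_s(\R^n)}>0$ and the proposition requires a strictly positive estimate. The mass of $u$ inside $B_0$ is invisible to the Poisson term, and no kernel perturbation can recover it.

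The paper bypasses this by a direct sliding argument on $u$ itself rather than on an auxiliary $\varphi$. After mollifying to make the equation hold pointwise, one takes the largest $\tau\geqslant0$ with $u\geqslant\tau\,\psi_{B_\rho(x_0)}$ in all of $\R^n$ and evaluates the equation at a touching point $z_0$. There $(u-\tau\psi)(z_0)=0$ and $u-\tau\psi\geqslant0$ globally, so
\[
(-\Delta)^s(u-\tau\psi)(z_0)=-c_{n,s}\int_{\R^n}\frac{(u-\tau\psi)(y)}{|z_0-y|^{n+2s}}\,dy\leqslant -C(n,s)\big(\|u\|_{L_s(\R^n)}-C'\tau\big),
\]
with the integral running over the \emph{entire} space, which is exactly what captures the interior mass that your argument misses. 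Combining this with the supersolution inequality gives $\tau\geqslant C(n,s)(1+\rho^{2s}\|c^+\|_\infty)^{-1}(\inf g+\|u\|_{L_s})$ in one step; the factor $(\max\{1,\diam\Omega\})^{-n-2s}$ then comes from rescaling the $L_s$ norm. No spectral truncation of the radius, no Green or Poisson kernels for $L$, and no perturbation estimates are required.
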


The proof of \thref{YCl8lL0I} essentially follows by, at each point \(x\in \Omega\), `touching' the solution~\(u\) from below by~\(\psi_B\) (recall the notation in~\eqref{bNuXHq1g}) where~\(B\) is the largest ball contained in~\(\Omega\) and centred at~\(x\). This is similar to the proof of \cite[Theorem 2.2]{MR4023466} where a nonlinear interior analogue of \thref{YCl8lL0I} was proven. To prove \thref{YCl8lL0I}, however, some care must be taken since the touching point may occur on the boundary of \(\Omega\) (unlike in
the interior result of~\cite[Theorem~2.2]{MR4023466}) where the PDE does not necessarily hold. Moreover, \thref{YCl8lL0I} is stated in the context of weak solutions where pointwise techniques no longer make sense, so a simple mollification argument needs to be made. We require the following lemma.

\begin{lem} \thlabel{Voueel7o}
Let \(\rho>0 \), \(c\in L^\infty(B_\rho)\), and \(g\in C^\infty_0(\R^n)\) be such that \(g\geqslant 0\) in \(B_\rho\). Suppose that~\(u\in C^\infty_0(\R^n)\) satisfies \begin{align*}
\begin{PDE}
(-\Delta)^s u +cu &\geqslant g &\text{in } B_\rho, \\
u&\geqslant 0 &\text{in } \R^n .
\end{PDE}
\end{align*}

Then, \begin{align*}
u(x) \geqslant C\Big(\inf_{B_\rho} g + \| u\|_{L_s(\R^n)}\Big)\psi_{B_\rho}(x) \qquad \text{for all } x\in B_\rho
\end{align*} with \(C:=C(n,s)(\max\{1, \rho\} )^{-n-2s}\big(1+\rho^{2s}\|c^+\|_{L^\infty(B_\rho)}\big)^{-1}\).
\end{lem}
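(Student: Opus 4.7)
The plan is to combine a direct barrier argument with the weak maximum principle for the operator $(-\Delta)^s+c^+$, exploiting that $c^+\geq 0$. As a preliminary reduction, since $u\geq 0$ and $c^-\geq 0$, the hypothesis $(-\Delta)^s u+cu\geq g$ upgrades to $(-\Delta)^s u+c^+u\geq g$ in $B_\rho$. This is convenient because the weak maximum principle for $(-\Delta)^s+c^+$ with $c^+\geq 0$ holds on $B_\rho$ for arbitrary $\|c^+\|_{L^\infty}$, as a standard test-function argument against the negative part of the candidate excursion shows.

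To bring the nonlocal tail of $u$ explicitly into the equation, I would set $\tilde u := u\chi_{B_\rho}$ and use the identity
\[
(-\Delta)^s\tilde u(x) \;=\; (-\Delta)^s u(x) + c_{n,s}\int_{\R^n\setminus B_\rho}\frac{u(y)}{|x-y|^{n+2s}}\dd y \qquad \text{for } x\in B_\rho,
\]
which shows that $\tilde u$ satisfies $(-\Delta)^s\tilde u + c^+\tilde u \geq g + T$ in $B_\rho$, with the non-negative tail integral $T(x)$ defined by the integral above, and $\tilde u = 0$ outside $B_\rho$. The elementary bounds $|x-y|\leq\rho+|y|\leq\max\{1,\rho\}(1+|y|)$ and $(1+|y|)^{n+2s}\leq C(n,s)(1+|y|^{n+2s})$, valid for $x\in B_\rho$ and $y\in\R^n\setminus B_\rho$, then yield
\[
\inf_{B_\rho}T \;\geq\; \frac{C(n,s)}{(\max\{1,\rho\})^{n+2s}}\int_{\R^n\setminus B_\rho}\frac{u(y)}{1+|y|^{n+2s}}\dd y.
\]

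The final step is a barrier comparison: I take $v := A\psi_{B_\rho}$, where $A$ is proportional to $\bigl(\inf_{B_\rho}g+(\max\{1,\rho\})^{-n-2s}\|u\|_{L_s(\R^n)}\bigr)/\bigl(1+\rho^{2s}\|c^+\|_{L^\infty(B_\rho)}\bigr)$. From the explicit formula $\psi_{B_\rho}(y) = C(n,s)(\rho^2-|y|^2)^s$ on $B_\rho$ we have $\|\psi_{B_\rho}\|_{L^\infty(B_\rho)}\leq C(n,s)\rho^{2s}$, whence
\[
(-\Delta)^s v+c^+v \;=\; A\bigl(1+c^+\psi_{B_\rho}\bigr) \;\leq\; A\bigl(1+C(n,s)\rho^{2s}\|c^+\|_{L^\infty(B_\rho)}\bigr) \;\leq\; g + T
\]
in $B_\rho$, by the choice of $A$ and the lower bound on $T$. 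Since $\tilde u-v$ vanishes outside $B_\rho$ and is a weak supersolution of $(-\Delta)^s+c^+$ inside, the weak maximum principle delivers $u=\tilde u\geq v=A\psi_{B_\rho}$ in $B_\rho$, which is the claim.

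The main obstacle is upgrading the lower bound on $T$---which naturally produces only the exterior integral $\int_{\R^n\setminus B_\rho}u(y)/(1+|y|^{n+2s})\dd y$---to the full $\|u\|_{L_s(\R^n)}$ appearing in the statement. I would handle this via a dichotomy: if the exterior portion of the $L_s$-mass is at least a fixed fraction of the whole, the estimate on $T$ already has the desired form; otherwise the $L_s$-mass is concentrated in $B_\rho$, forcing $\int_{B_\rho}u$ to be large, and a weak Harnack-type estimate for non-negative supersolutions of $(-\Delta)^s+c^+$ then converts this averaged information into a pointwise interior bound that dominates the required boundary barrier.
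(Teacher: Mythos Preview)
Your cutoff-and-barrier route differs from the paper's, and the gap you yourself flag at the end is exactly where it fails. The paper avoids the dichotomy entirely by a touching argument: for $\rho=1$, take the largest $\tau$ with $u\geq\tau\psi_{B_{1-\varepsilon}}$ in $\R^n$ and let $x_0\in\overline{B_{1-\varepsilon}}$ be a touching point. Since $(u-\tau\psi_{B_{1-\varepsilon}})(x_0)=0$ and $u-\tau\psi_{B_{1-\varepsilon}}\geq 0$ in all of $\R^n$,
\[
(-\Delta)^s(u-\tau\psi_{B_{1-\varepsilon}})(x_0)=-c_{n,s}\int_{\R^n}\frac{(u-\tau\psi_{B_{1-\varepsilon}})(y)}{|x_0-y|^{n+2s}}\,\dd y\leq -C\big(\|u\|_{L_s(\R^n)}-C'\tau\big),
\]
the kernel bound $|x_0-y|^{-n-2s}\geq C(n,s)(1+|y|^{n+2s})^{-1}$ (valid because $|x_0|<1$) capturing the \emph{full} $L_s$-mass---interior and exterior---in one stroke. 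Comparing with the equation side gives $\tau\geq C(1+\|c^+\|_\infty)^{-1}(\inf_{B_1} g+\|u\|_{L_s})$ directly; sending $\varepsilon\to 0$ and rescaling finishes.

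Your truncation $\tilde u=u\chi_{B_\rho}$ discards the interior contribution, and the proposed recovery via ``a weak Harnack-type estimate for non-negative supersolutions of $(-\Delta)^s+c^+$'' is circular: the present lemma \emph{is} exactly such an estimate (a pointwise lower bound in terms of $\|u\|_{L_s}$), and it is the building block for the quantitative Hopf-type results that follow in the paper. Even if you imported an interior bound $u\geq c_0$ on $B_{\rho/2}$ from elsewhere, you would still owe an argument extending it to the boundary-sensitive conclusion $u\geq A\psi_{B_\rho}$ on all of $B_\rho$; the pointwise interior bound says nothing near $\partial B_\rho$, and your sketch does not address this.
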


We observe that Lemma~\ref{Voueel7o} can be seen as a quantitative version of a strong maximum principle (which can be proved directly):
in particular, it entails that if~$u(x_0)=0$ for some $x_0\in\Omega$, then~$u$ vanishes identically.

\begin{proof}[Proof of Lemma~\ref{Voueel7o}]
We will begin by proving the case \(\rho=1\). Fix \(\varepsilon>0\) small and let \(\tau \geqslant 0\) be the largest value such that \(u\geqslant \tau \psi_{B_{1-\varepsilon}}\) in \(\R^n\) (recall \(\psi_{B_{1-\varepsilon}}\) is given by~\eqref{v2FT4ISn}). Now, by the definition of \(\tau\), there exists \(x_0\in B_{1-\varepsilon}\) such that \(u(x_0)=\tau \psi_{B_{1-\varepsilon}}(x_0)\). On one hand, since \( \psi_{B_{1-\varepsilon}}(x_0) = C(n,s)(1-\varepsilon)^{2s}\leqslant C(n,s)\), \begin{align*}
(-\Delta)^s(u-\tau \psi_{B_{1-\varepsilon}}) (x_0)+c(x_0) (u-\tau \psi_{B_{1-\varepsilon}}) (x_0) &\geqslant \inf_{B_1} g  - \tau \big (  1 +c(x_0) \psi_{B_{1-\varepsilon}} (x_0) \big ) \\
&\geqslant \inf_{B_1} g -C(n,s)\big(1+ \|c^+\|_{L^\infty(B_1)}\big) \tau . 
\end{align*} On the other hand, since \(\vert x_0 - y \vert^{-n-2s} \geqslant C(n,s)(1+\vert y \vert^{n+2s} )^{-1}\), we have that  \begin{align*}
(-\Delta)^s(u-\tau\psi_{B_{1-\varepsilon}}) (x_0)+c(x_0) (u-\tau \psi_{B_{1-\varepsilon}}) (x_0) &= - \int_{\R^n} \frac{(u-\tau \psi_{B_{1-\varepsilon}}) (y)}{\vert x_0 - y \vert^{n+2s}} \dd y \\
&\leqslant  -C(n,s) \int_{\R^n} \frac{(u-\tau \psi_{B_{1-\varepsilon}}) (y)}{1+\vert y \vert^{n+2s}} \dd y \\
&\leqslant - C(n,s)  \big( \|u\|_{L_s(\R^n)} - \tau \big) . 
\end{align*} Rearranging gives \(\tau \geqslant C(n,s) \big(1+\|c^+\|_{L^\infty(B_1)}\big)^{-1}  \big(\inf_{B_1} g + \| u\|_{L_s(\R^n)}\big)\), which implies that \begin{align*}
u(x) &\geqslant  C(n,s) \big(1+\|c^+\|_{L^\infty(B_1)}\big)^{-1} \Big(\inf_{B_1} g + \|u\|_{L_s(\R^n)}\Big) \psi_{B_{1-\varepsilon}}.
\end{align*} Sending \(\varepsilon \to 0^+\) gives the result when \(\rho=1\). 

Now, for the case that \(\rho\) is not necessarily 1, let \(u_\rho(x):= u(\rho x)\). Then \begin{align*}
(-\Delta)^su_\rho + \rho^{2s} c_\rho &\geqslant \rho^{2s} g_\rho \qquad \text{in }B_1
\end{align*} where \(c_\rho(x):=c(\rho x)\) and \(g_\rho(x):=g(\rho x)\), so, for all \(x\in B_\rho\), we have that \begin{align*}
u(x) &= u_\rho(x/\rho) \\
&\geqslant C(n,s) \big(1+\rho^{2s}\|c^+_\rho\|_{L^\infty(B_1)}\big)^{-1} \Big(\rho^{2s}\inf_{B_1} g_\rho + \|u_\rho\|_{L_s(\R^n)}\Big) \psi_{B_1}(x/\rho) \\
&= C(n,s) \rho^{-2s}\big(1+\rho^{2s}\|c^+\|_{L^\infty(B_\rho)}\big)^{-1} \Big(\rho^{2s}\inf_{B_\rho} g + \|u_\rho\|_{L_s(\R^n)}\Big) \psi_{B_\rho}(x).
\end{align*} Finally, \begin{align*}
\|u_\rho\|_{L_s(\R^n)} = \int_{\R^n} \frac{\vert u(\rho x) \vert }{1+\vert x \vert ^{n+2s} } \dd x &= \rho^{2s}\int_{\R^n} \frac{ \vert u(x) \vert }{\rho^{n+2s}+\vert x \vert ^{n+2s} } \dd x \\ &\geqslant \rho^{2s}(\max\{1, \rho\} )^{-n-2s} \| u\|_{L_s(\R^n)}
\end{align*} which completes the proof. 
\end{proof}

We can now give the proof of \thref{YCl8lL0I}. 

\begin{proof}[Proof of \thref{YCl8lL0I}]
If \(u,g\in C^\infty_0(\R^n)\) then the proof follows immediately from \thref{Voueel7o}. Indeed, let \(x\in \Omega\) be arbitrary. Then, applying \thref{Voueel7o} in the ball \(B_\rho(x)\) with \(\rho := \delta_{\partial \Omega}(x)\), we obtain that \begin{align*}
u(y) &\geqslant C  \Big(\essinf_{\Omega} g + \| u\|_{L_s(\R^n)}\Big)\psi_{B_\rho}(y) \qquad \text{for all } y \in B_\rho(x) 
\end{align*} with \(C=C(n,s)(\max\{1, \diam \Omega \} )^{-n-2s}\big(1+(\diam \Omega)^{2s}\|c^+\|_{L^\infty(\Omega)}\big)^{-1}\). Substituting \(y=x\) completes the proof.

In the general case that \(u\in H^s(\R^n)\), if \(u_\varepsilon\) and \(g_\varepsilon\) are mollifications of \(u\) and \(g\) respectively then \((-\Delta)^s u_\varepsilon +\|c^+\|_{L^\infty(\Omega) } u_\varepsilon \geqslant g_\varepsilon\) in \(\Omega_\varepsilon := \Omega \cap \{ \delta_{\partial \Omega} >\varepsilon\}\). Then, applying the result for smooth compactly supported functions, we have that \(u_\varepsilon \geqslant C \delta^{2s}_{\partial\Omega_\varepsilon}\) with \(C\) as in the statement of Proposition~\ref{YCl8lL0I} (and, in particular, uniformly bounded in \(\varepsilon\)). Moreover, given \(x\in \Omega_\varepsilon\), if \(y\in \partial \Omega_\varepsilon\) is the closest point to \(x\) on \( \partial \Omega_\varepsilon\) and \(z\in \partial \Omega\) is the closest point to \(y\) on \(\partial \Omega\) then \begin{align*}
\delta_{\partial \Omega}(x) \leqslant \vert x - z\vert \leqslant \vert x-y \vert + \vert y-z\vert \leqslant \delta_{\partial\Omega_\varepsilon}(x)+\varepsilon,
\end{align*}so \(u_\varepsilon \geqslant C (\delta_{\partial\Omega}-\varepsilon)^{2s}\) in \(\Omega_\varepsilon\). Then \(u_\varepsilon \to u\) a.e., so sending \(\varepsilon \to 0^+\) gives the result. 
\end{proof}

Next, we obtain a refined version of \thref{YCl8lL0I} when \(\Omega\) satisfies the uniform interior ball condition. 
As customary, we say that a bounded domain $\Omega \subset \R^n$ satisfies the {\it uniform interior ball condition
with radius} $r_\Omega>0$ if for every point $x_0 \in \partial \Omega$ there exists a ball $B \subset \Omega$ of radius
$r_\Omega$ such that its closure intersects $\partial \Omega$ only at $x_0$.

\begin{prop} \thlabel{1kapKUbs}
Let \(\Omega\) be an open bounded subset of \(\R^n\) with $C^1$ boundary and satisfying the uniform interior ball condition with radius \(r_\Omega>0\), \(c\in L^\infty(\Omega)\), and \(g\in L^2(\Omega)\) with \(g\geqslant 0\). Suppose that \(u\in L_s(\R^n)\) satisfies \begin{align*}
\begin{PDE}
(-\Delta)^s u+cu &\geqslant g &\text{in } \Omega, \\
u&\geqslant 0 &\text{in } \R^n
\end{PDE}
\end{align*} in the weak sense. 

Then,  \begin{align*}
u(x) &\geqslant  C\Big(\essinf_\Omega g + \| u\|_{L_s(\R^n)}\Big) \delta_{\partial \Omega}^{s}(x) \qquad \text{for a.e. } x\in \Omega
\end{align*} with \(C:=C(n,s)r_\Omega^s (\max\{1, \diam \Omega \})^{-n-2s}\big(1+(\diam \Omega)^{2s}\|c^+\|_{L^\infty(\Omega)}\big)^{-1}\).
\end{prop}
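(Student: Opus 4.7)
My plan is to prove Proposition \ref{1kapKUbs} by splitting $\Omega$ into a ``far from boundary'' and a ``near boundary'' region, using Proposition \ref{YCl8lL0I} in the first and a localised application of Lemma \ref{Voueel7o} on an interior ball of radius $r_\Omega$ in the second. The key point is that, near the boundary, $\delta_{\partial\Omega}^{2s}$ is smaller than $\delta_{\partial\Omega}^{s}$, so Proposition \ref{YCl8lL0I} alone is too weak; the extra regularity given by the uniform interior ball condition is precisely what closes this gap.

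First I would deal with the ``far'' region $\{x \in \Omega \text{ s.t. } \delta_{\partial\Omega}(x)\geqslant r_\Omega\}$. Here I apply Proposition \ref{YCl8lL0I} directly and observe that $\delta_{\partial\Omega}^{2s}(x) = \delta_{\partial\Omega}^{s}(x) \cdot \delta_{\partial\Omega}^{s}(x) \geqslant r_\Omega^s\,\delta_{\partial\Omega}^{s}(x)$, which absorbs the factor $r_\Omega^s$ appearing in the target constant while leaving the remaining $(\max\{1,\diam\Omega\})^{-n-2s}(1+(\diam\Omega)^{2s}\|c^+\|_{L^\infty(\Omega)})^{-1}$ intact.

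Next, for the ``near'' region $\{x \in \Omega \text{ s.t. } \delta_{\partial\Omega}(x) < r_\Omega\}$, I use the geometric input. Fix such an $x$ and let $z\in\partial\Omega$ be a closest boundary point. Since $\partial\Omega$ is $C^1$ and satisfies the uniform interior ball condition with radius $r_\Omega$, there is a ball $B:=B_{r_\Omega}(y_0)\subset\Omega$ with $\partial B \cap \partial \Omega = \{z\}$ and $y_0 = z + r_\Omega\, \nu(z)$, where $\nu(z)$ denotes the inward unit normal. Because $\delta_{\partial\Omega}$ is realised at $z$, the point $x$ lies on the inward normal segment from $z$, so $x = z + \delta_{\partial\Omega}(x)\,\nu(z)$ and consequently $|x-y_0| = r_\Omega - \delta_{\partial\Omega}(x)$, which in particular places $x$ inside $B$. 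I would then apply Lemma \ref{Voueel7o} on $B$, using the same mollification argument carried out at the end of the proof of Proposition \ref{YCl8lL0I} to reduce to the smooth case, to obtain
\begin{equation*}
u(x) \geqslant C(n,s)(\max\{1,r_\Omega\})^{-n-2s}\bigl(1+r_\Omega^{2s}\|c^+\|_{L^\infty(\Omega)}\bigr)^{-1}\Bigl(\essinf_\Omega g + \|u\|_{L_s(\R^n)}\Bigr)\psi_B(x).
\end{equation*}
Now the explicit formula \eqref{v2FT4ISn} gives
\begin{equation*}
\psi_B(x)=C(n,s)\bigl(r_\Omega^2-|x-y_0|^2\bigr)^s = C(n,s)\bigl(\delta_{\partial\Omega}(x)(2r_\Omega-\delta_{\partial\Omega}(x))\bigr)^s \geqslant C(n,s)\,r_\Omega^s\,\delta_{\partial\Omega}^{s}(x),
\end{equation*}
since $\delta_{\partial\Omega}(x)\leqslant r_\Omega$ implies $2r_\Omega - \delta_{\partial\Omega}(x)\geqslant r_\Omega$.

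Finally, I would verify that the constants from the two cases are both bounded below by the constant in the statement: in the near case, $r_\Omega\leqslant \diam\Omega$ gives $(\max\{1,r_\Omega\})^{-n-2s}\geqslant(\max\{1,\diam\Omega\})^{-n-2s}$ and $(1+r_\Omega^{2s}\|c^+\|_{L^\infty(\Omega)})^{-1}\geqslant(1+(\diam\Omega)^{2s}\|c^+\|_{L^\infty(\Omega)})^{-1}$, so both cases deliver the required $C(n,s)\,r_\Omega^s (\max\{1,\diam\Omega\})^{-n-2s}(1+(\diam\Omega)^{2s}\|c^+\|_{L^\infty(\Omega)})^{-1}$. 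The only subtle point I expect is the geometric step identifying the inward normal segment in Case~2: this is clean when $\partial\Omega$ is $C^1$ and satisfies a genuine interior ball condition, but one needs to confirm that the nearest boundary point is realised and that the segment $[z, y_0]$ lies entirely within $\overline{B}\subset\overline{\Omega}$, which I would justify by a standard unique-projection argument for $x$ in a one-sided tubular neighbourhood of $\partial\Omega$ of width at most $r_\Omega$.
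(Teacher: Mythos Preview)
Your proposal is correct and follows essentially the same approach as the paper: the same split into $\{\delta_{\partial\Omega}\geqslant r_\Omega\}$ (handled by Proposition~\ref{YCl8lL0I}) and $\{\delta_{\partial\Omega}<r_\Omega\}$ (handled by applying Lemma~\ref{Voueel7o} on the interior ball of radius $r_\Omega$ touching $\partial\Omega$ at the nearest boundary point), with the same computation $r_\Omega-|x-y_0|=\delta_{\partial\Omega}(x)$ and factorisation $(r_\Omega^2-|x-y_0|^2)^s\geqslant r_\Omega^s\,\delta_{\partial\Omega}^s(x)$. The paper passes directly to the $\diam\Omega$ constant when applying Lemma~\ref{Voueel7o}, whereas you first keep $r_\Omega$ and then compare at the end; both are fine.
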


\begin{proof}
By an analogous argument to the one in the proof of \thref{YCl8lL0I}, we may assume that \(u,g\in C^\infty_0(\R^n)\). Let \(x\in \Omega\) be arbitrary. If \(\delta_{\partial \Omega}(x) \geqslant r_\Omega\) then we are done by \thref{YCl8lL0I}, so let \(\delta_{\partial \Omega}(x)<r_\Omega\). If \(\bar x \) denotes the closest point to \(x\) on \(\partial \Omega\) then there exists a ball~\(B=B_{r_\Omega}(x_0)\) such that~\(x\in B\), \(B \subset \Omega\), and~\(B\) touches~\(\partial \Omega\) at \(\bar x\). Then, applying \thref{Voueel7o} in \(B\), we have that \(u(y) \geqslant C\big(\essinf_\Omega g + \| u\|_{L_s(\R^n)}\big)\psi_{B_{r_\Omega}}(y)\)
with \begin{align*}
    C=C(n,s)(\max\{1, \diam \Omega \} )^{-n-2s}\big(1+(\diam \Omega)^{2s}\|c^+\|_{L^\infty(\Omega)}\big)^{-1}.
\end{align*} Substituting \(y=x\), we obtain \begin{align*}
u(x) &\geqslant C\Big(\essinf_\Omega g + \| u\|_{L_s(\R^n)}\Big)(r_\Omega^2-\vert x-x_0\vert^2)^s \\
&= C\Big(\essinf_\Omega g + \| u\|_{L_s(\R^n)}\Big)(r_\Omega+\vert x-x_0\vert)^s(r_\Omega-\vert x-x_0\vert)^s \\
&\geqslant Cr_\Omega^s\Big(\essinf_\Omega g + \| u\|_{L_s(\R^n)}\Big)(r_\Omega-\vert x-x_0\vert)^s.
\end{align*} Observing that \(r_\Omega-\vert x-x_0\vert = \delta_{\partial \Omega}(x)\) completes the proof. 
\end{proof}

From \thref{1kapKUbs}, we immediately obtain the following corollary.

\begin{cor} \thlabel{zAPw0npM}
Let \(\Omega\) be an open bounded subset of \(\R^n\) with $C^1$ boundary and satisfying the uniform interior ball condition with radius \(r_\Omega>0\). Let \(f \in C^{0,1}_{\mathrm{loc}}(\overline \Omega \times \mathbb R)\) satisfy~\(f_0:=\inf_{x \in \Omega} f(x,0) \geqslant 0\). Suppose that~\(u \in L_s(\R^n)\) satisfies \begin{align*}
\begin{PDE}
(-\Delta)^s u &\geqslant f(x,u) &\text{in } \Omega ,\\
u&\geqslant 0 &\text{in } \R^n 
\end{PDE}
\end{align*} in the weak sense. 

Then, \begin{align*}
u(x) &\geqslant  C_\ast\big(f_0 + \| u\|_{L_s(\R^n)}\big) \delta_{\partial \Omega}^{s}(x) \qquad \text{for a.e. } x\in \Omega
\end{align*} with \(C_\ast:=C(n,s)r_\Omega^s (\max\{1, \diam \Omega \} )^{-n-2s}\big(1+(\diam \Omega)^{2s} [f]_{C^{0,1}(\overline \Omega \times [0,\|u\|_{L^\infty(\Omega)}])} \big)^{-1}\).
\end{cor}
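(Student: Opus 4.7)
The plan is to reduce the semilinear inequality $(-\Delta)^s u \geq f(x,u)$ to a linear inequality of the form $(-\Delta)^s u + c u \geq g$ to which Proposition \thref{1kapKUbs} can be directly applied. To this end, I would first ``linearise around zero'' by defining the coefficient
\begin{align*}
a(x) := \begin{cases} \dfrac{f(x,u(x)) - f(x,0)}{u(x)}, & \text{if } u(x) \neq 0, \\ 0, & \text{if } u(x) = 0. \end{cases}
\end{align*}
Since $u \geq 0$ almost everywhere and $u \in L^\infty(\Omega)$ (which follows from $u \in L_s(\R^n)$ together with standard local boundedness, or can be read directly from the context of the theorem), the Lipschitz hypothesis on $f$ in the second variable gives $|a(x)| \leq [f]_{C^{0,1}(\overline \Omega \times [0,\|u\|_{L^\infty(\Omega)}])}$ for a.e.\ $x \in \Omega$. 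By construction $f(x,u(x)) = a(x) u(x) + f(x,0) \geq a(x) u(x) + f_0$ pointwise almost everywhere.

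Next I would set $c(x) := -a(x)$ and $g := f_0$, observe that $c \in L^\infty(\Omega)$ with $\|c^+\|_{L^\infty(\Omega)} \leq [f]_{C^{0,1}(\overline \Omega \times [0,\|u\|_{L^\infty(\Omega)}])}$, and check that the weak formulation passes through: testing the hypothesis against an arbitrary nonnegative $v \in H^s(\R^n)$ with $v = 0$ in $\R^n \setminus \Omega$ yields
\begin{align*}
\mathcal E(u,v) + \int_\Omega c\, u\, v \dd x \geq \int_\Omega f_0 \, v \dd x,
\end{align*}
so that $(-\Delta)^s u + c u \geq f_0$ weakly in $\Omega$, with $f_0 \geq 0$ and $u \geq 0$ in $\R^n$. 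Proposition \thref{1kapKUbs} then applies verbatim and produces
\begin{align*}
u(x) \geq C\big(f_0 + \|u\|_{L_s(\R^n)}\big) \delta_{\partial \Omega}^s(x) \qquad \text{for a.e.\ } x \in \Omega,
\end{align*}
with $C = C(n,s)\, r_\Omega^s (\max\{1,\diam \Omega\})^{-n-2s} (1 + (\diam \Omega)^{2s} \|c^+\|_{L^\infty(\Omega)})^{-1}$. Substituting the upper bound for $\|c^+\|_{L^\infty(\Omega)}$ in terms of the Lipschitz seminorm of $f$ gives exactly the constant $C_*$ in the statement.

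There is essentially no obstacle here: the argument is just a standard ``semilinear-to-linear'' reduction, and the only minor subtlety is keeping track of the bound on $\|c^+\|_{L^\infty(\Omega)}$ to match the precise form of $C_*$. The nontrivial analytic content, namely the quantitative boundary growth estimate $\delta_{\partial \Omega}^s$ under the uniform interior ball condition, is entirely contained in Proposition \thref{1kapKUbs} and is therefore assumed.
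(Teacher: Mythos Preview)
Your proof is correct and matches the paper's approach almost exactly: the paper defines \(c(x) := -\int_0^1 \partial_z f(x, t u(x))\, dt\), which by the fundamental theorem of calculus is precisely your difference quotient \(-a(x)\) wherever \(u(x)\neq 0\), and then applies Proposition~\ref{1kapKUbs} in the same way. The only cosmetic difference is that the paper takes \(g = f(\cdot,0)\) rather than the constant \(f_0\), but since \(\essinf_\Omega f(\cdot,0) = f_0\) this yields the identical conclusion.
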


\begin{proof}
Since \(f=f(x,z)\) is locally lipschitz, \(\partial_z f\) exists a.e., so we may define \begin{align*}
c(x):= -\int_0^1 \partial_z f (x, t u(x)) \dd t .
\end{align*} Observe that \(\| c\|_{L^\infty(\Omega)} \leqslant [f]_{C^{0,1}(\overline \Omega \times [0,\| u\|_{L^\infty(\Omega)}])}\). Then, \((-\Delta)^s u+c u \geqslant f(\cdot, 0)\) in \(\Omega\) in the weak sense, so the result follows by \thref{1kapKUbs}. 
\end{proof}

\begin{remark}
One could also obtain an analogous result to \thref{zAPw0npM}
without assuming that $\Omega$ has $C^1$ boundary and satisfies the uniform interior ball condition; this can be achieved by applying \thref{YCl8lL0I} instead of \thref{1kapKUbs}. 
\end{remark}

\subsection{Equations with antisymmetry}

The purpose of this subsection is to prove the following proposition. 

\begin{prop} \thlabel{TV1cTSyn}
Let \(H\subset \R^n\) be a halfspace, \(U \) be an open subset of \(H\), and~\(a\in H\) and~\(\rho>0\) such that~\(B_\rho(a)\cap H \subset U\). Moreover, let \(c\in L^\infty(U)\) be such that \begin{equation}\label{ed0395v04ncrhrhfwejfvwejhfvqjhkfvqejhk}
\|c^+\|_{L^\infty(U)} < \lambda_1(B_\rho(a)\cap H).\end{equation} 
Suppose that \(v\) is antisymmetric with respect to \(\partial H\) and satisfies \begin{align*}
\begin{PDE}
(-\Delta)^s v +c v &\geqslant 0 &\text{in } U  ,\\
v&\geqslant 0 & \text{in } H 
\end{PDE}
\end{align*} in the weak sense. 

If \(K\subset H\) is a non-empty open set that is disjoint from \(B_\rho(a)\) and 
$$ \inf_{{x\in K}\atop{y\in B_\rho(a)\cap H}} \vert Q_{\partial H}(x)-y\vert^{-1} \geqslant M\geqslant 0 $$ then \begin{align}
v(x) &\geqslant C \| \delta_{\partial H} v\|_{L^1(K)} \delta_{\partial H}(x) \qquad \text{for a.e. } x\in B_{\rho/2}(a)\cap H \label{tQIcIMb5}
\end{align}  with \begin{align*}
C := C(n,s) \rho^{2s} M^{n+2s+2} \Big(1+\rho^{2s}\|c^+\|_{L^\infty(U)} \Big)^{-1}.
\end{align*}
\end{prop}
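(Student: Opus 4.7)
The strategy is an antisymmetric decomposition of $v$ combined with an antisymmetric kernel estimate and a comparison against a suitable barrier. Without loss of generality, assume $H = \R^n_+$, so that $\delta_{\partial H}(x) = x_1$, and set $K' := Q_{\partial H}(K) \subset H^c$. Since $K$ is disjoint from $B_\rho(a)$, the set $K \cup K'$ is disjoint from $B_\rho(a) \cap H$. Decompose $v = v_1 + v_2$ with $v_2 := v\, \chi_{K \cup K'}$; using $Q_{\partial H}(K \cup K') = K \cup K'$ and the antisymmetry of $v$, both $v_1$ and $v_2$ are antisymmetric and non-negative on $H$. In particular, $v_1 \equiv v$ on $B_{\rho/2}(a) \cap H$, while $v_2 \equiv 0$ on $B_\rho(a) \cap H$.

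For $x \in B_\rho(a) \cap H$, folding the integral over $K'$ onto $K$ via the antisymmetry of $v$ yields
\begin{equation*}
(-\Delta)^s v_2(x) \;=\; -c_{n,s} \int_K v(y) \left[ \frac{1}{|x-y|^{n+2s}} - \frac{1}{|x-y'|^{n+2s}} \right] \dd y,
\end{equation*}
where $y' := Q_{\partial H}(y)$. Combining the standard pointwise estimate
\begin{equation*}
\frac{1}{|x-y|^{n+2s}} - \frac{1}{|x-y'|^{n+2s}} \;\geqslant\; C(n,s)\, \frac{x_1 y_1}{|x-y'|^{n+2s+2}}
\end{equation*}
with the isometry $|x-y'| = |Q_{\partial H}(x) - y|$ and the hypothesis $|Q_{\partial H}(x) - y|^{-1} \geqslant M$ gives
\begin{equation*}
(-\Delta)^s v_2(x) \;\leqslant\; -C(n,s)\, M^{n+2s+2}\, \delta_{\partial H}(x)\, \|\delta_{\partial H}\, v\|_{L^1(K)}.
\end{equation*}
Combining with $(-\Delta)^s v + cv \geqslant 0$ weakly in $U \supset B_\rho(a) \cap H$ and $v_2 \equiv 0$ there, this yields, in the weak sense,
\begin{equation*}
(-\Delta)^s v_1 + c\, v_1 \;\geqslant\; C(n,s)\, M^{n+2s+2}\, \|\delta_{\partial H}\, v\|_{L^1(K)}\, \delta_{\partial H} \text{ in } B_\rho(a) \cap H,
\end{equation*}
while $v_1$ is antisymmetric and non-negative on $H$.

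The remaining ingredient is an antisymmetric barrier $\Phi$ satisfying: $\Phi$ antisymmetric and non-negative on $H$, $\Phi \equiv 0$ on $H \setminus (B_\rho(a) \cap H)$, $(-\Delta)^s \Phi + c\, \Phi \leqslant \delta_{\partial H}$ weakly in $B_\rho(a) \cap H$, and the quantitative lower bound $\Phi(x) \geqslant C(n,s)\, \rho^{2s} ( 1 + \rho^{2s}\, \|c^+\|_{L^\infty(U)} )^{-1}\, \delta_{\partial H}(x)$ on $B_{\rho/2}(a) \cap H$. I would produce $\Phi$ as the unique antisymmetric solution of the Dirichlet problem $(-\Delta)^s \Phi + c\, \Phi = \delta_{\partial H}$ in $B_\rho(a) \cap H$ with zero exterior data, whose existence and positivity follow from the spectral assumption~\eqref{ed0395v04ncrhrhfwejfvwejhfvqjhkfvqejhk}; the linear growth of $\Phi$ at $\partial H$ is the antisymmetric Hopf-type phenomenon of \thref{lem:FLIFu}. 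The precise constant then emerges by rescaling $x \mapsto x/\rho$: the $\rho^{2s}$ factor is the natural scaling of the nonlocal torsion problem, while $(1 + \rho^{2s}\|c^+\|_{L^\infty(U)})^{-1}$ accounts for the rescaled zero-th order coefficient. Alternatively, $\Phi$ can be built explicitly by scaling and translating the bump-function barrier of \thref{SaBD4}.

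Setting $\kappa := C(n,s)\, M^{n+2s+2}\, \|\delta_{\partial H}\, v\|_{L^1(K)}$, the function $w := v_1 - \kappa\, \Phi$ is antisymmetric and satisfies $(-\Delta)^s w + c\, w \geqslant 0$ weakly in $B_\rho(a) \cap H$; the antisymmetric weak maximum principle of \thref{QawmgWdG}, applicable thanks to~\eqref{ed0395v04ncrhrhfwejfvwejhfvqjhkfvqejhk}, yields $w \geqslant 0$ in $B_\rho(a) \cap H$, and restricting to $B_{\rho/2}(a) \cap H$---where $v = v_1$---gives~\eqref{tQIcIMb5}. The technical $H^s$ regularity of the pieces $v_1, v_2$ can be handled by a standard mollification/approximation argument applied to $v$. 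The principal obstacle is the quantitative barrier construction: the linear (rather than $\delta^s$) growth at $\partial H$ is the subtle antisymmetric Hopf effect, and achieving the sharp simultaneous dependence on $\rho$ and $\|c^+\|_{L^\infty(U)}$ requires a careful scaling argument carried out jointly with the antisymmetric Hopf estimate.
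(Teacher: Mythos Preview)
Your overall strategy is correct and, once unwound, is the paper's argument in a different packaging: the paper builds a subsolution $w := \tau\varphi + (\chi_K + \chi_{K'})v$ and applies the antisymmetric comparison principle \cite[Proposition~3.1]{MR3395749} to $v - w$, whereas you split $v = v_1 + v_2$ with $v_2 = (\chi_K + \chi_{K'})v$, push the mass of $v_2$ into a forcing term for $v_1$, and then compare $v_1$ against $\kappa\Phi$. The kernel estimate you use is exactly the paper's.

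The genuine gap is the barrier, which you correctly flag as the principal obstacle but do not actually build. Neither of your two proposed routes delivers the stated constant. Invoking \thref{lem:FLIFu} on the Dirichlet solution gives only a qualitative $\liminf_{h\to 0}\Phi(he_1)/h>0$, not the explicit $\rho^{2s}(1+\rho^{2s}\|c^+\|)^{-1}$ dependence; and the barrier of \thref{SaBD4} is a subsolution only in the annular region $B_2^+\setminus B_{1/2}(e_1)$, not on the full half-ball, so it cannot be used as your $\Phi$ after scaling and translating. Your scaling heuristic $x\mapsto x/\rho$ is also not directly applicable, since $a$ may sit anywhere in $H$---the domain $B_\rho(a)\cap H$ does not rescale to a fixed reference configuration.

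The paper closes this gap with an explicit smooth barrier (\thref{gmjWA2gy}): take a radial bump $\eta\in C^\infty_0(B_1)$ with $\eta\equiv 1$ on $B_{1/2}$ and set
\[
\varphi(x):=\rho^{2s}\,x_1\Big(\eta\big(\tfrac{x-a}{\rho}\big)+\eta\big(\tfrac{x-a'}{\rho}\big)\Big),
\]
which is antisymmetric by construction and satisfies $\rho^{2s}\chi_{B_{\rho/2}(a)}x_1\leqslant\varphi\leqslant C\rho^{2s}x_1$ by inspection. The nontrivial point is the bound $|(-\Delta)^s\varphi(x)|\leqslant C(n,s)\,x_1$ on $B_\rho^+(a)$, which the paper obtains via a Bochner-type identity (\thref{xwrqefNE}) relating the fractional Laplacian of an antisymmetric function on $\R^n$ to that of an associated radial-in-$x_1$ function on $\R^{n+2}$. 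With these built-in bounds, the $\rho^{2s}$ and $(1+\rho^{2s}\|c^+\|)^{-1}$ factors fall out of a single linear inequality in the choice of $\tau$, exactly as in your final comparison step.
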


\thref{TV1cTSyn} can be viewed as a quantitative version of Proposition 3.3 in \cite{MR3395749}. A similar result was also obtained in \cite{MR4577340} in the case \(c=0\). One advantage that
\thref{TV1cTSyn} has over both of the results of \cite{MR3395749} and \cite{MR4577340} is that it allows the ball~\(B_\rho(a)\) to
go right up to and, indeed, overlap the plane of symmetry \(\partial H\). To allow for this possibility, it is required to construct an antisymmetric barrier as given in \thref{gmjWA2gy} below. 

However, a disadvantage of \thref{TV1cTSyn} is that it is not a boundary estimate, in the sense that~\eqref{tQIcIMb5} holds in \(B_{\rho/2}(a) \cap H\) and not \(B_\rho(a) \cap H\), so it does not give any information up to \(\partial U\setminus \partial H\). This is also a by-product of the barrier given in \thref{gmjWA2gy}.
In~\cite{dipierro2023quantitative}, we prove an improved version of \thref{TV1cTSyn} which holds up to the boundary. This allows us to establish quantitative stability estimates for the nonlocal Serrin overdetermined problem.

\begin{remark} \label{uElLogjj}
Notice that~\eqref{ed0395v04ncrhrhfwejfvwejhfvqjhkfvqejhk} is always satisfied provided that \(\rho\) is small enough. Indeed, the Faber-Krahn inequality for the fractional Laplacian gives that \(\lambda_1(A) \geqslant C(n,s) \vert A \vert^{-\frac{2s}n}\) for all open, bounded sets \(A\subset \R^n\), see \cite{MR3395749,yildirim2013estimates} (see also~\cite{brasco2020quantitative}). Since \(\vert B_\rho(a)\cap H \vert \leqslant \frac 12 \vert B_1 \vert \rho^n\), it follows that~\eqref{ed0395v04ncrhrhfwejfvwejhfvqjhkfvqejhk} is satisfied whenever \begin{align*}
\rho < C(n,s) \|c^+\|_{L^\infty(U)}^{-\frac 1{2s}}
\end{align*} with \(C(n,s)\) chosen sufficiently small. 
\end{remark}

To prove \thref{TV1cTSyn}, we require two lemmata.

\begin{lem} \thlabel{xwrqefNE}
Let \(v \in C^\infty_0(\R^n)\) be antisymmetric with respect to \(\{x_1=0\}\). 

Then, \((-\Delta)^sv\) is a smooth function in \(\R^n\) that is antisymmetric with respect to \(\{x_1=0\}\). Furthermore, if \(w(x):=v(x)/x_1\) then \begin{align}
\vert (-\Delta)^s v(x)\vert &\leqslant C \Big( \|w \|_{L^\infty(\R^n)}+\|x_1^{-1} \partial_{1} w \|_{L^\infty(\R^n)}+\|D^2w \|_{L^\infty(\R^n)} \Big)x_1 \label{v5SMVDf3}
\end{align} for all \(x\in \R^n_+\). The constant \(C\) depends only on \(n\) and \(s\). 
\end{lem}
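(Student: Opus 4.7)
Smoothness of $(-\Delta)^s v$ is a standard consequence of $v\in C^\infty_0(\R^n)$, obtained by differentiating under the integral in the second-order difference representation. Antisymmetry follows by the change of variable $y\mapsto y_*:=y-2y_1 e_1$ in the defining singular integral: using $|x_*-y_*|=|x-y|$ together with $v(y_*)=-v(y)$ one deduces $(-\Delta)^s v(x_*)=-(-\Delta)^s v(x)$.

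For the quantitative estimate, I work from the second-order difference formula and substitute $v=x_1 w$. The elementary identity
\[
2v(x)-v(x+h)-v(x-h)=x_1\bigl[2w(x)-w(x+h)-w(x-h)\bigr]-h_1\bigl[w(x+h)-w(x-h)\bigr]
\]
yields the decomposition $(-\Delta)^s v(x)=x_1\,(-\Delta)^s w(x)-\tfrac{c_{n,s}}{2}J(x)$, where $J(x):=\int_{\R^n} h_1\bigl[w(x+h)-w(x-h)\bigr]\,|h|^{-n-2s}\,dh$. The first summand is controlled by $Cx_1(\|w\|_\infty+\|D^2w\|_\infty)$ via the classical pointwise estimate for the fractional Laplacian (split at $|h|=1$, Taylor expand on $B_1$, use boundedness on $B_1^c$). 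The remaining task is to show that $|J(x)|\leq Cx_1\cdot(\text{norms of }w)$. Since $v=x_1 w$ is antisymmetric, $w$ is \emph{even} in $x_1$; the substitution $h\mapsto(-h_1,h_2,\dots,h_n)$ combined with this evenness shows that $J$ is odd in $x_1$, and smoothness forces $J(0,x')=0$. The mean value theorem then gives $|J(x)|\leq x_1\sup_y|\partial_1 J(y)|$, reducing the problem to a uniform bound on $\partial_1 J$ in terms of the three norms in the statement.

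To control $\partial_1 J$ uniformly, I exploit the factorization $\partial_1 w(y)=y_1 G(y)$ with $G:=y_1^{-1}\partial_1 w$, which is well-defined and even in $y_1$ and satisfies $\|G\|_\infty=\|x_1^{-1}\partial_1 w\|_\infty$. Substituting the identity
\[
\partial_1 w(x+h)-\partial_1 w(x-h)=x_1\bigl[G(x+h)-G(x-h)\bigr]+h_1\bigl[G(x+h)+G(x-h)\bigr]
\]
into $\partial_1 J(x)$ splits it into two integrals, each estimated by the usual split at $|h|=1$ using $\|w\|_\infty$, $\|G\|_\infty$, and $\|D^2w\|_\infty$. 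The main obstacle is the long-range regime $|h|\geq 1$, and in particular the range $s\leq 1/2$: the naive integrand $|h_1|/|h|^{n+2s}$ is not integrable at infinity, so one cannot simply pull $\|\partial_1 w\|_\infty$ outside uniformly in the support of $v$. The factorization $\partial_1 w=y_1 G$ is precisely what supplies the missing decay of $\partial_1 w$ near $\{y_1=0\}$ and closes the estimate; this is the conceptual reason why the norm $\|x_1^{-1}\partial_1 w\|_\infty$ appears in the statement of the lemma.
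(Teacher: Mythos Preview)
Your overall strategy---the decomposition $(-\Delta)^s v(x)=x_1(-\Delta)^s w(x)-\tfrac{c_{n,s}}{2}J(x)$, then using the oddness of $J$ in $x_1$ together with the mean value theorem to reduce to a uniform bound on $\partial_1 J$---is a legitimate alternative to the paper's method, which is entirely different: the paper invokes Bochner's relation to write $(-\Delta)^s v(x)=x_1(-\Delta)^s_{\R^{n+2}}\tilde w(x_1,0,0,x_2,\dots,x_n)$ for the radial extension $\tilde w(\tilde x)=w(\sqrt{\tilde x_1^2+\tilde x_2^2+\tilde x_3^2},\tilde x_4,\dots,\tilde x_{n+2})$, and then applies the standard pointwise estimate in $\R^{n+2}$ together with an explicit computation of $D^2\tilde w$ (which is where $\|x_1^{-1}\partial_1 w\|_\infty$ enters).

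However, your execution of the $\partial_1 J$ bound has a genuine gap. After the substitution $\partial_1 w=y_1 G$ you arrive at
\[
\partial_1 J(x)=x_1\int\frac{h_1[G(x+h)-G(x-h)]}{|h|^{n+2s}}\,dh+\int\frac{h_1^2[G(x+h)+G(x-h)]}{|h|^{n+2s}}\,dh,
\]
and claim both pieces are controlled by the split at $|h|=1$. This fails. For the second integral over $|h|\ge 1$, the kernel $h_1^2/|h|^{n+2s}$ satisfies $\int_{|h|\ge 1}h_1^2|h|^{-n-2s}\,dh=\infty$ for every $s\in(0,1)$, so pulling out $\|G\|_\infty$ gives no uniform bound (the integral is finite only thanks to the compact support of $G$, and the bound then depends on that support). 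The first integral carries an explicit factor $x_1$, so even if the integral itself were bounded, the product would not be uniform in $x$; moreover, controlling $G(x+h)-G(x-h)$ for $|h|\le 1$ requires $\|\nabla G\|_\infty$, which involves third derivatives of $w$. Your remark that the factorization ``supplies the missing decay of $\partial_1 w$ near $\{y_1=0\}$'' misidentifies the obstruction: the problem is at infinity in $h$, not near the hyperplane.

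The route can be closed without the $G$-substitution. Bound $\partial_1 J(x)=\int h_1[\partial_1 w(x+h)-\partial_1 w(x-h)]\,|h|^{-n-2s}\,dh$ directly: over $|h|\le 1$ use $|\partial_1 w(x+h)-\partial_1 w(x-h)|\le 2|h|\,\|D^2 w\|_\infty$; over $|h|\ge 1$ write $\partial_1 w(x+h)-\partial_1 w(x-h)=\partial_{h_1}[w(x+h)+w(x-h)]$ and integrate by parts in $h_1$, which transfers the derivative to the kernel and yields a bound by $C\|w\|_\infty$ (the boundary term on $\{|h|=1\}$ and the bulk $\int_{|h|\ge1}|h|^{-n-2s}\,dh$ are both harmless). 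This gives $|\partial_1 J|\le C(\|w\|_\infty+\|D^2 w\|_\infty)$ with no dependence on the support of $v$---and in fact without needing $\|x_1^{-1}\partial_1 w\|_\infty$ at all.
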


\begin{proof}
The proof that \((-\Delta)^sv\) is a smooth function and antisymmetric follows from standard properties of the fractional Laplacian, so we will omit it and focus on the proof of~\eqref{v5SMVDf3}. For all \(\tilde x \in \R^{n+2}\), define \(\tilde w (\tilde x ) := w \big(\sqrt{\tilde x_1^2+\tilde x_2^2+\tilde x_3^2 }, \tilde x_4, \dots , \tilde x_{n+2}\big)\). By Bochner's relation, we have that \begin{align}
(-\Delta)^sv(x) &= x_1 (-\Delta)^s_{\R^{n+2}} \tilde w (x_1,0,0,x_2, \dots ,x_n) \label{0I9upPuf}
\end{align} where \((-\Delta)^s_{\R^{n+2}} \) is the fractional Laplacian in \(\R^{n+2}\) (and \((-\Delta)^s \) still refers to the fractional Laplacian in \(\R^n\)). For more details regarding Bochner's relation and a proof of~\eqref{0I9upPuf}, we refer the interested reader to the upcoming note~\cite{MR4411363}. 

Thus, applying standard estimates for the fractional Laplacian, we have that, \begin{align*}
\vert (-\Delta)^s v(x)\vert &\leqslant C\Big( \| D^2 \tilde w \|_{L^\infty(\R^{n+2})} + \|  \tilde w \|_{L^\infty(\R^{n+2})}  \Big)x_1 
\end{align*}for all \(x\in \R^n_+\) and \(C>0\) depending on~$n$ and~$s$. Clearly, \(\|  \tilde w \|_{L^\infty(\R^{n+2})} \leqslant \| w \|_{L^\infty(\R^n)}\). Moreover, by a direct computation, one can check that
\begin{eqnarray*}
\partial_{ij} \tilde w( \tilde x) &=&\partial_{11}w\Big(\sqrt{\tilde x_1^2+\tilde x_2^2+\tilde x_3^2 }, \tilde x_4, \dots , \tilde x_{n+2}\Big)
\frac{\tilde x_i\tilde x_j}{\tilde x_1^2+\tilde x_2^2+\tilde x_3^2}\\&&+
\partial_1 w\Big(\sqrt{\tilde x_1^2+\tilde x_2^2+\tilde x_3^2 }, \tilde x_4, \dots , \tilde x_{n+2}\Big)\left[\frac{\delta_{ij}}{\sqrt{\tilde x_1^2+\tilde x_2^2+\tilde x_3^2 }}-\frac{\tilde x_i\tilde x_j}{(\tilde x_1^2+\tilde x_2^2+\tilde x_3^2 )^{3/2}}
\right] \\
&&\qquad{\mbox{ if }} i,j\in\{1,2,3\},\\
\partial_{ij} \tilde w( \tilde x) &=&\partial_{1j-2}w\Big(\sqrt{\tilde x_1^2+\tilde x_2^2+\tilde x_3^2 }, \tilde x_4, \dots , \tilde x_{n+2}\Big)
\frac{\tilde x_i}{\sqrt{\tilde x_1^2+\tilde x_2^2+\tilde x_3^2 }}\\
&&\qquad {\mbox{ if }} i\in\{1,2,3\} \;{\mbox{ and }}\; j\in\{4, \dots, n+2\},\\
\partial_{ij} \tilde w( \tilde x) &=&\partial_{i-2j-2}w\Big(\sqrt{\tilde x_1^2+\tilde x_2^2+\tilde x_3^2 }, \tilde x_4, \dots , \tilde x_{n+2}\Big)
\qquad {\mbox{ if }} i,j\in\{4,\dots, n+2\},
\end{eqnarray*}
and therefore
\begin{align*}
\| D^2 \tilde w\|_{L^\infty(\R^{n+2})} &\leqslant C \Big(\|x_1^{-1} \partial_1 w \|_{L^\infty(\R^n)}+\|D^2w \|_{L^\infty(\R^n)} \Big)
\end{align*} for some universal constant \(C>0\), which implies the desired result. 
\end{proof}

We now construct the barrier that will be essential to allow \(B_\rho(a)\) in \thref{TV1cTSyn} to come up to \(\partial H\). 

\begin{lem} \thlabel{gmjWA2gy} Let \(a \in \overline{\R^n_+}\) and \(\rho>0\). There exists a function \(\varphi \in C^\infty_0(B_\rho(a)\cup B_\rho(a'))\) such that \(\varphi\) is antisymmetric with respect to \(\partial \R^n_+\), \(\rho^{2s}\chi_{B_{\rho/2}(a)}(x)x_1 \leqslant \varphi(x) \leqslant C\rho^{2s} x_1\) in \(\R^n_+\), and \begin{equation}\label{formula}
\vert (-\Delta)^s \varphi(x) \vert \leqslant C  x_1 \qquad \text{in } B_\rho^+(a).
\end{equation} The constant \(C\) depends only on \(n\) and \(s\). 
\end{lem}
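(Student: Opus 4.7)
The plan is to first construct a \emph{unit-scale} antisymmetric barrier $\tilde\varphi$ associated to the rescaled point $\tilde a := a/\rho$, and then define $\varphi$ by the rescaling $\varphi(x) := \rho^{2s+1}\tilde\varphi(x/\rho)$. The scaling exponent $2s+1$ is dictated by the two target bounds: the upper bound $\varphi \leqslant C\rho^{2s}x_1$ produces a factor $\rho^{2s} \cdot \rho = \rho^{2s+1}$ (one power of $\rho$ to convert $x_1/\rho$ back to $x_1$), while $(-\Delta)^s$ scales by $\rho^{-2s}$, so an estimate $|(-\Delta)^s\tilde\varphi(y)|\leqslant C y_1$ at unit scale propagates to $|(-\Delta)^s\varphi(x)| \leqslant \rho^{2s+1}\cdot \rho^{-2s}\cdot C(x_1/\rho) = Cx_1$, with constants independent of $\rho$.

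For the unit-scale construction, let $\eta\in C^\infty_0(B_1)$ satisfy $0\leqslant\eta\leqslant 1$ and $\eta\equiv 1$ on $B_{1/2}$, set $\psi(y):=y_1\,\eta(y-\tilde a)$, and define
\begin{equation*}
\tilde\varphi(y):=\psi(y)-\psi(y').
\end{equation*}
By construction $\tilde\varphi$ is smooth, antisymmetric with respect to $\{y_1=0\}$, and supported in $B_1(\tilde a)\cup B_1(\tilde a')$. Using $y_1'=-y_1$, one sees that in $\R^n_+$
\begin{equation*}
\tilde\varphi(y)=y_1\bigl[\eta(y-\tilde a)+\eta(y'-\tilde a)\bigr],
\end{equation*}
which immediately gives $\chi_{B_{1/2}(\tilde a)}(y)\,y_1\leqslant\tilde\varphi(y)\leqslant 2y_1$. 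Rescaling back then yields the pointwise bounds claimed in \eqref{formula} together with the correct support.

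The heart of the argument is the estimate on $(-\Delta)^s\tilde\varphi$, for which Lemma~\ref{xwrqefNE} is tailor-made. Setting $w(y):=\tilde\varphi(y)/y_1=\eta(y-\tilde a)+\eta(y'-\tilde a)$ (which is smooth and even in $y_1$, hence globally defined), the three quantities appearing in the right-hand side of \eqref{v5SMVDf3} need to be bounded uniformly in $\tilde a$. The norms $\|w\|_\infty$ and $\|D^2 w\|_\infty$ are controlled by $2\|\eta\|_\infty$ and $2\|D^2\eta\|_\infty$, which are universal. The delicate term is $\|y_1^{-1}\partial_1 w\|_\infty$; one computes
\begin{equation*}
\partial_1 w(y) = (\partial_1\eta)(y-\tilde a)-(\partial_1\eta)(y'-\tilde a),
\end{equation*}
and applying the mean value theorem in the first coordinate (where $y$ and $y'$ differ by $2y_1$) shows that this quantity equals $2y_1(\partial_{11}\eta)(\xi,y_2-\tilde a_2,\dots,y_n-\tilde a_n)$ for some intermediate $\xi$. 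Hence $\|y_1^{-1}\partial_1 w\|_\infty \leqslant 2\|D^2\eta\|_\infty$, again uniformly in $\tilde a$. Lemma~\ref{xwrqefNE} then gives $|(-\Delta)^s\tilde\varphi(y)|\leqslant C\,y_1$ throughout $\R^n_+$.

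The only place where uniformity in $\tilde a$ could conceivably fail is in controlling the singular quotient $\partial_1 w/y_1$, and this is precisely the step cleared by the antisymmetric cancellation encoded in the mean value identity above: without it, one would only get $\|y_1^{-1}\partial_1 w\|_\infty \leqslant C/y_1$, which would destroy the desired linear growth. Rescaling via $\varphi(x)=\rho^{2s+1}\tilde\varphi(x/\rho)$ then transfers antisymmetry, the correct support $B_\rho(a)\cup B_\rho(a')$, the pointwise bounds $\rho^{2s}\chi_{B_{\rho/2}(a)}(x)x_1\leqslant\varphi(x)\leqslant C\rho^{2s}x_1$, and the fractional Laplacian bound $|(-\Delta)^s\varphi(x)|\leqslant Cx_1$ on $B_\rho^+(a)$, as claimed.
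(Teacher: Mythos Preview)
Your proof is correct and follows essentially the same approach as the paper: construct the barrier at unit scale as $x_1$ times a sum of two translated bump functions, invoke Lemma~\ref{xwrqefNE} to get the linear bound on the fractional Laplacian, then rescale by $\rho^{2s+1}$. The paper writes the barrier directly at scale $\rho$ and uses a radial $\eta$, while you antisymmetrize $\psi(y)=y_1\eta(y-\tilde a)$ with a generic bump; these produce the same function when $\eta$ is even in $y_1$, and your added verification that $\|w\|_\infty$, $\|D^2w\|_\infty$, and $\|y_1^{-1}\partial_1 w\|_\infty$ are bounded uniformly in $\tilde a$ makes explicit a point the paper leaves implicit.
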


Recall that~$ B_\rho^+(a)=B_\rho(a)\cap \R^n_+$, and notice that~$
B_\rho^+(a)$ in Lemma~\ref{gmjWA2gy} coincides with~$B_\rho(a)$
if~$\rho\in(0,a_1)$ with~$a=(a_1,\dots,a_n)$.

\begin{proof}[Proof of Lemma~\ref{gmjWA2gy}]
Let \(\eta \in C^\infty_0(B_1)\) be a radial function such that~\(\eta =1\) in~\(B_{1/2}\) and~\(0\leqslant \eta \leqslant 1\) in~\(\R^n\). Define \begin{align*}
\varphi(x) &:= \rho^{2s} x_1 \bigg ( \eta \bigg ( \frac{x-a} \rho \bigg ) + \eta \bigg ( \frac{x-a'} \rho \bigg ) \bigg ) .
\end{align*} Then \(\varphi \in C^\infty_0(B_\rho(a)\cup B_\rho(a'))\), it is antisymmetric, and satisfies \(\rho^{2s}\chi_{B_{\rho/2}(a)}(x)x_1\leqslant \varphi \leqslant 2 \rho^{2s} x_1\) in~\(\R^n_+\). 

Moreover, let 
$$\bar \varphi(x):= x_1 \left( \eta \left(x - \frac{a}\rho\right) + \eta \left(x-
\frac{a'}\rho\right) \right) $$ so that \(\varphi(x) = \rho^{2s+1}\bar \varphi(x/\rho)\). From \thref{xwrqefNE}, it follows that \( \vert (-\Delta)^s \bar \varphi \vert \leqslant C x_1\) for some \(C>0\) depending on~$n$ and~$s$, which implies that \begin{align*}
\vert (-\Delta)^s \varphi(x) \vert &= \rho \vert (-\Delta)^s \bar \varphi(x/\rho ) \vert \leqslant  C\rho \bigg ( \frac{x}{\rho} \bigg )_1 = C x_1. \qedhere
\end{align*}
\end{proof}

Now, we will give the proof of \thref{TV1cTSyn}.

\begin{proof}[Proof of \thref{TV1cTSyn}]
Without loss of generality, we may assume that \(H = \R^n_+\). 
We also denote by~$B=B_\rho(a)$.
Recall that, given \(A\subset \R^n\), we use the notation \(A^+ = A \cap \R^n_+\). Let \(\tau \geqslant 0\) be a constant to be chosen later and \(w := \tau\varphi +  (\chi_K+\chi_{K'}) v\) where \(\varphi\) is as in \thref{gmjWA2gy} and \(K':=Q_{\partial \R^n_+}(K)\). Furthermore, let~\(\xi \in \mathcal H^s_0(B^+)\) with~\(\xi \geqslant 0\). By formula~\eqref{formula} in \thref{gmjWA2gy}, we have that \begin{align*}
\mathcal E (w, \xi )& = \tau \mathcal E (\varphi, \xi )+ \mathcal E (\chi_K v , \xi )+ \mathcal E (\chi_{K'}v , \xi ) \\
&\leqslant C  \tau \int_{B^+} x_1 \xi(x) \dd x -c_{n,s}  \int_{B^+ } \int_{K} \frac{\xi(x) v(y)}{\vert x-y\vert^{n+2s}} \dd y \dd x \\
&\qquad -c_{n,s}  \int_{B^+ } \int_{K'} \frac{\xi(x) v(y)}{\vert x-y\vert^{n+2s}} \dd y \dd x \\
&= \int_{B^+ } \bigg [ C\tau x_1 - c_{n,s}  \int_{K}  \bigg ( \frac 1{\vert x-y\vert^{n+2s}} - \frac 1 {\vert x'- y \vert^{n+2s}} \bigg ) v(y) \dd y \bigg ] \xi(x) \dd x 
\end{align*} where \(x':= Q_{\partial H}(x)\).

Since, for all \(x\in B^+\) and \(y\in K\), we have that \begin{align*}
\frac 1{\vert x-y\vert^{n+2s}} - \frac 1 {\vert x'- y \vert^{n+2s}}  &= \frac{n+2s}2 \int_{\vert x- y \vert^2}^{\vert x'- y \vert^2} t^{-\frac{n+2s+2}2} \dd t \\
&\geqslant \frac{n+2s}2  \;\frac{
\vert x' - y \vert^2 - \vert x- y \vert^2 }{ \vert x'- y \vert^{n+2s+2}} \\
&= 2(n+2s)  \frac{x_1y_1}{\vert x ' -y \vert^{n+2s+2} } \\
&\geqslant 2(n+2s) M^{n+2s+2} x_1y_1,
\end{align*} it follows that \begin{align*}
\mathcal E(w,\xi) &\leqslant C \Big ( \tau- \tilde C M^{n+2s+2} \| y_1 v \|_{L^1(K)} \Big)\int_{B^+ }   x_1 \xi(x) \dd x 
\end{align*} with \(C\) and \(\tilde C\) depending only on \(n\) and \(s\).

Hence, using that \(w = \tau \varphi \leqslant C \rho^{2s} \tau x_1\) in \(B^+\) by \thref{gmjWA2gy}, we have that \begin{align*}
\mathcal E(w,\xi) + &\int_{B^+} c(x) w(x) \xi(x) \dd x  \\ &\leqslant C \Big[ \tau\big(1  + \rho^{2s}\|c^+\|_{L^\infty(U)}\big)  - \tilde C M^{n+2s+2}  \| y_1 v \|_{L^1(K)} \Big]\int_{B^+ }   x_1 \xi(x) \dd x 
\end{align*} (after possibly relabeling \(C\) and \(\tilde C\), but still depending only on \(n\) and \(s\)). Choosing \begin{align*}
\tau &:=  \frac12 \tilde C M^{n+2s+2}  \| y_1 v \|_{L^1(K)}\big(
1  + \rho^{2s}\|c^+\|_{L^\infty(U)} \big)^{-1}
\end{align*} we obtain that \begin{align*}
(-\Delta)^s w + cw &\leqslant 0 \qquad \text{in } B^+. 
\end{align*} Moreover, in \(\R^n_+ \setminus B^+\), we have that~\(w = v \chi_K \leqslant  v\), so,
recalling also~\eqref{ed0395v04ncrhrhfwejfvwejhfvqjhkfvqejhk}, \cite[Proposition 3.1]{MR3395749} 
implies that,
in~\(B^+\), \begin{align*}
v(x) &\geqslant w(x)  =  \frac12 \tilde C M^{n+2s+2} \big(
1+\rho^{2s}\|c^+\|_{L^\infty(U)} \big)^{-1} \| y_1 v \|_{L^1(K)} \varphi(x). 
\end{align*} Recalling that \(\varphi \geqslant \rho^{2s} x_1 \chi_{B_{\rho/2}^+(a)}\) 
(by Lemma~\ref{gmjWA2gy}),
we obtain the final result. 
\end{proof}

\section{The stability estimate and proof of Theorem~\ref{oAZAv7vy} }
\label{sec:stabest}

The proof of \thref{oAZAv7vy} makes use of the method of moving planes. Before we begin our discussion of this technique and give the proof of \thref{oAZAv7vy}, we must fix some notation. Let~\(\mu \in \R\),  \(e\in \Sph^{n-1}\), and~\(A \subset \R^n\). Then we have the following standard definitions: 
\begin{align*}
\pi_\mu&=\{ x\in \R^n \text{ s.t. } x\cdot e =\mu \} && \text{a hyperplane orthogonal to }e \\
H_\mu&=\{x\in \R^n  \text{ s.t. } x\cdot e>\mu \} &&\text{the right-hand half space with respect to } \pi_\mu  \\
H_\mu'&=\{x\in \R^n  \text{ s.t. } x\cdot e<\mu \} &&\text{the left-hand half space with respect to } \pi_\mu\\
A_\mu &= A \cap H_\mu &&\text{the portion of }A \text{ on the right-hand side of } \pi_\mu \\
x_\mu' &= x-2(x\cdot e -\mu) e && \text{the reflection of } x \text{ across } \pi_\mu \\ 
A_\mu' &= \{x\in \R^n \text{ s.t. } x'_\mu\in A_\mu \} && \text{the reflection of } A_\mu \text{ across } \pi_\mu
\end{align*} Note that in some articles such as \cite{MR3836150} \(A'_\mu\) is used to denote the reflection of~\(A\) (instead of~\(A_\mu\)) across~\(\pi_\mu\).

The method of moving planes works as follows. Fix a direction \(e\in \Sph^{n-1}\) and suppose that \(\Omega\) is a bounded open subset of \(\R^n\) with \(C^1\) boundary. Since \(\Omega\) is bounded, for \(\mu\) sufficiently large the hyperplane \(\pi_\mu\) does not intersect \(\Omega\). Furthermore, by decreasing the value of \(\mu\), at some point \(\pi_\mu\) will intersect \(\overline \Omega\). We denote the value of \(\mu\) at this point by \begin{align*}
\Lambda = \Lambda_e :=\sup\{x \cdot e \text{ s.t. } x\in \Omega \}.
\end{align*} From here, we continue to decrease the value of \(\mu\). Initially, since \(\partial \Omega\) is \(C^1\), the reflection of \(\Omega_\mu\) across \(\pi_\mu\) will be contained within \(\Omega\), that is \(\Omega_\mu'\subset \Omega\) for \(\mu < \Lambda\) but with \(\mu\) sufficiently close to \(\Lambda\).
Eventually, as we continue to make \(\mu\) smaller, there will come a point when this is no longer the case.
More precisely, there exists \(\lambda =\lambda_e\in \R\) such that for all \(\mu \in [\lambda,\Lambda)\), it occurs that~\(\Omega'_\mu \subset \Omega\), but~\(\Omega'_\mu \not\subset \Omega\) for~\(\mu<\lambda\). We may write~\(\lambda\) more explicitly as \begin{align}
\lambda := \inf \Big\{ \tilde \mu \in \R \text{ s.t. } \Omega'_\mu \subset \Omega \text{ for all } \mu \in (\tilde \mu , \Lambda) \Big\} . \label{IAhmMsFq}
\end{align} When \(\mu = \lambda\), geometrically speaking, there are two possibilities that can occur: \label{pagemov}

\emph{Case 1:} The boundary of \(\Omega'_\lambda\) is internally tangent to~\(\partial\Omega\) at some point not on~\(\pi_\lambda\), that is, there exists~\(p\in (\partial \Omega \cap \partial \Omega'_\lambda) \setminus \pi_\lambda \); or 

\emph{Case 2:} The critical hyperplane \(\pi_\lambda\) is orthogonal to the~\(\partial \Omega\) at some point, that is, there exists~\(p\in \partial\Omega \cap \pi_\mu\) such that the normal of~\(\partial \Omega\) at~\(p\) is contained in the plane~\(\pi_\mu\). 

We emphasise that, when we refer to the critical hyperplane \(\pi_\lambda\) in the proceeding discussion and results, \(\pi_\lambda\) may satisfy Case 1 or Case 2 (or both) i.e. the notation \(\pi_\lambda\) is not specific to Case 1 nor is it specific to Case 2, but allows for either/both situations to occur. At this stage, let us introduce the function \begin{align}
v_\mu(x) := u(x) - u( x_\mu' ), \qquad \text{for all } \mu \in \R \text{ and }  x\in \R^n . \label{4LML9uGd}
\end{align} It follows that, in \(\Omega_\mu'\), \begin{align*}
(-\Delta)^s v_\mu (x) &= f(u(x)) - f(u(x_\mu')) = -c_\mu(x) v_\mu (x) 
\end{align*} where \begin{align*}
c_\mu(x) &= \int_0^1 f'((1-t)u(x) +tu(x_\mu')) \dd t .
\end{align*} Note that \begin{align}
\| c_\mu \|_{L^\infty(\Omega_\mu ')} \leqslant [f]_{C^{0,1}([0,\| u\|_{L^\infty(\Omega)}])}. \label{j5AoQqGi}
\end{align} Hence, \(v_\mu\) is an antisymmetric function that satisfies \begin{align*}
\begin{PDE}
(-\Delta)^s v_\mu  +c_\mu v_\mu &= 0 &\text{in } \Omega_\mu ' ,\\
v_\mu &= u &\text{in } (\Omega\cap H_\mu') \setminus \Omega_\mu' ,\\
v_\mu &= 0 &\text{in } H_\mu' \setminus \Omega ,
\end{PDE}
\end{align*} with \(c_\mu \in L^\infty (\Omega_\mu ')\).

In the situations where one expects that~\(\Omega\) should be a ball, the goal of the method of moving planes is to prove that~\(v_\lambda \equiv 0\) i.e. \(u\) is even with respect to reflections across the critical hyperplane~\(\pi_\lambda\). Since the direction~\(e\) was arbitrary, one can then deduce that~\(u\) must be radial with respect to some point. The proof that~\(v_\lambda \equiv 0\) is achieved through repeated applications of the maximum principle.

\begin{remark} \thlabel{VbKq6OVB}
From the preceding exposition, it is clear that in several instances we will need to evaluate \(u\) and \(v_\mu\) at a single point. This is technically an issue since, in \thref{oAZAv7vy}, \(u\) is only assumed to be in \(H^s(\R^n)\cap L^\infty(\R^n)\). However, by standard regularity theory, we have that\footnote{Here we are using the notation that for \(\alpha>0\) with \(\alpha\) not an integer, \(C^\alpha(\Omega):=C^{k,\beta}(\Omega)\) where \(k\) is the integer part of \(\alpha\) and \(\beta=\alpha-k\in (0,1)\).} \(u \in C^{2s+1-\varepsilon}(\Omega)\) for all \(\varepsilon>0\) such that \(2s+1-\varepsilon\) is not an integer. In particular, this implies that~\(u\in C^1(\Omega)\) which will be essential for the proof of the theorem. Indeed, using that \(f\) is locally Lipschitz, we have that \((-\Delta)^s u = -cu+f(0)\) with \(c(x) = - \int_0^1 f'(tu(x))\dd t\), so \((-\Delta)^su \in L^\infty(\Omega)\). Hence, it follows that \(u\in C^{2s-\varepsilon}(\Omega)\) for all \(\varepsilon\in (0,2s)\), \(2s-\varepsilon \not\in \Z\), see \cite[Proposition 2.3]{MR3168912}. Then, it follows that~\((-\Delta)^su = f(u) \in C^{\min \{1-\varepsilon,2s-\varepsilon \}} (\Omega)\), so by \cite[Proposition~2.2]{MR3168912} and a bootstrapping argument (if necessary), we obtain that~\(u \in C^{2s+1-\varepsilon}(\Omega)\). 
\end{remark}

\subsection{Uniform stability in each direction}

In this subsection, we will use the maximum principle of Section~\ref{S7DGIjUf} to prove uniform stability for each direction~\(e\in \Sph^{n-1}\), that is, we will show, for each~\(e\in \Sph^{n-1}\), that~\(\Omega\) is almost symmetric with respect to~\(e\). This is stated precisely in \thref{7TQmUHhl}. We will repeatedly use that fact that~\(v_\lambda\) is~\(C^1\) which follows from \thref{VbKq6OVB}. Before proving \thref{7TQmUHhl}, we have two lemmata. 

\begin{lem} \thlabel{JFhLgBx4}
Let \(\Omega\) be a bounded open set with \(C^1\) boundary, \(e\in \Sph^{n-1}\), and \(v_\mu\) as in~\eqref{4LML9uGd}. 

Then, for all \(\mu \in [\lambda , \Lambda]\), we have that~\(v_\mu \geqslant 0\) in \(\Omega_\mu'\).   
\end{lem}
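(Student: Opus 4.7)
The plan is to apply the method of moving planes, sliding $\mu$ from $\Lambda$ downward while preserving the non-negativity of $v_\mu$ in $\Omega_\mu'$. Set
\begin{align*}
\mu_* := \inf\Big\{\mu_0 \in [\lambda, \Lambda] \text{ s.t. } v_\mu \geqslant 0 \text{ in } \Omega_\mu' \text{ for all } \mu \in [\mu_0, \Lambda]\Big\},
\end{align*}
so that the lemma amounts to proving $\mu_* = \lambda$. The conclusion is trivial when $u\equiv 0$, so I may assume $u\not\equiv 0$, which by the strong maximum principle for $u$ itself gives $u>0$ throughout $\Omega$. For the starting step, when $\mu$ is close enough to $\Lambda$ the measure $|\Omega_\mu'|$ is arbitrarily small, so the fractional Faber--Krahn inequality (Proposition~\ref{ayQzh} of Chapter~\ref{1XMNCzrH}) gives $\lambda_1(\Omega_\mu')\to +\infty$ as $\mu\to\Lambda^-$. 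Since $\|c_\mu\|_{L^\infty(\Omega_\mu')}$ is uniformly bounded by $[f]_{C^{0,1}([0,\|u\|_{L^\infty(\Omega)}])}$ in view of~\eqref{j5AoQqGi}, the antisymmetric weak maximum principle (\cite[Proposition~3.1]{MR3395749}) applies to $v_\mu$ --- noting that $v_\mu\geqslant 0$ on $H_\mu'\setminus\Omega_\mu'$ because $u\geqslant0$ in $\Omega$ and $u\equiv 0$ outside $\Omega$ --- and yields $v_\mu\geqslant 0$ in $\Omega_\mu'$ for every such $\mu$. Hence $\mu_* < \Lambda$.

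I then argue by contradiction, assuming $\mu_* > \lambda$. By the regularity in Remark~\ref{VbKq6OVB} and joint continuity of $v_\mu$ in $(\mu,x)$, passing to the limit $\mu\searrow \mu_*$ gives $v_{\mu_*}\geqslant 0$ in $\Omega_{\mu_*}'$. The key sub-claim is that $v_{\mu_*} > 0$ strictly in $\Omega_{\mu_*}'$; otherwise, the antisymmetric strong maximum principle (\cite[Corollary~3.4]{MR3395749}; see also Proposition~\ref{fyoaW} of Chapter~\ref{1XMNCzrH}) would force $v_{\mu_*}\equiv 0$ almost everywhere in $\R^n$. Combined with $v_{\mu_*}(x)=u(x)-u(x_{\mu_*}')$ and the fact that $u>0$ in $\Omega$ while $u=0$ outside, this would make $\Omega$ symmetric across $\pi_{\mu_*}$, and the geometric definition~\eqref{IAhmMsFq} of $\lambda$ would then yield $\lambda=\mu_*$, contradicting $\mu_*>\lambda$.

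To conclude, I propagate the non-negativity slightly past $\mu_*$ using the strict positivity just established. Given $\eta>0$, pick a compact $K_\eta\subset\Omega_{\mu_*}'$ with $|\Omega_{\mu_*}'\setminus K_\eta|<\eta$ and $\delta_\eta:=\min_{K_\eta} v_{\mu_*}>0$. Using the $C^1$ regularity of $\partial\Omega$ to control the symmetric difference of $\Omega_\mu'$ as $\mu$ varies, for $\varepsilon>0$ small enough we have $K_\eta\subset\Omega_{\mu_*-\varepsilon}'$, $v_{\mu_*-\varepsilon}\geqslant \delta_\eta/2>0$ on $K_\eta$, and $|\Omega_{\mu_*-\varepsilon}'\setminus K_\eta|<2\eta$. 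Consequently $\{v_{\mu_*-\varepsilon}\leqslant 0\}\cap\Omega_{\mu_*-\varepsilon}'$ sits inside an open $\Pi_\varepsilon\subset\Omega_{\mu_*-\varepsilon}'\setminus K_\eta$ with $|\Pi_\varepsilon|<3\eta$. Picking $\eta$ so that Faber--Krahn gives $\lambda_1(\Pi_\varepsilon)>[f]_{C^{0,1}([0,\|u\|_{L^\infty(\Omega)}])}$, \cite[Proposition~3.1]{MR3395749} applied to $v_{\mu_*-\varepsilon}$ on $\Pi_\varepsilon$ (where $v_{\mu_*-\varepsilon}\geqslant 0$ on $H_{\mu_*-\varepsilon}'\setminus\Pi_\varepsilon$ by construction) forces $v_{\mu_*-\varepsilon}\geqslant 0$ on $\Pi_\varepsilon$, and hence on all of $\Omega_{\mu_*-\varepsilon}'$. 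This contradicts the definition of $\mu_*$ and finishes the proof.

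The principal obstacle I anticipate is the degenerate case of the strong maximum principle step: ruling out $v_{\mu_*}\equiv 0$ by translating it into the geometric symmetry of $\Omega$ across $\pi_{\mu_*}$ and reconciling with the definition~\eqref{IAhmMsFq} of $\lambda$. The remainder is a standard Faber--Krahn continuation typical of the moving planes method in the fractional setting.
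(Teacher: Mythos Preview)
Your proposal is correct and follows essentially the same approach as the paper, which does not prove the lemma directly but refers to the moving-plane argument in the proof of Theorem~\ref{CccFw} (Chapter~\ref{1XMNCzrH}): start the plane via Faber--Krahn plus the antisymmetric weak maximum principle of~\cite[Proposition~3.1]{MR3395749}, upgrade to strict positivity with the strong maximum principle, and continue past the critical value by trapping the nonpositive set in a region of small measure. Your treatment of the degenerate case $v_{\mu_*}\equiv 0$ via the induced symmetry of $\Omega$ is a bit more explicit than the paper's ``as above'' phrasing, but it amounts to the same observation that $(\Omega\cap H_{\mu_*}')\setminus\Omega_{\mu_*}'$ is nonempty when $\mu_*>\lambda$ and $u>0$ there.
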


The proof of \thref{JFhLgBx4}
follows from an application of the moving plane method in the nonlocal framework and is given as part of the proof of Theorem 1.4 in \cite{RoleAntisym2022}, so we will not include it again here. However, we would like to emphasise that, even though \cite[Theorem 1.4]{RoleAntisym2022} assumes the solution \(u\) of~\eqref{problem00} is constant on \(\partial G\) (i.e. \([u]_{\partial G}=0\)), this assumption was unnecessary to obtain the much weaker result of \thref{JFhLgBx4}.

We now give the second lemma.

\begin{lem} \thlabel{goNy8kYt}
Let \(\Omega\) and~$G$ be open bounded sets with~\(C^1\) boundary
such that~$\Omega=G+B_R$
for some~$R>0$. 
Let~\(f \in C^{0,1}_{\mathrm{loc}}(\R)\) be such that \(f(0)\geqslant 0\)
and~\(u\in  H^s (\R^n)\) be a solution of~\eqref{problem00}.

Then, for each \(e\in \Sph^{n-1}\), we have that \begin{align}
 \int_{(\Omega\cap H'_\lambda) \setminus \Omega_\lambda'} \delta_{\pi_\lambda}(x) u (x) \dd x \leqslant C  [u]_{\partial G} \label{HqEzyGyW},
\end{align} where \begin{align}
C := C(n,s)  R_0^{-2s} (\diam \Omega)^{n+2s+2} \label{KW9pZNdx}
\end{align} and \(R_0\) is given by~\eqref{K57sa7Sk}.
\end{lem}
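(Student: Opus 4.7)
The starting observation is that for any $x \in A := (\Omega \cap H'_\lambda) \setminus \Omega'_\lambda$, the reflection $x'_\lambda$ must lie outside $\Omega$ (otherwise $x$ would belong to $\Omega'_\lambda$), hence $u(x'_\lambda) = 0$ and $v_\lambda(x) = u(x)$. This rewrites the integral under study as $\int_A \delta_{\pi_\lambda}(x)\, v_\lambda(x)\, dx$, converting a question about $u$ into one about the antisymmetric quantity $v_\lambda$.

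The parallel-surface decomposition $\Omega = G + B_R$ reflects to $\Omega'_\lambda = G'_\lambda + B_R$, and combined with $\Omega'_\lambda \subset \Omega$ this forces $G'_\lambda \subset G$. Consequently, on the subset $\Gamma := \partial G \cap \partial G'_\lambda \cap H'_\lambda$, both $x$ and $x'_\lambda$ lie on $\partial G$, and the Lipschitz control furnished by the seminorm $[u]_{\partial G}$ yields the boundary bound
\[
|v_\lambda(x)| = |u(x) - u(x'_\lambda)| \leqslant [u]_{\partial G}\, |x - x'_\lambda| = 2\,[u]_{\partial G}\, \delta_{\pi_\lambda}(x).
\]
This is the only place where the hypothesis $[u]_{\partial G} < \infty$ enters; everything else follows from the equation.

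To convert this boundary control into an integral estimate over $A$, I would employ a fractional Green's-type identity: testing the weak equation $\mathcal{E}(v_\lambda, \eta) + \int c_\lambda v_\lambda \eta = 0$ against an antisymmetric test function supported near $A \cup A'_\lambda$ (for instance $\eta := \delta_{\pi_\lambda}(\chi_A - \chi_{A'_\lambda})$, suitably regularized), and exploiting the antisymmetric decomposition of the bilinear form via the kernel $K(x,y) = |x-y|^{-n-2s} - |x-y'_\lambda|^{-n-2s}$, which satisfies the elementary bound $K(x,y) \leqslant C(n,s)\, \delta_{\pi_\lambda}(x)\delta_{\pi_\lambda}(y)\,|x-y|^{-n-2s-2}$. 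The diagonal contribution produces (up to a constant) the quantity $\int_A \delta_{\pi_\lambda} u$ one wishes to bound; the off-diagonal contributions are controlled by $\Gamma$ through the estimate of the previous paragraph; and the integration of $K$ over $\Omega \times \Omega$ gives rise to the factor $(\diam \Omega)^{n+2s+2}$ appearing in~\eqref{KW9pZNdx}. The zero-order term $\int c_\lambda v_\lambda \eta$, with $\|c_\lambda\|_{L^\infty} \leqslant [f]_{C^{0,1}([0,\|u\|_{L^\infty(\Omega)}])}$ by~\eqref{j5AoQqGi}, is absorbed via the Faber-Krahn choice of $R_0$ in~\eqref{K57sa7Sk} (cf. Remark~\ref{uElLogjj}), which guarantees $\|c_\lambda\|_{L^\infty}$ is dominated by the first Dirichlet eigenvalue on balls of radius $R_0$ and produces the factor $R_0^{-2s}$.

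The main obstacle is the bookkeeping required to keep the dependence on $[u]_{\partial G}$ \emph{linear}: the interactions between the asymmetric region $A$ (where $v_\lambda = u$ is controlled only by the Dirichlet regularity of $u$) and the region $\Omega'_\lambda$ (where $v_\lambda$ is controlled via $\Gamma$) must not introduce higher-order terms. The Faber-Krahn scale $R_0$ is the key technical device that prevents this, enabling the application of the antisymmetric maximum principle of Proposition~\ref{TV1cTSyn} on balls of radius $R_0$ centred along $\partial G \cap \pi_\lambda$; an alternative route is a direct covering argument by such balls coupled with the pointwise kernel estimate above.
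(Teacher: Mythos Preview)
Your first two paragraphs are exactly right: on $A=(\Omega\cap H'_\lambda)\setminus\Omega'_\lambda$ one has $v_\lambda=u$, and at any critical point $p\in\partial G\cap\partial G'_\lambda\cap H'_\lambda$ the bound $v_\lambda(p)\leqslant 2[u]_{\partial G}\,\delta_{\pi_\lambda}(p)$ holds. But your mechanism for converting this pointwise bound into the integral estimate has a gap.

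The equation $(-\Delta)^s v_\lambda+c_\lambda v_\lambda=0$ holds only in $\Omega'_\lambda$, whereas your proposed test function $\eta=\delta_{\pi_\lambda}(\chi_A-\chi_{A'_\lambda})$ is supported on $A\cup A'_\lambda$, which lies \emph{outside} $\Omega'_\lambda$. Testing the weak formulation there does not couple to the equation, so the ``diagonal contribution'' you expect to produce $\int_A\delta_{\pi_\lambda}u$ does not arise from this pairing. Your off-diagonal bookkeeping is likewise unclear: the set $\Gamma$ where you have the $[u]_{\partial G}$ bound is just the tangency locus, not a region over which you integrate.

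The paper's route is more direct and avoids any testing. One applies \thref{TV1cTSyn} with $U=\Omega'_\lambda$, with $K=A$ as the mass region, and with a ball $B_{\varepsilon R_0}(p)$ centred at the \emph{single} moving-planes critical point $p\in\partial G'_\lambda$ (the internal tangency point in Case~1, or $p+he_1$ with $h\to0^+$ for the orthogonal point in Case~2). That proposition is a barrier statement: it says $v_\lambda(p)/p_1\geqslant C\int_A y_1 u(y)\,dy$ directly, with $C$ carrying exactly the factors $R_0^{2s}(\diam\Omega)^{-n-2s-2}$ via the kernel estimate and the Faber--Krahn choice of $R_0$. Combining with your upper bound $v_\lambda(p)/p_1\leqslant 2[u]_{\partial G}$ finishes the proof in one line. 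No covering along $\partial G\cap\pi_\lambda$ is needed; a single application at the critical point suffices.
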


\begin{proof}
Without loss of generality, take \(e=-e_1\) and \(\lambda =0\). 
We now apply the method of moving planes to~$G$.
First, suppose that we are in the first case,
namely, the boundary of \(G'_\lambda\) is internally tangent to~\(\partial G\) at some point not on~\(\{x_1=0\}\),
and let~\(p\in (\partial G \cap \partial G_\lambda') \setminus \{x_1=0\}\).

By \thref{TV1cTSyn} with~$U:=\Omega'_\lambda$, \(K := (\Omega \cap H_\lambda')\setminus \Omega_\lambda'\) and~\(B_\rho(a):=B_{\varepsilon R_0}(p)\)
(notice that, according to Remark~\ref{uElLogjj}, the condition~\eqref{ed0395v04ncrhrhfwejfvwejhfvqjhkfvqejhk}
is satisfied for \(\varepsilon=\varepsilon(n,s)\) sufficiently small), we have that \begin{align}
\frac{v_\lambda(p)}{p_1} &\geqslant C \int_{(\Omega\cap H_\lambda') \setminus \Omega_\lambda '} y_1 v_\lambda (y) \dd y = C \int_{(\Omega\cap H_\lambda') \setminus \Omega_\lambda '} y_1 u(y) \dd y. \label{bYXi8urM}
\end{align}
Note that, since \(x_\lambda'\) belongs to the reflection of~$\Omega$ across~\(\{x_1=0\}\)
for each \(x\in(\Omega\cap H_\lambda') \setminus \Omega_\lambda'\), we have that  \begin{align*}
\inf_{{x\in( \Omega\cap H'_\lambda) \setminus \Omega_\lambda'}\atop{y\in B_{R/2}^+(p)}} \vert x_\lambda'- y \vert^{-1} &\geqslant (\diam \Omega)^{-1}, 
\end{align*} so \thref{TV1cTSyn} implies that the constant in~\eqref{bYXi8urM} is given by \begin{align*}
C &= C(n,s) R_0^{2s} (\diam \Omega)^{-n-2s-2} \big(1+R_0^{2s}\|c^+\|_{L^\infty(\Omega_\lambda')} \big)^{-1} \geqslant C(n,s) R_0^{2s} (\diam \Omega)^{-n-2s-2} 
\end{align*} using~\eqref{j5AoQqGi} and~\eqref{K57sa7Sk}. Moreover, we have that \begin{align*}
\frac{v_\lambda(p)}{p_1} = \frac{u(p)-u(p_\lambda')}{p_1} = \frac{2(u(p)-u(p_\lambda'))}{\vert p_1-(p_\lambda')_1\vert} \leqslant 2 [u]_{\partial G},
\end{align*} which, along with~\eqref{bYXi8urM}, gives~\eqref{HqEzyGyW}. 

Now, let us suppose that we are in the second case and let~\(p\in \partial G \cap \{x_1=0\}\) be
such that the normal of~\(\partial G\) at~\(p\) is contained in~\(\{x_1=0\}\).
Proceeding in a similar fashion as the first
case, we apply \thref{TV1cTSyn} with~$U:=\Omega'_\lambda$,
\(K := (\Omega\cap H_\lambda') \setminus \Omega_\lambda'\) and~\(B_\rho(a):=B_{\varepsilon R}(p+he_1)\) with \(\varepsilon>0\) as above and~\(h>0\) very small,
to obtain that \begin{align*}
\int_{(\Omega\cap H_\lambda') \setminus \Omega_\lambda'} x_1 u (x) \dd x \leqslant C \frac{v_\lambda(p+he_1)}h
\end{align*} with \(C\) in the same form as in~\eqref{KW9pZNdx} and, in particular, independent of \(h\). Sending \(h\to 0^+\), we obtain that \begin{align*}
\int_{(\Omega\cap H_\lambda') \setminus \Omega_\lambda'} x_1 u (x) \dd x \leqslant C \partial_1 v_\lambda(p) \leqslant C [u]_{\partial G} 
\end{align*} which gives~\eqref{HqEzyGyW} in this case as well. 
\end{proof}

We are now able to obtain uniform stability for each direction in \thref{7TQmUHhl} below. 

\begin{prop}\thlabel{7TQmUHhl}
Let \(\Omega\) be an open bounded set with~\(C^1\) boundary and satisfying the uniform interior ball condition with radius~\(r_\Omega >0\)
and~$G$ be an open bounded set with~\(C^1\) boundary
such that~$\Omega=G+B_R$
for some~$R>0$. Let~\(f \in C^{0,1}_{\mathrm{loc}}(\R)\) be such that \(f(0)\geqslant 0\)
and~\(u\) be a weak solution of~\eqref{problem00}.

For \(e\in \Sph^{n-1}\), let \(\Omega '\) denote the reflection of \(\Omega\) with respect to the critical hyperplane \(\pi_\lambda\).

Then, \begin{align}
\vert \Omega \triangle \Omega' \vert \leqslant C_\star [u]_{\partial G}^{\frac1{s+2}}, \label{D8UMJ0i3}
\end{align} where \begin{align*}
&C_\star \\&:= C(n,s) \Big( C_\ast^{-1}
\big( f(0)+\|u\|_{L_s(\R^n)}\big)^{-1} R_0^{-2s} (\diam \Omega)^{n+2s+2}
+ (\diam \Omega)^{n-1} + r_\Omega^{-1}\vert \Omega \vert \Big) \Big ),
\end{align*} the constant \(C_\ast\) as in \thref{zAPw0npM}, and \(R_0\) given by~\eqref{K57sa7Sk}.
\end{prop}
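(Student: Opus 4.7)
The plan is to combine the weighted integral bound of \thref{goNy8kYt} with the pointwise boundary lower bound of \thref{zAPw0npM}, and then to split the relevant symmetric-difference set into a bad tubular region (where either $\delta_{\pi_\lambda}$ or $\delta_{\partial \Omega}$ is small) and a good region where both distances are bounded below. As a first step I would identify the symmetric-difference set: by definition of the critical hyperplane, $\Omega'_\lambda \subseteq \Omega \cap H'_\lambda$; reflecting $\Omega$ across $\pi_\lambda$ yields $\Omega' = \Omega'_\lambda \cup Q_{\pi_\lambda}(\Omega \cap H'_\lambda)$, and since $Q_{\pi_\lambda}(\Omega'_\lambda) = \Omega_\lambda$, one has $\Omega_\lambda \subseteq Q_{\pi_\lambda}(\Omega \cap H'_\lambda)$. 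A direct set-theoretic computation then gives $\Omega \setminus \Omega' = (\Omega \cap H'_\lambda) \setminus \Omega'_\lambda =: E$, and $|\Omega \triangle \Omega'| = 2|E|$ by the measure-preserving property of reflections. Applying \thref{zAPw0npM} to bound $u(x) \geq C_\ast (f(0) + \|u\|_{L_s(\R^n)}) \delta_{\partial \Omega}^s(x)$ and substituting into \thref{goNy8kYt} gives
\[ \int_E \delta_{\pi_\lambda}(x) \delta_{\partial \Omega}^s(x) \dd x \leq A \, [u]_{\partial G}, \qquad A := \frac{C(n,s) R_0^{-2s} (\diam \Omega)^{n+2s+2}}{C_\ast (f(0) + \|u\|_{L_s(\R^n)})}. \]

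Next, for a parameter $\eta > 0$ I would split $E = E^\eta \cup (E \setminus E^\eta)$ where
\[ E^\eta := \{x \in E \text{ such that } \delta_{\pi_\lambda}(x) < \eta \text{ or } \delta_{\partial \Omega}(x) < \eta\}. \]
On $E \setminus E^\eta$ the integrand in the previous display is bounded below by $\eta^{s+1}$, so $|E \setminus E^\eta| \leq A \eta^{-(s+1)} [u]_{\partial G}$. For $E^\eta$ I would use two elementary geometric estimates: projecting $\Omega$ onto $\pi_\lambda$ gives $|\{x \in \Omega : \delta_{\pi_\lambda}(x) < \eta\}| \leq C(n) \eta (\diam \Omega)^{n-1}$, while the uniform interior ball condition of radius $r_\Omega$ yields $\mathcal H^{n-1}(\partial \Omega) \leq C(n) r_\Omega^{-1} |\Omega|$ (via a Vitali-type packing argument using interior balls of radius $r_\Omega/2$), and hence $|\{x \in \Omega : \delta_{\partial \Omega}(x) < \eta\}| \leq C(n) \eta r_\Omega^{-1} |\Omega|$ by the tube formula for $\eta \leq r_\Omega$. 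Adding these gives $|E^\eta| \leq B\eta$ with $B := C(n)\big( (\diam \Omega)^{n-1} + r_\Omega^{-1}|\Omega|\big)$.

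Finally, I would optimize: the choice $\eta^{s+2} = (A/B) [u]_{\partial G}$ equates the two contributions, and Young's inequality with conjugate exponents $s+2$ and $(s+2)/(s+1)$ produces $A^{1/(s+2)} B^{(s+1)/(s+2)} \leq A + B$, yielding $|E| \leq C(A + B)[u]_{\partial G}^{1/(s+2)}$, which after doubling matches the form of $C_\star$ in the statement; the regime where the optimized $\eta$ would exceed $r_\Omega$ is handled by the trivial bound $|E| \leq |\Omega| \leq r_\Omega^{-1}|\Omega| \cdot \eta$. The main obstacle, in my view, is the quantitative control of the inner $\eta$-tube around $\partial \Omega$: a naive $C^1$ bound would depend on an unquantified modulus of continuity of $\partial \Omega$, so exploiting the uniform interior ball condition to produce an explicit dependence of the form $r_\Omega^{-1}|\Omega|$ is what makes the constant $C_\star$ take the stated shape. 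The remainder is a clean split-and-optimize scheme.
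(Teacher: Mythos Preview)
Your proposal is correct and follows essentially the same approach as the paper. The paper applies Chebyshev's inequality to the product $\delta_{\pi_\lambda}\,\delta_{\partial\Omega}^s$ with threshold $\gamma$ and then splits the sublevel set into $\{\delta_{\pi_\lambda}<\gamma^{1/(s+1)}\}$ and $\{\delta_{\partial\Omega}\le \gamma^{1/(s+1)}\}$, whereas you split directly on each distance being below a threshold $\eta$; with the identification $\gamma=\eta^{s+1}$ the two decompositions coincide, and both rely on the same tube estimate $|\{\delta_{\partial\Omega}<\eta\}|\le C r_\Omega^{-1}|\Omega|\,\eta$ (the paper cites \cite[Lemma~5.2]{MR4577340} and \cite{MR483992} for this) and the same slab estimate near $\pi_\lambda$, followed by the same optimization yielding the exponent $1/(s+2)$.
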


\begin{proof}
Without loss of generality, take \(e=-e_1\) and \(\lambda =0\). By \thref{zAPw0npM}, we have that \begin{align}
\int_{(\Omega\cap H_\lambda') \setminus \Omega_\lambda'} x_1 u (x) \dd x &\geqslant C_\ast \big( f(0)+\|u\|_{L_s(\R^n)}\big)
 \int_{(\Omega\cap H_\lambda') \setminus \Omega_\lambda'} x_1 \delta_{\partial \Omega}^s(x) \dd x. \label{JfjtK29b}
\end{align}
Fix \(\gamma >0\). By Chebyshev's inequality, \eqref{JfjtK29b} and \thref{goNy8kYt}, we have that  \begin{equation}\begin{split}\label{ido4985vb6tdfghwafrikewytoiw}
\Big| \big\{ x \in (\Omega\cap H_\lambda') \setminus \Omega_\lambda' \text{ s.t. } x_1 \delta_{\partial \Omega}^s(x) >\gamma \big\} \Big| \leqslant\;& \frac1{\gamma}
\int_{(\Omega\cap H_\lambda') \setminus \Omega_\lambda'} x_1 \delta_{\partial \Omega}^s (x) \dd x \\ \leqslant\;& \frac {CC_\ast^{-1}\big( f(0)+\|u\|_{L_s(\R^n)}\big)^{-1}} \gamma [u]_{\partial G}.
\end{split}\end{equation}  Moreover, \begin{eqnarray*}&&
\Big| \big\{ x \in (\Omega\cap H_\lambda') \setminus \Omega_\lambda' \text{ s.t. } x_1 \delta_{\partial \Omega}^s(x) \leqslant \gamma \big\} \Big| \\&=&
\Big| \big\{ x \in (\Omega\cap H_\lambda') \setminus \Omega_\lambda' \text{ s.t. } x_1 \delta_{\partial \Omega}^s(x) \leqslant \gamma, x_1< \gamma^{\frac 1{s+1}} \big\} \Big| \\&&\qquad + \Big| \big\{ x \in (\Omega \cap H_\lambda')
\setminus \Omega_\lambda' \text{ s.t. } x_1 \delta_{\partial \Omega}^s(x) \leqslant \gamma , x_1 \geqslant \gamma^{\frac 1{s+1}} \big\} \Big| \\
&\leqslant& \Big| \big\{ x \in \Omega^+ \text{ s.t. } x_1< \gamma^{\frac 1{s+1}} \big\} \Big| 
+  \Big| \big\{ x \in \Omega \text{ s.t. } \delta_{\partial \Omega}(x) \leqslant \gamma^{\frac 1{s+1}} \big\} \Big| . 
\end{eqnarray*}

Furthermore, we have the estimate \begin{align*}
 \Big| \big\{ x \in \Omega^+ \text{ s.t. } x_1< \gamma^{\frac 1{s+1}} \big\} \Big| &\leqslant (\diam \Omega)^{n-1} \gamma^{\frac 1{s+1}}
\end{align*} and, by \cite[Lemma 5.2]{MR4577340} in the case that \(\partial \Omega\) is \(C^2\) and more generally in \cite{MR483992}, we have that \begin{align*}
 \Big| \big\{ x \in \Omega \text{ s.t. } \delta_{\partial \Omega}(x) \leqslant \gamma^{\frac 1{s+1}} \big\} \Big|
 &\leqslant \frac{2n \vert \Omega \vert }{r_\Omega} \gamma^{\frac 1{s+1}}.
\end{align*} Thus,
\begin{align}\label{eq:NUOVAPERIMPROVEMENT}
\Big| \big\{ x \in (\Omega\cap H_\lambda') \setminus \Omega_\lambda' \text{ s.t. }& x_1 \delta_{\partial \Omega}^s(x) \leqslant \gamma \big\} \Big| 
\\
&\le \left[ (\diam \Omega)^{n-1} +\frac{2n \vert \Omega \vert }{r_\Omega} \right] \gamma^{\frac 1{s+1}}. \nonumber
\end{align}
{F}rom this and~\eqref{ido4985vb6tdfghwafrikewytoiw}, we deduce that
\begin{eqnarray*}
\vert (\Omega\cap H_\lambda') \setminus \Omega'_\lambda \vert \leqslant
\frac {CC_\ast^{-1}\big( f(0)+\|u\|_{L_s(\R^n)}\big)^{-1}} \gamma [u]_{\partial G}
+\left[ (\diam \Omega)^{n-1} +\frac{2n \vert \Omega \vert }{r_\Omega} \right] \gamma^{\frac 1{s+1}}.
\end{eqnarray*}
Hence,
 \begin{align*}
\vert \Omega \triangle \Omega '\vert &= 2 \vert (\Omega\cap H_\lambda') \setminus \Omega'_\lambda \vert \\ &\leqslant \frac {2CC_\ast^{-1}\big( f(0)+\|u\|_{L_s(\R^n)}\big)^{-1}} {\gamma} [u]_{\partial G}
+2\left[ (\diam \Omega)^{n-1} +\frac{2n \vert \Omega \vert }{r_\Omega} \right] \gamma^{\frac 1{s+1}}
\end{align*} for all \(\gamma >0\). Choosing \(\gamma = [u]_{\partial G}^{\frac{s+1}{s+2}}\),
we obtain that
$$
\vert \Omega \triangle \Omega '\vert \leqslant \left[ 2CC_\ast^{-1}\big( f(0)+\|u\|_{L_s(\R^n)}\big)^{-1}
+2\left( (\diam \Omega)^{n-1} +\frac{2n \vert \Omega \vert }{r_\Omega} \right) \right][u]_{\partial G}^{\frac 1{s+2}},$$
which gives~\eqref{D8UMJ0i3}. 
\end{proof}

Note, setting \([u]_{\partial G}=0\) in \thref{7TQmUHhl} implies that \(\Omega\) (and, therefore, \(G\)) must be symmetric with respect to the critical hyperplane for every direction. This implies that they are both balls, thereby recovering the main result of \cite{RoleAntisym2022}. 

\subsection{Proof of Theorem~\ref{oAZAv7vy}} 

We will now use the results of the previous subsection to prove \thref{oAZAv7vy}. In fact, \thref{oAZAv7vy} follows almost immediately from the following result, \thref{k4VRS3Fx}. The idea of \thref{k4VRS3Fx} is to choose a centre for \(\Omega\) by looking at the intersection of \(n\) orthogonal critical hyperplanes, then to prove that every other critical hyperplane is quantifiably close to this centre in terms of the semi-norm \([u]_{\partial G}\). The precise statement is as follows. 

\begin{prop} \thlabel{k4VRS3Fx} Let \(\Omega\) be an open bounded set with \(C^1\) boundary and satisfying the uniform interior ball condition with radius \(r_\Omega >0\) and suppose that the critical planes \(\pi_{e_i}\) with respect to the coordinate directions \(e_i\) coincide with \(\{x_i=0\}\) for every \(i=1,\dots, n\). Also, given \(e\in \Sph^{n-1}\), denote by \(\lambda_e\) the critical value associated with~\(e\) as in~\eqref{IAhmMsFq}. 

Assume that \begin{align}
[u]^{\frac 1{s+2} }_{\partial G} \leqslant \frac {\vert \Omega \vert } {n C_\star }   \label{MCxhJzBW}
\end{align} where \(C_\star\) is the constant in \thref{7TQmUHhl}.

Then,
\begin{align}\label{eq:NEWNEW OLD lambda estimate}
\vert \lambda_e \vert \leqslant C [u]_{\partial G}^{\frac 1 {s+2} }
\end{align}
for all \(e\in \Sph^{n-1}\) with \begin{eqnarray*}
C &:=& \frac{C(n) C_\star  (\diam \Omega)}{\vert \Omega \vert }\\
&=&C(n,s) \Big( C_\ast^{-1}
\big( f(0)+\|u\|_{L_s(\R^n)}\big)^{-1} R_0^{-2s} (\diam \Omega)^{n+2s+2}
\\
&&\hspace{10em}+ (\diam \Omega)^{n-1} + r_\Omega^{-1}\vert \Omega \vert \Big)  \frac{(\diam \Omega)}{\vert \Omega \vert },
\end{eqnarray*} the constant \(C_\ast\) as in \thref{zAPw0npM}, and \(R_0\) given by~\eqref{K57sa7Sk}. 
\end{prop}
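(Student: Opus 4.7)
The plan is to use the $n$ orthogonal critical hyperplanes $\{x_i = 0\}$ to pin the origin down as a canonical center of $\Omega$ via a centroid argument: first show the centroid $b := |\Omega|^{-1}\int_\Omega x\dd x$ is close to the origin, then show that for any $e \in \Sph^{n-1}$ the critical hyperplane $\pi_{\lambda_e}$ passes close to $b$, and combine the two. The case $n=1$ is immediate since then $e = \pm e_1$, so I may assume $n\geq 2$.

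By construction $\pi_{\lambda_e}$ is either tangent to $\overline{\Omega}$ or meets it orthogonally, so for every direction $e$ there is $q \in \overline{\Omega}$ with $q\cdot e = \lambda_e$; consequently $|x\cdot e - \lambda_e| \leq \diam(\Omega)$ for every $x \in \Omega$. Applied with $e = e_i$ and $\lambda_{e_i} = 0$, this gives $|x_i|\leq\diam(\Omega)$ on $\Omega$. Now I apply \thref{7TQmUHhl} to each coordinate direction and use that reflection across $\{x_i = 0\}$ flips the $i$-th coordinate of the centroid, to obtain
\begin{equation*}
2|b\cdot e_i| \leq \frac{1}{|\Omega|}\int_{\Omega \triangle \Omega^*_i}|x_i|\dd x \leq \frac{\diam(\Omega)\, C_\star}{|\Omega|}[u]_{\partial G}^{1/(s+2)},
\end{equation*}
where $\Omega^*_i$ is the reflection of $\Omega$ across $\{x_i=0\}$. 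Summing over $i$ bounds $|b|$ by a dimensional constant times $\diam(\Omega)\, C_\star |\Omega|^{-1}[u]_{\partial G}^{1/(s+2)}$.

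Next, for an arbitrary $e \in \Sph^{n-1}$, let $\Omega'$ be the reflection of $\Omega$ across $\pi_{\lambda_e}$, so \thref{7TQmUHhl} gives $|\Omega \triangle \Omega'| \leq C_\star [u]_{\partial G}^{1/(s+2)}$. Since reflection carries centroid to centroid, $b(\Omega')\cdot e = 2\lambda_e - b\cdot e$, and therefore
\begin{equation*}
2|\lambda_e - b\cdot e| = |b\cdot e - b(\Omega')\cdot e| \leq \frac{1}{|\Omega|}\int_{\Omega \triangle \Omega'}|x\cdot e|\dd x.
\end{equation*}
The tangency observation from the previous paragraph yields $|x\cdot e|\leq|\lambda_e|+\diam(\Omega)$ on $\Omega$ and, because the reflection preserves distance to $\pi_{\lambda_e}$, also on $\Omega'$. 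Substituting this bound and rearranging gives
\begin{equation*}
|\lambda_e|\left(1-\frac{|\Omega\triangle\Omega'|}{2|\Omega|}\right) \leq |b| + \frac{\diam(\Omega)\, C_\star}{2|\Omega|}[u]_{\partial G}^{1/(s+2)}.
\end{equation*}

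Hypothesis \eqref{MCxhJzBW} is calibrated exactly so that $|\Omega\triangle\Omega'|/|\Omega|\leq 1/n$, ensuring the coefficient of $|\lambda_e|$ on the left stays bounded below by a positive constant and can be absorbed on the right. Inserting the centroid estimate from the second step then produces \eqref{eq:NEWNEW OLD lambda estimate} with the claimed dependence of $C$ on $n$, $s$, $\diam(\Omega)$, $|\Omega|$, $R_0$, and $f$. The main technical subtlety I expect is the careful bookkeeping of the diameter and volume factors through each step; the key qualitative point is that the assumption \eqref{MCxhJzBW} is precisely what prevents the $|\lambda_e|$ term which unavoidably appears on the right-hand side (via the bound $|x\cdot e|\leq |\lambda_e|+\diam(\Omega)$ on $\Omega'$) from making the argument circular.
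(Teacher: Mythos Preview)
Your proof is correct and takes a genuinely different route from the paper's. The paper does not use the centroid at all: instead it first controls $|\Omega\triangle\Omega^{\mathbf 0}|$, where $\Omega^{\mathbf 0}=-\Omega$ is the point-reflection of $\Omega$ through the origin, via an iterated triangle inequality for symmetric differences across the $n$ coordinate hyperplanes; then it invokes the argument of \cite[Lemma~4.1]{MR3836150} (with \thref{7TQmUHhl} in place of their Proposition~3.1(a)) to obtain the moment-type bound $|\Omega_{\lambda_e}|\,\lambda_e \le (n+3)C_\star(\diam\Omega)[u]_{\partial G}^{1/(s+2)}$, and finally uses the symmetric-difference control together with~\eqref{MCxhJzBW} to show $|\Omega_{\lambda_e}|\ge|\Omega|/4$, which yields the estimate on $|\lambda_e|$.

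Your centroid argument replaces both the antipodal-set estimate and the black-box citation of \cite{MR3836150} by a direct first-moment computation, which makes the proof more self-contained. The paper's approach, on the other hand, is the established template in the quantitative moving-planes literature (it is exactly what is reused later in \thref{k4VRS3FxBIS}), so it dovetails more naturally with the surrounding material. The assumption~\eqref{MCxhJzBW} plays the same structural role in both proofs: in yours it keeps the coefficient $1-|\Omega\triangle\Omega'|/(2|\Omega|)$ bounded below, in the paper's it guarantees $|\Omega_{\lambda_e}|\ge|\Omega|/4$.
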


\begin{proof}
Define \(\Omega^{\mathbf 0} := \{-x \text{ s.t. } x\in \Omega\}\). Moreover, let \(Q_i:=Q_{\pi_{e_i}}\) be the reflections across each critical plane \(\pi_{e_i}\) and define \(\Omega^{\mathbf 0}\) recursively via \(\Omega^{\mathbf 0}_{i+1}  := Q_{i+1}(\Omega^{\mathbf 0}_i)\) for \(i=1,\dots n-1\) with \(\Omega^{\mathbf 0}_1:=\Omega\). Observe that \( \Omega^{\mathbf 0} = \Omega^{\mathbf 0}_n\). Via the triangle inequality for  symmetric difference, it follows that \begin{align*}
\vert \Omega \triangle \Omega^{\mathbf 0} \vert &\leqslant \vert \Omega \triangle Q_n(\Omega ) \vert  + \vert  Q_n(\Omega ) \triangle  Q_n(\Omega^{\mathbf 0}_{n-1}  )\vert 
\end{align*} Since \( \vert  Q_n(\Omega ) \triangle  Q_n(\Omega^{\mathbf 0}_{n-1}  )\vert = \vert  Q_n(\Omega  \triangle  \Omega^{\mathbf 0}_{n-1}  )\vert = \vert  \Omega  \triangle  \Omega^{\mathbf 0}_{n-1} \vert \), we have that \begin{align*}
\vert \Omega \triangle \Omega^{\mathbf 0} \vert &\leqslant \vert \Omega \triangle Q_n(\Omega ) \vert  + \vert  \Omega  \triangle  \Omega^{\mathbf 0}_{n-1} \vert .
\end{align*} Iterating, we obtain \begin{align}
\vert \Omega \triangle \Omega^{\mathbf 0} \vert &\leqslant \sum_{i=1}^n \vert \Omega \triangle Q_i(\Omega ) \vert \leqslant n C_\star [u]_{\partial G}^{\frac1 {s+2}} \label{1wuBMACC}
\end{align} by~\thref{7TQmUHhl}. 

Next, let us assume that \(\lambda_e>0\) (the case \(\lambda_e<0\) is analogous). If \(\Lambda_e > \diam \Omega\) then \(x\cdot e \geqslant \Lambda_e - \diam \Omega  \geqslant 0\) for all \(x\in \Omega\), so \(\vert \Omega \triangle \Omega^{\mathbf 0} \vert = 2\vert \Omega \vert \). However, this is in contradiction with~\eqref{MCxhJzBW} and~\eqref{1wuBMACC}, so we must have that \(\Lambda_e \leqslant \diam \Omega\). Arguing as in Lemma~4.1 in~\cite{MR3836150} using~\thref{7TQmUHhl} instead of \cite[Proposition 3.1 (a)]{MR3836150} , we find that \begin{align*}
\vert \Omega_{\lambda_e} \vert \lambda_e \leqslant (n+3) C_\star (\diam \Omega)  [u]_{\partial G}^{\frac 1{s+2}} . 
\end{align*} Now, recalling the notation of Section~\ref{sec:stabest}, we have that \begin{align*}
\vert \Omega \triangle \Omega^{\mathbf 0} \vert &= 2 \vert (\Omega \cap H_{\lambda_e}' )\setminus \Omega_{\lambda_e}' \vert = 2 \big ( \vert \Omega \vert - 2 \vert \Omega_{\lambda_e} \vert \big ),
\end{align*} and therefore \begin{align*}
\vert \Omega_{\lambda_e} \vert  =\frac{\vert \Omega \vert } 2 - \frac14 \vert \Omega \triangle \Omega^{\mathbf 0} \vert  \geqslant \frac{\vert \Omega \vert } 2 -\frac14 C_\star [u]_{\partial G}^{\frac1{s+2}} \geqslant \frac{\vert \Omega \vert } 4 
\end{align*} by \thref{7TQmUHhl}  and~\eqref{MCxhJzBW}. Thus, we conclude that \begin{align*}
\vert \lambda_e\vert &\leqslant  \frac{C(n) C_\star  (\diam \Omega)}{\vert \Omega \vert }  [u]_{\partial G}^{\frac 1{s+2}} ,
\end{align*}
as desired.
\end{proof}

Now the proof of \thref{oAZAv7vy} follows almost immediately from \thref{k4VRS3Fx}. 

\begin{proof}[Proof of \thref{oAZAv7vy}] 
If \([u]_{\partial G}^{\frac 1 {s+2}}> \frac{\vert \Omega \vert}{nC_\star}\) then the result is trivial since \[\rho(\Omega) \leqslant  \diam \Omega \leqslant  \frac{nC_\star \diam \Omega	}{\vert \Omega \vert }  [u]^{\frac 1 {s+2}}_{\partial G}\leqslant C [u]^{\frac 1 {s+2}}_{\partial G}\] with \(C\) as in the statement of the theorem.

If~\eqref{MCxhJzBW} holds then the result follows by reasoning as in the proof of \cite[Theorem~1.2]{MR3836150}, but using \thref{k4VRS3Fx} instead of \cite[Lemma 4.1]{MR3836150}.
Notice that the dependence on $r_\Omega$ appearing in \thref{k4VRS3Fx} can be removed (and, in fact, does not appear in \thref{oAZAv7vy}).
In fact, by the definition of $\Omega := G + B_R(0)$, we have that $\Omega$ automatically satisfies the uniform interior sphere condition and we can take, e.g., $r_\Omega := R/2$.
\end{proof}

\section{The role of boundary estimates in the attainment of the optimal exponent} \label{fEsBEcuv}

In this section, we give a broad discussion on some of the challenges the nonlocality of the fractional Laplacian presents in obtaining the optimal exponent. By way of an example, via the Poisson representation formula, we show that estimates for a singular integral involving the reciprocal of the distance to the boundary function play a key role in obtaining the anticipated optimal result. This suggests, surprisingly, that fine geometric estimates for the distance function close to the boundary are required to obtain the optimal exponent. 

Recall the notation at the beginning of Section~\ref{sec:stabest} and also
\begin{eqnarray*}
&&\R^n_+:=\{x=(x_1,\dots,x_n)\in\R^n \;{\mbox{ s.t. }} x_1>0\},\\
&&\R^n_-:=\{x=(x_1,\dots,x_n)\in\R^n \;{\mbox{ s.t. }} x_1<0\},\\
&&B_1^+:=B_1\cap\R^n_+, \qquad B_1^-:=B_1\cap\R^n_-,\\
&&\Omega^+:=\Omega\cap \R^n_+ \qquad
{\mbox{and}} \qquad \Omega^-:=\Omega\cap \R^n_-.
\end{eqnarray*}

Let~$\Omega$ and $G$ be bounded and smooth sets.
For the purposes of this discussion, let us assume that \(\Omega = G+B_{1/2}\) where~\(G\) is such that \(\R^n_- \cap G = B_{1/2}^-\), \(B_{1/2}^+ \subset \R^n_+ \cap \Omega\), and the critical plane~\(\pi_\lambda\) corresponding to running the method of moving planes with~\(e=-e_1\) is equal to~\(\{x_1=0\}\), see Figure~\ref{Fig2}.

\begin{figure}[ht]
\centering 
\includegraphics{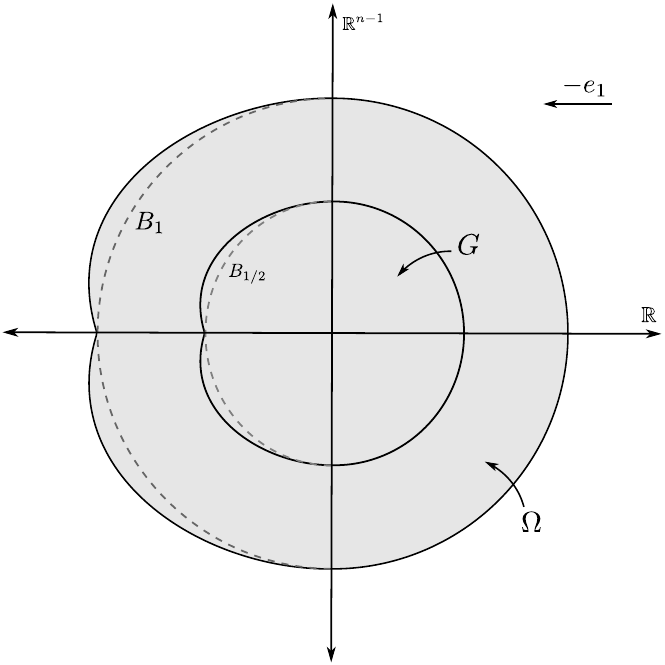}
\caption{Geometry of \(\Omega\) as described in Section~\ref{fEsBEcuv}.}
\label{Fig2}
\end{figure}

Suppose that \(u \in C^2(\Omega )\cap L^\infty(\R^n)\) satisfies the torsion problem \begin{align*}
\begin{PDE}
(-\Delta)^s u &= 1&\text{in } \Omega ,\\
u &=0 &\text{in } \R^n \setminus \Omega . 
\end{PDE}
\end{align*} In this case, if \(v(x) := u(x) - u(-x_1,x_2,\dots,x_n)\), then \(v\) is antisymmetric in \(\R^n\) with respect to~$\{x_1=0\}$
and \(s\)-harmonic in~\(B_1\). Hence, by the Poisson kernel representation, (up to normalization constants) \begin{align*}
v(x) &=  \int_{\R^n \setminus B_1 } \bigg ( \frac{1-\vert x \vert^2}{\vert y \vert^2-1} \bigg )^s \frac{v(y)}{\vert x - y \vert^n} \dd y .
\end{align*} Using the antisymmetry of \(v\), we may rewrite this as \begin{align*}
v(x) &= \int_{\R^n_+ \setminus B_1^+ } \bigg ( \frac{1-\vert x \vert^2}{\vert y \vert^2-1} \bigg )^s \bigg (\frac1{\vert x - y \vert^n} - \frac1{\vert x' - y \vert^n}\bigg )  v(y) \dd y \\
&=  \int_{\Omega^+ \setminus B_1^+ } \bigg ( \frac{1-\vert x \vert^2}{\vert y \vert^2-1} \bigg )^s \bigg (\frac1{\vert x - y \vert^n} - \frac1{\vert x' - y \vert^n}\bigg ) u(y) \dd y,
\end{align*}
where~$x'$ denotes the reflection of the point~$x$ with respect to~$\{x_1=0\}$.

Now, by \thref{oAZAv7vy}, when \([u]_{\partial G}\) is sufficiently small, then~\(\Omega\) is uniformly close to~\(B_1\), so we can suppose that~\(G_\lambda' \subset B_{3/4}\). Therefore, for all \(x\in \partial G\) and \(y \in \R^n_+\setminus \Omega\),  \begin{align*}
(1-\vert x \vert^2)^s \bigg (\frac1{\vert x - y \vert^n} - \frac1{\vert x' - y \vert^n}\bigg ) &\geqslant C x_1y_1.
\end{align*} 
Also, by Corollary~\ref{zAPw0npM},
we have that~\(u \geqslant C \delta^s_{\partial \Omega}\)
(with~$C>0$ not depending on~$u$),
and thus
\begin{align*}
v(x) &\geqslant C x_1 \int_{\Omega^+ \setminus B_1^+ }  \frac {y_1 u(y)}{(\vert y \vert^2-1)^s}  \dd y \geqslant Cx_1 \int_{\Omega^+ \setminus B_1^+ }  \frac {y_1\delta_{\partial \Omega}^s(y)}{\delta_{\partial B_1}^s(y)}  \dd y. 
\end{align*}

Recall the discussion on the moving plane method on page~\pageref{pagemov}
and, for simplicity, assume that we are in the first case (the second
case can be treated similarly), so there exists \(p\in (\partial G \cap \partial G_\lambda')\setminus \{x_1=0\}\). Hence, \begin{align} 
\int_{\Omega^+ \setminus B_1^+ }  y_1\bigg ( \frac {\delta_{\partial \Omega}(y)}{\delta_{\partial B_1}(y)} \bigg )^s  \dd y &\leqslant C \frac{v(p)}{p_1} \leqslant C [u]_{\partial G}. \label{K98q9WSJ}
\end{align}

Now the point of obtaining~\eqref{K98q9WSJ} is that the left hand side is geometric (does not depend on~\(u\)), and~\eqref{K98q9WSJ} is sharp in the sense that the only terms that have been `thrown away'
are bounded away from zero as \([u]_{\partial G} \to 0\). This suggests that if \([u]_{\partial G}\) is of order \(\varepsilon\) then the behaviour of \begin{align*}
\int_{\Omega^+ \setminus B_1^+ }  y_1\bigg ( \frac {\delta_{\partial \Omega}(y)}{\delta_{\partial B_1}(y)} \bigg )^s  \dd y
\end{align*} as a function of \(\varepsilon\) will entirely determine the optimal exponent~\(\overline\beta(s)\) (recall that
the definition of~\(\overline\beta(s)\) is given after the statement of \thref{oAZAv7vy} in Section~\ref{tH4AETfY}). This is surprising, since in the local case, the problem is entirely an interior one in that the proof relies only on interior estimates such as the Harnack inequality while, it appears that in the nonlocal case, the geometry of the boundary may have a significant effect on the value of~\(\overline\beta(s)\). If one hopes to obtain that~\(\overline\beta(s)=1\) from~\eqref{K98q9WSJ} then it would be necessary to show that \begin{align*}
\int_{\Omega^+ \setminus B_1^+ }  y_1\bigg ( \frac {\delta_{\partial \Omega}(y)}{\delta_{\partial B_1}(y)} \bigg )^s  \dd y \geqslant C \vert \Omega \setminus B_1^+ \vert. 
\end{align*} If it were true that \(y_1\delta^s_{\partial \Omega} \geqslant C \delta^s_{\partial B_1}\) then this would follow immediately; however, this is not the case as seen by sending \(y \to \partial \R^n_+ \cap B_{1/2}\). 

Moreover, even though we made several major assumptions on the geometry of \(\Omega\) to obtain~\eqref{K98q9WSJ}, we believe that the inequality is indicative of the more general situation. Indeed, one would expect that, under reasonable assumptions on \(\partial \Omega\), similar estimates to the ones employed above would hold for Poisson kernels in general domains, so one may suspect that an inequality of the form \begin{align*}
\int_{\Omega \triangle \Omega ' } \delta_{\pi_\lambda}(y)  \bigg ( \frac {\delta_{\partial \Omega}(y)}{\delta_{\partial \Omega' }(y)} \bigg )^s  \dd y &\leqslant C [u]_{\partial G}
\end{align*} should hold. It would be interesting in future articles to further explore this methodology to try obtain improvements on the exponent in \thref{oAZAv7vy}.

\section{Sharp investigation of a technique to improve the stability exponent}\label{c8w8u7Hn}

In the papers \cite{MR3881478, MR3836150}, it was proven that the only open bounded sets\footnote{For the precise statements and the regularity assumptions on the boundary of the set, see \cite{MR3836150,MR3881478}.} whose boundary has constant nonlocal mean curvature are balls.
Moreover, in \cite{MR3836150}, the method of moving planes was employed to obtain a stability estimate for sets whose boundary has
\emph{almost} constant nonlocal mean curvature. The stability estimate that
the authors proved also seems to achieve the optimal exponent of~\(1\) which is however
obtained using the following technical result,
see~\cite[Proposition 3.1(b)]{MR3836150}:

\begin{lem} \thlabel{Z08Q7917}
Let \(\Omega \subset \R^n\) be an open bounded set with \(C^{2,\alpha}\) boundary for~\(\alpha >2s\) and~\(s\in (0,1/2)\). Let~\(\pi_\lambda\) be the
critical hyperplane. Suppose that~\(\dist(0,\pi_\lambda) \leqslant 1/ 8\) and~\(B_r \subset \Omega \subset B_R\) with~\(1/2 \leqslant r\leqslant R \leqslant 2\).

Then, \begin{align}
\Big| \big\{ x\in \Omega \triangle \Omega ' \text{ s.t. } \dist(x,\pi_\lambda) \leqslant \gamma \big\} \Big| \leqslant C(n) \gamma (R-r) \label{IAMJKSyF}
\end{align} for all \(0<\gamma \leqslant 1/4\). 
\end{lem}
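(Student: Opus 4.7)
The plan is to perform a slicing argument by hyperplanes parallel to $\pi_\lambda$. Setting coordinates so that $\pi_\lambda = \{x_n = \lambda\}$ with $|\lambda| \leq 1/8$, I define the slice function
\[
A(t) := \mathcal{H}^{n-1}(\Omega \cap \{x_n = t\}).
\]
By the very definition of critical hyperplane, the reflection of $\Omega \cap \{x_n > \lambda\}$ across $\pi_\lambda$ is contained in $\Omega$. At level $t > \lambda$ this yields $\Omega \cap \{x_n = t\} \subseteq \Omega' \cap \{x_n = t\}$, since the transverse projection of the latter slice coincides with that of $\Omega \cap \{x_n = 2\lambda - t\}$. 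Consequently the slice-measure of $\Omega \triangle \Omega'$ at level $t$ is exactly $|A(t) - A(2\lambda - t)|$, and the substitution $\tau = |t - \lambda|$ gives
\[
\bigl|\{x \in \Omega \triangle \Omega' \,:\, \dist(x,\pi_\lambda) \leq \gamma\}\bigr| = 2 \int_0^{\gamma} \bigl[A(\lambda - \tau) - A(\lambda + \tau)\bigr]\, d\tau.
\]

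The next step is a pointwise bound on the integrand. Using $B_r \subseteq \Omega \subseteq B_R$ slice by slice, I decompose
\begin{align*}
A(\lambda - \tau) - A(\lambda + \tau) &\leq \mathcal{H}^{n-1}\bigl((B_R\setminus B_r) \cap \{x_n = \lambda - \tau\}\bigr) \\
&\quad + \bigl[\mathcal{H}^{n-1}(B_r \cap \{x_n = \lambda - \tau\}) - \mathcal{H}^{n-1}(B_r \cap \{x_n = \lambda + \tau\})\bigr].
\end{align*}
The hypotheses $r \geq 1/2$, $|\lambda| \leq 1/8$, $\gamma \leq 1/4$ keep the arguments of the $(n-1)/2$-th power bounded away from zero and above by $R^2$; the mean-value theorem applied to $u \mapsto u^{(n-1)/2}$ thus bounds the first term by $C(n)(R^2 - r^2) \leq C(n)(R-r)$ and the second by $C(n)\cdot 4|\lambda|\tau$. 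Integrating in $\tau$ gives the preliminary estimate
\[
\bigl|\{x \in \Omega \triangle \Omega' \,:\, \dist(x,\pi_\lambda) \leq \gamma\}\bigr| \leq C(n)\gamma(R-r) + C(n)|\lambda|\gamma^2.
\]

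The final task is to absorb the error $|\lambda|\gamma^2$ into $C(n)\gamma(R-r)$, which will follow once I establish the geometric control $|\lambda| \leq C(n)(R-r)$ (since then $|\lambda|\gamma \leq C(n)(R-r)$ for $\gamma \leq 1/4$). The heuristic driving this bound is that, since the pinching balls $B_r$ and $B_R$ share the origin as centre, the asymmetry forcing the critical hyperplane to have $\lambda \neq 0$ is driven by the eccentricity $R-r$. When $\Omega$ is star-shaped with respect to the origin, this control is elementary: tracing an extremal antipodal point of $B_r$ through its reflection across $\pi_\lambda$ and forcing it back into $B_R$ gives $|\lambda| \leq (R-r)/2$. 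For a general $C^{2,\alpha}$ set, the bound $|\lambda| \leq C(n)(R-r)$ should be derived by analysing the critical-plane touching configuration: in Case 1, $\partial\Omega$ is internally tangent to its reflection at some $p \notin \pi_\lambda$, and the $C^{2,\alpha}$ parametrization of $\partial\Omega$ near $p$ (combined with $p \in \overline{B_R \setminus B_r}$) pins down $|\lambda|$ to $O(R-r)$; in Case 2, the tangency point lies on $\pi_\lambda$ and an analogous local analysis applies. Establishing this geometric control, with constants depending only on $n$, is the main hurdle of the proof, and is where the regularity hypothesis $\alpha > 2s$ is expected to enter.
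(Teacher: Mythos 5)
Your slicing computation is sound and in fact reproduces the paper's Lemma~\ref{o1pUKCHX}: it yields the intermediate estimate
\[
\bigl|\{x \in \Omega \triangle \Omega' \,:\, \dist(x,\pi_\lambda) \leq \gamma\}\bigr| \leq C(n)\bigl[\gamma(R-r) + \gamma^2|\lambda|\bigr],
\]
and you correctly identify the remaining issue as the geometric control $|\lambda| \leq C(n)(R-r)$. The crucial gap is that this control is \emph{false}, and so is the lemma you are trying to prove. This is not a result of the paper: it is quoted from~\cite{MR3836150}, and the surrounding discussion argues that in this generality the claim fails. Theorem~\ref{example61} supplies an explicit counterexample, and Theorem~\ref{m9q9X2hE} a corrected, weaker statement.

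The counterexample is a family of smooth planar domains $\Omega_\varepsilon$ built from the unit disc by inserting a small localized bump in the boundary, of vertical height $\sim\varepsilon$ and horizontal width $\sim\varepsilon^{1/\alpha}$ for a fixed $\alpha>1$, placed near the point where the moving plane stops. One has $B_{1-C\varepsilon}\subset\Omega_\varepsilon\subset B_{1+C\varepsilon}$, so $R-r\leq C\varepsilon$, yet the critical value satisfies $\lambda_\varepsilon\geq\varepsilon^{1-1/\alpha}$ (inequality~\eqref{b6gP569L}); hence $\lambda_\varepsilon/(R-r)\to+\infty$, and the left-hand side of~\eqref{IAMJKSyF} is bounded below by $C\varepsilon^{1-1/\alpha}$, which dwarfs $\gamma(R-r)\sim\gamma\varepsilon$. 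These domains are $C^\infty$ (hence $C^{2,\alpha}$ for every $\alpha$), satisfy $\dist(0,\pi_{\lambda_\varepsilon})\to0$, and for small $\varepsilon$ are small $C^1$-perturbations of the disc, in particular star-shaped about the origin --- so your star-shaped heuristic cannot deliver the bound you need. Your expectation that the qualitative hypothesis $\alpha>2s$ would somehow enter is also unfounded: that condition plays no geometric role and is satisfied by the counterexample.

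What does hold, under a \emph{uniform} bound $\partial\Omega\in C^\alpha_{M,\rho}$ with $\alpha>1$, is $|\lambda|\leq C(n,M,\rho,\alpha)\,(R-r)^{1-1/\alpha}$ (this is~\eqref{EtWD36ww}). Plugging this into your intermediate estimate gives precisely Theorem~\ref{m9q9X2hE}, namely $C\gamma(R-r)^{1-1/\alpha}$, and Theorem~\ref{example61} shows this exponent is sharp. The linear bound in $R-r$ you aimed for corresponds formally to $\alpha=+\infty$ and cannot be obtained under any finite-order H\"older regularity of $\partial\Omega$.
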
 Without this result, the exponent in the stability estimate in~\cite{MR3836150} would have been \(1/2\), so \thref{Z08Q7917} seems to play a major role in~\cite{MR3836150} in the attempt of ``doubling the exponent'' that they obtained in previous estimates. This feature is extremely relevant to our main result, \thref{oAZAv7vy}, since we hoped that a similar argument to \thref{Z08Q7917} would lead to an exponent that was twice the one that we obtained. Unfortunately, we believe that, in the very broad generality in which the result is stated in~\cite{MR3836150}, \thref{Z08Q7917} may not be true, and we present a counter-example
(which may also impact some of the statements in~\cite{MR3836150}) as well as a corrected statement of \thref{Z08Q7917}. 

\subsection{A geometric lemma and a counter-example to \protect\thref{Z08Q7917}} 
\label{6LQMifK3}

The main result of this section is a geometric lemma which may be viewed as a corrected version of \thref{Z08Q7917}. Moreover, we will give an explicit family of domains which demonstrate that our geometric lemma is sharp. This family will also serve as a counter-example to \thref{Z08Q7917}. 

An essential component to the geometric lemma that is not present in the assumptions of \thref{Z08Q7917} is a uniform bound on the boundary regularity of \(\partial \Omega\). To our knowledge, there is no `standard' definition of such a bound, so we will begin by concretely specifying what is meant by this.

Let \(\Omega\) be an open subset of \(\R^n\) with \(C^1\) boundary. For each \(x\in \partial\Omega\), let 
$$ \Pi_x(y):= (y-x) - ((y-x)\cdot \nu(x))\nu(x),$$
which is the projection of \(\R^n\) onto \(T_x\partial \Omega\), that is, the tangent plane of \(\partial \Omega\) at \(x\). We have made a translation in the definition of \(\Pi_x\), so that \(\Pi_x(x)=0\). For simplicity, we will often identify  \(T_x\partial \Omega\) with \(\R^{n-1}\). 

\begin{defn} \thlabel{deZqwlaB}
Let \(\Omega\) be an open subset of \(\R^n\) with \(C^\alpha\) boundary, with~\(\alpha>1\), 
and let~\(\rho\), \(M>0\). We say that \(\partial \Omega \in C^\alpha_{M,\rho}\) if, for all \(x\in \partial \Omega\), there exists \(\psi^{(x)} : B_\rho^{n-1}\to\R\) such that \(\psi^{(x)}\in C^\alpha (B_\rho^{n-1} )\), \(\| \psi^{(x)}\|_{ C^\alpha (B_\rho^{n-1})  } \leqslant M \), and \begin{align*}
B_\rho (x) \cap \Omega = \big\{ y \in B_\rho(x) \text{ s.t. }  y\cdot \nu(x) < \psi^{(x)} (\Pi_x (y))  \big\} .
\end{align*}
\end{defn}

We remark that, if~$\alpha\in \Z$ in Definition~\ref{deZqwlaB},
the notation~$C^\alpha_{M,\rho}$ means~$C^{\alpha-1,1}_{M,\rho}$.

We now give the geometric lemma, the main result of this section.

\begin{thm} \thlabel{m9q9X2hE}
Let \(\Omega \subset \R^n\) be an open bounded set with \(\partial \Omega \in C^\alpha_{M,\rho}\), with~\(\alpha>1\), 
for some~\(M>0\) and~\(\rho \in (0,1/4)\). Moreover, suppose that \(B_r \subset \Omega \subset B_R\) with~\(1/2 \leqslant r\leqslant R \leqslant 2\).
Let~\(e\in \Sph^{n-1}\), denote by~\(\pi_\lambda\) the critical hyperplane with respect to~\(e\), and suppose that~\(\dist(0,\pi_\lambda) \leqslant 1/ 8\).

Then, \begin{align}
\Big| \big\{ x\in \Omega \triangle \Omega ' \text{ s.t. } \dist(x,\pi_\lambda) \leqslant \gamma \big\} \Big| \leqslant C\gamma (R-r)^{1-\frac 1 \alpha } \label{mKBpmgxs}
\end{align} for all \(0<\gamma \leqslant 1/4\). The constant \(C\) depends only on \(n\), \(M\), \(\rho\), and \(\alpha\). 
\end{thm}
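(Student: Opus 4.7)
The plan is to reduce the estimate to a local analysis near the ``equator'' $\partial\Omega\cap\pi_\lambda$, derive two competing upper bounds on the local asymmetry of $\partial\Omega$ across $\pi_\lambda$, and interpolate between them via Young's inequality $\min(a,b)\le a^{1/\alpha}b^{1-1/\alpha}$.

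\textbf{Reduction and local setup.} After translating and rotating, take $\pi_\lambda=\{x_1=0\}$ and $e=e_1$. Any $x\in\Omega\triangle\Omega'$ with $|x_1|<\gamma$ satisfies $\dist(x,\partial\Omega)<\gamma$ (the segment from $x$ to its reflection crosses $\partial\Omega$), so the set to be measured lies in a $\gamma$-tubular neighborhood of $\partial\Omega\cap\{|x_1|\le\gamma\}$. Using the $C^\alpha_{M,\rho}$ graph structure together with $B_r\subset\Omega\subset B_R$ and $1/2\le r\le R\le 2$, I cover $\partial\Omega\cap\pi_\lambda$ by at most $N\le C/\rho^{n-2}$ balls $B_\rho(p_i)$, $p_i\in\partial\Omega\cap\pi_\lambda$; since $\gamma,\rho\le 1/4$, these balls also capture the tubular neighborhood cut by the slab. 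At each equatorial point $p_i$, rotating so that $\nu(p_i)\mapsto e_n$ and so that the tangential projection of $e_1$ becomes the local $y_1$-axis, $\partial\Omega$ is the graph $y_n=\psi^{(i)}(y_1,\hat y)$, $\hat y\in\R^{n-2}$, with $\psi^{(i)}(0)=0$, $\nabla\psi^{(i)}(0)=0$, $\|\psi^{(i)}\|_{C^\alpha(B_\rho^{n-1})}\le M$. The critical-plane condition $\Omega'_\lambda\subset\Omega$ translates to the sign property $\psi^{(i)}(-y_1,\hat y)\ge\psi^{(i)}(y_1,\hat y)$ for $y_1\ge 0$, and the chart's contribution to the sought measure is
\[
L_i:=\int_{B_\rho^{n-2}}\int_0^{\min(\gamma,\rho)}\bigl(\psi^{(i)}(-y_1,\hat y)-\psi^{(i)}(y_1,\hat y)\bigr)\,dy_1\,d\hat y.
\]

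\textbf{Two bounds, interpolation, and summation.} I establish two bounds for the integrand. A \emph{H\"older bound} $\psi^{(i)}(-y_1,\hat y)-\psi^{(i)}(y_1,\hat y)\le CMy_1(y_1+|\hat y|)^{\alpha-1}$ comes from Taylor expansion of $\psi^{(i)}$ around the origin together with $\psi^{(i)}(0)=0,\ \nabla\psi^{(i)}(0)=0$, plus the fact (extracted from the sign condition above) that every odd-in-$y_1$ mixed Taylor coefficient of $\psi^{(i)}$ at the origin of total order less than $\alpha$ must vanish. A \emph{sandwich bound} $\psi^{(i)}(-y_1,\hat y)-\psi^{(i)}(y_1,\hat y)\le C(R-r)$ is obtained by writing $B_r\subset\Omega\subset B_R$ in local coordinates and using that $(p_i)_1=0$, so the $y_1$-component of the tangential shift of the balls vanishes and both $\psi^{(i)}(\pm y_1,\hat y)+p_i\cdot\nu(p_i)$ are confined to a common interval of width $O(R-r)$. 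Combining the two via $\min(a,b)\le a^{1/\alpha}b^{1-1/\alpha}$,
\[
\psi^{(i)}(-y_1,\hat y)-\psi^{(i)}(y_1,\hat y)\le C(R-r)^{1-1/\alpha}\,y_1^{1/\alpha}(y_1+|\hat y|)^{(\alpha-1)/\alpha},
\]
and using the subadditivity $(y_1+|\hat y|)^{(\alpha-1)/\alpha}\le y_1^{(\alpha-1)/\alpha}+|\hat y|^{(\alpha-1)/\alpha}$ together with the identity $\tfrac{1}{\alpha}+\tfrac{\alpha-1}{\alpha}=1$, the $y_1$-integral is elementary and gives $L_i\le C(R-r)^{1-1/\alpha}\rho^{n-2}\bigl(\gamma^2+\gamma^{1+1/\alpha}\rho^{(\alpha-1)/\alpha}\bigr)$. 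Summing over the $N\le C/\rho^{n-2}$ charts and using $\gamma,\rho\le 1/4$ (so $\gamma^{1/\alpha}\rho^{(\alpha-1)/\alpha}\le 1$ and $\gamma,\gamma^2\le\gamma$) yields the claimed bound $C\gamma(R-r)^{1-1/\alpha}$.

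\textbf{Main obstacle.} The delicate step is the H\"older bound for $\alpha>2$: a direct Taylor expansion leaves a contribution $y_1\sum_j\partial_1\partial_j\psi^{(i)}(0)\hat y_j$ of size $O(y_1|\hat y|)$, strictly larger than the required $y_1(y_1+|\hat y|)^{\alpha-1}$ when $|\hat y|$ is small. My resolution is that the sign condition $\psi^{(i)}(-y_1,\hat y)\ge\psi^{(i)}(y_1,\hat y)$ for $y_1>0$, together with $\psi^{(i)}(0,\hat y)=0$ in the appropriate sense, forces $\partial_1\psi^{(i)}(0,\hat y)\le 0$ for every $\hat y$ in a neighborhood of the origin; expanding this inequality in $\hat y$ compels its linear part $\sum_j\partial_1\partial_j\psi^{(i)}(0)\hat y_j$ to vanish, since otherwise some direction in $\hat y$ would produce a positive value. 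Iterating at higher orders of the Taylor expansion kills every odd-in-$y_1$ mixed derivative of $\psi^{(i)}$ at the origin of total order less than $\alpha$, and only the H\"older tail of the top-order derivatives remains, of the required size. A secondary technical point is handling the sandwich bound at equatorial points whose outward normal is not approximately radial, which requires combining the $C^\alpha$ graph structure and the sandwich to control the tangential shift $|p_\tau|$.
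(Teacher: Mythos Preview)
Your chart-by-chart interpolation is a different route from the paper's, but the H\"older bound---your declared ``main obstacle''---fails for $\alpha>3$, and your proposed resolution is incorrect. From the sign condition $g(y_1,\hat y):=\psi^{(i)}(-y_1,\hat y)-\psi^{(i)}(y_1,\hat y)\geq 0$ you correctly deduce that $h(\hat y):=\partial_1\psi^{(i)}(0,\hat y)$ attains a maximum at $\hat y=0$ with value $0$, which forces $\nabla h(0)=0$. But maximality gives only $D^2h(0)\leq 0$, not $D^2h(0)=0$, so the ``iteration'' stops there: the third-order mixed coefficients $\partial_1\partial_j\partial_k\psi^{(i)}(0)$ need not vanish. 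Concretely, $\psi^{(i)}(y_1,\hat y)=-y_1|\hat y|^2$ satisfies $\psi^{(i)}(0)=0$, $\nabla\psi^{(i)}(0)=0$, and the sign condition, yet $g=2y_1|\hat y|^2$, which for $\alpha>3$ dominates $y_1(y_1+|\hat y|)^{\alpha-1}$ near the origin. Your pointwise interpolation $\min(a,b)\leq a^{1/\alpha}b^{1-1/\alpha}$ is therefore unavailable. There is also a coordinate issue you label ``secondary'' but which already bites at order two: the reflection across $\pi_\lambda$ equals $y_1\mapsto -y_1$ in the chart only when $\nu(p_i)\perp e_1$. At a generic equatorial point $\beta:=e_1\cdot\nu(p_i)\neq 0$, so neither the sign condition nor the sandwich bound holds as stated (the $y_1$-component of the ball-centre in local coordinates is $\beta\,(p_i\cdot\nu(p_i))/\sqrt{1-\beta^2}$, not zero). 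Controlling $\beta$ requires knowing that $\nu$ is $(R-r)^{1-1/\alpha}$-close to $x/|x|$, which is essentially the estimate you are after.

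The paper avoids both problems. It first uses the global containment $\Omega\triangle\Omega'\subset(B_R\cup B_R')\setminus(B_r\cap B_r')$ to obtain directly
\[
\big|\{x\in\Omega\triangle\Omega':\dist(x,\pi_\lambda)\leq\gamma\}\big|\leq C\gamma\big(R-r+\gamma|\lambda|\big),
\]
reducing everything to the scalar bound $|\lambda|\leq C(R-r)^{1-1/\alpha}$. That bound is then extracted at a \emph{single} point---the moving-planes critical point $p$, where the relevant derivative vanishes by criticality rather than by a sign argument---combined with the interpolation $\|\psi-\sqrt{1-|\cdot|^2}\|_{C^1}\leq C\|\psi-\sqrt{1-|\cdot|^2}\|_{L^\infty}^{1-1/\alpha}\leq C(R-r)^{1-1/\alpha}$ applied once, globally.
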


In order to prove Theorem~\ref{m9q9X2hE},
we require two preliminary lemmata. The first lemma gives an elementary estimate of the left hand side of~\eqref{mKBpmgxs} in terms of~\(R-r\) and an error term involving~\(\dist (0,\pi_\lambda)\).

\begin{lem} \thlabel{o1pUKCHX}
Let \(\Omega \subset \R^n\) be an open bounded set with \(C^1\) boundary, and \(B_r \subset \Omega \subset B_R\) with~\(1/2 \leqslant r\leqslant R \leqslant 2\). 
Let~\(e\in \Sph^{n-1}\), denote by~\(\pi_\lambda\) the critical hyperplane with respect to~\(e\), and suppose that~\(\dist(0,\pi_\lambda) \leqslant 1/ 8\).

Then, \begin{align*}
\Big| \big\{ x\in \Omega \triangle \Omega ' \text{ s.t. } \vert x_1 -\lambda \vert \leqslant \gamma \big\} \Big| \leqslant C \gamma\big ( R-r + \gamma \vert \lambda \vert \big ) 
\end{align*} for all \(0<\gamma \leqslant 1/4\). The constant \(C\) depends only on \(n\). 
\end{lem}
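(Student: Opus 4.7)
Without loss of generality I take~$e=e_1$, so that~$\pi_\lambda=\{x_1=\lambda\}$ and the reflection across it acts as~$Q_\lambda(x_1,x')=(2\lambda-x_1,x')$. Writing~$\Omega^*:=Q_\lambda(\Omega)$ (which is the reflection of the whole of~$\Omega$ across~$\pi_\lambda$), the set~$\Omega\triangle\Omega^*$ is symmetric with respect to~$\pi_\lambda$. Since the strip~$S_\gamma:=\{|x_1-\lambda|\le\gamma\}$ is also symmetric across~$\pi_\lambda$, the plan is to reduce matters to estimating~$|(\Omega\setminus\Omega^*)\cap S_\gamma|$ and multiply by~$2$ at the end.

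The decisive inclusion is
\[
\Omega\setminus\Omega^*\ \subset\ B_R\setminus B_r(2\lambda e_1),
\]
which follows from~$B_r\subset\Omega$ (hence~$Q_\lambda(B_r)=B_r(2\lambda e_1)\subset\Omega^*$) together with~$\Omega\subset B_R$. Using the elementary inclusion~$B_R\setminus B_r(2\lambda e_1)\subset (B_R\setminus B_r)\cup(B_r\setminus B_r(2\lambda e_1))$, I then split the estimate into two claims:
\begin{enumerate}
\item[(C1)] $|(B_R\setminus B_r)\cap S_\gamma|\le C\gamma(R-r)$,
\item[(C2)] $|(B_r\setminus B_r(2\lambda e_1))\cap S_\gamma|\le C\gamma^2|\lambda|$.
\end{enumerate}

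To prove (C1), I slice with Fubini in the variable~$x_1$ and use that, for~$|x_1-\lambda|\le\gamma$ with~$|\lambda|\le 1/8$ and~$\gamma\le 1/4$, one has~$|x_1|\le 3/8<1/2\le r$, so the slice of~$B_R\setminus B_r$ at height~$x_1$ is an~$(n-1)$\nobreakdash-dimensional annulus with outer radius~$\sqrt{R^2-x_1^2}\le R\le 2$ and inner radius~$\sqrt{r^2-x_1^2}$ bounded below by a positive constant. The difference of the~$(n-1)$-dimensional ball measures of these radii is controlled by~$C(R-r)$ via the identity~$\sqrt{R^2-x_1^2}-\sqrt{r^2-x_1^2}=(R^2-r^2)/(\sqrt{R^2-x_1^2}+\sqrt{r^2-x_1^2})$, and integrating over the interval~$[\lambda-\gamma,\lambda+\gamma]$ of length~$2\gamma$ gives (C1). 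For (C2), I assume~$\lambda>0$ (the other case is analogous). A point~$x\in B_r\setminus B_r(2\lambda e_1)$ satisfies~$|x|<r$ and~$|x-2\lambda e_1|\ge r$, which forces~$4\lambda(\lambda-x_1)\ge r^2-|x|^2>0$ and hence~$x_1<\lambda$; the slice at~$x_1$ is then an~$(n-1)$\nobreakdash-dimensional annulus with outer radius~$\sqrt{r^2-x_1^2}$ and inner radius~$\sqrt{r^2-(x_1-2\lambda)^2}$. Since~$|x_1-2\lambda|\le|x_1|+2|\lambda|\le 3/8+1/4<r$ under our hypotheses on~$\lambda,\gamma,r$, both radii are bounded below and above by positive constants, so the slice measure is bounded by~$C$ times the difference of squared radii, namely~$(x_1-2\lambda)^2-x_1^2=4\lambda(\lambda-x_1)\le 4\lambda\gamma$. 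Integrating again over~$x_1\in[\lambda-\gamma,\lambda+\gamma]$ gives (C2).

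Combining (C1) and (C2), and doubling by symmetry, yields
\[
\big|(\Omega\triangle\Omega^*)\cap S_\gamma\big|\ \le\ 2\big(C\gamma(R-r)+C\gamma^2|\lambda|\big)\ \le\ C\gamma\big(R-r+\gamma|\lambda|\big),
\]
which is the desired bound. The only delicate point is checking that, under the smallness assumptions~$|\lambda|\le 1/8$, $\gamma\le 1/4$ and the normalization~$1/2\le r\le R\le 2$, all square roots appearing in the slice radii are bounded below by a positive universal constant so that the mean value estimates~$\sqrt{a}-\sqrt{b}\le(a-b)/(2\sqrt{b})$ (used above) do not degenerate; this is the content of the inequalities~$|x_1|\le 3/8<r$ and~$|x_1-2\lambda|<r$ that are verified at the start of each step, and is the only place where the numerical constants~$1/8$ and~$1/4$ in the hypotheses are genuinely used.
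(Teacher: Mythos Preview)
Your proof is correct and follows essentially the same approach as the paper: both reduce to estimating $(B_R\setminus B_r')\cap S_\gamma$ via slicing in~$x_1$. The only difference is that you decompose $B_R\setminus B_r'\subset(B_R\setminus B_r)\cup(B_r\setminus B_r')$ and bound each piece, whereas the paper integrates the difference of slice radii directly; the two computations are equivalent.

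One numerical slip: in (C2) you write $|x_1-2\lambda|\le|x_1|+2|\lambda|\le 3/8+1/4<r$, but $3/8+1/4=5/8$, which is not $<r$ when $r=1/2$. The correct bound (which the paper also uses) is that for $x_1\in[\lambda-\gamma,\lambda)$ with $\lambda>0$, one has $2\lambda-x_1\in(\lambda,\lambda+\gamma]\subset(0,3/8]$, so $|x_1-2\lambda|\le 3/8<1/2\le r$. With this correction the argument goes through.
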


\begin{proof}
First, observe that \begin{align}
\Omega \triangle \Omega ' \subset ( B_R \cup B_R' ) \setminus (B_r \cap B_r') \label{M3UmmPsi}
\end{align} where \(B_\rho '\) is the reflection of~$B_\rho$ across the critical hyperplane.
Indeed, if \(x\in \Omega \cup \Omega'\) then \(x\in \Omega \subset B_R\) or \(x\in \Omega'\subset B_R'\), so \(x\in B_R\cup B_R'\). Moreover, if \(x\in B_r\cap B_r'\) then \(x\in B_r\subset \Omega\) and \(x\in B_r'\subset \Omega'\), so~\(B_r\cap B_r'\subset \Omega \cap \Omega'\). Then~\eqref{M3UmmPsi} follows from the the fact that~\(\Omega \triangle \Omega' = (\Omega \cup \Omega' ) \setminus (\Omega \cap \Omega' )\). 

From~\eqref{M3UmmPsi}, it immediately follows that \begin{align*}
\Big| \big\{ x\in \Omega \triangle \Omega ' \text{ s.t. } \vert x_1-\lambda \vert \leqslant \gamma \big\} \Big| &\leqslant \Big| \big\{ x\in  ( B_R \cup B_R' ) \setminus (B_r \cap B_r')  \text{ s.t. } \vert x_1-\lambda \vert \leqslant \gamma \big\} \Big| .
\end{align*} Without loss of generality, we may assume that \(\lambda >0\). Then \begin{align*}
\Big| \big\{ x\in  ( B_R \cup B_R' ) &\setminus (B_r \cap B_r')  \text{ s.t. } \vert x_1-\lambda \vert \leqslant \gamma \big\} \Big|  \\
&= 2 \Big| \big\{ x\in   B_R\setminus B_r'  \text{ s.t. } \lambda-\gamma < x_1<\lambda \big\} \Big| \\
&= 2 \Big ( \Big| B_R \cap \{\lambda-\gamma < x_1<\lambda\big\}  \Big| - \Big| B_r' \cap \big\{\lambda-\gamma < x_1<\lambda\big\}  \Big|  \Big ) .
\end{align*}
Hence, via the co-area formula, we obtain that \begin{align*}
\Big| \big\{ x\in  ( B_R \cup B_R' ) &\setminus (B_r \cap B_r')  \text{ s.t. } \vert x_1-\lambda \vert \leqslant \gamma \big\} \Big| \\
&= 2 \int_{\lambda-\gamma}^\lambda \left [ \mathcal H^{n-1} \left( B_{ \sqrt{R^2-t^2}}^{n-1} \right) -  \mathcal H^{n-1} \left( B_{\sqrt{r^2-(2\lambda - t)^2}}^{n-1} \right)  \right] \dd t \\
&= 2 \omega_{n-1} \int_{\lambda-\gamma}^\lambda \left[ 
\big (R^2 - t^2 \big )^{\frac{n-1}2 } - \big (r^2 - (2\lambda - t )^2 \big )^{\frac{n-1}2 }\right] \dd t . 
\end{align*} 

Next, if \(0<a<\tau < b\) then \begin{align*}
\bigg \vert \frac{\dd }{\dd \tau } \tau^{\frac{n-1}2} \bigg \vert &= \frac{n-1} 2 \tau^{\frac{n-3}2 } \leqslant \frac{n-1} 2\begin{cases}
a^{-1/2}, &\text{if } n=2 \\
b^{\frac{n-3}2 }, &\text{if } n \geqslant 3. 
\end{cases}
\end{align*} Since \(\lambda>0\) and \(2\lambda - t \in (\lambda,\lambda+\gamma) \subset (0,3/8)\), we have that \begin{align*}
C^{-1} \leqslant r^2 - (2\lambda - t )^2 \leqslant R^2 - t^2 \leqslant C \qquad \text{ for all } t\in (\lambda-\gamma , \lambda)
\end{align*} with \(C>0\) a universal constant. Hence, \begin{align*}
\Big \vert \big (R^2 - t^2 \big )^{\frac{n-1}2 } - \big (r^2 - (2\lambda - t )^2 \big )^{\frac{n-1}2 }  \Big \vert &\leqslant C \big \vert \big (R^2 - t^2\big ) - \big ( r^2- (2\lambda - t )^2  \big ) \big \vert\\
&\leqslant C \big ( R-r +  \lambda (\lambda - t ) \big ). 
\end{align*} 

Gathering these pieces of information, we conclude that \begin{align*}
\Big| \big\{ x\in \Omega \triangle \Omega ' \text{ s.t. } \vert x_1-\lambda \vert \leqslant \gamma \big\} \Big| &\leqslant C \int_{\lambda-\gamma}^\lambda  \big(R-r +  \lambda (\lambda - t ) \big) \dd t \leqslant C \gamma\big ( R-r + \gamma \lambda \big ),
\end{align*} as required. 
\end{proof}

The purpose of the second lemma will be to reduce the proof of \thref{m9q9X2hE} to the case of graphs of functions. 

\begin{lem} \thlabel{GNZAnsDn}
Let \(\Omega \subset \R^n\) be an open bounded set with \(\partial \Omega \in C^\alpha_{M,\rho}\), with~\(\alpha>1\), 
for some~\(M>0\) and~\(\rho \in (0,1/4)\).

Then, there exists~$\epsilon_0\in(0,1)$ such that, for all~$\epsilon\in(0,\epsilon_0]$, it holds that if~\(B_{1-\varepsilon/2} \subset \Omega \subset B_{1+\varepsilon/2}\) then
there exists \(\psi_\epsilon:=\psi : B_{3/4}^{n-1} \to (0,+\infty)\)
such that~\(\psi \in C^\alpha (B_{3/4}^{n-1})\), \begin{align}
\big \|\psi-\sqrt{1-\vert x' \vert^2} \big \|_{C^1(B_{3/4}^{n-1})} \leqslant C \varepsilon^{1-\frac1\alpha}, \label{WR8AxQVQ}
\end{align}  and \begin{align}
\Omega \cap \big ( B_{3/4}^{n-1} \times (0,+\infty) \big ) = \{ x\in \R^n \text{ s.t. } 0<x_n < \psi(x') \}  . \label{Gug035Xo}
\end{align} The constant \(C\) depends only on \(n\), \(\alpha\), \(\rho\), and \(M\). 
\end{lem}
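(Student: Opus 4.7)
\emph{Plan of proof.} The strategy is to first establish the quantitative geometric fact that, at every $\bar{x}\in\partial\Omega$, the outward normal $\nu(\bar{x})$ differs from the radial direction $\bar{x}/|\bar{x}|$ by at most $C\varepsilon^{1-1/\alpha}$; once this is available, the transversality of $\partial\Omega$ with the vertical direction $e_n$ on the upper cap $\{x_n>0,\;|x'|<3/4\}$ follows quantitatively, which both yields the graph description~\eqref{Gug035Xo} and, after differentiation, the $C^1$-estimate~\eqref{WR8AxQVQ}.

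\emph{The normal estimate.} Fix $\bar{x}\in\partial\Omega$ and consider the auxiliary smooth function $h(y):=|y|^2-1$. The assumption $B_{1-\varepsilon/2}\subset\Omega\subset B_{1+\varepsilon/2}$ forces $\|h\|_{L^\infty(\partial\Omega)}\le C\varepsilon$. Using Definition~\thref{deZqwlaB} in the normalized form $\psi^{(\bar{x})}(0)=0$, $\nabla\psi^{(\bar{x})}(0)=0$, parametrize
$$\gamma^{(\bar{x})}(w):=\bar{x}+\sum_{i=1}^{n-1}w_i e_i' +\psi^{(\bar{x})}(w)\nu(\bar{x}),\qquad w\in B_\rho^{n-1},$$
where $\{e_1',\dots,e_{n-1}'\}$ is an orthonormal basis of $T_{\bar x}\partial\Omega$. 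Then $h\circ\gamma^{(\bar{x})}$ is a $C^\alpha$ function on $B_\rho^{n-1}$ with $[h\circ\gamma^{(\bar{x})}]_{C^\alpha}\le C(n,M,\rho)$, and $\|h\circ\gamma^{(\bar{x})}\|_{L^\infty}\le C\varepsilon$. The standard interpolation inequality
$$\|\nabla f\|_{L^\infty(B_{\rho/2}^{n-1})}\le C\,\|f\|_{L^\infty(B_\rho^{n-1})}^{\,1-1/\alpha}\,\|f\|_{C^\alpha(B_\rho^{n-1})}^{\,1/\alpha}$$
then yields $\|\nabla(h\circ\gamma^{(\bar{x})})\|_{L^\infty(B_{\rho/2}^{n-1})}\le C\varepsilon^{1-1/\alpha}$. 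Evaluating at $w=0$ and using $\nabla\psi^{(\bar{x})}(0)=0$, one finds $\partial_{w_j}[h\circ\gamma^{(\bar{x})}](0)=2\,\bar{x}\cdot e_j'$, so that the tangential component of $\bar{x}$ has norm at most $C\varepsilon^{1-1/\alpha}$. Since $|\bar{x}|\in[1-\varepsilon/2,1+\varepsilon/2]$, decomposing $\bar{x}/|\bar{x}|$ along $T_{\bar{x}}\partial\Omega$ and $\nu(\bar{x})$ gives the key estimate
$$\left|\nu(\bar{x})-\frac{\bar{x}}{|\bar{x}|}\right|\le C\varepsilon^{1-1/\alpha}.$$

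\emph{From normals to graph.} For $x'\in B_{3/4}^{n-1}$ and $\varepsilon$ small so that $B_{3/4}\subset B_{1-\varepsilon/2}\subset\Omega$, the vertical line $\{(x',t):t>0\}$ meets $\partial\Omega$. At any such meeting point $\bar{x}=(x',t)$, the previous estimate combined with $\bar{x}_n/|\bar{x}|\ge\sqrt{1-9/16}/(1+\varepsilon/2)\ge c_0$ gives $\nu(\bar{x})\cdot e_n\ge c_0/2$ once $\varepsilon\le\varepsilon_0(n,M,\rho,\alpha)$. By the quantitative local graph description at $\bar{x}$, $\Omega$ lies locally on the side $\{y\cdot\nu(\bar{x})<\bar{x}\cdot\nu(\bar{x})\}$, so $(x',t+\delta)\notin\overline{\Omega}$ for small $\delta>0$. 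A standard first-crossing argument then rules out any further crossing of $\partial\Omega$ by the vertical line, yielding existence and uniqueness of $\psi(x')$, and hence~\eqref{Gug035Xo}. The $C^\alpha$ regularity of $\psi$ follows by the implicit function theorem applied to $F(x',t):=\bigl((x',t)-\bar{x}\bigr)\cdot\nu(\bar{x})-\psi^{(\bar{x})}\bigl(\Pi_{\bar{x}}(x',t)\bigr)$ in neighborhoods, with a uniform $C^\alpha$ bound depending only on $n,\alpha,\rho,M$ because $\nu\cdot e_n\ge c_0/2$ is uniform. The $C^1$ estimate~\eqref{WR8AxQVQ} then follows by identifying $(-\nabla\psi(x'),1)/\sqrt{1+|\nabla\psi(x')|^2}$ with $\nu(x',\psi(x'))$ and observing that the spherical normal $\bar{x}/|\bar{x}|$ at $\bar{x}=(x',\psi(x'))$ differs from $(x',\sqrt{1-|x'|^2})=\nabla$-normal of the graph $\sqrt{1-|x'|^2}$ by $O(\varepsilon)$ on $B_{3/4}^{n-1}$; the key estimate then gives $|\nabla\psi(x')-\nabla\sqrt{1-|x'|^2}|\le C\varepsilon^{1-1/\alpha}$, while the $L^\infty$ bound $|\psi(x')-\sqrt{1-|x'|^2}|\le C\varepsilon$ is immediate from the sandwich of $\Omega$.

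\emph{Main obstacle.} The delicate point is keeping all constants uniform in $\bar{x}\in\partial\Omega$, in particular ensuring that the $C^\alpha$ bounds on the $\psi^{(\bar{x})}$ supplied by Definition~\thref{deZqwlaB} transfer, via the implicit function theorem, to a uniform $C^\alpha$ bound on the global height function $\psi$ on $B_{3/4}^{n-1}$: this relies on the lower bound $\nu\cdot e_n\ge c_0/2$ being uniform, which in turn requires the interpolation step above to be carried out with an interpolation constant depending only on $\rho$ (not on $\bar x$). The case $\alpha>2$ requires only the first-order vanishing of $\psi^{(\bar{x})}$ at $0$ used above, with interpolation applied to $h\circ\gamma^{(\bar{x})}$ whose $C^\alpha$ norm is still controlled; no separate treatment of higher $\alpha$ is needed.
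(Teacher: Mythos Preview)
Your proof is correct and relies on the same core idea as the paper's argument---interpolation between an $L^\infty$ bound of order $\varepsilon$ (coming from the sandwich $B_{1-\varepsilon/2}\subset\Omega\subset B_{1+\varepsilon/2}$) and the uniform $C^\alpha$ bound from Definition~\thref{deZqwlaB}, to obtain $C^1$ control of order $\varepsilon^{1-1/\alpha}$. The packaging differs: the paper works in the rotated local chart at a boundary point~$x_\star$, writes the graph function~$\psi^{(x_\star)}$ in those coordinates, observes that it is $O(\varepsilon)$-close in $L^\infty$ to the spherical graph $\sqrt{1-|y'|^2}$, and interpolates the difference directly; from the resulting $C^1$-closeness it reads off $\nu\cdot e_n\ge C>0$ and hence the global graph representation. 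You instead isolate the normal estimate $|\nu(\bar x)-\bar x/|\bar x||\le C\varepsilon^{1-1/\alpha}$ as the key geometric fact, obtained by interpolating the auxiliary scalar $h\circ\gamma^{(\bar x)}$ with $h(y)=|y|^2-1$, and then deduce both the transversality and the $C^1$ estimate from it. Your route is arguably cleaner: it avoids tracking the rotated coordinate frame and makes explicit that the only obstruction to graphicality is the normal direction; the paper's route is more direct in that it compares graph functions from the outset and so gets~\eqref{WR8AxQVQ} without passing through the normal. Both are equivalent in substance.
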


\begin{proof}
Let \(x_\star \in \partial \Omega \cap \big ( B_{3/4}^{n-1} \times (0,+\infty) \big ) \) and~\(\psi^{(x_\star)}\) be the function given in \thref{deZqwlaB} corresponding to the point~\(x_\star\). Next, let~\(A\) be a rigid motion such that~\(Ax_\star = 0\),  \(T_{x_\star}\partial\Omega\) is mapped to~ \(\R^{n-1}\times \{0\}\), and~\(\nu(x_\star) \) is mapped to~\(e_n\). To avoid any confusion, we will always use the variable~\(x\) to denote points in the original (unrotated) coordinates and~\(y\) to denote points in the new rotated coordinates, i.e. \(y=Ax\). By \thref{deZqwlaB}, in the \(y\) coordinates, \begin{align*}
y = (y' , \psi^{(x_\star)} (y') ) \qquad \text{ for all } y\in B_\rho \cap \partial \Omega
\end{align*} where \(y'=(y_1,\dots,y_{n-1})\).

Next, if \(\partial \Omega\) is contained in the closure of \(B_{1+\varepsilon/2} \setminus B_{1-\varepsilon/2}\), we have that \begin{align*}
\bigg ( 1-\frac \varepsilon 2\bigg )^2-\vert y ' \vert^2\leqslant \big( \psi^{(x_\star)} (y') \big )^2 \leqslant \bigg ( 1+\frac \varepsilon 2\bigg )^2-\vert y ' \vert^2
\end{align*} for all \(y' \in B^{n-1}_\rho \). 

Additionally, by Bernoulli's inequality, we have that \begin{align*}
0\leqslant \sqrt{\bigg ( 1+\frac \varepsilon 2\bigg )^2-\vert y ' \vert^2} -\sqrt{1-\vert y ' \vert^2} &\leqslant \frac 1 {2\sqrt{1-\vert y ' \vert^2}} \bigg ( \varepsilon + \frac{\varepsilon^2}4 \bigg ) \leqslant C \varepsilon
\end{align*} with \(C=C(\rho)>0\). Similarly, we also have that \begin{align*}
-C\varepsilon \leqslant \sqrt{\bigg ( 1-\frac \varepsilon 2\bigg )^2-\vert y ' \vert^2} -\sqrt{1-\vert y ' \vert^2} \leqslant 0 . 
\end{align*} Hence, it follows that \begin{align*}
\big \vert | \psi^{(x_\star)} (y')|  -\sqrt{1-\vert y ' \vert^2}  \big \vert &\leqslant C \varepsilon \qquad \text{for all } y' \in B^{n-1}_\rho . 
\end{align*} 

Thus, by interpolation, we have that \begin{align*}
\big \| &|\psi^{(x_\star)} (y')|  -\sqrt{1-\vert y ' \vert^2}  \big \|_{C^1( B^{n-1}_\rho)} \\
 &\leqslant C \big \| |\psi^{(x_\star)} (y')|  -\sqrt{1-\vert y ' \vert^2}  \big \|_{C^\alpha( B^{n-1}_\rho)}\big \| |\psi^{(x_\star)} (y')|  -\sqrt{1-\vert y ' \vert^2}  \big \|_{L^\infty( B^{n-1}_\rho)}^{1-\frac 1 \alpha } \\
 &\leqslant C \varepsilon^{1-\frac1{\alpha}}
\end{align*} using also that \(\partial \Omega \in C^\alpha_{M,\rho}\). 

Note that we have left the \(y\) variable in the equations above to emphasise that we are still using the rotated \(y\) coordinates. Now, returning to the original \(x\) coordinates, we observe that, by the above computation, we have that \(\partial \Omega\) is uniformly close to \(\partial B_1\) in the \(C^1\) sense, so \begin{align*}
\nu(x) \cdot e_n \geqslant C >0 \qquad \text{ for all } x\in \partial \Omega \cap \big ( B_{3/4}^{n-1} \times (0,+\infty) \big )  
\end{align*} provided that \(\varepsilon\) is sufficiently small. Thus, it follows that \(\partial \Omega\) is given by a graph with respect to the~\(e_n\) direction, that is, the claim in~\eqref{Gug035Xo} holds for some \(\psi : B_{3/4}^{n-1} \to (0,+\infty)\). We can see that~\(\psi \in C^\alpha (B_{3/4}^{n-1})\) since~\(\partial \Omega\) is~\(C^\alpha\) and we obtain the claim in~\eqref{WR8AxQVQ} by an identical interpolation argument to the one above. 
\end{proof}

We may now give the proof of \thref{m9q9X2hE}.

\begin{proof}[Proof of \thref{m9q9X2hE}] Without loss of generality, we may assume that \(e=e_1\) and \(\lambda>0\). By~\thref{o1pUKCHX}, it is enough to prove that \begin{align}
\lambda\leqslant C(n,M,\rho,\alpha)(R-r)^{1-\frac 1 \alpha } . \label{EtWD36ww}
\end{align}
 Since \(\lambda \leqslant 2\),  if there exists \(C=C(n,M,\rho,\alpha)>0\) such that \(R-r\geqslant C \) then we are done, so we may assume that \(R-r = \varepsilon\) with \(\varepsilon\) arbitrarily small. Moreover, by rescaling, we may further assume that~\(r=1-\varepsilon/2\) and~\(R=1+\varepsilon/2\). Furthermore, let~\(\psi :B_{3/4}^{n-1}\) be the function given by \thref{GNZAnsDn}. 
 
First, let us consider Case 1 of the method of moving planes. In this case, we obtain a point~\(p=(p_1,\dots,p_n)\in (\partial \Omega \cap \partial \Omega'_\lambda) \setminus \pi_\lambda \) (recall that, in this notation, \(\Omega'_\lambda = \Omega' \cap H_\lambda\), so \(p_1<\lambda\)). Hence,
we have that~\( r^2 \leqslant \vert p \vert^2 \leqslant (r+\varepsilon)^2\) and~\( r^2 \leqslant \vert Q_{\pi_\lambda} (p) \vert^2 \leqslant (r+\varepsilon)^2\) (recall that~$Q_{\pi_\lambda}$ reflects a point across~\(\pi_\lambda\)), from which it follows that \begin{align*}
\lambda (\lambda - p_1 ) \leqslant  \varepsilon (2r+\varepsilon) \leqslant C \varepsilon . 
\end{align*} If \(\lambda - p_1 \geqslant 1/4\) then we are done, so we may assume that \(\lambda - p_1 < 1/4\). 

Moreover, by rotating with respect to~\((x_2,\dots,x_n)\), we may assume without loss of generality that~\(p_2=\dots=p_{n-1} =0\) and~\(p_n>0\). In particular, this implies that \(p'=(p_1,\dots,p_{n-1} ) \in B_{3/4}^{n-1}\). 

Now, on one hand, if \(\psi_\lambda(x') := \psi(x') - \psi (2\lambda-x_1,x_2,\dots,x_{n-1}) \) then \(\psi_\lambda \geqslant 0\) for \(x'\in B_{3/4}^{n-1} \cap \{x_1<\lambda \}\) and \(\psi_\lambda(p')=0\), so \(\partial_1\psi_\lambda (p')=0\). On the other hand, by \thref{GNZAnsDn}, if \(x'' = (x_2,\dots,x_{n-1})\), \begin{align*}
\partial_1 \psi_\lambda(p') &= \partial_1 \big( \psi_\lambda - \sqrt{1-\vert x ' \vert^2}+ \sqrt{1-(2\lambda-x_1)^2 -\vert x''\vert^2 } \big ) (p') \\
&\qquad + \partial_1 \big( \sqrt{1-\vert x ' \vert^2}- \sqrt{1-(2\lambda-x_1)^2 -\vert x''\vert^2 } \big ) (p') \\
&\leqslant C \varepsilon^{1-\frac 1 \alpha}  - \bigg ( \frac{p_1}{\sqrt{1-p_1^2}} + \frac{2\lambda-p_1}{\sqrt{1-(2\lambda-p_1)^2}} \bigg ).
\end{align*} Hence, we have that \begin{equation}\label{elemcalc0}
\frac{p_1}{\sqrt{1-p_1^2}} + \frac{2\lambda-p_1}{\sqrt{1-(2\lambda-p_1)^2}}  \leqslant C \varepsilon^{1-\frac 1 \alpha} .
\end{equation} 

Moreover, we claim that \begin{equation}\label{elemcalc}
\frac{p_1}{\sqrt{1-p_1^2}} + \frac{2\lambda-p_1}{\sqrt{1-(2\lambda-p_1)^2}}  \geqslant\frac{2 \lambda}{\sqrt{1-\lambda^2}} .
\end{equation} 
To prove this, we consider the function
$$ f(t):=\frac{\lambda-t}{\sqrt{1-(\lambda-t)^2}} + \frac{\lambda+t}{\sqrt{1-(\lambda+t)^2}}$$
for~$t\in\left(-\frac12,\frac12\right)$. Notice that~$f$ is even, and therefore we restrict our analysis to the interval~$\left[0,\frac12\right)$.

By a direct calculation,
\begin{eqnarray*}
f'(t)&=& 
-\frac{(\lambda - t)^2}{(1 - (\lambda - t)^2)^{3/2}} - \frac{1}{\sqrt{1 - (\lambda - t)^2}} + \frac1{\sqrt{1 - (\lambda + t)^2}} 
+ \frac{(\lambda + t)^2}{(1 - (\lambda + t)^2)^{3/2}}\\
&=&-\frac{1}{(1 - (\lambda - t)^2)^{3/2} }+\frac{1}{(1 - (\lambda + t)^2)^{3/2}} .
\end{eqnarray*}

Now we define~$\phi(\tau):=(1-\tau)^{-3/2}$. Since~$(\lambda-t)^2\le(\lambda+t)^2\le17/32< 1$, we can write
\begin{eqnarray*}
f'(t)=\int_{ (\lambda - t)^2}^{(\lambda + t)^2} \phi'(\tau)\,d\tau=
\frac{3}{2} \int_{ (\lambda - t)^2}^{(\lambda + t)^2} (1-\tau)^{-5/2}\,d\tau\ge0.
\end{eqnarray*}
Therefore, for all~$t\in\left(0,\frac12\right)$ (and thus for all~$t\in\left(-\frac12,\frac12\right)$), we have that
$$ f(t)\ge f(0)=\frac{2\lambda}{\sqrt{1-\lambda^2}}.$$
Hence, setting~$t=\lambda-p_1\in\left(-\frac12,\frac12\right)$, we obtain the desired inequality in~\eqref{elemcalc}.

In particular, from \eqref{elemcalc} we have that
$$ \frac{p_1}{\sqrt{1-p_1^2}} + \frac{2\lambda-p_1}{\sqrt{1-(2\lambda-p_1)^2}}  \geqslant C \lambda,$$
for some~$C>0$.
Putting together this and~\eqref{elemcalc0}, we deduce that
the claim in~\eqref{EtWD36ww} holds. 

For Case 2 of the method of moving planes, we obtain a point~\(p\in \partial\Omega \cap \pi_\lambda\) at which~\(\partial \Omega\) is orthogonal to~\(\pi_\lambda\). In this case, we have that \(\partial_1\psi_\lambda(p') \geqslant 0\). {F}rom here the proof is identical to Case~1. 
\end{proof}

We will now give an explicit family of domains which demonstrate that the result obtained in \thref{m9q9X2hE}
is sharp. This also serves as a counter-example to \thref{Z08Q7917}. 

\begin{thm}\label{example61}
There exists a smooth \(1\)-parameter family \(\{\Omega_\varepsilon\}\) of open bounded subsets of \(\R^2\) such that \begin{itemize}
\item \(\partial \Omega_\varepsilon \in C^\alpha_{M,1/8}\), with \(\alpha>1\), for some \(M>0\) independent of \(\varepsilon\);
\item \(B_{1-C_1\varepsilon} \subset \Omega_\varepsilon \subset B_{1+C_1\varepsilon}\) for a universal constant \(C_1>0\);
\item if \(\pi_\lambda\) is
the critical hyperplane with respect to~\(e_1\) and~\(\gamma \in (0,1/4)\), then  \begin{align}
\Big| \big\{ x\in \Omega_\varepsilon \triangle \Omega_\varepsilon ' \text{ s.t. } \dist(x,\pi_\lambda) \leqslant \gamma\big\} \Big| \geqslant C_2\varepsilon^{1-\frac 1 \alpha } \label{EtWD36ab}
\end{align} as \(\varepsilon \to 0^+\). The constant \(C_2>0\) depends on \(n\) and \(\gamma\). 
\end{itemize} 
\end{thm}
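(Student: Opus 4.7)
I plan to exhibit $\Omega_\varepsilon \subset \R^2$ as a localized two-scale perturbation of the unit disk~$B_1$, chosen to saturate the~$C^\alpha$ regularity bound. Fix a smooth cut-off~$\varphi \in C^\infty_c((-1,1))$ satisfying~$\varphi \geq 0$ and~$\varphi'(0) > 0$ (an asymmetric bump). Set~$\delta := \varepsilon^{1/\alpha}$, pick a location $a = a(\varepsilon) \in (0, 1/16)$ to be chosen later in coordination with $\alpha$, and define~$\Omega_\varepsilon$ by replacing the portion of~$\partial B_1$ lying in the upper half-plane by the graph
\begin{equation*}
x_2 = \sqrt{1-x_1^2} + \varepsilon\, \varphi\!\left(\frac{x_1 - a}{\delta}\right)
\end{equation*}
on $(a-\delta, a+\delta)$, smoothly patched to $\sqrt{1-x_1^2}$ elsewhere; the rest of $\partial B_1$ is left unchanged. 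Since the perturbation has amplitude $\sim \varepsilon$ and horizontal scale $\delta$, its $C^\alpha$-seminorm is controlled by $\varepsilon/\delta^\alpha = 1$, so $\partial \Omega_\varepsilon \in C^\alpha_{M, 1/8}$ uniformly in $\varepsilon$; moreover $\|\varepsilon\varphi(\cdot)\|_{L^\infty} \leq C_1 \varepsilon$ gives $B_{1-C_1\varepsilon} \subset \Omega_\varepsilon \subset B_{1+C_1\varepsilon}$.

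Next, I would compute the critical value $\lambda$ of the moving plane method with $e = e_1$. The condition $\Omega'_\mu \subset \Omega_\varepsilon$ translates, for points in the bump region, into
\begin{equation*}
\sqrt{1-(2\mu - x_1)^2} - \sqrt{1-x_1^2} \;\geq\; \varepsilon\!\left[\varphi\!\left(\tfrac{x_1 - a}{\delta}\right) - \varphi\!\left(\tfrac{2\mu - x_1 - a}{\delta}\right)\right]
\end{equation*}
for all $x_1 \in (\mu, a+\delta)$. Setting $u = x_1 - \mu$ and expanding, the leading-order form reads $2\mu\, u \gtrsim 2\varepsilon\, u\, \varphi'((\mu - a)/\delta)\,\delta^{-1} + O(u^3)$, so as $u \to 0^+$ one obtains the tangency condition
\begin{equation*}
\mu \;\geq\; \frac{\varepsilon}{\delta}\, \varphi'\!\left(\tfrac{\mu - a}{\delta}\right) + \text{lower order} \;=\; \varepsilon^{1-1/\alpha}\, \varphi'\!\left(\tfrac{\mu - a}{\delta}\right) + \text{l.o.t.}
\end{equation*}
Choosing $a = a(\varepsilon)$ so that $(\lambda - a)/\delta$ lands at a value where $\varphi'$ is bounded below by a fixed positive constant (e.g.\ $a = \varepsilon^{1-1/\alpha}/2$ when $\alpha > 2$, or a suitable $O(\varepsilon^{1-1/\alpha})$ shift for $\alpha \in (1,2]$), the self-consistent critical value satisfies
\begin{equation*}
c_0 \varepsilon^{1-1/\alpha} \;\leq\; \lambda \;\leq\; C_0 \varepsilon^{1-1/\alpha},
\end{equation*}
with $c_0, C_0 > 0$ depending only on $\varphi$ and $\alpha$. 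In particular $\dist(0, \pi_\lambda) \leq 1/8$ for $\varepsilon$ small.

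Finally I would estimate the measure from below by comparing with translated balls. Since $|\Omega_\varepsilon \triangle B_1| \leq C \varepsilon\, \delta = C\varepsilon^{1+1/\alpha}$, and similarly for the reflected sets, the triangle inequality for symmetric difference gives
\begin{equation*}
\big|(\Omega_\varepsilon \triangle \Omega_\varepsilon') \cap S_\gamma \big| \;\geq\; \big|(B_1 \triangle B_1(2\lambda e_1)) \cap S_\gamma \big| - C\varepsilon^{1+1/\alpha},
\end{equation*}
where $S_\gamma := \{x : \dist(x, \pi_\lambda) \leq \gamma\}$. A direct slicing computation (of the same type as the proof of Lemma~\ref{o1pUKCHX}) shows that the first term equals $\frac{8\lambda \gamma^2}{\sqrt{1-\lambda^2}} (1+o(1)) \geq c(\gamma)\,\varepsilon^{1-1/\alpha}$. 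Since $\alpha > 1$ implies $\varepsilon^{1+1/\alpha} = o(\varepsilon^{1-1/\alpha})$, the error term is absorbed and~\eqref{EtWD36ab} follows with $C_2 = C_2(\gamma)$. The main technical obstacle will be the second step: verifying that the tangency that defines $\lambda$ is genuinely realized at the bump (rather than being pre-empted by a global configuration or by the smooth patching region), and coordinating the choice of $a = a(\varepsilon)$ with $\alpha$ so that the argument of $\varphi'$ in the tangency condition lies in the support of $\varphi'$ where it is bounded from below. The cutoff patching requires an additional care to keep the $C^\alpha_{M,1/8}$ constants independent of $\varepsilon$, but this is routine.
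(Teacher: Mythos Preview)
Your construction and strategy are essentially the paper's: a two-scale bump of amplitude~$\varepsilon$ and width~$\delta=\varepsilon^{1/\alpha}$, centered near~$\varepsilon^{1-1/\alpha}$, forces~$\lambda \gtrsim \varepsilon^{1-1/\alpha}$, and the measure lower bound comes from the displacement of two nearby unit disks. The paper streamlines two points you flag as obstacles. First, it places the bump in the \emph{lower} half of~$\partial B_1$ and computes the measure directly in the unperturbed \emph{upper} half, so no symmetric-difference triangle inequality (and no~$\varepsilon^{1+1/\alpha}$ error) is needed. Second, it fixes~$a=\varepsilon^{1-1/\alpha}$ with an \emph{odd} profile~$\eta$ satisfying~$\eta(\tau)=2\tau$ near~$0$, and then checks containment fails at~$\mu=a$ by evaluating at the single point~$\tau=a-\tfrac14\delta$; this avoids your tangency analysis and, more importantly, works uniformly in~$\alpha>1$---your worry about coordinating~$a(\varepsilon)$ with~$\alpha$ disappears once you test at~$\mu_*=a$ (where~$(\mu_*-a)/\delta=0$ and~$\varphi'(0)>0$), which already gives~$\lambda\ge a$ for any~$a<\varphi'(0)\,\varepsilon^{1-1/\alpha}$.
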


\begin{figure}[ht]
\centering 
\includegraphics{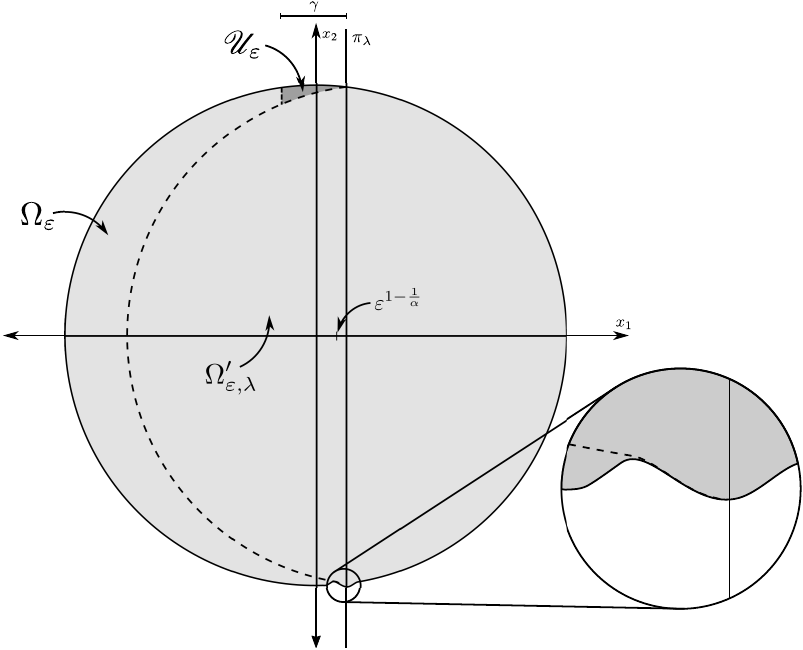}
\caption{Diagram of \(\Omega_\varepsilon\) in Theorem~\ref{example61}.}
\label{Fig1}
\end{figure}

\begin{proof}
We will define \(\Omega_\varepsilon\) by specifying its boundary, see Figure~\ref{Fig1}. 
More precisely, in the region~\(\R^2 \setminus \big ( (0,1/2) \times (-3/2,-1/2 ) \big )  \), let~\(\partial \Omega_\varepsilon = \partial B_1\).
For the definition of~\(\partial \Omega_\varepsilon\) in~\( (0,1/2) \times (-3/2,-1/2 ) \), let~\(\eta~\in C^\infty_0(\R)\) be such that \begin{align*}
\begin{cases}
\eta(\tau) = 2 \tau &\text{in }  ( - 1/4, 1/4 ), \\
\eta =0 &\text{in } \R \setminus (-1,1), \\
\eta(-\tau) =-\eta(\tau) &\text{in } \R, \\
\vert \eta \vert \leqslant 1  &\text{in } \R. 
\end{cases}
\end{align*} 
Let also
\begin{eqnarray*}
&&\eta_\varepsilon(\tau) :=
\varepsilon\eta  \left(\frac{\tau -\varepsilon^{1-\frac 1 \alpha}  } {\varepsilon^{1/\alpha}}  \right) 
\qquad
{\mbox{and}} \qquad
\psi_\varepsilon(\tau):= -\sqrt{1-\tau^2} - \eta_\varepsilon(\tau)
\end{eqnarray*}
and define the remaining portion of \(\partial \Omega_\varepsilon \) by \begin{align*}
\partial \Omega_\varepsilon \cap \Big ( (0,1/2) \times (-3/2,-1/2 ) \Big ) = \big\{ (x,\psi_\varepsilon(x) )  \text{ s.t. } x\in (0,1/2) \big\} .
\end{align*} 

We observe that \(\partial \Omega_\varepsilon\) is smooth and that \begin{align*}
\tau^2 + (\psi_\varepsilon(\tau))^2  &= 1+ 2   \eta_\varepsilon(\tau)  \sqrt{1-\tau^2}  +( \eta_\varepsilon(\tau) )^2 \leqslant 1+ C \varepsilon.
\end{align*} Hence, by Bernoulli's inequality,\begin{align*}
\sqrt{\tau^2 + \psi_\varepsilon(\tau)^2}-1  &\leqslant C\varepsilon ,
\end{align*} and similarly, we can show that \( \sqrt{\tau^2 + \psi_\varepsilon(\tau)^2}-1  \geqslant -C\varepsilon\), so \(B_{1-C\varepsilon} \subset \Omega_\varepsilon \subset  B_{1+C\varepsilon}\). Moreover, we have that \(\| \psi_\varepsilon\|_{C^\alpha((0,1/2))}\leqslant M\) for some \(M>0\) sufficiently large and independent of \(\varepsilon\), so \(\partial \Omega_\varepsilon \in C^\alpha_{M,1/8}\).

Now all that is left to be shown is~\eqref{EtWD36ab}.
First, we claim that the critical parameter~\(\lambda_\varepsilon\) satisfies \begin{align}
\lambda_\varepsilon \geqslant \varepsilon^{1-\frac1\alpha}.  \label{b6gP569L}
\end{align}
To prove this, for all \(\mu \in \R\), we define~\(\psi_{\varepsilon,\mu}(\tau) := \psi_\varepsilon(\tau) - \psi_\varepsilon(2\mu-\tau)\). 
Furthermore, we
consider
$$ \tau_\varepsilon:= \varepsilon^{1-\frac1 \alpha}-\frac14\varepsilon^{\frac1\alpha} \qquad
{\mbox{and}} \qquad \mu_\varepsilon := \varepsilon^{1-\frac1\alpha}.$$ We have that
\begin{eqnarray*}
\psi_{\varepsilon,\mu_\varepsilon}(\tau_\varepsilon) &=& 
- \Big ( \sqrt{1-\tau_\varepsilon^2}  - \sqrt{1-(2\mu_\varepsilon-\tau_\varepsilon) ^2}  \Big ) \\
&&\qquad - \varepsilon \left( 
\eta\left(\frac{\tau_\varepsilon-\varepsilon^{1-\frac1\alpha}}{\varepsilon^{1/\alpha}}\right)-\eta\left(
\frac{2\mu_\varepsilon-\tau_\varepsilon-\varepsilon^{1-\frac1\alpha}}{\varepsilon^{1/\alpha}}\right)\right)
\\&=&- \Big ( \sqrt{1-\tau_\varepsilon^2}  - \sqrt{1-(2\mu_\varepsilon-\tau_\varepsilon) ^2}  \Big ) - 2\varepsilon \bigg ( \frac{\tau_\varepsilon-\varepsilon^{1-\frac1\alpha}}{\varepsilon^{1/\alpha}}-\frac{2\mu_\varepsilon-\tau_\varepsilon-\varepsilon^{1-\frac1\alpha}}{\varepsilon^{1/\alpha}}\bigg ) \\
&=& - \frac{4\mu_\varepsilon(\mu_\varepsilon-\tau_\varepsilon)}{ \sqrt{1-\tau_\varepsilon^2}  +\sqrt{1-(2\mu_\varepsilon-\tau_\varepsilon) ^2}} 
+4\varepsilon^{1-\frac1\alpha} \big (  \mu_\varepsilon- \tau_\varepsilon  \big )  \\
&=&- 4\mu_\varepsilon(\mu_\varepsilon-\tau_\varepsilon)\left(\frac12+o(1)\right)
+4\varepsilon^{1-\frac1\alpha} \big (  \mu_\varepsilon- \tau_\varepsilon  \big ) \\
&=&2\varepsilon^{1-\frac1\alpha} \big (  \mu_\varepsilon- \tau_\varepsilon  \big ) +o\Big(\varepsilon^{1-\frac1\alpha} \big (  \mu_\varepsilon- \tau_\varepsilon  \big )\Big)\\
&=&\frac12\varepsilon+o(\varepsilon)\\
&\ge&\frac14\varepsilon ,  
\end{eqnarray*}
as soon as~$\varepsilon$ is sufficiently small.

Hence, we have that~\(\psi_{\varepsilon,\mu_\varepsilon}(\tau_\varepsilon) >0\), which implies that the reflected region of \(\Omega_{\varepsilon,\mu}\) must have left the region~\(\Omega_\varepsilon\) by the time we have reached~\(\mu=\mu_\varepsilon\), so we conclude that the critical time~\(\lambda_\varepsilon\) satisfies~\(\lambda_\varepsilon > \mu_\varepsilon\),
which gives~\eqref{b6gP569L}.

In fact, since \(\eta\) is zero outside of \((-1,1)\), it follows that \begin{align*}
\lambda_\varepsilon \leqslant C \varepsilon^{1-\frac 1\alpha},
\end{align*}
so, in particular, \(\lambda_\varepsilon \to 0\) as \(\varepsilon \to 0^+\). 

Finally, to complete the proof of~\eqref{EtWD36ab}, we will show that \begin{equation}\label{9u43c7bv9843t69}
\Big| \big\{ x\in \Omega_\varepsilon \triangle \Omega_\varepsilon ' \text{ s.t. } \dist(x,\pi_\lambda) \leqslant \gamma\big\} \Big| \geqslant C \lambda_\varepsilon. 
\end{equation} Indeed, let \(\mathscr U_\varepsilon :=  \big ( \big ( \Omega_\varepsilon \cap \{ \lambda-\gamma <x_1<\lambda \} \big ) \setminus \Omega_{\varepsilon,\lambda}'  \big ) \cap \{x_2>0\}\), see Figure~\ref{Fig1}. Observe that \begin{align*}
\Big| \big ( \Omega_\varepsilon \cap \{ \lambda-\gamma <x_1<\lambda \} \big ) \setminus \Omega_{\varepsilon,\lambda}' \Big|  \geqslant \vert \mathscr U_\varepsilon \vert . 
\end{align*} Moreover, using that \(\lambda_\varepsilon \to 0\), we have that  \begin{eqnarray*}
\vert \mathscr U_\varepsilon \vert &=& \int_{\lambda_\varepsilon-\gamma}^{\lambda_\varepsilon} \Big(\sqrt{1-\tau^2} - \sqrt{1-(2\lambda_\varepsilon-\tau)^2}\Big) \dd \tau \\
&=& \int_{\lambda_\varepsilon-\gamma}^{\lambda_\varepsilon} 
\frac{4\lambda_\varepsilon(\lambda_\varepsilon-\gamma)}{
\sqrt{1-\tau^2} + \sqrt{1-(2\lambda_\varepsilon-\tau)^2}} \dd \tau \\
&\ge&
4\lambda_\varepsilon
\int_{\lambda_\varepsilon-\gamma}^{\lambda_\varepsilon-\frac\gamma2} 
\frac{\lambda_\varepsilon-\gamma}{2} \dd \tau \\
&\ge&\frac{\gamma^2}2  \lambda_\varepsilon ,
\end{eqnarray*} which establishes~\eqref{9u43c7bv9843t69}, and thus completes the proof of Theorem~\ref{example61}.
\end{proof}


\subsection{An application of \thref{m9q9X2hE}: improvement of the exponent in \thref{oAZAv7vy}}\label{subsec:new improvement exponent}

We will now show how to use \thref{m9q9X2hE} to improve the stability exponent in \thref{m9q9X2hE}.
The key step is to use \thref{m9q9X2hE} to achieve the following modified version of \thref{7TQmUHhl}.

\begin{prop}\thlabel{Prop:new per improvement}
Let \(\Omega\) be an open bounded set with~\(C^1\) boundary and satisfying the uniform interior ball condition with radius~\(r_\Omega >0\)
and~$G$ be an open bounded set with~\(C^1\) boundary
such that~$\Omega=G+B_R$
for some~$R>0$. Let~\(f \in C^{0,1}_{\mathrm{loc}}(\R)\) be such that \(f(0)\geqslant 0\)
and \(u\) satisfies~\eqref{problem00} in the weak sense.

For \(e\in \Sph^{n-1}\), let \(\Omega '\) denote the reflection of \(\Omega\) with respect to the critical hyperplane \(\pi_\lambda\).
	
In addition, suppose that \(\partial \Omega \in C^\alpha_{M,\rho}\), with~\(\alpha>1\), 
	for some~\(M>0\) and~\(\rho \in (0,1/4)\), and that 
	\begin{equation}\label{eq:hp per improvement con r e R}
	B_{\rho_i} \subset \Omega \subset B_{\rho_e} \quad  \text{ with } \quad \frac12 \leqslant \rho_i \leqslant \rho_e \leqslant 2    
    \end{equation}
	and
\begin{equation}\label{eq:hp NEWNEW per improvement con r e R}
\rho_e - \rho_i \ge 4^{\frac{ \alpha (s+2)}{ (\alpha -1)(s+1) }} [u]_{\partial G}^{\frac{ \alpha }{ (\alpha -1)(s+1) }} .
\end{equation}	
	
	Then, 
	\begin{align*}
		\vert \Omega \triangle \Omega' \vert \leqslant \widetilde{C}_\star  (\rho_e - \rho_i)^{\frac{\alpha-1}{ \alpha (s+2)} }  \,[u]_{\partial G}^{\frac{1}{s+2}}, 
	\end{align*}
	where $\widetilde{C}_\star$ is some explicit constant depending on~$n$, $s$,
	$\alpha$, $M$, $\rho$, $\diam \Omega$, $R$, $[f]_{C^{0,1}([0,\|u\|_{L^\infty(\Omega)}])}$, and~$\|u\|_{L_s(\R^n)}$.
\end{prop}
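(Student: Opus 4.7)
The plan is to mirror the proof of \thref{7TQmUHhl}, but at the crucial step to refine the analysis through a two-parameter splitting which takes advantage of the sharper estimate \thref{m9q9X2hE}. Without loss of generality (arranging coordinates so that $e=-e_1$ and $\lambda=0$, and verifying that the normalization requirements of \thref{m9q9X2hE} hold thanks to \eqref{eq:hp per improvement con r e R}), I would start from the same fundamental bound underlying the proof of \thref{7TQmUHhl}: combining \thref{zAPw0npM} with \thref{goNy8kYt}, one has
\begin{equation*}
\int_{(\Omega\cap H_\lambda')\setminus\Omega_\lambda'}x_1\,\delta_{\partial\Omega}^s(x)\,\dd x\leqslant C\,[u]_{\partial G},
\end{equation*}
with $C$ depending only on the quantities entering $\widetilde{C}_\star$.

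Fix parameters $\gamma,\eta>0$ (to be chosen later). Chebyshev's inequality directly gives
\begin{equation*}
\Big|\big\{x\in(\Omega\cap H_\lambda')\setminus\Omega_\lambda'\;\text{s.t.}\;x_1\delta_{\partial\Omega}^s(x)>\gamma\big\}\Big|\leqslant\frac{C\,[u]_{\partial G}}{\gamma}.
\end{equation*}
For the complementary set, in contrast with the proof of \thref{7TQmUHhl}, I would use the \emph{two-parameter} inclusion
\begin{equation*}
\big\{x_1\delta_{\partial\Omega}^s(x)\leqslant\gamma\big\}\subset\big\{x_1\leqslant\eta\big\}\cup\big\{\delta_{\partial\Omega}(x)\leqslant(\gamma/\eta)^{1/s}\big\}.
\end{equation*}
On the first piece, the sharp estimate \thref{m9q9X2hE} (applicable thanks to $\partial\Omega\in C^\alpha_{M,\rho}$ together with \eqref{eq:hp per improvement con r e R}) yields
\begin{equation*}
\Big|\big\{x\in\Omega\triangle\Omega'\;\text{s.t.}\;x_1\leqslant\eta\big\}\Big|\leqslant C\,\eta\,(\rho_e-\rho_i)^{1-1/\alpha},
\end{equation*}
while on the second, the tubular neighborhood estimate via the interior sphere condition (already employed in \eqref{eq:NUOVAPERIMPROVEMENT}) gives
\begin{equation*}
\Big|\big\{x\in\Omega\;\text{s.t.}\;\delta_{\partial\Omega}(x)\leqslant(\gamma/\eta)^{1/s}\big\}\Big|\leqslant\frac{C|\Omega|}{r_\Omega}\left(\frac\gamma\eta\right)^{1/s}.
\end{equation*}

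Assembling these three estimates and using $|\Omega\triangle\Omega'|=2|(\Omega\cap H_\lambda')\setminus\Omega_\lambda'|$, I obtain
\begin{equation*}
|\Omega\triangle\Omega'|\leqslant C\left[\frac{[u]_{\partial G}}{\gamma}+\eta(\rho_e-\rho_i)^{1-1/\alpha}+\left(\frac\gamma\eta\right)^{1/s}\right].
\end{equation*}
The last two terms are balanced by choosing $\eta\sim\gamma^{1/(s+1)}(\rho_e-\rho_i)^{-s(\alpha-1)/(\alpha(s+1))}$, which collapses them into a single contribution of order $\gamma^{1/(s+1)}(\rho_e-\rho_i)^{(\alpha-1)/(\alpha(s+1))}$. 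A further optimization in $\gamma$, yielding $\gamma\sim[u]_{\partial G}^{(s+1)/(s+2)}(\rho_e-\rho_i)^{-(\alpha-1)/(\alpha(s+2))}$, would then deliver the sought-after estimate
\begin{equation*}
|\Omega\triangle\Omega'|\leqslant\widetilde{C}_\star\,(\rho_e-\rho_i)^{(\alpha-1)/(\alpha(s+2))}\,[u]_{\partial G}^{1/(s+2)}.
\end{equation*}

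The main delicate point will be the admissibility check $\eta\leqslant 1/4$ required by \thref{m9q9X2hE}. Using the pleasant identity $1+s(s+2)=(s+1)^2$, a direct computation shows that the optimized $\eta$ equals $[u]_{\partial G}^{1/(s+2)}(\rho_e-\rho_i)^{-(\alpha-1)(s+1)/(\alpha(s+2))}$ (up to universal constants), and therefore $\eta\leqslant 1/4$ translates exactly into the quantitative lower bound \eqref{eq:hp NEWNEW per improvement con r e R} imposed on $\rho_e-\rho_i$. A subsidiary technical issue, namely arranging a translation so that the condition $\dist(0,\pi_\lambda)\leqslant 1/8$ appearing in \thref{m9q9X2hE} is in force, is absorbed into the constant $\widetilde{C}_\star$, using that by \thref{k4VRS3Fx} the critical hyperplane is already quantitatively close to the chosen center.
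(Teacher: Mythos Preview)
Your proposal is correct and follows essentially the same approach as the paper: the paper also performs the two-parameter splitting (using $\beta$ where you use $\eta$), applies \thref{m9q9X2hE} to the slab $\{x_1<\beta\}$ and the tubular estimate to $\{\delta_{\partial\Omega}\leqslant(\gamma/\beta)^{1/s}\}$, and then optimizes jointly in $(\beta,\gamma)$, arriving at precisely the same choices and noting that \eqref{eq:hp NEWNEW per improvement con r e R} ensures $\beta\leqslant 1/4$. Your explicit treatment of the admissibility check and of the $\dist(0,\pi_\lambda)\leqslant 1/8$ hypothesis (which the paper leaves implicit) is, if anything, slightly more careful.
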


\begin{proof}
The proof of Proposition~\ref{Prop:new per improvement} 
is a suitable modification of the one of \thref{7TQmUHhl}. More precisely, we recall~\eqref{ido4985vb6tdfghwafrikewytoiw}
and we use that~$f(0) \ge 0$
to obtain that
\begin{equation}\label{d9i43765v943675y90378659403869yuhgijfjhgflkdjvk7yt8ur}\begin{split}
\Big| \big\{ x \in (\Omega\cap H_\lambda') \setminus \Omega_\lambda' \text{ s.t. } x_1 \delta_{\partial \Omega}^s(x) >\gamma \big\} \Big|
\leqslant\;& \frac {CC_\ast^{-1}\big( f(0)+\|u\|_{L_s(\R^n)}\big)^{-1}} \gamma [u]_{\partial G}\\
\leqslant\;& \frac {CC_\ast^{-1} \|u\|_{L_s(\R^n)}^{-1}} \gamma [u]_{\partial G},
\end{split}\end{equation}
where \(C_\ast\) is as in \thref{zAPw0npM}.

Furthermore, given~$\gamma$, $\beta\in\left(0,\frac14\right]$, we see that
	\begin{eqnarray*}&&
		\Big| \big\{ x \in (\Omega\cap H_\lambda') \setminus \Omega_\lambda' \text{ s.t. } x_1 \delta_{\partial \Omega}^s(x) \leqslant \gamma \big\} \Big| \\&=&
		\Big| \big\{ x \in (\Omega\cap H_\lambda') \setminus \Omega_\lambda' \text{ s.t. } x_1 \delta_{\partial \Omega}^s(x) \leqslant \gamma, x_1< \beta \big\} \Big| \\&&\qquad + \Big| \big\{ x \in (\Omega \cap H_\lambda')
		\setminus \Omega_\lambda' \text{ s.t. } x_1 \delta_{\partial \Omega}^s(x) \leqslant \gamma , x_1 \geqslant \beta \big\} \Big| \\
		&\leqslant& \Big| \big\{ x \in \Omega^+ \text{ s.t. } x_1 < \beta \big\} \Big| +  \left| \left\{ x \in \Omega \text{ s.t. } \delta_{\partial \Omega}(x) \leqslant \left( \frac{\gamma}{\beta} \right)^{\frac 1{s}} \right\} \right| . 
	\end{eqnarray*}
Now, using \thref{m9q9X2hE} we have that
	\begin{align*}
		\Big| \big\{ x \in \Omega^+ \text{ s.t. } x_1< \beta \big\} \Big| &\leqslant 	C \,   (\rho_e - \rho_i)^{1 - \frac{1}{\alpha}}  \beta  ,
	\end{align*}
	for some~$C>0$ depending only on~$n$, $M$, $\rho$ and~$\alpha$.
	
Also, using \cite[Lemma 5.2]{MR4577340} and \cite{MR483992}, we obtain that \begin{align*}
		\left| \left\{ x \in \Omega \text{ s.t. } \delta_{\partial \Omega}(x) \leqslant \left( \frac{\gamma}{\beta} \right)^{\frac{1}{s}} \right\} \right|  &\leqslant
		\frac{2n \vert \Omega \vert }{r_\Omega} 
		\left( \frac{\gamma}{\beta} \right)^{\frac 1{s}}.
	\end{align*} 
Gathering these pieces of information, we obtain that
		\begin{eqnarray*}
	\Big| \big\{ x \in (\Omega\cap H_\lambda') \setminus \Omega_\lambda' \text{ s.t. } x_1 \delta_{\partial \Omega}^s(x) \leqslant \gamma \big\} \Big| \le
	C  (\rho_e - \rho_i)^{1 - \frac{1}{\alpha}} \beta +\frac{2n \vert \Omega \vert }{r_\Omega} \left( \frac{\gamma}{\beta}\right)^{\frac 1{s}}.
	\end{eqnarray*}	
	
{F}rom this and~\eqref{d9i43765v943675y90378659403869yuhgijfjhgflkdjvk7yt8ur}, we now deduce that
	\begin{eqnarray*}
	\vert \Omega \triangle \Omega '\vert &=& 2 \vert (\Omega\cap H_\lambda') \setminus \Omega'_\lambda \vert
	\\&\leqslant& 
		 CC_\ast^{-1} \|u\|_{L_s(\R^n)}^{-1}  \frac{[u]_{\partial G}}{\gamma} 
		 +  C(\rho_e - \rho_i)^{1 - \frac{1}{\alpha}} \beta + \frac{2n \vert \Omega \vert }{r_\Omega}\left( \frac{\gamma}{\beta}\right)^{\frac 1{s}} \\
		  &\leqslant& 
		 C\left[ \frac{[u]_{\partial G}}{\gamma} 
		 +  (\rho_e - \rho_i)^{1 - \frac{1}{\alpha}} \beta + \left( \frac{\gamma}{\beta}\right)^{\frac 1{s}}\right].
	\end{eqnarray*}
	Minimizing the expression in the last line in the variables $(\beta,\gamma)$ gives 
	$$
	\gamma \beta = \frac{[u]_{\partial G}}{(\rho_e - \rho_i)^{1 - \frac{1}{\alpha}}} 
		\qquad {\mbox{ and }}\qquad
	\beta =   \frac{[u]_{\partial G}^{\frac{1}{s+2}} }{(\rho_e - \rho_i)^{ {\frac{(\alpha -1 )(s+1)}{\alpha (s+2) }}  }},
	$$
	that is,
	$$ \gamma = \frac{[u]_{\partial G}^{\frac{s+1}{s+2}}}{(\rho_e - \rho_i)^{\frac{\alpha-1}{\alpha(s+2)} } } 
		\qquad {\mbox{ and }}\qquad
	\beta =   \frac{[u]_{\partial G}^{\frac{1}{s+2}} }{(\rho_e - \rho_i)^{ {\frac{(\alpha -1 )(s+1)}{\alpha (s+2) }}  }}.
	$$
	Notice that the assumption in~\eqref{eq:hp NEWNEW per improvement con r e R} guarantees that, with these choices,
	$\gamma$, $\beta\in\left(0,\frac14\right]$.
	
Thus, we conclude that	$$
	\vert \Omega \triangle \Omega '\vert \leqslant C \, (\rho_e - \rho_i)^{\frac{\alpha-1}{ \alpha (s+2)} }  \,[u]_{\partial G}^{\frac{1}{s+2}}.$$
This completes the proof of Proposition~\ref{Prop:new per improvement}. 
\end{proof}

Proposition~\ref{Prop:new per improvement} leads to the following statement, which is the counterpart
of Proposition~\ref{k4VRS3Fx}. 

\begin{prop} \thlabel{k4VRS3FxBIS} Let \(\Omega\) be an open bounded set with \(C^1\) boundary and satisfying the uniform interior ball condition with radius \(r_\Omega >0\) and suppose that the critical planes \(\pi_{e_i}\) with respect to the coordinate directions \(e_i\) coincide with \(\{x_i=0\}\) for every \(i=1,\dots, n\). Also, given \(e\in \Sph^{n-1}\), denote by \(\lambda_e\) the critical value associated with~\(e\) as in~\eqref{IAhmMsFq}. 

In addition, suppose that \(\partial \Omega \in C^\alpha_{M,\rho}\), with~\(\alpha>1\), 
	for some~\(M>0\) and~\(\rho \in (0,1/4)\),
	and that the assumptions in~\eqref{eq:hp per improvement con r e R} and~\eqref{eq:hp NEWNEW per improvement con r e R}
are satisfied.
	
Assume that \begin{align*}
[u]^{\frac 1{s+2} }_{\partial G} \leqslant \frac {\vert \Omega \vert } {n \widetilde{C}_\star }  
\end{align*}
where \(\widetilde{C}_\star\) is the constant in Proposition~\ref{Prop:new per improvement}.

Then,
\begin{align*}
\vert \lambda_e \vert \leqslant C \, (\rho_e - \rho_i)^{\frac{\alpha-1}{ \alpha (s+2)} } [u]_{\partial G}^{\frac 1 {s+2} }
\end{align*}
for all \(e\in \Sph^{n-1}\),
where~$C$
is some explicit constant depending on~$n$, $s$,
	$\alpha$, $M$, $\rho$, $\diam \Omega$, $R$, $[f]_{C^{0,1}([0,\|u\|_{L^\infty(\Omega)}])}$, and~$\|u\|_{L_s(\R^n)}$.
\end{prop}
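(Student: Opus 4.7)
The plan is to rerun the argument of \thref{k4VRS3Fx} verbatim, with every invocation of \thref{7TQmUHhl} replaced by the sharper estimate of \thref{Prop:new per improvement}. Since the latter carries the additional geometric factor $(\rho_e-\rho_i)^{(\alpha-1)/(\alpha(s+2))}$, that same factor propagates through all of the bookkeeping and appears in the final bound on $|\lambda_e|$.

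More concretely, I would first set $\Omega^{\mathbf 0}:=-\Omega$ and use the hypothesis that the critical hyperplanes associated with the coordinate directions coincide with $\{x_i=0\}$: this allows one to write $\Omega^{\mathbf 0}$ as the composition of the $n$ coordinate reflections $Q_1,\dots,Q_n$ applied to $\Omega$. Telescoping via the triangle inequality for the symmetric difference, and applying \thref{Prop:new per improvement} once in each coordinate direction, yields the analogue of~\eqref{1wuBMACC}, namely
\begin{equation*}
|\Omega \triangle \Omega^{\mathbf 0}|
\,\leq\, n\,\widetilde C_\star\,(\rho_e-\rho_i)^{\frac{\alpha-1}{\alpha(s+2)}}\,[u]_{\partial G}^{\frac{1}{s+2}}.
\end{equation*}
The smallness hypothesis $[u]_{\partial G}^{1/(s+2)}\leq |\Omega|/(n\widetilde C_\star)$ together with the assumption~\eqref{eq:hp per improvement con r e R} (which in particular makes the factor $(\rho_e-\rho_i)^{(\alpha-1)/(\alpha(s+2))}$ bounded) then excludes the extreme case $|\Omega\triangle\Omega^{\mathbf 0}|=2|\Omega|$.

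Next, for an arbitrary $e\in\Sph^{n-1}$, without loss of generality take $e=-e_1$ and $\lambda_e>0$. Arguing as in \cite[Lemma~4.1]{MR3836150}, the preceding exclusion forces $\Lambda_e\leq\diam\Omega$, whence, applying \thref{Prop:new per improvement} in the direction $e$ itself, we obtain
\begin{equation*}
|\Omega_{\lambda_e}|\,\lambda_e \,\leq\, C(n)\,(\diam\Omega)\,\widetilde C_\star\,(\rho_e-\rho_i)^{\frac{\alpha-1}{\alpha(s+2)}}\,[u]_{\partial G}^{\frac{1}{s+2}}.
\end{equation*}
Combining the identity $|\Omega\triangle\Omega^{\mathbf 0}|=2(|\Omega|-2|\Omega_{\lambda_e}|)$ with the smallness hypothesis yields $|\Omega_{\lambda_e}|\geq |\Omega|/4$, and dividing through delivers the claimed bound on $|\lambda_e|$ with the constant in the statement.

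The argument is essentially a mechanical substitution; the only point requiring attention is to verify that the compatibility condition~\eqref{eq:hp NEWNEW per improvement con r e R} required by \thref{Prop:new per improvement} is available whenever we invoke it, which is immediate since this condition is among the hypotheses of \thref{k4VRS3FxBIS}. Consequently I do not anticipate a substantive obstacle: the conceptual content of the improvement is concentrated entirely in the geometric lemma \thref{m9q9X2hE} and its consequence \thref{Prop:new per improvement}, and transporting it to a statement on the critical values is purely a matter of repeating the scheme of \thref{k4VRS3Fx}.
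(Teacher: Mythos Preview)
Your proposal is correct and matches the paper's approach: the paper simply states that the proof ``follows the same line as that of Proposition~\ref{k4VRS3Fx}'', i.e., exactly the mechanical substitution of \thref{Prop:new per improvement} for \thref{7TQmUHhl} that you carry out. Your remark that the extra factor $(\rho_e-\rho_i)^{(\alpha-1)/(\alpha(s+2))}$ is bounded under~\eqref{eq:hp per improvement con r e R}, so that the smallness hypothesis still does its job in the exclusion and lower-bound steps, is the only point requiring care, and you address it.
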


We omit the proof of Proposition~\ref{k4VRS3FxBIS}, since it follows the same line as that of Proposition~\ref{k4VRS3Fx}.

Now, up to a translation, we can suppose that the critical planes \(\pi_{e_i}\) with respect to the coordinate directions \(e_i\) coincide with \(\{x_i=0\}\) for every \(i=1,\dots, n\). Hence, following \cite[Proof of Theorem 1.2]{MR3836150}, we can define
\begin{equation}\label{def:new rhoe and rhoi}
\rho_e := \max_{\partial \Omega}|x|  \qquad {\mbox{ and }}\qquad \rho_i := \min_{\partial \Omega}|x| ,
\end{equation}
and work with $\rho_e -\rho_i$ (which clearly gives an upper bound for $\rho(\Omega)$) to achieve the desired stability estimates.

In the following result, we also assume that
\begin{equation}\label{eq:scaling condition mmm...}
\rho_e=1.
\end{equation}
Notice that such an assumption is always satisfied, up to a dilation.

\begin{thm} \thlabel{thm:improvement}
Let \(G\) be an open bounded subset of~\( \R^n\) and~\(\Omega = G+B_R\) for some~\(R>0\). Furthermore, let~\(\Omega\) and~\(G\) have~\(C^1\) boundary,
and let~\(f \in C^{0,1}_{\mathrm{loc}}(\R)\) be such that~\(f(0)\geqslant 0\). Suppose that~\(u\) satisfies~\eqref{problem00} in the weak sense.

In addition, let \(\partial \Omega \in C^\alpha_{M,\rho}\), with~\(\alpha>1\), 
	for some~\(M>0\) and~\(\rho \in (0,1/4)\). 
	
	Let $\rho_e$ and $\rho_i$ be as in~\eqref{def:new rhoe and rhoi} and~\eqref{eq:scaling condition mmm...}.
	
	Then, 
	\begin{align*}
		\rho(\Omega) \leqslant \rho_e -\rho_i &\leqslant \widetilde{C} [u]_{\partial G}^{\frac{\alpha}{1+\alpha(s+1)}} ,
	\end{align*} 
	where $\widetilde{C}$ is some explicit constant depending 
	on~$n$, $s$,
	$\alpha$, $M$, $\rho$, $\diam \Omega$, $R$, $[f]_{C^{0,1}([0,\|u\|_{L^\infty(\Omega)}])}$, and~$\|u\|_{L_s(\R^n)}$.
	\end{thm}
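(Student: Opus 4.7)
The plan is to follow the strategy used to prove \thref{oAZAv7vy}, but to substitute Proposition~\ref{k4VRS3FxBIS} (which carries the additional decay factor $(\rho_e-\rho_i)^{(\alpha-1)/(\alpha(s+2))}$) for Proposition~\ref{k4VRS3Fx}, thereby gaining a better stability exponent. First, I would dispose of the case in which $[u]_{\partial G}$ is large: since $\rho_e-\rho_i\le 2\rho_e=2$, the desired inequality is automatic whenever $[u]_{\partial G}^{\alpha/(1+\alpha(s+1))}$ exceeds an explicit threshold depending only on the data. In the complementary small regime, I would invoke \thref{oAZAv7vy} to guarantee that $\rho_i$ is close to $\rho_e=1$, so that hypothesis~\eqref{eq:hp per improvement con r e R} of Proposition~\ref{k4VRS3FxBIS} is met. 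The remaining hypothesis~\eqref{eq:hp NEWNEW per improvement con r e R} can be handled by a second dichotomy: if it fails, then $\rho_e-\rho_i$ is already smaller than the right-hand side of the target inequality, and there is nothing to prove.

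With these preliminaries out of the way and using that, by hypothesis, the critical hyperplanes $\pi_{e_i}$ in the coordinate directions coincide with $\{x_i=0\}$, Proposition~\ref{k4VRS3FxBIS} yields
\[
|\lambda_e| \le C\,(\rho_e-\rho_i)^{\frac{\alpha-1}{\alpha(s+2)}}\,[u]_{\partial G}^{\frac{1}{s+2}} \qquad \text{for every } e\in\Sph^{n-1}.
\]
The next step is purely geometric and mirrors the concluding part of the proof of~\cite[Theorem~1.2]{MR3836150} already invoked at the end of the proof of \thref{oAZAv7vy}. The idea is that, when all coordinate critical hyperplanes pass through the origin, uniform smallness of $|\lambda_e|$ across $e\in\Sph^{n-1}$ forces $\Omega$ to be almost radially symmetric about the origin: choosing $e:=x_M/\rho_e$ for a boundary point $x_M$ realizing $\rho_e$ and reflecting $x_M$ across $\pi_{\lambda_e}$ produces a point in $\overline{\Omega}$ at distance $\rho_e-2\lambda_e$ from the origin in the direction $-e$, and comparing with the analogous construction at a minimizer of $|x|$ on $\partial \Omega$ gives
\[
\rho_e-\rho_i \le C\sup_{e\in\Sph^{n-1}}|\lambda_e|.
\]

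Combining these two displays and absorbing the factor $(\rho_e-\rho_i)^{(\alpha-1)/(\alpha(s+2))}$ to the left-hand side yields
\[
(\rho_e-\rho_i)^{1-\frac{\alpha-1}{\alpha(s+2)}}\le C\,[u]_{\partial G}^{\frac{1}{s+2}}.
\]
An elementary manipulation of the exponent,
\[
1-\frac{\alpha-1}{\alpha(s+2)} = \frac{\alpha(s+2)-(\alpha-1)}{\alpha(s+2)} = \frac{1+\alpha(s+1)}{\alpha(s+2)},
\]
then allows me to raise both sides to the power $\alpha(s+2)/(1+\alpha(s+1))$, producing precisely the exponent $\alpha/(1+\alpha(s+1))$ claimed by the theorem.

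I expect the principal technical obstacle to be the careful treatment of the two dichotomies ($[u]_{\partial G}$ small versus large, and~\eqref{eq:hp NEWNEW per improvement con r e R} satisfied versus not), together with the explicit bookkeeping of constants in terms of $n$, $s$, $\alpha$, $M$, $\rho$, $\diam\Omega$, $R$, $[f]_{C^{0,1}([0,\|u\|_{L^\infty(\Omega)}])}$ and $\|u\|_{L_s(\R^n)}$ that is asked for in the statement. No new conceptual ingredient beyond those already developed in the paper appears to be needed.
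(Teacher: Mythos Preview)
Your proposal is correct and follows essentially the same approach as the paper: the same two dichotomies (on~\eqref{eq:hp NEWNEW per improvement con r e R} and on the smallness of $[u]_{\partial G}$, the latter via \thref{oAZAv7vy} to secure~\eqref{eq:hp per improvement con r e R}), followed by Proposition~\ref{k4VRS3FxBIS} and the geometric argument from~\cite[Theorem~1.2]{MR3836150}. You are slightly more explicit than the paper about the final step $\rho_e-\rho_i\le C\sup_e|\lambda_e|$ and the exponent algebra, whereas the paper simply refers back to the proof of \thref{oAZAv7vy}; this is the same content.
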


\begin{proof}
Without loss of generality, we can assume that \eqref{eq:hp NEWNEW per improvement con r e R} is satisfied, as otherwise \thref{thm:improvement} trivially holds.

An inspection of the proof of \thref{oAZAv7vy} shows that $\rho_e$ and $\rho_i$ defined in \eqref{def:new rhoe and rhoi} satisfy
\begin{align*}
 	\rho_e - \rho_i &\leqslant C [u]_{\partial G}^{\frac 1 {s+2}} .
\end{align*}
Thus, if~$[u]_{\partial G} \le (2 C)^{-(s+2)} $,
we obtain that~$\rho_e -\rho_i \le 1/2$,
and therefore, in light of the assumption in~\eqref{eq:scaling condition mmm...},
we have that~$1/2\le\rho_i\le\rho_e\le2$, namely~\eqref{eq:hp per improvement con r e R} is satisfied.

Hence, we can employ Proposition~\ref{k4VRS3FxBIS} and obtain that
\begin{equation}
\label{eq:finalstepperimprovement}
\vert \lambda_e \vert \leqslant C \, (\rho_e - \rho_i)^{\frac{\alpha-1}{ \alpha (s+2)} } [u]_{\partial G}^{\frac 1 {s+2} }.
\end{equation}
The desired result follows by reasoning as in the proof of
\thref{oAZAv7vy}, but using \eqref{eq:finalstepperimprovement} instead of~\eqref{eq:NEWNEW OLD lambda estimate}.
\end{proof}


\section{The exponent for a family of ellipsoids} \label{z1jzKMXt}

Throughout this section, we will denote a point \(x=(x_1,\dots ,x_n) \in \R^n\) by \(x=(x_1,x')\) where\footnote{Not to be confused with the notation of the method of moving planes where an apostrophe referred to a reflection across a hyperplane.} \(x'= (x_2,\dots,x_n)\in \R^{n-1}\).

\begin{prop} \thlabel{0LlyfQlx}
Suppose that \(n\geqslant 2\) is an integer, \(s\in (0,1)\), and \(\varepsilon>0\). Define \begin{align}\label{sjiwo555ruwytui333tyeiurtfr}
\Omega_\varepsilon :=  \bigg \{ (x_1,x')\in \R^n \text{ s.t. } \frac{x_1^2}{(1+\varepsilon)^2} + \vert x ' \vert^2 <1 \bigg \}.
\end{align} 

Then, for \(0<\varepsilon<1/4\), there exists a 1-parameter family \(G_\varepsilon \subset \Omega_\varepsilon\) with smooth boundary such that \begin{align*}
G_\varepsilon + B_{1/2} = \Omega_\varepsilon.
\end{align*}

Moreover, let \(u_\varepsilon \in C^2(\Omega_\varepsilon) \cap C^s(\R^n)\) be the unique solution to \begin{align}
\begin{PDE}
(-\Delta)^s u_\varepsilon &=1 &\text{in } \Omega_\varepsilon, \\
u_\varepsilon &=0 &\text{in } \R^n \setminus \Omega_\varepsilon .
\end{PDE} \label{kaPuQECN}
\end{align} 

Then, \begin{align}
\lim_{\varepsilon \to 0^+} \frac{[u_\varepsilon]_{\partial G_{\varepsilon}}}{\rho(\Omega_\varepsilon)} &= s \gamma_{n,s} \bigg ( \frac34 \bigg )^{s-1}  , \label{83Wl82Op}
\end{align}
where
\begin{align}\label{gammaenneesse}
\gamma_{n,s} := \frac{2^{-2s} \Gamma \big ( \frac n2 \big )} {\Gamma \left(\frac{n+2s}2 \right) \Gamma (1+s)}.
\end{align} 
\end{prop}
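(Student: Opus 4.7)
The plan is to perform a first-order asymptotic expansion of both $\partial G_\varepsilon$ and $u_\varepsilon$ as $\varepsilon \to 0^+$, and then to extract the Lipschitz semi-norm from the leading-order term.

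First, by the $O(n-1)$-invariance of $\Omega_\varepsilon$ in $(x_2,\ldots,x_n)$ and its symmetry under $x_1\mapsto -x_1$, the optimal pair of concentric balls in the definition of $\rho$ can be taken centered at the origin, with inscribed radius $1$ and circumscribed radius $1+\varepsilon$; hence $\rho(\Omega_\varepsilon)=\varepsilon$. Since the inradius $1$ of $\Omega_\varepsilon$ exceeds $1/2$, the inner parallel body $G_\varepsilon=\{x\in\R^n \text{ s.t. } B_{1/2}(x)\subset\Omega_\varepsilon\}$ is well-defined and smooth, and satisfies $G_\varepsilon+B_{1/2}=\Omega_\varepsilon$. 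Parametrizing $\partial G_\varepsilon$ over $\partial B_{1/2}$ by $q_\varepsilon(p)=p+\varepsilon\xi(p)+O(\varepsilon^2)$, a direct geometric computation---tracking the radial displacement $\varepsilon\omega_1^2$ of $\partial\Omega_\varepsilon$ from $\partial B_1$ at the point $\omega=2p$, together with the $O(\varepsilon)$ rotation of the outward normal---gives $\xi(p)=2p_1 e_1$.

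For the expansion of $u_\varepsilon$ itself, I would exploit that $B_1\subset\Omega_\varepsilon$, so $v_\varepsilon:=u_\varepsilon-u_0$ is $s$-harmonic in $B_1$ and equals $u_\varepsilon$ on $\R^n\setminus B_1$. The nonlocal Poisson representation gives
\begin{equation*}
v_\varepsilon(p) = c_{n,s}^{(P)}(1-|p|^2)^s \int_{\Omega_\varepsilon\setminus B_1}\frac{u_\varepsilon(y)}{(|y|^2-1)^s\,|p-y|^n}\dd y.
\end{equation*}
In the thin sliver $\Omega_\varepsilon\setminus B_1$---of radial thickness $\varepsilon\omega_1^2+O(\varepsilon^2)$ in direction $\omega\in\partial B_1$---the boundary expansion $u_\varepsilon(y)=\gamma_{n,s}2^s\dist(y,\partial\Omega_\varepsilon)^s(1+O(\varepsilon))$ combined with the change of variable $y=r\omega$, $r=1+\varepsilon t$, reduces the inner radial integral to $\omega_1^2 B(1-s,1+s)=\omega_1^2\Gamma(1-s)\Gamma(1+s)$. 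Together with the Taylor expansion $u_0(q_\varepsilon(p))=\gamma_{n,s}(3/4)^s-4s\gamma_{n,s}(3/4)^{s-1}p_1^2\varepsilon+O(\varepsilon^2)$ (obtained from $\nabla u_0(p)=-2s\gamma_{n,s}(3/4)^{s-1}p$ and $\xi(p)\cdot p = 2p_1^2$), this yields
\begin{equation*}
u_\varepsilon(q_\varepsilon(p)) = \gamma_{n,s}(3/4)^s + \varepsilon\phi(p) + O(\varepsilon^2)
\end{equation*}
uniformly in $p\in\partial B_{1/2}$, with $\phi$ explicit. The Funk--Hecke formula, applied to the spherical-harmonic decomposition $\omega_1^2=1/n+(\omega_1^2-1/n)$ (the second term being a degree-$2$ spherical harmonic), then shows that $\phi(p)$ is affine in $p_1^2$, say $\phi(p)=\alpha p_1^2+\text{const}$.

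Finally, an elementary calculation on great circles in the $(e_1,e_j)$-planes of $\partial B_{1/2}$---via the identity $\cos^2\theta_1-\cos^2\theta_2=-\sin(\theta_1+\theta_2)\sin(\theta_1-\theta_2)$---shows that $[p_1^2]_{\partial B_{1/2}}=1/2$, realized in the limit by pairs converging to the points with $p_1=\pm 1/(2\sqrt{2})$. Hence, by uniform $C^1$-convergence of $u_\varepsilon\circ q_\varepsilon$ on $\partial B_{1/2}$, one obtains $[u_\varepsilon]_{\partial G_\varepsilon}/\rho(\Omega_\varepsilon)\to |\alpha|/2$ as $\varepsilon\to 0^+$. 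The hard part of the proof will be the explicit identification $|\alpha|/2=s\gamma_{n,s}(3/4)^{s-1}$: this amounts to evaluating the Funk--Hecke eigenvalue of the Riesz kernel $|p-\omega|^{-n}$ against the degree-$2$ spherical harmonic $\omega_1^2-1/n$ (a specific Gauss hypergeometric integral), and combining it with the Poisson constant $c_{n,s}^{(P)}=\Gamma(n/2)\sin(\pi s)/\pi^{n/2+1}$, the reflection formula $\Gamma(1-s)\Gamma(1+s)=\pi s/\sin(\pi s)$, and the explicit expression for $\gamma_{n,s}$ in~\eqref{gammaenneesse}. While routine, this reduction requires delicate bookkeeping of several Gamma-function identities.
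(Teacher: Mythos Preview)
Your perturbative strategy is \emph{sound in principle} but takes a substantially harder route than the paper, and the core computation is left undone.

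The paper's proof is entirely algebraic: since \(\Omega_\varepsilon\) is an ellipsoid, the torsion function is known explicitly (see~\cite{MR4181195}),
\[
u_\varepsilon(x)=\gamma_{n,s,\varepsilon}\Big(1-\tfrac{x_1^2}{(1+\varepsilon)^2}-|x'|^2\Big)^s_+,\qquad \gamma_{n,s,\varepsilon}=\frac{\gamma_{n,s}}{(1+\varepsilon)\,{}_2F_1\!\big(\tfrac{n+2s}{2},\tfrac12;\tfrac n2;1-(1+\varepsilon)^2\big)}.
\]
With this in hand, one parametrizes \(\partial G_\varepsilon^+\) explicitly by \(\phi_\varepsilon:B_1^{n-1}\to\R^n\), plugs in, and Taylor-expands in \(\varepsilon\). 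The main technical point (\thref{M2fW7tQX}, proved via \thref{c3Ge7XSV,F3lTRgfZ}) is the uniform estimate
\[
\bigg|\frac{|(u_\varepsilon\circ\phi_\varepsilon)(r)-(u_\varepsilon\circ\phi_\varepsilon)(\tilde r)|}{\varepsilon\,|\phi_\varepsilon(r)-\phi_\varepsilon(\tilde r)|}-\tfrac12 s\gamma_{n,s}(3/4)^{s-1}\frac{\big||r|^2-|\tilde r|^2\big|}{|\phi_0(r)-\phi_0(\tilde r)|}\bigg|\leqslant C\varepsilon,
\]
which reduces the problem to showing \(\sup\frac{||r|^2-|\tilde r|^2|}{|\phi_0(r)-\phi_0(\tilde r)|}=2\) (your computation \([p_1^2]_{\partial B_{1/2}}=1/2\) is the same fact in different coordinates). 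No Poisson integrals, no Funk--Hecke, no hypergeometric evaluations---just derivatives of explicit functions.

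Your approach, by contrast, never uses the explicit solution. The Poisson representation of \(v_\varepsilon=u_\varepsilon-u_0\) combined with the sliver expansion is correct, and Funk--Hecke does give \(\phi(p)\) affine in \(p_1^2\); but the identification \(|\alpha|/2=s\gamma_{n,s}(3/4)^{s-1}\) requires computing the degree-2 Funk--Hecke eigenvalue of the Riesz kernel on \(\partial B_1\) against the Poisson constant and the Beta-integral---a genuine (if classical) special-function computation that you have not carried out. A second subtlety you gloss over: to pass from the pointwise expansion \(u_\varepsilon(q_\varepsilon(p))=\gamma_{n,s}(3/4)^s+\varepsilon\phi(p)+O(\varepsilon^2)\) to a statement about \([u_\varepsilon]_{\partial G_\varepsilon}\) you need the \(O(\varepsilon^2)\) to be uniform \emph{after division by \(|q_\varepsilon(p)-q_\varepsilon(\tilde p)|\)}, which is precisely the content of \thref{M2fW7tQX}; your ``uniform \(C^1\)-convergence'' claim needs justification (the boundary expansion \(u_\varepsilon\approx\gamma_{n,s}2^s\dist(\cdot,\partial\Omega_\varepsilon)^s\) in the sliver, uniformly in \(\varepsilon\), is not free). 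Your method has the virtue of being domain-independent in spirit---it would apply to perturbations where no explicit solution exists---but for this particular proposition the explicit ellipsoid formula is the decisive shortcut you are missing.
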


The existence of the 1-parameter family \(G_\varepsilon\) as in \thref{0LlyfQlx} is an easy corollary of the following geometric lemma.

\begin{lem} \thlabel{jUJs9Upe}
Let \(n\geqslant 2\) be an integer and suppose that \(\Omega\) is an open bounded subset of \(\R^n\) that satisfies the uniform interior ball condition with radius \(r_\Omega>0\). If \(\delta:\Omega \to \R\) is the distance to the boundary of \(\Omega\) and \(\Omega^{\rho} := \{ x\in \Omega \text{ s.t. } \delta(x)>\rho\}\) then for all \(\rho \in (0,r_\Omega)\), \begin{align*}
\Omega^\rho+B_\rho = \Omega.
\end{align*} Moreover, if the boundary of \(\Omega\) is \(C^k\) for some integer \( k \ge 2 \) then,
for all \(\rho \in (0,r_\Omega)\), the boundary of \( \Omega^{\rho}\) is also \(C^k\). 
\end{lem}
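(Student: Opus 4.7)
The plan is to deduce both assertions from the geometric structure of the distance function $\delta$ in the tubular neighborhood of $\partial\Omega$. The inclusion $\Omega^\rho+B_\rho\subset\Omega$ is immediate: every $a\in\Omega^\rho$ satisfies $B_\rho(a)\subset\Omega$ by definition of $\delta$. For the reverse inclusion, if $x\in\Omega$ satisfies $\delta(x)>\rho$ then already $x\in\Omega^\rho$ and the trivial decomposition $x=x+0$ suffices, so the work lies entirely in the case $\delta(x)\leq\rho$.

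Given such an $x$, I would fix a closest boundary point $y\in\partial\Omega$ with $|x-y|=\delta(x)$ (which exists by compactness of $\partial\Omega$) and invoke the uniform interior ball condition at $y$ to obtain some $B_{r_\Omega}(z)\subset\Omega$ with $y\in\partial B_{r_\Omega}(z)$. The crucial geometric step is the claim that $x=y+\delta(x)\nu$ where $\nu:=(z-y)/r_\Omega$; this follows because both open balls $B_{\delta(x)}(x)$ and $B_{r_\Omega}(z)$ lie in $\Omega$ and touch $\partial\Omega$ only at $y$, which forces their centers to lie on a common inward ray from $y$. Once this alignment is in hand, I pick any $\rho'\in(\rho,\min\{r_\Omega,\rho+\delta(x)\})$---a nonempty interval since $\rho<r_\Omega$ and $\delta(x)>0$---and define $a:=y+\rho'\nu$ and $b:=x-a=(\delta(x)-\rho')\nu$. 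Since $a\in B_{r_\Omega}(z)$ with $|a-z|=r_\Omega-\rho'$, the lower bound $\delta(a)\geq r_\Omega-|a-z|=\rho'>\rho$ gives $a\in\Omega^\rho$, while $|b|=\rho'-\delta(x)<\rho$ gives $b\in B_\rho$.

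For the second assertion, I would appeal to the classical regularity theory of the distance function: when $\partial\Omega$ is of class $C^k$ with $k\geq 2$ and $\Omega$ satisfies the uniform interior ball condition of radius $r_\Omega$, the function $\delta$ itself is of class $C^k$ on the one-sided tubular neighborhood $\{0<\delta<r_\Omega\}$ with $|\nabla\delta|\equiv 1$ (see, e.g., the appendix to Section~14.6 of Gilbarg--Trudinger, or Krantz--Parks). Every $\rho\in(0,r_\Omega)$ is therefore a regular value of $\delta$, and the implicit function theorem realizes $\partial\Omega^\rho=\delta^{-1}(\rho)$ as an embedded $C^k$ hypersurface of the same regularity as $\partial\Omega$.

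The main obstacle I foresee is making the tangency argument in the second paragraph fully rigorous, since the uniform interior ball condition alone does not guarantee $C^1$ regularity of $\partial\Omega$ at $y$, and ``corner-like'' boundary points can admit multiple interior balls tangent at $y$ in different directions. The resolution is to single out the particular interior ball whose direction from $y$ coincides with $(x-y)/\delta(x)$; this can be achieved by continuously enlarging $B_{\delta(x)}(x)$ along the ray $\{y+tv:t\geq 0\}$ with $v:=(x-y)/\delta(x)$, defining $t^*:=\sup\{t\in[\delta(x),r_\Omega]:B_t(y+tv)\subset\Omega\}$, and ruling out $t^*<r_\Omega$ by comparing a putative tangency point $y^\ast\neq y$ of $\overline{B_{t^*}(y+t^*v)}$ with $\partial\Omega$ against the interior ball supplied at $y^\ast$.
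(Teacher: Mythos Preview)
Your proof is essentially identical to the paper's: both handle the nontrivial inclusion $\Omega\subset\Omega^\rho+B_\rho$ by taking a nearest boundary point, moving along the ray through it into a radius-$r_\Omega$ interior ball to locate a point of $\Omega^\rho$ (your $a=y+\rho'\nu$ is exactly the paper's $x=z-(\rho-R+\mu)\nu(p)$ after matching notation and setting $\rho'=\rho+\mu$), and both invoke Gilbarg--Trudinger for the $C^k$ regularity of $\delta$. You are in fact slightly more careful than the paper in flagging the collinearity of the two ball centers with the boundary point as needing justification---the paper simply asserts it---and your sliding resolution is the right idea.
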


\begin{proof}
We will begin by showing that \(\Omega^{\rho}+B_\rho \subset \Omega\). If \(z\in \Omega^{\rho}+B_\rho \) then \(z=x+y \) for some \(x\in \Omega^{\rho}\) and \(y \in B_\rho\). Let \(R>0\) be the largest value possible such that \(B_R(x)\subset \Omega\). Then \(R=\delta(x)\), so \(R >\rho\). Hence, \begin{align*}
\vert z - x\vert = \vert y \vert <\rho < R,
\end{align*} so \(z\in B_R(x) \subset \Omega\). 

Now, let us show that \(\Omega \subset \Omega^{\rho}+B_\rho\). Let \(z\in \Omega\). If \(z \in \Omega^{\rho}\) then \(z=z+0\in \Omega^{\rho}+B_\rho\) and we are done, so we may assume that \(z\in \Omega \setminus \Omega^{\rho}\). Now, let \(R>0\) be the largest value possible such that~\(B_R(z)\subset \Omega\) and let \(p\in \partial \Omega\) be a point at which \(B_R(z)\) touches \(\partial \Omega\). Define the unit vector \begin{align*}
\nu (p) := \frac{p-z}{\vert p - z \vert}. 
\end{align*} Next, since \(B_R(z)\) is an interior touching ball, there exists some~\(\bar z \in \Omega \) such that~\(B_{r_\Omega}(\bar z)\) also touches~\(\partial \Omega\) at~\(p\) and \(p\), \(z\), and \(\bar z\) are collinear. Now, let \(\mu \in (0, \min \{ r_\Omega - \rho , R\})\) and define \begin{align*}
x := z - (\rho - R+\mu ) \nu (p) .
\end{align*} Since \( z = \bar z +(r_\Omega - R) \nu (p)\), it follows that \( \vert x-\bar z\vert = \vert r_\Omega-\rho-\mu \vert < r_\Omega\), so \(x\in B_{r_\Omega}(\bar z)\). Hence, \(x \in \Omega\). Moreover, \(\vert x -p\vert = \rho +\mu > \rho\), so \(x\in \Omega^{\rho}\) and \(\vert z-x\vert = \rho -R +\mu <\rho\), so \(z-x\in B_\rho\). Thus, we have that \(z =x +(z-x)\in \Omega^{\rho} +B_\rho\) as required. This completes the first part of the proof.

The fact that the boundary of \( \Omega^{\rho}\) is \(C^k\) for any \(\rho \in (0,r_\Omega)\) is a simple consequence of \cite[Lemma 14.16]{MR1814364}. Indeed, following the proof of \cite[Lemma 14.16]{MR1814364}, we see that \(\delta\in C^k(\Gamma^{r_\Omega})\) where 
$$ \Gamma^{r_\Omega} := \{ x\in \Omega \text{ s.t. } \delta(x) < r_\Omega\}.$$ Then the boundary of~\(\Omega^{\rho}\) is simply the level set~\(\Omega \cap \{\delta=\rho\}\) which is contained in~\(\Gamma^{r_\Omega}\), so~\(\partial \Omega^{\rho}\) is~\(C^k\). 
\end{proof}

From \thref{jUJs9Upe}, we immediately obtain the proof of the first part of \thref{0LlyfQlx}: 

\begin{cor} \thlabel{FOHX1qV4}
Suppose that \(n\geqslant 2\) is an integer, \(s\in (0,1)\), \(\varepsilon>0\),
and~$\Omega_\varepsilon$ be as in~\eqref{sjiwo555ruwytui333tyeiurtfr}. 

Then, for~\(0<\varepsilon<1/4\), there exists a 1-parameter family \(G_\varepsilon \subset \Omega_\varepsilon\) with smooth boundary such that \begin{align*}
G_\varepsilon + B_{1/2} =  \Omega_\varepsilon.
\end{align*}
\end{cor}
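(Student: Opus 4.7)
The plan is to derive Corollary~\ref{FOHX1qV4} as an immediate application of Lemma~\ref{jUJs9Upe} with the parameter $\rho=1/2$. More precisely, I would define
\begin{align*}
G_\varepsilon := (\Omega_\varepsilon)^{1/2}=\big\{ x\in\Omega_\varepsilon \text{ s.t. } \delta_{\partial\Omega_\varepsilon}(x)>\tfrac12\big\},
\end{align*}
which is manifestly a one-parameter family of subsets of $\Omega_\varepsilon$. Once it is checked that $\Omega_\varepsilon$ satisfies the uniform interior ball condition with some radius $r_{\Omega_\varepsilon}>1/2$, Lemma~\ref{jUJs9Upe} delivers simultaneously both the Minkowski identity $G_\varepsilon+B_{1/2}=\Omega_\varepsilon$ and the smoothness of $\partial G_\varepsilon$ (since $\partial\Omega_\varepsilon$ is clearly smooth as the zero level set of a real-analytic strictly convex function on its critical set).

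The only genuine step is therefore the estimate $r_{\Omega_\varepsilon}>1/2$ for $\varepsilon\in(0,1/4)$, and this is where the proof has to be done. I would use the fact that $\Omega_\varepsilon$ is a smooth uniformly convex body, so the best radius in the uniform interior ball condition coincides with the minimum over $\partial\Omega_\varepsilon$ of the smallest principal radius of curvature. Because $\Omega_\varepsilon$ is an axially symmetric ellipsoid with semiaxis $a=1+\varepsilon$ along $e_1$ and semiaxis $b=1$ along the orthogonal directions, the two kinds of principal curvatures at a boundary point can be read off from the generating ellipse $x_1^2/(1+\varepsilon)^2+t^2=1$ and the rotational symmetry. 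A direct calculation shows that the smallest principal radius of curvature on $\partial\Omega_\varepsilon$ is attained at the two poles $(\pm(1+\varepsilon),0)$ and equals $b^2/a=1/(1+\varepsilon)$. For $\varepsilon\in(0,1/4)$ this gives $r_{\Omega_\varepsilon}\geq 1/(1+\varepsilon)>4/5>1/2$, as needed.

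The hard part, in the sense of requiring care rather than cleverness, is to confirm that for a smooth strictly convex body this minimum-curvature-radius heuristic actually coincides with the largest admissible radius in the uniform interior ball condition. This is classical, but to be safe I would argue it by hand: at a boundary point $p$ of $\Omega_\varepsilon$, the osculating ball of radius $r$ centred along the inner normal is contained in $\Omega_\varepsilon$ provided $r$ is strictly smaller than the smallest principal radius of curvature at $p$ (by second-order Taylor expansion plus global convexity to rule out non-local re-entry). Thus the infimum of principal radii, $1/(1+\varepsilon)$, can be chosen as $r_{\Omega_\varepsilon}$ after a harmless $\varepsilon$-independent shrinkage, and $r_{\Omega_\varepsilon}>1/2$ remains valid throughout $\varepsilon\in(0,1/4)$.

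Putting these ingredients together, an application of Lemma~\ref{jUJs9Upe} with $\Omega=\Omega_\varepsilon$ and $\rho=1/2$ yields both properties claimed in Corollary~\ref{FOHX1qV4}, completing the proof.
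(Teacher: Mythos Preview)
Your proposal is correct and follows essentially the same approach as the paper: define $G_\varepsilon$ as the distance-$1/2$ sublevel set $(\Omega_\varepsilon)^{1/2}$, verify that the ellipsoid satisfies the uniform interior ball condition with radius $r_{\Omega_\varepsilon}=(1+\varepsilon)^{-1}>1/2$ for $\varepsilon<1/4$, and then invoke Lemma~\ref{jUJs9Upe}. The paper simply asserts the value $r_{\Omega_\varepsilon}=(1+\varepsilon)^{-1}$ as ``easy to check'', whereas you spell out the principal-curvature computation; your extra caution about a possible shrinkage is unnecessary (for a smooth strictly convex body the minimum principal radius of curvature \emph{is} the optimal interior ball radius), but it does no harm.
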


\begin{proof}
It is easy to check that \(\Omega_\varepsilon\) satisfies the uniform interior ball condition with uniform radius given by \(r_{\Omega_\varepsilon}=(1+\varepsilon)^{-1}\). Hence, since \(\varepsilon<1/4\), we have that~\(r_{\Omega_\varepsilon}>1/2\), so if~\(G_\varepsilon := \Omega_\varepsilon^\rho\) with~\(\rho = 1/2\) (i.e. \(G_\varepsilon := \{ x\in \Omega_\varepsilon \text{ s.t. } \dist (x, \partial \Omega_\varepsilon)>1/2\}\)) then \thref{jUJs9Upe} implies that~\(G_\varepsilon\) satisfies all the required properties. 
\end{proof}

The remainder of this section will be spent proving the second part of \thref{0LlyfQlx}. In theory, this is relatively simple: since \(\Omega_\varepsilon\) is an ellipsoid, the solution to the torsion problem~\eqref{kaPuQECN} is known explicitly, see \cite{MR4181195}, and is given by \begin{align*}
u_\varepsilon(x):= \gamma_{n,s,\varepsilon}  \bigg (1 - \frac{x_1^2}{(1+\varepsilon)^2} - \vert x ' \vert^2 \bigg )^s_+ 
\end{align*} where \begin{align*}
\gamma_{n,s,\varepsilon} := \frac{\gamma_{n,s}}{(1+\varepsilon)  {}_2F_1 \big ( \frac{n+2s}2 , \frac 12 ; \frac n2  ; 1-(1+\varepsilon)^2 \big)}
\end{align*} and~$\gamma_{n,s}$ is given by~\eqref{gammaenneesse}.

Here above \({}_2F_1 \) is the Hypergeometric function, see~\cite{MR1225604} for more details. Note that
\({}_2F_1 (a,b;c;0)=1\), so~\(\gamma_{n,s,0}=\gamma_{n,s}\), and~\(\gamma_{n,s}\) is precisely the constant for which~\(u_0 (x) := \gamma_{n,s}(1-\vert x \vert^2)^s_+\) satisfies~\((-\Delta )^su_0=1\) in \(B_1\).

Now, it is easy to check that \(\rho(\Omega_\varepsilon)=\varepsilon\), so to prove the second part of \thref{0LlyfQlx}, one simply has to find the first order expansion of \([u_{\varepsilon}]_{\partial G_\varepsilon}\) in \(\varepsilon\). However, in practice, this is not trivial for a few reasons: one, parametrizations of \(G_\varepsilon\) are algebraically complicated; two, \([u_{\varepsilon}]_{\partial G_\varepsilon}\) is defined in terms of a supremum, which, in general, does not commute well with limits; and three, the supremum is over a quotient whose numerator and denominator both depend on \(\varepsilon\) and whose denominator is not bounded away from zero.

We will now state a technical result that will allow us to postpone addressing these difficulties and to proceed directly to the proof of \thref{0LlyfQlx}. 

\begin{lem} \thlabel{M2fW7tQX} Let \(\varepsilon \in[0,1)\) and define \(a_\varepsilon,b_\varepsilon :[0,+\infty) \to \R\) and \(\phi_\varepsilon : B_1^{n-1} \to \R^n\) as follows \begin{equation}\label{defperf333}\begin{split}
a_\varepsilon (\tau):=\;& 1+\varepsilon - \frac 1 {2 \sqrt{1+ \big ( (1+\varepsilon)^2-1\big ) \tau^2 }} ,\\
b_\varepsilon (\tau):=\;&  1- \frac {1+\varepsilon} {2 \sqrt{1+ \big ( (1+\varepsilon)^2-1\big ) \tau^2 }} ,\\
{\mbox{and}} \qquad \phi_\varepsilon (r):=\;& \Big(a_\varepsilon(\vert r \vert) \sqrt{1-\vert r\vert^2} , b_\varepsilon(\vert r \vert ) r \Big). 
\end{split}\end{equation} 

Then, \begin{align*}
\bigg \vert &\frac{\vert (u_\varepsilon \circ \phi_\varepsilon)(r)-(u_\varepsilon\circ \phi_\varepsilon)(\tilde r ) \vert }{\varepsilon \vert \phi_\varepsilon(r) - \phi_\varepsilon(\tilde r) \vert } - \frac12 s \gamma_{n,s}\left(\frac34\right)^{s-1} \frac{\big \vert  \vert r \vert^2 - \vert \tilde r \vert^2 \big \vert }{\vert \phi_0(r) - \phi_0(\tilde r)\vert}\bigg \vert \leqslant C \varepsilon .
\end{align*} The constant \(C>0\) depends only on \(n\) and \(s\).  
\end{lem}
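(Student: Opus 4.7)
The plan is to reduce the lemma to a first-order Taylor expansion in $\varepsilon$ of the composition $F_\varepsilon(r):=u_\varepsilon(\phi_\varepsilon(r))$, and then to handle the resulting ratio by a careful comparison between $|\phi_\varepsilon|$ and $|\phi_0|$ in the denominator. To begin, set $S:=S(r,\varepsilon):=\sqrt{1+((1+\varepsilon)^2-1)\vert r\vert^2}$; note that $a_\varepsilon(\vert r\vert)^2$ and $b_\varepsilon(\vert r\vert)^2$ are smooth functions of $(\vert r\vert^2,\varepsilon)$, hence smooth in $(r,\varepsilon)$. Substituting the explicit formula for $u_\varepsilon$ yields $F_\varepsilon(r)=\gamma_{n,s,\varepsilon}\,A(r,\varepsilon)^s$ where, after elementary algebra using $(1-\vert r\vert^2)+(1+\varepsilon)^2\vert r\vert^2=S^2$,
\begin{equation*}
A(r,\varepsilon)=\frac{S}{1+\varepsilon}-\frac{1}{4S^2}\left[\frac{1-\vert r\vert^2}{(1+\varepsilon)^2}+(1+\varepsilon)^2\vert r\vert^2\right].
\end{equation*}
In particular $A(r,0)=3/4$, so by continuity $A\geqslant 1/2$ on $\overline{B_1^{n-1}}\times[0,\varepsilon_0]$ for $\varepsilon_0>0$ sufficiently small, and $F_\varepsilon$ is jointly smooth in $(r,\varepsilon)$ on this compact set.

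Since $F_0(r)=\gamma_{n,s}(3/4)^s$ is constant, the key quantity is $\partial_\varepsilon F_\varepsilon(r)|_{\varepsilon=0}$. Using $\partial_\varepsilon S|_{\varepsilon=0}=\vert r\vert^2$, a short direct calculation gives $\partial_\varepsilon A(r,\varepsilon)|_{\varepsilon=0}=\tfrac{1}{2}(\vert r\vert^2-1)$, whence
\begin{equation*}
\partial_\varepsilon F_\varepsilon(r)|_{\varepsilon=0}=c_0+\tfrac{1}{2}s\gamma_{n,s}(3/4)^{s-1}(\vert r\vert^2-1)
\end{equation*}
for some constant $c_0$ independent of $r$. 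Applying Taylor's theorem with integral remainder to the smooth map $(r,\varepsilon)\mapsto F_\varepsilon(r)$ and taking differences (which annihilates $c_0$) yields
\begin{equation*}
F_\varepsilon(r)-F_\varepsilon(\tilde r)=\tfrac{1}{2}\varepsilon\, s\gamma_{n,s}(3/4)^{s-1}(\vert r\vert^2-\vert\tilde r\vert^2)+\varepsilon^2 R(r,\tilde r,\varepsilon),
\end{equation*}
with $\vert R(r,\tilde r,\varepsilon)\vert\leqslant C\vert r-\tilde r\vert$ uniformly, the Lipschitz bound on $R$ coming from the uniform smoothness of $\partial_\varepsilon^2 F_\varepsilon$ in $r$.

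To control the denominators, I would first verify the uniform bi-Lipschitz bound $|\phi_\varepsilon(r)-\phi_\varepsilon(\tilde r)|\geqslant c\vert r-\tilde r\vert$ by inspecting only the second component $r\mapsto b_\varepsilon(\vert r\vert)r$, whose Jacobian is an $O(\varepsilon)$-perturbation of $I/2$ and hence uniformly invertible for $\varepsilon$ small. Dividing the previous expansion by $\varepsilon|\phi_\varepsilon(r)-\phi_\varepsilon(\tilde r)|$ and applying the reverse triangle inequality then yields the required estimate, but with $|\phi_\varepsilon|$ in place of $|\phi_0|$ in the second term of the lemma; to convert, I would establish
\begin{equation*}
\bigl|\,|\phi_\varepsilon(r)-\phi_\varepsilon(\tilde r)|-|\phi_0(r)-\phi_0(\tilde r)|\,\bigr|\leqslant C\varepsilon\,|\phi_0(r)-\phi_0(\tilde r)|,
\end{equation*}
and conclude using the identity $\bigl|\tfrac{1}{x}-\tfrac{1}{y}\bigr|=\tfrac{|x-y|}{xy}$ together with $\vert\vert r\vert^2-\vert\tilde r\vert^2\vert\leqslant 2\vert r-\tilde r\vert\leqslant C|\phi_0(r)-\phi_0(\tilde r)|$.

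The main obstacle is establishing the last displayed bound, since the factor $\sqrt{1-\vert r\vert^2}$ in the first component of $\phi_\varepsilon$ prevents $\phi_\varepsilon-\phi_0$ from being globally Lipschitz on $\overline{B_1^{n-1}}$ with constant $O(\varepsilon)$: its gradient blows up like $\varepsilon/\sqrt{1-\vert r\vert^2}$. The resolution is to split the first-component difference into the two pieces $[(a_\varepsilon-a_0)(\vert r\vert)-(a_\varepsilon-a_0)(\vert\tilde r\vert)]\sqrt{1-\vert r\vert^2}$ and $(a_\varepsilon-a_0)(\vert\tilde r\vert)\bigl[\sqrt{1-\vert r\vert^2}-\sqrt{1-\vert\tilde r\vert^2}\bigr]$, and to absorb the potentially singular second piece using the identity
\begin{equation*}
|\phi_0(r)-\phi_0(\tilde r)|^2=\tfrac{1}{4}\bigl(\sqrt{1-\vert r\vert^2}-\sqrt{1-\vert\tilde r\vert^2}\bigr)^2+\tfrac{1}{4}\vert r-\tilde r\vert^2,
\end{equation*}
so that the Hölder-type singularity $|\sqrt{1-\vert r\vert^2}-\sqrt{1-\vert\tilde r\vert^2}|$ is exactly controlled by $2|\phi_0(r)-\phi_0(\tilde r)|$.
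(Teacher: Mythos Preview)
Your proposal is correct and follows a route that is parallel to, but organized differently from, the paper's. The paper splits the lemma into two sublemmata: one (analogous to your step~6) showing $\bigl||\phi_0(r)-\phi_0(\tilde r)|/|\phi_\varepsilon(r)-\phi_\varepsilon(\tilde r)|-1\bigr|\leqslant C\varepsilon$, and one (analogous to your steps~1--5) showing the required bound but with $|\phi_0|$ already in the denominator. For the denominator comparison, the paper writes $\phi_0(r)=A_\varepsilon(|r|)\phi_\varepsilon(r)$ for a diagonal matrix $A_\varepsilon$ and, for the crucial term $|a_\varepsilon(|r|)-a_\varepsilon(|\tilde r|)|$, applies Cauchy's mean value theorem to bound it by $C\varepsilon\,\bigl|a_\varepsilon(|r|)\sqrt{1-|r|^2}-a_\varepsilon(|\tilde r|)\sqrt{1-|\tilde r|^2}\bigr|$; your add-and-subtract splitting achieves the same control more directly and is equally valid (you should also record the analogous, easier, splitting for the vector component $(b_\varepsilon-b_0)(|r|)r$). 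For the numerator, the paper defines $\psi_\varepsilon(\tau):=\varepsilon^{-1}\bigl[(v_\varepsilon\circ\phi_\varepsilon)(r)-(3/4)^s\bigr]+\tfrac12 s(3/4)^{s-1}(1-\tau^2)$ with $\tau=|r|$ and devotes an appendix to the explicit verification that $\|\psi_\varepsilon'\|_{L^\infty((0,1))}\leqslant C\varepsilon$. Your Taylor-in-$\varepsilon$ argument is a cleaner replacement: the key point, which you correctly identify, is that the quantity $A(r,\varepsilon)$ under the $s$-th power depends smoothly on $(|r|^2,\varepsilon)\in[0,1]\times[0,\varepsilon_0]$ and stays bounded away from zero there, so joint smoothness of $F_\varepsilon(r)=H(|r|^2,\varepsilon)$ gives the Lipschitz bound on the second-order remainder without any explicit differentiation. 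Both arguments only cover $\varepsilon$ small; for $\varepsilon$ bounded away from~$0$ the estimate follows from the uniform bi-Lipschitz property of $\phi_\varepsilon$ and uniform smoothness of $u_\varepsilon$ on its range.
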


In the statement of~\thref{M2fW7tQX} and in what follows, we use the notation~$B^{n-1}_1$ for the unit ball in~$\R^{n-1}$.

We will withhold the proof of \thref{M2fW7tQX} until the end of the section. Given \thref{M2fW7tQX}, we can now complete the proof of \thref{0LlyfQlx}.

\begin{proof}[Proof \thref{0LlyfQlx}] The proof of the first statement regarding the existence of \(G_\varepsilon\) is the subject of \thref{FOHX1qV4}, so we will focus on proving~\eqref{83Wl82Op}. Since the ellipsoid is symmetric with respect to reflections across the plane \(\{x_1=0\}\), it follows that \(G_\varepsilon\) is too. Hence, we have that \begin{align*}
[u_\varepsilon]_{\partial G_\varepsilon} &= \sup_{\substack{x,y\in \partial G_\varepsilon^+ \\ x\neq y } } \bigg \{ \frac{\vert u_\varepsilon(x)-u_\varepsilon(y) \vert }{\vert x - y \vert } \bigg \}.
\end{align*} Here, we are using the notation that, given \(A\subset \R^n\), \(A^+ := A \cap \{x_1>0\}\).

Next, the upper half ellipsoid \(\Omega_\varepsilon^+\) can be parametrized by \(\bar \phi_\varepsilon : B_1^{n-1} \to \Omega_\varepsilon^+\) defined
by~\(\bar \phi_\varepsilon (r) := ((1+\varepsilon)\sqrt{1-\vert r \vert^2},r)\). Moreover, the outward pointing normal to~\(B_1\) is given by
\begin{align*}
\nu_\varepsilon(x) &:= \frac1{\sqrt{\frac{x_1^2}{(1+\varepsilon)^4} + \vert x' \vert^2}} \bigg ( \frac{x_1}{(1+\varepsilon)^2} , x' \bigg ),
\end{align*} so a parametrization of \(\partial G_\varepsilon^+\) is \(\phi_\varepsilon :B_1^{n-1} \to \R^n\) defined by~\(\phi_\varepsilon := \bar \phi_\varepsilon-\frac12 (\nu_\varepsilon \circ \bar \phi_\varepsilon) \), that is, \begin{align*}
\phi_\varepsilon(r) &= \big ( (1+\varepsilon)\sqrt{1-\vert r \vert^2}, r \big ) - \frac{1+\varepsilon}{2\sqrt{1+((1+\varepsilon)^2-1)\vert r \vert^2}} \bigg ( \frac{\sqrt{1-\vert r \vert^2}}{1+\varepsilon} ,r  \bigg )\\ &= \big(a_\varepsilon(\vert r\vert ) \sqrt{1-\vert r \vert^2} , b_\varepsilon(\vert r \vert) r\big) 
\end{align*} in the notation of \thref{M2fW7tQX}.

Hence, \begin{align*}
[u_\varepsilon]_{\partial G_\varepsilon} &= \sup_{\substack{r,\tilde r\in B_1^{n-1} \\ r\neq \tilde r } } \left\{  \frac{\vert (u_\varepsilon \circ \phi_\varepsilon)(r)-(u_\varepsilon\circ \phi_\varepsilon)(\tilde r ) \vert }{\vert \phi_\varepsilon(r) - \phi_\varepsilon(\tilde r) \vert } \right\}.
\end{align*} Next, using \thref{M2fW7tQX} and the second triangle inequality in the \(\sup\) norm (over \(r,\tilde r\in B_1^{n-1}\)), we obtain that  \begin{align*}
\bigg \vert \frac{[u_\varepsilon]_{\partial G_\varepsilon}}{\varepsilon} -&\frac12 s \gamma_{n,s}\left(\frac34\right)^{s-1}  \sup_{\substack{r,\tilde r\in B_1^{n-1} \\ r\neq \tilde r } } \bigg \{ \frac{\big \vert  \vert r \vert^2 - \vert \tilde r \vert^2 \big \vert }{\vert \phi_0(r) - \phi_0(\tilde r)\vert} \bigg \}\bigg \vert \\
&\leqslant \sup_{\substack{r,\tilde r\in B_1^{n-1} \\ r\neq \tilde r } } \bigg \vert \frac{\vert (u_\varepsilon \circ \phi_\varepsilon)(r)-(u_\varepsilon\circ \phi_\varepsilon)(\tilde r ) \vert }{\varepsilon \vert \phi_\varepsilon(r) - \phi_\varepsilon(\tilde r) \vert } - \frac12 s \gamma_{n,s}\left(\frac34\right)^{s-1}  \frac{\big \vert  \vert r \vert^2 - \vert \tilde r \vert^2 \big \vert }{\vert \phi_0(r) - \phi_0(\tilde r)\vert}\bigg \vert  \\
&\leqslant C\varepsilon,
\end{align*} so, by sending \(\varepsilon \to 0^+\), we find that \begin{align}
 \lim_{\varepsilon \to 0^+} \frac{[u_\varepsilon]_{\partial G_\varepsilon}}{\varepsilon} &= \frac12 s \gamma_{n,s}\left(\frac34\right)^{s-1} \sup_{\substack{r,\tilde r\in B_1^{n-1} \\ r\neq \tilde r } } \bigg \{ \frac{\big \vert  \vert r \vert^2 - \vert \tilde r \vert^2 \big \vert }{\vert \phi_0(r) - \phi_0(\tilde r)\vert} \bigg \} .\label{vTr8Kp6E}
\end{align}

Finally, we claim that \begin{align}
\sup_{\substack{r,\tilde r\in B_1^{n-1} \\ r\neq \tilde r } } \bigg \{ \frac{\big \vert  \vert r \vert^2 - \vert \tilde r \vert^2 \big \vert }{\vert \phi_0(r) - \phi_0(\tilde r)\vert} \bigg \} = 2. \label{GPAI62Da}
\end{align} Indeed, if \begin{align*}
Q:=\frac{\big (  \vert r \vert^2 - \vert \tilde r \vert^2 \big )^2 }{\vert \phi_0(r) - \phi_0(\tilde r)\vert^2}
\end{align*} then \begin{align*}
Q=   \frac{4\big (  \vert r \vert^2 - \vert \tilde r \vert^2 \big )^2}{\big (\sqrt{1-\vert r \vert^2}-\sqrt{1-\vert \tilde r \vert^2} \big )^2+\vert r - \tilde r \vert^2} \leqslant  \frac{4\big (  \vert r \vert^2 - \vert \tilde r \vert^2 \big )^2 }{(\sqrt{1-\vert r \vert^2}-\sqrt{1-\vert \tilde r \vert^2})^2+\big ( \vert r\vert  - \vert \tilde r \vert \big )^2}.
\end{align*} Next, let \(t,\tilde t \in (0,\pi/2)\) be such that \(\vert r\vert = \cos t\) and \(\vert \tilde r\vert = \cos \tilde t\). Then \begin{align*}
Q &\leqslant  \frac{ 4(  \cos^2t - \cos^2 \tilde t  )^2 }{(\sin t -\sin \tilde t)^2+ ( \cos t   - \cos \tilde t )^2}.
\end{align*} Then, via elementary trigonometric identities, \begin{eqnarray*}&&
(  \cos^2t - \cos^2 \tilde t  )^2= 4 \sin^2  ( t+\tilde t  ) \sin^2 \bigg ( \frac{t-\tilde t}2  \bigg )  \cos^2 \bigg ( \frac{t-\tilde t}2  \bigg )  \\ \text{ and} &&
(\sin t -\sin \tilde t)^2+ ( \cos t   - \cos \tilde t )^2 = 4 \sin^2 \bigg ( \frac{t-\tilde t}2 \bigg ),
\end{eqnarray*} so \begin{align*}
Q \leqslant 4 \sin^2  ( t+\tilde t  )  \cos^2 \bigg ( \frac{t-\tilde t}2  \bigg ) \leqslant 4,
\end{align*} which shows that the left-hand side of~\eqref{GPAI62Da} is less than or equal to the right-hand side of~\eqref{GPAI62Da}. To show that the reverse inequality is also true, consider \(r:=e_1/\sqrt{2}\) and \(\tilde r := ( 1/\sqrt 2 +h)e_1\). Then \begin{align*}
\frac{\big \vert  \vert r \vert^2 - \vert \tilde r \vert^2 \big \vert }{\vert \phi_0(r) - \phi_0(\tilde r)\vert} &= \frac{2\vert h \vert \left(h+\sqrt{2}\right)}{\sqrt{1-h\sqrt 2-\sqrt{1-2 \sqrt{2} h-2 h^2}}} \to 2
\end{align*} as \(h \to 0\) which completes the proof of~\eqref{GPAI62Da}. Thus, recalling that \(\rho(\Omega_\varepsilon) = \varepsilon\),~\eqref{83Wl82Op} follows from~\eqref{vTr8Kp6E} and~\eqref{GPAI62Da}. 
\end{proof}

We will now give the proof of \thref{M2fW7tQX}. We will do this over two lemmata.

\begin{lem} \thlabel{c3Ge7XSV}
Let \(\varepsilon \in[ 0,1)\) and  \(a_\varepsilon\), \(b_\varepsilon\), and \(\phi_\varepsilon\) be as in \thref{M2fW7tQX}. 

Then, for all~\(r,\tilde r\in B_1^{n-1}\) with~\(r\neq \tilde r \), \begin{align}
\bigg \vert \frac{\vert\phi_0(r)-\phi_0(\tilde r)\vert }{\vert \phi_\varepsilon(r)-\phi_\varepsilon(\tilde r)\vert} -1 \bigg \vert \leqslant C \varepsilon .\label{Wy0X8YzO}
\end{align} The constant \(C>0\) depends only on \(n\). 
\end{lem}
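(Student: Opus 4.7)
My plan is to prove Lemma \ref{c3Ge7XSV} by regularizing the parametrization $\phi_\varepsilon$ to remove the apparent singularity of its Jacobian as $|r| \to 1^-$. Indeed, a direct estimate based on $\|\phi_\varepsilon - \phi_0\|_{C^1(B_1^{n-1})}$ fails because the first component of $\phi_\varepsilon$ contains a factor $\sqrt{1-|r|^2}$, whose derivative blows up near the boundary. The cure is to introduce spherical coordinates $r = \sin\theta \cdot \omega$ with $\theta \in [0,\pi/2]$ and $\omega \in S^{n-2}$, and set
\begin{equation*}
\Psi_\varepsilon(\theta,\omega) := \phi_\varepsilon(\sin\theta\,\omega) = \bigl(a_\varepsilon(\sin\theta)\cos\theta,\; b_\varepsilon(\sin\theta)\sin\theta\cdot\omega\bigr).
\end{equation*}
This map extends smoothly to $\theta = \pi/2$ (the equator) and to $\theta = 0$ (the north pole, where it becomes $\omega$-independent). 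In particular, $\Psi_0(\theta,\omega) = \tfrac12(\cos\theta,\sin\theta\,\omega)$ is the standard spherical parametrization of the closed upper hemisphere $S_0 := \overline{\phi_0(B_1^{n-1})}$ of $\partial B_{1/2}$, whose induced Riemannian metric is $\tfrac14(d\theta^2 + \sin^2\theta\,d\omega^2)$.

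The first step is to show that the intrinsic gradient of $\Psi_\varepsilon - \Psi_0$ on $S_0$ is of order $\varepsilon$, uniformly in $(\theta,\omega)$. Unwinding \eqref{defperf333}, one checks by direct calculation that $a_\varepsilon(\tau)-\tfrac12$, $b_\varepsilon(\tau)-\tfrac12$, $a'_\varepsilon(\tau)$ and $b'_\varepsilon(\tau)$ are all $O(\varepsilon)$ uniformly for $\tau \in [0,1]$, so $\partial_\theta(\Psi_\varepsilon - \Psi_0) = O(\varepsilon)$. For the angular direction, the intrinsic derivative $\frac{1}{\sin\theta}\partial_\omega(\Psi_\varepsilon - \Psi_0)$ is bounded because $\omega$ enters $\Psi_\varepsilon$ only through the combination $\sin\theta\cdot\omega$: the $\sin\theta$ cancels, leaving a factor of $b_\varepsilon(\sin\theta)-\tfrac12 = O(\varepsilon)$. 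This is the key point that the reparametrization buys us.

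The second step is to integrate. For any $(\theta,\omega),(\tilde\theta,\tilde\omega)\in[0,\pi/2]\times S^{n-2}$, integrating $|\nabla^{S_0}(\Psi_\varepsilon-\Psi_0)|$ along a minimizing geodesic on $S_0$ yields
\begin{equation*}
\bigl|(\Psi_\varepsilon - \Psi_0)(\theta,\omega) - (\Psi_\varepsilon - \Psi_0)(\tilde\theta,\tilde\omega)\bigr| \le C\varepsilon\, d_{S_0}\bigl(\Psi_0(\theta,\omega),\Psi_0(\tilde\theta,\tilde\omega)\bigr).
\end{equation*}
Since the closed upper hemisphere is convex (every pair of points is joined by a great-circle arc staying in $S_0$), the intrinsic distance $d_{S_0}$ is comparable to the chord distance: $|y-\tilde y|\le d_{S_0}(y,\tilde y)\le \tfrac{\pi}{2}|y-\tilde y|$. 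Translating back to the $r$-variable, this gives
\begin{equation*}
\bigl|(\phi_\varepsilon-\phi_0)(r)-(\phi_\varepsilon-\phi_0)(\tilde r)\bigr|\le C\varepsilon\,|\phi_0(r)-\phi_0(\tilde r)|.
\end{equation*}

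Finally, by the reverse triangle inequality
\begin{equation*}
\bigl||\phi_\varepsilon(r)-\phi_\varepsilon(\tilde r)|-|\phi_0(r)-\phi_0(\tilde r)|\bigr|\le C\varepsilon\,|\phi_0(r)-\phi_0(\tilde r)|,
\end{equation*}
so $|\phi_\varepsilon(r)-\phi_\varepsilon(\tilde r)|\ge (1-C\varepsilon)|\phi_0(r)-\phi_0(\tilde r)|$; dividing then gives \eqref{Wy0X8YzO}. The main obstacle throughout is the non-uniform behaviour near the equator $|r|=1$, where both sides of the ratio are small and the naive Euclidean estimates degenerate; the trigonometric substitution, together with the elementary observation that $\omega$ appears in $\Psi_\varepsilon$ only paired with $\sin\theta$, is precisely what resolves this.
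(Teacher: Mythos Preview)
Your proof is correct and takes a genuinely different route from the paper. The paper writes $\phi_0(r)=A_\varepsilon(|r|)\phi_\varepsilon(r)$ for a diagonal matrix $A_\varepsilon$ with entries $\tfrac{1}{2a_\varepsilon}$ and $\tfrac{1}{2b_\varepsilon}$, then bounds $\|I-A_\varepsilon\|_{\mathrm{OP}}$ and $\|A_\varepsilon(|r|)-A_\varepsilon(|\tilde r|)\|_{\mathrm{OP}}/|\phi_\varepsilon(r)-\phi_\varepsilon(\tilde r)|$ separately. The second bound is the delicate one; it is obtained via Cauchy's Mean Value Theorem applied to the pairs $(a_\varepsilon,\,a_\varepsilon\sqrt{1-\tau^2})$ and $(b_\varepsilon,\,\tau b_\varepsilon)$, showing that $|a_\varepsilon'/\bar a_\varepsilon'|=O(\varepsilon)$ uniformly. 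This absorbs the blow-up of $\partial_\tau\sqrt{1-\tau^2}$ algebraically, without changing coordinates.

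Your approach instead removes the singularity geometrically by passing to spherical coordinates, so that $\Psi_\varepsilon$ extends smoothly to the equator and the intrinsic gradient of $\Psi_\varepsilon-\Psi_0$ on the hemisphere $S_0$ is uniformly $O(\varepsilon)$. The convexity of the closed hemisphere and the standard arc--chord comparison then do the work that Cauchy's MVT does in the paper. Your argument is arguably cleaner and more conceptual; the paper's is more hands-on and delivers the bound directly with $|\phi_\varepsilon(r)-\phi_\varepsilon(\tilde r)|$ in the denominator, avoiding the final division step where you need $1-C\varepsilon>0$. That last point is harmless here (the lemma is only used as $\varepsilon\to0^+$, and for $\varepsilon$ bounded away from $0$ one has crude two-sided bounds on $a_\varepsilon,b_\varepsilon$), but it would be worth one sentence to close the gap for all $\varepsilon\in[0,1)$ as stated.
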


\begin{proof}
Observe that \begin{equation}\label{JqHdxdks}\begin{split}
\bigg \vert \frac{\vert\phi_0(r)-\phi_0(\tilde r)\vert }{\vert \phi_\varepsilon(r)-\phi_\varepsilon(\tilde r)\vert} -1 \bigg \vert  & = \bigg \vert \frac{\vert\phi_0(r)-\phi_0(\tilde r)\vert-\vert \phi_\varepsilon(r)-\phi_\varepsilon(\tilde r)\vert}{\vert \phi_\varepsilon(r)-\phi_\varepsilon(\tilde r)\vert}\bigg \vert  \\
&\leqslant \frac{\vert (\phi_0-\phi_\varepsilon)(r)- (\phi_0-\phi_\varepsilon)(\tilde r) \vert}{\vert \phi_\varepsilon(r)-\phi_\varepsilon(\tilde r)\vert}. 
\end{split}\end{equation}
Next, observe that \begin{align*}
\phi_0(r) &= A_\varepsilon(\vert r \vert ) \phi_\varepsilon (r)
\end{align*} where, for each \(\tau\in \R\), \(A_\varepsilon(\tau)\) is the \(n\times n\) matrix given by \begin{align*}
A_\varepsilon(\tau) &:= \begin{pmatrix}
\frac1{2a_\varepsilon(\tau)} & 0 \\ 0 & \frac1{2b_\varepsilon(\tau)} I_{n-1}
\end{pmatrix}.
\end{align*}  Here \(I_k\) denotes the \(k\times k\) identity matrix and we are thinking of \(\phi_\varepsilon\) and~\(\phi_0\) as column vectors.

It will be useful in the future to note that \(a_\varepsilon\) and \(b_\varepsilon\) are strictly increasing for \(\tau\geqslant 0\) (see~\eqref{mAVkC45x} below for \(a_\varepsilon\), the computation for \(b_\varepsilon\) is analogous), so, for all \(\tau \in [0,1)\), \begin{equation}
\begin{split}&
\frac12 +\varepsilon \leqslant a_\varepsilon(\tau) \leqslant 1 +\varepsilon - \frac 1{2(1+\varepsilon)}\leqslant \frac12 +C\varepsilon \\{\mbox{and }}\quad&
\frac12(1-\varepsilon) \leqslant b_\varepsilon(\tau)  \leqslant \frac 12 .
\end{split} \label{ijAZGImZ}
\end{equation} In particular, when \(\varepsilon\) is small, \(A_\varepsilon(\tau)\) is well-defined since \(a_\varepsilon,b_\varepsilon>0\). Hence, it follow from~\eqref{JqHdxdks} that \begin{equation}\label{bgfHZN8C}\begin{split}
&\bigg \vert \frac{\vert\phi_0(r)-\phi_0(\tilde r)\vert }{\vert \phi_\varepsilon(r)-\phi_\varepsilon(\tilde r)\vert} -1 \bigg \vert  \\
\leqslant\;& \frac{\vert (I_n-A_\varepsilon(\vert r \vert )) \phi_\varepsilon(r)- (I_n-A_\varepsilon(\vert \tilde r \vert ) )\phi_\varepsilon(\tilde r) \vert}{\vert \phi_\varepsilon(r)-\phi_\varepsilon(\tilde r)\vert}\\
\leqslant\;& \frac{\vert (I_n-A_\varepsilon(\vert r \vert )) \phi_\varepsilon(r)- (I_n-A_\varepsilon(\vert  r \vert ) )\phi_\varepsilon(\tilde r) \vert}{\vert \phi_\varepsilon(r)-\phi_\varepsilon(\tilde r)\vert} 
+\frac{\vert (A_\varepsilon(\vert \tilde r \vert )-A_\varepsilon(\vert r \vert )) \phi_\varepsilon(\tilde r) \vert}{\vert \phi_\varepsilon(r)-\phi_\varepsilon(\tilde r)\vert}   \\
\leqslant\;& \|I_n-A_\varepsilon(\vert r \vert )\|_{\textrm{OP}} + \frac{\| A_\varepsilon(\vert \tilde r \vert )-A_\varepsilon(\vert r \vert )\|_{\mathrm{OP}}  }{\vert \phi_\varepsilon(r)-\phi_\varepsilon(\tilde r)\vert}
\vert \phi_\varepsilon(\tilde r) \vert,
\end{split}\end{equation}
where \(\| \cdot \|_{\mathrm{OP}}\) denotes the matrix operator norm.

It follows from~\eqref{ijAZGImZ} that \begin{align}
\|I_n-A_\varepsilon(\tau )\|_{\textrm{OP}} &= \max \bigg \{\bigg \vert 1 - \frac 1{2a_\varepsilon(\tau)}\bigg \vert ,  \bigg \vert 1 - \frac 1{2b_\varepsilon(\tau)}\bigg \vert \bigg \}\leqslant C\varepsilon . \label{ZiPmtbEz}
\end{align} Moreover, if \(\| \cdot \|_{\mathrm F}\) denotes the Frobenius norm, then \begin{align*}
\| A_\varepsilon(\vert \tilde r \vert )-A_\varepsilon(\vert r \vert )\|_{\mathrm{OP}}^2 &\leqslant C \| A_\varepsilon(\vert \tilde r \vert )-A_\varepsilon(\vert r \vert )\|_{\mathrm{F}}^2 \\
&= C \left[\left( \frac 1 {a_\varepsilon(\vert r \vert) } - \frac 1 {a_\varepsilon(\vert \tilde r \vert) } \right)^2 +   \left( \frac 1 {b_\varepsilon(\vert r \vert) } - \frac 1 {b_\varepsilon(\vert \tilde r \vert) } \right)^2 \right]\\
&= C \left( \frac{\big (a_\varepsilon(\vert r \vert) - a_\varepsilon(\vert \tilde r \vert) \big )^2}{a_\varepsilon(\vert r \vert)^2 a_\varepsilon(\vert \tilde r \vert)^2} + \frac{\big (b_\varepsilon(\vert r \vert) - b_\varepsilon(\vert \tilde r \vert) \big )^2}{b_\varepsilon(\vert r \vert)^2 b_\varepsilon(\vert \tilde r \vert)^2} \right)\\
&\leqslant  C \Big ( \big (a_\varepsilon(\vert r \vert) - a_\varepsilon(\vert \tilde r \vert) \big )^2 + \big (b_\varepsilon(\vert r \vert) - b_\varepsilon(\vert \tilde r \vert) \big )^2 \Big ), 
\end{align*} where we used~\eqref{ijAZGImZ} to obtain the final inequality.

We also have that \begin{align*}
\vert \phi_\varepsilon(r)-\phi_\varepsilon(\tilde r)\vert^2 &= \Big ( a_\varepsilon(\vert r \vert ) \sqrt{1-\vert r \vert^2}-a_\varepsilon(\vert \tilde r \vert ) \sqrt{1-\vert \tilde r \vert^2} \Big )^2+\vert b_\varepsilon (\vert r \vert ) r - b_\varepsilon(\vert \tilde r \vert ) \tilde r \vert^2 \\
&\geqslant \Big ( a_\varepsilon(\vert r \vert ) \sqrt{1-\vert r \vert^2}-a_\varepsilon(\vert \tilde r \vert ) \sqrt{1-\vert \tilde r \vert^2} \Big )^2+\big ( b_\varepsilon (\vert r \vert ) \vert r \vert  - b_\varepsilon(\vert \tilde r \vert ) \vert  \tilde r \vert \big )^2.
\end{align*}

We claim that \begin{equation}
\begin{split}&
\big \vert a_\varepsilon(\vert r \vert) - a_\varepsilon(\vert \tilde r \vert) \big \vert \leqslant C \varepsilon \big \vert a_\varepsilon(\vert r \vert ) \sqrt{1-\vert r \vert^2}-a_\varepsilon(\vert \tilde r \vert ) \sqrt{1-\vert \tilde r \vert^2} \big \vert  \\ \text{and}\qquad &
\big \vert b_\varepsilon(\vert r \vert) - b_\varepsilon(\vert \tilde r \vert) \big \vert \leqslant C \varepsilon \big \vert  b_\varepsilon (\vert r \vert ) \vert r \vert  - b_\varepsilon(\vert \tilde r \vert ) \vert  \tilde r \vert \big \vert.
\end{split} \label{fERN80H1}
\end{equation} We will prove the inequality in~\eqref{fERN80H1} involving \(a_\varepsilon\), the proof of the inequality involving \(b_\varepsilon\) is entirely analogous. Let \(\bar a_\varepsilon(\tau ) := a_\varepsilon (\tau ) \sqrt{1-\tau^2}\). Then, by Cauchy's Mean Value Theorem, \begin{align}
\bigg \vert \frac{ a_\varepsilon(\vert r \vert) - a_\varepsilon(\vert \tilde r \vert) }{\bar a_\varepsilon(\vert r \vert) - \bar a_\varepsilon(\vert \tilde r \vert) }\bigg \vert &\leqslant \bigg \| \frac{a_\varepsilon'}{\bar a_\varepsilon'} \bigg \|_{L^\infty((0,1))}. \label{w3PCZfMC}
\end{align} On one hand,  \begin{align*}
a'_\varepsilon(\tau) &= \frac12 \varepsilon(\varepsilon+2) \tau \big ( 1 + \big ( (1+\varepsilon)^2-1 \big ) \tau^2  \big )^{-3/2},
\end{align*}so, for all \(\tau \in [0,1)\), \begin{align}
C^{-1} \varepsilon \tau   \leqslant a'_\varepsilon(\tau) \leqslant C \varepsilon \tau. \label{mAVkC45x}
\end{align} On the other hand, using~\eqref{ijAZGImZ} and~\eqref{mAVkC45x}, we have that \begin{align*}
\bar a_\varepsilon '(\tau ) &= a_\varepsilon '(\tau ) \sqrt{1-\tau^2} - \frac{\tau a_\varepsilon(\tau)}{\sqrt{1-\tau^2}} \leqslant C(\varepsilon -1)\tau  <0 . 
\end{align*}Hence, \(\vert \bar a_\varepsilon '(\tau ) \vert =  -a_\varepsilon'(\tau) \geqslant C \tau\), so \begin{align*}
 \bigg \vert \frac{a_\varepsilon'(\tau)}{\bar a_\varepsilon'(\tau)} \bigg \vert &=   \frac{a_\varepsilon'(\tau)}{\vert \bar a_\varepsilon'(\tau) \vert } \leqslant C \varepsilon, \qquad \text{for all } \tau \in (0,1),
\end{align*} which, along with~\eqref{w3PCZfMC}, implies the inequality for \(a_\varepsilon\) in~\eqref{fERN80H1}. 

Gathering these pieces of information, we obtain that
\begin{equation}\begin{split}
&\frac{\| A_\varepsilon(\vert \tilde r \vert )-A_\varepsilon(\vert r \vert )\|_{\mathrm{OP}}}{\vert \phi_\varepsilon(r)-\phi_\varepsilon(\tilde r)\vert}
\leqslant\\ &\frac{
C \sqrt{  \big ((a_\varepsilon(\vert r \vert) - a_\varepsilon(\vert \tilde r \vert) \big )^2 + \big (b_\varepsilon(\vert r \vert) - b_\varepsilon(\vert \tilde r \vert) \big )^2 }
}{ \sqrt{\Big ( a_\varepsilon(\vert r \vert ) \sqrt{1-\vert r \vert^2}-a_\varepsilon(\vert \tilde r \vert ) \sqrt{1-\vert \tilde r \vert^2} \Big )^2+\big ( b_\varepsilon (\vert r \vert ) \vert r \vert  - b_\varepsilon(\vert \tilde r \vert ) \vert  \tilde r \vert \big )^2}}
\\
\leqslant\;& C \varepsilon 
.\label{R7PfHtVD}
\end{split}\end{equation}

Finally, since \(G_\varepsilon \subset \Omega_\varepsilon \subset B_{1+\varepsilon}\), we have that
\begin{align}
\vert \phi_\varepsilon(\tilde r) \vert &\leqslant 1+\varepsilon \leqslant C. \label{5nLBFgsK}
\end{align} Thus,~\eqref{Wy0X8YzO} follows from~\eqref{bgfHZN8C},~\eqref{ZiPmtbEz},~\eqref{R7PfHtVD}, and~\eqref{5nLBFgsK}.
\end{proof}

\begin{lem} \thlabel{F3lTRgfZ} 
Let \(\varepsilon \in[ 0,1)\) and  \(a_\varepsilon\), \(b_\varepsilon\), and \(\phi_\varepsilon\) be as in \thref{M2fW7tQX}. 

Then, for all~\(r,\tilde r\in B_1^{n-1}\) with~\(r\neq \tilde r \),  \begin{align*}
\bigg \vert \frac{\vert (u_\varepsilon \circ \phi_\varepsilon)(r)- (u_\varepsilon \circ \phi_\varepsilon)(\tilde r)\vert}{\varepsilon\vert \phi_0(r) - \phi_0(\tilde r)\vert} -
\frac12 s \gamma_{n,s}\left(\frac34\right)^{s-1} 
\frac{\big \vert  \vert r \vert^2 - \vert \tilde r \vert^2 \big \vert }{\vert \phi_0(r) - \phi_0(\tilde r)\vert} \bigg \vert \leqslant C \varepsilon.
\end{align*}
The constant \(C>0\) depends only on \(n\) and \(s\). 
\end{lem}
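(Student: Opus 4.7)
The plan is to perform a first-order Taylor expansion in $\varepsilon$ of the composite map $F(r, \varepsilon) := u_\varepsilon(\phi_\varepsilon(r))$, exploiting the fact that at $\varepsilon = 0$ the map $\phi_0$ parametrizes the sphere $\partial B_{1/2}$, so $F(\cdot, 0) \equiv \gamma_{n,s}(3/4)^s$ is constant on $B_1^{n-1}$. Since $\phi_\varepsilon(r) \in \partial G_\varepsilon$ lies at distance $1/2$ from $\partial \Omega_\varepsilon$, the argument of the fractional power defining $u_\varepsilon$ stays bounded away from $0$, and hence $F$ is smooth on $B_1^{n-1} \times [0, \varepsilon_0]$ for $\varepsilon_0$ small enough; all higher-order remainders will therefore be bounded uniformly. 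For $\varepsilon$ away from $0$ all relevant quantities are already bounded, so the estimate is trivial in that regime and only the case of small $\varepsilon$ requires analysis.

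First I would expand each ingredient to first order in $\varepsilon$. A direct computation from~\eqref{defperf333} gives
$$a_\varepsilon(\tau) = \tfrac{1}{2} + \bigl(1 + \tfrac{\tau^2}{2}\bigr)\varepsilon + O(\varepsilon^2), \qquad b_\varepsilon(\tau) = \tfrac{1}{2} + \tfrac{\tau^2 - 1}{2}\varepsilon + O(\varepsilon^2).$$
Plugging these into $|\phi_\varepsilon(r)|^2_{\mathrm{ell}} := a_\varepsilon(|r|)^2(1-|r|^2)/(1+\varepsilon)^2 + b_\varepsilon(|r|)^2 |r|^2$ and simplifying (the terms in $|r|^4$ cancel), one obtains
$$g_\varepsilon(r) := 1 - |\phi_\varepsilon(r)|^2_{\mathrm{ell}} = \tfrac{3}{4} - \tfrac{\varepsilon(1 - |r|^2)}{2} + O(\varepsilon^2).$$
For the normalisation constant, using that $({}_2F_1)'(0) = \frac{ab}{c} = \frac{n+2s}{2n}$ (with $a = (n+2s)/2$, $b = 1/2$, $c = n/2$) and that the argument $1 - (1+\varepsilon)^2$ has derivative $-2$ at $\varepsilon = 0$, one finds $\gamma_{n,s,\varepsilon} = \gamma_{n,s}\bigl(1 + \tfrac{2s\varepsilon}{n}\bigr) + O(\varepsilon^2)$. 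Multiplying $\gamma_{n,s,\varepsilon}$ by $g_\varepsilon(r)^s$ and using the identity $(3/4)^s \cdot \tfrac{2}{3} = \tfrac{1}{2}(3/4)^{s-1}$, I would deduce
$$F(r, \varepsilon) = \gamma_{n,s}(3/4)^s + \varepsilon\,A(r) + \varepsilon^2 R(r, \varepsilon), \qquad A(r) = A_0 + \tfrac{s\gamma_{n,s}(3/4)^{s-1}}{2}|r|^2,$$
where $A_0$ is an $r$-independent constant and both $R$ and $\partial_r R$ are bounded uniformly on $B_1^{n-1} \times [0, \varepsilon_0]$ by smoothness.

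Subtracting the expansion at $\tilde r$ then kills both the leading term $\gamma_{n,s}(3/4)^s$ and the constant $A_0$, leaving
$$F(r, \varepsilon) - F(\tilde r, \varepsilon) = \tfrac{\varepsilon s\gamma_{n,s}(3/4)^{s-1}}{2}\bigl(|r|^2 - |\tilde r|^2\bigr) + \varepsilon^2\bigl(R(r,\varepsilon) - R(\tilde r, \varepsilon)\bigr).$$
The mean-value inequality yields $|R(r,\varepsilon) - R(\tilde r, \varepsilon)| \le C|r - \tilde r|$, and the elementary identity
$$|\phi_0(r) - \phi_0(\tilde r)|^2 = \tfrac{1}{4}\bigl[(\sqrt{1-|r|^2} - \sqrt{1-|\tilde r|^2})^2 + |r - \tilde r|^2\bigr]$$
gives $|r - \tilde r| \le 2|\phi_0(r) - \phi_0(\tilde r)|$. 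Dividing through by $\varepsilon|\phi_0(r) - \phi_0(\tilde r)|$ and applying $||a| - |b|| \le |a - b|$ converts the signed identity into the stated estimate with absolute values. The main obstacle, and the point requiring the most care, is the bookkeeping of the constant terms in the first-order coefficient: in particular, the $\tfrac{2s}{n}(3/4)^s$ contribution coming from $\gamma_{n,s,\varepsilon}$ is $r$-independent and fortuitously cancels under the subtraction, so that only the $|r|^2$-term surviving from $g_\varepsilon^s$ produces the coefficient $\tfrac{s\gamma_{n,s}(3/4)^{s-1}}{2}$ that appears in the statement.
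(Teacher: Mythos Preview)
Your proposal is correct and follows the same core idea as the paper—a first-order expansion of $u_\varepsilon\circ\phi_\varepsilon$ in $\varepsilon$—but the execution differs in a way worth noting. The paper works with $v_\varepsilon = u_\varepsilon/\gamma_{n,s,\varepsilon}$, encodes the first-order error as a single function $\psi_\varepsilon(\tau)$ of the radial variable $\tau=|r|$, and then establishes $\|\psi_\varepsilon'\|_{L^\infty((0,1))}\le C\varepsilon$ by an explicit, term-by-term computation relegated to an appendix; the passage from $v_\varepsilon$ to $u_\varepsilon$ is handled at the end via $|\gamma_{n,s,\varepsilon}-\gamma_{n,s}|\le C\varepsilon$ together with the boundedness of the target quotient. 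You instead expand the full $F(r,\varepsilon)=u_\varepsilon(\phi_\varepsilon(r))$ directly, compute the first-order coefficient in $\varepsilon$ (including the $\gamma_{n,s,\varepsilon}$ contribution), and control the second-order remainder and its Lipschitz constant in $r$ abstractly by noting that $F$—as a function of $(|r|,\varepsilon)$—is smooth on the compact set $[0,1]\times[0,\varepsilon_0]$ since $g_\varepsilon$ stays bounded away from zero. This is cleaner and avoids the tedious appendix computation; the price is that the remainder bound is non-explicit, relying on compactness rather than a concrete formula. Your observation that the $r$-independent part $A_0$ of the first-order coefficient (which includes the $\tfrac{2s}{n}(3/4)^s$ term from $\gamma_{n,s,\varepsilon}$) cancels upon subtraction is exactly what makes the inline treatment of $\gamma_{n,s,\varepsilon}$ work, and mirrors what the paper achieves by the separate $|\gamma_{n,s,\varepsilon}-\gamma_{n,s}|\le C\varepsilon$ argument.
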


\begin{proof}
Let \begin{align*}
v_\varepsilon(x) &:= \frac 1 {\gamma_{n,s,\varepsilon}} u_\varepsilon(x) = \left ( 1 - \frac{x_1^2}{(1+\varepsilon)^2} - \vert x'\vert^2 \right )^s 
\end{align*} 
and define
$$ \psi_\epsilon(\tau):=\frac{\displaystyle \left ( 1 - \frac{(a_\epsilon(\tau))^2(1-\tau^2)}{(1+\varepsilon)^2} - (b_\epsilon(\tau))^2\tau^2 \right )^s
-\left(\frac34\right)^s}{\epsilon}
+\frac12 s\left(\frac34\right)^{s-1}(1-\tau^2).$$
Exploiting the expressions of~$a_\epsilon$ and $b_\epsilon$ in~\eqref{defperf333}, we compute the derivative of~$\psi_\epsilon$ with respect to~$\tau$ as
\begin{align*}
&\frac{d}{d\tau}\psi_\epsilon(\tau)\\
&=
s \left[    -\frac{\tau ^3 (1+\epsilon) \left(2\epsilon + \epsilon^2 \right) \left( 1-\frac{1+\epsilon}{2 \sqrt{1+\tau ^2 \left(2\epsilon +\epsilon^2 \right)}}\right)}{
\left(1+\tau ^2 \left(2\epsilon+ \epsilon^2 \right)\right)^{3/2}}
+\frac{2 \tau  \left(1+\epsilon-\frac{1}{2 \sqrt{1+\tau ^2 \left(2\epsilon + \epsilon^2 \right)}} \right)^2}{(1+\epsilon)^2}\right.
\\&\quad\left. 
-2 \tau  \left(1- \frac{1+\epsilon}{2 \sqrt{1+\tau ^2 \left(2\epsilon + \epsilon^2 \right)}}  \right)^2
-\frac{ \left(1-\tau ^2\right) \tau  \left(2\epsilon + \epsilon^2 \right) \left(1+\epsilon-\frac{1}{2 \sqrt{1+\tau ^2 \left(2\epsilon + \epsilon^2 \right)}} \right)}{
(1+\epsilon)^2 \left(1+\tau ^2 \left(2\epsilon + \epsilon^2 \right)\right)^{3/2}}\right]
\\& \times
\left[1-\frac{\left(1-\tau ^2\right) \left(1+\epsilon-\frac{1}{2 \sqrt{1+\tau ^2 \left(2\epsilon + \epsilon^2 \right)}}\right)^2}{(1+\epsilon)^2}-\tau ^2 \left(1-\frac{1+\epsilon }{2 \sqrt{1+
\tau ^2 \left(2\epsilon + \epsilon^2 \right)}}\right)^2\right]^{s-1}.
\end{align*}

We claim that, for~$\epsilon$ sufficiently small,
\begin{equation}\label{di4o375bv96c34567834567578}
\left\|\frac{d}{d\tau}\psi_\epsilon\right\|_{L^\infty((0,1))}\le C\epsilon,
\end{equation}
for some~$C>0$ depending only on~$s$. Not to interrupt the flow of the argument, we defer the proof of~\eqref{di4o375bv96c34567834567578}
to Appendix~\ref{apptedcalc}.

Now, we notice that
$$ \frac{(v_\varepsilon  \circ \phi_\varepsilon)(r)-\left(\frac34\right)^s}{\epsilon}
+\frac12 s\left(\frac34\right)^{s-1}(1-|r|^2)=\psi_\epsilon(|r|).
$$
Therefore,
\begin{eqnarray*}&&
\bigg \vert \frac{\vert (v_\varepsilon \circ \phi_\varepsilon)(r)- (v_\varepsilon \circ \phi_\varepsilon)(\tilde r)\vert}{\varepsilon\vert \phi_0(r) - \phi_0(\tilde r)\vert} -\frac12 s\left(\frac34\right)^{s-1} \frac{\big \vert  \vert r \vert^2 - \vert \tilde r \vert^2 \big \vert}{\vert \phi_0(r) - \phi_0(\tilde r)\vert} \bigg \vert \\
&\leqslant&  \bigg \vert  \frac{  \varepsilon^{-1}\left[(v_\varepsilon \circ \phi_\varepsilon)(r)-\left(\frac34\right)^s\right]
-\frac12 s\left(\frac34\right)^{s-1} \vert r \vert^2}{\vert \phi_0(r) - \phi_0(\tilde r)\vert} \\
&&\hspace{5em}-
\frac{\varepsilon^{-1}\left[(v_\varepsilon \circ \phi_\varepsilon)(\tilde r)-\left(\frac34\right)^s\right]
-\frac12 s\left(\frac34\right)^{s-1} \vert \tilde r \vert^2}{\vert \phi_0(r) - \phi_0(\tilde r)\vert} \bigg \vert   \\
&=&  \bigg \vert  \frac{  \varepsilon^{-1}\left[(v_\varepsilon \circ \phi_\varepsilon)(r)-\left(\frac34\right)^s\right]
+\frac12 s\left(\frac34\right)^{s-1} (1-\vert r \vert^2)}{\vert \phi_0(r) - \phi_0(\tilde r)\vert} \\
&&\hspace{5em}-
\frac{\varepsilon^{-1}\left[(v_\varepsilon \circ \phi_\varepsilon)(\tilde r)-\left(\frac34\right)^s\right]
+\frac12 s\left(\frac34\right)^{s-1}(1- \vert \tilde r \vert^2)}{\vert \phi_0(r) - \phi_0(\tilde r)\vert} \bigg \vert   \\
&=& \frac{\vert \psi_\varepsilon ( \vert r \vert ) - \psi_\varepsilon( \vert \tilde r \vert ) \vert}{\vert \phi_0(r) - \phi_0(\tilde r)\vert} .
\end{eqnarray*} Moreover, \begin{align*}
\vert \phi_0( r  ) - \phi_0( \tilde r  ) \vert^2  &= \frac14 \Big ( \big ( \sqrt{1-\vert r \vert^2}-\sqrt{1-\vert \tilde r \vert^2} \big )^2 + \vert r - \tilde r \vert^2 \Big ) \geqslant C \big \vert \vert r \vert - \vert \tilde r \vert \big \vert^2 ,  
\end{align*} so, recalling~\eqref{di4o375bv96c34567834567578}, we conclude that
\begin{equation}\label{OAWOSaAl}\begin{split}&
\bigg \vert \frac{\vert (v_\varepsilon \circ \phi_\varepsilon)(r)- (v_\varepsilon \circ \phi_\varepsilon)(\tilde r)\vert}{\varepsilon\vert \phi_0(r) - \phi_0(\tilde r)\vert} - \frac12 s\left(\frac34\right)^{s-1}  \frac{\big \vert  \vert r \vert^2 - \vert \tilde r \vert^2 \big \vert}{\vert \phi_0(r) - \phi_0(\tilde r)\vert} \bigg \vert \leqslant
C \frac{\vert \psi_\varepsilon ( \vert r \vert ) - \psi_\varepsilon( \vert \tilde r \vert ) \vert}{\big\vert \vert r \vert - \vert \tilde r \vert \big \vert}\\
&\qquad\qquad\qquad
\leqslant  C \left\|\frac{d}{d\tau}\psi_\epsilon\right\|_{L^\infty((0,1))} 
\le  C\epsilon.
\end{split}\end{equation}

Finally, note that \(\gamma_{n,s,\varepsilon}\) depends
smoothly on \(\varepsilon\) for \(\varepsilon\) small and \(\gamma_{n,s,\varepsilon} \to \gamma_{n,s}\) as \(\varepsilon \to 0^+\), which implies that \begin{align}
\vert \gamma_{n,s,\varepsilon} - \gamma_{n,s} \vert &\leqslant C \varepsilon . \label{YH4k6yy6}
\end{align} Combing~\eqref{YH4k6yy6} with~\eqref{OAWOSaAl} gives the final result. 
\end{proof}

We can now prove \thref{M2fW7tQX}. 

\begin{proof}[Proof of \thref{M2fW7tQX}]
By Lemmata~\ref{c3Ge7XSV} and~\ref{F3lTRgfZ}, for all \(r,\tilde r \in B_1^{n-1}\) with \(r \neq \tilde r\), \begin{align*}
\bigg \vert &\frac{\vert (u_\varepsilon \circ \phi_\varepsilon)(r)-(u_\varepsilon\circ \phi_\varepsilon)(\tilde r ) \vert }{\varepsilon \vert \phi_\varepsilon(r) - \phi_\varepsilon(\tilde r) \vert } -
\frac12 s \gamma_{n,s}\left(\frac34\right)^{s-1} \frac{\big \vert  \vert r \vert^2 - \vert \tilde r \vert^2 \big \vert }{\vert \phi_0(r) - \phi_0(\tilde r)\vert}\bigg \vert \\
&\leqslant  \bigg \vert \frac{\vert (u_\varepsilon \circ \phi_\varepsilon)(r)-(u_\varepsilon\circ \phi_\varepsilon)(\tilde r ) \vert }{\varepsilon \vert \phi_0(r) - \phi_0(\tilde r) \vert } \bigg \vert \cdot \bigg \vert  \frac{\vert\phi_0(r)-\phi_0(\tilde r)\vert }{\vert \phi_\varepsilon(r)-\phi_\varepsilon(\tilde r)\vert} -1 \bigg \vert   \\ 
& \qquad +\bigg \vert \frac{\vert (u_\varepsilon \circ \phi_\varepsilon)(r)-(u_\varepsilon\circ \phi_\varepsilon)(\tilde r ) \vert }{\varepsilon \vert \phi_0(r) - \phi_0(\tilde r) \vert } -\frac12 s \gamma_{n,s}\left(\frac34\right)^{s-1}  \frac{\big \vert  \vert r \vert^2 - \vert \tilde r \vert^2 \big \vert }{\vert \phi_0(r) - \phi_0(\tilde r)\vert}\bigg \vert \\
&\leqslant C\varepsilon \bigg (  \bigg \vert \frac{\vert (u_\varepsilon \circ \phi_\varepsilon)(r)-(u_\varepsilon\circ \phi_\varepsilon)(\tilde r ) \vert }{\varepsilon \vert \phi_0(r) - \phi_0(\tilde r) \vert } \bigg \vert   +1 \bigg ).
\end{align*} Moreover, by \thref{F3lTRgfZ}, for \(\varepsilon\) small, \begin{align*}
 \bigg \vert \frac{\vert (u_\varepsilon \circ \phi_\varepsilon)(r)-(u_\varepsilon\circ \phi_\varepsilon)(\tilde r ) \vert }{\varepsilon \vert \phi_0(r) - \phi_0(\tilde r) \vert } \bigg \vert &\leqslant C \bigg (  \frac{\big \vert  \vert r \vert^2 - \vert \tilde r \vert^2 \big \vert }{\vert \phi_0(r) - \phi_0(\tilde r)\vert} +1 \bigg ) \leqslant C
\end{align*} using~\eqref{GPAI62Da}. This completes the proof. 
\end{proof}

\section{Appendix A: Proof of the claim in \eqref{di4o375bv96c34567834567578}}\label{apptedcalc}

We write
\begin{equation}\label{e032jjjjtb58403t4734t4iuytr}
\frac{d}{d\tau}\psi_\epsilon(\tau)=s g_\epsilon(\tau) (h_\epsilon(\tau))^{s-1},\end{equation}
where \begin{align*}
    g_\epsilon(\tau)&:= -\frac{\tau ^3 (1+\epsilon) \left(2\epsilon + \epsilon^2 \right) 
\left( 1-\frac{1+\epsilon }{2 \sqrt{1+\tau ^2 \left(2\epsilon + \epsilon^2 \right)}}\right)}{
\left(1+\tau ^2 \left(2\epsilon + \epsilon^2 \right)\right)^{3/2}}
+\frac{2 \tau  \left(1+\epsilon-\frac{1}{2 \sqrt{1+\tau ^2 \left(2\epsilon + \epsilon^2 \right)}} \right)^2}{(1+\epsilon)^2}
\\&\quad
-2 \tau  \left(1- \frac{1+\epsilon}{2 \sqrt{1+\tau ^2 \left(2\epsilon +\epsilon^2 \right)}}  \right)^2
-\frac{ \left(1-\tau ^2\right) \tau  \left(2\epsilon + \epsilon^2 \right) \left(1+\epsilon-\frac{1}{2 \sqrt{1+\tau ^2 \left(2\epsilon + \epsilon^2 \right)}} \right)}{
(1+\epsilon)^2 \left(1+\tau ^2 \left(2\epsilon + \epsilon^2 \right)\right)^{3/2}}
\end{align*} and
\begin{align*}
h_\epsilon(\tau)&:=
1-\frac{\left(1-\tau ^2\right) \left(1+\epsilon-\frac{1}{2 \sqrt{1+\tau ^2 \left(2\epsilon + \epsilon^2 \right)}}\right)^2}{(1+\epsilon)^2}-\tau ^2 \left(1-\frac{1+\epsilon }{2 \sqrt{1+
\tau ^2 \left(2\epsilon + \epsilon^2 \right)}}\right)^2.
\end{align*}

We first prove that
\begin{equation}\label{prima000} |g_\epsilon(\tau)|\le C\epsilon,\end{equation}
for all~$\tau\in(0,1)$, for some constant~$C>0$.

For this, we observe that
\begin{equation}\label{prima111}\begin{split}&
\left| -\frac{\tau ^3 (1+\epsilon) \left(2\epsilon +\epsilon^2 \right) \left( 1-\frac{1+\epsilon}{2 \sqrt{1+\tau ^2 \left(2\epsilon + \epsilon^2 \right)}}\right)}{
\left(1+\tau ^2 \left(2\epsilon + \epsilon^2 \right)\right)^{3/2}}\right|\\
&\qquad\qquad\le
\left|\tau ^3 (1+\epsilon) \left(2\epsilon +\epsilon^2 \right) \left( 1-\frac{1+\epsilon }{2 \sqrt{1+\tau ^2 \left(2\epsilon + \epsilon^2 \right)}}\right)\right|
\\&\qquad\qquad\le 6\epsilon\left|  1-\frac{1+\epsilon}{2 \sqrt{1+\tau ^2 \left(2\epsilon+ \epsilon^2 \right)}} \right|\le 12\epsilon.
\end{split}\end{equation}
Similarly,
\begin{equation}\label{prima222}\begin{split}&
\left|-\frac{ \left(1-\tau ^2\right) \tau  \left(2\epsilon + \epsilon^2 \right) \left(1+\epsilon-\frac{1}{2 \sqrt{1+\tau ^2 \left(2\epsilon + \epsilon^2 \right)}} \right)}{
(1+\epsilon)^2 \left(1+\tau ^2 \left(2\epsilon + \epsilon^2 \right)\right)^{3/2}}\right|\\
&\qquad\le
\left|\left(1-\tau ^2\right) \tau  \left(2\epsilon + \epsilon^2 \right) \left(1+\epsilon-\frac{1}{2 \sqrt{1+\tau ^2 \left(2\epsilon + \epsilon^2 \right)}} \right)\right|
\\&\qquad \le 3\epsilon \left|1+\epsilon-\frac{1}{2 \sqrt{1+\tau ^2 \left(2\epsilon + \epsilon^2 \right)}}\right|\le 9\epsilon.
\end{split}\end{equation}

Additionally,
\begin{equation*}\begin{split}&
2\tau\left[
\frac{ \left(1+\epsilon-\frac{1}{2 \sqrt{1+\tau ^2 \left(2\epsilon + \epsilon^2 \right)}} \right)^2}{(1+\epsilon)^2}
-\left(1- \frac{1+\epsilon}{2 \sqrt{1+\tau ^2 \left(2\epsilon +\epsilon^2 \right)}}  \right)^2\right]\\
=\;&
2\tau\left[\left(1+\epsilon-\frac{1}{2 \sqrt{1+\tau ^2 \left(2\epsilon + \epsilon^2 \right)}} \right)^2
-\left(1- \frac{1+\epsilon}{2 \sqrt{1+\tau ^2 \left(2\epsilon +\epsilon^2 \right)}}  \right)^2
\right]\\&\qquad
+2\tau\left(1+\epsilon-\frac{1}{2 \sqrt{1+\tau ^2 \left(2\epsilon + \epsilon^2 \right)}} \right)^2\left[\frac{1}{(1+\epsilon)^2}-1\right]\\
=\;&
2\tau\left[\left(1+\epsilon-\frac{1}{2 \sqrt{1+\tau ^2 \left(2\epsilon + \epsilon^2 \right)}} \right)^2
-\left(1- \frac{1+\epsilon}{2 \sqrt{1+\tau ^2 \left(2\epsilon +\epsilon^2 \right)}}  \right)^2
\right]\\&\qquad
-2\tau\left(1+\epsilon-\frac{1}{2 \sqrt{1+\tau ^2 \left(2\epsilon + \epsilon^2 \right)}} \right)^2\frac{2\epsilon+\epsilon^2}{(1+\epsilon)^2}.
\end{split}\end{equation*}
We observe that 
\begin{equation*} \begin{split}&
\left(1+\epsilon-\frac{1}{2 \sqrt{1+\tau ^2 \left(2\epsilon + \epsilon^2 \right)}} \right)^2
-\left(1- \frac{1+\epsilon}{2 \sqrt{1+\tau ^2 \left(2\epsilon +\epsilon^2 \right)}}  \right)^2
\\
=\;&\left[\left(1+\epsilon-\frac{1}{2 \sqrt{1+\tau ^2 \left(2\epsilon + \epsilon^2 \right)}}\right)+\left(
1- \frac{1+\epsilon}{2 \sqrt{1+\tau ^2 \left(2\epsilon +\epsilon^2 \right)}} \right)\right]\\
&\qquad\times \left[\left(1+\epsilon-\frac{1}{2 \sqrt{1+\tau ^2 \left(2\epsilon + \epsilon^2 \right)}}\right)-
\left(
1- \frac{1+\epsilon}{2 \sqrt{1+\tau ^2 \left(2\epsilon +\epsilon^2 \right)}} \right)
\right]\\=\;&
\left(2+\epsilon- \frac{2+\epsilon}{2 \sqrt{1+\tau ^2 \left(2\epsilon +\epsilon^2 \right)}}\right)
\left(\epsilon+\frac{\epsilon}{2 \sqrt{1+\tau ^2 \left(2\epsilon + \epsilon^2 \right)}}\right)\\
=\;&\epsilon(2+\epsilon)
\left(1- \frac{1}{2 \sqrt{1+\tau ^2 \left(2\epsilon +\epsilon^2 \right)}}\right)
\left(1+\frac{1}{2 \sqrt{1+\tau ^2 \left(2\epsilon + \epsilon^2 \right)}}\right)\\
=\;&\epsilon(2+\epsilon)
\left(1- \frac{1}{4\big(1+\tau ^2 \left(2\epsilon +\epsilon^2 \right)\big)}\right),
\end{split}\end{equation*}
and therefore
\begin{equation*}\begin{split}&
\left|2\tau \left[\left(1+\epsilon-\frac{1}{2 \sqrt{1+\tau ^2 \left(2\epsilon + \epsilon^2 \right)}} \right)^2
-\left(1- \frac{1+\epsilon}{2 \sqrt{1+\tau ^2 \left(2\epsilon +\epsilon^2 \right)}}  \right)^2\right]\right|
\\ &\qquad \le 6\epsilon
\left|1- \frac{1}{4\big(1+\tau ^2 \left(2\epsilon +\epsilon^2 \right)\big)}\right|\le 12\epsilon .
\end{split}\end{equation*}
Furthermore,
\begin{eqnarray*}
\left|-2\tau\left(1+\epsilon-\frac{1}{2 \sqrt{1+\tau ^2 \left(2\epsilon + \epsilon^2 \right)}} \right)^2\frac{2\epsilon+\epsilon^2}{(1+\epsilon)^2}\right|
&&\le 6\epsilon\left|\left(1+\epsilon-\frac{1}{2 \sqrt{1+\tau ^2 \left(2\epsilon + \epsilon^2 \right)}} \right)^2\right|\\&&\le 54\epsilon.
\end{eqnarray*}
Gathering these pieces of information, we obtain that
\begin{eqnarray*}
\left|2\tau\left[
\frac{ \left(1+\epsilon-\frac{1}{2 \sqrt{1+\tau ^2 \left(2\epsilon + \epsilon^2 \right)}} \right)^2}{(1+\epsilon)^2}
-\left(1- \frac{1+\epsilon}{2 \sqrt{1+\tau ^2 \left(2\epsilon +\epsilon^2 \right)}}  \right)^2\right]\right|\le 66\epsilon.
\end{eqnarray*}

{F}rom this, \eqref{prima111} and~\eqref{prima222}
we obtain the claim in~\eqref{prima000}.

We now show that, for~$\epsilon$ sufficiently small,
\begin{equation}\label{djiwecy4i83yci437t539kkkk}
h_\epsilon(\tau)\ge c,
\end{equation}
for all~$\tau\in(0,1)$, for some~$c>0$.

For this, we observe that
\begin{equation*}\begin{split}&1-
\frac{\left(1-\tau ^2\right) \left(1+\epsilon-\frac{1}{2 \sqrt{1+\tau ^2 \left(2\epsilon + \epsilon^2 \right)}}\right)^2}{(1+\epsilon)^2}
-\tau ^2 \left(1-\frac{1+\epsilon }{2 \sqrt{1+
\tau ^2 \left(2\epsilon + \epsilon^2 \right)}}\right)^2\\ =\;&
1-\frac{ \left(1+\epsilon-\frac{1}{2 \sqrt{1+\tau ^2 \left(2\epsilon + \epsilon^2 \right)}}\right)^2}{(1+\epsilon)^2}
\\
&+\tau^2\left[\frac{ \left(1+\epsilon-\frac{1}{2 \sqrt{1+\tau ^2 \left(2\epsilon + \epsilon^2 \right)}}\right)^2}{(1+\epsilon)^2}-
\left(1-\frac{1+\epsilon }{2 \sqrt{1+
\tau ^2 \left(2\epsilon + \epsilon^2 \right)}}\right)^2\right]\\
=\;&\frac{ 1+\epsilon }{(1+\epsilon)^2\sqrt{1+\tau ^2 \left(2\epsilon + \epsilon^2 \right)}}-
\frac{ 1}{4(1+\epsilon)^2 \big({1+\tau ^2 \left(2\epsilon + \epsilon^2 \right)}\big)}
\\&\qquad
+\tau^2\left[\frac{ \left(1+\epsilon-\frac{1}{2 \sqrt{1+\tau ^2 \left(2\epsilon + \epsilon^2 \right)}}\right)^2}{(1+\epsilon)^2}-
\left(1-\frac{1+\epsilon }{2 \sqrt{1+
\tau ^2 \left(2\epsilon + \epsilon^2 \right)}}\right)^2\right]\\
=\;&\frac{ 1  }{(1+\epsilon)\sqrt{1+\tau ^2 \left(2\epsilon + \epsilon^2 \right)}}-
\frac{ 1}{4(1+\epsilon)^2 \big({1+\tau ^2 \left(2\epsilon + \epsilon^2 \right)}\big)}
\\ &\quad
+\tau^2\epsilon(2+\epsilon)\left[
1- \frac{1}{4\big(1+\tau ^2 \left(2\epsilon +\epsilon^2 \right)\big)}
-\left(1+\epsilon-\frac{1}{2 \sqrt{1+\tau ^2 \left(2\epsilon + \epsilon^2 \right)}} \right)^2\frac{1}{(1+\epsilon)^2}
\right]\\
=\;&\frac{ 1}{(1+\epsilon)\sqrt{1+\tau ^2 \left(2\epsilon + \epsilon^2 \right)}}-
\frac{ 1}{4(1+\epsilon)^2 \big({1+\tau ^2 \left(2\epsilon + \epsilon^2 \right)}\big)}
\\ &\qquad
+\tau^2\frac{\epsilon(2+\epsilon)}{(1+\epsilon)^2}\left[\frac{1+\epsilon}{\sqrt{1+\tau ^2 \left(2\epsilon + \epsilon^2 \right)} }
-\frac{1+(1+\epsilon)^2}{4\big({1+\tau ^2 \left(2\epsilon + \epsilon^2 \right)}\big)}
\right].
\end{split}\end{equation*}
For~$\epsilon$ sufficiently small, it follows that \begin{align*}
    &\frac{\left(1-\tau ^2\right) \left(1+\epsilon-\frac{1}{2 \sqrt{1+\tau ^2 \left(2\epsilon + \epsilon^2 \right)}}\right)^2}{(1+\epsilon)^2}
-\tau ^2 \left(1-\frac{1+\epsilon }{2 \sqrt{1+
\tau ^2 \left(2\epsilon + \epsilon^2 \right)}}\right)^2 \\
&\ge\frac{1}{(1+\epsilon)^2}-\frac{1}{4(1+\epsilon)^2}-\frac{\epsilon(2+\epsilon)}{(1+\epsilon)^2}\left|\frac{1+\epsilon}{\sqrt{1+\tau ^2 \left(2\epsilon + \epsilon^2 \right)} }
-\frac{1+(1+\epsilon)^2}{4\big({1+\tau ^2 \left(2\epsilon + \epsilon^2 \right)}\big)}
\right|\\
&\ge\frac1{(1+\epsilon)^2}\left(1-\frac14-\frac{\epsilon(2+\epsilon)(4+4\epsilon+\epsilon^2)}{2}\right)\\
&\ge\frac12.
\end{align*}  This establishes~\eqref{djiwecy4i83yci437t539kkkk}.

{F}rom~\eqref{e032jjjjtb58403t4734t4iuytr}, \eqref{prima000}
and~\eqref{djiwecy4i83yci437t539kkkk}, we obtain the desired claim 
in~\eqref{di4o375bv96c34567834567578}.

\chapter{Quantitative stability for the nonlocal overdetermined Serrin problem}\label{yP1bfxWn}

We establish quantitative stability for the nonlocal Serrin overdetermined problem, via the method of the moving planes.
Interestingly, our stability estimate is even better than those obtained so far in the classical setting (i.e., for the classical Laplacian) via the method of the moving planes.

A crucial ingredient is the construction of a new antisymmetric barrier, which allows a unified treatment of the moving planes method. This strategy allows us to establish a new general quantitative nonlocal maximum principle for antisymmetric functions, leading to new quantitative nonlocal versions of both the Hopf lemma and the Serrin corner point lemma.

All these tools -- i.e., the new antisymmetric barrier, the general quantitative nonlocal maximum principle, and the quantitative nonlocal versions of both the Hopf lemma and the Serrin corner point lemma -- are of independent interest.

\section{Introduction}

The classical Serrin problem~\cite{MR333220} is the paradigmatic example of an ``overdetermined'' problem in which the existence of a solution satisfying both a Dirichlet and a Neumann boundary condition necessarily forces the domain to be a ball. 

To efficiently utilize classification results of this type in concrete applications it is often desirable to have a ``quantitative'' version, for instance stating that if the Neumann condition is ``almost'' satisfied, then the domain is necessarily ``close'' to a ball.

The recent literature has also intensively investigated the Serrin problem in the fractional setting. This line of research has profitably developed many aspects of the theory of nonlocal equations, including boundary regularity, Pohozaev identities, maximum and comparison principles, various versions of boundary and external conditions, a specific analysis of antisymmetric functions, etc.

However, despite the broad literature on the fractional Serrin problem, a quantitative version of this result is still missing, due to the intrinsic difficulties in adapting the classical techniques to the nonlocal case.

This paper aims to fill this gap in the literature, finally obtaining a quantitative stability result for the fractional Serrin problem. The result itself will rely on several auxiliary results of independent interest, such as
a new general quantitative nonlocal maximum principle for antisymmetric functions, which will lead to new quantitative nonlocal versions of both the Hopf lemma and the Serrin corner point lemma.
Also, a very fine analysis for a barrier function will be put forth, relying on suitable asymptotics of special functions and bespoke geometric estimates.

Specifically, the setting that we consider here is as follows.

\medskip

Let~$n$ be a positive integer and~$ \Om $ be an open bounded subset of~$\R^n$ with smooth boundary.
For~$s \in (0,1)$, let~$(-\Delta)^s$ denote the fractional Laplacian defined by 
\begin{equation}\label{eq:def fractional Laplacian}
(-\Delta)^s u(x) = c_{n,s} \, \PV \int_{\R^n} \frac{u(x)-u(y)}{\vert x - y \vert^{n+2s}} \dd y ,
\end{equation}
where~$\PV$ denotes the Cauchy principal value and~$c_{n,s}$ is a positive normalization constant.

Let~\(u\) be a solution (in an appropriate sense) of the equation
\begin{equation}\label{eq:Dirichlet problem}
\begin{cases}
(-\Delta)^s u =f(u) \quad & \text{ in } \Omega ,\\
u=0 \quad & \text{ in } \R^n \setminus \Omega, \\
u \geq 0 \quad & \text{ in } \R^n ,
\end{cases}
\end{equation}
where~$f:\R \to \R$ is a locally Lipschitz function satisfying~$f(0)\geq 0$.

Setting~$\de(x)$ to be a smooth function in~$\Om$ that coincides with~$\mathrm{dist}(x, \R^n \setminus \Om)$ in a neighborhood of~$\pa\Om$, we will assume that the solution~$u$ to~\eqref{eq:Dirichlet problem} is such that
\begin{equation}\label{eq:higher boundary regularity}
\frac{u}{\de^s} \in C^1 (\ol{\Om}).
\end{equation}
Such an assumption is automatically satisfied whenever~$\Om$ and~$f$ are smooth enough: indeed, \cite{grubb2014local, grubb2015fractional} prove that if~$\Om$ is smooth and~$f \in C^{\ga}$ then~$\frac{u}{\de^s} \in C^{s+\ga} (\ol{\Om})$ for all~$\ga>0$ (see also~\cite{ros2016boundary} where the~$C^{s+\ga} (\ol{\Om})$ regularity is achieved for~$C^{2,\ga}$ domains, for~$\ga$ small).

We stress that, for~$s=1$, \eqref{eq:higher boundary regularity} recovers the assumption~$u\in C^2(\ol{\Om})$ required in the classical proof established by Serrin via the method of moving planes: in particular, that assumption was needed to obtain a contradiction to Serrin corner lemma. In the same way, we believe that, in the nonlocal setting~$s\in(0,1)$, the assumption in~\eqref{eq:higher boundary regularity} is needed to obtain a contradiction to the nonlocal version of Serrin corner lemma and hence the rigidity result (see the second case in the proof of the
forthcoming \thref{lem:symmetric difference}).

For~$x\in\pa\Om$, we define the fractional normal derivative of~$u$ at~$x\in\pa\Om$ as
\begin{equation}\label{eq:def fracnormal derivative}
\pa_\nu^s u (x):= \lim_{t \to 0^+ } \frac{u(x) - u(x - t \nu(x))}{t^s},
\end{equation}
where~$\nu$ denotes the exterior unit normal to~$\pa\Om$.

The main result in~\cite{MR3395749} states that, if
the overdetermined problem given by~\eqref{eq:Dirichlet problem} and the Serrin-type condition
\begin{align}
\pa_\nu^s u = \text{constant} \quad \text{ on } \partial \Om \label{eq:overdetermination}
\end{align}
admits a solution, then~\(\Omega\) is necessarily a ball.
In this paper, we obtain quantitative stability for such a rigidity result.

We measure how close~\(\Omega\) is to being a ball with the quantity
\begin{equation*}
\rho(\Omega) := \inf \big\{  \rho_e - \rho_i \text{ : } B_{\rho_i} (x) \subset \Omega \subset B_{\rho_e} (x) \text{ for some } x \in \Omega \big\},
\end{equation*}
and how close~\( \pa_\nu^s u\) is to being constant on~\(\partial \Omega\) with the Lipschitz semi-norm 
\begin{equation}\label{eq:def Lipschitz seminorm}
[ \pa_\nu^s u]_{\pa \Om} := \sup_{\substack{x,y\in \pa \Om \\ x\neq y}}\frac{\vert \pa_\nu^s u(x) - \pa_\nu^s u(y) \vert}{\vert x - y \vert }.
\end{equation}

Moreover, we consider the following weighted~$L^1$-norm
\begin{equation}\label{semiliweigh}
\| u \|_{L_s(\R^n)}  = \int_{\R^n} \frac{ \vert u(x) \vert }{ 1+ \vert x \vert^{n+2s} } \dd x .\end{equation}

Then, our main result is the following:

\begin{thm} \thlabel{thm:Main Theorem}
Let~\( \Om \) be an open bounded subset of~$ \R^n$ of class~$C^2$ satisfying the uniform interior sphere condition\footnote{As usual, we say that~$\Om$ satisfies the \emph{uniform interior sphere condition with radius~$r_\Om$} if for any point~$x\in\pa\Om$ there exists a ball~$B_{r_\Om}\subset\Om$ of radius~$r_\Om$ such that~$\pa\Om \cap \pa B_{r_\Om} = \left\lbrace x \right\rbrace$.} with radius~$ r_\Om >0$.
Let~$f \in C^{0,1}_{\mathrm{loc}}(\R)$ be such that~$f(0)\geq 0$.
Let~\(u\) be a weak solution of~\eqref{eq:Dirichlet problem} satisfying~\eqref{eq:higher boundary regularity}. 

Define
\begin{equation}\begin{split}\label{eq:def R}
&R:= \min \left\lbrace r_\Om,  \frac{  \ka_{n,s}^{\frac{1}{2s}} |B_1|^{-\frac{1}{n}} }{[f]_{C^{0,1}([0,\|u\|_{L^\infty(\Omega)}])}^{\frac{1}{2s}} } \right\rbrace \\
{\mbox{with }} \qquad
& \ka_{n,s} := 
\frac{n}{2^{1-2s}} |B_1|^{1+2s/n} (1-s) \pi^{-n/2} \frac{\Ga(\frac{n}{2} +s )}{ \Ga(2+s) }
.\end{split}\end{equation}

Then,
\begin{equation}\label{eq:main stability estimate}
\rho(\Omega) \leq C [ \pa_\nu^s u]_{\partial \Om}^{\frac 1 {s+2}} 
\end{equation}
with 
\begin{equation*}
C := C(n,s) \left[1+
\frac{ 1 }{ r_\Om^{1-s}  \left( f(0)+\|u\|_{L_s(\R^n)} \right) } \right]
\left( \frac{\diam \Omega}{R} \right)^{2n+3+6s} 
R^{n+2-s} .
\end{equation*}
\end{thm}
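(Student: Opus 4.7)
The plan is to adapt the moving plane scheme of \thref{oAZAv7vy} to the Serrin-type overdetermined condition \eqref{eq:overdetermination}. Fix a direction $e \in \Sph^{n-1}$, let $\pi_\lambda$ be the critical hyperplane as in~\eqref{IAhmMsFq}, and consider the antisymmetric function $v_\lambda(x) := u(x) - u(x'_\lambda)$. By the rigidity argument already established for this problem, $v_\lambda \geq 0$ in the reflected half-space $H'_\lambda$ and solves a linear equation $(-\Delta)^s v_\lambda + c_\lambda v_\lambda = 0$ in $\Omega'_\lambda$ with $\|c_\lambda\|_{L^\infty} \leq [f]_{C^{0,1}([0,\|u\|_{L^\infty(\Omega)}])}$, exactly as in Section~\ref{sec:stabest}.

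The first key step is to establish, for both geometric configurations that occur at the critical hyperplane (Case~1: internal tangency at some $p \in (\partial\Omega \cap \partial\Omega'_\lambda)\setminus\pi_\lambda$; Case~2: orthogonality at $p \in \partial\Omega \cap \pi_\lambda$), a boundary estimate of the form
\begin{equation*}
\int_{(\Omega \cap H'_\lambda)\setminus\Omega'_\lambda} \delta_{\pi_\lambda}(y)\, u(y)\,\dd y \leq C\, [\partial_\nu^s u]_{\partial\Omega}.
\end{equation*}
This is the counterpart of \thref{goNy8kYt}, but with $[\partial_\nu^s u]_{\partial\Omega}$ replacing $[u]_{\partial G}$. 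The crucial ingredient here is a new quantitative nonlocal maximum principle for antisymmetric functions, of the type of \thref{TV1cTSyn}, which is upgraded so that the ball in which one touches $v_\lambda$ from below is allowed to touch both $\pi_\lambda$ \emph{and} $\partial\Omega$ simultaneously. This requires constructing a new antisymmetric barrier that is compatible with both boundaries at once, and hence handles Cases~1 and~2 in a unified way. In Case~1, the estimate at $p$ is closed via $\partial_\nu^s v_\lambda(p) = \partial_\nu^s u(p) - \partial_\nu^s u(Q_\lambda(p))$, together with \eqref{eq:def Lipschitz seminorm}, which produces a factor of $\mathrm{dist}(p,\pi_\lambda)$. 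In Case~2, since $p$ lies on $\pi_\lambda$, one differentiates $v_\lambda/\delta^s$ along a direction tangent to $\pi_\lambda$, using the assumption \eqref{eq:higher boundary regularity} to ensure this is well-defined, and then invokes the quantitative nonlocal Serrin corner lemma derived from the same barrier.

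From this boundary estimate, combined with the quantitative lower bound $u(y) \geq c\,(f(0) + \|u\|_{L_s(\R^n)})\,\delta_{\partial\Omega}^s(y)$ from \thref{zAPw0npM} and the uniform interior sphere condition, a Chebyshev/interpolation argument identical to the one in the proof of \thref{7TQmUHhl} (using the splitting $x_1 \delta_{\partial\Omega}^s(x) \lessgtr \gamma$ and optimizing in $\gamma$) yields, uniformly in $e \in \Sph^{n-1}$,
\begin{equation*}
|\Omega \triangle \Omega'_{\lambda_e}| \leq C\,[\partial_\nu^s u]_{\partial\Omega}^{\frac{1}{s+2}}.
\end{equation*}
The conclusion then follows exactly as in the proof of \thref{oAZAv7vy}: one picks $n$ mutually orthogonal critical hyperplanes meeting at a point (translated to the origin), uses the triangle inequality for the symmetric difference to bound $|\Omega \triangle \Omega^{\mathbf 0}|$ as in \thref{k4VRS3Fx}, controls $|\lambda_e|$ for every direction $e$ by the same quantity, and finally bounds $\rho_e - \rho_i$ (and hence $\rho(\Omega)$) by $C\,[\partial_\nu^s u]_{\partial\Omega}^{1/(s+2)}$.

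The principal obstacle is the construction of the new antisymmetric barrier and the ensuing quantitative nonlocal Serrin corner lemma needed in Case~2. In the local setting, Serrin's corner lemma relies on second-order derivatives at the corner point; in the nonlocal setting, no such direct analogue exists, which is precisely why the regularity assumption \eqref{eq:higher boundary regularity} is imposed: it converts the corner-point analysis into a $C^1$ estimate for $u/\delta^s$ up to the boundary, which can then be related to $[\partial_\nu^s u]_{\partial\Omega}$ via the definition \eqref{eq:def fracnormal derivative}. Unifying Cases~1 and~2 through a single barrier---rather than treating them with separate barriers, as was done in \thref{lem:FLIFu} and \thref{SaBD4} of Chapter~\ref{1XMNCzrH}---is the main technical novelty, and ultimately the reason why the exponent $1/(s+2)$ obtained here formally exceeds the logarithmic exponent that the moving planes method produces in the classical case.
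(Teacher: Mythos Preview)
Your proposal is correct and follows essentially the same route as the paper. The paper indeed constructs a single new antisymmetric barrier (\thref{prop:newantisymmetricbarrier}) yielding a unified quantitative maximum principle (\thref{Quantitative Maximum Principle}), from which both the quantitative Hopf lemma (\thref{thm:quantitative Hopf}) for Case~1 and the quantitative Serrin corner lemma (\thref{thm:quantitative corner lemma}) for Case~2 are derived; the integral estimate of \thref{lem:symmetric difference} then plugs into the Chebyshev/interpolation argument of \thref{prop:vecchia prop 4.4}, and the conclusion follows via the same symmetric-difference/critical-value machinery you describe.
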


It is worth noting that the stability estimate in~\eqref{eq:main stability estimate} is
even better than those obtained so far in the classical setting (i.e., for~$s=1$) via the method of moving planes. Indeed, \eqref{eq:main stability estimate} gives H\"older-type stability, which is better than the logarithmic-type stability achieved in~\cite{MR1729395}. We mention that, for~$s=1$, the estimate in~\cite{MR1729395} was improved to H\"older stability in~\cite{MR3522349} at the cost of restricting the analysis to a particular class of domains (which includes convex domains). We stress that, in contrast with~\cite{MR3522349}, \eqref{eq:main stability estimate} holds without restricting the class of domains considered; moreover, in the formal limit for~$s \to 1$, the stability exponent in~\eqref{eq:main stability estimate} is better than that obtained in~\cite{MR3522349}.

We mention that in the particular case~$s=1$ and~$f \equiv const.$ other stability results have been established with alternative approaches to the method of moving planes; these are mainly inspired by the approach towards Serrin's symmetry result pioneered by Weinberger in~\cite{MR333221} and can be found, e.g., in~\cite{brandolini2008stability, feldman2018stability, MR4124125, MR4054869, MP2023interpolating}.
\medskip

The quantitative version of the fractional Serrin problem has remained completely untouched, though several related problems have been considered under various perspectives: these include the cases in which the Neumann condition in the fractional Serrin problem is replaced by a parallel surface condition,
see~\cite{MR4577340, RoleAntisym2022, DPTVParallelStability},
and some of the methodologies share some features with quantitative versions of fractional geometric problems, such as~\cite{MR3836150}.

We stress that our result is new even in the case~$f \equiv const.$

\begin{remark}
Leveraging the fine analysis provided in \cite[Sections 6 and 1.3]{DPTVParallelStability} -- in particular, reasoning as in \cite[Theorem 6.9]{DPTVParallelStability} -- the stability exponent $1/(s+2)$ of \thref{thm:Main Theorem} may be improved to $\al / ( 1 +\al(s+1) )$, provided that $\Om$ is of class $C^\al$ for $\al>1$. In this case, the inequality
$$
\rho(\Omega) \leq C [ \pa_\nu^s u]_{\partial \Om}^{\al / ( 1 +\al(s+1) )}
$$
holds true with a constant $C$ depending on the same parameters of the constant appearing in \thref{thm:Main Theorem}
%
%
as well as on the $C^\al$ regularity of $\Om$.
\end{remark}

\smallskip

A crucial ingredient to establish \thref{thm:Main Theorem} is the following result, which provides a new powerful antisymmetric barrier that allows a unified treatment of the method of the moving planes. Such a barrier will allow us to establish a new general nonlocal maximum principle (\thref{Quantitative Maximum Principle}) that leads to quantitative versions (Theorems~\ref{thm:quantitative Hopf} and~\ref{thm:quantitative corner lemma}) of both the nonlocal Hopf-type lemma and the nonlocal Serrin corner point lemma established in Proposition~3.3 and Lemma~4.4
of~\cite{MR3395749}.

\begin{thm}\thlabel{prop:newantisymmetricbarrier}
Let~\(s\in (0,1)\), \(n\) be a positive integer, \( a = (a_1 , \dots , a_n) \in \R^n \) with~$a_1>0$, and~\(\rho>0\).
For~$x_0 \in \R^n$, consider
$$
\psi_{B_\rho (x_0)} := \ga_{n,s} \left( \rho^2 - |x-x_0|^2 \right)_+^s \quad \text{ where } \quad
\ga_{n,s}:= \frac{4^{-s} \Ga(\frac{n}{2})}{\Ga(\frac{n}{2}+s)(\Ga(1+s))} ,
$$
and set
\begin{align}\label{eq:def new barrier}
\varphi(x):= x_1  \big (  \psi_{B_\rho(a)} (x)+ \psi_{B_\rho(a_\ast)} (x)  \big ) ,
\end{align}
where~$a_\ast:= a - a_1 e_1$ is the reflection of~$a$ across~$\left\lbrace x_1 = 0 \right\rbrace$.

Then, we have that 
\begin{align}
(-\Delta)^s \varphi (x) \leqslant \frac{n+2s}n  x_1 \qquad \text{in } B_\rho^+(a) \setminus \partial B_\rho(a_\ast). \label{eTrwiP0v}
\end{align} 
\end{thm}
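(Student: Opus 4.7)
The idea is to lift the problem to $\R^{n+2}$ via Bochner's relation and then compare the lifted function $\tilde u$ with the $s$-torsion function of a ball in $\R^{n+2}$. First, writing $\varphi(x)=x_1\,u(x)$ with $u(x):=\psi_{B_\rho(a)}(x)+\psi_{B_\rho(a_*)}(x)$, one notes that $u$ is even in $x_1$, since the reflection $x\mapsto(-x_1,x_2,\dots,x_n)$ swaps the centers $a$ and $a_*$ and hence permutes the two summands. Consequently, Bochner's relation (as recalled in Lemma~\ref{xwrqefNE}) yields
\[
(-\Delta)^s\varphi(x)=x_1\,(-\Delta)^{s}_{\R^{n+2}}\tilde u(\tilde x)\big|_{\tilde x=(x_1,0,0,x_2,\dots,x_n)},
\]
where $\tilde u(\tilde x):=u\bigl(\sqrt{\tilde x_1^2+\tilde x_2^2+\tilde x_3^2},\,\tilde x_4,\dots,\tilde x_{n+2}\bigr)$. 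Setting $r:=\sqrt{\tilde x_1^2+\tilde x_2^2+\tilde x_3^2}$, $w:=(\tilde x_4,\dots,\tilde x_{n+2})$, and $a':=(a_2,\dots,a_n)$, one has
\[
\tilde u(\tilde x)=\gamma_{n,s}\Bigl[\bigl(\rho^2-(r-a_1)^2-|w-a'|^2\bigr)_+^{s}+\bigl(\rho^2-(r+a_1)^2-|w-a'|^2\bigr)_+^{s}\Bigr].
\]

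Next, I would introduce the comparison barrier
\[
V(\tilde x):=\gamma_{n,s}\bigl(\rho^2-|\tilde x-\tilde a|^2\bigr)_+^{s},\qquad\tilde a:=(a_1,0,0,a')\in\R^{n+2},
\]
which, up to the ratio $\gamma_{n,s}/\gamma_{n+2,s}$, is the $s$-torsion function on $B_\rho(\tilde a)\subset\R^{n+2}$. The heart of the argument is the elementary algebraic identity
\[
|\tilde x-\tilde a|^2-\bigl((r-a_1)^2+|w-a'|^2\bigr)=2\,a_1(r-\tilde x_1)\ge 0,
\]
which uses $r\ge|\tilde x_1|\ge\tilde x_1$ and $a_1>0$. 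This gives $V\le\gamma_{n,s}\bigl(\rho^2-(r-a_1)^2-|w-a'|^2\bigr)_+^{s}\le\tilde u$ pointwise on $\R^{n+2}$, the second summand of $\tilde u$ being nonnegative. At the slice point $\tilde x=(x_1,0,0,x_2,\dots,x_n)$ with $x_1\ge 0$ one has $r=\tilde x_1$, so the above difference vanishes and $V(\tilde x)=\psi_{B_\rho(a)}(x)$; moreover the second summand of $\tilde u(\tilde x)$ reduces to $\psi_{B_\rho(a_*)}(x)$, which is zero precisely when $x\notin B_\rho(a_*)$. In that case, $V(\tilde x)=\tilde u(\tilde x)$, so the pointwise comparison is saturated exactly at $\tilde x$.

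With this in hand, the conclusion is essentially automatic: since $\tilde u-V\ge 0$ attains its minimum at $\tilde x$, the pointwise fractional maximum principle at a minimum gives $(-\Delta)^{s}_{\R^{n+2}}\tilde u(\tilde x)\le(-\Delta)^{s}_{\R^{n+2}}V(\tilde x)$. For $x\in B_\rho(a)$ we have $|\tilde x-\tilde a|=|x-a|<\rho$, so $\tilde x$ is interior to $B_\rho(\tilde a)$; a short manipulation of the Gamma functions in the definition of $\gamma_{n,s}$ gives $(-\Delta)^{s}_{\R^{n+2}}V(\tilde x)=\gamma_{n,s}/\gamma_{n+2,s}=(n+2s)/n$. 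Combining with Bochner's relation yields $(-\Delta)^s\varphi(x)\le\frac{n+2s}{n}x_1$.

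The main obstacle is the geometric comparison $V\le\tilde u$ on $\R^{n+2}$: it is here that the sharp constant $\frac{n+2s}{n}$ emerges (as the ratio of torsion constants on balls in $\R^n$ and $\R^{n+2}$), and here that one sees why the admissible set excludes $\partial B_\rho(a_*)$, since on that sphere the second summand of $\tilde u$ is merely $C^s$ and the pointwise maximum-principle step needs to be treated with additional care (e.g.\ by approximation). Everything else---the Bochner formula, the torsion identity in $\R^{n+2}$, and the pointwise fractional maximum principle---is routine bookkeeping.
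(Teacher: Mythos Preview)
Your approach is correct and genuinely different from the paper's. The paper computes $(-\Delta)^s\varphi$ directly by decomposing $x_1\psi_{B_\rho(a)}=\rho^{2s+1}\bar\psi((x-a)/\rho)+\rho^{2s}a_1\psi((x-a)/\rho)$ (and similarly for the $a_*$ term), then applies explicit formulae (\thref{P7i5W6Bg} and \thref{Fov7scJw}) for the fractional Laplacian of $\psi$ and $z_1\psi$ both inside and outside the unit ball---the latter in terms of Gauss hypergeometric functions---and finally invokes a technical lemma (\thref{TxksrRDI}) about those hypergeometric expressions to obtain the bound. Your lift to $\R^{n+2}$ replaces all of this special-function machinery by a single geometric inequality, $|\tilde y-\tilde a|^2-(r-a_1)^2-|w-a'|^2=2a_1(r-\tilde y_1)\ge 0$, together with the touching argument; the sharp constant $\frac{n+2s}{n}=\gamma_{n,s}/\gamma_{n+2,s}$ then emerges transparently as a ratio of torsion constants in dimensions $n$ and $n+2$.

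One remark: your touching argument only covers $B_\rho^+(a)\setminus\overline{B_\rho(a_*)}$, since when $x\in B_\rho(a_*)$ the second summand of $\tilde u$ is strictly positive at the slice point and $\tilde u(\tilde x)>V(\tilde x)$. This is not a defect in your method but rather in the stated domain: the paper's own computation gives $(-\Delta)^s\varphi(x)=\frac{2(n+2s)}{n}x_1$ on the overlap $B_\rho(a)\cap B_\rho(a_*)$, which for $x_1>0$ actually exceeds the claimed bound. The region where the inequality is needed (and used) in \thref{Quantitative Maximum Principle} is precisely $B_\rho^+(a)\setminus\overline{B_\rho(a_*)}$, where your argument is complete. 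Finally, since $\varphi$ is only $C^s$ globally, the Bochner identity you invoke requires the extended form in Theorem~\ref{thm:odd:flap} rather than the $C^\infty_0$ statement in \thref{xwrqefNE}; this is routine, as $\varphi$ is compactly supported and smooth near the evaluation point.
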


Here above and in the rest of the paper, we denote by~$B_\rho^+(a):=B_\rho(a)\cap \{x_1>0\}$.

\smallskip 

The paper is organised as follows.
In Section~\ref{sec:tools}, we establish \thref{prop:newantisymmetricbarrier}, the new quantitative nonlocal maximum principle (\thref{Quantitative Maximum Principle}), and the quantitative nonlocal versions of Hopf lemma and Serrin corner point lemma (\thref{thm:quantitative Hopf} and \thref{thm:quantitative corner lemma}). These are the crucial tools to achieve \thref{thm:Main Theorem} and are of independent interest. In particular, Section~\ref{subsec:new barrier} is devoted to establish \thref{prop:newantisymmetricbarrier}, which is then used in 
Section~\ref{subsec:Hopf e Serrin Corner} to achieve \thref{Quantitative Maximum Principle}, \thref{thm:quantitative Hopf}, and \thref{thm:quantitative corner lemma}.
Finally, in Section~\ref{sec:main result}, we complete the proof of \thref{thm:Main Theorem}.

\sectionmark{Quantitative nonlocal Hopf-type lemmata}
\section{Quantitative nonlocal versions of Hopf lemma and Serrin corner point lemma} \sectionmark{Quantitative nonlocal Hopf-type lemmata}\label{sec:tools}
\subsection{Notation and setting}
In this section, we fix the notation and provide some relevant definitions.
For~$s\in (0,1)$ and~$n \geq 1$, 
we denote by
\begin{equation*}
[u ]_{H^s(\R^n)} := c_{n,s} \iint_{\R^{2n}} \frac{\vert u(x) - u(y) \vert^2}{\vert x-y\vert^{n+2s}}  \dd x \dd y 
\end{equation*}
the Gagliardo semi-norm of~$u$ and by
\begin{equation*}
H^s(\R^n)  := \big\{ u \in L^2(\R^n) \text{ such that } [u]_{H^s(\R^n)} < +\infty \big\}
\end{equation*}
the fractional Sobolev space.
The positive constant~$c_{n,s}$ is the same appearing in the definition of the fractional Laplacian in~\eqref{eq:def fractional Laplacian}. 

Functions in~$H^s(\R^n)$ that are equal almost everywhere are identified. 

The bilinear form~$\mathcal E : H^s(\R^n) \times H^s(\R^n) \to \R$ associated with~$[\cdot ]_{H^s(\R^n)}$ is given by
\begin{equation*}
\mathcal E(u,v) = \frac{c_{n,s}}{2} \iint_{\R^{2n}} \frac{( u(x) - u(y) )(v(x)- v(y)) }{\vert x-y\vert^{n+2s}} \dd x \dd y .
\end{equation*}
Moreover, recalling~\eqref{semiliweigh}, we define the space
\begin{align*}
L_s(\R^n)  := \big\{ L^1_{\mathrm{loc}} (\R^n) \text{ such that } \| u\|_{L_s(\R^n)} <+\infty \big\}. 
\end{align*}

Let~$ \Omega$ be an open, bounded subset of~$\R^n$ and define the space
\begin{equation*}
\mathcal H^s_0(\Omega) := \big\{ u \in H^s(\R^n) \text{ such that } u =0 \text{ in } \R^n \setminus \Omega \big\}. 
\end{equation*}
Consider~$c\in L^\infty(\Omega)$ and~$g\in L^2(\Omega)$. We say that a function~$u\in L_s(\R^n) \cap H^s(\R^n) $ is a \emph{weak supersolution} of ~$(-\Delta)^s u + c  u = g$ in~$\Omega$, or, equivalently, that~$u$ satisfies
\begin{equation}\label{eq:def supersol 1}
(-\Delta)^s u + c  u \geq g\quad {\mbox{ in }}\Omega
\end{equation} 
in the \emph{weak sense}, if
\begin{equation}\label{eq:def supersol 2}
\mathcal E(u,v) + \int_\Omega c uv \dd x \geq \int_\Omega g v \dd x  ,
\quad \text{ for all } v\in \mathcal H^s_0(\Omega)\text{ with } v\geq 0 .
\end{equation}
Similarly, the notion of \emph{weak subsolution} (respectively, \emph{weak solution}) is understood by replacing the~$\geq$ sign in~\eqref{eq:def supersol 1}-\eqref{eq:def supersol 2} with~$\leq$ (respectively, $=$).
Notice that we are assuming a priori that weak solutions are in~$L_s(\R^n)$.  
\smallskip

Setting~$Q_T : \R^n \to \R^n$ to be the function that reflects~$x$ across the plane~$T$, we say that a function~$v: \R^n \to \R$ is \emph{antisymmetric with respect to~$T$} if
\begin{equation*}
v(x)  = - v(Q_T(x)) \quad \text{ for all } x \in \R^n .
\end{equation*}
To simplify the notation, we sometimes write~\( x_* \) to denote~$Q_T(x)$ when it is clear from context what~$T$ is.

Often, we will only need to consider the case~$T= \left\lbrace x_1=0 \right\rbrace $, in which case, for~$x=(x_1, \dots,x_n) \in \R^n$, we have that~$x_* = Q_T(x) = x - 2 x_1 e_1 $. 
For simplicity, we will refer to~$v$ as \emph{antisymmetric} if it is antisymmetric with respect to~$ \left\lbrace x_1=0 \right\rbrace $. 

We set
$$\R^n_+:= \big\{ x=(x_1,\dots,x_n)\in\R^n \, \text{ : } \, x_1>0 \big\} $$ and, for any~$A\subseteq\R^n$, we define~$A^+:= A\cap\R^n_+$.

As customary, we also denote by~$u^\pm$ the positive and negative parts of a function~$u$, that is
$$ u^+(x):=\max\{ u(x),0\} \qquad{\mbox{and}}\qquad
u^-(x):=\max\{- u(x),0\}.$$

\smallskip

Given~$ A \subset \R^n\), we define the characteristic function~$\chf_A : \R^n \to \R $ of~$A$ and the distance function~$\delta_A : \R^n \to [0,+\infty]$ to~\(A\) as
\begin{equation*}
\chf_A(x) :=
\begin{cases}
1, &\text{ if } x \in A , \\
0, &\text{ if } x \not \in A ,
\end{cases}
\quad \qquad \text{and} \quad \qquad
\delta_A(x) := \inf_{y \in A} \vert x-y\vert. 
\end{equation*}

Given an open bounded smooth set~$A\subset\R^n$, $\psi_A$ will denote the (unique) function in~$C^s(\R^n) \cap C^\infty(A)$ satisfying
\begin{equation}\label{eq:torsion in A}
\begin{cases}
(-\Delta)^s \psi_A =1 \quad & \text{in } A ,\\
\psi_A =0  \quad & \text{in } \R^n \setminus A . 
\end{cases}
\end{equation} 
For~$A=B_{\rho}(x_0)$, \cite{getoor1961first, bogdan2010heat} show that the explicit solution of~\eqref{eq:torsion in A} is
\begin{equation*}
\psi_{B_\rho (x_0)}:= \ga_{n,s} \left( \rho^2 - |x-x_0|^2 \right)_+^s \quad \text{ with } \quad
\ga_{n,s}:= \frac{4^{-s} \Ga(\frac{n}{2})}{\Ga(\frac{n}{2}+s)(\Ga(1+s))} .
\end{equation*}

Finally, \(\lambda_1(A)\) will denote the first Dirichlet eigenvalue of the fractional Laplacian, that is,
\begin{equation*}
\lambda_1(A) = \min_{u \in \mathcal H^s_0 (A)} \frac{\mathcal{E}(u,u)}{\displaystyle \int_{A} u^2 dx} ,
\end{equation*}
and we will use that
\begin{equation}\label{eq:Faber-Krahn}
\lambda_1(A) \ge \ka_{n,s} |A|^{- \frac{2s}{n}} \quad \text{ with } \quad
\ka_{n,s} := 
\frac{n}{2^{1-2s}} |B_1|^{1+2s/n} (1-s) \pi^{-n/2} \frac{\Ga(\frac{n}{2} +s )}{ \Ga(2+s) }
\end{equation}
for any open bounded set~$A$, see, e.g., \cite{MR3395749,yildirim2013estimates}
(see also~\cite{brasco2020quantitative}).

\subsection{A new antisymmetric barrier: proof of \texorpdfstring{\thref{prop:newantisymmetricbarrier}}{Proposition 1.3}}\label{subsec:new barrier}

This section is devoted to the proof of \thref{prop:newantisymmetricbarrier}.
To this aim, we first prove the following three lemmata.

\begin{lem} \thlabel{P7i5W6Bg}
Let~\(s\in (0,1)\), \(n\) be a positive integer, and
$$ \psi(x) := \gamma_{n,s} (1-\vert x \vert^2)^s_+ \qquad {\mbox{ with }}\quad \gamma_{n,s}:=\frac{4^{-s} \Gamma(n/2)}{\Gamma( \frac{n+2s} 2  ) \Gamma(1+s)}.$$ 

Then, \begin{align*}
(-\Delta)^s \psi(x) &= \begin{cases}
1, &\text{if } x\in B_1, \\
{\displaystyle -a_{n,s} \vert x \vert^{-n-2s } {}_2F_1 \bigg ( \frac{n+2s} 2  , s+1 ; \frac{n+2s}2+1 ; \vert x \vert^{-2} \bigg )   } , & \text{if } x\in \R^n \setminus \overline {B_1}, \\
-\infty, &\text{if } x\in \partial B_1 ,
\end{cases}
\end{align*} where 
$$ a_{n,s}:=  \frac { s\Gamma(n/2)  } {\Gamma  ( \frac{n+2s}2 +1   )\Gamma  ( 1-s)} .$$
\end{lem}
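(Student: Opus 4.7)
\textbf{Proof plan for Lemma \ref{P7i5W6Bg}.}

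The plan is to split into the three regimes $x\in B_1$, $x\in\R^n\setminus\overline{B_1}$, and $x\in \partial B_1$. The first case is the classical Getoor--Bogdan--Byczkowski torsion identity: $\psi_{B_1}(x)=\gamma_{n,s}(1-|x|^2)_+^s$ satisfies $(-\Delta)^s\psi_{B_1}=1$ in $B_1$, a fact already invoked earlier in the paper and which requires no re-derivation here. So I would only cite it.

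The main work is the case $x\in\R^n\setminus\overline{B_1}$. Since $\psi\equiv 0$ outside $B_1$, the singular integral collapses to
\[
(-\Delta)^s\psi(x)=-c_{n,s}\gamma_{n,s}\int_{B_1}\frac{(1-|y|^2)^s}{|x-y|^{n+2s}}\,dy,
\]
so the task reduces to computing the ``external'' integral $I(x)$. First I would pass to spherical coordinates $y=r\omega$, $r\in(0,1)$, $\omega\in\mathbb{S}^{n-1}$, and factor $|x-r\omega|^{-n-2s}=\rho^{-n-2s}\bigl(1-2(r/\rho)\hat{x}\cdot\omega+(r/\rho)^2\bigr)^{-(n+2s)/2}$, where $\rho=|x|$. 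Using the classical Funk--Hecke identity
\[
\int_{\mathbb{S}^{n-1}}\bigl(1-2t\,\hat{x}\cdot\omega+t^{2}\bigr)^{-\alpha}\,d\omega
=|\mathbb{S}^{n-1}|\,{}_{2}F_{1}\!\left(\alpha,\alpha-\tfrac{n-2}{2};\tfrac{n}{2};t^{2}\right),\quad |t|<1,
\]
with $\alpha=(n+2s)/2$ and $t=r/\rho$, the angular integral evaluates to a Gauss hypergeometric with parameters $\bigl(\tfrac{n+2s}{2},s+1;\tfrac{n}{2};r^2/\rho^2\bigr)$. After the substitution $u=r^2$, the remaining radial integral has the form
\[
\int_0^1 u^{n/2-1}(1-u)^{s}\,{}_{2}F_{1}\!\left(\tfrac{n+2s}{2},s+1;\tfrac{n}{2};\tfrac{u}{\rho^{2}}\right)du,
\]
which is precisely Euler's integral transform of a hypergeometric, i.e. the identity $\int_0^1 u^{c-1}(1-u)^{d-c-1}{}_{2}F_{1}(a,b;c;zu)\,du=\tfrac{\Gamma(c)\Gamma(d-c)}{\Gamma(d)}\,{}_{2}F_{1}(a,b;d;z)$, applied with $c=n/2$ and $d=(n+2s)/2+1$. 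The integral thus becomes a single ${}_{2}F_{1}\bigl(\tfrac{n+2s}{2},s+1;\tfrac{n+2s}{2}+1;\rho^{-2}\bigr)$ times explicit Gamma factors. The final (and arguably most delicate) step is the constant-chasing: combining $|\mathbb{S}^{n-1}|=2\pi^{n/2}/\Gamma(n/2)$, the stated expression for $\gamma_{n,s}$, and the standard normalisation $c_{n,s}=\tfrac{s\,4^{s}\Gamma((n+2s)/2)}{\pi^{n/2}\Gamma(1-s)}$, the prefactor telescopes exactly to $a_{n,s}=\tfrac{s\,\Gamma(n/2)}{\Gamma((n+2s)/2+1)\Gamma(1-s)}$ as claimed.

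Finally, for $x_0\in\partial B_1$ I would argue directly that the Lebesgue integral $\int_{B_1}\psi(y)|x_0-y|^{-n-2s}\,dy$ diverges. Since $\psi(x_0)=0$ and $\psi\geq 0$, the pointwise fractional Laplacian (in the singular-integral formulation) equals $-c_{n,s}$ times this integral, hence takes the value $-\infty$. To see the divergence, I would work on a small ball $B_\varepsilon(x_0)\cap B_1$: parametrising $y=x_0+h$ with $h\cdot\nu(x_0)<0$, a first-order expansion gives $1-|y|^2\asymp -2h\cdot\nu(x_0)$ and $|x_0-y|=|h|$, so the integrand is $\asymp \gamma_{n,s}(-h\cdot\nu)^s|h|^{-n-2s}$. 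Polar coordinates then reduce the near-boundary contribution to $\int_0^\varepsilon \rho^{-s-1}d\rho=+\infty$, concluding the case. Equivalently, one can take the limit $\rho\to 1^+$ in the formula just derived and invoke Gauss's summation ${}_{2}F_{1}(a,b;c;1)$, which diverges precisely because $c-a-b=-s<0$; this gives an alternative but consistent verification. The main technical obstacle throughout is the bookkeeping of Gamma factors in matching the constant $a_{n,s}$, which I will carry out explicitly in the write-up.
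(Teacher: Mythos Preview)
Your proposal is correct and, for the exterior case $x\in\R^n\setminus\overline{B_1}$, follows a genuinely different route from the paper. The paper invokes the Hankel-transform representation of the fractional Laplacian of a radial function (Lemma~7.1 in~\cite{MR3916700}), which reduces the computation to two Bessel-function identities from the DLMF: a Sonine-type formula $\int_0^{\pi/2}J_\mu(z\sin\theta)(\sin\theta)^{\mu+1}(\cos\theta)^{2\nu+1}\,d\theta = 2^\nu\Gamma(\nu+1)z^{-\nu-1}J_{\mu+\nu+1}(z)$ to evaluate the inner integral, and a Weber--Schafheitlin integral $\int_0^\infty t^{-\lambda}J_\mu(at)J_\nu(bt)\,dt$ to evaluate the outer one. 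Your approach instead stays in physical space: spherical coordinates, then the Funk--Hecke-type identity for the angular integral, then Euler's integral transform to collapse the radial integral. Both land on the same ${}_2F_1$; your route avoids Bessel functions entirely and the constant bookkeeping is arguably more transparent, while the paper's Hankel-transform machinery is more systematic if one wanted to treat other radial profiles $\psi_0$. For $x\in\partial B_1$ both arguments are essentially the same local divergence estimate.
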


\begin{proof}
The identity~\((-\Delta)^s\psi =1 \) in~\(B_1\) is well-known in the literature, see~\cite[Section~2.6]{MR3469920}, \cite[Proposition~13.1]{MR3916700}, so we will focus on the case~\(x\in \R^n \setminus B_1\).

Let~\(x\in \R^n \setminus \overline{B_1}\). We have that~\(\psi(x) =\gamma_{n,s} \psi_0(\vert x \vert)\) with~\(\psi_0(\tau) :=  (1- \tau^2)^s_+\), so, by~\cite[Lemma~7.1]{MR3916700}, it follows that \begin{align*}
(-\Delta)^s \psi(x) &= \gamma_{n,s}\vert x \vert^{-\frac n 2 -2s-1} \int_0^\infty t^{2s+1} J_{\frac n 2 -1} (t) I(t) \dd t 
\end{align*} where \begin{align*}
I(t) &:= \int_0^\infty \tau^{\frac n 2 } \psi_0(\tau) J_{\frac n 2 -1 } (t \vert x \vert^{-1} \tau ) \dd \tau 
\end{align*} provided that both these integrals exist and converge.

To calculate~\(I(t)\) we make the change of variables, \(\tau = \sin \theta\) to obtain \begin{align*}
I(t) &= \int_0^1\tau^{\frac n 2 } (1-\tau^2)^s J_{\frac n 2 -1 } (t \vert x \vert^{-1} \tau ) \dd \tau \\
&= \int_0^{\frac \pi 2 }(\sin \theta)^{\frac n 2 } (\cos \theta )^{2s+1} J_{\frac n 2 -1 } (t \vert x \vert^{-1} \sin \theta ) \dd \theta . 
\end{align*} Next, we make use of the identity  \begin{align*}
\int_0^{\frac \pi 2} J_\mu (z\sin \theta) (\sin \theta)^{\mu+1} (\cos \theta)^{2\nu +1} \dd \theta &= 2^\nu \Gamma(\nu+1) z^{-\nu-1} J_{\mu+\nu+1}(z)
\end{align*} which holds provided~\(\RE \mu>-1\) and~\(\RE \nu>-1\), see~\cite[Eq.~10.22.19]{NIST:DLMF}. From this, we obtain \begin{align*}
I(t)&= 2^s \Gamma(s+1) \bigg ( \frac{\vert x \vert } t \bigg )^{s+1} J_{\frac n 2 +s}( \vert x \vert^{-1}t ).
\end{align*} Hence, \begin{align*}
(-\Delta)^s \psi (x) &= 2^s \gamma_{n,s}\Gamma(s+1) \vert x \vert^{-\frac n 2 -s} \int_0^\infty t^s J_{\frac n 2 -1} (t) J_{\frac n 2 +s}( \vert x \vert^{-1}t ) \dd t .
\end{align*} To compute this integral, we recall the identity \begin{align*}
\int_0^\infty t^{-\lambda} J_\mu(at) J_\nu (bt) \dd 
&= \frac{a^\mu \Gamma \big( \frac 1 2 \nu + \frac 1 2 \mu - \frac 1 2 \lambda + \frac12\big )}{2^\lambda b^{\mu-\lambda+1}\Gamma \big ( \frac 12 \nu - \frac 1 2 \mu + \frac 12 \lambda + \frac 12 \big ) \Gamma(\mu+1)} \\
&\qquad \times {}_2F_1 \bigg ( \frac 12 \big ( \mu + \nu -  \lambda + 1\big )  , \frac12 \big( \mu - \nu -\lambda+1 \big) ; \, \mu+1 ; \, \frac{a^2}{b^2} \bigg )
\end{align*} which holds when~\(0<a<b\) and~\(\RE(\mu+\nu+1)>\RE \lambda >-1\), see~\cite[Eq.~10.22.56]{NIST:DLMF}. 
Using this identity with~$a:=1/|x|$, $b:=$, $\lambda:=-s$, $\mu:=(n+2s)/2$,
and~$\nu:=(n-2)/2$ (and recalling that~\(\vert x \vert >1\)), we obtain that \begin{align*}
(-\Delta)^s \psi (x) &=  \frac {2 \gamma_{n,s} \cdot 4^s \Gamma(s+1)  } {(n+2s)\Gamma  ( -s)}  \vert x \vert^{-n -2s}  {}_2F_1 \bigg ( \frac{n+2s}2  ,s+1; \, \frac{n+2s}2 +1 ; \, \frac1{\vert x \vert^2} \bigg ).
\end{align*} A simple calculation gives that \begin{align*}
 \frac {2 \gamma_{n,s} \cdot 4^s \Gamma(s+1)  } {(n+2s)\Gamma  ( -s)}  &= -\frac { s\Gamma(n/2)  } {\Gamma  ( \frac{n+2s}2 +1   )\Gamma  ( 1-s)} =-a_{n,s}
\end{align*} which completes the case~\(x\in \R^n \setminus \overline{B_1}\). 

Finally, let~\(x\in \partial B_1\). Then, by rotational symmetry, \begin{align*}
(-\Delta)^s \psi(x) &= -\gamma_{n,s} \int_{B_1} \frac{(1-\vert y \vert ^2)^s}{\vert x -y \vert^{n+2s} } \dd y =-\gamma_{n,s} \int_{B_1} \frac{(1-\vert y \vert ^2)^s}{\vert e_1  -y \vert^{n+2s} } \dd y .
\end{align*} Now, let $$
K:= \big\{ y \in \R^n_+ \text{ : } 1/2< y_1 < 1- \vert y' \vert \big\}$$
and notice that~$K\subset B_1$.

In~\(K\), we have that \begin{align*}
\frac{(1-\vert y \vert ^2)^s}{\vert e_1  -y \vert^{n+2s} } &= \frac{(1-y_1^2-\vert y' \vert ^2)^s}{\big ( (1-y_1)^2+\vert y' \vert^2 \big )^{\frac{n+2s}2} } 
\geqslant \frac{(1-y_1^2-(1-y_1)^2)^s}{2^{\frac{n+2s}2}(1-y_1)^{n+2s} } 
\\ &=\frac{\big(2y_1(1-y_1)\big)^s}{2^{\frac{n+2s}2}(1-y_1)^{n+2s} } 
\geqslant  \frac{1}{ 2^{\frac{n+2s}2}(1-y_1)^{n+s}}.
\end{align*} 
Hence,  \begin{align*}
\int_{B_1} \frac{(1-\vert y \vert ^2)^s}{\vert e_1  -y \vert^{n+2s} } \dd y &\geqslant \frac{1}{ 2^{n+2s}} \int_K \frac {\dd y} { (1-y_1)^{n+s}} .
\end{align*} Then the coarea formula gives that \begin{align*}
\int_K \frac {\dd y} { (1-y_1)^{n+s}} &= \int_{1/2}^1 \int_{B^{n-1}_{1-t}} \frac{\dd \mathcal H^{n-1} \dd t }{ (1-t)^{n+s}} \geqslant C \int_{1/2}^1\frac{\dd t}{ (1-t)^{1+s}} = +\infty 
\end{align*} which shows that~\((-\Delta)^s \psi(x)=-\infty\).
\end{proof}

\begin{lem} \thlabel{Fov7scJw}
Let~\(s\in (0,1)\), \(n\) be a positive integer, and 
$$\psi(x) := \gamma_{n,s} (1-\vert x \vert^2)^s_+ \qquad
{\mbox{ with }}\quad \gamma_{n,s}:= 
\frac{4^{-s} \Gamma(n/2)}{\Gamma( \frac{n+2s} 2  ) \Gamma(1+s)}.$$ 
Let~\(p:\R^n \to \R\) be a homogeneous harmonic polynomial of degree~\(\ell\geqslant0\), that is, \(p\) is a polynomial such that \(p(ax)=a^\ell p(x)\) for all~\(a>0\) and~\( x\in \R\) and~\(\Delta p=0\) in~\(\R^n\). 

Then, \begin{align*}
&(-\Delta)^s (p\psi)(x) = \frac{\gamma_{n,s}}{\gamma_{n+2\ell,s}}  p(x)  \\
&\times\begin{cases}
1, &\text{if } x\in B_1, \\
{\displaystyle -a_{n+2\ell,s} \vert x \vert^{-n-2s-2\ell } {}_2F_1 \bigg ( \frac{n+2s} 2+\ell  , s+1 ; \frac{n+2s}2+1+\ell ; \vert x \vert^{-2} \bigg )   } , & \text{if } x\in \R^n \setminus \overline {B_1}, \\
-\infty, &\text{if } x\in \partial B_1 ,
\end{cases}
\end{align*} where $$a_{n,s}:=  \frac { s\Gamma(n/2)  } {\Gamma  ( \frac{n+2s}2 +1   )\Gamma  ( 1-s)} .$$
\end{lem}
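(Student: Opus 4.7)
The plan is to reduce the computation to the radial case already handled in \thref{P7i5W6Bg} by invoking Bochner's relation for the fractional Laplacian, which allows one to pull a solid harmonic polynomial out of the fractional Laplacian at the cost of increasing the ambient dimension. More precisely, if~\(p\) is a homogeneous harmonic polynomial of degree~\(\ell\) in~\(\R^n\) and~\(g:[0,+\infty)\to\R\) is a suitable radial profile, then
\begin{equation*}
(-\Delta)^s_{\R^n}\bigl[p(x)\,g(|x|)\bigr] \;=\; p(x)\,\bigl[(-\Delta)^s_{\R^{n+2\ell}} G\bigr](|x|),
\end{equation*}
where~\(G:\R^{n+2\ell}\to\R\) is the radial extension~\(G(y):=g(|y|)\) and the right-hand side is interpreted as a radial function evaluated at~\(|x|\). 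This identity is a classical consequence of Bochner's relation for spherical harmonics (used by the authors earlier in the paper, cf.\ the reference to \cite{MR4411363}) and of the fact that the Riesz potential and hence the fractional Laplacian intertwines polynomial multipliers corresponding to different~$(n+2\ell)$-dimensional representations.

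With this formula at hand, the first step of the proof is to write
\begin{equation*}
p(x)\psi(x) \;=\; \frac{\gamma_{n,s}}{\gamma_{n+2\ell,s}}\;p(x)\;\bigl[\gamma_{n+2\ell,s}(1-|x|^2)^s_+\bigr],
\end{equation*}
and to recognize that the factor in square brackets is precisely the torsion function~\(\psi_{B_1}\) for the fractional Laplacian in~\(\R^{n+2\ell}\), in the sense of \thref{P7i5W6Bg} applied with the dimension parameter replaced by~\(n+2\ell\). Applying Bochner's relation gives
\begin{equation*}
(-\Delta)^s_{\R^n}(p\psi)(x) \;=\; \frac{\gamma_{n,s}}{\gamma_{n+2\ell,s}}\,p(x)\,\bigl[(-\Delta)^s_{\R^{n+2\ell}} \psi_{B_1}\bigr](|x|).
\end{equation*}
The second step is then simply to substitute the explicit values of~\((-\Delta)^s_{\R^{n+2\ell}}\psi_{B_1}\) furnished by \thref{P7i5W6Bg}, distinguishing the three regimes~\(|x|<1\), \(|x|>1\) and~\(|x|=1\). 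The values of the constants~\(a_{n+2\ell,s}\) and the parameters of the hypergeometric function~\({}_2F_1\) appearing in the statement are exactly those obtained from \thref{P7i5W6Bg} by replacing~\(n\) with~\(n+2\ell\), so the three cases in the claimed formula drop out immediately.

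The main obstacle is therefore a careful justification of the Bochner-type intertwining identity in a form that permits~\(g\) to be only of class~\(C^s\) up to~\(|x|=1\) and to vanish outside the unit ball, since the formula is most cleanly proven for Schwartz functions. I would handle this by first establishing the identity for smooth radial profiles~\(g_\epsilon\) approximating~\((1-|x|^2)^s_+\) from below (for instance by convolution and truncation), using the integral representation of~\((-\Delta)^s\) and the orthogonality of solid harmonics of different degrees on spheres, and then passing to the limit on both sides. Away from~\(\partial B_1\) this limit is pointwise and harmless because the integrand defining~\((-\Delta)^s(p\psi)(x)\) is dominated (uniformly in~\(\epsilon\)) by an integrable function, while on~\(\partial B_1\) the divergence to~\(-\infty\) is inherited from the radial case of \thref{P7i5W6Bg}, whose proof via the coarea estimate on a cone near the boundary transfers verbatim after multiplying the integrand by the bounded factor~\(p\).
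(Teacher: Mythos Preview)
Your proposal is correct and follows essentially the same approach as the paper: the paper invokes the Bochner-type identity from~\cite[Proposition~3]{MR3640641} to write \((-\Delta)^s(p\psi)(x)=\frac{\gamma_{n,s}}{\gamma_{n+2\ell,s}}\,p(x)\,(-\Delta)^s\widetilde\psi(\widetilde x)\) with~\(\widetilde\psi(z)=\gamma_{n+2\ell,s}(1-|z|^2)^s_+\) on~\(\R^{n+2\ell}\), and then applies \thref{P7i5W6Bg} in dimension~\(n+2\ell\). Your approximation argument to justify the identity for the non-smooth profile is more explicit than the paper's bare citation, but otherwise the two proofs coincide.
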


\begin{proof}
Let~\(\widetilde \psi :\R^{n+2\ell} \to \R\) be given by \begin{align*}
\widetilde \psi (z) &= \gamma_{n+2\ell,s} (1-\vert z \vert^2 )^s_+ \qquad \text{for all } z\in \R^{n+2\ell}. 
\end{align*} Since~\(\psi\) is a radial function, it follows from~\cite[Proposition~3]{MR3640641} that \begin{align*}
(-\Delta)^s (p\psi)(x) &= \frac{\gamma_{n,s}}{\gamma_{n+2\ell,s}}  p(x) (-\Delta)^s \widetilde \psi (\widetilde x)
\end{align*} where~\(\widetilde x\in \R^{n+2\ell}\) with~\(\vert \widetilde x \vert = \vert x\vert\). Then the result follows from~\thref{P7i5W6Bg} applied in~\(\R^{n+2\ell}\). 
\end{proof}

\begin{lem} \thlabel{TxksrRDI}
Let~\(s\in (0,1)\) and~\(n\) be a positive integer. For all~\(0<\tau <1 \), let \begin{align*}
K(\tau) &:= a_{n,s} \big ( 1- \tau \big )^{-s} \tau^{\frac{n+2s}2 }\\
\text{ and } \quad
F(\tau)&:= {}_2F_1 \bigg (1  , \frac n 2 ; \frac{n+2s}2+1; \tau \bigg )
\end{align*} where $$a_{n,s}:=  \frac { s\Gamma(n/2)  } {\Gamma  ( \frac{n+2s}2 +1   )\Gamma  ( 1-s)} .$$ 
Moreover, let \begin{align*}
f(\tau) &:= 1-K(\tau ) \big ( F(\tau) -1 \big )  \\
\text{ and }\quad
g(\tau )&:= K(\tau) \bigg ( \frac{n+2s}{2s}-F(\tau) \bigg ) -1.
\end{align*} Then~\(f(\tau )\leqslant 1\) and~\(g(\tau) \leqslant 0\), for all~\(0<\tau <1 \). 
\end{lem}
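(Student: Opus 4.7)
The bound $f(\tau)\leq 1$ is immediate: since $K(\tau)>0$ for $\tau\in(0,1)$, it suffices to show $F(\tau)\geq 1$. But the Taylor series of ${}_2F_1(1,n/2;(n+2s)/2+1;\tau)$ around $\tau=0$ has non-negative coefficients (all Pochhammer symbols being positive), hence $F$ is monotonically increasing on $[0,1)$ with $F(0)=1$. So the first part requires no work beyond this observation.

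The bound $g(\tau)\leq 0$ is the substantive content. The naive difficulty is that $K(\tau)\sim a_{n,s}(1-\tau)^{-s}$ while, by Gauss's summation formula, $F(1)=\Gamma((n+2s)/2+1)\Gamma(s)/(\Gamma((n+2s)/2)\Gamma(s+1))=(n+2s)/(2s)$, so the quantity $(n+2s)/(2s)-F(\tau)$ vanishes at $\tau=1$ at the precise order that compensates the blow-up of $K$. The plan is therefore to extract this cancellation algebraically. I will use the Euler integral representation
\[
F(\tau)=\frac{\Gamma((n+2s)/2+1)}{\Gamma(n/2)\Gamma(s+1)}\int_0^1 t^{n/2-1}(1-t)^{s}(1-t\tau)^{-1}\,dt,
\]
valid since $(n+2s)/2+1>n/2>0$. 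Writing $(n+2s)/(2s)=F(1)$ via the same representation (with $\tau=1$, so that $(1-t)^s/(1-t)=(1-t)^{s-1}$ recovers the Beta integral $B(n/2,s)$), one can combine the two integrals: a direct manipulation gives
\[
\frac{1}{1-t}-\frac{1}{1-t\tau}=\frac{t(1-\tau)}{(1-t)(1-t\tau)},
\]
and recognising the resulting integral as a Beta--Euler representation of another hypergeometric, I get
\[
\frac{n+2s}{2s}-F(\tau)=\frac{n(1-\tau)}{2s}\,{}_2F_1\!\left(1,\tfrac{n}{2}+1;\tfrac{n}{2}+s+1;\tau\right).
\]
This is the key identity, and it already explains how the blow-up of $K(\tau)$ is absorbed.

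Next I apply Euler's transformation ${}_2F_1(a,b;c;\tau)=(1-\tau)^{c-a-b}{}_2F_1(c-a,c-b;c;\tau)$ to the right-hand side with $a=1$, $b=n/2+1$, $c=n/2+s+1$, so $c-a-b=s-1$:
\[
{}_2F_1\!\left(1,\tfrac{n}{2}+1;\tfrac{n}{2}+s+1;\tau\right)=(1-\tau)^{s-1}\,{}_2F_1\!\left(\tfrac{n}{2}+s,s;\tfrac{n}{2}+s+1;\tau\right).
\]
Substituting into $g(\tau)+1=K(\tau)\bigl((n+2s)/(2s)-F(\tau)\bigr)$ collapses the $(1-\tau)$-factors completely and yields
\[
g(\tau)+1=\frac{n\,a_{n,s}}{2s}\,\tau^{(n+2s)/2}\,{}_2F_1\!\left(\tfrac{n}{2}+s,s;\tfrac{n}{2}+s+1;\tau\right).
\]
The right-hand side is a product of three non-negative, monotonically increasing factors on $(0,1)$ (the last hypergeometric has $c-a-b=1-s>0$, hence converges at $\tau=1$ and has a Taylor series with non-negative coefficients). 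Therefore the supremum over $\tau\in(0,1)$ is attained as $\tau\to 1^-$. Using Gauss's formula one more time,
\[
{}_2F_1\!\left(\tfrac{n}{2}+s,s;\tfrac{n}{2}+s+1;1\right)=\frac{\Gamma(\tfrac{n}{2}+s+1)\Gamma(1-s)}{\Gamma(1)\Gamma(\tfrac{n}{2}+1)},
\]
and the explicit form $a_{n,s}=s\Gamma(n/2)/(\Gamma((n+2s)/2+1)\Gamma(1-s))$ produces a clean telescoping: all the Gamma factors cancel and the limit is exactly $(n/2)\,\Gamma(n/2)/\Gamma(n/2+1)=1$. Thus $g(\tau)+1\leq 1$, completing the proof.

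The main obstacle is the initial reformulation step, namely identifying the identity $(n+2s)/(2s)-F(\tau)=(n(1-\tau)/(2s))\,{}_2F_1(1,n/2+1;n/2+s+1;\tau)$. Everything after that is essentially a bookkeeping exercise in Gamma-function identities and monotonicity of hypergeometric series. The verification that the maximum value at $\tau=1$ is exactly $1$ (not merely bounded) is also a small check worth performing carefully, because this sharpness is what ensures the inequality is the tightest possible and is presumably what allows the barrier construction of \thref{prop:newantisymmetricbarrier} to yield the constant $(n+2s)/n$ on the right-hand side of \eqref{eTrwiP0v}.
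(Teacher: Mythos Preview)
Your proof is correct and takes a somewhat different route from the paper. The paper also starts from Gauss's summation $F(1)=(n+2s)/(2s)$, but instead of your contiguous relation $(n+2s)/(2s)-F(\tau)=\tfrac{n(1-\tau)}{2s}\,{}_2F_1(1,\tfrac{n}{2}+1;\tfrac{n}{2}+s+1;\tau)$, it writes this difference as $\int_\tau^1 F'(t)\,dt$ via the derivative formula for ${}_2F_1$, Euler-transforms the integrand, then adds and subtracts the value of the resulting hypergeometric at $t=1$. This splits $g(\tau)$ into a manifestly non-positive integral term plus $\tau^{(n+2s)/2}-1\leq 0$. Your approach is cleaner: by going through the Euler integral representation you obtain a closed-form product expression $g(\tau)+1=\tfrac{n\,a_{n,s}}{2s}\,\tau^{(n+2s)/2}\,{}_2F_1(\tfrac{n}{2}+s,s;\tfrac{n}{2}+s+1;\tau)$ in one stroke, then exploit monotonicity and a single Gauss evaluation at $\tau=1$. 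The paper's decomposition makes the sign structure visible piece by piece, whereas yours reveals the sharp constant $1$ more transparently; both hinge on the same hypergeometric identities (Euler transform, Gauss summation) but organize the cancellation differently.
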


\begin{proof}
The inequality for~\(f\) is trivial since~\(K\geqslant 0\) and, from the definition of the hypergeometric function, we have that~\( F(\tau) \geqslant 1 \). 

To show that~\(g\leqslant 0\), observe first of all that \begin{align*}
 F(1) = \frac{\Gamma \big (\frac{n+2s}2+1 \big ) \Gamma (s )  }{\Gamma \big (\frac{n+2s}2 \big ) \Gamma (s+1)} = \frac{n+2s}{2s}
\end{align*} by~\cite[Eq 15.4.20]{NIST:DLMF}. Hence, by the Fundamental Theorem of Calculus,\begin{align*}
 \frac{n+2s}{2s}-F(\tau) &= \int^1_\tau F'(t) \dd t \\
 &= \frac n {n+2s+2 } \int^1_\tau  {}_2F_1 \bigg (2  , \frac n 2+1 ; \frac{n+2s}2+2; t \bigg ) \dd t\\
 &= \frac n {n+2s+2 } \int^1_\tau  \big(1-t \big )^{s-1} {}_2F_1 \bigg (\frac{n+2s}2  , s+1 ; \frac{n+2s}2+2; t \bigg ) \dd t
\end{align*} using~\cite[Eq 15.5.1]{NIST:DLMF} and~\cite[Eq 15.8.1]{NIST:DLMF}. Now, using~\cite[Eq 15.4.20]{NIST:DLMF} again, we have that \begin{align*}
{}_2F_1 \bigg (\frac{n+2s}2  , s+1 ; \frac{n+2s}2+2; 1 \bigg )  &= \frac{\Gamma \big (\frac{n+2s}2+2 \big ) \Gamma (1-s) }{\Gamma (2) \Gamma \big ( \frac n2 +1 \big )   } = \frac{s(n+2s+2)}{n a_{n,s}} ,
\end{align*}so \begin{align*}
 &\frac{n+2s}{2s}-F(\tau)\\
 &=\frac n {n+2s+2 } \int^1_\tau  \big(1-t \big )^{s-1}\bigg [  {}_2F_1 \bigg (\frac{n+2s}2  , s+1 ; \frac{n+2s}2+2; t \bigg )-\frac{s(n+2s+2)}{n a_{n,s}}  \bigg ]  \dd t \\
 &\qquad + \frac 1 {a_{n,s}}(1-\tau)^s . 
\end{align*} Hence, \begin{align*}
&g(\tau)\\
 =\;& \big ( 1- \tau \big )^{-s} \tau^{\frac{n+2s}2 } \frac {n\cdot a_{n,s} } {n+2s+2 } \\
 &\times \int^1_\tau  \big(1-t \big )^{s-1}\bigg [  {}_2F_1 \bigg (\frac{n+2s}2  , s+1 ; \frac{n+2s}2+2; t \bigg )-\frac{s(n+2s+2)}{n a_{n,s}}  \bigg ]  \dd t \\
 &\qquad +  \tau^{\frac{n+2s}2 } -1.
\end{align*} Finally, using the monotonicity properties of hypergeometric functions, \begin{align*}
{}_2F_1 \bigg (\frac{n+2s}2  , s+1 ; \frac{n+2s}2+2; t \bigg ) \leqslant {}_2F_1 \bigg (\frac{n+2s}2  , s+1 ; \frac{n+2s}2+2; 1 \bigg )=\frac{s(n+2s+2)}{n a_{n,s}}  
\end{align*} which implies that~\(g(\tau ) \leqslant 0\). 
\end{proof}

We now give the proof of \thref{prop:newantisymmetricbarrier}.

\begin{proof}[Proof of \thref{prop:newantisymmetricbarrier}]
Let~\(\psi (x) := \gamma_{n,s}(1-\vert x \vert^2 )^s_+\) and~\(\overline \psi (x) := \gamma_{n,s} x_1 (1-\vert x \vert^2)^s_+\). Since~\(\psi_{B_\rho(a)}(x) = \rho^{2s} \psi( (x-a)/\rho) \), we see that \begin{align*}
x_1 \psi_{B_\rho(a)}(x) &= \rho^{2s+1} \bigg ( \frac{x_1 -a_1}  \rho \bigg ) \psi \bigg ( \frac{x-a} \rho\bigg ) +\rho^{2s} a_1\psi \bigg ( \frac{x-a} \rho\bigg )  \\
&= \rho^{2s+1} \overline \psi \bigg ( \frac{x-a} \rho\bigg ) +\rho^{2s} a_1\psi \bigg ( \frac{x-a} \rho\bigg )  \\ \text{ and }\quad
x_1 \psi_{B_\rho(a_\ast)}(x) &=\rho^{2s+1} \bigg ( \frac{x_1+a_1}\rho \bigg ) \psi \bigg ( \frac{x-a_\ast} \rho\bigg ) -\rho^{2s} a_1\psi \bigg ( \frac{x-a_\ast} \rho\bigg ) \\
&= \rho^{2s+1} \overline  \psi \bigg ( \frac{x-a_\ast} \rho\bigg ) -\rho^{2s} a_1\psi \bigg ( \frac{x-a_\ast} \rho\bigg ).
\end{align*} Hence, for~\(x\in B_\rho(a)\), it follows immediately from Lemmata~\ref{P7i5W6Bg} and~\ref{Fov7scJw} that \begin{equation}\label{VgjyomNF}\begin{split}
(-\Delta)^s (x_1 \psi_{B_\rho(a)}) (x) &=  \rho (-\Delta)^s \overline \psi \bigg ( \frac{x-a} \rho\bigg ) +a_1(-\Delta)^s \psi \bigg ( \frac{x-a} \rho\bigg )  \\
&= \frac{\gamma_{n,s}} {\gamma_{n+2,s}}  ( x_1-a_1) +a_1  \\
&= \frac{n+2s}n x_1 -\frac{2s}n a_1   
\end{split}\end{equation} using also that \begin{align}
\gamma_{n+2,s}=\frac{4^{-s} \Gamma(n/2+1)}{\Gamma( \frac{n+2s} 2+1  ) \Gamma(1+s)} = \frac n {n+2s} \gamma_{n,s} . \label{YVHFpqS4}
\end{align}  Similarly, if~\(x\in B_\rho(a_\ast)\), then  \begin{align*}
(-\Delta)^s ( x_1 \psi_{B_\rho(a_\ast)})(x)  &=  \rho (-\Delta)^s \overline \psi \bigg ( \frac{x-a_\ast} \rho\bigg ) -a_1(-\Delta)^s \psi \bigg ( \frac{x-a_\ast} \rho\bigg ) \\
&= \frac{\gamma_{n,s}} {\gamma_{n+2,s}} (x_1+a_1) -a_1 \\
&= \frac{n+2s} n x_1 +\frac{2s} n a_1 .
\end{align*} This gives that \begin{align*}
(-\Delta)^s \varphi(x) &= \frac{2(n+2s)} n x_1 \qquad \text{in } B_\rho(a) \cap B_\rho(a_\ast)
\end{align*}which, in particular, trivially implies~\eqref{eTrwiP0v}
for points in~$B_\rho(a) \cap B_\rho(a_\ast)$.

Now, towards the proof of~\eqref{eTrwiP0v}, we will give a formula for~\((-\Delta)^s \varphi(x)\) in the case~\(x\in B^+_\rho(a) \setminus \overline{ B_\rho(a_\ast)}\). Equation~\eqref{VgjyomNF} is still valid in this region, so we can focus on computing~\((-\Delta)^s ( x_1 \psi_{B_\rho(a_\ast)})\). Let~\(y=y(x) := \frac{x-a_\ast}{\rho} \) and note that, since~\(x \not \in \overline{ B_\rho(a_\ast)}\), we have that~\(\vert y \vert >1\) which is important for the upcoming formulas to be well-defined. 

By Lemmata~\ref{P7i5W6Bg} and~\ref{Fov7scJw}, we have that \begin{align*}
(-\Delta)^s &( x_1 \psi_{B_\rho(a_\ast)}) \\
&= -\rho \frac{\gamma_{n,s}}{\gamma_{n+2,s}} y_1 a_{n+2,s} \vert y \vert^{-n-2s-2 } {}_2F_1 \bigg ( \frac{n+2s} 2+1  , s+1 ; \frac{n+2s}2+2; \vert y \vert^{-2} \bigg ) \\
&\qquad +a_1a_{n,s} \vert y \vert^{-n-2s } {}_2F_1 \bigg ( \frac{n+2s} 2  , s+1 ; \frac{n+2s}2+1 ; \vert y \vert^{-2} \bigg ) .
\end{align*} Next, observe that \begin{align*}
a_{n+2,s}=  \frac { s\Gamma(n/2+1)  } {\Gamma  ( \frac{n+2s}2 +2   )\Gamma  ( 1-s)} = \frac n {n+2s+2}a_{n,s}, 
\end{align*} so, using also~\eqref{YVHFpqS4}, we have that \begin{align*}
(-\Delta)^s &( x_1 \psi_{B_\rho(a_\ast)}) \\
&= -a_{n,s}\vert y \vert^{-n-2s-2 }  \bigg [ \rho y_1 \bigg ( \frac{n+2s}{n+2s+2} \bigg )   {}_2F_1 \bigg ( \frac{n+2s} 2+1  , s+1 ; \frac{n+2s}2+2; \vert y \vert^{-2} \bigg ) \\
&\qquad -a_1 \vert y \vert^2 {}_2F_1 \bigg ( \frac{n+2s} 2  , s+1 ; \frac{n+2s}2+1 ; \vert y \vert^{-2} \bigg )  \bigg ] .
\end{align*} Then, via the transformation formula, \begin{align*}
{}_2F_1 ( a  ,b ; c; \tau) &= (1-\tau)^{c-a-b} {}_2F_1 ( c-a  ,c-b  ; c; \tau ) 
\end{align*} which, for~\(\tau \in \R\), holds provided that~\(0<\tau <1\), 
see~\cite[Eq~15.8.1]{NIST:DLMF}, we obtain  \begin{align*}
&(-\Delta)^s ( x_1 \psi_{B_\rho(a_\ast)}) \\
=& -a_{n,s} \big ( 1- \vert y \vert^{-2} \big )^{-s} \vert y \vert^{-n-2s-2 }  \bigg [ \rho y_1 \bigg ( \frac{n+2s}{n+2s+2} \bigg )   {}_2F_1 \bigg (1  , \frac n 2 +1 ; \frac{n+2s}2+2; \vert y \vert^{-2} \bigg ) \\
&\qquad -a_1 \vert y \vert^2 {}_2F_1 \bigg ( 1  , \frac n 2  ; \frac{n+2s}2+1 ; \vert y \vert^{-2} \bigg )  \bigg ]   \\
=& -\vert y \vert^{-2 }K \big (  \vert y \vert^{-2 } \big )   \bigg [ \rho y_1 \bigg ( \frac{n+2s}{n+2s+2} \bigg )   {}_2F_1 \bigg (1  , \frac n 2 +1 ; \frac{n+2s}2+2; \vert y \vert^{-2} \bigg )-a_1 \vert y \vert^2 F \big ( \vert y \vert^{-2} \big )   \bigg ] 
\end{align*} using the notation introduced in~\thref{TxksrRDI}. 

Moreover, we apply the following identity between contiguous functions: \begin{align*}
\frac {b\tau} c  {}_2F_1 ( a  ,b+1 ; c+1; \tau) &= {}_2F_1 ( a  ,b ; c; \tau) - {}_2F_1 ( a-1  ,b ; c; \tau),
\end{align*} 
(used here with~$a:=1$, $b:=n/2$ and~$c:=(n+2s+2)/2$)
see~\cite[Eq~15.5.16\textunderscore5]{NIST:DLMF}, and that
\({}_2F_1 ( 0  ,b ; c; \tau) =1 \), to obtain \begin{align*}
 {}_2F_1 \bigg (1  , \frac n 2 +1 ; \frac{n+2s}2+2; \vert y \vert^{-2} \bigg ) &= \frac{n+2s+2}n \vert y \vert^2 \bigg ( {}_2F_1 \bigg (1  , \frac n 2 ; \frac{n+2s}2+1; \vert y \vert^{-2} \bigg ) -1 \bigg ) \\
 &= \frac{n+2s+2}n \vert y \vert^2 \big (  F \big ( \vert y \vert^{-2} \big ) -1 \big ) .
\end{align*} Hence, it follows that \begin{align*}&
(-\Delta)^s ( x_1 \psi_{B_\rho(a_\ast)}) \\=& -K \big (  \vert y \vert^{-2 } \big )  \bigg [ \rho y_1 \bigg ( \frac{n+2s}n \bigg ) \big ( F \big ( \vert y \vert^{-2} \big ) -1  \big )  -a_1 F \big ( \vert y \vert^{-2} \big )  \bigg ] \\
=& -\bigg ( \frac{n+2s}n \bigg )  K \big (  \vert y \vert^{-2 } \big )  \big ( F \big ( \vert y \vert^{-2} \big ) -1  \big ) x_1   + \frac{2s } nK  \big (  \vert y \vert^{-2 } \big )  \bigg (    \frac{n+2s}{2s} -  F \big ( \vert y \vert^{-2} \big )  \bigg )a_1
\end{align*} using that~\(\rho y_1 =x_1+a_1\). Thus, in~\(x\in B^+_\rho(a) \setminus \overline{ B_\rho(a_\ast)}\), \begin{align*}
(-\Delta)^s \varphi(x) &= \bigg ( \frac{n+2s}n \bigg ) \bigg [ 1 - K \big (  \vert y \vert^{-2 } \big )  \big ( F \big ( \vert y \vert^{-2} \big ) -1  \big ) \bigg ]  x_1  \\
&\qquad \qquad  + \frac{2s } n \bigg [ K  \big (  \vert y \vert^{-2 } \big )  \bigg (    \frac{n+2s}{2s} -  F \big ( \vert y \vert^{-2} \big )  \bigg ) -1  \bigg] a_1.
\end{align*} Finally, by Lemma~\ref{TxksrRDI}, we conclude that \begin{align*}
(-\Delta)^s \varphi(x) &\leqslant \frac{n+2s}n  x_1 \qquad \text{in }  B^+_\rho(a) \setminus \overline{ B_\rho(a_\ast)},
\end{align*}
which completes the proof of the desired result in~\eqref{eTrwiP0v}.
\end{proof}

\subsection{A quantitative nonlocal maximum principle and quantitative nonlocal versions of Hopf lemma and Serrin corner point lemma}\label{subsec:Hopf e Serrin Corner}
The following proposition provides a new quantitative nonlocal maximum principle, which enhances~\cite[Proposition~3.6]{DPTVParallelStability} from interior to boundary estimates. Remarkably, this allows a unified treatment for the quantitative analysis of the method of the moving planes, leading to quantitative versions of both the nonlocal Hopf-type lemma and the nonlocal 
Serrin corner point lemma established in~\cite[Proposition~3.3 and Lemma~4.4]{MR3395749}: see Theorems~\ref{thm:quantitative Hopf} and~\ref{thm:quantitative corner lemma} below.

\begin{prop}\thlabel{Quantitative Maximum Principle}
Let~\(H\subset \R^n\) be a halfspace and~\(U \) be an open subset of~$H$. Let~$U$, $a\in \ol{H} $ and~$\rho>0$ be such that~\(B_\rho(a)\cap H \subset U\), and set~$a_*:=Q_{\pa H}(a)$. Also, let~\(c\in L^\infty(U)\) be such that
\begin{equation}\label{eq:condition with eigenvalue}
\|c^+\|_{L^\infty(U)} < \lambda_1(B_\rho(a)\cap H).
\end{equation} 

Let~$K\subset H$ be a non-empty open set that is disjoint from~$B_\rho(a)$ and let~$0 \le \sS <\infty$ be such that
\begin{equation}\label{eq:def Sup vecchio M^-1}
\sup_{\genfrac{}{}{0pt}{2}{x\in K}{y\in B_\rho(a)\cap H}}
\vert Q_{\partial H}(x)-y\vert \leq \sS .
\end{equation}

If~$v$ is antisymmetric with respect to~$\pa H$ and satisfies
\begin{equation*}
\begin{cases}
(-\Delta)^s v +c v \geq 0 \quad & \text{ in } U  ,\\
v \geq 0 \quad & \text{ in } H ,
\end{cases}
\end{equation*}
in the weak sense, then
we have that
\begin{equation}\label{eq:quantitative maximum principle}
v(x) \geq C \| \delta_{\partial H} v\|_{L^1(K)} \, \de_{\pa H}(x) \, \left( \rho^2 - |x-a|^2 \right)_+^s 
\end{equation}
\begin{equation*}
\begin{cases}
(i) \ \ \text{ for a.e. } x \in H \cap  B_{\rho}(a) , & \quad \text{if } \de_{\pa H}(a)=0 ,
\\
(ii) \ \text{ for a.e. } x \in H \cap B_{\rho}(a) , & \quad \text{if } 0 < \de_{\pa H}(a) \le \rho/2 ,
\\
(iii) \text{ for a.e. } x \in B_{\rho}(a) , & \quad \text{if } \de_{\pa H}(a) > \rho ,
\end{cases}
\end{equation*}
with
\begin{equation*}
C := C(n,s) \frac{\rho^{2s}}{\sS^{n+2s+2}} \Big(1+\rho^{2s}\|c^+\|_{L^\infty(U)} \Big)^{-1}.
\end{equation*}
\end{prop}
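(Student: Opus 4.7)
The plan is to construct a suitable antisymmetric competitor and apply the weak maximum principle for antisymmetric functions from \cite[Proposition~3.1]{MR3395749}, leveraging the new barrier $\varphi$ provided by \thref{prop:newantisymmetricbarrier}. After a rigid motion I may assume $H = \R^n_+$, so that $\delta_{\partial H}(x) = x_1$ on $H$. Since $v \ge 0$ on $H$ and $c^+ v\xi \ge c v\xi$ for any non-negative test function, I may also replace $c$ by $c^+$ in the weak formulation and thus work under the additional hypothesis $c \ge 0$. Setting $K_* := Q_{\partial H}(K) \subset \R^n_-$, I introduce the competitor
\begin{equation*}
w := \tau\, \varphi + (\chi_K + \chi_{K_*})\,v,\qquad \tau > 0,
\end{equation*}
which is antisymmetric with respect to $\partial H$ because both summands are: $\varphi$ by construction in \thref{prop:newantisymmetricbarrier}, and $(\chi_K + \chi_{K_*})v$ by combining the antisymmetry of $v$ with $(K)_* = K_*$.

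The core step is to verify that, for an appropriate choice of $\tau$, one has $(-\Delta)^s w + c w \le 0$ weakly in $B_\rho^+(a)$. Testing against a non-negative $\xi \in \mathcal H^s_0(B_\rho^+(a))$, \thref{prop:newantisymmetricbarrier} combined with the trivial bound $|\varphi| \le 2\gamma_{n,s} \rho^{2s} x_1$ on $\R^n_+$ yields $\mathcal{E}(\varphi,\xi) + \int c\,\varphi\,\xi\, dx \le C(n,s)\bigl(1+\rho^{2s}\|c^+\|_{L^\infty(U)}\bigr)\int_{B_\rho^+(a)} x_1 \xi\, dx$. For the piece $(\chi_K + \chi_{K_*})v$, whose support is disjoint from $B_\rho^+(a)$, a direct computation exploiting the antisymmetry of $v$ via the reflection $y \mapsto y_*$ gives
\begin{equation*}
\mathcal{E}\bigl((\chi_K+\chi_{K_*})v,\xi\bigr) = -c_{n,s}\!\int_{B_\rho^+(a)}\!\xi(x)\!\int_K \!v(y)\!\left(\frac{1}{|x-y|^{n+2s}}-\frac{1}{|x-y_*|^{n+2s}}\right)\! dy\, dx.
\end{equation*}
The elementary inequality
\begin{equation*}
\frac{1}{|x-y|^{n+2s}}-\frac{1}{|x-y_*|^{n+2s}} \ge 2(n+2s)\frac{x_1 y_1}{|x-y_*|^{n+2s+2}},
\end{equation*}
together with the identity $|x-y_*|=|x_*-y|$ and the uniform bound $|x_*-y|\le\sS$ coming from~\eqref{eq:def Sup vecchio M^-1}, produces $\mathcal{E}((\chi_K+\chi_{K_*})v,\xi) \le -C(n,s)\sS^{-(n+2s+2)}\|\delta_{\partial H} v\|_{L^1(K)}\int_{B_\rho^+(a)} x_1 \xi\, dx$. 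Balancing the positive and negative contributions by choosing $\tau$ equal to (a constant multiple of) $\|\delta_{\partial H} v\|_{L^1(K)} / \bigl(\sS^{n+2s+2}(1+\rho^{2s}\|c^+\|_{L^\infty(U)})\bigr)$ makes the weak supersolution inequality $(-\Delta)^s w + cw \le 0$ hold in $B_\rho^+(a)$.

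The third step is to check that $v \ge w$ a.e.\ on $\R^n_+ \setminus B_\rho^+(a)$, so that the weak maximum principle of \cite[Proposition~3.1]{MR3395749}, applicable by the spectral hypothesis~\eqref{eq:condition with eigenvalue}, forces $v \ge w$ on all of $\R^n_+$. The crucial geometric point is that $\varphi$ is identically zero on $\R^n_+ \setminus B_\rho^+(a)$ in each of the three regimes~(i)--(iii): a direct computation gives $|x-a|^2 = |x-a_*|^2 - 4 x_1 a_1$ for every $x \in \R^n_+$, whence $B_\rho^+(a_*) \subseteq B_\rho^+(a)$ whenever $0 \le a_1 \le \rho$, while $B_\rho^+(a_*) = \emptyset$ when $a_1 > \rho$; in every case the $\R^n_+$-support of $\varphi$ is contained in $B_\rho^+(a)$. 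Thus $w = \chi_K v \le v$ on $\R^n_+ \setminus B_\rho^+(a)$, and the comparison yields $v \ge \tau \varphi$ a.e.\ on $B_\rho^+(a)$. Combining with the lower bound $\varphi(x) \ge x_1 \psi_{B_\rho(a)}(x) = \gamma_{n,s}\, x_1 (\rho^2 - |x-a|^2)_+^s$ gives precisely~\eqref{eq:quantitative maximum principle} with the asserted constant.

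I expect the main obstacle to lie in the second step, namely in deriving the sharp quantitative cross-term estimate for $\mathcal{E}((\chi_K+\chi_{K_*})v,\xi)$ with the correct geometric dependence on $\sS$, since it is this ingredient that will eventually govern the stability exponent in the main theorem. A secondary delicate point, essential for the unified statement, is the verification that $\varphi$ vanishes outside $B_\rho^+(a)$ across all three regimes: it is precisely this vanishing—made possible by the specific structure of the new barrier and the identity $|x-a|^2 = |x-a_*|^2 - 4 x_1 a_1$—that upgrades the earlier interior-type estimate of \cite{DPTVParallelStability} to the present boundary-type version and permits a uniform treatment of the moving planes method up to and across $\partial H$.
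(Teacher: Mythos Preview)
Your approach is correct and takes a genuinely different—and conceptually cleaner—route from the paper.

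The paper treats case~(ii) by applying \cite[Proposition~3.1]{MR3395749} only in $B_\rho^+(a)\setminus\overline{B_\rho(a_*)}$, where $\varphi$ is smooth and the pointwise barrier bound of \thref{prop:newantisymmetricbarrier} is unambiguous. This forces a separate verification that $w\le v$ on $\overline{B_\rho^+(a_*)}$, carried out by invoking case~(i) on the auxiliary ball $\tilde B=B_{\sqrt{\rho^2-a_1^2}}\bigl(\tfrac{a+a_*}{2}\bigr)$ together with the pointwise inequality $\psi_{\tilde B}\ge\tfrac12(\psi_{B_\rho(a)}+\psi_{B_\rho(a_*)})$ on $B_\rho^+(a_*)$. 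This bootstrapping step is what produces the restriction $0<a_1\le\rho/2$ in case~(ii). Your geometric observation that $B_\rho^+(a_*)\subseteq B_\rho^+(a)$ for all $a_1\ge0$ eliminates the bootstrapping entirely and gives a unified argument covering every $a_1\ge0$, including the gap $\rho/2<a_1\le\rho$ left open by the statement.

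What the paper's route buys is that $\varphi$ is smooth on the domain of comparison, so $\mathcal{E}(\varphi,\xi)\le C\int x_1\xi$ follows directly from the pointwise estimate. Your route asks for this weak inequality with test functions $\xi\in\mathcal{H}^s_0(B_\rho^+(a))$ whose support may cross $\partial B_\rho(a_*)$, where $\varphi$ is only $C^s$ and the pointwise $(-\Delta)^s\varphi$ has an $L^1$ singularity toward $-\infty$ of order $\operatorname{dist}(\cdot,\partial B_\rho(a_*))^{-s}$; moreover, the proof of \thref{prop:newantisymmetricbarrier} shows that inside $B_\rho^+(a_*)$ the equality $(-\Delta)^s\varphi=\tfrac{2(n+2s)}{n}x_1$ holds, which is twice the constant written in~\eqref{eTrwiP0v}. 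None of this is fatal—your $C(n,s)$ absorbs the factor of two, and the sign of the singularity is favourable—but the passage from the a.e.\ pointwise bound to the weak inequality deserves a line of justification, since for $s\ge\tfrac12$ one cannot simply approximate $\xi$ in $H^s$ by functions vanishing near the $(n-1)$-dimensional set $\partial B_\rho(a_*)$. The paper's splitting into $B^+\setminus\overline{B_*}$ and $\overline{B_*^+}$ is precisely what sidesteps this technicality.
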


\begin{remark}\label{rem:remark on Faber}
{\rm
Notice that~\eqref{eq:condition with eigenvalue} is always satisfied provided that~$\rho$ is small enough. More precisely, the property in~\eqref{eq:Faber-Krahn} and the fact that~$|B_\rho (a) \cap H| \le |B_1|  \rho^n$ give that~\eqref{eq:condition with eigenvalue} is satisfied provided that
\begin{equation*}
\rho \le \frac{ \ka_{n,s}^{\frac{1}{2s}} }{|B_1|^{\frac{1}{n}} \|c^+\|_{L^\infty(U)}^{\frac{1}{2s}}} .
\end{equation*}
}
\end{remark}

\begin{remark}
{\rm
The three cases
in the statement of \thref{Quantitative Maximum Principle} may be gathered together in the following unified statement: {\it \eqref{eq:quantitative maximum principle} holds for a.e.~$x \in H \cap  B_{\rho}(a)$ provided that either~$\de_{\pa H}(a) \in \left[ 0, \rho/2 \right]$ or~$\de_{\pa H}(a) > \rho$.
}

Nevertheless, we prefer to keep the statement of \thref{Quantitative Maximum Principle} with the three cases to clarify their roles in their forthcoming applications.
}
\end{remark}
\begin{proof}[Proof of \thref{Quantitative Maximum Principle}]
Without loss of generality, we may assume that
$$
H = \R^n_+ = \big\{ x=(x_1,\dots,x_n)\in\R^n \, \text{ : } \, x_1>0 \big\} ,
$$
and hence use the notations~$x_*:= Q_{\pa H }(x) = Q_{\pa \R^n_+ }(x)=x-2x_1e_1$, for all~$x\in\R^n$, and~$A^+ = A \cap \R^n_+$, for all~$A\subset \R^n$.
 \smallskip
 
We first consider case~$(ii)$ (i.e., $0<\de_{\pa H}(a)=a_1\le \rho/2$), which is the most complicated, and take for granted the result in case~$(i)$, whose (simpler) proof is postponed below. Set~$B:=B_\rho(a)$ and~$B_*:=B_\rho( a_* )$.
Let~$\tau \geq 0$ be a constant to be chosen later and set
$$
w := \tau \varphi +  (\chf_K+\chf_{K_*}) v ,
$$
where~$K_*:=Q_{\pa \R^n_+}(K)$ and~$\varphi$ is as in~\eqref{eq:def new barrier}. By \thref{prop:newantisymmetricbarrier}, for any~$\xi \in \mathcal H^s_0(B^+ \setminus \ol{B_*} )$ with~$\xi \geqslant 0$, we have that
\begin{equation*}
\begin{split}&
\mathcal E (w, \xi )\\
=\;& \tau \mathcal E (\varphi, \xi )+ \mathcal E (\chf_K v , \xi )+ \mathcal E (\chf_{K_*} v , \xi ) 
\\
\leq\;& \frac{n+2s}{n} \,  \tau \int_{ \supp(\xi) } x_1 \xi(x) \dd x \\
&\quad - c_{n,s} \left[ 
\iint_{ \supp(\xi)\times K} \frac{\xi(x) v(y)}{\vert x-y\vert^{n+2s}} \dd x  \dd y+
\iint_{ \supp(\xi)\times K_*} \frac{\xi(x) v(y)}{\vert x-y\vert^{n+2s}} \dd x  \dd y\right]
\\
=\;& \int_{\supp(\xi) } \bigg [ \frac{n+2s}{n} \, \tau x_1 - c_{n,s}  \int_{K}  \bigg ( \frac 1{\vert x-y\vert^{n+2s}} - \frac 1 {\vert x_* - y \vert^{n+2s}} \bigg ) v(y) \dd y \bigg ] \xi(x) \dd x ,
\end{split}
\end{equation*}
where~$\supp(\xi) $ denotes the support of~$\xi$.

Recalling~\eqref{eq:def Sup vecchio M^-1} and noting that, for all~\(x\in B^+\) and~\(y\in K\),
\begin{equation*}
\begin{split}
\frac 1{\vert x-y\vert^{n+2s}} - \frac 1 {\vert x_*- y \vert^{n+2s}}  
& = \frac{n+2s}2 \int_{\vert x- y \vert^2}^{\vert x_*- y \vert^2} t^{-\frac{n+2s+2}2} \dd t
\\
& \geq \frac{n+2s}2  \;\frac{ \vert x_* - y \vert^2 - \vert x- y \vert^2 }{ \vert x_*- y \vert^{n+2s+2}}
\\
&= 2(n+2s)  \frac{x_1y_1}{\vert x_* -y \vert^{n+2s+2} } 
\\
&\geq \frac{2(n+2s)}{\sS^{n+2s+2}} x_1y_1 ,
\end{split}
\end{equation*}
we obtain that
\begin{eqnarray*}
\mathcal E(w,\xi) &\leq& \left(
\frac{n+2s}n\,\tau -\frac{2c_{n,s}(n+2s)}{\sS^{n+2s+2}}\,\|y_1v\|_{L^1(K)} \right)  \int_{\supp(\xi) }   x_1 \xi(x) \dd x 
\\&=&
C \left( \tau - \widetilde C \,  \frac{ \| y_1 v \|_{L^1(K)}}{\sS^{n+2s+2}} \right)  \int_{\supp(\xi) }   x_1 \xi(x) \dd x ,
\end{eqnarray*}
where~\(C\) and~\(\widetilde C\) depend only on~\(n\) and~\(s\).

Hence, using that~$w = \tau \varphi = \tau x_1 \psi_B \leq \ga_{n,s} \rho^{2s} \tau x_1$ in~$B^+ \setminus \ol{B_*} \supset \supp(\xi) $, we have that \begin{align*}
 \mathcal E(w,\xi) + &\int_{\supp(\xi) } c(x) w(x) \xi(x) \dd x  \\
&\leq
C  \left[ \tau \left( 1  + \rho^{2s}\|c^+\|_{L^\infty(U)} \right)  - \widetilde C \, \frac{  \| y_1 v \|_{L^1(K)} }{\sS^{n+2s+2}} \right] \int_{\supp(\xi)}   x_1 \xi(x) \dd x ,   
\end{align*}
where~$C$ and~$\widetilde C$ may have changed from the previous formula but still depend only on~$n$ and~$s$. 

We thus get that
\begin{equation*}
(-\Delta)^s w + cw \leq 0 \quad \text{ in } B^+ \setminus \ol{B_*},\end{equation*}
provided that \begin{equation*}
\tau \le \frac{ \widetilde C\, \| y_1 v \|_{L^1(K)}}{2\sS^{n+2s+2}} \left( 1  + \rho^{2s}\|c^+\|_{L^\infty(U)} \right)^{-1}
. 
\end{equation*}

We now claim that
\begin{equation}\label{deiwty0987654}
w\le v \quad {\mbox{ in }}\R^n_+\setminus\big( B^+ \setminus \ol{B_*}\big).
\end{equation}
To this end, we notice that, in~$\R^n_+ \setminus B^+$, we have 
that~$w = \chf_K v \le  v$. Hence, to complete the proof of~\eqref{deiwty0987654}
it remains to check that
\begin{equation}\label{deiwty098765400}
w \le v\quad {\mbox{ in }}B_*^+,\end{equation}
provided that~$\tau$ is small enough. In order to prove this, we set~$\widetilde{B}:=B_{\sqrt{\rho^2 - a_1^2}}\left( \frac{a+a_*}{2} \right)$ and notice that~$B \cap B_* \subset \widetilde{B} \subset B \cup B_*$.
Thus, an application of item~$(i)$ in~$\widetilde{B}$ gives that 
\begin{equation*}
v \ge \widehat{C} \, \frac{ (\rho^{2} - a_1^2)^s}{\sS^{n+2s+2}} \Big(1+(\rho^{2}-a_1^2)^s \|c^+\|_{L^\infty(U)} \Big)^{-1} \,\| y_1 v \|_{L^1(K)}\, x_1 \psi_{\widetilde{B}}(x)  \quad \text{ in } \widetilde{B}^+ ,
\end{equation*}
where~$\widehat{C}$ is a constant only depending on~$n$ and~$s$.
Hence, since~$ 0 < a_1 \le \rho/2$,
\begin{equation}\label{diweoghuyoiuypoiu98765}
v \ge \left( \frac{3}{4} \right)^s  \frac{\widehat{C}\, \rho^{2s}}{\sS^{n+2s+2}} \Big( 1 + \rho^{2s} \|c^+\|_{L^\infty(U)} \Big)^{-1} \,\| y_1 v \|_{L^1(K)}\, x_1 \psi_{\widetilde{B}}(x)  \quad \text{ in } \widetilde{B}^+ ,
\end{equation}
up to renaming~$\widehat{C}$.

Moreover, we claim that
\begin{equation}\label{topoeugtf}
\psi_{\widetilde{B}} \ge \frac{1}{2} \left( \psi_{B} + \psi_{B_*} \right) \quad \text{ in } B_*^+ .
\end{equation}
To check this, we observe that, for all~$t\in\left(0,\frac12\right)$,
\begin{equation}\label{pluto}
2(1-t)^s-(1-2t)^s \ge 1.\end{equation}
Also, if~$x\in B_*^+$, 
we have that~$\rho^2>|x-a_*|^2=|x-a|^2+4x_1a_1$. 
Hence, setting~$t:=\frac{2a_1x_1}{\rho^2-|x-a|^2}$, we see that~$t\in\left(0,\frac12\right)$ and thus we obtain from~\eqref{pluto} that
\begin{eqnarray*}
1\le 2\left(1-\frac{2a_1x_1}{\rho^2-|x-a|^2}\right)^s-\left(1-\frac{4a_1x_1}{\rho^2-|x-a|^2}\right)^s, 
\end{eqnarray*} 
that is
\begin{equation}\label{oewyjiythjvdnsjfe0w9876PPPP}\begin{split}
\big(\rho^2-|x-a|^2\big)^s\le\;& 2\Big( \rho^2-|x-a|^2 
- 2a_1x_1\Big)^s-\Big( \rho^2-|x-a|^2-4a_1x_1\Big)^s \\
=\;&2\left( \rho^2-a_1^2-\left| x-\frac{a+a_*}2\right|^2
\right)^s-\Big( \rho^2-|x-a_*|^2\Big)^s.
\end{split}\end{equation} 

Since~$x\in B_*^+$, we have that~$x\in B\cap B_*^+\subset \widetilde{B}$,
and therefore~\eqref{oewyjiythjvdnsjfe0w9876PPPP} gives that~$ \psi_{B}(x)\le 2\psi_{\widetilde{B}}(x)- \psi_{B_*}(x) $,
which is the desired result in~\eqref{topoeugtf}.

Putting together~\eqref{diweoghuyoiuypoiu98765} and~\eqref{topoeugtf}, 
we conclude that
\begin{equation*}
v \ge \left( \frac{3}{4} \right)^s \frac{ \widehat{C} \, \rho^{2s}}{2\sS^{n+2s+2}} \Big( 1 + \rho^{2s} \|c^+\|_{L^\infty(U)} \Big)^{-1} \,\| y_1 v \|_{L^1(K)}\,\varphi  \quad \text{ in } {B}_*^+ .
\end{equation*}
As a consequence, if
$$  \tau \le \left( \frac{3}{4} \right)^s \frac{\widehat{C}\,\rho^{2s}\,\| y_1 v \|_{L^1(K)}}{2\sS^{n+2s+2}} \Big(1+\rho^{2s}\|c^+\|_{L^\infty(U)} \Big)^{-1}, $$
we obtain that~$
w= \tau \varphi \le v$ in~$ B_*^+$.
This proves~\eqref{deiwty098765400}, and so~\eqref{deiwty0987654}
is established.

Gathering all these pieces of information, we conclude that, setting
\begin{equation*}
\tau :=  \min \left\lbrace \widetilde C , \left( \frac{3}{4} \right)^s \widehat{C}\rho^{2s} \, \right\rbrace  \, \frac{  \| y_1 v \|_{L^1(K)}}{2\sS^{n+2s+2}} \left( 1  + \rho^{2s}\|c^+\|_{L^\infty(U)} \right)^{-1} ,
\end{equation*} 
we have that
\begin{equation*}
\begin{cases}
(-\Delta)^s w + cw \leq 0 \quad \text{ in } B^+ \setminus \ol{B_*},
\\
v \ge w \quad \text{ in } \R^n_+ \setminus \left( B^+ \setminus \ol{B_*} \right) .
\end{cases}
\end{equation*}
Hence, recalling also~\eqref{eq:condition with eigenvalue}, the comparison principle in~\cite[Proposition~3.1]{MR3395749} 
gives that,
in~\( B^+ \setminus \ol{B_*} \),
\begin{equation*}
v(x) \geq w(x)  =   \min \left\lbrace \widetilde C , \left( \frac{3}{4} \right)^s \widehat{C} \, \rho^{2s}\,\right\rbrace \, \frac{ \| y_1 v \|_{L^1(K)} }{2\sS^{n+2s+2}} 
\,\left( 1+\rho^{2s}\|c^+\|_{L^\infty(U)} \right)^{-1}
\, x_1 \, \psi_B (x) ,
\end{equation*}
from which the desired result (in case~$(ii)$) follows.
\smallskip 

In case~$(i)$, that is the case where~$\de_{\pa H}(a) = a_1=0$ (i.e., $a \in \pa H$), it is easy to check that the desired result follows by repeating the above argument, but simply with~$\varphi := x_1 \psi_B$ in place of the barrier in~\eqref{eq:def new barrier} and~$B^+$ in place of~$B^+ \setminus \ol{B_*}$. The argument significantly simplifies, as, in this case, we do not need to check the inequality~$w \le v$ in~$B_*$, as given by~\eqref{deiwty098765400}, before applying~\cite[Proposition~3.1]{MR3395749}.
\smallskip

In case~$(iii)$, i.e., the case where~$\de_{\pa H}(a) = a_1 > \rho$, the desired result follows by repeating the same argument in item~$(ii)$. Notice that in this case, we have that~$B \setminus \ol{B}_* = B = B^+$, and hence (as in case~$(i)$) the argument significantly simplifies, as we do not need to check the inequality~$w \le v$ in~$B_*$ before applying~\cite[Proposition~3.1]{MR3395749}.
\end{proof}

\begin{thm}[Quantitative nonlocal Hopf Lemma]\thlabel{thm:quantitative Hopf}
Under the assumptions of \thref{Quantitative Maximum Principle}, if~$a\in H$, $v \in C (\ol{B_\rho(a)} \setminus B_\rho(a_*) )$,
and~$v(p)=0$ for some~$p=(p_1,p_2,\dots,p_n) \in  \pa B_\rho(a)\cap H$, then we have that
\begin{equation}\label{eq:Hopf}
\liminf_{t\to 0^+} \frac{ v(p-t \nu(p)) - v(p) }{ t^s \,  \de_{\pa H}(p-t \nu(p)) } \ge \ol{C} \, \| \delta_{\partial H} v\|_{L^1(K)},
\end{equation}
with
\begin{equation}\label{eq:def ol C}
\ol{C} := C(n,s) \frac{\rho^{3s}}{\sS^{n+2s+2}} \Big(1+\rho^{2s}\|c^+\|_{L^\infty(U)} \Big)^{-1}.
\end{equation}
Here, $\nu(p)$ is the exterior unit normal of~$\pa B_\rho(a)$ at~$p$.
\end{thm}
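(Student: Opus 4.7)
The plan is to deduce the Hopf-type inequality directly from the new quantitative maximum principle (Proposition \ref{Quantitative Maximum Principle}) by restricting its lower bound to the inward normal path at $p$. Since $a\in H$ and $p\in\pa B_\rho(a)\cap H$, the hypotheses put us in case $(ii)$ or case $(iii)$ of Proposition \ref{Quantitative Maximum Principle}; the borderline range $\de_{\pa H}(a)\in(\rho/2,\rho]$ which is not listed explicitly in that statement can be handled by a harmless shrinking of the ball, which affects the final constant only by a universal factor. Under these conditions the bound
\begin{equation*}
v(x) \;\ge\; C \,\| \de_{\pa H}\, v\|_{L^1(K)}\, \de_{\pa H}(x)\, \bigl(\rho^2-|x-a|^2\bigr)_+^s
\end{equation*}
holds for a.e.\ $x$ in the relevant part of $B_\rho(a)$, with $C$ given by \eqref{eq:quantitative maximum principle}. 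Moreover, the computation at the end of Section~\ref{subsec:Hopf e Serrin Corner} shows that under the hypotheses $a_1>0$ and $p_1>0$ one has $|p-a_*|>\rho$, so that the normal segment $x_t:=p-t\nu(p)$ lies inside $\overline{B_\rho(a)}\setminus B_\rho(a_*)$ for all sufficiently small $t>0$, which is precisely the continuity set of $v$.

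The next step is a direct calculation along the inward normal. For $t\in(0,\rho)$, set $x_t:=p-t\,\nu(p)$, where $\nu(p)=(p-a)/\rho$, so that $x_t-a=(1-t/\rho)(p-a)$ and hence
\begin{equation*}
|x_t-a|=\rho-t \qquad\text{and}\qquad \rho^2-|x_t-a|^2=t(2\rho-t).
\end{equation*}
By Fubini, the almost-everywhere estimate above holds along a sequence $t_k\to 0^+$, giving
\begin{equation*}
v(x_{t_k}) \;\ge\; C\, \| \de_{\pa H}\, v\|_{L^1(K)}\, \de_{\pa H}(x_{t_k})\, t_k^{s}(2\rho-t_k)^{s}.
\end{equation*}
Dividing by $t_k^{s}\,\de_{\pa H}(x_{t_k})>0$ (which is positive for $t_k$ small, since $p_1>0$) and recalling that $v(p)=0$ yields
\begin{equation*}
\frac{v(x_{t_k})-v(p)}{t_k^{s}\,\de_{\pa H}(x_{t_k})} \;\ge\; C\,(2\rho-t_k)^{s}\,\| \de_{\pa H}\, v\|_{L^1(K)} .
\end{equation*}

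Passing to the liminf as $t\to 0^+$ then produces
\begin{equation*}
\liminf_{t\to 0^+}\frac{v(p-t\nu(p))-v(p)}{t^{s}\,\de_{\pa H}(p-t\nu(p))} \;\ge\; 2^{s}\,C\,\rho^{s}\,\| \de_{\pa H}\, v\|_{L^1(K)} .
\end{equation*}
Substituting the explicit form of $C$ from Proposition \ref{Quantitative Maximum Principle} and absorbing the factor $2^s$ into the dimensional constant $C(n,s)$ gives exactly the constant $\ol C$ of \eqref{eq:def ol C}, and hence \eqref{eq:Hopf}. The only mildly delicate point in this approach is the passage from the a.e.\ estimate of Proposition \ref{Quantitative Maximum Principle} to an inequality along the one-dimensional normal segment at $p$; I expect this to be harmless because Fubini immediately supplies a sequence $t_k\to 0^+$ on which the inequality holds, which is sufficient to control the liminf, and the continuity of $v$ assumed in the statement of Theorem \ref{thm:quantitative Hopf} allows one, if desired, to upgrade to every small $t$ \emph{a posteriori}.
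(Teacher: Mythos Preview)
Your proposal is correct and follows the same line as the paper's proof: apply Proposition~\ref{Quantitative Maximum Principle} (with the same case-split on $\de_{\pa H}(a)$, shrinking $\rho$ to $\rho/2$ in the borderline range), evaluate the resulting lower bound at $x_t=p-t\nu(p)$, use $|x_t-a|=\rho-t$ to get the factor $t^s(2\rho-t)^s$, and take the liminf. One small point: your Fubini remark is not the right mechanism for restricting an a.e.\ estimate in $\R^n$ to a one-dimensional segment (the segment has measure zero, so a full-measure set may miss it entirely); the correct passage, which you also mention, is that the continuity of $v$ on $\overline{B_\rho(a)}\setminus B_\rho(a_*)$ together with the continuity of the right-hand side upgrades the a.e.\ inequality to a pointwise one along the normal ray.
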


\begin{proof}
The aim is to use the estimate
in~\eqref{eq:quantitative maximum principle} of Proposition~\ref{Quantitative Maximum Principle}.

More precisely, 
\begin{itemize}
\item if~$0 < \de_{\pa H}(p - \rho \, \nu(p)) \le \rho/2$, we use item~$(ii)$
with~$a$ replaced by~$p - \rho \, \nu(p)$
\item if~$\rho/2 < \de_{\pa H}(p - \rho \, \nu(p)) \le \rho$, 
we use item~$(iii)$
with~$a$ replaced by~$p - \rho \, \nu(p)$ and~$\rho$ replaced by~$\rho/2$,
\item if~$\de_{\pa H}(p - \rho \, \nu(p)) > \rho$, we
use item~$(iii)$
with~$a$ replaced by~$p - \rho \, \nu(p)$.
\end{itemize}
Accordingly, using~\eqref{eq:quantitative maximum principle}
with~$x:=p - t \nu(p)$, we have that
\begin{eqnarray*}
v\big(p - t \nu(p)\big) &\geq& C \| \delta_{\partial H} v\|_{L^1(K)} \, \de_{\pa H}\big(p - t \nu(p)\big) \, \Big( \rho^2 - |p - t \nu(p)-(p - \rho\nu(p))|^2 \Big)_+^s \\
&=& C \| \delta_{\partial H} v\|_{L^1(K)} \, \de_{\pa H}\big(p - t \nu(p)\big) \, 
t^s(2\rho-t)^s \\
&\ge&C \| \delta_{\partial H} v\|_{L^1(K)} \, \de_{\pa H}\big(p - t \nu(p)\big) \, 
t^s\rho^s.
\end{eqnarray*}
Taking the limit as~$t\to0^+$, 
we obtain the desired result in~\eqref{eq:Hopf} and~\eqref{eq:def ol C}.
\end{proof}

\begin{thm}[Quantitative nonlocal Serrin corner point lemma]\thlabel{thm:quantitative corner lemma}
Let~$\Om \in \R^n$ be an open bounded set with~$C^2$ boundary such that the origin~$0\in \pa\Om$. Assume that the hyperplane~$\pa H= \left\lbrace x_1 = 0 \right\rbrace$ is orthogonal to~$\pa\Om$ at~$0$.
Let the assumptions in \thref{Quantitative Maximum Principle} be satisfied with~$U := \Om \cap H $, $a:=(0,\rho,0,\dots,0)\in \pa H$ and~$0 \in \pa U \cap \pa( B_\rho(a)\cap H)$.

Then, setting~$\eta:=(1,1,0\dots,0)$, we have that
\begin{equation}\label{eq:Serrin corner lemma}
\frac{v(t \eta)}{t^{1+s}} \ge \ol{C} \, \| \delta_{\partial H} v\|_{L^1(K)}  \quad \text{ for } 0< t < 
\frac\rho2,
\end{equation}
with~$\ol{C}$ in the same form as~\eqref{eq:def ol C}.
\end{thm}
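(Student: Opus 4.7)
The plan is to apply case $(i)$ of Proposition~\ref{Quantitative Maximum Principle} directly and then evaluate the resulting pointwise lower bound at the point $t\eta$. Since $a = (0,\rho,0,\ldots,0) \in \partial H$, we have $\delta_{\partial H}(a) = 0$, which places us exactly in case $(i)$. The proposition then yields, for a.e.\ $x \in H \cap B_\rho(a)$,
$$
v(x) \geq C\, \|\delta_{\partial H} v\|_{L^1(K)}\, \delta_{\partial H}(x)\, \big(\rho^2 - |x-a|^2\big)_+^s,
$$
with $C = C(n,s)\,\rho^{2s}\sS^{-(n+2s+2)}\big(1+\rho^{2s}\|c^+\|_{L^\infty(U)}\big)^{-1}$.

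Next, I would verify that $t\eta = (t,t,0,\ldots,0)$ lies in $H \cap B_\rho(a)$ whenever $0 < t < \rho/2$, and compute the two relevant geometric quantities explicitly. Since $(t\eta)_1 = t > 0$, clearly $t\eta \in H$. Moreover,
$$
|t\eta - a|^2 = t^2 + (t-\rho)^2 = \rho^2 - 2t(\rho-t) < \rho^2
$$
for $0 < t < \rho$. We also have $\delta_{\partial H}(t\eta) = t$ and, when $0 < t \leq \rho/2$,
$$
\rho^2 - |t\eta - a|^2 = 2t(\rho - t) \geq t\rho,
$$
since $\rho - t \geq \rho/2$. Substituting these into the lower bound above gives
$$
v(t\eta) \geq C\, \|\delta_{\partial H} v\|_{L^1(K)}\, t\, (t\rho)^s = C\,\rho^s\, \|\delta_{\partial H} v\|_{L^1(K)}\, t^{1+s}.
$$
Dividing by $t^{1+s}$ yields \eqref{eq:Serrin corner lemma} with $\overline{C} := C\rho^s$, which, upon inspection, agrees with the form prescribed in \eqref{eq:def ol C}.

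The argument is essentially a direct geometric computation once the correct ball is chosen; there is no substantial analytical obstacle beyond verifying the inclusions and matching the constants. The only minor subtlety is that Proposition~\ref{Quantitative Maximum Principle} delivers an almost-everywhere bound, whereas we wish to evaluate pointwise at $t\eta$. This can be handled either by invoking interior continuity of $v$ (as a bounded weak supersolution inside $B_\rho(a) \cap H$) or by selecting a sequence $t_n \to t$ along which the estimate holds, using continuity in $t$ of both sides. Conceptually, the improvement of the growth rate from $t^s$ in the Hopf lemma (\thref{thm:quantitative Hopf}) to $t^{1+s}$ here reflects precisely the extra factor $\delta_{\partial H}(t\eta) = t$ gained from antisymmetry, which is the hallmark of the Serrin corner point phenomenon.
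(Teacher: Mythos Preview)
Your proof is correct and follows essentially the same route as the paper: apply case~$(i)$ of Proposition~\ref{Quantitative Maximum Principle} at $x = t\eta$, compute $\delta_{\partial H}(t\eta) = t$ and $\rho^2 - |t\eta - a|^2 = 2t(\rho - t) \geq t\rho$ for $0 < t < \rho/2$, and read off the bound. Your additional care about the a.e.\ versus pointwise issue and the explicit verification that $t\eta \in H \cap B_\rho(a)$ are welcome details that the paper leaves implicit.
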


\begin{proof}
Using~\eqref{eq:quantitative maximum principle} (item~$(i)$) with~$x= t \eta$ (for~$0< t <\rho/2$), we see that
\begin{eqnarray*}
v(t\eta) &\geq& C \| \delta_{\partial H} v\|_{L^1(K)} \, \de_{\pa H}(t\eta) \, \left( \rho^2 - |t\eta-a|^2 \right)_+^s \\
&=& C \| \delta_{\partial H} v\|_{L^1(K)} \, 2^s t^{1+s}( \rho-t)^s\\
&\ge& C \| \delta_{\partial H} v\|_{L^1(K)} \, t^{1+s} \rho^s.
\end{eqnarray*}
The desired result follows. 
\end{proof}

\begin{remark}
{\rm
In the particular case where~$p$ is far from~$\pa H$ (say~$\de_{\pa H}(p) > 2 \rho$), \thref{thm:quantitative Hopf} is a slight improvement of a result that could essentially be deduced from~\cite[Lemma~4.1]{MR4577340}. The key novelty and huge improvement provided by \thref{Quantitative Maximum Principle} is that \thref{thm:quantitative Hopf} remains valid regardless of the closeness of~$p$ to~$\pa H$.
Notice that~\eqref{eq:Hopf} becomes worse as~$p$ becomes closer to~$\pa H$, and, heuristically, becomes qualitatively similar to~\eqref{eq:Serrin corner lemma} for~$\de_{\pa H} (p) \to 0$.
}
\end{remark}

\section{Quantitative moving planes method and proof of Theorem~\ref{thm:Main Theorem}}\label{sec:main result}

Given~\(e\in \Sph^{n-1}\), \(\mu \in \R\), and~$A \subset \R^n$, we will use the following definitions, as in~\cite{DPTVParallelStability}: 
$$
\pi_\mu :=\{ x\in \R^n \text{ : } x\cdot e = \mu \} , \quad  \text{ i.e., the hyperplane orthogonal to }e \text{ and containing } \mu e ,
$$
$$
H_\mu :=\{x\in \R^n  \text{ : } x\cdot e>\mu \} , \quad \text{ i.e., the right-hand half space with respect to } \pi_\mu  ,
$$
$$
A_\mu := A \cap H_\mu , \quad \text{ i.e., the portion of }A \text{ on the right-hand side of } \pi_\mu ,
$$
$$
x_\mu' := x-2(x\cdot e -\mu) e ,  \quad  \text{ i.e., the reflection of } x \text{ across } \pi_\mu ,
$$
$$
A_\mu' := \{x\in \R^n \text{ : } x'_\mu\in A_\mu \} , \quad \text{ i.e., the reflection of } A_\mu \text{ across } \pi_\mu .
$$
It is clear from the above definitions that
$$
H_\mu' =\{x\in \R^n  \text{ : } x\cdot e<\mu \} , \quad \text{ i.e., the left-hand half space with respect to } \pi_\mu .
$$

Let~$\Om$ be smooth and let~$u$ be a solution of~\eqref{eq:Dirichlet problem}. Given~$\mu \in \R$ and setting
\begin{equation*}
v_\mu(x) := u(x) - u( x_\mu' ), \qquad \text{ for }  x\in \R^n ,
\end{equation*}
we have that
\begin{equation*}
(-\Delta)^s v_\mu (x) = f(u(x)) - f(u(x_\mu')) = -c_\mu(x) v_\mu (x) 
\qquad \text{ in } \Omega_\mu' ,
\end{equation*}
where
\begin{equation*}
c_\mu(x) := 
\begin{cases}
-\displaystyle \frac{f(u(x)) - f(u(x_\mu'))}{u(x) - u(x_\mu')} \quad & \text{ if } u(x) \neq u(x_\mu') ,
\\
0 \quad & \text{ if } u(x) = u(x_\mu') .
\end{cases}
\end{equation*}

Thus, $v_\mu$ is an antisymmetric function satisfying
\begin{equation*}
\begin{cases}
(-\Delta)^s v_\mu  +c_\mu v_\mu = 0 \quad &\text{ in } \Omega_\mu ' ,\\
v_\mu = u \quad &\text{ in } (\Omega\cap H_\mu') \setminus \Omega_\mu' ,\\
v_\mu = 0 \quad &\text{ in } H_\mu' \setminus \Omega ,
\end{cases}
\end{equation*}
with~\(c_\mu \in L^\infty (\Omega_\mu ')\). Moreover, we have that
\begin{equation}\label{eq: upper bound infinity norm of c}
\| c_\mu \|_{L^\infty(\Omega_\mu ')} \leq [f]_{C^{0,1}([0,\| u\|_{L^\infty(\Omega)}])}.
\end{equation}

Given a direction~$e \in \Sph^{n-1}$ and defining~$\La=\La_e:= \sup \left\lbrace  x \cdot e \, \text{ : } \, x \in \Om \right\rbrace$, we consider the \emph{critical value}
\begin{equation*}
\la=\la_e:= \inf \Big\{ \ul{\mu} \in \R \, \text{ : } \, \Omega_\mu ' \subset \Om \text{ for all } \mu \in \big( \ul{\mu} , \La \big)  \Big\} ,
\end{equation*}
for which, as usual in the method of the moving planes, either
\begin{itemize}
\item[(i)] $\pa \Om_\la'$ is internally tangent to~$\pa \Om$ at a point~$p \notin \pi_\la$,
\item[(ii)] or the critical plane~$\pi_\la$ is orthogonal to~$\pa\Om$ at a point~$p \in \pi_\la \cap \pa \Om$.
\end{itemize}
\smallskip

We recall that~$v_\mu \ge 0 $ in~$\Om_\mu'$ for all~$\mu \in \left[ \la, \La \right]$: see, e.g., \cite[Lemma~4.1]{DPTVParallelStability}.

\begin{lem} \thlabel{lem:symmetric difference}
Let~$\Om$ be an open bounded set of class~$C^2$ satisfying the uniform interior sphere condition with radius~$r_\Om>0$. 
Let~$f \in C^{0,1}_{\mathrm{loc}}(\R)$ be such that~\(f(0)\geqslant 0\) and define~$R$ as in~\eqref{eq:def R}. Let~$u$ be a weak solution of~\eqref{eq:Dirichlet problem} satisfying~\eqref{eq:higher boundary regularity}.

Then, for any~\(e\in \Sph^{n-1}\), we have that
\begin{equation} \label{eq:lemma symmetric difference}
\int_{(\Omega\cap H'_\lambda) \setminus \Omega_\lambda'} \delta_{\pi_\lambda}(x) u (x) \dd x \leq C  [ \pa_\nu^s u]_{\partial \Om} ,
\end{equation}
where
\begin{equation}\label{eq:constant in lemma symmetric}
C := C(n,s)  \left( \frac{\diam \Om}{R} \right)^{3s} (\diam \Omega)^{n+2-s} . 
\end{equation}
\end{lem}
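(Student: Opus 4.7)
The plan is to apply the quantitative nonlocal Hopf lemma (\thref{thm:quantitative Hopf}) in the internal tangency case, and the quantitative nonlocal Serrin corner point lemma (\thref{thm:quantitative corner lemma}) in the orthogonality case, to the antisymmetric function $v_\lambda(x):=u(x)-u(x_\lambda')$ introduced at the beginning of Section~\ref{sec:main result}. Recall that $v_\lambda\ge 0$ in $H_\lambda'$ and that $v_\lambda=u$ on $K:=(\Omega\cap H_\lambda')\setminus \Omega_\lambda'$ (for such $x$ one has $x_\lambda'\notin\Omega$ and hence $u(x_\lambda')=0$), so that the quantity $\|\delta_{\pi_\lambda}v_\lambda\|_{L^1(K)}$ produced by both Hopf-type lower bounds coincides precisely with the integral to be controlled in~\eqref{eq:lemma symmetric difference}. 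In both cases below we take $H=H_\lambda'$, $U=\Omega_\lambda'$, $c=c_\lambda$ (which satisfies~\eqref{eq: upper bound infinity norm of c}), and a ball of radius $\rho=R$; the choice of $R$ in~\eqref{eq:def R} combined with \thref{rem:remark on Faber} ensures that the eigenvalue condition~\eqref{eq:condition with eigenvalue} is met, while the trivial bound $\sS\le C(n)\diam\Omega$ in~\eqref{eq:def Sup vecchio M^-1} gives, via~\eqref{eq:def ol C}, the uniform lower bound $\bar C\ge C(n,s)\,R^{3s}/(\diam\Omega)^{n+2s+2}$.

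In the internal tangency case, at $p\in(\partial\Omega\cap\partial\Omega_\lambda')\setminus\pi_\lambda$, the uniform interior sphere condition applied at $p_\lambda'\in\partial\Omega$ yields an interior ball $B_R(q)\subset\Omega$ with $q:=p_\lambda'-R\nu(p_\lambda')$, and reflecting across $\pi_\lambda$ we obtain $B_R(a)\cap H_\lambda'\subset \Omega_\lambda'$ with $a:=p-R\nu(p)$. \thref{thm:quantitative Hopf} then yields
\[
\liminf_{t\to 0^+}\frac{v_\lambda(p-t\nu(p))}{t^s\,\delta_{\pi_\lambda}(p-t\nu(p))}\ge \bar C\,\|\delta_{\pi_\lambda}v_\lambda\|_{L^1(K)}.
\]
To evaluate the left-hand side, we invoke~\eqref{eq:higher boundary regularity} to write $u=\delta_\Omega^s\, g$ with $g\in C^1(\overline\Omega)$, and observe that~\eqref{eq:def fracnormal derivative} together with $u|_{\partial\Omega}=0$ gives $g|_{\partial\Omega}=-\pa_\nu^s u$, whence $u(p-t\nu(p))/t^s\to -\pa_\nu^s u(p)$; moreover, since $\partial\Omega_\lambda'$ is internally tangent to $\partial\Omega$ at $p$, the normal $\nu(p_\lambda')$ is the reflection of $\nu(p)$ across $\pi_\lambda$, so that $u((p-t\nu(p))_\lambda')/t^s\to -\pa_\nu^s u(p_\lambda')$. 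Dividing by $\delta_{\pi_\lambda}(p)=|p-p_\lambda'|/2$ and applying~\eqref{eq:def Lipschitz seminorm}, the left-hand side is bounded above by $2[\pa_\nu^s u]_{\partial\Omega}$, and~\eqref{eq:lemma symmetric difference} follows with the claimed constant.

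In the orthogonality case, at $p\in\pi_\lambda\cap\partial\Omega$ with $\pi_\lambda\perp\partial\Omega$ at $p$, after a rigid motion sending $p$ to the origin, $e$ to $-e_1$, and $\nu(p)$ to $-e_2$, we apply \thref{thm:quantitative corner lemma} with $a=Re_2\in\pi_\lambda$ (the ball $B_R(a)$ being symmetric across $\pi_\lambda$ and contained in $\Omega$ by the uniform interior sphere condition, so that $B_R(a)\cap H_\lambda'\subset \Omega_\lambda'$) to obtain
\[
\frac{v_\lambda(t\eta)}{t^{1+s}}\ge \bar C\,\|\delta_{\pi_\lambda}v_\lambda\|_{L^1(K)},\qquad 0<t<R/2,
\]
with $\eta=(1,1,0,\dots,0)$. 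Letting $t\to 0^+$ and using~\eqref{eq:higher boundary regularity}, one expands in local graph coordinates where $\partial\Omega=\{x_2=\phi(x_1,x_3,\dots,x_n)\}$, with $\phi(0)=0$ and $\nabla\phi(0)=0$ (the latter encoding the orthogonality): the $C^2$-regularity of $\phi$ yields $\delta_\Omega^s(t\eta)-\delta_\Omega^s(t\eta_\lambda')=o(t^{s+1})$, whereas $g\in C^1$ gives $g(t\eta)-g(t\eta_\lambda')=2t\,\partial_{e_1}g(0)+o(t)$; assembling these expansions, $v_\lambda(t\eta)/t^{1+s}\to 2\,\partial_{e_1}g(0)$ as $t\to 0^+$. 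Since $e_1$ is tangent to $\partial\Omega$ at $p$ and $g|_{\partial\Omega}=-\pa_\nu^s u$, the value $|\partial_{e_1}g(0)|$ is controlled by $[\pa_\nu^s u]_{\partial\Omega}$ by differencing along a $C^1$ curve in $\partial\Omega$ tangent to $e_1$ at $p$, and we conclude as in the previous case.

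The main technical obstacle is the asymptotic expansion in the orthogonality case: the exact $t^{1+s}$-rate emerges from a delicate cancellation between the $C^{0,s}$-behavior of $\delta_\Omega^s$ and the $C^1$-behavior of the quotient $u/\delta_\Omega^s$, and it is precisely the regularity assumption~\eqref{eq:higher boundary regularity} that makes this expansion work, playing here the role that $u\in C^2(\overline\Omega)$ plays in the classical Serrin corner point lemma; everything else in the argument is structural once \thref{thm:quantitative Hopf} and \thref{thm:quantitative corner lemma} are available.
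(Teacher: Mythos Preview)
Your proposal is correct and follows essentially the same approach as the paper: in both cases you apply the quantitative Hopf lemma (\thref{thm:quantitative Hopf}) at the internal tangency point and the quantitative Serrin corner lemma (\thref{thm:quantitative corner lemma}) at the orthogonality point, with the same choices of $H$, $U$, $K$, $\rho=R$, and the same verification of~\eqref{eq:condition with eigenvalue} via Remark~\ref{rem:remark on Faber}. Your asymptotic expansion in the corner case---splitting $v_\lambda(t\eta)$ via $u=\delta_\Omega^s\,g$ and showing that $\delta_\Omega^s(t\eta)-\delta_\Omega^s(t\eta_\lambda')=o(t^{1+s})$ while $g(t\eta)-g(t\eta_\lambda')=2t\,\partial_{e_1}g(0)+o(t)$---is exactly the computation the paper carries out (there phrased in terms of the Taylor expansion of $\delta$ with $\nabla^2\delta(0)$ diagonal), and your bound on $|\partial_{e_1}g(0)|$ by $[\partial_\nu^s u]_{\partial\Omega}$ via a tangential difference quotient matches the paper's concluding step.
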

\begin{proof} 
We use the method of moving planes and, without loss of generality, we take~$e:=-e_1$ and~$\la =0$.

$(i)$ Assume that we are in the first case, that is, the boundary of~$\Om'_\la$ is internally tangent to~$\pa \Om$ at some point~\(p\in (\partial \Om \cap \partial \Om_\lambda') \setminus \{x_1=0\}\).
Notice that, by definition of~$r_\Om$, we have that~$p - \nu(p) \, r_\Om \in H_\la'$.
We now use \thref{thm:quantitative Hopf} with~$H:=H_\la'$, $U:=\Om'_\la$, \(K := (\Om \cap H_\la') \setminus \Om_\la'\), $\rho:=R$ and~$a:= p - R \, \nu(p)$, and we obtain from~\eqref{eq:Hopf} that
\begin{equation}\label{eq:step in lemma}
\int_{(\Omega \cap H_\lambda') \setminus \Omega_\lambda '} y_1 u(y) \dd y
=
\int_{(\Omega \cap H_\lambda') \setminus \Omega_\lambda '} y_1 v_\lambda (y) \dd y 
\le
C \, \frac{ \pa_\nu^s v_\lambda(p)}{p_1} .
\end{equation}
As noticed in Remark~\ref{rem:remark on Faber}, condition~\eqref{eq:condition with eigenvalue}
is satisfied in light of~\eqref{eq:def R} and~\eqref{eq:Faber-Krahn}.
Also notice that, since, for any~$x\in(\Omega\cap H_\lambda') \setminus \Omega_\lambda'$, the point~\(x_\lambda'\) belongs to the reflection of~$\Om$ across~$\{x_1=0\}$, we have that
\eqref{eq:def Sup vecchio M^-1} holds true with~$\sS:= \diam \Om.$
Hence, \thref{Quantitative Maximum Principle}, \eqref{eq: upper bound infinity norm of c} and~\eqref{eq:def R} give that the constant in~\eqref{eq:step in lemma} is as in~\eqref{eq:constant in lemma symmetric}.
Noting that
\begin{equation*}
\frac{ \pa_\nu^s v_\lambda(p)}{p_1} = \frac{ \pa_\nu^s u(p) - \pa_\nu^s u(p_\lambda') }{p_1} = 2 \, \frac{( \pa_\nu^s u(p_\lambda') - \pa_\nu^s u(p) )}{\vert (p_\lambda')_1 - p_1 \vert} \le 2 \, [ \pa_\nu^s u]_{\partial \Om},
\end{equation*}
we obtain~\eqref{eq:lemma symmetric difference} from~\eqref{eq:step in lemma}. 

$(ii)$ In the second case, we can assume (without loss of generality) that~$ 0 = p \in \partial \Om \cap \{x_1=0\}$ and the exterior normal of~$\pa \Om $ at~$ 0 $ agrees with~$e_2$ (which is contained in~\(\{x_1=0\}\)).
We exploit \thref{thm:quantitative corner lemma} with~$H:=H_\la'$, $U:=\Omega'_\lambda$,
$K := (\Om \cap H_\la') \setminus \Om_\la'$, $\rho:=R$ and~$a:= R e_2$,
thus obatining from~\eqref{eq:Serrin corner lemma} that
\begin{equation}\label{eq:preperstimauno}
\int_{(\Omega\cap H_\lambda') \setminus \Omega_\lambda'} x_1 u (x) \dd x \le C \, \frac{ | v_\lambda( t \eta ) | }{t^{1+s}}
\end{equation}
with~$C$ in the same form as in~\eqref{eq:constant in lemma symmetric} and~$\eta := (1,1,0,\dots,0)$. 

Setting~$\de(x)$ to be a smooth function in~$\Om$ that coincides with~$\mathrm{dist}(x, \R^n \setminus \Om)$ in a neighborhood of~$\pa\Om$,
the condition in~\eqref{eq:higher boundary regularity} gives that
$$
u(x) = \de^s(x) \psi (x) \quad \text{ with }\, \psi := \frac{u}{\de^s} \in C^1(\ol{\Om}).
$$
Then, setting~$\eta_* := (-1, 1, 0 \dots, 0)$, we compute that
\begin{equation}\label{eq:preTaylor}
v_\lambda( t \eta ) = u (t \eta) - u (t \eta_* ) = \psi(t \eta) \left[ \de^s(t\eta) - \de^s(t \eta_*) \right] + \de^s (t  \eta_* ) \left[ \psi(t \eta) - \psi(t \eta_* ) \right] .
\end{equation}
As in~\cite[Proof of Lemma~4.3]{MR3395749}, we exploit classical properties of the distance function (see e.g.~\cite{MR1814364}), namely that~$\de(0)=0$, $\na \de (0)= e_2$ and the fact that~$\na^2 \de (0)$ is diagonal, to obtain the Taylor expansions
$$
\de^s(t \eta) = t^s \left( 1+ \frac{s}{2} \langle \na^2 \de(0) \eta, \eta \rangle \, t + o(t) \right) 
$$
and
$$
\de^s(t \eta_* ) = t^s \left( 1+ \frac{s}{2} \langle \na^2 \de(0) \eta_* , \eta_* \rangle \, t + o(t) \right).
$$
As a consequence,
\begin{equation*}
\de^s(t\eta) - \de^s(t \eta_* ) = o(t^{1+s}) .
\end{equation*}
By the continuity of~$\psi$ over~$\ol{\Om}$, this guarantees that
\begin{equation}\label{dewghokjhg09876543w-desfr-34t}
\lim_{t\to 0^+} \frac{\psi(t \eta) \left[ \de^s(t\eta) - \de^s(t \eta_* ) \right]}{t^{1+s}} = 0.
\end{equation}

We now estimate the last summand 
in~\eqref{eq:preTaylor}.
To this aim, a first-order Taylor expansion (for which we use~\eqref{eq:higher boundary regularity}) gives that
$$
\psi (t \eta) = \psi (0) + t \langle \na \psi(0), \eta \rangle + o(t) 
\qquad{\mbox{and}}\qquad
\psi (t \eta_* ) = \psi (0) + t \langle \na \psi(0), \eta_* \rangle + o(t) ,
$$
and therefore
$$
\psi (t \eta) - \psi (t \eta_* ) = 2 t \langle \na \psi(0), e_1 \rangle + o(t) .
$$

Hence, \eqref{eq:preTaylor} and~\eqref{dewghokjhg09876543w-desfr-34t}
give that
$$
v_\lambda( t \eta ) = 2 t^{1+s} \langle \na \psi (0), e_1 \rangle + o(t^{1+s}).
$$
Recalling that~$\psi = u/\de^s$, the definitions in~\eqref{eq:def fracnormal derivative} and~\eqref{eq:def Lipschitz seminorm}, and that~$e_1$ is orthogonal to~$e_2 = \nu (0)$,
we obtain that
$$
\lim_{t \to 0^+} \frac{ | v_\lambda( t \eta ) | }{ t^{1+s} } \le
2 \big|\langle \na \psi (0), e_1 \rangle\big|\le
 2  [\pa_\nu^s u]_{\pa\Om}.
$$
This and~\eqref{eq:preperstimauno} lead to
\begin{equation*}
\int_{(\Omega\cap H_\lambda') \setminus \Omega_\lambda'} x_1 u (x) \dd x  \leqslant C [\pa_\nu^s u]_{\partial \Om} ,
\end{equation*}
and~\eqref{eq:lemma symmetric difference} follows in this case as well. 
\end{proof}

\begin{prop}\thlabel{prop:vecchia prop 4.4}
Let~\(\Omega\) be an open bounded set of class~$C^2$ satisfying the uniform interior sphere condition with radius~\(r_\Omega >0\). Let~$f \in C^{0,1}_{\mathrm{loc}}(\R)$ be such that~\(f(0)\geqslant 0\), and define~$R$ as in~\eqref{eq:def R}. Let~$u$ be a weak solution of~\eqref{eq:Dirichlet problem} satisfying~\eqref{eq:higher boundary regularity}.

For~$e\in \Sph^{n-1}$, denoting with~$\Om'$ the reflection of~$\Om$ with respect to the critical hyperplane~$\pi_\la$, we have that
\begin{equation}\label{eq:vecchia prop 4.4}
\vert \Omega \triangle \Omega' \vert \leqslant C_\star [\pa_\nu^s u]_{\pa\Om}^{\frac1{s+2}}, 
\end{equation}
where
\begin{equation}\label{eq:C star}
C_\star := C(n,s) \left[1+
\frac{ 1 }{ r_\Om^{1-s}  \left( f(0)+\|u\|_{L_s(\R^n)} \right) } \right]
\left( \frac{\diam \Omega}{R} \right)^{2n+2+6s} 
R^{n+2-s} r_\Om^{n-1}.
\end{equation}
\end{prop}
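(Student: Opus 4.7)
The plan is to combine the integral bound from \thref{lem:symmetric difference} with the Hopf-type inequality from \thref{zAPw0npM} via a Chebyshev-type splitting, exactly paralleling the proof of \thref{7TQmUHhl} in Chapter~\ref{1XMNCzrH}. Without loss of generality, I would take \(e=-e_1\) and \(\la=0\), so that \(\delta_{\pi_\la}(x)=x_1\) on \(\Omega\cap H_\la'\).

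First, since \(f\) is locally Lipschitz with \(f(0)\ge 0\), \thref{zAPw0npM} (after absorbing the Lipschitz quotient of \(f\) into a zero-th order coefficient) gives
\begin{equation*}
u(x) \ge C_\ast \big(f(0)+\|u\|_{L_s(\R^n)}\big)\, \delta_{\partial \Om}^{\,s}(x) \qquad \text{for a.e. } x\in\Om,
\end{equation*}
with \(C_\ast\) as in \thref{zAPw0npM}. Multiplying by \(x_1\), integrating over \((\Om\cap H_\la')\setminus\Om_\la'\), and invoking \thref{lem:symmetric difference}, I obtain
\begin{equation*}
\int_{(\Om\cap H_\la')\setminus\Om_\la'} x_1\,\delta_{\partial\Om}^{\,s}(x)\,\dd x \le \frac{C}{C_\ast \big(f(0)+\|u\|_{L_s(\R^n)}\big)}\,[\pa_\nu^s u]_{\pa\Om},
\end{equation*}
with \(C\) as in \eqref{eq:constant in lemma symmetric}. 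Chebyshev's inequality then bounds the super-level set
\begin{equation*}
\Big|\big\{x\in(\Om\cap H_\la')\setminus\Om_\la'\,:\,x_1\delta_{\partial\Om}^{\,s}(x)>\ga\big\}\Big| \le \frac{\widetilde{C}\,[\pa_\nu^s u]_{\pa\Om}}{\ga \,C_\ast \big(f(0)+\|u\|_{L_s(\R^n)}\big)}
\end{equation*}
for every \(\ga>0\).

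Next, for the complementary sub-level set, I would split
\(\{x_1\delta_{\partial\Om}^{\,s}\le\ga\} \subset \{x_1<\ga^{1/(s+1)}\}\cup\{\delta_{\partial\Om}\le\ga^{1/(s+1)}\}\):
the first slab has measure at most \((\diam\Om)^{n-1}\ga^{1/(s+1)}\), while the second is controlled by the tubular-neighborhood estimate \(|\{\delta_{\partial\Om}\le t\}|\le 2n\,r_\Om^{-1}|\Om|\,t\) (see \cite[Lemma 5.2]{MR4577340} or \cite{MR483992}), which is exactly the step that uses the uniform interior sphere condition. Using \(|\Om\triangle\Om'|=2|(\Om\cap H_\la')\setminus\Om_\la'|\) and summing the two contributions gives
\begin{equation*}
|\Om\triangle\Om'|\le \frac{A}{\ga}\,[\pa_\nu^s u]_{\pa\Om}+B\,\ga^{1/(s+1)},
\end{equation*}
where \(A\) collects the constants from Chebyshev and \(B:=2(\diam\Om)^{n-1}+4n\,r_\Om^{-1}|\Om|\).

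Finally, I would balance the two summands by choosing \(\ga = [\pa_\nu^s u]_{\pa\Om}^{(s+1)/(s+2)}\), which produces the exponent \(1/(s+2)\) of \eqref{eq:vecchia prop 4.4}. The remaining work is bookkeeping: trace the dependence of \(A\) and \(B\) on \(n\), \(s\), \(r_\Om\), \(R\), \(\diam\Om\) and \([f]_{C^{0,1}}\) through \(C_\ast\) and~\eqref{eq:constant in lemma symmetric}, and absorb \(|\Om|\le |B_1|(\diam\Om)^n\) to express everything in the form of \eqref{eq:C star}; in particular, the factor \(r_\Om^{\,s-1}\) appearing in \(C_\star\) comes from recombining the \(r_\Om^{-s}\) inside \(C_\ast^{-1}\) with the \(r_\Om^{-1}\) coming from \(B\), after choosing \(\ga\). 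I do not expect any conceptual obstacle: the difficult analytic and geometric ingredients—the new antisymmetric barrier of \thref{prop:newantisymmetricbarrier}, the quantitative maximum principle of \thref{Quantitative Maximum Principle}, the quantitative Hopf lemma \thref{thm:quantitative Hopf}, and the corner point lemma \thref{thm:quantitative corner lemma} feeding into \thref{lem:symmetric difference}—are already in place, so the most delicate point here is purely the careful bookkeeping of constants to match the form of \(C_\star\) in~\eqref{eq:C star}.
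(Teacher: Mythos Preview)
Your proposal is correct and is essentially the same approach as the paper: the paper's proof says only that it ``runs as the proof of Proposition~4.4 in~\cite{DPTVParallelStability}, but using \thref{lem:symmetric difference} instead of~\cite[Lemma~4.3]{DPTVParallelStability}'', and you have unpacked exactly that argument---\thref{zAPw0npM} for the lower bound on~$u$, Chebyshev with \thref{lem:symmetric difference}, the slab/tubular-neighborhood split, and the optimization $\gamma=[\pa_\nu^s u]_{\pa\Om}^{(s+1)/(s+2)}$. One minor slip: \thref{7TQmUHhl} is in Chapter~\ref{Fw81drHU}, not Chapter~\ref{1XMNCzrH}.
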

\begin{proof}
The proof runs as the proof of Proposition~4.4 in~\cite{DPTVParallelStability}, but using \thref{lem:symmetric difference} instead of~\cite[Lemma~4.3]{DPTVParallelStability}.
\end{proof}

Notice that, if the Serrin-type overdetermined condition~\eqref{eq:overdetermination} is in force then \([ \pa_\nu^s u]_{\partial \Om}=0\), and hence \thref{prop:vecchia prop 4.4} gives that, for every direction, $\Om$ must be symmetric with respect to the critical hyperplane. This implies that~$\Om$ must be a ball, and hence recovers the main result of~\cite{MR3395749}.

Moreover,
\thref{prop:vecchia prop 4.4} allows us to obtain \thref{thm:Main Theorem} by exploiting tools of the quantitative method of the moving planes that have been developed in~\cite{MR3836150} (see also~\cite{MR4577340, RoleAntisym2022, DPTVParallelStability}).

\begin{proof}[Proof \thref{thm:Main Theorem}]
The proof can be completed by combining \thref{prop:vecchia prop 4.4} with tools already used in~\cite{MR3836150, DPTVParallelStability}.
We can assume that, for every~$i= 1, \dots, n$, the critical plane~$\pi_{e_i}$ with respect to the coordinate direction~$e_i$ coincides with~$\left\lbrace x_i = 0 \right\rbrace $ . Given~$e \in S^{n-1}$, denote by~$\lambda_e$ the critical value associated with~$e$.
Reasoning as in the proof of~\cite[Proposition~4.5]{DPTVParallelStability}, but using \thref{prop:vecchia prop 4.4} instead of~\cite[Proposition~4.4]{DPTVParallelStability}, we get that, if
\begin{equation}\label{eq:assumption for lambda estimate}
[\pa_\nu^su]^{\frac 1{s+2} }_{\partial G} \leq \frac {\vert \Omega \vert } {n C_\star }  ,
\qquad \text{ with } C_\star \text{ as in~\eqref{eq:C star}},  
\end{equation}
then
\begin{equation}\label{eq:lambda estimate frac normal der}
\vert \lambda_e \vert \leqslant C [\pa_\nu^s u]_{\partial \Om}^{\frac 1 {s+2} }
\quad \text{ for all } e\in \Sph^{n-1} ,
\end{equation}
with
\begin{equation*}
C := C(n,s) \left[1+
\frac{ 1 }{ r_\Om^{1-s}  \left( f(0)+\|u\|_{L_s(\R^n)} \right) } \right]
\left( \frac{\diam \Omega}{R} \right)^{2n+2+6s} 
R^{n+2-s} \,
\left( \frac{r_\Om^{n-1} \diam \Om}{|\Om|} \right)
.
\end{equation*}
\thref{thm:Main Theorem} follows by reasoning as in the proof of~\cite[Theorem~1.2]{MR3836150} but using~\eqref{eq:assumption for lambda estimate} and~\eqref{eq:lambda estimate frac normal der} instead of~\cite[Lemma~4.1]{MR3836150}. Notice that, recalling the definition of~$r_\Om$ and~\eqref{eq:def R}, the measure of~$\Omega$ can be easily estimated by means of~$|\Om| \ge |B_1| r_\Om^{n} \ge |B_1| R^n$.
\end{proof}

\section*{Acknowledgements} 

All the authors are members of the Australian Mathematical Society (AustMS).
Giorgio Poggesi is supported by the Australian Research Council (ARC) Discovery Early Career Researcher Award (DECRA) DE230100954 ``Partial Differential Equations: geometric aspects and applications'' and is member of the Istituto Nazionale di Alta Matematica (INdAM)/Gruppo Nazionale Analisi Matematica Probabilit\`a e Applicazioni (GNAMPA).
Jack Thompson is supported by an Australian Government Research Training Program Scholarship.
Enrico Valdinoci is supported by the Australian Laureate Fellowship FL190100081 ``Minimal surfaces, free boundaries and partial differential equations''.

\part{Nonlocal Harnack inequalities for antisymmetric functions} \label{Part2}

\chapter{Introduction to Part II} 

The Harnack inequality plays a classical role in elliptic and parabolic \textsc{PDE} theory. From the perspective of regularity theory, the Harnack inequality directly implies the oscillation decay leading to H\"older regularity which is essential to the regularity theory for equations in divergence and non-divergence form with bounded, measurable coefficients. Another perspective is that the Harnack inequality is a quantitative analogue of the strong maximum principle which makes it a valuable tool to prove stability, particularly for overdetermined problems. For nonlocal equations, analogues of both the strong maximum principle and the Harnack inequality exist but require the solution to have a sign in all \(\R^n\) (not merely the region in which the solution satisfies the \textsc{PDE}). In this form, the strong maximum principle and Harnack inequality are incompatible with antisymmetric functions since the only antisymmetric function that is nonnegative in \(\R^n\) is the zero function. As such, the stability analysis of nonlocal overdetermined problems requires the development of Harnack inequalities for antisymmetric functions. This is the subject of Part II of this thesis.

\section{Nonlocal maximum principles for antisymmetric functions} 

Let \(\Omega \subset \R^n\) be bounded with sufficiently regular boundary such that \(0\in\Omega\), \(\R^n_+ = \{ x\in \R^n \text{ s.t. } x_1>0\}\), and \(x'=(-x_1,x_2,\dots,x_n)\). It is indicative of the method of moving planes to obtain a set \(\Omega'\subset \Omega\) and a function \(v \in C^2(\Omega)\cap C(\R^n)\) satisfying \(v(x)=-v(x')\) for all \(x\in \R^n\) and \begin{align*}
    \begin{PDE}
-\Delta v &=0, &\text{in }  \Omega' \cap \R^n_+ \\
v&\geq 0, &\text{on } \partial \Omega' \cap \R^n_+ \\
v&=0, &\text{on }\Omega' \cap \partial \R^n_+
    \end{PDE}
\end{align*} in the local case or, in the nonlocal case, \begin{align} \label{YA7P1M6O}
     \begin{PDE}
(-\Delta)^s v &=0, &\text{in }  \Omega' \cap \R^n_+ \\
v&\geq 0, &\text{in } \R^n_+\cap (\Omega \setminus \Omega') \\
v&=0, &\text{in } \R^n_+\setminus \Omega . 
    \end{PDE}
\end{align} For simplicity, we are ignoring the fact that there is usually a zero-th order term present in the above equations. An important step in the method of moving planes is to prove that \(v\geq 0\) in \(\Omega'\cap \R^n_+\). In the local case, this is a simple application of the standard maximum principle. Indeed, \(v\geq 0\) on \(\partial (\Omega'\cap \R^n_+)\) and \(v\) is harmonic in \(\Omega'\cap \R^n_+\), so \(v \geq 0 \) in \(\Omega'\cap \R^n_+\). Furthermore, the only significance the antisymmetry of \(v\) plays (i.e. \(v(x)=-v(x')\)) is to conclude \(v=0\) on \(\Omega' \cap \partial \R^n_+\). 

In the nonlocal regime, however, to apply the usual maximum principle in \(\Omega'\cap \R^n_+\), we require that \(v\geq 0\) in \(\R^n \setminus (\Omega'\cap \R^n_+)\). Since, for any open set \(E\subset \R^n\), the only antisymmetric function \(v:\R^n \to \R\) satisfying \(v\geq 0\) in \(\R^n \setminus E\) is \(v\equiv 0\) in \(\R^n\), the nonlocal case requires a new maximum principle which takes advantage of the symmetry present in \(v\). With this in mind, let \(x\in \Omega'\cap \R^{n+1}\). Then, via the change of variables \(y\to y'\) in the second integral below and using the antisymmetry of \(v\), we obtain \begin{align*}
  (-\Delta)^s v(x) 
    &= \PV \int_{\R^n_+} \frac{v(x)-v(y)}{\vert x- y\vert^{n+2s}} \dd y + \int_{\R^n_-} \frac{v(x)-v(y)}{\vert x- y\vert^{n+2s}} \dd y \\
    &= \PV \int_{\R^n_+} \frac{v(x)-v(y)}{\vert x- y\vert^{n+2s}} \dd y + \int_{\R^n_+} \frac{v(x)+v(y)}{\vert x'- y\vert^{n+2s}} \dd y \\
    &= \PV \int_{\R^n_+} \bigg ( \frac1 {\vert x- y\vert^{n+2s}} -\frac1{\vert x'- y\vert^{n+2s}}\bigg ) (v(x)-v(y)) \dd y \\
& \qquad + 2v(x)\int_{\R^n_+} \frac{\dd y }{\vert x'- y\vert^{n+2s}} .
\end{align*} Since \begin{align*}
     \frac1 {\vert x- y\vert^{n+2s}} -\frac1{\vert x'- y\vert^{n+2s}} &>0 \qquad \text{for all } x\in \R^n_+,
\end{align*} if there exists \(x_0 \in \Omega'\cap \R^n_+\) such that \(v(x_0)=0\) and \(v\) satisfies~\eqref{YA7P1M6O} then \begin{align*}
    (-\Delta)^sv(x_0) &= -\PV \int_{\R^n_+} \bigg ( \frac1 {\vert x_0- y\vert^{n+2s}} -\frac1{\vert x_0'- y\vert^{n+2s}}\bigg ) v(y) \dd y \leq 0
\end{align*} with equality if and only if \(v\equiv 0\) in \(\R^n\). Hence, this implies the strong maximum principle: either \(v>0\) in \(\Omega'\cap \R^n_+\) or \(v\equiv 0\) in \(\R^n\). In particular, we conclude that \(v\geq 0\) in \(\Omega'\cap \R^n_+\). The above argument was first given in \cite{MR3395749} for weak solutions of~\eqref{YA7P1M6O}, see also Proposition \ref{fyoaW}. It is clear from the preceding argument that antisymmetry plays a much more prominent role with nonlocal equations than with local case. 

In the stability analysis of overdetermined problems, conceptually the idea is to replace qualitative arguments with quantitative ones. An example of a qualitative result is the strong maximum principle and its quantitative analogue is the Harnack inequality. Indeed, for a nonnegative harmonic function \(u:\Omega \to \R\) with \(\Omega\) bounded, the strong maximum principle tells us either \(u>0\) in \(\Omega\) or \(u\equiv 0\) in \(\Omega\). On the other hand, the Harnack inequality states that for all connected \(\tilde \Omega\subset\subset \Omega\) then \(\sup_{\tilde \Omega} u \leq C \inf_{\tilde \Omega}u\) with \(C>0\) depending on \(\tilde \Omega\), \(\Omega\), and \(n\). In other words, the existence of a point \(x_0\in \tilde \Omega\) with \(u(x_0)=\varepsilon \) with \(\varepsilon>0\) very small gives the quantitative upper bound \(u\leq C\varepsilon\) in \(\tilde \Omega\). For the Harnack inequality to apply to the method of moving planes and the stability of overdetermined problems, we require that we can apply it to antisymmetric functions. While this is of no significance for local problems, as we describe above, it is significant for nonlocal problems.

In Chapter~\ref{RYsUIqSr}, we prove a Harnack inequality for antisymmetric solutions to linear PDE driven by the fractional Laplacian with zero-th order terms. Such PDE are exactly the equations that arise in the method of moving planes. In its simplest form, our Harnack inequality is the following: let \(c\in L^\infty(B_1^+)\) and suppose that \(u\in C^2(B_1) \cap L^\infty(\R^n)\) is antisymmetric, non-negative in \(\R^n_+\), and satisfies \begin{align}
    (-\Delta)^s u +cu = 0 \qquad \text{in } B_1. \label{TyBhIDPf}
\end{align} Then \begin{align}
    \sup_{B_{1/2}^+} \frac{ u(x)}{x_1} \leq C \inf_{B_{1/2}^+} \frac{u(x)}{x_1} . \label{NDEEpVmI}
\end{align} The constant \(C>0\) depends only on \(n\), \(s\), and \(\|c\|_{L^\infty(B_1^+)}\). Moreover, the quantities \( \inf_{B_{1/2}^+} \frac{u(x)}{x_1}\) and  \(\sup_{B_{1/2}^+} \frac{u(x)}{x_1}\) are both comparable to \begin{align}
    \int_{\R^n_+} \frac{x_1\vert u(x)\vert }{1+\vert x \vert^{n+2s+2}} \dd x . \label{xq8ep7ce}
\end{align} The quantity~\eqref{xq8ep7ce} appears naturally in the proof of the antisymmetric Harnack inequality and plays an identical role to what \begin{align*}
    \| u\|_{L_s(\R^n)} = \int_{\R^n} \frac{\vert u(x) \vert }{1+\vert x \vert^{n+2s}} \dd x 
\end{align*} plays in the usual (non-antisymmetric) nonlocal Harnack inequality. The proof of~\eqref{NDEEpVmI} requires the construction of several antisymmetric barriers which, via viscosity-type arguments (i.e. touching the solutions from above/below with the barriers), implies both an interior boundedness and weak Harnack inequality. Together, these imply the full Harnack inequality. 

In Chapter~\ref{OXXm8GiN}, we continue to explore the antisymmetric Harnack inequality of Chapter~\ref{RYsUIqSr}, but through techniques from harmonic analysis. At the heart of the chapter, is the powerful Bochner's relation: let \(f\) and \(\tilde f\) be two radial Schwartz functions on \(\R^n\) and \(\R^{n+2\ell}\) respectively with the same profile function, that is, \(f(x)=\tilde f(\tilde x)\) whenever \(\vert x \vert = \vert \tilde x \vert \). Moreover, let \(V\) be a solid harmonic polynomial of degree \(\ell\). Then \begin{align*}
    \mathscr F_n \{ Vf\}(\xi) = i^\ell V(\xi) \mathscr F_{n+2\ell}f (\tilde \xi)
\end{align*} for all \(\xi \in \R^n\) and \(\tilde \xi \in \R^{n+2\ell}\) with \(\vert \xi \vert = \vert \tilde \xi \vert\). Here \(\mathscr F_m\) denotes the Fourier transform in \(\R^m\). Morally, Bochner's relation tells us that for radial functions, solid harmonic polynomials commute with the Fourier transform (at the expense of the Fourier transform needing to be evaluated in a higher dimension based on the degree of the polynomial). To connect Bochner's relation to the antisymmetric Harnack inequality, we use two facts. The first is that  \begin{align*}
    \mathscr F_n \{ (-\Delta)^s u\}(\xi) = \vert \xi \vert^{2s}   \mathscr F_n u(\xi),
\end{align*} so the Fourier symbol of \((-\Delta)^{2s}\) (\(\vert \xi\vert^{2s}\)) is a radial function. The second is that, given an antisymmetric function \(u:\R^n \to \R\), the function \(x_1\mapsto \frac{u(x)}{x_1}\) is even (i.e. radial in one dimension) and \(V(x_1)=x_1\) is a homogeneous harmonic polynomial in one dimension. This allows us to prove that \begin{align*}
    (-\Delta)^s \tilde v(\tilde x) &= \frac{(-\Delta)^s u \big ( \sqrt{\tilde x_1^2 +\tilde x_2^2+ \tilde x_3^2 }, \tilde x_4,\dots, \tilde x_{n+2} \big ) }{\sqrt{\tilde x_1^2 +\tilde x_2^2+ \tilde x_3^2 }}
\end{align*} for all \(\tilde x \in \{ \tilde y \in B_1^{n+2}  \text{s.t. } (\tilde y_1,\tilde y_2,\tilde y_3) \neq 0 \}\) where \(\tilde v : \R^{n+2} \to \R\) is given by \begin{align*}
    \tilde v (\tilde x ) &= \frac{u \big ( \sqrt{\tilde x_1^2 +\tilde x_2^2+ \tilde x_3^2 }, \tilde x_4,\dots, \tilde x_{n+2}\big ) }{\sqrt{\tilde x_1^2 +\tilde x_2^2+ \tilde x_3^2 }}.
\end{align*} In particular, if \(u\) satisfies~\eqref{TyBhIDPf} then \(\tilde v\) satisfies \begin{align*}
    (-\Delta)^s \tilde v + \tilde c \tilde v =0 \qquad \text{in } \{ \tilde y \in B_1^{n+2}  \text{s.t. } (\tilde y_1,\tilde y_2,\tilde y_3) \neq 0 \},
\end{align*} so, applying the Harnack inequality for the fractional Laplacian in \(\R^{n+2}\) allows us to conclude the antisymmetric Harnack inequality in \(\R^n\). Not only does this illustrate a beautiful connection between the standard nonlocal Harnack and nonlocal Harnack inequalities over classes of functions with a symmetry present, but the argument's application to the fractional Laplacian only uses that the Fourier symbol of the fractional Laplacian is radial. As such, the arguments directly generalise to nonlocal operators with radial Fourier symbol which we also address in the final section of Chapter~\ref{OXXm8GiN}. 


\chapter{On the Harnack inequality for antisymmetric \(s\)-harmonic functions} \label{RYsUIqSr}

We prove the Harnack inequality for antisymmetric \(s\)-harmonic functions, and more generally for
solutions of fractional equations with zero-th order terms, in a general domain. This may be used in conjunction with the method of moving planes to obtain quantitative stability results for symmetry and overdetermined problems for semilinear equations driven by the fractional Laplacian.

The proof is split into two parts: an interior Harnack inequality away from the plane of symmetry, and a boundary Harnack inequality close to the plane of symmetry. We prove these results by first
establishing the weak Harnack inequality for super-solutions and local boundedness for sub-solutions in both the interior and boundary case.

En passant, we also obtain a new mean value formula for antisymmetric $s$-harmonic functions.

\section{Introduction and main results}
Since the groundbreaking work \cite{harnack1887basics}, Harnack-type inequalities have become an essential tool in the analysis of partial differential equations (\textsc{PDE}). In \cite{MR1729395,MR3522349,MR3481178}, Harnack inequalities have been applied to antisymmetric functions---functions that are odd with respect to reflections across a given plane, for more detail see Section \ref{LenbLVkk}---in conjunction with the method of moving planes to obtain stability results for local overdetermined problems including \emph{Serrin's overdetermined problem} and the \emph{parallel surface problem}; see also \cite{MR3932952}.

In recent years, there has been consolidated interest in overdetermined problems driven by nonlocal operators, particularly the fractional Laplacian; however, standard Harnack inequalities for such operators are incompatible with antisymmetric functions since they require functions to be non-negative in all of \(\R^n\). In this paper, we will address this problem by proving a Harnack inequality for antisymmetric \(s\)-harmonic functions with zero-th order terms which only requires non-negativity in a halfspace. By allowing zero-th order terms this result is directly applicable to symmetry and overdetermined problems for semilinear equations driven by the fractional Laplacian.

\subsection{Background} Fundamentally the original Harnack inequality for elliptic \textsc{PDE} is a quantitative formulation of the strong maximum principle and directly gives, among other things, Liouville's theorem, the removable singularity theorem, compactness results, and Hölder estimates for weak and viscosity solutions, see~\cite{MR1814364,MR2597943}. This has led it to be extended to a wide variety of other settings. For local \textsc{PDE} these include:
linear parabolic \textsc{PDE}~\cite{MR2597943,MR1465184}; quasilinear and fully nonlinear
elliptic \textsc{PDE}~\cite{MR170096,MR226198,MR1351007}; quasilinear and fully nonlinear
parabolic \textsc{PDE}~\cite{MR244638,MR226168}; and in connection with curvature and geometric
flows such as Ricci flow on Riemannian manifolds~\cite{MR431040,MR834612,MR2251315}.
An extensive survey on the Harnack inequality for local \textsc{PDE} is~\cite{MR2291922}.

For equations arising from jump processes, colloquially known as nonlocal \textsc{PDE}, the first Harnack inequality is due to Bogdan~\cite{MR1438304} who proved the boundary Harnack inequality for the fractional Laplacian. The fractional Laplacian is the prototypical example of a nonlocal operator and is defined by \begin{align}
(-\Delta)^s u(x) &= c_{n,s} \PV \int_{\R^n} \frac{u(x)-u(y)}{\vert x - y \vert^{n+2s}} \dd y  \label{ZdAlT}
\end{align}
where~$s\in(0,1)$, \(c_{n,s}\) is a positive normalisation constant (see~\cite{MR3916700} for more details, particularly Proposition~5.6 there) and \(\PV\) refers to the Cauchy principle value. The result in~\cite{MR1438304} was only valid for Lipschitz domains, but this was later extended to any bounded domain in~\cite{MR1719233}.

Over the proceeding decade there were several papers proving Harnack inequalities for more general jump processes including~\cite{MR1918242,MR3271268}, see also~\cite{MR2365478}. For fully nonlinear nonlocal \textsc{PDE}, a Harnack inequality was established in~\cite{MR2494809}, see also~\cite{MR2831115}. More recently, in~\cite{MR4023466} the boundary Harnack inequality was proved for nonlocal \textsc{PDE} in non-divergence form.

As far as we are aware, the only nonlocal Harnack inequality for antisymmetric functions in the literature is in~\cite{ciraolo2021symmetry} where a boundary Harnack inequality was established for antisymmetric \(s\)-harmonic functions in a small ball centred at the origin. Our results generalise this to arbitrary symmetric domains and to equations with zero-th order terms. 

\subsection{Main results} \label{K4mMv} Let us now describe in detail our main results. First, we will introduce some useful notation. Given a point \(x=(x_1,\dots , x_n) \in \R^n\), we will write \(x=(x_1,x')\) with \(x'=(x_2,\dots , x_n)\) and we denote by \(x_\ast\) the reflection of \(x\) across the plane \(\{x_1=0\}\), that is, \(x_\ast = (-x_1,x')\). Then we call a function \(u:\R^n \to \R\) \emph{antisymmetric} if \begin{align*}
u(x_\ast) = -u(x) \qquad \text{for all } x\in \R^n. 
\end{align*} We will also denote by \(\R^n_+\) the halfspace \(\{x\in \R^n \text{ s.t. } x_1>0\}\) and, given \(A\subset \R^n\), we let \(A^+ := A \cap \R^n_+\). Moreover, we will frequently make use of the
functional space \(\mathscr A_s(\R^n)\) which we define to be the set of all antisymmetric functions \(u\in L^1_{\mathrm {loc}}(\R^n)\) such that \begin{align}
\Anorm{u}:= \int_{\R^n_+} \frac{x_1 \vert u(x) \vert}{1+ \vert x \vert^{n+2s+2}}\dd x \label{QhKBn}
\end{align} is finite. The role that the new space \(\mathscr A_s(\R^n)\) plays will be explained in more detail later in this section.

Our main result establishes the Harnack inequality\footnote{The arguments presented here are quite general and can be suitably extended to other integro-differential operators. For the specific case of the fractional Laplacian, the extension problem can provide alternative ways to prove some of the results presented here. We plan to treat this case extensively in a forthcoming paper, but in Appendix~\ref{APPEEXT:1} here
we present a proof of~\eqref{LA:PAKSM} specific for the fractional Laplacian and~$c:=0$
that relies on extension methods.}
in a general symmetric domain \(\Omega\).

\begin{thm} \thlabel{Hvmln}
Let \(\Omega \subset \R^n\) and \(\tilde \Omega \Subset \Omega\) be bounded domains that are symmetric with respect to \(\{x_1=0\}\), and let \(c\in L^\infty(\Omega^+)\). Suppose that \(u \in C^{2s+\alpha}(\Omega)\cap \mathscr{A}_s(\R^n) \) for some \(\alpha >0\) with \(2s+\alpha\) not an integer, \(u\) is non-negative in \(\R^n_+\), and satisfies \begin{align*}
(-\Delta)^s u +cu&=0 \qquad \text{in }\Omega^+.
\end{align*}

Then there exists \(C>0\) depending only on \(\Omega\), \(\tilde \Omega\), \(\| c\|_{L^\infty(\Omega^+)}\), \(n\), and \(s\) such that \begin{equation}\label{LA:PAKSM}
\sup_{\tilde \Omega^+} \frac{u(x)}{x_1}  \leqslant C \inf_{ \tilde \Omega^+} \frac{u(x)}{x_1} .
\end{equation}  Moreover, \(\inf_{\tilde \Omega^+} \frac{u(x)}{x_1}\) and \(\sup_{\tilde \Omega^+} \frac{u(x)}{x_1}\) are comparable to \(\Anorm{u}\).
\end{thm}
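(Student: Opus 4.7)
My plan is to establish \thref{Hvmln} by reducing the global Harnack inequality on $\tilde\Omega^+$ to a combination of two local Harnack-type estimates on balls, one ``interior'' (balls bounded away from $\{x_1=0\}$) and one ``boundary'' (balls touching, or close to, the symmetry plane), and then to chain them. The reason this split is natural is that when $x_1$ is comparable to the radius of the ball we are working in, the quotient $u(x)/x_1$ behaves like the trace $u(x)$ itself, and a standard-type Harnack can be hoped for; when $x_1$ is small compared to that radius, the antisymmetry forces $u$ to vanish linearly on the plane, so $u(x)/x_1$ is the correct quantity to Harnack-control, mirroring what happens in the classical boundary Harnack setup.

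The first step is to decompose $\tilde\Omega^+$ into a chain of overlapping balls $B_{r_i}(x_i)$ with $x_i\in\tilde\Omega^+$ and $r_i$ comparable to $\mathrm{dist}(\{x_1=0\}\cup\partial\Omega,B_{r_i}(x_i))$ whenever this distance is positive, together with a finite family of boundary balls whose centres approach $\{x_1=0\}$. For interior balls I will prove a version of \eqref{LA:PAKSM} of the form $\sup_{B_{r/2}(x_0)}u\le C\inf_{B_{r/2}(x_0)}u$ by adapting the standard non-local Harnack inequality: the solution $u$ is non-negative only in $\R^n_+$, but the antisymmetric tail $\int_{\R^n_-}u^-(y)|x-y|^{-n-2s}dy$ can be rewritten, using $u(y)=-u(y_\ast)$ and the fact that $|x-y|\ge |x-y_\ast|$ for $x,y\in\R^n_+$, as a positive weighted integral of $u$ over $\R^n_+$ that is controlled by $\|u\|_{\mathscr{A}_s(\R^n)}$. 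The zero-th order term $cu$ is absorbed into the right-hand side by the standard Moser/De Giorgi machinery for integro-differential equations, exactly as in the proofs of the weak Harnack inequality and local boundedness for sub/supersolutions.

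For boundary balls $B_r(x_0)$ with $x_0\in\R^n_+$ and $x_{0,1}\lesssim r$, the key step, and the main obstacle of the paper, is to prove separately: (i) a weak Harnack inequality
\[
\Big(\dashint_{B_{r/2}^+(x_0)}\Big(\frac{u(x)}{x_1}\Big)^p dx\Big)^{1/p}\le C\inf_{B_{r/4}^+(x_0)}\frac{u(x)}{x_1}+\|u\|_{\mathscr{A}_s(\R^n)}
\]
for some $p>0$ whenever $u$ is an antisymmetric supersolution, and (ii) the matching local boundedness
\[
\sup_{B_{r/4}^+(x_0)}\frac{u(x)}{x_1}\le C\Big(\dashint_{B_{r/2}^+(x_0)}\Big(\frac{u(x)}{x_1}\Big)^p dx\Big)^{1/p}+C\|u\|_{\mathscr{A}_s(\R^n)}
\]
for subsolutions. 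Both will be obtained by constructing explicit antisymmetric barriers whose ratio with $x_1$ is uniformly bounded above and below; the profile must be an antisymmetric analogue of the usual $(\rho^2-|x|^2)_+^s$ barrier for $(-\Delta)^s$, and its fractional Laplacian must be computable (and with a sign that is robust against a bounded perturbation $cu$). These barriers feed a Krylov--Safonov/De Giorgi iteration performed on the function $u/x_1$, using the identity $(-\Delta)^s(x_1 v)=x_1(-\Delta)^s v+(\text{lower order})$ arising from the antisymmetric decomposition, to convert the equation for $u$ into an equation with bounded zero-th order coefficients for $u/x_1$. Combining (i) and (ii) yields the boundary Harnack inequality on each such ball.

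Finally I will stitch interior and boundary pieces together along a Harnack chain: since $\tilde\Omega^+\Subset\Omega^+$ is connected, finitely many overlapping balls of the two types cover it, with constants depending only on $\Omega$, $\tilde\Omega$, $n$, $s$ and $\|c\|_{L^\infty(\Omega^+)}$, yielding \eqref{LA:PAKSM}. The comparability to $\|u\|_{\mathscr{A}_s(\R^n)}$ follows by applying the upper bound in local boundedness (to bound $\sup_{\tilde\Omega^+} u/x_1$ from above by $\|u\|_{\mathscr{A}_s(\R^n)}$ up to an $L^p$-norm of $u/x_1$ on a slightly larger set, absorbed by the lower Harnack bound) together with the reverse inequality obtained from the weak Harnack estimate tested against the integral kernel defining $\|\cdot\|_{\mathscr{A}_s(\R^n)}$; essentially the mean-value-type representation for antisymmetric $s$-harmonic functions mentioned in the abstract provides both comparisons at once. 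The technically hardest point I expect is the construction and sharp fractional-Laplacian computation for the antisymmetric barrier in the boundary case, because it must simultaneously vanish linearly on $\{x_1=0\}$, be admissible for the Moser iteration applied to $u/x_1$, and survive the zero-th order perturbation $cu$.
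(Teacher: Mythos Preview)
Your overall architecture matches the paper exactly: split into an interior Harnack (Theorem~\ref{DYcYH}) on balls bounded away from $\{x_1=0\}$ and a boundary Harnack (Theorem~\ref{C35ZH}) on half-balls touching the plane, prove each as ``weak Harnack for supersolutions $+$ local boundedness for subsolutions'' via antisymmetric barriers, and then chain along a covering of $\tilde\Omega^+$ (Section~\ref{lXIUl}). Two implementation points differ from the paper and one of them is a genuine issue. First, the paper's weak Harnack inequalities (Propositions~\ref{MT9uf} and~\ref{SwDzJ u9i}) give $\Anorm{u}\le C(\inf u/x_1+M)$ directly rather than an $L^p$-average bound, and the local boundedness (Propositions~\ref{guDQ7} and~\ref{EP5Elxbz}) gives $\sup u/x_1\le C(\Anorm{u}+M)$; this makes the comparability to $\Anorm{u}$ immediate, with no extra absorption step. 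Second, and more importantly, your proposed device of converting to an equation for $v=u/x_1$ via ``$(-\Delta)^s(x_1v)=x_1(-\Delta)^sv+\text{lower order}$'' does not work as stated: the commutator term $c_{n,s}\,\PV\!\int(x_1-y_1)v(y)|x-y|^{-n-2s}\,dy$ is a genuine first-order nonlocal operator, not a bounded zero-th order coefficient, so you cannot run a standard Moser/De Giorgi iteration on $v$ in dimension $n$. (The only clean version of this identity is the dimension-raising Bochner relation used in Chapter~\ref{OXXm8GiN}.) The paper avoids this entirely by working with $u$ itself and touching it with antisymmetric barriers built to be comparable to $x_1$ near the plane: $\varphi^{(2)}(x)=x_1\zeta(x)$ in Proposition~\ref{g9foAd2c} and the Poisson-kernel barrier $\varphi^{(3)}$ of Lemma~\ref{KaMmndyO} in Proposition~\ref{EP5Elxbz}. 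The touching argument is pointwise (viscosity-style) rather than energy-based, and the case split near $\{x_1=0\}$ in the local boundedness proof (the point you correctly flag as hardest) is handled by comparing $a_1$ to $\theta d$ rather than by reducing to an equation for $u/x_1$.
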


Here and throughout this document we use the convention that if \(\beta>0\) and \(\beta\) is not an integer then \(C^\beta (\Omega)\) denotes the Hölder space \(C^{k,\beta'}(\Omega)\) where \(k\) is the integer part of \(\beta\) and \(\beta'=\beta-k \in (0,1)\).

One can compare the Harnack inequality in Theorem~\ref{Hvmln} here
with previous results in the literature, such as Proposition~6.1 in~\cite{MR4308250},
which can be seen as a boundary Harnack inequality for antisymmetric functions,
in a rather general fractional elliptic setting. We point out that Theorem~\ref{Hvmln} here
holds true without any sign assumption on~$c$ (differently from Proposition~6.1 in~\cite{MR4308250}
in which a local sign assumption\footnote{The sign assumption on \(c\) in Proposition~6.1 of~\cite{MR4308250} is required since the result was for unbounded domains. On the other hand, our result is for bounded domains which is why this assumption is no longer necessary. } on~$c$ was taken), for all~$s\in(0,1)$ (while the analysis of Proposition~6.1 in~\cite{MR4308250}
was focused on the case~$s\in\left[\frac12,1\right)$), and in every dimension~$n$ (while Proposition~6.1 in~\cite{MR4308250} dealt with the case~$n=1$).

We would like to emphasise that, in contrast to the standard Harnack inequality for the fractional Laplacian, \thref{Hvmln} only assumes \(u\) to be non-negative in the halfspace \(\R^n_+\) which is a more natural assumption for antisymmetric functions. Also, note that the assumption that \(\tilde \Omega \Subset \Omega\) allows~\(\tilde \Omega^+\) to touch \(\{x_1=0\}\), but prevents \(\tilde \Omega^+\) from touching the portion of \(\partial\Omega\) that is not on \(\{x_1=0\}\); it is in this sense that we will sometimes refer to \thref{Hvmln} (and later \thref{C35ZH}) as a \emph{boundary} Harnack inequality for antisymmetric functions.

Interestingly, the quantity \(\Anorm{u}\) arises very naturally in the context of antisymmetric functions and plays the same role that \begin{align}
\| u\|_{\mathscr L_s (\R^n)} := \int_{\R^n} \frac{\vert u (x) \vert }{1+\vert x \vert^{n+2s}} \dd x \label{ACNAzPn8}
\end{align} plays in the non-antisymmetric nonlocal Harnack inequality, see for example \cite{MR4023466}. To our knowledge, \(\Anorm{\cdot}\) is new in the literature.

A technical aspect of \thref{Hvmln}, however, is that the fractional Laplacian is in general not defined if we simply have \(u\) is \(C^{2s+\alpha}\) and \(\Anorm{u} < +\infty\). This leads us to introduce the following new definition of the fractional Laplacian for functions in \(\mathscr A_s(\R^n)\) which we will use throughout the remainder of this paper. 

\begin{defn} \thlabel{mkG4iRYH}
Let \(x\in \R^n_+\) and suppose that \(u \in \mathscr A_s(\R^n)\) is \( C^{2s+\alpha}\) in a neighbourhood of \(x\) for some \(\alpha>0\) with \(2s+\alpha\) not an integer. The fractional Laplacian of \(u\) at \(x\), which we denote as usual by \((-\Delta)^su(x)\), is defined by \begin{align}\label{OopaQ0tu} 
(-\Delta)^su(x) &= c_{n,s} \lim_{\varepsilon\to 0^+} \int_{\R^n_+ \setminus B_\varepsilon(x)} \left(
\frac 1 {\vert x - y \vert^{n+2s}} - \frac 1 {\vert x_\ast- y \vert^{n+2s}}\right) \big(u(x)-u(y)\big)\dd y  \\
&\qquad + \frac {c_{1,s}} s  u(x)x_1^{-2s}\nonumber 
\end{align} where \(c_{n,s}\) is the constant from~\eqref{ZdAlT}.
\end{defn}

We will motivate \thref{mkG4iRYH} in Section \ref{LenbLVkk} as well as verify it is well-defined. We also prove in Section \ref{LenbLVkk} that if \(\| u\|_{\mathscr L_s(\R^n)}\) is finite, $u$ is \( C^{2s+\alpha}\) in a neighbourhood of \(x\), and antisymmetric then \thref{mkG4iRYH} agrees with~\eqref{ZdAlT}. 

See also~\cite{MR3453602, MR4108219, MR4030266, MR4308250} where maximum principles in the setting
of antisymmetric solutions of integro-differential equations have been established by exploiting
the antisymmetry of the functions as in~\eqref{OopaQ0tu}.

It is also worth mentioning that the requirement that \(u\) is antisymmetric in \thref{Hvmln} cannot be entirely removed. In Appendix \ref{j4njb}, we construct a sequence of functions that explicitly demonstrates this. Moreover, \thref{Hvmln} is also false if one only assumes \(u\geqslant0\) in \(\Omega^+\) as proven in \cite[Corollary 1.3]{RoleAntisym2022}.

To obtain the full statement of \thref{Hvmln} we divide the proof into two parts: an interior Harnack inequality and a boundary Harnack inequality close to \(\{x_1=0\}\). The interior Harnack inequality is given as follows.

\begin{thm} \thlabel{DYcYH}
Let \(\rho\in(0,1)\) and \(c\in L^\infty(B_\rho(e_1))\). Suppose that \(u \in C^{2s+\alpha}(B_\rho(e_1))\cap \mathscr{A}_s(\R^n) \) for some \(\alpha >0\) with \(2s+\alpha\) not an integer, \(u\) is non-negative in \(\R^n_+\), and satisfies \begin{align*}
(-\Delta)^s u +cu =0 \qquad\text{in } B_{\rho}(e_1).
\end{align*}

Then there exists \(C_\rho>0\) depending only on \(n\), \(s\), \(\| c \|_{L^\infty(B_\rho(e_1))}\), and \(\rho\) such that \begin{align*}
\sup_{B_{\rho/2}(e_1)} u \leqslant C_\rho \inf_{B_{\rho/2}(e_1)} u.
\end{align*} Moreover, both the quantities \(\sup_{B_{\rho/2}(e_1)} u \) and \(\inf_{B_{\rho/2}(e_1)} u \) are comparable to \(\Anorm{u}\).
\end{thm}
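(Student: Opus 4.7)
The plan is to exploit the antisymmetric form of the fractional Laplacian given in \thref{mkG4iRYH} to recast the equation $(-\Delta)^s u + cu = 0$ as an integro-differential equation for the \emph{non-negative} function $u|_{\R^n_+}$, of the form
\[
c_{n,s}\,\PV\!\int_{\R^n_+} K(x,y)\,\bigl(u(x)-u(y)\bigr)\,dy + \widetilde{c}(x)\,u(x) = 0 \qquad\text{in } B_\rho(e_1),
\]
with the strictly positive kernel $K(x,y):=|x-y|^{-n-2s}-|x_*-y|^{-n-2s}$ and a bounded coefficient $\widetilde c$ that incorporates the contribution coming from $J(x)=\int_{\R^n_+}|x_*-y|^{-n-2s}\,dy$. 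Since $B_\rho(e_1)\Subset\R^n_+$, a direct computation gives the two-sided bounds
\[
K(x,y)\asymp|x-y|^{-n-2s}\text{ for }y\in B_\rho(e_1),\qquad K(x,y)\asymp \frac{y_1}{(1+|y|)^{n+2s+2}}\text{ for }y\in\R^n_+\setminus B_\rho(e_1),
\]
uniformly in $x\in B_{\rho/2}(e_1)$. The operator is therefore locally comparable to $(-\Delta)^s$, while its long-range tail naturally carries precisely the weight that defines $\Anorm{\cdot}$. This is the fundamental reason why $\Anorm{u}$, and not $\|u\|_{\mathscr L_s(\R^n)}$, is the correct tail quantity in the antisymmetric setting.

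Following the strategy announced in the introduction, I would then establish separately, for the reformulated equation, the two building blocks of the Harnack inequality: a weak Harnack inequality for non-negative super-solutions, of the form $\bigl(|B_{\rho/2}(e_1)|^{-1}\int_{B_{\rho/2}(e_1)} u^{p_0}\bigr)^{1/p_0}\leq C_\rho \inf_{B_{\rho/4}(e_1)} u$ for some small $p_0>0$ depending only on $n$ and $s$; and a local boundedness estimate for non-negative sub-solutions, of the form $\sup_{B_{\rho/2}(e_1)} u \leq C_\rho \bigl(\bigl(|B_\rho(e_1)|^{-1}\int_{B_\rho(e_1)} u^{p_0}\bigr)^{1/p_0} + \mathcal T \bigr)$, where $\mathcal T:= \sup_{x\in B_{\rho/2}(e_1)}\int_{\R^n_+\setminus B_\rho(e_1)} K(x,y)u(y)\,dy$. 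Both are established via the standard Moser/Krylov--Safonov iteration on shrinking balls, carried out directly for the non-translation-invariant kernel $K$, through explicit antisymmetric cut-offs of the form $\eta(x)-\eta(x_*)$ whose action under $(-\Delta)^s$ can be computed and controlled. The kernel bounds above yield the crucial two-sided comparability $c_\rho\Anorm{u}\leq \mathcal T \leq C_\rho\Anorm{u}$, which combined with the two building blocks produces the one-sided Harnack $\sup_{B_{\rho/2}(e_1)} u \leq C_\rho\bigl(\inf_{B_{\rho/2}(e_1)} u + \Anorm{u}\bigr)$. The upgrade to the comparability of both $\sup$ and $\inf$ with $\Anorm{u}$ is then obtained via the weak maximum principle for the operator with kernel $K$ applied to a pair of tailor-made antisymmetric barriers of the form $\Phi(x) = A \Anorm{u}\, x_1\, \zeta(x)$, with $\zeta$ a smooth cut-off, engineered so that $\Phi$ is a super- (respectively sub-) solution in $B_\rho(e_1)$ and dominates (respectively is dominated by) $u$ outside.

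The main obstacle is the interplay between the local singular part of $K$ and its non-trivial long-range weight. Because $K$ is not translation-invariant and degenerates to $0$ along $\{y_1=0\}$, the Moser/De Giorgi iteration must keep the $y_1$-weight in the tail at every step: a naive reduction to the standard non-local Harnack applied to the non-negative extension $u\chi_{\R^n_+}$ would produce $\|u\|_{\mathscr L_s(\R^n)}$ in place of $\Anorm{u}$ in the tail, overestimating the contributions from near $\{y_1=0\}$ and losing the sharp comparability stated in the theorem. Constructing the antisymmetric barriers so that their action under $(-\Delta)^s$ can be explicitly computed and matched against the tail $\mathcal T$ is accordingly the most delicate technical point; the bounded zero-th order coefficient $c$ is then absorbed through standard perturbation arguments based on maximum principles for antisymmetric functions.
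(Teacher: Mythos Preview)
Your reformulation of the equation via the antisymmetric kernel $K(x,y)=|x-y|^{-n-2s}-|x_*-y|^{-n-2s}$ and the identification of $\Anorm{u}$ as the correct tail quantity are exactly right, and the two-block structure (weak Harnack for super-solutions, local boundedness for sub-solutions) matches the paper's architecture. The paper, however, does not run a Moser or Krylov--Safonov iteration. Instead it uses direct barrier/touching-point arguments in the spirit of \cite{MR4023466,MR2494809,MR2831115}: for the weak Harnack one touches $u$ from below by $\tau\varphi^{(1)}$ with $\varphi^{(1)}$ an explicit antisymmetric cut-off, and evaluates the equation at the touching point to obtain directly $\Anorm{u}\le C_\rho\inf_{B_{\rho/2}(e_1)}u$; for local boundedness one touches from above by a growing envelope $\tau(\rho-|x-e_1|)^{-n-2}$ and feeds the rescaled weak Harnack into a density argument to get $\sup_{B_{\rho/2}(e_1)}u\le C_\rho\Anorm{u}$.

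The practical difference is that the paper's estimates already contain the comparability with $\Anorm{u}$, so no separate barrier step is needed to upgrade from your one-sided inequality $\sup u\le C(\inf u+\Anorm{u})$ to the full statement. In your plan, that upgrade step---showing $\Anorm{u}\le C\inf u$ via a barrier $\Phi=A\,\Anorm{u}\,x_1\,\zeta$---is essentially the paper's weak Harnack proof in disguise, so you would end up doing that computation anyway on top of an iteration that the paper avoids. Your route should work, but it is longer; the touching-point approach is more economical here precisely because the kernel is non-translation-invariant and the tail weight is non-standard, which makes the usual iteration bookkeeping heavier than a single barrier comparison.
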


For all \(r>0\), we write \(B_r^+ := B_r \cap \R^n_+\). Then the following result is the antisymmetric boundary Harnack inequality.

\begin{thm} \thlabel{C35ZH}
Let \(\rho>0\) and \(c\in L^\infty(B_\rho^+)\). Suppose that \(u \in C^{2s+\alpha}(B_\rho)\cap \mathscr{A}_s(\R^n) \) for some \(\alpha >0\) with \(2s+\alpha\) not an integer, \(u\) is non-negative in \(\R^n_+\), and satisfies \begin{align*}
(-\Delta)^s u +cu&=0 \qquad \text{in } B_\rho^+.
\end{align*}

Then there exists \(C_\rho>0\) depending only on \(n\), \(s\), \(\| c \|_{L^\infty(B_\rho^+)}\), and \(\rho\) such that\begin{align*}
\sup_{x\in B_{\rho/2}^+} \frac{u(x)}{x_1} \leqslant C_\rho \inf_{x\in B_{\rho/2}^+} \frac{u(x)}{x_1}.
\end{align*}  Moreover, both the quantities \(\sup_{x\in B_{\rho/2}^+} \frac{u(x)}{x_1} \) and \(\inf_{x\in B_{\rho/2}^+} \frac{u(x)}{x_1} \) are comparable to \(\Anorm{u}\).
\end{thm}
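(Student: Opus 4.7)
The plan is to follow the classical De Giorgi--Nash--Moser strategy, adapted to the antisymmetric setting: I would establish separately (i) a local boundedness estimate for antisymmetric subsolutions, and (ii) a weak Harnack inequality for antisymmetric supersolutions, both expressed in terms of the weighted norm $\Anorm{u}$. Combining (i) and (ii) then delivers the Harnack inequality together with the comparability with $\Anorm{u}$ in one shot, so no further combination is needed beyond the standard syllogism $\sup(u/x_1)\le C_1\Anorm{u}\le C_1C_2\inf(u/x_1)$.

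For (i), the target estimate is $\sup_{B_{\rho/2}^+}(u/x_1)\le C_\rho\Anorm{u}$ for antisymmetric subsolutions. I would construct an antisymmetric supersolution barrier of the form $\Psi(x):=x_1\eta(x)$, with $\eta$ a suitable radial profile chosen so that $(-\Delta)^s\Psi+c\Psi\geqslant 0$ in $B_\rho^+$, and so that $\Psi(x)\geqslant c_0 x_1$ on $B_\rho^+\setminus B_{\rho/2}^+$ while $\Psi(x)\lesssim x_1/(1+|x|^{n+2s+2})^{-1}$ outside $B_\rho$. Computing $(-\Delta)^s\Psi$ via the antisymmetric formula~\eqref{OopaQ0tu} gives quantitative control. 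The bound then follows by sliding $\lambda\Psi$ downward against $u$ until a first contact point (with $\lambda$ proportional to $\Anorm{u}$, which controls $u$ outside $B_\rho^+$ against $\Psi$), and applying the antisymmetric comparison principle at the touching point to rule out interior contact.

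For (ii), the target is $\Anorm{u}\le C_\rho\inf_{B_{\rho/2}^+}(u/x_1)$ for antisymmetric supersolutions. Here I would build a subsolution barrier $\Phi(x):=x_1\zeta(x)$ with $\zeta$ compactly supported in a ball lying strictly inside $B_{\rho/2}^+$, satisfying $(-\Delta)^s\Phi+c\Phi\leqslant 0$ in $B_\rho^+$ outside its support and dominated by $Cx_1$ everywhere. Sliding $\lambda\Phi$ upward from zero produces a point $x_\star\in B_{\rho/2}^+$ where $u(x_\star)/(x_\star)_1\geqslant c\lambda$. Using the interior Harnack inequality of Theorem~\ref{DYcYH} to propagate this lower bound along Harnack chains, and a covering argument to exhaust $\R^n_+$, one recovers the weighted $L^1$ bound on $u$ by exploiting that the kernel difference $|x-y|^{-n-2s}-|x_\ast-y|^{-n-2s}$ decays like $x_1 y_1/(1+|y|)^{n+2s+2}$ for $|y|$ large, which is exactly the weight defining $\Anorm{\cdot}$ in~\eqref{QhKBn}.

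The hard part will be the construction and verification of the two barriers $\Psi$ and $\Phi$, in particular the explicit estimation of their antisymmetric fractional Laplacian, which combines a principal-value integral with the boundary term $(c_{1,s}/s)\,u(x)x_1^{-2s}$ appearing in Definition~\ref{mkG4iRYH}. The zero-th order term $cu$ will be absorbed by first proving the estimates under the smallness assumption $\rho^{2s}\|c\|_{L^\infty(B_\rho^+)}\leqslant\epsilon_0$ (chosen so that $c\Psi$ and $c\Phi$ can be swallowed by the leading-order terms in $(-\Delta)^s\Psi$ and $(-\Delta)^s\Phi$), then covering a general ball by finitely many small balls and iterating, chaining interior Harnack bounds through Theorem~\ref{DYcYH}. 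Morally, the underlying reason the argument works is that antisymmetry forces $u$ to vanish linearly on $\{x_1=0\}$, so $u/x_1$ behaves like a solution of a genuine elliptic equation on $B_\rho^+$ and inherits a Harnack inequality, but with the weight $x_1\,dx$ replacing Lebesgue measure and $\Anorm{\cdot}$ replacing the usual tail norm $\|\cdot\|_{\mathscr L_s(\R^n)}$ defined in~\eqref{ACNAzPn8}.
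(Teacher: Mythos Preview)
Your decomposition into (i) local boundedness and (ii) weak Harnack matches the paper's strategy, but both halves have real gaps.

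In (i), a direct sliding argument cannot work: for the touching-point computation to give information you need $u\leqslant\lambda\Psi$ in all of $\R^n_+$ (the operator sees the whole space), yet $\Anorm{u}$ provides only \emph{integral} control of $u$ outside $B_\rho^+$ and no pointwise bound whatsoever, so no finite $\lambda$ achieves the global ordering. The paper (Proposition~\ref{EP5Elxbz}) instead uses the Caffarelli--Silvestre technique: touch $u$ from above by $\tau\zeta(x)(\rho-|x|)^{-n-2}$ with $\zeta\sim x_1$ (the blow-up at $\partial B_\rho$ forces the contact point inside), build an auxiliary antisymmetric function $w$ equal in $\R^n_+$ to the positive part of the gap, estimate $(-\Delta)^s w+cw$ from below (the exterior error is exactly $C(\theta d)^{-n-2s-2}\Anorm{u}\,x_1$ via the kernel bound of Lemma~\ref{ltKO2}), feed $w$ back into the already-proved weak Harnack (Proposition~\ref{g9foAd2c}), and close with a measure comparison of sub/superlevel sets. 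This circuitous route is the whole point---it converts integral exterior control into pointwise interior control.

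In (ii), placing the barrier's support strictly inside $\R^n_+$ yields only $\Anorm{u}\leqslant C\inf_{K'}(u/x_1)$ for some $K'\Subset B_{\rho/2}^+\cap\{x_1>0\}$. You then propose propagating to $\inf_{B_{\rho/2}^+}(u/x_1)$ via Harnack chains through Theorem~\ref{DYcYH}, but the interior estimate only applies in balls $B_r(a)$ with $a_1>r$, and the chain constant degenerates as $x_1\to 0$. The paper's barrier $\varphi^{(2)}(x)=x_1\zeta(x)$ has $\zeta$ radial with support $B_{3/4}$ reaching $\{x_1=0\}$, and the touching point is allowed to lie on $\{x_1=0\}$; that boundary case is handled by a dedicated limit computation (Lemma~\ref{6fD34E6w}) for $\lim_{h\to 0}(-\Delta)^s v(he_1)/h$, after first reducing to $u\in C^\infty_0$ via the antisymmetric mollification of Lemma~\ref{tZVUcYJl}. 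This is how the infimum is taken all the way to the hyperplane without chaining.
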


To our knowledge, Theorems~\ref{DYcYH} and~\ref{C35ZH} are new in the literature. In the particular case \(c\equiv0\), \thref{C35ZH} was proven in \cite{ciraolo2021symmetry}, but the proof relied on the Poisson representation formula for the Dirichlet problem in a ball which is otherwise unavailable in more general settings.

The proofs of Theorems~\ref{DYcYH} and~\ref{C35ZH} are each split into two halves: for \thref{DYcYH} we prove an interior weak Harnack inequality for super-solutions (\thref{MT9uf}) and interior local boundedness of sub-solutions (\thref{guDQ7}). Analogously, for \thref{C35ZH} we prove a boundary weak Harnack inequality for super-solutions (\thref{SwDzJu9i}) and boundary local boundedness of sub-solutions (\thref{EP5Elxbz}). The proofs of Propositions~\ref{MT9uf}, \ref{guDQ7}, \ref{SwDzJu9i}, and~\ref{EP5Elxbz} make use of general barrier methods which take inspiration from \cite{MR4023466,MR2494809,MR2831115}; however, these methods require adjustments which take into account the antisymmetry assumption. 

Once Theorems~\ref{DYcYH} and \ref{C35ZH} have been established, \thref{Hvmln} follows easily from a standard covering argument. For completeness, we have included this in Section \ref{lXIUl}. 

Finally, in Appendix~\ref{4CEly} we provide an alternate elementary proof of \thref{C35ZH} in the particular case \(c\equiv0\). As in \cite{ciraolo2021symmetry}, our proof relies critically on the Poisson representation formula in a ball for the fractional Laplacian, but the overall strategy of our proof is entirely different. This was necessary to show that \(\sup_{x\in B_{\rho/2}^+} \frac{u(x)}{x_1} \) and \(\inf_{x\in B_{\rho/2}^+} \frac{u(x)}{x_1} \) are comparable to \(\Anorm{u}\) which was not proven in \cite{ciraolo2021symmetry}. Our proof makes use of a new mean-value formula for antisymmetric \(s\)-harmonic functions, which we believe to be interesting
in and of itself. 

The usual mean value formula for \(s\)-harmonic functions says that if \(u\) is \(s\)-harmonic in \(B_1\) then we have that \begin{align}
u(0) =  \gamma_{n,s} \int_{\R^n\setminus B_r} \frac{r^{2s} u(y)}{(\vert y \vert^2-r^2)^s\vert y \vert^n}\dd y \qquad \text{for all }r\in(0, 1] \label{K8kw5rpi}
\end{align}where
\begin{equation}\label{sry6yagamma098765} \gamma_{n,s}:= \frac{\sin(\pi s)\Gamma (n/2)}{\pi^{\frac n 2 +1 }}.
\end{equation}

{F}rom antisymmetry, however, both the left-hand side and the right-hand side of~\eqref{K8kw5rpi} are zero irrespective of the fact \(u\) is \(s\)-harmonic. It is precisely this observation that leads to the consideration of \(\partial_1u(0)\) instead of \(u(0)\) which is more appropriate when \(u\) is antisymmetric.

\begin{prop} \thlabel{cqGgE}
Let \(u\in C^{2s+\alpha}(B_1) \cap \mathscr L_s(\R^n)\) with \(\alpha>0\) and \(2s+\alpha\) not an integer. Suppose that~\(u\) is antisymmetric and \(r\in(0, 1]\).  If \((-\Delta)^s u = 0 \) in \(B_1\) then  \begin{align*}
\frac{\partial u}{\partial x_1} (0) &=  2n \gamma_{n,s} \int_{\R^n_+ \setminus B_r^+} \frac{r^{2s}y_1 u(y)}{(\vert y \vert^2 - r^2)^s\vert y \vert^{n+2}} \dd y ,
\end{align*}
with~$\gamma_{n,s}$ given by~\eqref{sry6yagamma098765}.
\end{prop}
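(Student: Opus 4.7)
The plan is to exploit the Poisson representation formula for $s$-harmonic functions in $B_r$. Under the stated hypotheses, the classical theory (extending~\eqref{K8kw5rpi} from the centre of the ball to an arbitrary interior point) gives
\begin{equation*}
u(z) = \gamma_{n,s} \int_{\R^n \setminus B_r} \left( \frac{r^2-\vert z\vert^2}{\vert y \vert^2 - r^2} \right)^{\!s} \frac{u(y)}{\vert y - z\vert^n}\,\dd y \qquad \text{for every } z \in B_r.
\end{equation*}
Antisymmetry forces $u(0)=0$, so evaluating at $z=0$ only recovers the trivial identity $0=0$; this is why~\eqref{K8kw5rpi} itself is useless in the antisymmetric regime. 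The key idea is to differentiate the Poisson representation in the $e_1$-direction at $z=0$ so as to extract the first non-trivial content.

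Differentiation under the integral sign is legitimate. The derivative of $(r^2-\vert z\vert^2)^s$ at $z=0$ is proportional to $z_1$ and hence vanishes, so only the derivative of $\vert y-z\vert^{-n}$ survives, and a direct computation yields
\begin{equation*}
\partial_{z_1} \vert y-z\vert^{-n}\big|_{z=0} \;=\; \frac{n\,y_1}{\vert y\vert^{n+2}}.
\end{equation*}
Uniformly for $z$ in a small neighbourhood of the origin, the resulting integrand is dominated by a constant multiple of $\vert u(y)\vert/[(\vert y\vert^2-r^2)^s\vert y\vert^{n+1}]$, which is integrable near $\partial B_r$ because $s<1$ and integrable at infinity because $u \in \mathscr L_s(\R^n)$ (cf.~\eqref{ACNAzPn8}). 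Therefore
\begin{equation*}
\frac{\partial u}{\partial x_1}(0) \;=\; n\,\gamma_{n,s} \int_{\R^n \setminus B_r} \frac{r^{2s}\,y_1\, u(y)}{(\vert y\vert^2 - r^2)^s\, \vert y \vert^{n+2}}\,\dd y.
\end{equation*}

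To conclude, I will split the integral into its contributions over $\R^n_+ \setminus B_r^+$ and $\R^n_- \setminus B_r^-$, and perform the change of variables $y \mapsto y_*$ in the latter. Since $\vert y_*\vert = \vert y\vert$ and $(y_*)_1 = -y_1$, while antisymmetry gives $u(y_*) = -u(y)$, the two minus signs cancel and the two contributions coincide; this produces the factor $2$ appearing in the statement. The only genuinely delicate technical point is to justify the Poisson representation at the precise level of regularity and integrability assumed here ($u \in C^{2s+\alpha}(B_1) \cap \mathscr L_s(\R^n)$ with $(-\Delta)^s u = 0$ in $B_1$); I expect to handle this via a standard approximation argument, applying the formula on $B_{r'}$ for $r' \in (r,1)$ and passing to the limit $r'\to r^+$ by dominated convergence, so no serious obstacle should arise.
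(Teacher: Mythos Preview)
Your proof is correct and takes essentially the same approach as the paper: start from the Poisson representation in~$B_r$, differentiate in the $e_1$-direction at the origin (the paper does this via the difference quotient $u(he_1)/h$ rather than direct differentiation, but the computation is the same), and justify the passage to the limit by dominated convergence. The only cosmetic difference is the order of operations: the paper first rewrites the Poisson formula in its antisymmetric form over $\R^n_+\setminus B_r^+$ (your final step) and only then differentiates, whereas you differentiate first and symmetrize at the end.
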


\subsection{Organisation of paper}

The paper is organised as follows. In Section \ref{LenbLVkk}, we motivate \thref{mkG4iRYH} and establish some technical lemmata regarding this definition. In Section~\ref{q7hKF}, we
prove \thref{DYcYH} and, in Section~\ref{TZei6Wd4}, we prove \thref{C35ZH}. In Section~\ref{lXIUl}, we prove the main result \thref{Hvmln}. In Appendix~\ref{j4njb}, we construct a sequence that demonstrates the antisymmetric assumption in \thref{Hvmln} cannot be removed. Finally, in Appendix~\ref{4CEly}, we prove
Proposition~\ref{cqGgE}, and with this, we provide an elementary alternate proof of \thref{C35ZH} for the particular case~\(c\equiv0\).

\section{The antisymmetric fractional Laplacian} \label{LenbLVkk}

Let \(n\) be a positive integer and \(s\in (0,1)\). The purpose of this section is to motivate \thref{mkG4iRYH} as well as prove some technical aspects of the definition such as that it is well-defined and that it coincides with~\eqref{ZdAlT} when \(u\) is sufficiently regular. Recall that, given a point \(x=(x_1,\dots , x_n) \in \R^n\), we will write \(x=(x_1,x')\) with \(x'=(x_2,\dots , x_n)\) and we denote by \(x_\ast\) the reflection of \(x\) across the plane \(\{x_1=0\}\), that is, \(x_\ast = (-x_1,x')\). Then we call a function \(u:\R^n \to \R\) \emph{antisymmetric} if \begin{align*}
u(x_\ast) = -u(x) \qquad \text{for all } x\in \R^n. 
\end{align*} It is common in the literature, particularly when dealing with the method of moving planes, to define antisymmetry with respect to a given hyperplane \(T\); however, for our purposes, it is sufficient to take~\(T=\{x_1=0\}\).

For the fractional Laplacian of \(u\), given by~\eqref{ZdAlT}, to be well-defined in a pointwise sense, we need two ingredients: \(u\) needs enough regularity in a neighbourhood of \(x\) to overcome the singularity at \(x\) in the kernel \(\vert x-y\vert^{-n-2s}\), and \(u\) also needs an integrability condition at infinity to account for the integral in~\eqref{ZdAlT} being over \(\R^n\). For example, if \(u\) is \(C^{2s+\alpha}\) in a neighbourhood of \(x\) for some~\(\alpha >0\) with \(2s+\alpha\) not an integer and \(u\in\mathscr L_s(\R^n)\) where \(\mathscr L_s (\R^n)\) is the set of functions \(v\in L^1_{\mathrm{loc}}(\R^n)\) such that \(\| v\|_{\mathscr L_s(\R^n)}  <+\infty\) (recall \(\| \cdot \|_{\mathscr L_s(\R^n)}\) is given by~\eqref{ACNAzPn8}) then \((-\Delta)^su\) is well-defined at \(x\) and is in fact continuous there, \cite[Proposition 2.4]{MR2270163}. 

In the following proposition, we show that if \(u\in \mathscr L_s(\R^n)\) is antisymmetric and \(u\) is \(C^{2s+\alpha}\) in a neighbourhood of \(x\) for some \(\alpha >0\) with \(2s+\alpha\) not an integer then \(u\) satisfies~\eqref{OopaQ0tu}. This simultaneously motivates \thref{mkG4iRYH} and demonstrates this definition does generalise the definition of the fractional Laplacian when the given function is antisymmetric.

\begin{prop} \thlabel{eeBLRjcZ}
Let \(x\in \R^n_+\) and \(u \in \mathscr L_s(\R^n)\) be an antisymmetric function that is \(C^{2s+\alpha}\) in a neighbourhood of \(x\) for some \(\alpha>0\) with \(2s+\alpha\) not an integer. Then~\eqref{ZdAlT} and \thref{mkG4iRYH} coincide. 
\end{prop}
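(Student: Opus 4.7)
The plan is to split the principal value integral in \eqref{ZdAlT} over the two halfspaces $\R^n_\pm$ and then fold the $\R^n_-$ piece onto $\R^n_+$ using the reflection $y\mapsto y_\ast$ together with the antisymmetry of $u$. The boundary-type term in \eqref{OopaQ0tu} will emerge after an algebraic rearrangement, and the explicit coefficient $c_{1,s}/s$ will come out of a single closed-form integral on the halfspace. A preliminary observation is that, since $u$ is antisymmetric, the bound $x_1\le\vert x\vert$ yields $\Anorm{u}\le \| u\|_{\mathscr L_s(\R^n)}<\infty$, so that $u\in\mathscr A_s(\R^n)$ and the right-hand side of \eqref{OopaQ0tu} makes sense once the limit is shown to exist.

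First I would fix $\varepsilon\in(0,x_1)$ so that $B_\varepsilon(x)\subset\R^n_+$, and split
\begin{equation*}
\int_{\R^n\setminus B_\varepsilon(x)}\frac{u(x)-u(y)}{\vert x-y\vert^{n+2s}}\dd y=\int_{\R^n_+\setminus B_\varepsilon(x)}\frac{u(x)-u(y)}{\vert x-y\vert^{n+2s}}\dd y+\int_{\R^n_-}\frac{u(x)-u(y)}{\vert x-y\vert^{n+2s}}\dd y.
\end{equation*}
In the second integral the substitution $z:=y_\ast$ is measure-preserving from $\R^n_-$ onto $\R^n_+$, and combining the elementary identity $\vert x-z_\ast\vert=\vert x_\ast-z\vert$ with $u(z_\ast)=-u(z)$ converts it into $\int_{\R^n_+}(u(x)+u(z))\vert x_\ast-z\vert^{-n-2s}\dd z$. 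Splitting this latter integral over $(\R^n_+\setminus B_\varepsilon(x))\cup B_\varepsilon(x)$ and using $u(x)+u(y)=2u(x)-(u(x)-u(y))$ on the outer piece, a short rearrangement yields
\begin{align*}
&\int_{\R^n\setminus B_\varepsilon(x)}\frac{u(x)-u(y)}{\vert x-y\vert^{n+2s}}\dd y
=\int_{\R^n_+\setminus B_\varepsilon(x)}\bigl(u(x)-u(y)\bigr)\!\left[\frac{1}{\vert x-y\vert^{n+2s}}-\frac{1}{\vert x_\ast-y\vert^{n+2s}}\right]\dd y\\
&\qquad\qquad\qquad +2u(x)\int_{\R^n_+\setminus B_\varepsilon(x)}\frac{\dd y}{\vert x_\ast-y\vert^{n+2s}}+\int_{B_\varepsilon(x)}\frac{u(x)+u(y)}{\vert x_\ast-y\vert^{n+2s}}\dd y.
\end{align*}

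Letting $\varepsilon\to 0^+$, the left-hand side converges to the standard PV (which exists since $u$ is $C^{2s+\alpha}$ near $x$ and $u\in\mathscr L_s(\R^n)$), the last term on the right is bounded by $C\varepsilon^nx_1^{-n-2s}$ and vanishes, and the middle term converges by monotone convergence to $2u(x)\int_{\R^n_+}\vert x_\ast-y\vert^{-n-2s}\dd y$. By subtraction the remaining integral also admits a limit, and multiplying through by $c_{n,s}$ reproduces \eqref{OopaQ0tu}, modulo verifying the constant identity
\begin{equation*}
2c_{n,s}\int_{\R^n_+}\frac{\dd y}{\vert x_\ast-y\vert^{n+2s}}=\frac{c_{1,s}}{s}\,x_1^{-2s}.
\end{equation*}
I expect the main obstacle to be this constant computation. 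My plan is to substitute $w:=y-x_\ast$ so that $w_1\in(x_1,\infty)$ and $w'\in\R^{n-1}$, introduce polar coordinates in $w'$, and rescale via $r=w_1 t$ to reduce the integral to
\begin{equation*}
\omega_{n-2}\left(\int_0^\infty\frac{t^{n-2}}{(1+t^2)^{(n+2s)/2}}\dd t\right)\int_{x_1}^\infty w_1^{-1-2s}\dd w_1.
\end{equation*}
Evaluating the inner integral via the Beta function as $\Gamma((n-1)/2)\Gamma((1+2s)/2)/(2\Gamma((n+2s)/2))$, the outer integral directly as $x_1^{-2s}/(2s)$, and using $\omega_{n-2}=2\pi^{(n-1)/2}/\Gamma((n-1)/2)$, a cascade of cancellations involving the explicit formulas for $c_{n,s}$ and $c_{1,s}$ reduces both sides of the displayed identity to $\tfrac{4^s\Gamma((1+2s)/2)}{\sqrt{\pi}\,\Gamma(1-s)}\,x_1^{-2s}$, completing the proof.
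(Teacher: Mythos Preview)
Your proof is correct and follows essentially the same approach as the paper: both split the truncated integral over $\R^n_\pm$, reflect the $\R^n_-$ piece onto $\R^n_+$ via antisymmetry, rearrange into the three displayed terms, show the $B_\varepsilon(x)$ contribution vanishes, and then evaluate the halfspace integral to identify the constant $c_{1,s}/s$. The only cosmetic difference is that the paper scales via $z=(y_1/x_1,(y'-x')/x_1)$ to reduce to $\int_{\R^n_+}|e_1+z|^{-n-2s}\,dz$ and defers the Beta-function computation to a separate remark, whereas you shift via $w=y-x_\ast$ and carry out the computation inline; the two are equivalent.
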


\begin{proof}
Let~$x=(x_1,x')\in \R^n_+$ and take~$\delta\in\left(0,\frac{x_1}2\right)$ such that~$u$
is~$C^{2s+\alpha}$ in~$B_\delta(x)\subset\R^n_+$. Furthermore, let \begin{align}
(-\Delta)^s_\delta u(x):= c_{n,s}\int_{\R^n  \setminus B_\delta(x)} \frac{u(x)-u(y)}{\vert x - y\vert^{n+2s}} \dd y .\label{WUuXb0Hk}
\end{align}
By the regularity assumptions on \(u\), the integral in~\eqref{ZdAlT} is well-defined and \begin{align*}
 \lim_{\delta\to0^+}(-\Delta)^s_\delta u(x) = c_{n,s} \PV \int_{\R^n}& \frac{u(x)-u(y)}{\vert x - y \vert^{n+2s}} \dd y =(-\Delta)^s u(x). 
\end{align*}

Splitting the integral in~\eqref{WUuXb0Hk} into two integrals over \(\R^n_+  \setminus B_\delta(x)\) and \(\R^n_-\) respectively and
then using that \(u\) is antisymmetric in the integral over \(\R^n_-\), we obtain
\begin{align}
(-\Delta)^s_\delta u(x) &= c_{n,s}\int_{\R^n_+ \setminus B_\delta(x)} \frac{u(x)-u(y)}{\vert x - y\vert^{n+2s}} \dd y +c_{n,s} \int_{\R^n_+} \frac{u(x)+u(y)}{\vert x_\ast - y\vert^{n+2s}} \dd y \nonumber \\
&= c_{n,s}\int_{\R^n_+ \setminus B_\delta(x)} \left(
\frac 1 {\vert x - y \vert^{n+2s}} - \frac 1 {\vert x_\ast- y \vert^{n+2s}} \right)\big(u(x)-u(y)\big) \dd y \nonumber \\
&\qquad +2c_{n,s}u(x) \int_{\R^n_+ \setminus B_\delta(x)} \frac{\dd y }{\vert x_\ast - y \vert^{n+2s}} + c_{n,s} \int_{B_\delta(x)} \frac{u(x)+u(y) }{\vert x_\ast - y \vert^{n+2s}}\dd y . \label{cGXQjDIV}
\end{align}

We point out that if~$y\in\R^n_+$ then~$|x_\ast-y|\ge  x_1$,
and therefore
\begin{eqnarray*}
&& \left \vert \int_{B_\delta(x)} \frac{u(x)+u(y) }{\vert x_\ast - y \vert^{n+2s}}\dd y \right \vert\le
\int_{B_\delta(x)} \frac{2|u(x)|+|u(y)-u(x)| }{\vert x_\ast - y \vert^{n+2s}}\dd y\\
&&\qquad\qquad \le \int_{B_\delta(x)} \frac{2|u(x)|+C|x-y|^{\min\{1,2s+\alpha\}} }{\vert x_\ast - y \vert^{n+2s}}\dd y\le  C\big (  \vert u(x) \vert + \delta^{\min\{1,2s+\alpha\}} \big ) \delta^n,
\end{eqnarray*}
for some~$C>0$ depending on~$n$, $s$, $x$, and the norm of~$u$ in a neighbourood of~$x$.

As a consequence, sending \(\delta \to 0^+\) in~\eqref{cGXQjDIV}
and using the Dominated Convergence Theorem,
we obtain \begin{align*}
(-\Delta)^s u(x) &= c_{n,s}\lim_{\delta \to 0^+}\int_{\R^n_+ \setminus B_\delta(x)} \left( \frac 1 {\vert x - y \vert^{n+2s}} - \frac 1 {\vert x_\ast- y \vert^{n+2s}} \right)\big(u(x)-u(y)\big) \dd y  \\
&\qquad +2c_{n,s}u(x) \int_{\R^n_+} \frac{\dd y }{\vert x_\ast - y \vert^{n+2s}}. 
\end{align*}

Also, making the change of variables \(z=(y_1/x_1 , (y'-x')/x_1)\) if \(n>1\) and \(z_1=y_1/x_1\) if \(n=1\), we see that \begin{align*}
 \int_{\R^n_+} \frac{\dd y }{\vert x_\ast - y \vert^{n+2s}}  &= x_1^{-2s}  \int_{\R^n_+}   \frac{\dd z}{\vert e_1+ z \vert^{n+2s}}. 
\end{align*}
Moreover, via a direct computation  \begin{equation}\label{sd0w3dewt95b76 498543qdetr57uy54u}
 c_{n,s}   \int_{\R^n_+}   \frac{\dd z}{\vert e_1+ z \vert^{n+2s}} = \frac {c_{1,s}}{2s} 
\end{equation} see \thref{tuMIrhco} below for details. 

Putting together these considerations, we obtain the desired result.
\end{proof}

\begin{remark} \thlabel{tuMIrhco}
Let \begin{align}
\tilde c_{n,s} := c_{n,s}   \int_{\R^n_+}   \frac{\dd z}{\vert e_1+ z \vert^{n+2s}}. \label{CA7GPGvx}
\end{align}The value of \(\tilde c_{n,s}\) is of no consequence to \thref{mkG4iRYH} and so, for this paper, is irrelevant; however, it is interesting to note that \(\tilde c_{n,s}\) can be explicitly evaluated and is, in fact, equal to \((2s)^{-1}c_{1,s}\) (in particular \(\tilde c_{n,s}\) is independent of \(n\) which is not obvious from~\eqref{CA7GPGvx}). Indeed, if \(n>1\) then \begin{align*}
 \int_{\R^n_+}   \frac{\dd z}{\vert e_1+ z \vert^{n+2s}} &= \int_0^\infty \int_{\R^{n-1}}   \frac{\dd z_1\dd z'}{\big ( (z_1+1)^2 + \vert z' \vert^2\big )^{\frac{n+2s}2}}
\end{align*} so, making the change of variables \(\tilde z'= (z_1+1)^{-1} z'\) in the inner integral, we have that \begin{align*}
 \int_{\R^n_+}   \frac{\dd z}{\vert e_1+ z \vert^{n+2s}} &=  \int_0^\infty   \frac 1 {(z_1+1)^{1+2s}} \left(  \int_{\R^{n-1}}   \frac{\dd \tilde z'}{\big ( 1 + \vert \tilde z' \vert^2\big )^{\frac{n+2s}2}} \right) \dd z_1.
\end{align*} By \cite[Proposition 4.1]{MR3916700}, \begin{align*}
\int_{\R^{n-1}}   \frac{\dd \tilde z'}{\big ( 1 + \vert \tilde z' \vert^2\big )^{\frac{n+2s}2}} &= \frac{\displaystyle
\pi^{\frac{n-1}2} \Gamma \big ( \frac{1+2s}2 \big )}{\displaystyle \Gamma \big ( \frac{n+2s}2 \big )}.
\end{align*} Hence, \begin{align*}
\int_{\R^n_+}   \frac{\dd z}{\vert e_1+ z \vert^{n+2s}} &= \frac{\pi^{\frac{n-1}2} \Gamma \big ( \frac{1+2s}2 \big )}{\Gamma \big ( \frac{n+2s}2 \big )} \int_0^\infty   \frac {\dd z_1} {(z_1+1)^{1+2s}} =  \frac{\pi^{\frac{n-1}2} \Gamma \big ( \frac{1+2s}2 \big )}{2s \Gamma \big ( \frac{n+2s}2 \big )}.
\end{align*} This formula remains valid if \(n=1\). Since \(c_{n,s} = s \pi^{-n/2}4^s \Gamma \big ( \frac{n+2s}2 \big )/ \Gamma (1-s)\), see \cite[Proposition~5.6]{MR3916700}, it follows that \begin{align*}
\tilde c_{n,s} &= \frac{2^{2s-1} \Gamma \big ( \frac{1+2s}2 \big )}{\pi^{1/2} \Gamma(1-s) } = \frac { c_{1,s}}{2s}.
\end{align*}
In particular, this proves formula~\eqref{sd0w3dewt95b76 498543qdetr57uy54u},
as desired.
\end{remark}

The key observation from \thref{eeBLRjcZ} is that the kernel \( {\vert x - y \vert^{-n-2s}} -  {\vert x_\ast- y \vert^{-n-2s}}\) decays quicker at infinity than the kernel \(\vert x-y \vert^{-n-2s}\) which is what allows us to relax the assumption~\(u\in \mathscr L_s (\R^n)\). Indeed, if~$x$, $y\in \R^n_+ $ then \(\vert x_\ast - y \vert \geqslant \vert x-y \vert\) and \begin{align}
 \frac 1 {\vert x - y \vert^{n+2s}} - \frac 1 {\vert x_\ast- y \vert^{n+2s}} &= \frac{n+2s} 2 \int_{\vert x - y \vert^2}^{\vert x_\ast -y \vert^2} t^{-\frac{n+2s+2}2} \dd t \nonumber \\
 &\leqslant \frac{n+2s} 2  \big ( \vert x_\ast -y \vert^2-\vert x -y \vert^2 \big ) \frac 1 {\vert x - y \vert^{n+2s+2}} \nonumber \\
 &= 2(n+2s) \frac{x_1y_1}{\vert x - y \vert^{n+2s+2}}. \label{LxZU6}
\end{align} Similarly, for all~$x$, $y\in \R^n_+$,\begin{align}
\frac 1 {\vert x - y \vert^{n+2s}} - \frac 1 {\vert x_\ast- y \vert^{n+2s}} &\geqslant 2(n+2s)  \frac{x_1y_1}{\vert x_\ast - y \vert^{n+2s+2}}. \label{buKHzlE6}
\end{align} Hence, if \(\vert x\vert <C_0\) and if \(|x-y|>C_1\) for some \(C_0,C_1>0\) independent of \(x,y\), we find that \begin{align}
\frac{C^{-1} x_1y_1}{1+\vert y \vert^{n+2s+2}} \leqslant \frac 1 {\vert x - y \vert^{n+2s}} - \frac 1 {\vert x_\ast- y \vert^{n+2s}} \leqslant \frac{Cx_1y_1}{1+\vert y \vert^{n+2s+2}} \label{Zpwlcwex}
\end{align} which motivates the consideration of \(\mathscr A_s(\R^n)\).

Our final lemma in this section proves that \thref{mkG4iRYH} is well-defined, that is, the assumptions of \thref{mkG4iRYH} are enough to guarantee \eqref{OopaQ0tu} converges.

\begin{lem} \thlabel{zNNKjlJJ}
Let \(x \in \R^n_+\) and \(r\in (0,x_1/2)\) be sufficiently small so that \(B:=B_r(x) \Subset\R^n_+\). If \(u \in C^{2s+\alpha}(B) \cap  \mathscr A_s(\R^n)\) for some \(\alpha>0\) with \(2s+\alpha\) not an integer then \((-\Delta)^su(x)\) given by \eqref{OopaQ0tu} is well-defined and there exists \(C>0\) depending on \(n\), \(s\), \(\alpha\), \(r\), and \(x_1\) such that \begin{align*}
\big \vert (-\Delta)^s u(x) \big  \vert &\leqslant C \big (  \| u\|_{C^{\beta}(B)} + \Anorm{u} \big ) 
\end{align*} where \(\beta = \min \{ 2s+\alpha ,2\}\).
\end{lem}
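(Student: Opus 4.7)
The plan is to split the domain of integration in the definition~\eqref{OopaQ0tu} into a ``near'' piece $B_r(x) \cap \R^n_+ = B_r(x)$ (which equals the full ball since $r < x_1/2$ forces $B_r(x) \Subset \R^n_+$) and a ``far'' piece $\R^n_+ \setminus B_r(x)$, and to bound each contribution separately, together with the boundary term $\frac{c_{1,s}}{s} u(x) x_1^{-2s}$. Throughout, I will denote
\begin{align*}
K(x,y) := \frac{1}{|x-y|^{n+2s}} - \frac{1}{|x_\ast - y|^{n+2s}}.
\end{align*}

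For the near part, note that on $B_r(x)$ one has $|x_\ast - y| \geqslant x_1 - r \geqslant x_1/2$, so the summand $|x_\ast-y|^{-n-2s}(u(x)-u(y))$ is bounded by $C(n,s,x_1)\|u\|_{L^\infty(B)}$, and its integral over $B_r(x)$ is controlled by $C(n,s,x_1,r)\|u\|_{C^\beta(B)}$. For the genuinely singular part, I would use the reflection $y \mapsto 2x - y$, which preserves the punctured annulus $B_r(x)\setminus B_\varepsilon(x)$ by its symmetry about $x$; averaging the two equivalent expressions yields the symmetrized form
\begin{align*}
\int_{B_r(x)\setminus B_\varepsilon(x)} \frac{u(x)-u(y)}{|x-y|^{n+2s}} \dd y = \frac{1}{2} \int_{B_r(x)\setminus B_\varepsilon(x)} \frac{2u(x)-u(y)-u(2x-y)}{|x-y|^{n+2s}} \dd y.
\end{align*}
A Taylor expansion, valid since $u \in C^{2s+\alpha}(B)$, gives $|2u(x)-u(y)-u(2x-y)| \leqslant C \|u\|_{C^\beta(B)} |x-y|^{\beta}$ with $\beta = \min\{2s+\alpha,2\}$; since $\beta > 2s$, the resulting integrand is dominated by an $L^1$ function on $B_r(x)$, so the limit as $\varepsilon \to 0^+$ exists by Dominated Convergence and is bounded by $C(n,s,r) \|u\|_{C^\beta(B)}$.

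For the far part, the pointwise estimate~\eqref{LxZU6} provides $0 \leqslant K(x,y) \leqslant 2(n+2s)\, x_1 y_1 / |x-y|^{n+2s+2}$ for all $y \in \R^n_+$. I would split $\R^n_+ \setminus B_r(x)$ into $\{|y| > 2|x|\}$, where $|x-y| \geqslant |y|/2$ and hence $K(x,y) \leqslant C(|x|)\, y_1/(1+|y|^{n+2s+2})$, and $\{|y| \leqslant 2|x|\} \setminus B_r(x)$, where $|x-y| \geqslant r$ and $1+|y|^{n+2s+2}$ is bounded above by a constant depending on $|x|$. In the first region the integral of $K(x,y)|u(y)|$ is directly controlled by $C(|x|)\Anorm{u}$, and in the second by $C(x_1,r,|x|)\int_{B_{2|x|}^+} y_1|u(y)|\dd y$, which is in turn dominated by $C(|x|)\Anorm{u}$ upon inserting the bounded weight $(1+|y|^{n+2s+2})^{-1}$. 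The $u(x)$-part is handled analogously after observing that $\int_{\R^n_+ \setminus B_r(x)} K(x,y) \dd y < \infty$ by the same splitting, and $|u(x)| \leqslant \|u\|_{C^\beta(B)}$.

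The remaining zero-th order term $\frac{c_{1,s}}{s}u(x)x_1^{-2s}$ is bounded by $C(s,x_1)\|u\|_{C^\beta(B)}$, so collecting all the estimates yields the stated inequality. The mildly delicate step is the symmetrization in the near integral: the bare difference $u(x)-u(y)$ only overcomes the kernel $|x-y|^{-n-2s}$ when $2s < 1$, and for $s \geqslant 1/2$ the reflection trick is essential to extract a second-order cancellation. This reflection is legitimate precisely because $r < x_1/2$ ensures $B_r(x) \Subset \R^n_+$, so both $y$ and $2x-y$ lie inside the regularity region $B$; this, rather than the integrability at infinity, is really the only structural input of the hypothesis, with $\Anorm{u}$ then handling everything outside $B_r(x)$ through the crucial gain of a factor $y_1$ in~\eqref{LxZU6}.
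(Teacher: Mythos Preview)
Your proof is correct and follows essentially the same approach as the paper: both split into a near piece on $B$ and a far piece on $\R^n_+\setminus B$, handle the singular part of the near integral via the second-order difference $2u(x)-u(y)-u(2x-y)$, and control the far piece using the kernel estimate~\eqref{LxZU6} to obtain the $\Anorm{\cdot}$ bound. The only cosmetic difference is that the paper packages the near-part symmetrization by introducing $\bar u := u\chi_B$ and invoking the standard $(-\Delta)^s\bar u(x)$, whereas you perform the reflection $y\mapsto 2x-y$ directly on the principal-value integral; your far-part argument is also more explicit about the splitting needed to compare $|x-y|^{-n-2s-2}$ with $(1+|y|)^{-n-2s-2}$, which the paper absorbs into a single line.
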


We remark that the constant \(C\) in \thref{zNNKjlJJ} blows up as \(x_1\to 0^+\). 

\begin{proof}[Proof of Lemma~\ref{zNNKjlJJ}]
Write \( (-\Delta)^s u(x) = I_1 +I_2 + s^{-1} c_{1,s} u(x) x_1^{-2s}\) where  \begin{align*}
I_1 &:=  c_{n,s}\lim_{\varepsilon \to 0^+} \int_{B\setminus B_\varepsilon(x)} \left(
\frac 1 {\vert x - y \vert^{n+2s}} - \frac 1 {\vert x_\ast- y \vert^{n+2s}}\right) \big(u(x)-u(y)\big)\dd y\\
\text{ and } \qquad I_2 &:=  c_{n,s}\int_{\R^n_+\setminus B} \left(\frac 1 {\vert x - y \vert^{n+2s}} - \frac 1 {\vert x_\ast- y \vert^{n+2s}}\right) \left(u(x)-u(y)\right)\dd y.
\end{align*}

\emph{For \(I_1\):} If \(y\in \R^n_+\) then \({\vert x - y \vert^{-n-2s}} -  {\vert x_\ast- y \vert^{-n-2s}} \) is only singular at \(x\), so we may write \begin{align*}
I_1 &= c_{n,s}\lim_{\varepsilon \to 0^+} \int_{B\setminus B_\varepsilon(x)}\frac {u(x)-u(y)} {\vert x - y \vert^{n+2s}}\dd y - \int_B \frac {u(x)-u(y)} {\vert x_\ast- y \vert^{n+2s}}\dd y.
\end{align*}If \(\chi_B\) denotes the characteristic function of \(B\) then \(\bar  u := u\chi_B\in C^{2s+\alpha}(B)\cap L^\infty(\R^n)\), so \((-\Delta)^s\bar u(x)\) is defined and \begin{align*}
(-\Delta)^s\bar u(x) &= c_{n,s}\PV \int_{\R^n} \frac{u(x) - u(y)\chi_B(y)}{\vert x-y\vert^{n+2s}} \dd y \\
&= c_{n,s} \PV \int_B \frac{u(x) - u(y)}{\vert x-y\vert^{n+2s}} \dd y + c_{n,s} u(x)\int_{\R^n\setminus B} \frac{\dd y }{\vert x-y\vert^{n+2s}}\\
&=  c_{n,s}\PV \int_B \frac{u(x) - u(y)}{\vert x-y\vert^{n+2s}} \dd y +  \frac 1{2s}c_{n,s} n \vert B_1 \vert r^{-2s} u(x) .
\end{align*}Hence, \begin{align*}
I_1&= (-\Delta)^s\bar u(x) -\frac 1{2s}c_{n,s} n \vert B_1 \vert r^{-2s} u(x) -c_{n,s} \int_B \frac {u(x)-u(y)} {\vert x_\ast- y \vert^{n+2s}}\dd y 
\end{align*} which gives that \begin{align*}
\vert I_1 \vert &\leqslant  \big \vert (-\Delta)^s\bar u(x) \big \vert +C \| u\|_{L^\infty(B)} +2\| u\|_{L^\infty(B)} \int_B \frac 1 {\vert x_\ast- y \vert^{n+2s}}\dd y  \\
&\leqslant   \big \vert (-\Delta)^s\bar u(x) \big \vert  +C\| u\|_{L^\infty(B)}.
\end{align*}
Furthermore, if \(\alpha <2(1-s)\) then by a Taylor series expansion  \begin{align*}
\big \vert (-\Delta)^s\bar u (x) \big \vert &\leqslant C \int_{\R^n} \frac{\vert 2\bar u(x)-\bar u(x+y) - \bar u(x-y) \vert }{\vert y \vert^{n+2s}} \dd y \\
&\leqslant C [\bar u]_{C^{2s+\alpha}(B)}  \int_{B_1} \frac{\dd y  }{\vert y \vert^{n-\alpha}}  + C \| \bar u \|_{L^\infty(\R^n)} \int_{\R^n\setminus B_1} \frac{\dd y }{\vert y \vert^{n+2s}} \\
&\leqslant C  \big (  \| \bar u\|_{C^{2s+\alpha}(B)} + \| \bar u \|_{L^\infty(\R^n)} \big ) \\
&=C  \big ( \| u\|_{C^{2s+\alpha}(B)} + \| u \|_{L^\infty(B)} \big )  .
\end{align*} If \(\alpha \geqslant 2(1-s)\) then this computation holds with \([\bar u]_{C^{2s+\alpha}(B)} \) replaced with \(\|u\|_{C^2(B)}\).

\emph{For \(I_2\):} By~\eqref{LxZU6}, \begin{align*}
\vert I_2 \vert &\leqslant Cx_1\int_{\R^n_+\setminus B} \frac {y_1\big (\vert u(x) \vert + \vert u(y) \vert  \big ) } {\vert x - y\vert^{n+2s+2}} \dd y \\
&\leqslant C \| u \|_{L^\infty(B)}\int_{\R^n_+\setminus B} \frac {y_1} {\vert x - y\vert^{n+2s+2}} \dd y + C \Anorm{u} \\
&\leqslant C \big (\| u \|_{L^\infty(B)}+ \Anorm{u} \big ) .
\end{align*}

Combining the estimates for \(I_1\) and \(I_2\) immediately gives the result. 
\end{proof}

\section{Interior Harnack inequality and proof of Theorem~\ref{DYcYH}}\label{q7hKF}

The purpose of this section is to prove \thref{DYcYH}. Its proof is split into two parts: the interior weak Harnack inequality for super-solutions (\thref{MT9uf}) and interior local boundedness for sub-solutions (\thref{guDQ7}).

\subsection{The interior weak Harnack inequality}

The interior weak Harnack inequality for super-solutions is given in \thref{MT9uf} below. 

\begin{prop} \thlabel{MT9uf} 
Let \(M\in \R \), \(\rho \in(0,1)\), and \(c\in L^\infty(B_\rho(e_1))\). Suppose that \(u \in C^{2s+\alpha}(B_\rho(e_1))\cap \mathscr{A}_s(\R^n)\) for some \(\alpha>0\) with \(2s+\alpha\) not an integer, \(u\) is non-negative in \(\R^n_+\), and satisfies  \begin{align*}
(-\Delta)^su +cu &\geqslant -M \qquad \text{in } B_\rho(e_1).
\end{align*}

Then there exists \(C_\rho>0\) depending only on \(n\), \(s\), \(\| c \|_{L^\infty(B_\rho(e_1))}\), and \(\rho\) such that \begin{align*}
\Anorm{u} &\leqslant C_\rho \left( \inf_{B_{\rho/2}(e_1)} u + M \right) . 
\end{align*}
\end{prop}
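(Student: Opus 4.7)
The strategy is to extract the estimate from a single application of the super-solution condition at an infimum point, exploiting the antisymmetric representation of the fractional Laplacian given by \thref{mkG4iRYH} and the pointwise kernel lower bound \eqref{buKHzlE6}.

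First I would pick $x_0\in\overline{B_{\rho/2}(e_1)}$ at which $u$ attains its infimum $m:=\inf_{B_{\rho/2}(e_1)} u$, perturbing slightly into the interior if necessary to ensure the auxiliary integrals appearing below remain finite. Expanding $(-\Delta)^su(x_0)$ via \thref{mkG4iRYH} gives
\[
(-\Delta)^s u(x_0)=c_{n,s}\,\PV\!\int_{\R^n_+} K(x_0,y)\bigl(m-u(y)\bigr)\,dy+\tfrac{c_{1,s}}{s}\,m\,(x_0)_1^{-2s},
\]
with antisymmetric kernel $K(x_0,y):=|x_0-y|^{-n-2s}-|(x_0)_\ast-y|^{-n-2s}>0$ on $\R^n_+$. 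Splitting this PV-integral into the pieces over $B_{\rho/2}(e_1)$ and its complement, on $B_{\rho/2}(e_1)$ one has $m-u(y)\le 0$ and $K>0$, so the integrand has sign $\le 0$; the $C^{2s+\alpha}$ regularity of $u$ makes the singularity at $x_0$ absolutely integrable (integrand of order $|y-x_0|^{-n+\alpha}$), so this local contribution is non-positive even in the PV sense. Combining with the super-solution inequality $(-\Delta)^s u(x_0)+c(x_0)m\ge -M$, and absorbing both $c(x_0)m$ and $\tfrac{c_{1,s}}{s}m\,(x_0)_1^{-2s}$ into a single error $C_\rho m$, one arrives at
\[
c_{n,s}\!\int_{\R^n_+\setminus B_{\rho/2}(e_1)}\! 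K(x_0,y)\,u(y)\,dy\;\le\;c_{n,s}\,m\!\int_{\R^n_+\setminus B_{\rho/2}(e_1)}\! K(x_0,y)\,dy\;+\;M\;+\;C_\rho m,
\]
where both right-hand integrals are ordinary (non-PV) finite Lebesgue integrals, since $x_0$ sits in the strict interior of $B_{\rho/2}(e_1)$.

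Then, using the estimate $K(x_0,y)\ge 2(n+2s)(x_0)_1 y_1/|(x_0)_\ast-y|^{n+2s+2}$ from \eqref{buKHzlE6} together with the uniform bound $|(x_0)_\ast-y|\le C_\rho(1+|y|)$ valid for $x_0\in B_{\rho/2}(e_1)$ (since $(x_0)_\ast$ stays bounded), I obtain the \emph{uniform} lower bound $K(x_0,y)\ge c_\rho\,y_1/(1+|y|^{n+2s+2})$ on all of $\R^n_+$; crucially, this remains valid near $y=x_0$, where the left-hand side blows up while the right-hand side stays bounded. Applied on $\R^n_+\setminus B_{\rho/2}(e_1)$, the previous display then converts to
\[
\int_{\R^n_+\setminus B_{\rho/2}(e_1)}\frac{y_1\,u(y)}{1+|y|^{n+2s+2}}\,dy\;\le\; C_\rho(m+M),
\]
which gives the tail part of $\Anorm{u}$; the cruder bound $K(x_0,y)\ge c_\rho$ on the bounded annulus $B_\rho(e_1)\setminus B_{\rho/2}(e_1)$ (where $|(x_0)_\ast-y|$ is bounded away from zero) additionally yields $\int_{B_\rho(e_1)\setminus B_{\rho/2}(e_1)} u\,dy\le C_\rho(m+M)$.

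The main remaining obstacle is to control the ``innermost'' contribution $\int_{B_{\rho/2}(e_1)}\frac{y_1\,u(y)}{1+|y|^{n+2s+2}}\,dy \asymp \int_{B_{\rho/2}(e_1)} u\,dy$ to $\Anorm{u}$: the pointwise identity at the infimum point governs only the integral \emph{outside} the ball in which the infimum is taken. I expect this to be handled via a barrier-comparison argument. Having already controlled the exterior data, one constructs an antisymmetric super-solution $\psi$ on $B_\rho(e_1)$ with $\psi\ge c_\rho$ on $B_{\rho/2}(e_1)$; then the antisymmetric weak maximum principle applied to $\lambda\psi-u$ with the choice $\lambda=C_\rho(m+M)$ produces the pointwise upper bound $u\le C_\rho(m+M)$ on $B_{\rho/2}(e_1)$, which closes the desired estimate $\Anorm{u}\le C_\rho(m+M)$.
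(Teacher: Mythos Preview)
There are two genuine gaps. First, your splitting at $\partial B_{\rho/2}(e_1)$ breaks down precisely when the infimum is attained at a point $x_0\in\partial B_{\rho/2}(e_1)$: in that case $\int_{\R^n_+\setminus B_{\rho/2}(e_1)} K(x_0,y)\,dy=+\infty$ (the singularity $|x_0-y|^{-n-2s}$ sits on the splitting surface), so the right-hand side of your displayed inequality is infinite and the estimate is vacuous. ``Perturbing slightly into the interior'' does not save this: at a nearby interior point $x_0'$ one has $u(x_0')>m$ in general, so the sign condition $u(x_0')-u(y)\leqslant 0$ on $B_{\rho/2}(e_1)$ is lost, and the local PV integral is then only controlled by $\|u\|_{C^{2s+\alpha}}$, not by $m+M$. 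Second, your proposed patch for the inner piece $\int_{B_{\rho/2}(e_1)}u$ is not correct: since $u$ is only a \emph{super}solution, the comparison principle yields \emph{lower} bounds on $u$, not the upper bound $u\leqslant C_\rho(m+M)$ you want; obtaining that pointwise upper bound is the content of the separate local-boundedness \thref{guDQ7}, which is harder and in fact uses the present result as input.

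The paper avoids both issues with a single barrier argument. One touches $u$ from below not by the constant $m$ but by $\tau\varphi$, where $\varphi$ is an antisymmetric cutoff with $\varphi\equiv 1$ on $B_{\rho/2}(e_1)$ and $\varphi\equiv 0$ on $\R^n_+\setminus B_{3\rho/4}(e_1)$. Because $u\geqslant 0=\tau\varphi$ on $\R^n_+\setminus B_{3\rho/4}(e_1)$, the maximal $\tau$ with $u\geqslant\tau\varphi$ on \emph{all} of $\R^n_+$ is finite, satisfies $\tau\leqslant\inf_{B_{\rho/2}(e_1)}u$, and the touching point $a$ lies in $\overline{B_{3\rho/4}(e_1)}$, automatically in the strict interior of $B_\rho(e_1)$. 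At $a$ one has $(u-\tau\varphi)(a)=0$ and $u-\tau\varphi\geqslant 0$ on the whole halfspace, so the antisymmetric representation together with \eqref{buKHzlE6} gives a one-sided bound on the kernel integral over \emph{all} of $\R^n_+$, producing $\Anorm{u}\leqslant C_\rho(\tau+M)$ directly, with no inner/outer split needed.
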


The proof of \thref{MT9uf} is a simple barrier argument that takes inspiration from \cite{MR4023466}. We will begin by proving the following proposition, \thref{YLj1r},  which may be viewed as a rescaled version of \thref{MT9uf}.  We will require \thref{YLj1r} in the second part of the section when we prove the interior local boundedness of sub-solutions (\thref{guDQ7}). 

\begin{prop} \thlabel{YLj1r} 
Let \(M\in \R \), \(\rho \in(0,1)\), and \(c\in L^\infty(B_\rho(e_1))\). Suppose that \(u \in C^{2s+\alpha}(B_\rho(e_1))\cap \mathscr{A}_s(\R^n)\) for some \(\alpha>0\) with \(2s+\alpha\) not an integer, \(u\) is non-negative in \(\R^n_+\), and satisfies  \begin{align*}
(-\Delta)^su +cu &\geqslant -M \qquad \text{in } B_\rho(e_1).
\end{align*}

Then there exists \(C>0\) depending only on \(n\), \(s\), and~\(\rho^{2s}\| c \|_{L^\infty(B_\rho(e_1))}\),
such that  \begin{align*}
 \frac 1 {\rho^n} \int_{B_{\rho/2}(e_1)} u(x) \dd x &\leqslant C \left( \inf_{B_{\rho/2}(e_1)} u + M \rho^{2s}\right) . 
\end{align*} Moreover, the constant \(C\) is of the form \begin{align}
C = C' \left(1+ \rho^{2s} \| c \|_{L^\infty(B_\rho(e_1))}\right) \label{CyJJQHrF}
\end{align} with \(C'>0\) depending only on \(n\) and \(s\). 
\end{prop}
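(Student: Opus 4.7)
The strategy is a barrier argument in the spirit of the interior weak Harnack inequality for nonlocal equations proved in~\cite{MR4023466}, with the barriers antisymmetrised so that they respect the constraint $u(x_\ast)=-u(x)$ and can be combined with the antisymmetric weak maximum principle (as in~\cite{MR3395749}) in place of its classical analogue. I would split the proof into two stages: an absorption step reducing to $M=0$, and a contact stage producing the weak Harnack for the homogeneous problem.

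For the absorption stage, I would build an antisymmetric torsion-type barrier $\Psi\colon\R^n\to\R$ with $\Psi\geq 0$ in $\R^n_+$, $\|\Psi\|_{L^\infty(\R^n)}\leq C\rho^{2s}$, and $(-\Delta)^s\Psi\geq 1$ in $B_\rho(e_1)$. A natural candidate is
\begin{equation*}
\Psi(x) := \psi_{B_\rho(e_1)}(x) - \psi_{B_\rho(-e_1)}(x),
\end{equation*}
where $\psi_{B_\rho(x_0)}(x)$ is (up to a positive constant) the explicit solution of $(-\Delta)^s\psi=1$ in $B_\rho(x_0)$ with $\psi=0$ outside, namely a multiple of $(\rho^2-|x-x_0|^2)_+^s$. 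Since $\rho<1$, the balls $B_\rho(\pm e_1)$ lie in opposite halfspaces and are disjoint; hence $\Psi$ is antisymmetric and nonnegative in $\R^n_+$, and using that $(-\Delta)^s\psi_{B_\rho(-e_1)}<0$ outside its support gives $(-\Delta)^s\Psi\geq 1$ in $B_\rho(e_1)$. Setting $v:=u+M\Psi$ produces an antisymmetric function, nonnegative in $\R^n_+$, satisfying $(-\Delta)^s v + cv \geq -CM\rho^{2s}\|c^+\|_{L^\infty(B_\rho(e_1))}$ in $B_\rho(e_1)$, whose infimum and $L^1$-average differ from those of $u$ by at most $CM\rho^{2s}$. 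Absorbing the residual forcing gives rise to the factor $1+\rho^{2s}\|c\|_{L^\infty(B_\rho(e_1))}$ in the final constant and reduces matters to the case $M=0$.

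With $M=0$, the goal is to establish $\rho^{-n}\int_{B_{\rho/2}(e_1)} v\,dx \leq C\inf_{B_{\rho/2}(e_1)} v$. For this I would run a touching argument \`a la Krylov--Safonov using a second antisymmetric sub-barrier $\Phi$ satisfying $\Phi\leq 0$ in $\R^n_+\setminus B_\rho(e_1)$, $\Phi\geq c_1\rho^{-n}$ on $B_{\rho/2}(e_1)$, and $(-\Delta)^s\Phi+c\,\Phi\leq 0$ in the annulus $B_\rho(e_1)\setminus\overline{B_{\rho/2}(e_1)}$. Such a $\Phi$ is obtained by superposing a large bump of height $\sim\rho^{-n}$ on $B_{\rho/2}(e_1)$ (antisymmetrised through reflection across $\{x_1=0\}$) with a correction of $\psi_{B_\rho(e_1)}$-type, computing the fractional Laplacian via \thref{mkG4iRYH} and using the antisymmetric kernel bounds~\eqref{LxZU6}--\eqref{Zpwlcwex} to impose the sign condition on the annulus. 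For $\tau^{\ast}:=\sup\{\tau\geq 0\,:\, v\geq \tau\Phi \text{ in }\R^n_+\}$, the antisymmetric weak maximum principle applied to $v-\tau^{\ast}\Phi$ on the annulus forces any contact point to lie in $\overline{B_{\rho/2}(e_1)}$; combining the contact equality $v(x_0)=\tau^{\ast}\Phi(x_0)$ with the lower bound on $\Phi$ yields $\tau^{\ast}\leq C\rho^n\inf_{B_{\rho/2}(e_1)} v$. Iterating this touching estimate dyadically over sub-level sets of $v$ then produces a ``measure of positivity'' inequality of the form $|\{v>\lambda\}\cap B_{\rho/2}(e_1)|\leq C\rho^n\lambda^{-\varepsilon}(\inf v)^\varepsilon$ for some $\varepsilon>0$, and integrating in $\lambda$ recovers the desired $L^1$-bound.

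The main obstacle I anticipate is the explicit construction of the antisymmetric sub-barrier $\Phi$ with the correct $\rho$-scaling. The technical point is to control the contribution of the ``tail'' of $\Phi$ in $\R^n_-$, which enters through the antisymmetric kernel of \thref{mkG4iRYH}: careful exploitation of the cancellations~\eqref{LxZU6}--\eqref{Zpwlcwex} is what allows the final constant to retain the advertised form $C'(1+\rho^{2s}\|c\|_{L^\infty(B_\rho(e_1))})$ with $C'$ depending only on $n$ and $s$.
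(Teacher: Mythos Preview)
Your strategy misses the key structural feature that makes this nonlocal proof work in a single step. The paper does not perform an absorption stage or an iteration over sub-level sets; instead, it obtains the $L^1$ bound directly from one touching argument, because at the contact point the fractional Laplacian itself ``sees'' the integral of the supersolution.

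Concretely, the paper rescales $\tilde u(x):=u(\rho x+e_1)$, so that $\tilde u$ is antisymmetric with respect to the shifted hyperplane $T_\rho=\{x_1=-1/\rho\}$ and satisfies $(-\Delta)^s\tilde u+\tilde c\,\tilde u\geq -M\rho^{2s}$ in $B_1$. The barrier is the elementary antisymmetrised cutoff $\varphi^{(1)}(x)=\zeta(x)-\zeta(Q_\rho(x))$ with $\zeta\equiv1$ on $B_{1/2}$, $\zeta\equiv0$ outside $B_{3/4}$; no torsion functions are needed. One takes $\tau$ maximal with $\tilde u\geq\tau\varphi^{(1)}$ in $H_\rho$, so $\tau\leq\inf_{B_{1/2}}\tilde u$, and lets $a\in\overline{B_{3/4}}$ be the contact point. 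The crucial estimate is then
\[
(-\Delta)^s(\tilde u-\tau\varphi^{(1)})(a)
\;\leq\;-C\int_{B_{1/2}}\big(\tilde u(y)-\tau\big)\,dy,
\]
which follows from the antisymmetric representation of the fractional Laplacian once you use $(\tilde u-\tau\varphi^{(1)})(a)=0$ and $\tilde u-\tau\varphi^{(1)}\geq0$ in $H_\rho$. Combining this upper bound with the lower bound coming from the equation immediately yields $\rho^{-n}\int_{B_{\rho/2}(e_1)}u\leq C(1+\rho^{2s}\|c\|_{L^\infty})(\inf_{B_{\rho/2}(e_1)}u+M\rho^{2s})$ in one stroke, with the $M$ term falling out naturally.

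Your Krylov--Safonov-type iteration is the local mechanism for weak Harnack and is both unnecessary here and too weak: integrating a tail bound $|\{v>\lambda\}|\lesssim\lambda^{-\varepsilon}(\inf v)^\varepsilon$ only yields an $L^p$ bound for $p<\varepsilon$, not the $L^1$ estimate the proposition asserts. The nonlocal operator does the averaging for you at the contact point, so the iteration is superfluous and the absorption stage can be dropped.
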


Before we give the proof of \thref{YLj1r}, we introduce some notation.
Given~$ \rho \in(0,1)$, we define
$$  T_\rho:= \left\{x_1=-\frac1{\rho}\right\}\qquad{\mbox{and}}\qquad
H_\rho:=\left\{x_1>-\frac1{\rho}\right\}.$$
We also let~$Q_\rho$ be the reflection across the hyperplane~$T_\rho$, namely
$$Q_\rho(x):=x-2(x_1+1/\rho)e_1\qquad{\mbox{for all }} x\in \R^n. $$
With this, we establish the following lemma.

\begin{lem} \thlabel{rvZJdN88}
Let \(\rho \in(0,1)\) and \(\zeta\) be a smooth cut-off function such that \begin{align*}
\zeta \equiv 1 \text{ in } B_{1/2}, \quad \zeta \equiv 0 \text{ in } \R^n \setminus B_{3/4}, \quad{\mbox{ and }}
\quad
0\leqslant \zeta \leqslant 1.
\end{align*} Define \(\varphi^{(1)} \in C^\infty_0(\R^n)\) by \begin{align*}
\varphi^{(1)}(x) := \zeta (x) - \zeta \big (  Q_\rho(x) \big ) \qquad \text{for all } x\in \R^n.
\end{align*}

Then \(\varphi^{(1)}\) is antisymmetric with respect to \(T_\rho:= \{x_1=-1/\rho\}\) and there exists \(C>0\) depending only on \(n\) and \(s\) (but not on \(\rho\)) such that \(\| (-\Delta)^s \varphi^{(1)} \|_{L^\infty(B_{3/4})} \leqslant C\).
\end{lem}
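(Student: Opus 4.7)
The plan is to verify the antisymmetry by direct inspection and then to bound the fractional Laplacian of $\varphi^{(1)}$ on $B_{3/4}$ by treating the two pieces $\zeta$ and $\zeta\circ Q_\rho$ separately, exploiting the fact that the second piece has support very far from $B_{3/4}$ when $\rho \in (0,1)$.

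First I would check antisymmetry. Since $Q_\rho\circ Q_\rho = \mathrm{id}$, we have
\[
\varphi^{(1)}(Q_\rho(x))=\zeta(Q_\rho(x))-\zeta(x)=-\varphi^{(1)}(x),
\]
so $\varphi^{(1)}$ is antisymmetric with respect to $T_\rho$. Next, set $\tilde\zeta(x):=\zeta(Q_\rho(x))$, so that $\varphi^{(1)}=\zeta-\tilde\zeta$. Because $\zeta\in C^\infty_0(B_{3/4})$ is a fixed function (independent of $\rho$), the quantity $\|(-\Delta)^s\zeta\|_{L^\infty(\R^n)}$ is a finite constant depending only on $n$ and $s$.

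The main point is therefore to estimate $(-\Delta)^s\tilde\zeta$ on $B_{3/4}$ uniformly in $\rho\in(0,1)$. To this end, I would observe that $\supp\tilde\zeta=Q_\rho(\supp\zeta)\subset \overline{B_{3/4}(-2e_1/\rho)}$, since $Q_\rho$ is an isometry sending the origin to $-2e_1/\rho$. Now for any $x\in B_{3/4}$, the first coordinate of $Q_\rho(x)$ is $-x_1-2/\rho$, whose absolute value is at least $2/\rho-3/4>5/4>3/4$ for every $\rho\in(0,1)$; hence $Q_\rho(x)\notin B_{3/4}$ and so $\tilde\zeta(x)=0$ for every $x\in B_{3/4}$. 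Moreover, for $x\in B_{3/4}$ and $y\in\supp\tilde\zeta$, one has
\[
|x-y|\ge |x_1-y_1|\ge \frac{2}{\rho}-\frac{3}{4}-\frac{3}{4}\ge \frac{1}{2},
\]
uniformly in $\rho\in(0,1)$.

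Combining these observations, for $x\in B_{3/4}$ we obtain
\[
\bigl|(-\Delta)^s\tilde\zeta(x)\bigr|
=c_{n,s}\left|\int_{\supp\tilde\zeta}\frac{\tilde\zeta(y)}{|x-y|^{n+2s}}\,dy\right|
\le c_{n,s}\,|B_{3/4}|\,\Bigl(\tfrac{2}{\rho}-\tfrac{3}{2}\Bigr)^{-n-2s}\le C,
\]
where $C$ depends only on $n$ and $s$ (using $2/\rho-3/2>1/2$ for $\rho\in(0,1)$). Putting this together with the uniform bound for $(-\Delta)^s\zeta$, we conclude
\[
\|(-\Delta)^s\varphi^{(1)}\|_{L^\infty(B_{3/4})}
\le\|(-\Delta)^s\zeta\|_{L^\infty(B_{3/4})}+\|(-\Delta)^s\tilde\zeta\|_{L^\infty(B_{3/4})}\le C,
\]
with $C=C(n,s)$. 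There is essentially no obstacle here: the only substantive point is verifying that the reflected support of $\zeta$ stays a definite distance $\ge 1/2$ away from $B_{3/4}$ for every admissible $\rho$, which is what yields the claimed uniform constant.
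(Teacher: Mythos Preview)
Your proof is correct and follows essentially the same route as the paper: both verify antisymmetry via the involution property of $Q_\rho$, split $\varphi^{(1)}=\zeta-\zeta\circ Q_\rho$, bound $(-\Delta)^s\zeta$ by a fixed constant, and handle $(-\Delta)^s(\zeta\circ Q_\rho)$ on $B_{3/4}$ by noting that its support lies in $B_{3/4}(-2e_1/\rho)$ at distance at least $2/\rho-3/2\ge 1/2$ from $B_{3/4}$. The only cosmetic difference is that you spell out the coordinate computation for the distance bound a bit more explicitly.
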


\begin{proof}
Since \(Q_\rho(x)\) is the reflection of \(x\in \R^n\) across \(T_\rho\), we immediately obtain that \(\varphi^{(1)} \) is antisymmetric with respect to the plane \(T_\rho\). As \(0\leqslant \zeta \circ Q_\rho \leqslant 1\) in \(\R^n\) and \(\zeta \circ Q_\rho=0\) in \(B_{3/4}\), from~\eqref{ZdAlT} we have that \begin{align*}
\vert (-\Delta)^s (\zeta \circ Q_\rho)(x) \vert &=C \int_{B_{3/4}(-2e_1/\rho)} \frac{(\zeta \circ Q_\rho)(y)}{\vert x - y\vert^{n+2s}} \dd y \leqslant C \qquad \text{for all }x \in B_{3/4} 
\end{align*} using also
that \(\vert x- y\vert \geqslant 2(1/\rho -3/4) \geqslant 1/2\). Moreover, \begin{align*}
\|  (-\Delta)^s \zeta \|_{L^\infty(B_{3/4})} &\leqslant C ( \| D^2\zeta \|_{L^\infty(B_{3/4})} + \| \zeta \|_{L^\infty(\R^n)} ) \leqslant C, 
\end{align*} for example, see the computation on p.~9 of \cite{MR3469920}. Thus,
$$\| (-\Delta)^s \varphi^{(1)} \|_{L^\infty(B_{3/4})} \leqslant \| (-\Delta)^s \zeta\|_{L^\infty(B_{3/4})} + \| (-\Delta)^s (\zeta \circ Q_\rho)\|_{L^\infty(B_{3/4})} \leqslant C,$$
which completes the proof. 
\end{proof}

Now we give the proof of \thref{YLj1r}. 

\begin{proof}[Proof of \thref{YLj1r}]
Let \(\tilde u(x):=u (\rho x+e_1)\) and \(\tilde c(x) := \rho^{2s}c(\rho x+e_1)\). Observe that \((-\Delta)^s\tilde u+\tilde c \tilde u \geqslant - M\rho^{2s}\) in \(B_1\) and that \(\tilde u \) is antisymmetric with respect to \(T_\rho\).

Let \(\varphi^{(1)}\) be defined as in \thref{rvZJdN88} and suppose that\footnote{Note that such a \(\tau\) exists. Indeed, let \(U \subset \R^n\), \(u:U \to \R\) be a nonnegative function and let \(\varphi : U \to \R\) be such that there exists \(x_0\in U\) for which \(\varphi(x_0)>0\). Then define \begin{align*}
I := \{ \tau \geqslant 0 \text{ s.t. } u(x) \geqslant \tau \varphi(x) \text{ for all } x\in U \} .
\end{align*} It follows that \(I\) is non-empty since \(0\in I\), so \(\sup I\) exists, but may be \(+\infty\). Moreover, \(\sup I \leqslant u(x_0)/\varphi(x_0) < +\infty\), so \(\sup I\) is also finite.} \(\tau\geqslant 0\) is the largest possible value such that~\(\tilde u  \geqslant \tau \varphi^{(1)} \) in the half space~\(H_\rho\). Since \(\varphi^{(1)} =1\) in \(B_{1/2}\), we immediately obtain that \begin{align}
\tau \leqslant \inf_{B_{1/2}} \tilde u = \inf_{B_{\rho/2}(e_1)} u. \label{mzMEg}
\end{align}

Moreover, by continuity, there exists \(a \in \overline{B_{3/4}}\) such that \(\tilde u(a)=\tau\varphi^{(1)} (a)\). On one hand, using \thref{rvZJdN88}, we have that \begin{align}
(-\Delta)^s(\tilde u-\tau \varphi^{(1)})(a)+\tilde c (a) (\tilde u-\tau \varphi^{(1)})(a) &\geqslant -M\rho^{2s} - \tau \big(  C+ \|\tilde c\|_{L^\infty(B_1)}\big) \nonumber \\
&\geqslant -M\rho^{2s} - C\tau \big(  1+ \rho^{2s}\|c\|_{L^\infty(B_\rho(e_1))}\big) .\label{aDLwG3pX}
\end{align} On the other hand, since \(\tilde u-\tau\varphi^{(1)}\) is antisymmetric with respect to \(T_\rho\), \(\tilde u - \tau \varphi^{(1)} \geqslant 0\) in \(H_\rho\), and~\((\tilde u-\tau \varphi^{(1)})(a)=0\), it follows that \begin{align}
&(-\Delta)^s (\tilde u-\tau \varphi^{(1)})(a)+c_\rho(a)(\tilde u-\tau \varphi^{(1)})(a) \nonumber \\
&\leqslant - C \int_{B_{1/2}} \left(
\frac 1 {\vert a - y \vert^{n+2s}} - \frac 1 {\vert Q_\rho(a) - y \vert^{n+2s}} \right) \big(
\tilde u(y)-\tau \varphi^{(1)}(y)\big) \dd y. \label{wPcy8znV}
\end{align} For all \(y \in B_{1/2}\), we have that \(\vert a - y \vert \leqslant C\) and \(\vert Q_\rho(a) - y \vert \geqslant C\) (the assumption \(\rho <1\) allows to choose this \(C\) independent of \(\rho\)), so \begin{align}
(-\Delta)^s (\tilde u-\tau \varphi^{(1)})(a)+c_\rho(a)(\tilde u-\tau \varphi^{(1)})(a) &\leqslant - C \int_{B_{1/2}}  \big(\tilde u(y)-\tau \varphi^{(1)}(y)\big) \dd y \nonumber \\
&\leqslant -C \left( \frac 1 {\rho^n} \int_{B_{\rho/2}(e_1)}  u(y) \dd y - \tau \right). \label{ETn5BCO5}
\end{align} Rearranging \eqref{aDLwG3pX} and \eqref{ETn5BCO5} then using \eqref{mzMEg}, we obtain \begin{align*}
\frac 1 {\rho^n} \int_{B_{\rho/2}(e_1)}  u(y) \dd y &\leqslant C  \Big( \tau \big(  1+ \rho^{2s}\|c\|_{L^\infty(B_\rho(e_1))}\big)  + M\rho^{2s} \Big) \\
&\leqslant C \big(  1+ \rho^{2s}\|c\|_{L^\infty(B_\rho(e_1))}\big) \left(
\inf_{B_{\rho/2}(e_1)} u + M \rho^{2s}\right)
\end{align*} as required.
\end{proof}

A simple adaptation of the proof of \thref{YLj1r} leads to the proof of \thref{MT9uf} which we now give. 

\begin{proof}[Proof of \thref{MT9uf}]
Follow the proof of \thref{YLj1r} but instead of \eqref{wPcy8znV}, we write \begin{align*}
&(-\Delta)^s (\tilde u-\tau \varphi^{(1)})(a)+c_\rho(a)(\tilde u-\tau \varphi^{(1)})(a)\\
&\qquad= - C \int_{H_\rho} \left(
\frac 1 {\vert a - y \vert^{n+2s}} - \frac 1 {\vert Q_\rho(a) - y \vert^{n+2s}} \right) \big(
\tilde u(y)-\tau \varphi^{(1)}(y)\big) \dd y.
\end{align*} Then, for all \(x,y\in H_\rho\), \begin{align}
\frac 1 {\vert x - y \vert^{n+2s}} - \frac 1 {\vert Q_\rho(x) - y \vert^{n+2s}} &= \frac{n+2s} 2 \int_{\vert x-y\vert^2}^{\vert Q_\rho(x)-y\vert^2} t^{- \frac{n+2s+2}2} \dd t \label{gvKkSL1N}\\
&\geqslant C \Big (\vert Q_\rho(x)-y\vert^2-\vert x-y\vert^2 \Big ) \vert Q_\rho(x) - y \vert^{- (n+2s+2)}  \nonumber\\
&= C \frac{(x_1+1/\rho)(y_1+1/\rho)}{\vert Q_\rho(x) - y \vert^{n+2s+2}}, \nonumber
\end{align} so using that \(\tilde u  - \tau \varphi^{(1)} \geqslant 0\) in \(H_\rho\), we see that \begin{align*}
(-\Delta)^s (\tilde u-\tau \varphi^{(1)})(a)+c_\rho(a)(\tilde u-\tau \varphi^{(1)})(a) 
&\leqslant - C \int_{H_\rho} \frac{(y_1+1/\rho)(\tilde u(y)-\tau \varphi^{(1)}(y))  }{\vert Q_\rho(a) - y \vert^{n+2s+2}}\dd y \\
&\leqslant - C_\rho  \left( \int_{H_\rho} \frac{(y_1+1/\rho)\tilde u(y) }{\vert Q_\rho(a) - y \vert^{n+2s+2}}\dd y + \tau \right) .
\end{align*} Making the change of variables \(z=\rho y +e_1\), we have that \begin{align*}
\int_{H_\rho} \frac{(y_1+1/\rho)\tilde u(y) }{\vert Q_\rho(a) - y \vert^{n+2s+2}}\dd y  &= \rho^{-n-1} \int_{\R^n_+} \frac{z_1 u(z) }{\vert Q_\rho(a) - z/\rho +e_1/\rho \vert^{n+2s+2}}\dd z.
\end{align*} Thus, since \( \vert Q_\rho(a) - z/\rho +e_1/\rho \vert^{n+2s+2} \leqslant C_\rho (1+ \vert z \vert^{n+2s+2} ) \), we conclude that
\begin{align*}
\int_{H_\rho} \frac{(y_1+1/\rho)\tilde u(y) }{\vert Q_\rho(a) - y \vert^{n+2s+2}}\dd y &\geqslant C_\rho \int_{\R^n_+} \frac{z_1 u(z) }{1+ \vert z \vert^{n+2s+2}} \dd z=C_\rho \Anorm{u}.
\end{align*} As a consequence,
\begin{align}
(-\Delta)^s (\tilde u-\tau \varphi^{(1)})(a)+\tilde c(a)(\tilde u-\tau \varphi^{(1)})(a) &\leqslant- C_\rho \Anorm{u}+C_\rho \tau  . \label{Pzf2a}
\end{align} Rearranging~\eqref{aDLwG3pX} and~\eqref{Pzf2a} then using~\eqref{mzMEg} gives \begin{align*}
\Anorm{u} &\leqslant C_\rho ( \tau + M ) \leqslant C_\rho \left( \inf_{B_{\rho/2}(e_1)} u + M \right), 
\end{align*}
as desired.
\end{proof}

\subsection{Interior local boundedness}

The second part of the proof of \thref{DYcYH} is the interior local boundedness of sub-solutions given in \thref{guDQ7} below. 

\begin{prop} \thlabel{guDQ7}
Let \(M \geqslant 0\), \(\rho\in(0,1/2)\), and \(c\in L^\infty(B_\rho(e_1))\). Suppose that \(u \in C^{2s+\alpha}(B_\rho(e_1))\cap \mathscr A_s(\R^n)\) for some \(\alpha>0\) with \(2s+\alpha\) not an integer, and \(u\) satisfies   \begin{equation}\label{sow85bv984dert57nb5}
(-\Delta)^su +cu \leqslant M \qquad \text{in } B_\rho(e_1).
\end{equation} 

Then there exists \(C_\rho>0\) depending only on \(n\), \(s\), \(\| c \|_{L^\infty(B_\rho(e_1))}\), and \(\rho\) such that  \begin{align*}
\sup_{B_{\rho/2}(e_1)} u &\leqslant C_\rho ( \Anorm{u} +M  ) .
\end{align*} 
\end{prop}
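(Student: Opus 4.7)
The strategy parallels that of \thref{YLj1r} but with the touching argument performed from \emph{above}. A key new difficulty is that the naive barrier of Lemma \ref{rvZJdN88} (whose fractional Laplacian is merely bounded) is useless for subsolutions: at the touching point one obtains an inequality of the form $0 \le M\rho^{2s} + \tau C$ that is vacuous. To extract real information, the barrier must instead have a fractional Laplacian of \emph{definite positive sign} on the region of interest.

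As in \thref{YLj1r}, I would begin by rescaling: set $\tilde u(x) := u(\rho x + e_1)$ and $\tilde c(x) := \rho^{2s} c(\rho x + e_1)$, so that $\tilde u$ is antisymmetric with respect to $T_\rho=\{x_1=-1/\rho\}$, belongs to $C^{2s+\alpha}(B_1) \cap \mathscr{A}_s(\R^n)$, and satisfies $(-\Delta)^s \tilde u + \tilde c\, \tilde u \le M\rho^{2s}$ in $B_1$. The claim reduces to $\sup_{B_{1/2}} \tilde u \le C\bigl(\|u\|_{\mathscr{A}_s(\R^n)} + M\bigr)$. To decouple the estimate in $B_{1/2}$ from the values of $\tilde u$ far away, I would split $\tilde u = \tilde u_{\mathrm{in}} + \tilde u_{\mathrm{out}}$, with $\tilde u_{\mathrm{in}} := \tilde u\,\hat\eta$ for a cutoff $\hat\eta$ that is symmetric with respect to $T_\rho$, equal to $1$ on $B_{3/4}$, and compactly supported in $B_{7/8}$. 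The two-sided antisymmetric kernel bounds in \eqref{LxZU6}--\eqref{Zpwlcwex}, together with the change of variables $z = \rho y + e_1$, give
\[
|(-\Delta)^s \tilde u_{\mathrm{out}}(x)| \;\le\; C_\rho\bigl(\|u\|_{\mathscr{A}_s(\R^n)} + \sup_{B_{7/8}}|\tilde u|\bigr) \qquad \text{for all } x\in B_{1/2},
\]
so that $\tilde u_{\mathrm{in}}$ satisfies a modified subsolution inequality with the above quantity on the right hand side.

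For the barrier, I would take the antisymmetric $s$-torsion
\[
\tilde\psi(x) := \psi_{B_R(x_0)}(x) - \psi_{B_R(Q_\rho(x_0))}(x),
\]
choosing $x_0\in H_\rho$ and $R$ so that $B_1 \Subset B_R(x_0) \Subset H_\rho$ and $B_R(x_0) \cap B_R(Q_\rho(x_0)) = \emptyset$. By construction $\tilde\psi$ is antisymmetric with respect to $T_\rho$, strictly positive on $B_R(x_0)$, comparable to a constant on $B_{1/2}$, and, since $\psi_{B_R(Q_\rho(x_0))}$ is $s$-harmonic at points of $B_R(x_0)$ and vanishes outside a disjoint ball, one has $(-\Delta)^s\tilde\psi \ge 1$ on $B_R(x_0)$. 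Let $\tau^\ast := \inf\{\tau\ge 0 : \tilde u_{\mathrm{in}} \le \tau\,\tilde\psi \text{ on } H_\rho\}$; this is finite because $\tilde u_{\mathrm{in}}$ is bounded and compactly supported in $B_R(x_0)$. At a touching point $a \in \overline{B_{7/8}}$, the function $w := \tau^\ast\tilde\psi - \tilde u_{\mathrm{in}}$ is antisymmetric, nonnegative on $H_\rho$, and vanishes at $a$; hence by \thref{mkG4iRYH}, $(-\Delta)^s w(a) = -c_{n,s}\!\int_{H_\rho} K_\rho(a,y) w(y)\,dy \le 0$. Combining this with the modified subsolution inequality and $(-\Delta)^s\tilde\psi \ge 1$ gives
\[
\tau^\ast\bigl(1-\|\tilde c\|_\infty\|\tilde\psi\|_\infty\bigr) \;\le\; M\rho^{2s} + C_\rho\bigl(\|u\|_{\mathscr{A}_s}+\sup_{B_{7/8}}|\tilde u|\bigr),
\]
and, since the factor in parentheses on the left is bounded below by a positive constant $\kappa$ for $\rho$ in the admissible range, one obtains $\sup_{B_{1/2}}\tilde u \le \tau^\ast \sup_{B_{1/2}}\tilde\psi \le C\bigl(\|u\|_{\mathscr{A}_s}+M+\sup_{B_{7/8}}|\tilde u|\bigr)$.

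The final step is to remove the intermediate sup on the right-hand side, and this is where I expect the \emph{main obstacle}. Applying the previous estimate on a family of nested balls $B_{r_k}$ with $r_k \searrow 1/2$ and iterating, one wants to absorb $\sup_{B_{r_k}}|\tilde u|$ into the left-hand side via a Simon-type absorption lemma (see, e.g., \cite[Lemma 6.1]{MR1814364}); this requires the coefficient in front of the intermediate sup to decay fast enough along the iteration, which in turn forces a careful optimization of the cutoff $\hat\eta$ and of the geometric parameters $R,x_0$ to keep the kernel constants $C_\rho$ small. The corresponding control of $\sup_{B_{7/8}}\tilde u^-$ needed to close the bound on $|\tilde u|$ is obtained by noting that $-\tilde u$ is an antisymmetric supersolution (with the opposite sign of the zeroth-order term) to which \thref{MT9uf}, applied after a translation of centers, can be invoked directly. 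Undoing the rescaling yields the claim of \thref{guDQ7}.
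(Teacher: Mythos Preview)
Your approach diverges fundamentally from the paper's, and the absorption step you flag as ``the main obstacle'' is in fact a genuine gap rather than a technicality to be patched. The coefficient in front of $\sup_{B_{7/8}}|\tilde u|$ comes from the annulus contribution $\int_{B_{7/8}\setminus B_{3/4}} K_\rho(a,y)\,|\tilde u(y)|\,dy$ in your estimate of $(-\Delta)^s\tilde u_{\mathrm{out}}(a)$, and this is of order one, not small: the touching point $a$ can lie anywhere in $\overline{B_{7/8}}$ (not only in $B_{1/2}$ where you stated the bound), and near $\partial B_{3/4}$ the integral is even singular. Making the annulus thin does not help either, since then the iteration step $r_{k+1}\to r_k$ gains almost nothing in radius while the kernel constant does not improve. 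This is a structural feature of the nonlocal cut-off splitting, not a matter of optimizing parameters. Your secondary step---controlling $\sup\tilde u^-$ via \thref{MT9uf}---is also broken as stated: that proposition requires the function to be nonnegative in $\R^n_+$, which $-u$ need not be (no sign is assumed in \thref{guDQ7}).

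The paper avoids absorption entirely by a different mechanism, in the spirit of \cite{MR2494809,MR2831115}. After normalizing so that $\Anorm{u}\le 1$ and $M\le 1$, it touches $u$ from above by $\tau(\rho-|x-e_1|)^{-n-2}$, which blows up at $\partial B_\rho(e_1)$; this forces a touching point $a$ strictly inside, at distance $d$ from the boundary, with $u(a)=\tau d^{-n-2}$. One then works at the small scale $\theta d$ around $a$: the function $v=(1-\theta/2)^{-n-2}u(a)\zeta-u$ is nonnegative in $B_{\theta d/2}(a)$, and after replacing $v$ by the antisymmetric extension of $v^+$ (using \thref{ltKO2} to control the resulting error), one applies the \emph{weak Harnack inequality} \thref{YLj1r} in a ball of radius $\sim\theta d$. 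The constant there has the explicit form \eqref{CyJJQHrF}, so the $\theta d$-rescaling produces a factor $(\theta d)^{2s}\|c\|_{L^\infty}$ that is harmless. Comparing the resulting measure bound on $\{u>u(a)/2\}\cap B_{\theta d/8}(a)$ with the a priori bound from $\Anorm{u}\le1$ yields $\tau\le C_\rho$ directly---no iteration, no absorption of a sup on the right. The point is that the blow-up barrier and the small-scale weak Harnack replace your cut-off/absorption machinery entirely.
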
 

The proof of \thref{guDQ7} uses similar ideas to \cite[Theorem~11.1]{MR2494809} and \cite[Theorem~5.1]{MR2831115}. Before we prove \thref{guDQ7}, we need the following lemma. 

\begin{lem} \thlabel{ltKO2}
Let \(R\in(0,1)\) and \(a\in \overline{ B_2^+}\). Then there exists \(C>0\) depending only on \(n\) and \(s\) such that, for all \(x\in B_{R/2}^+(a)\) and \(y \in \R^n_+ \setminus B_R^+(a)\), \begin{align*}
\frac 1 {\vert x - y \vert^{n+2s}} - \frac 1 {\vert x_\ast - y \vert^{n+2s}} &\leqslant C R^{-n-2s-2} \frac{x_1y_1}{1+\vert y \vert^{n+2s+2}}.
\end{align*} 
\end{lem}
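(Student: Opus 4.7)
The plan is to combine the pointwise upper bound on the antisymmetric kernel recorded in~\eqref{LxZU6}, namely
\[
\frac 1 {\vert x - y \vert^{n+2s}} - \frac 1 {\vert x_\ast- y \vert^{n+2s}} \leqslant 2(n+2s) \frac{x_1y_1}{\vert x - y \vert^{n+2s+2}}\qquad\text{for all }x,y\in\R^n_+,
\]
with a two-regime estimate on~$|x-y|^{-(n+2s+2)}$ which exploits the hypotheses~$x\in B_{R/2}^+(a)$ and~$y\in\R^n_+\setminus B_R^+(a)$. Given~\eqref{LxZU6}, the conclusion reduces to showing that there exists~$C=C(n,s)>0$ such that
\begin{equation}\label{planprop}
\frac{1}{\vert x-y\vert^{n+2s+2}} \;\leq\; C\,R^{-n-2s-2}\,\frac{1}{1+\vert y\vert^{n+2s+2}}
\end{equation}
for all admissible pairs~$(x,y)$.

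To establish~\eqref{planprop}, I would split according to the size of~$|y|$. First, since~$a\in\overline{B_2^+}$ and~$R<1$, we have~$|x|\le|a|+R/2\le 5/2$. Hence, if~$|y|\ge 5$, the triangle inequality gives~$|x-y|\ge |y|-|x|\ge |y|/2$, so~$|x-y|^{-(n+2s+2)}\le 2^{n+2s+2}|y|^{-(n+2s+2)}\le C(1+|y|^{n+2s+2})^{-1}$, and, since~$R<1$, the factor~$R^{-n-2s-2}$ in~\eqref{planprop} can be inserted for free. Second, if~$|y|\le 5$, we use the defining inclusion~$y\notin B_R(a)$ together with~$x\in B_{R/2}(a)$ to deduce~$|x-y|\ge |y-a|-|x-a|\ge R/2$. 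This yields~$|x-y|^{-(n+2s+2)}\le C R^{-n-2s-2}$, and since~$1+|y|^{n+2s+2}$ is bounded from above by a constant depending only on~$n$ and~$s$ in this range, the right-hand side of~\eqref{planprop} is also bounded below by~$cR^{-n-2s-2}$, giving the claim in this regime as well.

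Inserting~\eqref{planprop} into~\eqref{LxZU6} yields the asserted inequality with a constant of the form~$2(n+2s)\,C(n,s)$. No step in this argument looks particularly delicate; the only mildly subtle point is keeping the splitting thresholds (here chosen as~$|y|=5$) independent of~$R$ and of the base point~$a$, which is guaranteed by the uniform bounds~$|a|\le 2$ and~$R<1$. In particular, no cancellation or integration is required beyond the elementary kernel estimate~\eqref{LxZU6}, so the overall proof should be short once the two-regime split is set up.
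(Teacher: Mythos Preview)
Your proof is correct and follows essentially the same approach as the paper: both reduce via~\eqref{LxZU6} to showing $|x-y|^{n+2s+2}\geqslant C R^{n+2s+2}(1+|y|^{n+2s+2})$, and both do this by a two-regime split according to whether~$|y|$ is large or bounded. The paper routes through rescaled variables~$\tilde x=(x-a)/R$, $\tilde y=(y-a)/R$ and an intermediate claim~$|y-a|\geqslant CR|y|$ (with its own split at~$|y|=4$), whereas you split directly at~$|y|=5$ and work in the original coordinates; your version is slightly more streamlined but the substance is identical.
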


\begin{proof} Let \(\tilde x := (x-a)/R\in B_{1/2}\) and \(\tilde y = (y-a)/ R\in \R^n\setminus B_1\). Clearly,
we have that~\(\vert \tilde y - \tilde x \vert \geqslant 1/2\). Moreover,  since \(\vert \tilde x \vert <1/2 < 1/(2 \vert \tilde y \vert)\), we have that \(\vert \tilde y - \tilde x \vert \geqslant \vert \tilde y \vert -\vert \tilde x \vert  \geqslant (1/2) \vert \tilde y \vert\). Hence, \begin{align*}
\vert \tilde y - \tilde x \vert^{n+2s+2} \geqslant \frac 1 {2^{n+2s+2}}  \max  \big \{ 1 , \vert \tilde y \vert^{n+2s+2} \big \} \geqslant C \big ( 1+  \vert \tilde y \vert^{n+2s+2} \big ) 
\end{align*} for some \(C\) depending only on \(n\) and \(s\). It follows that
\begin{align}
\vert x -y \vert^{n+2s+2} = R^{n+2s+2}  \vert \tilde x - \tilde y  \vert^{n+2s+2} 
\geqslant C R^{n+2s+2} \big ( 1 + R^{-n-2s-2} \vert y-a \vert^{n+2s+2} \big ). \label{TO9DRBuX}
\end{align}

Finally, we claim that there exists \(C\) independent of \(R\) such that \begin{align}
\vert y - a\vert \geqslant CR \vert y \vert \qquad \text{for all } y\in \R^n \setminus B_R(a) . \label{tdNyAxBN}
\end{align} Indeed, if \(y \in \R^n \setminus B_4\) then \begin{align*}
\vert y - a\vert \geqslant \vert y \vert -2 \geqslant \frac12 \vert y \vert>\frac R 2 \vert y \vert,
\end{align*} and if \(y \in (\R^n \setminus B_R(a) ) \cap B_4\) then \begin{align*}
\vert y -a\vert \geqslant R > \frac R 4 \vert y \vert
\end{align*} which proves \eqref{tdNyAxBN}.

Thus, \eqref{TO9DRBuX} and \eqref{tdNyAxBN} give that, for all \( x \in B_{R/2}(a)\) and \( y \in \R^n_+ \setminus B_R(a)\), \begin{align*}
\vert x -y \vert^{n+2s+2} &\geqslant C R^{n+2s+2} \big ( 1 +  \vert y\vert^{n+2s+2} \big ) .
\end{align*} Then the result follows directly from~\eqref{LxZU6}. 
\end{proof}

With this preliminary work, we now focus on the proof of \thref{guDQ7}.

\begin{proof}[Proof of \thref{guDQ7}]
We first observe that,
dividing through by~\( \Anorm{u} +M\), we may
assume that \((-\Delta)^su +cu\leqslant 1\) in \(B_\rho(e_1)\) and~\(\Anorm{u} \leqslant 1\). 

We also point out that if~$u\le0$ in~$B_\rho(e_1)$, then the claim in \thref{guDQ7}
is obviously true. Therefore, we can suppose that
\begin{equation}\label{upos44567890}
\{u>0\}\cap B_\rho(e_1)\ne \varnothing.\end{equation}
Thus, we let~\(\tau\geqslant 0 \) be the smallest possible value such that \begin{align*}
u(x) &\leqslant \tau (\rho-\vert x-e_1 \vert )^{-n-2} \qquad \text{for all } x\in B_\rho(e_1).
\end{align*} Such a \(\tau\) exists in light of~\eqref{upos44567890} by following a similar argument to the one in the footnote at the bottom of p. 11.

To complete the proof, we will show that \begin{align}
\tau &\leqslant C_\rho  \label{svLyO}
\end{align}with \(C_\rho\) depending only on \(n\), \(s\), \(\| c\|_{L^\infty(B_\rho(e_1))}\),
and \(\rho\) (but independent of \(u\)). Since \(u\) is uniformly
continuous in~$B_\rho(e_1)$, there exists \(a\in B_\rho(e_1)\) such that \(u(a) = \tau (\rho-\vert a-e_1\vert)^{-n-2}\).
Notice that~$u(a)>0$ and~\(\tau >0\). 

Let also~\(d=:\rho-\vert a-e_1\vert\) so that \begin{align}
u(a) = \tau d^{-n-2} ,\label{CEnkR}
\end{align}  and let  \begin{align*}
U:= \bigg \{ y \in B_\rho (e_1) \text{ s.t. } u(y) > \frac{u(a)} 2 \bigg \} .
\end{align*}  Since \(\Anorm{u}\leqslant 1\),  if \(r\in(0,d)\) then \begin{align*}
C_\rho &\geqslant  \int_{B_\rho (e_1)} |u(x)| \dd x
\ge \int_{ U\cap B_r (a)} u(x) \dd x \geqslant \frac{u(a)}2 \cdot \vert U \cap B_r (a) \vert  .
\end{align*} Thus, from~\eqref{CEnkR}, it follows that  \begin{align}
 \vert  U \cap B_r (a) \vert \leqslant \frac{C_\rho d^{n+2}}\tau \qquad \text{ for all } r\in(0,d). \label{W5Ar0}
\end{align}

Next, we make the following claim. 

\begin{claim} 
There exists~$\theta_0\in(0,1)$ depending only on \(n\), \(s\), \(\| c\|_{L^\infty(B_\rho(e_1))}\), and \(\rho\)
such that if~\(\theta\in(0,\theta_0]\) there exists~\(C>0\) depending only on \(n\), \(s\), \(\| c\|_{L^\infty(B_\rho(e_1))}\), \(\rho\), and~$\theta$ such that \begin{align*}
\big \vert B_{\theta d /8} (a)  \setminus U \big \vert \leqslant  \frac 1 4 \vert B_{\theta d /8} \vert +C \frac{d^n }{\tau} . 
\end{align*} In particular, neither \(\theta\) nor \(C\) depend on \(\tau\), \(u\), or \(a\).
\end{claim}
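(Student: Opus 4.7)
The strategy will be to test the subsolution inequality \eqref{sow85bv984dert57nb5} pointwise at the peak point $a$, where $u(a)=\tau d^{-n-2}$ is forced to be very large whenever $\tau$ is, and then use \thref{mkG4iRYH} to rewrite the resulting inequality in the form
\begin{equation}\label{eq:planIa}
I(a):=\lim_{\varepsilon\to 0^+}\!\int_{\R^n_+\setminus B_\varepsilon(a)}\!K(a,y)\bigl(u(a)-u(y)\bigr)\,\dd y\ \leq\ C_\rho\bigl(1+u(a)\bigr),
\end{equation}
where $K(a,y):=|a-y|^{-n-2s}-|a_\ast-y|^{-n-2s}$, and $C_\rho$ depends only on $n,s,\|c\|_{L^\infty(B_\rho(e_1))},\rho$. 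Here we exploit that $a_1\geq 1-\rho>1/2$ (inherited from $\rho<1/2$ and $|a-e_1|<\rho$) and that $u(a)>0$; the latter allows us to drop the nonnegative term $\tfrac{c_{1,s}}{s}u(a)a_1^{-2s}$ from the left-hand side and to absorb $-c(a)u(a)$ into $\|c\|_\infty u(a)$.

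I will then decompose $I(a)=I_{\mathrm{in}}+I_{\mathrm{ann}}+I_{\mathrm{far}}$ according to the three regions $B_{\theta d/100}(a)$, the annulus $B_{\theta d/8}(a)\setminus B_{\theta d/100}(a)$, and $\R^n_+\setminus B_{\theta d/8}(a)$. For the far piece, \thref{ltKO2} applied with $R=\theta d/8$ (valid since $|a-e_1|<1/2$ gives $a\in\overline{B_2^+}$) yields the kernel bound $K(a,y)\leq C(\theta d)^{-n-2s-2}a_1y_1/(1+|y|^{n+2s+2})$; combined with $\|u\|_{\mathscr A_s(\R^n)}\leq 1$ (after the initial normalization) and $a_1\leq 3/2$ this gives $|I_{\mathrm{far}}|\leq C(\theta d)^{-n-2s-2}(1+u(a))$. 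On the annulus the kernel admits the uniform two-sided estimate $K(a,y)\asymp(\theta d)^{-n-2s}$, since $|a_\ast-y|\geq a_1\geq 1/2$ makes the subtracted piece negligible compared to the singular piece when $\theta d$ is small. Crucially, the touching-paraboloid condition—that the minimality of $\tau$ forces $u(y)\leq\tau\phi(y)^{-n-2}$ on $B_\rho(e_1)$, and that $\phi(y)\geq d(1-\theta/8)$ throughout $B_{\theta d/8}(a)$—yields the uniform upper bound $u(y)\leq(1+C_n\theta)u(a)$ on $B_{\theta d/8}(a)$ for $\theta\leq\theta_0$ small, whence $u(a)-u(y)\geq -C_n\theta\,u(a)$ everywhere on $B_{\theta d/8}(a)$ and, of course, $u(a)-u(y)\geq u(a)/2$ on $U^c$.

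Splitting the annular integral into its $U$ and $U^c$ parts and using these two lower bounds produces
$$
I_{\mathrm{ann}}\ \geq\ \frac{c\,u(a)}{(\theta d)^{n+2s}}\,\bigl|U^c\cap\mathrm{ann}\bigr|\ -\ C_n\theta\,u(a)\,(\theta d)^{-2s}.
$$
The inner piece $I_{\mathrm{in}}$ is a convergent principal value by \thref{zNNKjlJJ}; the essential point is that its lower bound depends only on the paraboloid-induced modulus $C_n\theta$, because the singular kernel $|a-y|^{-n-2s}$ is even under $y\mapsto 2a-y$, so its PV against the linear Taylor term of $u$ at $a$ vanishes, while the smooth kernel $|a_\ast-y|^{-n-2s}$ is uniformly bounded by $C$ on $B_{\theta d/100}(a)$ and contributes only a bounded integral. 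Combining these pieces with \eqref{eq:planIa}, substituting $u(a)=\tau d^{-n-2}$, and rearranging gives
$$
\bigl|U^c\cap(B_{\theta d/8}(a)\setminus B_{\theta d/100}(a))\bigr|\ \leq\ C_n\theta\,|B_{\theta d/8}|\ +\ C_\theta\,\frac{d^n}{\tau}\ +\ C_\theta\,(\theta d)^{2s}|B_{\theta d/8}|.
$$
Choosing $\theta_0$ small enough that all the $\theta$-dependent coefficients above sum to at most $1/8$ absorbs the first and third terms into $(1/8)|B_{\theta d/8}|$. Finally, the trivial estimate $|B_{\theta d/100}(a)|\leq (1/8)|B_{\theta d/8}|$, which holds for every $n\geq 1$ by the elementary inequality $(8/100)^n\leq 1/8$, accounts for the excised inner ball and yields the claimed bound $|B_{\theta d/8}(a)\setminus U|\leq\tfrac14|B_{\theta d/8}|+Cd^n/\tau$.

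The main obstacle will be extracting the lower bound for $I_{\mathrm{in}}$ in a form independent of the $C^{2s+\alpha}$-norm of $u$: a naive Taylor-expansion bound produces constants depending on $\|u\|_{C^{2s+\alpha}}$, which are not permitted in the final estimate. The plan resolves this by leveraging the evenness of the singular kernel $|a-y|^{-n-2s}$ about $a$ to annihilate the first-order Taylor contribution of $u$, so that only the paraboloid-induced one-sided bound $u(a)-u(y)\geq -C_n\theta u(a)$ enters the estimate—effectively reducing the PV contribution to the same form as the annular remainder, which is then absorbed into the final choice of $\theta_0$.
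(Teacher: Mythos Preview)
Your approach—testing the subsolution inequality pointwise at the touching point $a$ and decomposing the resulting integral—is genuinely different from the paper's. The paper instead builds an antisymmetric nonnegative supersolution $w$ out of $(1-\theta/2)^{-n-2}u(a)\zeta - u$ (cut off to its positive part and antisymmetrized), establishes $(-\Delta)^s w + cw \geq -C\bigl((\theta d)^{-n-2s-2}+u(a)\bigr)$ on $B_{\theta d/4}(a)$, and then applies the rescaled weak Harnack inequality (\thref{YLj1r}) to $w$ as a black box. This modular route avoids any direct principal-value analysis. Your route is closer in spirit to the viscosity-style pointwise arguments of Caffarelli--Silvestre.

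There is, however, a genuine gap in your treatment of $I_{\mathrm{in}}$. You write that evenness of $|a-y|^{-n-2s}$ annihilates the linear Taylor contribution, ``so that only the paraboloid-induced one-sided bound $u(a)-u(y)\geq -C_n\theta\,u(a)$ enters the estimate.'' But after symmetrization the integrand becomes $(2u(a)-u(y)-u(2a-y))\,|a-y|^{-n-2s}$, and the zeroth-order bound $2u(a)-u(y)-u(2a-y)\geq -2C_n\theta\,u(a)$ is useless against the \emph{divergent} integral $\int_{B_r(a)}|a-y|^{-n-2s}\,dy=+\infty$; the resulting lower bound is $-\infty$. What actually saves the argument is the full strength of the touching condition: since $u\leq g:=\tau(\rho-|\cdot-e_1|)^{-n-2}$ with equality at $a$, one has
\[
2u(a)-u(y)-u(2a-y)\ \geq\ 2g(a)-g(y)-g(2a-y),
\]
and a second-order Taylor expansion of the smooth, convex barrier $g$ (whose Hessian at $a$ has size $\leq C d^{-n-4}$) gives the \emph{quadratic} lower bound $\geq -C\,u(a)\,|y-a|^2/d^2$. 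This extra factor $|y-a|^2$ is precisely what makes $\int_{B_{\theta d/100}(a)}|a-y|^{2-n-2s}\,dy$ finite and yields $I_{\mathrm{in}}\geq -C_{n,s}\,u(a)\,\theta^{2-2s}d^{-2s}$, which then feeds correctly into your final chain of inequalities. With this correction your scheme does go through.
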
 

We will withhold the proof of the claim until the end. Assuming the claim is true, we complete the proof of the \thref{guDQ7} as follows. By~\eqref{W5Ar0} (used here with~$r:={\theta_0 d}/8$)
and the claim, we have that \begin{align*}
\frac{C_\rho d^{n+2}}\tau  \geqslant \big \vert B_{\theta_0 d /8}  \big \vert -\big \vert B_{\theta_0 d /8} (a)  \setminus U \big \vert 
\geqslant \frac 3 4 \vert B_{\theta_0 d/8} \vert -C \frac{d^n }{\tau} .
\end{align*} Rearranging gives that \( \tau \leqslant C( d^2 +1 ) \leqslant C\), which proves \eqref{svLyO}. 

Accordingly, to complete the proof of \thref{guDQ7}, it remains to establish
 the Claim. For this, let \(\theta\in(0,1)\) be a small constant to be chosen later. We will prove the claim by applying \thref{YLj1r} to an appropriate auxiliary function. For \(x\in B_{\theta d/2} (a)\), we have that \(\vert x -e_1 \vert \leqslant \vert a -e_1 \vert + \theta d/2=\rho-(1-\theta/2)d \), so, using~\eqref{CEnkR}, \begin{align}
u(x) \leqslant \tau \bigg (1-\frac \theta2\bigg )^{-n-2} d^{-n-2} =  \bigg (1-\frac\theta 2\bigg )^{-n-2} u(a) \qquad \text{for all } x\in B_{\theta d/2}(a). \label{sS0kO}
\end{align}

Let \(\zeta \) be a smooth, antisymmetric function such that~$\zeta \equiv 1$ in~$\{x_1 > 1/2 \}$
and~$0\leqslant \zeta \leqslant 1$ in~$\R^n_+$, 
and consider the antisymmetric function \begin{align*}
v(x) &:=   \left(1-\frac\theta 2\right)^{-n-2} u(a) \zeta(x) -u (x) \qquad \text{for all } x\in \R^n. 
\end{align*} Since \(\zeta \equiv 1\) in \(\{x_1 > 1/2 \} \supset B_{\theta d/2}(a)\) and \(0\leqslant \zeta \leqslant 1\) in \(\R^n_+\), it follows easily from~\eqref{OopaQ0tu} that \((-\Delta)^s \zeta \geqslant 0\) in \(B_{\theta d/2}(a)\). Hence, in \(B_{\theta d/2}(a)\), \begin{align*}
(-\Delta)^s v +cv &\geqslant -(-\Delta)^s u-cu+  c\bigg (1-\frac\theta 2\bigg )^{-n-2} u(a)  \\
&\geqslant -1 -C\| c^-\|_{L^\infty(B_\rho(e_1))}\bigg (1-\frac\theta 2\bigg )^{-n-2}  u(a) . 
\end{align*} Taking \(\theta\) sufficiently small, we obtain \begin{align}
(-\Delta)^s v +cv &\geqslant -C (1 +  u(a) ) \qquad \text{in } B_{\theta d/2}(a). \label{7fUyX}
\end{align}

The function \(v\) is almost the auxiliary function to which we would like to apply \thref{YLj1r}; however, \thref{YLj1r} requires \(v \geqslant 0\) in \(\R^n_+\) but we only have \(v \geqslant 0\) in \(B_{\theta d/2}(a)\) due to \eqref{sS0kO}. To resolve this issue let us instead consider the function \(w\) such that \(w (x) = v^+(x)\) for all \(x\in \R^n_+\) and~\(w(x) = -w(x_\ast)\) for all \(x\in \overline{\R^n_-}\).
We point out that~$w$ coincides with~$v$ in~$B_{\theta d/2}(a)$, thanks to~\eqref{sS0kO},
and therefore it is as regular as~$v$ in~$B_{\theta d/2}(a)$, which allows us to write the fractional Laplacian of~$w$ in~$B_{\theta d/2}(a)$ in a pointwise sense.

Also we observe that we have~\(w\geqslant0\) in \(\R^n_+\) but we no longer have a lower bound for \((-\Delta)^sw+cw\). To obtain this, observe that for all \(x\in \R^n_+\), \begin{align*}
(w-v)(x) 
&= \begin{cases}
0, &\text{for all } x\in \{v>0\} \cap \R^n_+ \\
u(x)- (1-\theta /2 )^{-n-2} u(a) \zeta(x), &\text{for all } x\in \{v \leqslant 0\}\cap \R^n_+. 
\end{cases}
\end{align*} In particular, \(w-v\leqslant |u|\) in \(\R^n_+\). It follows that for all \(x\in B_{\theta d/2}(a)\),
\begin{align*}
(-\Delta)^s (w-v)(x) &\geqslant -C \int_{\R^n_+ \setminus B_{\theta d/2}(a) } \left( \frac 1 {\vert x-y\vert^{n+2s}} - \frac 1 {\vert x_\ast - y \vert^{n+2s}} \right)| u(y)|\dd y.
\end{align*} Moreover, by \thref{ltKO2}, for all~$x\in B_{\theta d/4}(a)$,\begin{align}
(-\Delta)^s (w-v)(x) &\geqslant -C (\theta d)^{-n-2s-2} \Anorm{u}
\geqslant -C (\theta d)^{-n-2s-2}  . \label{Fy8oddTC} 
\end{align}Thus, by~\eqref{7fUyX} and~\eqref{Fy8oddTC}, for all \(x\in  B_{\theta d/4}(a)\), we have that \begin{align}
(-\Delta)^sw(x)+c(x)w(x) &= (-\Delta )^s v (x)+c(x) v(x) + (-\Delta)^s (w-v)(x)
\nonumber \\ 
&\geqslant -C \big (1 +  u(a) +(\theta d)^{-n-2s-2} \big )  \nonumber \\
&\geqslant  -C \big ( (\theta d)^{-n-2s-2}+u(a)  \big ) \label{c1kcbe0C}
\end{align} using that \(\theta d<1\).

Next let us consider the rescaled and translated functions \(\tilde w(x) := w(a_1 x+(0,a'))\) and \(\tilde c (x) := a_1^{2s} c(a_1 x+(0,a'))\) (recall that \(a' = (a_2,\dots, a_n) \in \R^{n-1}\)). By~\eqref{c1kcbe0C} we have that
\begin{align*}
(-\Delta)^s \tilde w+\tilde c\,  \tilde w  \geqslant -Ca_1^{2s} \big ((\theta d)^{-n-2s-2} +  u(a) \big )  \qquad \text{in } B_{\theta d /(4a_1)}(e_1 ).
\end{align*}  On one hand, by \thref{YLj1r}, we obtain \begin{align*}
\left(\frac{\theta d } 8 \right)^{-n} \int_{B_{\theta d /8}(a )}  w( x) \dd x 
&= \left(\frac{\theta d }{8a_1} \right)^{-n} \int_{B_{\theta d/(8a_1)}(e_1 )} \tilde w ( x) \dd x \\
&\leqslant C \Big(  \tilde w(e_1) +  (\theta d)^{-n-2} +u(a) (\theta d )^{2s} \Big)\\
&= C  \left( \left(1-\frac\theta 2\right)^{-n-2} -1 \right)u(a)  +C (\theta d)^{-n-2} +u(a) (\theta d )^{2s} .
\end{align*}We note explicitly that, by~\eqref{CyJJQHrF}, the constant in the above line is given by \begin{align*}
C= C'\big ( 1 +(\theta d)^{2s} \| c\|_{L^\infty(B_{\rho}(e_1))} \big ),
\end{align*} so using that \(\theta d<1\) it may be chosen to depend only on \(n\), \(s\), and \( \| c\|_{L^\infty(B_{\rho}(e_1))}\). On the other hand, \begin{align*}
B_{\theta d /8} (a)  \setminus U  &\subseteq\left\{ w \geqslant \left( \left( 1- \frac \theta  2\right)^{-n-2} -\frac12 \right) u(a) \right\} \cap B_{\theta d /8} (a)  ,
\end{align*}so we have that \begin{align*}
(\theta d )^{-n} \int_{B_{\theta d/8}(a)} w(x) \dd x  &\geqslant (\theta d)^{-n} \bigg ( \bigg ( 1- \frac \theta  2\bigg )^{-n-2} -\frac12 \bigg ) u(a) \cdot  \big \vert B_{\theta d /8} (a)  \setminus U\big   \vert \\
&\geqslant  C (\theta d)^{-n} u(a) \cdot  \big \vert B_{\theta d /8} (a)  \setminus U\big   \vert 
\end{align*} for \(\theta\) sufficiently small. Thus, \begin{align*}
\big \vert B_{\theta d /8} (a)  \setminus U \big   \vert
&\leqslant C  (\theta d)^n \bigg ( \bigg (1-\frac\theta 2\bigg )^{-n-2} -1 \bigg )  +C(u(a))^{-1} (\theta d)^{-2} + (\theta d )^{n+2s} \\
&\leqslant C  (\theta d)^n \bigg ( \bigg (1-\frac\theta 2\bigg )^{-n-2} -1 +\theta^{2s}\bigg )  +C \frac{\theta^{-2} d^n }{\tau} 
\end{align*} using~\eqref{CEnkR} and that \(d^{n+2s}<d^n\) since \(d<1\). At this point we may choose \(\theta\) sufficiently small such that \begin{align*}
 (\theta d)^n \bigg ( \bigg (1-\frac\theta 2\bigg )^{-n-2} -1 +\theta^{2s}\bigg )  \leqslant \frac 1 4 \vert B_{\theta d/8} \vert .
\end{align*} This proves the claim, and thus completes the proof of
Proposition~\ref{guDQ7}.
\end{proof}

\section{Boundary Harnack inequality and proof of \thref{C35ZH}} \label{TZei6Wd4}

In this section, we give the proof of \thref{C35ZH}. Analogous to the proof of \thref{DYcYH}, the proof
of \thref{C35ZH} is divided into the boundary Harnack inequality for super-solutions (\thref{SwDzJu9i}) and 
the boundary local boundedness for sub-solutions (\thref{EP5Elxbz}). Together these two results immediately 
give \thref{C35ZH}. 

\subsection{The boundary weak Harnack inequality}

Our next result is the antisymmetric boundary weak Harnack inequality. 

\begin{prop} \thlabel{SwDzJu9i} 
Let \(M\in \R\), \(\rho>0\), and \(c\in L^\infty(B_\rho^+)\). Suppose that \(u\in C^{2s+\alpha}(B_\rho)\cap \mathscr{A}_s(\R^n)\) for some \(\alpha > 0\) with \(2s+\alpha\) not an integer, \(u\) is non-negative in \(\R^n_+\) and satisfies \begin{align*}
(-\Delta)^s u +cu \geqslant -Mx_1 \qquad \text{in } B_\rho^+.
\end{align*}

Then there exists \(C_\rho>0\) depending only on \(n\), \(s\), \(\| c \|_{L^\infty(B_\rho^+)}\), and \(\rho\) such that \begin{align*}
\Anorm{u}  &\leqslant  C_\rho \left(  \inf_{B_{\rho/2}^+} \frac{u(x)}{x_1} +M \right).
\end{align*}  
\end{prop}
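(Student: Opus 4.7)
My approach will adapt the antisymmetric barrier method from the proof of \thref{MT9uf}, but with a barrier that encodes the correct linear vanishing at the symmetry plane $\{x_1=0\}$. Specifically, I will take
\[ \varphi(x) := x_1 \eta(x), \]
where $\eta \in C^\infty_0(B_\rho)$ is a smooth cutoff that is even in $x_1$ and satisfies $\eta \equiv 1$ on $B_{3\rho/4}$ and $0 \leq \eta \leq 1$ throughout. Such a $\varphi$ is antisymmetric, smooth, and compactly supported, hence lies in $\mathscr{A}_s(\R^n)$; moreover, the Bochner-relation identity used in the proof of Lemma~\ref{xwrqefNE} of Chapter~\ref{Fw81drHU} expresses $(-\Delta)^s \varphi(x)$ as $x_1$ times the fractional Laplacian in $\R^{n+2}$ of an even extension of $\eta$, which delivers the crucial linear estimate
\[ |(-\Delta)^s \varphi(x)| \leq C_\rho\, x_1 \qquad \text{for all } x \in \R^n_+. \]

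With this barrier in hand, I will let $\tau \geq 0$ be the largest value such that $u \geq \tau \varphi$ in $\R^n_+$. Since $\varphi = x_1$ on $B_{3\rho/4}^+$ and $B_{\rho/2}^+ \subset B_{3\rho/4}^+$, this immediately gives
\[ \tau \leq \inf_{B_{\rho/2}^+} \frac{u(x)}{x_1}. \]
By continuity there exists a touching point $a$ at which $u(a) = \tau \varphi(a)$; using the $C^{2s+\alpha}(B_\rho)$ regularity of $u$ across the plane (and, if needed, a small perturbation of $\varphi$ to $\varphi + \epsilon x_1 \eta$ followed by sending $\epsilon \to 0^+$), one may assume $a \in B_\rho^+$ with $a_1 > 0$.

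At the touching point $a$, on the one hand, the hypothesis combined with the barrier estimate yields
\[ (-\Delta)^s(u - \tau \varphi)(a) + c(a)(u - \tau\varphi)(a) \geq -\Big( M + C_\rho \tau (1 + \|c\|_{L^\infty(B_\rho^+)}) \Big) a_1, \]
and since $(u-\tau\varphi)(a) = 0$ the left-hand side reduces to $(-\Delta)^s(u-\tau\varphi)(a)$. On the other hand, since $u-\tau\varphi$ is antisymmetric, non-negative in $\R^n_+$, and vanishes at $a$, the representation in \thref{mkG4iRYH} combined with the sharp kernel lower bound \eqref{buKHzlE6} gives
\[ (-\Delta)^s(u-\tau\varphi)(a) \leq -C\, a_1 \int_{\R^n_+} \frac{y_1\,(u(y) - \tau \varphi(y))}{|a_* - y|^{n+2s+2}}\, dy. \]
Since $a \in \overline{B_\rho^+}$ forces $|a_* - y|^{n+2s+2} \leq C_\rho (1 + |y|^{n+2s+2})$, the $u$-piece of the integral is bounded below by a constant times $a_1 \Anorm{u}$, while the $\varphi$-piece is controlled by $C_\rho \tau a_1$ in view of $\Anorm{\varphi} \leq C_\rho$. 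Combining the two inequalities, dividing through by $a_1 > 0$, and rearranging produces the desired
\[ \Anorm{u} \leq C_\rho (\tau + M) \leq C_\rho \left( \inf_{B_{\rho/2}^+} \frac{u(x)}{x_1} + M \right). \]

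The main technical difficulty will lie in establishing the global linear bound $|(-\Delta)^s \varphi| \leq C_\rho x_1$ on all of $\R^n_+$ (not merely on a compact subset), since $\varphi$ is neither radial nor supported away from $\{x_1 = 0\}$; the Bochner-relation factorisation referenced above is precisely the tool that extracts the factor of $x_1$ exactly and reduces matters to bounding the fractional Laplacian of a smooth compactly supported function in dimension $n+2$. A secondary subtlety is ensuring the touching point has $a_1 > 0$: for $2s + \alpha > 1$ this is automatic from the Taylor expansion of $u$ across the plane, and otherwise an elementary perturbation argument on $\varphi$ suffices.
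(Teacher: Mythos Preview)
Your overall strategy—the barrier $\varphi(x)=x_1\eta(x)$, the largest $\tau$ with $u\ge\tau\varphi$ in $\R^n_+$, the kernel lower bound~\eqref{buKHzlE6}, and the control $|a_\ast-y|^{n+2s+2}\le C_\rho(1+|y|^{n+2s+2})$—matches the paper's proof exactly. The gap is in your treatment of the touching point.

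Your claim that one may take $a_1>0$ is false in general, and neither of your justifications works. The perturbation you write, $\varphi+\epsilon x_1\eta=(1+\epsilon)\varphi$, is just a scalar multiple of $\varphi$ and changes nothing about where $u/\varphi$ attains its infimum. And the Taylor expansion argument fails even for smooth $u$: take $u(x)=x_1(1+x_1^2)$ near the origin; then on the region where $\eta\equiv1$ one has $u/\varphi=1+x_1^2$, whose infimum is attained only in the limit $x_1\to0$, so there is no interior touching point. The ``main technical difficulty'' you identify (the global linear bound on $(-\Delta)^s\varphi$) is in fact a non-issue, since $(-\Delta)^s\varphi$ is smooth and antisymmetric, hence trivially $O(x_1)$ on the compact set $\overline{B_\rho}$ where the touching point must lie.

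The paper handles this correctly by first mollifying $u$ via \thref{tZVUcYJl} (so that $u\in C^\infty_0$ and $\partial_1 u$ makes sense on $\{x_1=0\}$; this is essential when $2s+\alpha\le1$), and then explicitly splitting into two cases: either there is an interior touching point $a\in B_{3/4}^+$, or there is a boundary point $a\in B_{3/4}\cap\{x_1=0\}$ at which $\partial_1\tilde u(a)=\tau\,\partial_1\varphi^{(2)}(a)$. The boundary case requires \thref{6fD34E6w}, which computes $\lim_{h\to0}h^{-1}(-\Delta)^s v(he_1)$ as an explicit integral; this replaces the pointwise inequality at $a$ by a limiting version at $a+he_1$ as $h\to0^+$, and only then can one divide by $h$ and extract $\Anorm{u}$. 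You are missing both the mollification step and the entire boundary-touching case.
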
 

As with the interior counter-part of \thref{SwDzJu9i}, that is
\thref{MT9uf}, we will prove the following rescaled version of \thref{SwDzJu9i},
namely \thref{g9foAd2c}. This version is essential to the proof of the boundary local boundedness for sub-solutions. Once \thref{g9foAd2c} has been proven, \thref{SwDzJu9i} follows easily with some minor adjustments. 

\begin{prop} \thlabel{g9foAd2c} 
Let \(M\in \R\), \(\rho >0\), and \(c\in L^\infty(B_\rho^+)\). Suppose that \(u\in C^{2s+\alpha}(B_\rho)\cap \mathscr{A}_s(\R^n)\) for some \(\alpha >0\) with~\(2s+\alpha\) not an integer, \(u\) is non-negative in \(\R^n_+\) and satisfies \begin{align*}
(-\Delta)^s u +cu \geqslant -Mx_1\qquad \text{in } B_\rho^+.
\end{align*}

Then there exists \(C>0\) depending only on \(n\), \(s\), and \(\rho^{2s}\| c \|_{L^\infty(B_\rho^+)}\) such that \begin{align*}
\frac 1 {\rho^{n+2}} \int_{B_{\rho/2}^+} y_1 u(y) \dd y &\leqslant C \left(  \inf_{B_{\rho/2}^+} \frac{u(x)}{x_1} + M \rho^{2s}\right). 
\end{align*}  Moreover, the constant \(C\) is of the form \begin{align*}
C=C' \big(1+\rho^{2s} \| c \|_{L^\infty(B_\rho^+)} \big) 
\end{align*} with \(C'\) depending only on \(n\) and \(s\). 
\end{prop}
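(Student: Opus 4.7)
The plan is to imitate the strategy of the interior analogue \thref{YLj1r}, replacing the barrier $\varphi^{(1)}$ — which was antisymmetric with respect to a hyperplane $T_\rho$ sitting strictly outside $B_1$ — by a new barrier antisymmetric with respect to the symmetry plane $\{x_1=0\}$ itself, so that the natural first-order vanishing at that plane is built directly into the barrier. First I would rescale: setting $\tilde u(x):=u(\rho x)$ and $\tilde c(x):=\rho^{2s}c(\rho x)$ produces an antisymmetric non-negative (in $\R^n_+$) function satisfying $(-\Delta)^s\tilde u+\tilde c\,\tilde u\geq -M\rho^{2s+1}x_1$ in $B_1^+$. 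All subsequent estimates would then be carried out at the unit scale, and at the very end one unscales via $z=\rho y$ to recover the weights $\rho^{n+2}$ and $\rho^{2s}$ in the statement.

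The barrier I plan to use is $\varphi(x):=x_1\zeta(x)$, where $\zeta\in C_c^\infty(B_{3/4})$ is a smooth radial cutoff with $\zeta\equiv 1$ on $B_{1/2}$ and $0\leq\zeta\leq 1$ on $\R^n$. Since $\zeta$ is even in $x_1$, this $\varphi$ is smooth, compactly supported and antisymmetric; moreover $\varphi\equiv x_1$ on $B_{1/2}^+$ and $0\leq\varphi\leq x_1$ in $\R^n_+$. The key property — which replaces \thref{rvZJdN88} — is the pointwise bound
\[|(-\Delta)^s\varphi(x)|\leq C\,x_1\qquad\text{for all } x\in B_1^+,\]
with $C=C(n,s)$. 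This will follow from the observation that $(-\Delta)^s\varphi$ is smooth (because $\varphi\in C_c^\infty(\R^n)$) and antisymmetric (because the fractional Laplacian preserves antisymmetry), hence vanishes identically on $\{x_1=0\}$; a first-order Taylor expansion in $x_1$, combined with the Calderón–Zygmund tail decay, then yields the claimed linear control in $x_1$ on the compact set $\overline{B_1^+}$. This linear vanishing is exactly what is needed to absorb the right-hand side $-M\rho^{2s+1}x_1$ of the rescaled equation.

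The touching step then proceeds as in \thref{YLj1r}. I would let $\tau\geq 0$ be the largest constant for which $\tilde u\geq\tau\varphi$ on $\R^n$, and note that $\varphi\equiv x_1$ on $B_{1/2}^+$ forces $\tau\leq\inf_{B_{1/2}^+}\tilde u/x_1=\rho\inf_{B_{\rho/2}^+}u/x_1$. At a touching point $a\in\overline{B_{3/4}^+}$ with $a_1>0$, the differential inequality for $\tilde u$, together with $|(-\Delta)^s\varphi|\leq Cx_1$ and $\varphi\leq x_1$, produces the lower bound
\[[(-\Delta)^s+\tilde c](\tilde u-\tau\varphi)(a)\geq -a_1\bigl(M\rho^{2s+1}+C\tau(1+\rho^{2s}\|c\|_{L^\infty(B_\rho^+)})\bigr).\]
Against this, the antisymmetric representation~\eqref{OopaQ0tu}, the identity $(\tilde u-\tau\varphi)(a)=0$, the non-negativity of $\tilde u-\tau\varphi$ in $\R^n_+$, and the kernel lower bound~\eqref{buKHzlE6} (valid uniformly for $a\in\overline{B_{3/4}^+}$ and $y\in B_{1/2}^+$ since $|a_*-y|\leq 2$) produce the matching upper bound
\[(-\Delta)^s(\tilde u-\tau\varphi)(a)\leq -c_{n,s}\int_{B_{1/2}^+}\Bigl(\tfrac{1}{|a-y|^{n+2s}}-\tfrac{1}{|a_*-y|^{n+2s}}\Bigr)(\tilde u-\tau\varphi)(y)\dd y\leq -Ca_1\int_{B_{1/2}^+}y_1(\tilde u-\tau\varphi)(y)\dd y.\]
Comparing, dividing by $a_1>0$, absorbing the trivial term $\tau\int_{B_{1/2}^+}y_1\varphi\dd y\leq C\tau$, and unscaling via $z=\rho y$ would then deliver the desired estimate with a constant of precisely the form~\eqref{CyJJQHrF}.

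The main obstacle I foresee is the possibility that the touching point $a$ actually lies on $\{x_1=0\}$ — a scenario which did not arise in \thref{YLj1r}, where the plane of antisymmetry of the barrier sat strictly outside $B_1$. When $a_1=0$, the identity $\tilde u(a)=\tau\varphi(a)$ reduces trivially to $0=0$ by antisymmetry and is uninformative. I plan to resolve this either by approximating $a$ along a sequence $a^{(k)}\in\R^n_+$ with $a_1^{(k)}>0$ and $\tilde u(a^{(k)})/\varphi(a^{(k)})\to\tau$, and passing to the limit in the displayed inequality after dividing by $a_1^{(k)}$ — this is legitimate since every quantity in sight vanishes to first order in $x_1$ at $\{x_1=0\}$ with explicit constants — or alternatively by perturbing the barrier as $\varphi_\epsilon:=\varphi+\epsilon\Phi$, where $\Phi\in C_c^\infty(\R^n)$ is a fixed antisymmetric function strictly dominating $x_1$ in a neighbourhood of $\{x_1=0\}\cap\overline{B_{3/4}}$, so that the touching between $\tilde u$ and $\tau_\epsilon\varphi_\epsilon$ is forced into $\{x_1>0\}$, and then letting $\epsilon\to 0^+$.
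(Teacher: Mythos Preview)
Your overall plan—rescale, use the barrier $\varphi(x)=x_1\zeta(x)$ (this is $\varphi^{(2)}$ in the paper), and run the touching argument—is exactly the paper's approach, and you correctly isolate the boundary-touching case $a_1=0$ as the only new obstacle. But neither of your resolutions works as stated. For (a): along a minimising sequence you only get $w(a^{(k)})=\epsilon_k\, a_1^{(k)}$ with $\epsilon_k\to 0$ (where $w:=\tilde u-\tau\varphi$), yet the representation~\eqref{OopaQ0tu} carries the term $s^{-1}c_{1,s}\,w(a^{(k)})\,(a_1^{(k)})^{-2s}$, which after dividing by $a_1^{(k)}$ becomes $\epsilon_k(a_1^{(k)})^{-2s}$ and need not vanish; you would need $\epsilon_k=o\big((a_1^{(k)})^{2s}\big)$, which is not given. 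Your claim that ``everything vanishes to first order'' implicitly presumes $w\in C^1$ at $a$, but the hypothesis allows $2s+\alpha<1$. For (b): any smooth antisymmetric $\Phi$ also vanishes linearly at $\{x_1=0\}$, so $\varphi+\epsilon\Phi$ has the same first-order structure there and nothing forces the touching into $\{x_1>0\}$.

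The paper closes the gap by first replacing $c$ with the nonnegative constant $\|c\|_{L^\infty(B_\rho^+)}$ (legitimate since $u\geq 0$ in $\R^n_+$) and then mollifying via \thref{tZVUcYJl}, which reduces to $\tilde u\in C_0^\infty(\R^n)$ while preserving antisymmetry, nonnegativity in $\R^n_+$, and the differential inequality up to an $\varepsilon$-loss. With $C^1$ regularity now available, the boundary touching genuinely yields $\partial_1 w(a)=0$, and the paper evaluates the inequality at $a^{(h)}:=a+he_1$, divides by $h$, and invokes \thref{6fD34E6w} to compute $\lim_{h\to 0}h^{-1}(-\Delta)^s w(a^{(h)})$ as the explicit weighted integral $-C\int_{\R^n_+}y_1 w(y)\,|y-a|^{-n-2s-2}\,dy$. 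That lemma's proof proceeds through difference quotients and an integration by parts, and thus genuinely requires the smoothness provided by the mollification step.
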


Before we prove \thref{g9foAd2c}, we require some lemmata. 

\begin{lem} \thlabel{tZVUcYJl}
Let \(M\geqslant0\), \(k \geqslant 0\), and suppose that \(u\in C^{2s+\alpha}(B_1)\cap \mathscr{A}_s(\R^n)\) for some~\(\alpha>0\) with~\(2s+\alpha\) not an integer. \begin{enumerate}[(i)]
\item If \(u\) satisfies \begin{align*}
(-\Delta)^s u +k u \geqslant -Mx_1\qquad \text{in } B_1^+
\end{align*} then for all \(\varepsilon>0\) sufficiently small there exists \(u_\varepsilon \in C^\infty_0(\R^n)\)
antisymmetric and such that \begin{equation}
(-\Delta)^s u_\varepsilon +k u_\varepsilon \geqslant -(M+\varepsilon) x_1\qquad \text{in } B_{7/8}^+. \label{EuoJ3En6}
\end{equation}
\item If \(u\) satisfies \begin{align*}
(-\Delta)^s u +k u \leqslant Mx_1\qquad \text{in } B_1^+
\end{align*} then for all \(\varepsilon>0\) sufficiently small there exists \(u_\varepsilon \in C^\infty_0(\R^n)\)
antisymmetric and such that \begin{align*}
(-\Delta)^s u_\varepsilon +k u_\varepsilon \leqslant (M+\varepsilon) x_1\qquad \text{in } B_{7/8}^+. 
\end{align*}
\end{enumerate} In both cases the sequence \(\{ u_\varepsilon\} \) converges to \(u\) uniformly in \(B_{7/8}\).

Additionally, if \(u \) is non-negative in \(\R^n_+\) then \(u_\varepsilon\) is also non-negative in \(\R^n_+\).
\end{lem}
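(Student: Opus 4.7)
The plan is to build $u_\varepsilon$ by combining two successive approximations: first a truncation at infinity, to pass from $u \in \mathscr A_s(\R^n)$ to a compactly supported function, and then a convolution with a radial mollifier, to gain smoothness. Both steps can be arranged to preserve antisymmetry (since the cutoff and the mollifier are both radial) and, under suitable radial monotonicity of the mollifier, non-negativity of $u_\varepsilon$ on $\R^n_+$. The main technical obstacle will be controlling how the differential inequality deteriorates under mollification near the hyperplane $\{x_1=0\}$, where the convolution unavoidably sees the ``wrong-signed'' half-space; this will be resolved by rewriting the convolution as an integral over $\R^n_+$ against a symmetrised kernel that is positive thanks to the radial monotonicity of $\eta_\delta$.

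First I would truncate. Fix a smooth radial cut-off $\chi \in C^\infty_0(\R^n)$ with $\chi \equiv 1$ in $B_1$, $\chi \equiv 0$ outside $B_2$ and $0\leqslant \chi \leqslant 1$, and set $\chi_R(x):=\chi(x/R)$, $v_R := u\,\chi_R$. Radial symmetry of $\chi_R$ ensures $v_R$ is antisymmetric, and $v_R \equiv u$ on $B_R$. For $R\geqslant 2$ and $z \in B_1^+$ the error $w:=v_R-u$ is supported in $\R^n \setminus B_R$, so the antisymmetric representation of the fractional Laplacian together with estimate~\eqref{LxZU6} and the elementary bound $|z-y|\geqslant |y|/2$ yield
\[
|(-\Delta)^s w(z)|\;\leqslant\; C\, z_1 \int_{\R^n_+ \setminus B^+_R} \frac{y_1|u(y)|}{1+|y|^{n+2s+2}}\,dy \;=\; C\, z_1\, \omega(R),
\]
where $\omega(R)\to 0$ as $R\to\infty$ precisely because $u\in\mathscr A_s(\R^n)$. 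Consequently, for $z \in B_1^+$,
\[
(-\Delta)^s v_R(z) + k\, v_R(z) \;\geqslant\; -\bigl(M + C\omega(R)\bigr)\,z_1.
\]

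Next I would mollify. Let $\eta_\delta \in C^\infty_0(B_\delta)$ be a standard radial, radially non-increasing mollifier, and define $u_\varepsilon := v_R * \eta_\delta$ for $R$ and $\delta$ to be chosen. Then $u_\varepsilon\in C^\infty_0(\R^n)$ is antisymmetric. Since $v_R$ is a compactly supported distribution, $(-\Delta)^s$ commutes with convolution by $\eta_\delta$, so for every $x \in B_{7/8}^+$ and every $\delta < 1/8$,
\[
(-\Delta)^s u_\varepsilon(x) + k\, u_\varepsilon(x) \;=\; \int_{\R^n} f(z)\,\eta_\delta(x-z)\,dz,
\qquad f:=(-\Delta)^s v_R + k\,v_R,
\]
and $f$ is antisymmetric. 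Exploiting this antisymmetry via the substitution $z\mapsto z_\ast$ in the half where $z_1<0$ gives the key identity
\[
\int_{\R^n} f(z)\,\eta_\delta(x-z)\,dz \;=\; \int_{\{z_1>0\}} f(z)\bigl[\eta_\delta(x-z) - \eta_\delta(x-z_\ast)\bigr]\,dz .
\]
For $x,z\in\R^n_+$ one has $|x-z|^2 - |x-z_\ast|^2 = -4x_1z_1<0$, so radial monotonicity of $\eta_\delta$ forces the bracket to be non-negative, and it is supported in $B_1^+$ (where our lower bound on $f$ applies, for $\delta<1/8$). A direct calculation, using that $\eta_\delta$ integrates to $1$ and has vanishing first moment (as it is radial), shows that $\int_{\{z_1>0\}} z_1\bigl[\eta_\delta(x-z) - \eta_\delta(x-z_\ast)\bigr]\,dz = x_1$. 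Combining these ingredients gives
\[
(-\Delta)^s u_\varepsilon(x) + k\,u_\varepsilon(x) \;\geqslant\; -\bigl(M + C\omega(R)\bigr)\,x_1,
\]
and picking $R$ so large that $C\omega(R)<\varepsilon$ establishes~\eqref{EuoJ3En6}. Part (ii) follows verbatim by reversing all inequalities, the non-negativity of the symmetrised weight doing the same work.

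The remaining properties are then routine. Uniform convergence $u_\varepsilon \to u$ on $B_{7/8}$ follows from $v_R\equiv u$ on $\overline{B_{7/8}}\Subset B_R$ combined with the classical uniform convergence of mollifiers applied to continuous functions on compact sets. For the non-negativity, if $u\geqslant 0$ on $\R^n_+$ then $v_R\geqslant 0$ on $\R^n_+$ since $\chi_R\geqslant 0$, and the same symmetrisation used above gives
\[
u_\varepsilon(x) \;=\; \int_{\{y_1>0\}} v_R(y)\bigl[\eta_\delta(x-y) - \eta_\delta(x-y_\ast)\bigr]\,dy \;\geqslant\; 0 \qquad\text{for all } x\in\R^n_+,
\]
since both factors in the integrand are non-negative. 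As advertised, the sole delicate step is ensuring that the lower bound for $f$ in $B_1^+$ survives convolution against $\eta_\delta$ even when $x_1$ is very small, and this is precisely what the positivity of the symmetrised weight $\eta_\delta(x-z)-\eta_\delta(x-z_\ast)$ delivers.
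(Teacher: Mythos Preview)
Your proof is correct and follows essentially the same approach as the paper's: truncate by a radial cutoff, control the error in $B_1^+$ via the antisymmetric decomposition and the $\mathscr A_s$ tail, mollify with a radial decreasing $\eta_\delta$, rewrite the convolution over $\R^n_+$ against the symmetrised kernel $\eta_\delta(x-z)-\eta_\delta(x-z_\ast)\geqslant 0$, and handle nonnegativity by the same symmetrisation. One small refinement worth noting: you compute $\int_{\{z_1>0\}} z_1[\eta_\delta(x-z)-\eta_\delta(x-z_\ast)]\,dz = x_1$ exactly (by folding the second term back into $\{z_1<0\}$ and using that $\eta_\delta$ has unit mass and vanishing first moment), whereas the paper only proves the inequality $\leqslant x_1$ through a slightly longer intermediate estimate; your identity is cleaner and suffices.
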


For the usual fractional Laplacian, \thref{tZVUcYJl} follows immediately by taking a mollification of \(u\) and in principle, this is also the idea here. However, there are a couple of technicalities that need to be addressed. The first is that here the fractional Laplacian is defined according to \thref{mkG4iRYH} and it remains to be verified that this fractional Laplacian commutes with the convolution operation as the usual one does. As a matter of fact, \thref{mkG4iRYH} does not lend itself well to the Fourier transform which makes it difficult to prove such a property. We overcome this issue by first multiplying \(u\) by an appropriate cut-off function which allows us to reduce to the case \((-\Delta)^s\) as given by the usual definition.  

The second issue is that directly using the properties of mollifiers, we can only expect to
control~\(u_\varepsilon\) in some~\(U \Subset B_1^+\) and not up to~\(\{x_1=0\}\). We can relax this thanks to the antisymmetry of \(u\). 

\begin{proof}[Proof of Lemma~\ref{tZVUcYJl}]
Fix \(\varepsilon>0\). Let \(R>1\) and let~\(\zeta\) be a smooth radial cut-off function such that $$
\zeta \equiv 1 \text{ in } B_R, \quad \zeta \equiv 0 \text{ in } \R^n \setminus B_{2R},
\quad{\mbox{and}}\quad 0\leqslant \zeta \leqslant 1 .
$$
Let also \(\bar  u := u \zeta\).

Now let us define a function \(f:B_1\to\R\) as follows: let \(f(x)=(-\Delta)^s u(x) + k u(x)\) for all \(x\in B_1^+\), \(f(x) = 0\) for all \(x\in B_1\cap \{x_1=0\}\), and \(f(x)=-f(x_\ast)\). We also define \(\bar f:B_1\to \R\) analogously with \(u\) replaced with \(\bar u\). By definition, both \(f\) and \(\bar f\) are antisymmetric\footnote{Note that the definition of antisymmetric requires the domain of \(f\) and \(\bar f\) to be \(\R^n\). For simplicity, we will still refer to \(f\) and \(\bar f\) as antisymmetric in this context since this technicality does not affect the proof.}, but note carefully that, \emph{a priori}, there is no reason to expect any regularity of \(f\) and \(\bar f\) across \(\{x_1=0\}\) (we will in fact find that \(\bar f \in C^\alpha(B_1)\)).

We claim that for \(R\) large enough (depending on \(\varepsilon\)), \begin{align}
\vert \bar f(x) - f(x) \vert \leqslant \varepsilon x_1\qquad \text{for all } x\in B_1^+. \label{8LncZIti}
\end{align} Indeed, if \(x\in B_1^+\) then \begin{align*}
\bar f(x) - f(x) =(-\Delta)^s(\bar u - u ) (x) &= C \int_{\R^n_+\setminus B_R} \bigg ( \frac 1 {\vert x - y \vert^{n+2s}} - \frac 1 {\vert x_\ast- y \vert^{n+2s}} \bigg ) ( u -\bar u ) (y) \dd y .
\end{align*} {F}rom~\eqref{LxZU6}, it follows that \begin{align*}
\vert \bar f(x) - f(x) \vert  \leqslant  Cx_1 \int_{\R^n_+\setminus B_R}  \frac{y_1\vert u(y) -\bar  u(y)\vert}{\vert x - y \vert^{n+2s+2}} \dd y \leqslant Cx_1 \int_{\R^n_+\setminus B_R}  \frac{y_1\vert u(y) \vert }{1+ \vert y \vert^{n+2s+2}} \dd y .
\end{align*} Since \(u\in \mathscr A_s(\R^n)\), taking \(R\) large we obtain~\eqref{8LncZIti}.

Next, consider the standard mollifier \(\eta(x) := C_0 \chi_{B_1}(x) e^{-\frac 1 {1-\vert x\vert ^2}}\) with \(C_0>0\) such that \(\int_{\R^n} \eta(x) \dd x =1\) and let~\(\eta_\varepsilon(x) := \varepsilon^{-n} \eta (x/\varepsilon)\). Also, let \(u_\varepsilon := \bar u \ast \eta_\varepsilon\) and \(f_\varepsilon := \bar f \ast \eta_\varepsilon\).

Notice that~$u_\epsilon\in C^\infty_0(\R^n)$ and it is antisymmetric. Additionally, we show that~\eqref{EuoJ3En6}
holds true in case~$(i)$ (case~$(ii)$ being analogous).

To this end, we observe that, since \(\bar u\) has compact support, we have that~\(\bar u \in \mathscr L_s(\R^n)\), so by \thref{eeBLRjcZ}, \((-\Delta)^s \bar u\) can be understood in the usual sense in \(B_1\), that is, by~\eqref{ZdAlT}. Moreover,
by~\cite[Propositions~2.4-2.6]{MR2270163}, we have that~\((-\Delta)^s\bar u\in C^\alpha(B_1)\) which gives that \(\bar f \in C^\alpha(B_1)\) and \begin{align*}
(-\Delta)^s\bar u +k \bar  u &= \bar f \qquad \text{in }B_1.
\end{align*} In particular, we may use standard properties of mollifiers to immediately obtain \begin{align*}
(-\Delta)^s u_\varepsilon +k u_\varepsilon &= f_\varepsilon \qquad \text{in }B_{7/8}.
\end{align*} 

Also, since \(\bar f\) is antisymmetric, it follows that \begin{align*}
f_\varepsilon(x) &= \int_{\R^n }  \bar f (y) \eta_\varepsilon (x-y) \dd y = \int_{\R^n_+ } \bar f (y)\big (  \eta_\varepsilon (x-y) - \eta_\varepsilon (x_\ast-y) \big )\dd y.
\end{align*} Observe that, since \(\eta \) is monotone decreasing in the radial direction and \(\vert x- y \vert \leqslant \vert x_\ast - y \vert\) for all~\(x,y\in \R^n_+\),  \begin{align}
\eta_\varepsilon (x-y) - \eta_\varepsilon (x_\ast-y) \geqslant 0 \qquad \text{for all } x,y\in \R^n_+. \label{DPmhap5t}
\end{align} Moreover, by~\eqref{8LncZIti}, we see that~\(\bar f (x) \geqslant -(M+\varepsilon)x_1\) for all \(x\in B_1^+\), so if \(x\in B_{7/8}^+\) and \(\varepsilon>0\) is sufficiently small (independent of \(x\)) then it follows that \begin{align}\label{fjrehgeruig009887}
f_\varepsilon(x) &=\int_{B_\varepsilon^+(x)} \bar f (y)\big (  \eta_\varepsilon (x-y) - \eta_\varepsilon (x_\ast-y) \big )\dd y \nonumber\\&\geqslant -(M+\varepsilon)\int_{B_\varepsilon^+(x)}  y_1 \big (  \eta_\varepsilon (x-y) - \eta_\varepsilon (x_\ast-y) \big )\dd y.
\end{align}

Next, we claim that \begin{align}
\int_{B_\varepsilon^+(x)}  y_1 \big (  \eta_\varepsilon (x-y) - \eta_\varepsilon (x_\ast-y) \big )\dd y &\leqslant  x_1. \label{Hp0lBCzB}
\end{align} Indeed, \begin{align*}
\int_{B_\varepsilon^+(x)}  y_1 \big (  \eta_\varepsilon (x-y) - \eta_\varepsilon (x_\ast-y) \big )\dd y &= \int_{B_\varepsilon(x)}  y_1   \eta_\varepsilon (x-y) \dd y \\&-\int_{B_\varepsilon^-(x)}  y_1    \eta_\varepsilon (x-y) \dd y - \int_{B_\varepsilon^+(x)}  y_1 \eta_\varepsilon (x_\ast-y) \dd y \\
&= \int_{B_\varepsilon(x)}  y_1   \eta_\varepsilon (x-y) \dd y  \\
&\qquad - \int_{B_\varepsilon^+(x)\setminus B_\varepsilon^+(x_\ast)}  y_1   \eta_\varepsilon (x_\ast-y) \dd y \\
&\leqslant \int_{B_\varepsilon(x)}  y_1   \eta_\varepsilon (x-y) \dd y .
\end{align*} Moreover, using that \(z \mapsto z_1 \eta(z)\) is antisymmetric and \(\int_{B_\varepsilon} \eta(z)\dd z =1\), we obtain that \begin{align*}
 \int_{B_\varepsilon(x)}  y_1   \eta_\varepsilon (x-y) \dd y &=  \int_{B_\varepsilon(x)}  (y_1-x_1)   \eta_\varepsilon (x-y) \dd y +x_1  \int_{B_\varepsilon(x)}    \eta_\varepsilon (x-y) \dd y =x_1
\end{align*}  which gives \eqref{Hp0lBCzB}. 

{F}rom~\eqref{fjrehgeruig009887} and~\eqref{Hp0lBCzB}, we obtain that
$$ f_\epsilon(x) \ge -(M+\epsilon)x_1$$
for all \(x\in B_{7/8}^+\), as soon as~$\epsilon$ is taken sufficiently small.
This is the desired
result in~\eqref{EuoJ3En6}.

Finally, it follows immediately from the properties of mollifiers that \( u_\varepsilon \to u \) uniformly in \(B_{7/8}\) as~\(\varepsilon\to 0^+\). Moreover, if \(u \geqslant 0\) in \(\R^n_+\) then from antisymmetry, \begin{align*}
u_\varepsilon (x) = \int_{\R^n_+ } \bar u(y)\big (  \eta_\varepsilon (x-y) - \eta_\varepsilon (x_\ast-y) \big )\dd y \geqslant 0 \qquad \text{for all } x\in \R^n_+
\end{align*} using~\eqref{DPmhap5t}. 
\end{proof}

Our second lemma is as follows. 

\begin{lem} \thlabel{6fD34E6w}
Suppose that \( v\in C^\infty_0(\R^n)\) is an antisymmetric function satifying \(\partial_1 v (0)=0\). Then \begin{align*}
\lim_{h\to 0 }\frac{(-\Delta)^sv(he_1)} h  = -2c_{n,s}(n+2s) \int_{\R^n_+} \frac{y_1 v(y)}{\vert y \vert^{n+2s+2}} \dd y .
\end{align*}
\end{lem}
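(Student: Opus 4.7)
The plan is to recognize the limit on the left-hand side as a directional derivative at the origin, which allows us to sidestep a direct passage to the limit inside the defining singular integral and reduce the identity to a simpler statement about $(-\Delta)^s(\partial_1 v)$ at the origin. Since $v\in C^\infty_0(\R^n)\subset\mathcal S(\R^n)$, the function $F:=(-\Delta)^s v$ is $C^\infty(\R^n)$, and the antisymmetry of $v$ forces $F$ to be antisymmetric as well, so in particular $F(0)=0$. Hence, as $h\to 0^+$,
\[
\frac{(-\Delta)^s v(he_1)}{h} = \frac{F(he_1)-F(0)}{h} \longrightarrow \partial_1 F(0) = (-\Delta)^s(\partial_1 v)(0),
\]
where the last equality uses the commutation of $\partial_1$ with $(-\Delta)^s$ on Schwartz functions (seen, e.g., via the Fourier multiplier $|\xi|^{2s}$). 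This reduces the problem to computing $(-\Delta)^s w(0)$ for $w:=\partial_1 v$.

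The next observation is that $w\in C^\infty_0(\R^n)$ is \emph{symmetric} with respect to $\{y_1=0\}$ (since $v$ is antisymmetric) and, crucially, $w(0)=\partial_1 v(0)=0$ by hypothesis. The vanishing of $w$ at the origin removes the leading principal-value singularity, and the symmetry in $y_1$ gives
\[
(-\Delta)^s w(0) \;=\; -c_{n,s}\,\PV \int_{\R^n}\frac{w(y)}{|y|^{n+2s}}\dd y
\;=\; -2c_{n,s}\,\PV \int_{\R^n_+}\frac{\partial_1 v(y)}{|y|^{n+2s}}\dd y.
\]

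The final step will be to integrate by parts in $y_1$ on $\R^n_+\setminus B_\varepsilon^+$ and let $\varepsilon\to 0^+$. The identity $\partial_1(|y|^{-n-2s}) = -(n+2s)\,y_1\,|y|^{-n-2s-2}$ will produce the desired kernel, while the absolute convergence of $\int_{\R^n_+} y_1 v(y)\,|y|^{-n-2s-2}\dd y$ near the origin can be read off (after exploiting angular cancellations over the upper half-sphere) from the structural decomposition $v(y)=y_1\tilde v(y)$ with $\tilde v\in C^\infty$ \emph{even} in $y_1$, $\tilde v(0)=\partial_1 v(0)=0$. Two boundary contributions appear: the one on $\{y_1=0\}$ vanishes because $v(0,y')=0$ by antisymmetry, while the one on the upper hemisphere $\partial B_\varepsilon^+$ scales, at first sight, like $\varepsilon^{1-2s}$ and is therefore \emph{not} controlled by smoothness alone once $s\ge 1/2$. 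The hard part, and the only place where both hypotheses (antisymmetry of $v$ and $\partial_1 v(0)=0$) are essential, is the control of this hemisphere term: I will Taylor-expand $\tilde v$, note that the surviving first-order coefficients $b_i=\partial_i\tilde v(0)$ with $i\ge 2$ yield integrands $y_1^2 y_i$ on $\partial B_\varepsilon^+$ that are odd in $y_i$ and hence integrate to zero, leaving a genuine $O(\varepsilon^{n+3})$ contribution and so a boundary term of size $O(\varepsilon^{2-2s})\to 0$ uniformly for $s\in(0,1)$.

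Combining the three steps yields
\[
\lim_{h\to 0^+}\frac{(-\Delta)^s v(he_1)}{h}
\;=\;(-\Delta)^s(\partial_1 v)(0)
\;=\;-2c_{n,s}(n+2s)\int_{\R^n_+}\frac{y_1 v(y)}{|y|^{n+2s+2}}\dd y,
\]
as desired. The one-sided limit $h\to 0^+$ extends to the two-sided limit $h\to 0$ since, by the antisymmetry of $F=(-\Delta)^s v$, the quotient $F(he_1)/h$ is an even function of $h$.
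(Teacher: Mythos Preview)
Your proof is correct and follows the same two-step skeleton as the paper: first identify the limit as $(-\Delta)^s(\partial_1 v)(0)$, then compute this via integration by parts using $\partial_1|y|^{-n-2s}=-(n+2s)y_1|y|^{-n-2s-2}$.

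The execution of the first step differs. The paper works with difference quotients $\partial_1^h v$, shows directly that $\frac{(-\Delta)^s v(he_1)}{h}=(-\Delta)^s\partial_1^h v(0)$ (using that $y\mapsto v(y)+v(-y)$ is odd in $y'$ so integrates to zero against the radial kernel), and then passes to the limit by estimating $(-\Delta)^s(\partial_1^h v-\partial_1 v)(0)$ in terms of $C^0$ and $C^2$ norms. Your route---$F=(-\Delta)^s v$ is smooth and antisymmetric, hence $F(0)=0$ and $F(he_1)/h\to\partial_1 F(0)=(-\Delta)^s\partial_1 v(0)$ by Fourier commutation---is cleaner and avoids the difference-quotient manipulations. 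In the second step, the paper integrates by parts over all of $\R^n$ and writes the identity without explicitly handling the singularity at the origin; your careful treatment on $\R^n_+\setminus B_\varepsilon^+$, with the Taylor expansion $v=y_1\tilde v$ and the odd-symmetry cancellation of $y_1^2 y_i$ on the hemisphere yielding an $O(\varepsilon^{2-2s})$ boundary term, fills in a detail the paper leaves implicit.
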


Note that since \(v\in C^\infty_0(\R^n)\), the fractional Laplacian is given by the usual definition as per \thref{eeBLRjcZ}. 

\begin{proof} [Proof of Lemma~\ref{6fD34E6w}]
We will begin by proving that \begin{align}
\lim_{h\to 0 }\frac{(-\Delta)^sv(he_1)} h = (-\Delta)^s \partial_1 v(0). \label{JMT019eU}
\end{align} For this, consider the difference quotient \begin{align*}
\partial^h_1v(x) := \frac{v(x+he_1)-v(x)}h 
\end{align*} for all \(x\in \R^n\) and~\(\vert h \vert>0\) small. Since \(v\) is antisymmetric, \(v(0)=0\), so \begin{align*}
\frac{2v(h e_1)-v(he_1+y)-v(he_1-y)}h&= 2\partial^h_1v(0)-\partial^h_1v(y)-\partial^h_1v(-y) - \frac{v(y)+v(-y)}h
\end{align*} for all \(y\in \R^n\). Moreover, the function~\(y \mapsto v(y)+v(-y)\) is odd with
respect to~$y'$, and so \begin{align*}
\int_{\R^n} \frac{v(y)+v(-y)}{\vert y \vert^{n+2s}}\dd y&=0.
\end{align*} It follows that \begin{align*}
\frac{(-\Delta)^sv(he_1)} h &= \frac{c_{n,s}} 2 \int_{\R^n } \bigg ( 2\partial^h_1v(0)-\partial^h_1v(y)-\partial^h_1v(-y) - \frac{v(y)+v(-y)}h \bigg ) \frac{\dd y }{\vert y \vert^{n+2s}}\\
&= (-\Delta)^s \partial^h_1v(0) .
\end{align*} {F}rom these considerations and the computation at the top of p. 9 in \cite{MR3469920}, we have that \begin{align*}
\bigg \vert \frac{(-\Delta)^sv(he_1)} h  -(-\Delta)^s \partial_1 v(0) \bigg \vert &= \big \vert (-\Delta)^s ( \partial^h_1v-\partial_1v)(0) \big \vert \\
&\leqslant C \Big( \|\partial^h_1v-\partial_1v\|_{L^\infty(\R^n)} + \|D^2\partial^h_1v-D^2\partial_1v\|_{L^\infty(\R^n)} \Big) .
\end{align*} Then we obtain~\eqref{JMT019eU} by sending \(h\to 0\), using that \(\partial^h_1v \to \partial_1v\) in \(C^\infty_{\mathrm{loc}}(\R^n)\) as \(h\to 0\).

To complete the proof, we use that \(\partial_1v(0)=0\) and integration by parts to obtain \begin{align*}
(-\Delta)^s \partial_1 v(0) &= -c_{n,s} \int_{\R^n} \frac{\partial_1v(y)}{\vert y \vert^{n+2s}} \dd y \\
&= c_{n,s} \int_{\R^n} v(y) \partial_1\vert y \vert^{-n-2s} \dd y \\
&= -c_{n,s} (n+2s) \int_{\R^n} \frac{y_1v(y)}{\vert y \vert^{n+2s+2}} \dd y \\
&= -2c_{n,s} (n+2s) \int_{\R^n_+} \frac{y_1v(y)}{\vert y \vert^{n+2s+2}} \dd y 
\end{align*} where the last equality follows from antisymmetry of \(v\). 
\end{proof}

We are now able to give the proof of \thref{g9foAd2c}.

\begin{proof}[Proof of \thref{g9foAd2c}]  Since \(u\) is non-negative in \(\R^n_+\),
we have that~\( (-\Delta)^s u + \| c\|_{L^\infty(B_\rho^+)} u \geqslant  -Mx_1\) in \(B_\rho^+\).
Define~\(\tilde u(x):= u(\rho x)\) and note that \begin{align*}
(-\Delta)^s \tilde u +\rho^{2s} \| c\|_{L^\infty(B_\rho^+)} \tilde  u \geqslant -M\rho^{2s+1} x_1 \qquad  \text{in } B_1^+.
\end{align*} By way of \thref{tZVUcYJl} (i), we may take a \(C^\infty_0(\R^n)\) sequence of functions approximating \(\tilde u\) which satisfy the assumptions of \thref{g9foAd2c} with \(M\) replaced with \(M+\varepsilon\), obtain the estimate, then pass to the limit. In this way we may assume \(\tilde u \in C^\infty_0(\R^n)\). 

Let \(\zeta\) be a smooth radially symmetric cut-off function such that\begin{align*}
\zeta \equiv 1 \text{ in } B_{1/2}, \quad \zeta \equiv 0 \text{ in } \R^n \setminus B_{3/4},\quad {\mbox{and}}
\quad 0\leqslant \zeta \leqslant 1,
\end{align*} and define \(\varphi^{(2)} \in C^\infty_0(\R^n)\) by \(\varphi^{(2)}(x):= x_1 \zeta (x)\) for all \(x\in \R^n\). Suppose that \(\tau \geqslant 0\) is the largest possible value such that \(\tilde u  \geqslant \tau \varphi^{(2)}\) in \(\R^n_+\). For more detail on the existance of such a \(\tau\), see the footnote at the bottom of p. 11. Since \(\varphi^{(2)}(x)=x_1\) in \(B_{1/2}\), we have that \(x_1 \tau \leqslant \tilde u(x)\) for all~\(x\in B_{1/2}\), so \begin{align}
\tau \leqslant \inf_{B_{1/2}^+} \frac{\tilde u(x)}{x_1} =  \rho \inf_{B_{\rho/2}^+} \frac{u(x)}{x_1}. \label{pUJA2JZI}
\end{align} Since \(\tilde u\) is \(C^1\) in \(B_1\), there are two possibilities that can occur: either
there exists \(a\in B_{3/4}^+\) such that~\( \tilde u(a) = \tau \varphi^{(2)} (a)\); or there exists \(a \in B_{3/4} \cap \{ x_1=0\}\) such that \(\partial_1 \tilde u(a) = \tau \partial_1 \varphi^{(2)} (a)\). 

First suppose that there exists \(a\in B_{3/4}^+\) such that \( \tilde u(a) = \tau \varphi^{(2)} (a)\). Since \(\varphi^{(2)}\in C^\infty_0(\R^n)\) and is antisymmetric, \((-\Delta)^s\varphi^{(2)}\) is antisymmetric and~$\partial_1 \varphi^{(2)}=0$ in~$\{x_1=0\}$, we can
exploit Lemma~\ref{6fD34E6w} to say that~\((-\Delta)^s\varphi^{(2)}(x)/x_1\) is bounded in \(\R^n\).

On one hand, using that \((\tilde u-\tau \varphi^{(2)})(a)=0\), we have that \begin{align}
(-\Delta)^s(\tilde u-\tau \varphi^{(2)})(a)  &= (-\Delta)^s(\tilde u-\tau \varphi^{(2)})(a) +\rho^{2s} \| c\|_{L^\infty(B_\rho^+)}
(\tilde u-\tau \varphi^{(2)})(a) \nonumber \\
&\geqslant -M\rho^{2s+1}a_1 -\tau \big  (C + \rho^{2s}\| c \|_{L^\infty(B_\rho^+)} \big  ) a_1 \nonumber \\
&\geqslant  -M\rho^{2s+1}a_1 -C \tau  \big (1 + \rho^{2s} \| c \|_{L^\infty(B_\rho^+)} \big ) a_1 . \label{doW9AF3Y}
\end{align} On the other hand, since \(\tilde u-\tau \varphi^{(2)}\) is antisymmetric, non-negative in \(\R^n_+\), and \((\tilde u-\tau \varphi^{(2)})(a) = 0\), we have by \thref{mkG4iRYH} that  \begin{align*}
(-\Delta)^s(\tilde u-\tau \varphi^{(2)})(a) = -C  \int_{\R^n_+} \bigg (\frac 1 {\vert a - y \vert^{n+2s}} - \frac 1 {\vert a_\ast- y \vert^{n+2s}}\bigg )(\tilde u-\tau \varphi^{(2)})(y) \dd y .
\end{align*} It follows from~\eqref{buKHzlE6} that \begin{align}
(-\Delta)^s(\tilde u-\tau \varphi^{(2)})(a) &\leqslant -C a_1 \int_{B_{1/2}^+}  \frac{y_1 (\tilde u-\tau \varphi^{(2)})(y)} {\vert a_\ast- y \vert^{n+2s+2}}\dd y \nonumber \\
&\leqslant -C a_1 \bigg ( \int_{B_{1/2}^+} y_1 \tilde u(y) \dd y -\tau \bigg ) \nonumber  \\
&=-C a_1 \bigg ( \frac 1 {\rho^{n+1}} \int_{B_{\rho/2}^+} y_1 u(y) \dd y -\tau \bigg ). \label{retlxLlR}
\end{align} Rearranging~\eqref{doW9AF3Y} and~\eqref{retlxLlR}, and recalling~\eqref{pUJA2JZI} gives \begin{align}
\frac 1 {\rho^{n+1}} \int_{B_{\rho/2}^+} y_1 u(y) \dd y &\leqslant C \Big(
\tau (1 + \rho^{2s} \| c \|_{L^\infty(B_\rho^+)} )  + \rho^{2s+1}M \Big) \nonumber  \\
&\leqslant C \rho (1 + \rho^{2s} \| c \|_{L^\infty(B_\rho^+)} )  \left(  \inf_{B_{\rho/2}^+} \frac{u(x)}{x_1} + M \rho^{2s}\right), \label{7O4TL1vF}
\end{align}
which gives the desired result in this case.

Now suppose that there exists \(a \in B_{3/4} \cap \{ x_1=0\}\) such that \(\partial_1 \tilde u(a) =\tau \partial_1 \varphi^{(2)} (a)\). Let \(h>0\) be small and set \(a^{(h)}:= a+ he_1\). On one hand, as in~\eqref{doW9AF3Y},  \begin{align*}
(-\Delta)^s(\tilde u-\tau \varphi^{(2)})(a^{(h)}) &\geqslant -M\rho^{2s+1}h -C \tau \big ( 1+ \rho^{2s}\|c\|_{L^\infty(B_\rho^+)}  \big )h .
\end{align*} Dividing both sides by \(h\) and sending \(h\to 0^+\), it follows from \thref{6fD34E6w} (after a translation) that \begin{align*}
-M\rho^{2s+1} -C \tau \big ( 1+ \rho^{2s}\|c\|_{L^\infty(B_\rho^+)}  \big ) &\leqslant  -C \int_{\R^n_+} \frac{y_1 (\tilde u-\tau \varphi^{(2)})(y)}{\vert y-a\vert^{n+2s+2}} \dd y \\
&\leqslant-C \bigg (  \frac 1{\rho^{n+1}} \int_{B_{\rho/2}^+} y_1 u(y) \dd y - \tau \bigg ). 
\end{align*} Rearranging as before gives the desired result. 
\end{proof}

Next, we give the proof of~\thref{SwDzJu9i}. 

\begin{proof}[Proof of~\thref{SwDzJu9i}]
Let \(\varphi^{(2)}\), \(\tau\), and \(a\) be the same as in the proof of \thref{g9foAd2c}. The proof of \thref{SwDzJu9i} is identical to the proof of \thref{g9foAd2c} except for the following changes.

In the case \(a\in B_{3/4}^+\) and \(\tilde u(a) = \tau \varphi^{(2)}(a)\), we use~\eqref{buKHzlE6} to obtain \begin{align*}
(-\Delta)^s (\tilde u - \tau \varphi^{(2)})(a) \leqslant-C a_1  \int_{\R^n_+} \frac{y_1(\tilde u-\tau \varphi^{(2)})(y)}{\vert a_\ast - y \vert^{n+2s+2}} \dd y \leqslant - C_\rho a_1 \big ( \Anorm{u} -\tau \big )
\end{align*} where we have also used that \begin{align}
\vert a_\ast - y \vert^{n+2s+2} \leqslant C (1+\vert y \vert^{n+2s+2} \big ) \qquad \text{for all } y\in \R^n_+. \label{BWqeWX33}
\end{align} Moreover, in the case \(a\in B_{3/4}\cap \{x_1=0\}\) and \(\partial_1\tilde u(a) = \tau \partial_1\varphi^{(2)}(a)\), 
we have that~\(a=a_\ast\) so~\eqref{BWqeWX33} also gives \begin{align*}
\int_{\R^n_+} \frac{y_1 (\tilde u-\tau \varphi^{(2)})(y)}{\vert y-a\vert^{n+2s+2}} \dd y &\geqslant   C_\rho \big (\Anorm{u} -\tau \big ) .
\qedhere
\end{align*}
\end{proof}

\subsection{Boundary local boundedness}

We now prove the boundary local boundedness for sub-solutions. 

\begin{prop} \thlabel{EP5Elxbz}
Let \(M \geqslant 0\), \(\rho\in(0,1)\), and \(c\in L^\infty(B_\rho^+)\). Suppose that \(u \in C^{2s+\alpha}(B_\rho)\cap \mathscr A_s (\R^n)\) for some \(\alpha>0\) with \(2s+\alpha\) not an integer, and \(u\) satisfies  \begin{align*}
(-\Delta)^su +cu &\leqslant M x_1 \qquad \text{in } B_\rho^+.
\end{align*}

Then there exists \(C_\rho>0\) depending only on \(n\), \(s\), \(\| c \|_{L^\infty(B_\rho^+)}\), and \(\rho\) such that  \begin{align*}
\sup_{x\in B_{\rho/2}^+} \frac{ u(x)}{x_1} &\leqslant C_\rho ( \Anorm{u} +M  ) .
\end{align*} 
\end{prop}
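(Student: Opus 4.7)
The strategy will parallel closely the proof of the interior local boundedness result (\thref{guDQ7}), but with the natural adjustment that everywhere $u$ played a role in that proof, here the ratio $\phi(x):=u(x)/x_1$ will take its place, and the boundary weak Harnack inequality (\thref{g9foAd2c}) will be invoked whenever the touching point lies close to $\{x_1=0\}$. First I will divide through by $\Anorm{u}+M$ to reduce to the normalized setting $(-\Delta)^s u+cu\leqslant x_1$ in $B_\rho^+$ and $\Anorm{u}\leqslant 1$; the goal then becomes $\sup_{B_{\rho/2}^+}\phi\leqslant C_\rho$. Since $u$ is antisymmetric and belongs to $C^{2s+\alpha}(B_\rho)$, the ratio $\phi$ extends continuously (in fact smoothly across $\{x_1=0\}$ after even reflection) to $\overline{B_{\rho/2}^+}$, so the supremum is attained.

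Next I will let $\tau\geqslant 0$ be the smallest value such that $\phi(x)\leqslant \tau(\rho/2-|x|)^{-n-2s-2}$ on $B_{\rho/2}^+$, and pick a touching point $a\in B_{\rho/2}^+$ with $\phi(a)=\tau d^{-n-2s-2}$ where $d:=\rho/2-|a|$. The plan is to show $\tau\leqslant C_\rho$, which will yield the proposition. A key ingredient is the integral estimate: setting $U:=\{y\in B_{\rho/2}^+ \text{ s.t. } \phi(y)>\phi(a)/2\}$, the bound $\Anorm{u}\leqslant 1$ combined with $y_1 u(y)\geqslant \tfrac{1}{2}\phi(a)y_1^2$ on $U$ will give, for every $r\in(0,d)$, an upper bound of the form $\int_{U\cap B_r(a)}y_1^2\,dy\leqslant C\,d^{n+2s+2}/\tau$. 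This plays the role of~\eqref{W5Ar0} in the interior proof.

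The argument then splits into two cases depending on how close $a$ sits to the antisymmetry plane. If $a_1\geqslant d/4$, then $y_1\sim a_1\sim d$ on $B_{\theta d/8}(a)$ and one recovers exactly the situation of \thref{guDQ7}: construct the antisymmetric auxiliary function $v:=(1-\theta/2)^{-n-2s-2}u(a)\zeta-u$ with $\zeta$ a smooth antisymmetric cut-off equal to $1$ on $\{x_1>1/2\}$, pass to its positive-part antisymmetric extension $w$ as in that proof, rescale so that the ball $B_{\theta d/(4a_1)}(e_1)$ replaces $B_{\theta d/4}(a)$, and apply the rescaled interior weak Harnack \thref{YLj1r} to deduce $|B_{\theta d/8}(a)\setminus U|\leqslant \tfrac14|B_{\theta d/8}|+Cd^n/\tau$. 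Combined with the integral estimate this forces $\tau\leqslant C_\rho$. If instead $a_1<d/4$, then $B_{\theta d/2}(a)$ reaches the plane $\{x_1=0\}$ and the boundary scaling kicks in: I will use the barrier $v:=C\phi(a)x_1\zeta-u$ with $\zeta$ a smooth, radial cut-off so that $v\geqslant 0$ on $B_{\theta d/2}(a)$ by the touching inequality, again antisymmetrize by taking positive parts, and use~\eqref{buKHzlE6} and \thref{ltKO2} to control the tail contribution, obtaining $(-\Delta)^s w+cw\geqslant -C(\phi(a)+(\theta d)^{-n-2s-2})x_1$ on $B_{\theta d/4}(a)$. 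Rescaling so that this ball becomes a half-ball of the form $B_r^+$ and applying the boundary weak Harnack \thref{g9foAd2c} yields the same measure estimate on $B_{\theta d/8}(a)\setminus U$ as in the interior case, closing the contradiction.

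The main obstacle will be the execution of the boundary case: the barrier has to be antisymmetric, stay non-negative in $\R^n_+$ after truncation, and produce the right inhomogeneity of the form $-(\text{const})\,x_1$ in order for \thref{g9foAd2c} to apply; moreover the choice of exponent $n+2s+2$ in the definition of $\tau$ and the weighting by $y_1^2$ in the $L^1$-estimate must be calibrated to one another and to the rescaling of $B_{\theta d/4}(a)$ to a half-ball. Once those delicate estimates are set up, the same arithmetic as in the proof of \thref{guDQ7} produces the desired bound $\tau\leqslant C_\rho$, completing the proof of \thref{EP5Elxbz}.
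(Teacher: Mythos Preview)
Your overall strategy---touching from above by a blow-up profile and then applying the appropriate weak Harnack inequality---is the right one, but the barrier you propose in the away-from-plane case does not give $v\geqslant 0$. With $a_1\geqslant d/4$ and $v=(1-\theta/2)^{-\gamma}u(a)\zeta-u$ where $\zeta\leqslant 1$, on $B_{\theta d/2}(a)$ your touching inequality only yields $u(x)\leqslant (1-\theta/2)^{-\gamma}\phi(a)\,x_1$, while $(1-\theta/2)^{-\gamma}u(a)\zeta(x)=(1-\theta/2)^{-\gamma}\phi(a)\,a_1\zeta(x)$; since $x_1$ can exceed $a_1$ on that ball you would need $\zeta(x)\geqslant x_1/a_1>1$, which is impossible. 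The analogous barrier in \thref{guDQ7} works only because there the touching constraint is imposed on $u$ itself, not on $u/x_1$.

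The paper resolves this by building the factor comparable to $x_1$ into the touching constraint from the start---but using in its place the $s$-harmonic function $\zeta(x):=\varphi^{(3)}(x/(2\rho))$ of \thref{KaMmndyO}, which satisfies $C^{-1}x_1\leqslant\zeta\leqslant Cx_1$ on $B_{\rho}^+$ and $(-\Delta)^s\zeta=0$ in $B_\rho$. The touching inequality is then $u(x)\leqslant\tau\,\zeta(x)(\rho-|x|)^{-n-2}$, so the \emph{single} barrier $v=\tau d^{-n-2}(1-\theta/2)^{-n-2}\zeta-u$ is automatically nonnegative on $B_{\theta d/2}^+(a)$ and, thanks to the $s$-harmonicity of $\zeta$, satisfies $(-\Delta)^s v+cv\geqslant -C(1+\tau d^{-n-2})x_1$ there with no further work. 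The case split (into $a_1\geqslant\theta d/16$, $a_1<\theta d/16$, and a separate case $a\in\{x_1=0\}$ where $\partial_1 u(a)=\tau(\partial_1\zeta)(a)\,d^{-n-2}$) is then purely about which weak Harnack inequality---\thref{YLj1r} or \thref{g9foAd2c}---to apply to the truncated, rescaled function, not about the barrier construction itself.
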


Before we prove \thref{EP5Elxbz}, we prove the following lemma. 

\begin{lem} \thlabel{KaMmndyO}
Let \(\varphi \in C^\infty(\R)\) be an odd function such that \(\varphi(t)=1\) if \(t>2\) and \(0\leqslant \varphi(t) \leqslant 1\) for all \(t\geqslant0\). Suppose that \(\varphi^{(3)} \in C^s(\R^n) \cap L^\infty(\R^n)\) is the solution to \begin{align}
\begin{PDE}
(-\Delta)^s \varphi^{(3)} &= 0 &\text{in } B_1, \\
\varphi^{(3)}(x) &= \varphi(x_1)  &\text{in } \R^n \setminus B_1.
\end{PDE} \label{pa7rDaw7}
\end{align}

Then \(\varphi^{(3)}\) is antisymmetric and there exists \(C>1\) depending only on \(n\) and \(s\) such that \begin{align*}
C^{-1}x_1  \leqslant \varphi^{(3)}(x) \leqslant Cx_1
\end{align*} for all \(x\in B_{1/2}^+\).
\end{lem}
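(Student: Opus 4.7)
The plan is to prove the three claims separately: first the antisymmetry of $\varphi^{(3)}$, then the upper bound $\varphi^{(3)}(x) \leqslant Cx_1$, and finally the lower bound $\varphi^{(3)}(x) \geqslant C^{-1} x_1$ on $B_{1/2}^+$. The antisymmetry follows quickly from uniqueness: the function $\psi(x) := -\varphi^{(3)}(x_\ast)$ also belongs to $C^s(\R^n) \cap L^\infty(\R^n)$, satisfies $(-\Delta)^s \psi = 0$ in $B_1$ by the reflection invariance of the fractional Laplacian across $\{x_1=0\}$, and agrees with $\varphi(x_1) = -\varphi(-x_1)$ outside $B_1$ because $\varphi$ is odd. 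Since the Dirichlet problem~\eqref{pa7rDaw7} admits a unique solution in $C^s(\R^n) \cap L^\infty(\R^n)$, we conclude $\psi \equiv \varphi^{(3)}$, which is exactly the antisymmetry assertion.

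For the upper bound, I would invoke interior regularity theory for the fractional Laplacian: since $\varphi^{(3)} \in L^\infty(\R^n)$ and $(-\Delta)^s \varphi^{(3)} = 0$ in $B_1$, we have $\varphi^{(3)} \in C^\infty(B_1)$ with $\|\nabla \varphi^{(3)}\|_{L^\infty(B_{3/4})}$ controlled by $\|\varphi^{(3)}\|_{L^\infty(\R^n)} \leqslant 1$, up to a constant depending only on $n$ and $s$. Combining this with antisymmetry, which forces $\varphi^{(3)}(0,x') = 0$ for $x' \in \R^{n-1}$ with $|x'|<1$, the mean value theorem applied to $t \mapsto \varphi^{(3)}(tx_1, x')$ yields $\varphi^{(3)}(x) \leqslant \|\partial_1 \varphi^{(3)}\|_{L^\infty(B_{1/2})}\, x_1 \leqslant C x_1$ in $B_{1/2}^+$.

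For the lower bound, I would use the explicit Poisson representation formula for $(-\Delta)^s$ in the ball: for $x \in B_1$,
\begin{equation*}
\varphi^{(3)}(x) = \gamma_{n,s} \int_{\R^n \setminus B_1} \left( \frac{1-|x|^2}{|y|^2 - 1} \right)^s \frac{\varphi(y_1)}{|x-y|^n} \dd y,
\end{equation*}
with $\gamma_{n,s}$ as in~\eqref{sry6yagamma098765}. Splitting the integral over $\R^n_+ \setminus B_1^+$ and its reflection, and using the oddness of $\varphi$ to symmetrize, gives the antisymmetrized formula
\begin{equation*}
\varphi^{(3)}(x) = \gamma_{n,s} \int_{\R^n_+ \setminus B_1^+} \left( \frac{1-|x|^2}{|y|^2 - 1} \right)^s \varphi(y_1) \left( \frac{1}{|x-y|^n} - \frac{1}{|x_\ast - y|^n} \right) \dd y.
\end{equation*}
Since $\varphi(y_1) \geqslant 0$ for $y_1 \geqslant 0$, and applying the standard kernel estimate (analogous to~\eqref{buKHzlE6} but for the power $n$ instead of $n+2s$),
\begin{equation*}
\frac{1}{|x-y|^n} - \frac{1}{|x_\ast - y|^n} \geqslant \frac{2n\, x_1 y_1}{|x_\ast - y|^{n+2}},
\end{equation*}
we obtain $\varphi^{(3)}(x) \geqslant c\, x_1$ by restricting the domain of integration to, e.g., $\{y \in \R^n_+ \setminus B_2 : y_1 \geqslant 2\}$ where $\varphi(y_1) = 1$, noting that for $x \in B_{1/2}^+$ both $(1-|x|^2)^s \geqslant (3/4)^s$ and $|x_\ast - y|$ stays bounded, so the integral is bounded below by a positive constant depending only on $n$ and $s$.

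The main obstacle is handling the Poisson formula rigorously in the antisymmetric setting: verifying that the symmetrization step is valid despite the fact that $\varphi^{(3)}$ is not integrable against the Poisson kernel in the usual sense (the exterior datum $\varphi(y_1)$ is bounded but $\varphi(y_1)/|x-y|^n$ is only conditionally integrable due to the oscillation from antisymmetry at infinity), and checking that the kernel difference estimate holds uniformly for $x \in B_{1/2}^+$ and $y$ in the chosen region. These issues are essentially technical and should be resolved by noting that the bounded antisymmetric nature of the datum yields absolute convergence once paired into the symmetrized form, exactly as in the analogous computation leading to Proposition~\ref{cqGgE}.
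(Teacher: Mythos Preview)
Your proposal is correct. The antisymmetry argument via uniqueness and the lower bound via the symmetrized Poisson representation match the paper's approach exactly (the paper derives antisymmetry from the Poisson formula but notes the uniqueness argument as an alternative).

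The one genuine difference is in the upper bound. The paper stays entirely within the Poisson representation: from the symmetrized formula it applies the kernel estimate
\[
\frac{1}{|x-y|^n} - \frac{1}{|x_\ast - y|^n} \leqslant \frac{2n\, x_1 y_1}{|x-y|^{n+2}}
\]
(the analogue of~\eqref{LxZU6} with exponent $n$ in place of $n+2s$) and then checks directly that the resulting integral $\int_{\R^n_+\setminus B_1^+} y_1\varphi(y_1)\,(|y|^2-1)^{-s}|x-y|^{-n-2}\,dy$ is finite for $x\in B_{1/2}^+$. Your route through interior $C^1$ estimates plus the mean value theorem is shorter and perfectly valid, but imports a regularity black box, whereas the paper's computation keeps the whole proof self-contained within the Poisson kernel framework and symmetric between the upper and lower bounds.

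One minor point: your stated ``main obstacle'' about conditional integrability is not actually present. The full Poisson kernel carries the factor $(|y|^2-1)^{-s}$, so against bounded exterior data it decays like $|y|^{-n-2s}$ at infinity and is absolutely integrable before any symmetrization. The symmetrized form is still the right way to extract the factor of $x_1$, but you do not need to worry about convergence.
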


\begin{proof}
Via the Poisson kernel representation, see \cite[Section 15]{MR3916700}, and using that \(\varphi\) is an odd function, we may write \begin{align*}
\varphi^{(3)}(x)  &= C \int_{\R^n \setminus B_1} \bigg (  \frac{1-\vert x \vert^2}{\vert y \vert^2-1}\bigg )^s \frac{\varphi(y_1)}{\vert x -y \vert^n} \dd y  \\
&= C \int_{\R^n_+ \setminus B_1^+} \bigg (  \frac{1-\vert x \vert^2}{\vert y \vert^2-1}\bigg )^s \bigg ( \frac 1 {\vert x -y \vert^n} - \frac 1 {\vert x_\ast -y \vert^n}\bigg )  \varphi(y_1)\dd y.
\end{align*} {F}rom this formula, we immediately obtain that \(\varphi^{(3)}\) is antisymmetric (this can also be argued by the uniqueness of solutions to~\eqref{pa7rDaw7}). Then, by an analogous computation to~\eqref{LxZU6} (just replacing~$n+2s$
with~$n$), \begin{align*}
\varphi^{(3)}(x) &\leqslant C  x_1 \int_{\R^n_+ \setminus B_1^+}  \frac{ y_1\varphi(y_1)}{(\vert y \vert^2-1)^s\vert x-y \vert^{n+2}}  \dd y \\
&\leqslant  C  x_1 \left[\int_{B_2^+ \setminus B_1^+}  \frac{ y_1}{(\vert y \vert^2-1)^s }  \dd y
+\int_{\R^n_+ \setminus B_2^+}  \frac{ y_1}{(\vert y \vert^2-1)^s(\vert y \vert - 1)^{n+2}}  \dd y\right] \\
&\leqslant Cx_1
\end{align*} for all \(x\in B_{1/2}^+\). Similarly, using now~\eqref{buKHzlE6} (replacing~$n+2s$
with~$n$), we have that
\begin{align*}
\varphi^{(3)}(x) &\geqslant  C x_1 \int_{\R^n_+ \setminus B_1^+}   \frac{ y_1\varphi(y_1)}{(\vert y \vert^2-1)^s\vert x_\ast -y \vert^{n+2}} \dd y \\
&\geqslant C x_1 \int_{\{y_1>2\}}   \frac{ y_1}{(\vert y \vert^2-1)^s(\vert y \vert +1)^{n+2}} \dd y \\
&\geqslant C x_1
\end{align*} for all \(x\in B_{1/2}^+\). 
\end{proof}

Now we can give the proof of \thref{EP5Elxbz}. 

\begin{proof}[Proof of \thref{EP5Elxbz}]
Dividing through by \(\Anorm{u}+M\), we can also assume that \((-\Delta)^s u + cu \leqslant x_1\) in \(B_\rho^+\) and \(\Anorm{u}\leqslant 1\).  Moreover, as explained at the start of the proof of \thref{g9foAd2c}, via \thref{tZVUcYJl} (ii) (after rescaling), it is not restrictive to assume \(u \in C^\infty_0(\R^n)\) and it is antisymmetric. 

Furthermore, we point out that the claim in \thref{EP5Elxbz} is obviously true if~$u\le0$
in~$B^+_\rho$, hence we suppose that~$\{u>0\}\cap B^+_\rho\ne\varnothing$.

Let \(\varphi^{(3)}\) be as in \thref{KaMmndyO} and let \(\zeta(x) := \varphi^{(3)}(x/(2\rho)) \). Suppose that \(\tau \geqslant 0\) is the smallest value such that \begin{align*}
u(x) &\leqslant \tau \zeta (x) (\rho - \vert x \vert )^{-n-2} \qquad \text{in } B_\rho^+. 
\end{align*}
The existence of such a \(\tau\) follows from a similar argument to the one presented in the footnote at the bottom of p. 11. Notice that~$\tau>0$.
To complete the proof we will show that \(\tau \leqslant C_\rho\) with \(C_\rho\) independent of \(u\).  Since \(u\) is continuously differentiable, two possibilities can occur: \begin{itemize}
\item[Case 1:] There exists \(a \in B_\rho^+\) such that \begin{align*}
u(a) &= \tau \zeta (a) (\rho - \vert a \vert )^{-n-2}.
\end{align*} 
\item[Case 2:] There exists \(a\in B_\rho \cap \{x_1=0\}\) such that \begin{align*}
\partial_1 u(a) = \tau \partial_1 \big \vert_{x=a} \big ( \zeta (x) (\rho - \vert x \vert )^{-n-2} \big )= \tau (\partial_1 \zeta (a)) (\rho - \vert a \vert )^{-n-2}  .
\end{align*} 
\end{itemize}
Let \(d:= \rho - \vert a\vert \) and define \(U\subset B_\rho^+\) as follows: if Case 1 occurs let \begin{align*}
U := \bigg \{ x \in B_\rho^+ \text{ s.t. } \frac{u(x)}{\zeta(x)} > \frac{u(a)}{2\zeta (a)} \bigg \};
\end{align*} otherwise if Case 2 occurs then let \begin{align*}
U:=\bigg \{ x \in B_\rho^+ \text{ s.t. }  \frac{u(x)}{\zeta(x)} > \frac{\partial_1u(a)}{2 \partial_1\zeta (a)} \bigg \}. 
\end{align*} Since  \(u(a) = \tau \zeta(a) d^{-n-2}\) in Case 1 and \(\partial_1u(a) = \tau \partial_1\zeta(a) d^{-n-2}\) in Case 2, we may write \begin{equation}\label{ei395v7b865998cn754mx984zUUU}
U =  \bigg \{ x \in B_\rho^+ \text{ s.t. } u(x)> \frac 1  2 \tau d^{-n-2} \zeta(x) \bigg \}
\end{equation}which is valid in both cases.

Then, we have that, for all \(r\in(0,d)\),
\begin{align*}
C_\rho &\geqslant \int_{B_\rho^+} y_1| u(y)| \dd y \geqslant \frac 1  2 \tau d^{-n-2} \int_{U \cap B_r(a)} y_1 \zeta(y) \dd y .
\end{align*} 
As a consequence, by \thref{KaMmndyO}, we have that \begin{align}
\int_{U \cap B_r(a)} y_1^2 \dd y 
\le C_\rho\int_{U \cap B_r(a)} y_1 \zeta(y) \dd y 
\leqslant \frac{ C_\rho d^{n+2} } \tau  \qquad \text{for all } r\in(0,d). \label{TTWpDkie}
\end{align} Next, we make the following claim. 

\begin{claim}
There exists~$\theta_0\in(0,1)$ depending only on \(n\), \(s\), \(\| c\|_{L^\infty(B_\rho(e_1))}\), and \(\rho\) such that
if~$\theta\in(0,\theta_0]$ there exists~\(C>0\)
depending only on \(n\), \(s\), \(\| c\|_{L^\infty(B_\rho(e_1))}\), \(\rho\), and~$\theta$ such that
\begin{itemize}
\item In Case 1: 
\begin{enumerate}[(i))]
\item If \(a_1 \geqslant   \theta d/16 \) then \begin{align*}
\big  \vert B_{(\theta d)/64}(a) \setminus U  \big \vert   &\leqslant \frac 1 4 \big  \vert B_{(\theta d)/64}  \big \vert  +\frac{C d^n} \tau .
\end{align*}
\item If \(a_1 <   \theta d/16 \) then \begin{align*}
\int_{B_{(\theta d)/64}^+(a) \setminus U } x_1^2 \dd x &\leqslant  \frac 1 4 \int_{B_{(\theta d)/64}^+(a) } x_1^2 \dd x + \frac{C d^{n+2}}\tau .
\end{align*}
\end{enumerate} 
\item In Case 2:  \begin{align*}
\int_{B_{(\theta d)/64}^+(a)\setminus U } x_1^2\dd x &\leqslant\frac 1 4 \int_{B_{(\theta d)/64}^+ (a)} x_1^2 \dd x + \frac{ C d^{n+2}}\tau.
\end{align*}
\end{itemize} In particular, neither~\(\theta\) nor~\(C\) depend on \(\tau\), \(u\), or \(a\).
\end{claim}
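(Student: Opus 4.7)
My plan is to mimic the interior argument used in the proof of \thref{guDQ7}, but replacing the constant super-solution bound by the linearly-vanishing bound $-Mx_1$ that is natural near $\{x_1=0\}$, so that we can invoke the boundary-type rescaled weak Harnack inequality \thref{g9foAd2c} (and, in the interior Case~1(i), the rescaled \thref{YLj1r}). First, fix a small $\theta \in (0,1)$ to be chosen and set $C_\theta := (1-\theta/2)^{-n-2}$. For every $x\in B_{\theta d/2}(a)$ one has $|x|\leqslant |a|+\theta d/2 = \rho - (1-\theta/2)d$, so the touching inequality $u(x)\leqslant \tau \zeta(x)(\rho-|x|)^{-n-2}$ implies the pointwise bound $u(x)\leqslant C_\theta \tau d^{-n-2}\zeta(x)$ on $B_{\theta d/2}(a)$. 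I therefore introduce the antisymmetric auxiliary function
\[
v(x) := C_\theta \tau d^{-n-2}\zeta(x) - u(x),
\]
which satisfies $v\geqslant 0$ on $B^+_{\theta d/2}(a)$. Since $\zeta(x)=\varphi^{(3)}(x/(2\rho))$ is $s$-harmonic in $B_{2\rho}$ and satisfies $\zeta(x)\leqslant Cx_1$ on $B^+_\rho$ by \thref{KaMmndyO}, the equation on $u$ gives, in $B^+_{\theta d/2}(a)$,
\[
(-\Delta)^s v + cv \;\geqslant\; -x_1 \;-\; \|c\|_{L^\infty(B_\rho^+)}\,C_\theta\tau d^{-n-2}\,\zeta \;\geqslant\; -C_1\bigl(1+C_\theta\tau d^{-n-2}\bigr)x_1.
\]

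Next, let $w$ be the antisymmetric extension of $v^+|_{\R^n_+}$. Then $w\equiv v$ on $B^+_{\theta d/2}(a)$, $w\geqslant 0$ on $\R^n_+$, and $0\leqslant w-v\leqslant |u|$ on $\R^n_+$. Applying \thref{ltKO2} on the ball $B_{\theta d/2}(a)$ (with $R=\theta d/2$) and using $\Anorm{u}\leqslant 1$ yields
\[
(-\Delta)^s(w-v)(x) \;\geqslant\; -C(\theta d)^{-n-2s-2}\,x_1 \qquad \text{in } B^+_{\theta d/4}(a).
\]
Combining this with the lower bound above, and recalling that $\zeta\leqslant Cx_1$, I obtain in $B^+_{\theta d/4}(a)$ the super-solution inequality
\[
(-\Delta)^s w + cw \;\geqslant\; -M_\star\,x_1, \qquad M_\star := C_2\bigl(1+C_\theta\tau d^{-n-2}+(\theta d)^{-n-2s-2}\bigr),
\]
which is precisely of the form needed to apply the rescaled antisymmetric weak Harnack inequalities established in the previous subsection.

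In Case~1(i), where $a_1\geqslant \theta d/16$, the ball $B_{\theta d/16}(a)$ sits away from $\{x_1=0\}$, so after translating to centre $a$ and rescaling by $\theta d/16$, the function $\tilde w$ is defined on a unit ball centred at roughly $e_1$. Since $x_1\leqslant Cd$ on this ball, the $-M_\star x_1$ bound degenerates to a \emph{constant} lower bound, and \thref{YLj1r} applies and yields the averaged upper estimate
\[
\frac{1}{(\theta d)^n}\int_{B_{\theta d/64}(a)} w \,dx \;\leqslant\; C\bigl(\tfrac12\tau d^{-n-2}\zeta(a) + M_\star(\theta d)^{2s+1}\bigr),
\]
(the infimum of $w$ in the test ball being controlled by $w(a)\leqslant(C_\theta-1)\tau d^{-n-2}\zeta(a)$). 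In Cases~1(ii) and~2, where $a_1<\theta d/16$, the ball $B^+_{\theta d/32}((0,a'))$ touches $\{x_1=0\}$, so I invoke the boundary rescaled weak Harnack \thref{g9foAd2c} instead, which is tailored to lower bounds of the form $-Mx_1$, and deduce
\[
\frac{1}{(\theta d)^{n+2}}\int_{B^+_{\theta d/64}(a)} y_1 w(y)\,dy \;\leqslant\; C\Bigl(\inf_{B^+_{\theta d/64}(a)}\tfrac{w(x)}{x_1} + M_\star(\theta d)^{2s}\Bigr),
\]
where the infimum is controlled using the touching point: in Case~1(ii), $w/x_1$ at $a$ gives $\leqslant (C_\theta-1)\tau d^{-n-2}\zeta(a)/a_1\leqslant C(C_\theta-1)\tau d^{-n-2}$, and in Case~2 the analogous bound comes from matching of normal derivatives $\partial_1 u(a)=\tau\partial_1\bigl(\zeta\cdot(\rho-|\cdot|)^{-n-2}\bigr)(a)$, together with \thref{6fD34E6w} to pass from the quotient to $\partial_1 w/\partial_1\zeta$.

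Finally, in view of \eqref{ei395v7b865998cn754mx984zUUU},
\[
B^+_{\theta d/64}(a)\setminus U \;\subset\; \bigl\{x\in B^+_{\theta d/64}(a)\,:\, w(x)\geqslant \tfrac12 \tau d^{-n-2}\zeta(x)\bigr\},
\]
and Chebyshev's inequality converts the integral bounds on $w$ (respectively, $y_1 w$) into the desired measure (respectively, $x_1^2$-weighted measure) estimates. In Case~1(i) one uses $\zeta(x)\geqslant Ca_1\geqslant C\theta d$ on the test ball, producing a factor $d^{n+2}\tau\cdot d^{n+2} = d^n/\tau$ after simplification. In Cases~1(ii) and~2 one uses $\zeta(x)\geqslant Cx_1$, so $w(x)\geqslant \tfrac12\tau d^{-n-2}\zeta(x)$ translates into $w(x)/x_1\geqslant C\tau d^{-n-2}$ on the exceptional set, and pairing this with the $y_1$-weighted integral bound yields the weighted estimate with $\int x_1^2\,dx$. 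In every case the leading term on the right hand side is proportional to
\[
(\theta d)^n\Bigl((1-\theta/2)^{-n-2}-1 + \theta^{2s}\Bigr)
\]
(respectively, with $(\theta d)^{n+2}$ in place of $(\theta d)^n$). Choosing $\theta=\theta_0$ small enough makes this term at most one quarter of the full ball (respectively, of $\int_{B^+_{\theta d/64}(a)} x_1^2\,dx$), while the error term remains $Cd^n/\tau$ (respectively, $Cd^{n+2}/\tau$). The main obstacle in carrying this out is the careful bookkeeping of the $x_1$ weights through the two weak Harnack variants, and in particular ensuring that the smallness parameter $\theta$ can be chosen uniformly in $\tau$, $u$, and $a$; this is guaranteed because the rescaled constants appearing in \thref{YLj1r} and \thref{g9foAd2c} are of the universal form $(1+(\theta d)^{2s}\|c\|_{L^\infty(B_\rho^+)})$, which is bounded by a constant depending only on $n$, $s$, $\rho$, and $\|c\|_{L^\infty(B_\rho^+)}$.
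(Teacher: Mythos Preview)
Your approach is essentially identical to the paper's: the same auxiliary functions $v$ and $w$, the same supersolution estimate $(-\Delta)^s w+cw\geqslant -C((\theta d)^{-n-2s-2}+\tau d^{-n-2})x_1$ via \thref{ltKO2}, the same case split invoking \thref{YLj1r} in Case~1(i) and \thref{g9foAd2c} in Cases~1(ii) and~2, and the same Chebyshev-plus-smallness-of-$\theta$ conclusion. One technical caution: in Case~1(i) you cannot ``translate to centre $a$'' in the $e_1$-direction without destroying antisymmetry---the paper instead translates tangentially to set $a'=0$ and rescales by $a_1$ (so $a\mapsto e_1$); and in Case~1(ii) the radius fed to \thref{g9foAd2c} must be chosen with care (the paper uses $R=\tfrac12\bigl(\sqrt{(\theta d/(4a_1))^2-1}+2\bigr)$) to guarantee both $e_1\in B^+_{R/2}$ and the inclusion $B^+_{\theta d/64}(a)\setminus U\subset B^+_{a_1R/2}$ needed for the Chebyshev step.
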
 

We withhold the proof of the claim until the end. Assuming that
the claim is true, we complete the proof of \thref{EP5Elxbz} as follows.
 
 If Case 1(i) occurs then for all \(y\in B_{(\theta_0 d)/64} (a)\)
 we have that~\(y_1>a_1-( \theta_0 d)/64 \geqslant C d\), and so \begin{align*}
\int_{U \cap B_{(\theta_0 d)/64}(a)} y_1^2 \dd y \geqslant C d^2 \cdot \big \vert U \cap B_{(\theta_0 d)/64}(a) \big \vert .
\end{align*} Hence, from~\eqref{TTWpDkie} (used here with~$r:=(\theta_0d)/64$), we have that \begin{align*}
\vert U \cap B_{( \theta_0 d)/64}(a) \big \vert \leqslant \frac{C_\rho d^n}{\tau}. 
\end{align*}
Then using the claim, we find that \begin{align*}
\frac{C_\rho d^n}{\tau} \geqslant \big  \vert B_{(\theta_0 d)/64} \big \vert  - \big  \vert B_{(\theta_0 d)/64}(a) \setminus U  \big \vert 
\geqslant  \frac 3 4 \big  \vert B_{(\theta_0 d)/64}  \big \vert  -\frac{C d^n} \tau 
\end{align*}which gives that \(\tau \leqslant C_\rho\) in this case.

If Case 1(ii) or Case 2 occurs then from~\eqref{TTWpDkie} (used here with~$r:=(\theta_0d)/64$)
and the claim, we have that \begin{equation}\label{dewiotbv5748976w4598ty}
\frac{C_\rho d^{n+2}}{\tau} \geqslant \int_{B_{(\theta_0 d)/64}^+(a) } x_1^2 \dd x - \int_{B_{( \theta_0 d)/64}^+(a) \setminus U } x_1^2 \dd x 
\geqslant \frac 3 4 \int_{B_{( \theta_0 d)/64}^+(a) } x_1^2 \dd x - \frac{C d^{n+2}}\tau .
\end{equation}
We now observe that, given~$r\in(0,d)$, if~$x\in B_{r/4}\left(a+\frac34 re_1\right)\subset B^+_{r}(a)$
then~$x_1\ge a_1+\frac34 r-\frac{r}4\ge\frac{r}2$,
and thus
\begin{equation}\label{sdwet68980poiuytrkjhgfdmnbvcx23456789}
\int_{B_{r}^+(a) } x_1^2 \dd x\ge
\int_{B_{r/4}(a+(3r)/4 e_1) } x_1^2 \dd x\ge \frac{r^2}4\,|B_{r/4}(a+(3r)/4 e_1) |=C r^{n+2},\end{equation}
for some~$C>0$ depending on~$n$. Exploiting this formula with~$r:=(\theta_0 d)/64$ into~\eqref{dewiotbv5748976w4598ty},
we obtain that
$$ \frac{C_\rho d^{n+2}}{\tau} \geqslant C \left(\frac{ \theta_0 d}{64}\right)^{n+2 }- \frac{C d^{n+2}}\tau,$$
which gives that \(\tau \leqslant C_\rho\) as required.

Hence, we now focus on the proof of the claim. 
Let \(\theta\in(0,1)\) be a small constant to be chosen later. By translating, we may also assume without loss of generality that \(a'=0\) (recall that~\(a'=(a_2,\dots,a_n)\in \R^{n-1}\)).

For each \(x\in B_{\theta d/2}^+(a)\), we have that~\(\vert x \vert \leqslant \vert a \vert +\theta d/2=\rho- (1-\theta/2)d\). Hence, in both Case 1 and Case 2, 
\begin{align}
u(x) \leqslant \tau d^{-n-2}   \bigg (1- \frac \theta 2 \bigg )^{-n-2}\zeta (x) \qquad \text{in }B_{\theta d/2}^+(a). \label{ln9LcJTh}
\end{align} Let \begin{align*}
v(x):=  \tau d^{-n-2}   \bigg (1- \frac \theta 2 \bigg )^{-n-2}\zeta (x)-u(x)  \qquad \text{for all } x\in \R^n. 
\end{align*} We have that \(v\) is antisymmetric and \(v\geqslant 0\) in \(B_{\theta d/2}^+(a) \) due to~\eqref{ln9LcJTh}. Moreover, since \(\zeta\) is \(s\)-harmonic in \(B_\rho^+ \supset B_{\theta d/2}^+(a)\), for all \(x\in B_{\theta d/2}^+(a)\), \begin{align*}
(-\Delta)^s v(x) +c(x)v(x) &= -(-\Delta)^s u(x) - c(x) u(x) +c(x) \tau d^{-n-2} \bigg (1- \frac \theta 2 \bigg )^{-n-2}\zeta (x)\\
&\geqslant -x_1 -C  \tau d^{-n-2} \|c^-\|_{L^\infty(B_\rho^+)} \bigg (1- \frac \theta 2 \bigg )^{-n-2}\zeta (x).
\end{align*} Taking \(\theta\) sufficiently small and using that \(\zeta (x)\leqslant C x_1\) (in light of~\thref{KaMmndyO}),
we obtain \begin{align}
(-\Delta)^s v(x) +c(x)v(x)&\geqslant -C \big ( 1 + \tau d^{-n-2}  \big )x_1  \qquad \text{in } B_{\theta d/2}^+(a). \label{ulzzcUwf}
\end{align}

Next, we define \(w(x):= v^+(x)\) for all \(x\in \R^n_+\) and \(w(x):= -w(x_\ast)\) for all \(x\in \overline{\R^n_-}\). 
We point out that, in light of~\eqref{ln9LcJTh}, $w$ is as regular as~$v$ in~$B_{\theta d/2}^+(a)$, and thus
we can compute the fractional Laplacian of~$w$ in~$B_{\theta d/2}^+(a)$ in a pointwise sense.

We also observe that \begin{align*}
(w-v)(x) (x) &=  \begin{cases}
0 &\text{if } x\in \R^n_+ \cap \{ v\geqslant 0\} ,\\
u(x)-\tau d^{-n-2} \big (1- \frac \theta 2 \big )^{-n-2}\zeta (x),&\text{if } x\in \R^n_+ \cap \{ v< 0\}.
\end{cases}
\end{align*} In particular, \(w-v\leqslant |u|\) in \(\R^n_+\).
Thus, for all \(x\in B_{\theta d/2}^+(a)\), \begin{align*}
(-\Delta)^s (w-v)(x)&\geqslant -C \int_{\R^n_+\setminus B_{\theta d/2}^+(a)} \left(\frac 1 {\vert x -y \vert^{n+2s}} - \frac 1 {\vert x_\ast -y \vert^{n+2s}} \right) |u(y)|\dd y .
\end{align*}  Moreover, by \thref{ltKO2}, for all~$x \in B_{\theta d/4}^+(a)$,
\begin{equation}
(-\Delta)^s (w-v)(x) \geqslant -C(\theta d)^{-n-2s-2} \Anorm{u}
x_1 \geqslant -C(\theta d)^{-n-2s-2}x_1 . \label{y8tE2pf9}
\end{equation} Hence, by~\eqref{ulzzcUwf} and~\eqref{y8tE2pf9}, we obtain  \begin{align}
(-\Delta)^s w +cw &=(-\Delta)^sv +cv +(-\Delta)^s(w-v) \nonumber \\
&\geqslant -C \bigg ( 1 + \tau d^{-n-2}  +(\theta d)^{-n-2s-2}  \bigg )x_1 \nonumber \\
&\geqslant -C \bigg (  (\theta d)^{-n-2s-2} + \tau d^{-n-2}  \bigg )x_1\label{ayFE7nQK}
\end{align} in \( B_{\theta d/4}^+(a)\).

Next, let us consider Case 1 and Case 2 separately.  

\emph{Case 1:} Suppose that \(a\in B_\rho^+\) and let \(\tilde w (x) = w (a_1 x)\) and \(\tilde c (x) = a_1^{2s} c (a_1 x)\). Then from~\eqref{ayFE7nQK}, we have that \begin{align}
(-\Delta)^s \tilde w(x) +\tilde c(x) \tilde w(x) &\geqslant - Ca_1^{2s+1} \bigg (  (\theta d)^{-n-2s-2} + \tau d^{-n-2}  \bigg )x_1 \label{zrghHZhX}
\end{align} for all \( x\in B_{\theta d/(4a_1)}^+(e_1)\). 

As in the proof of \thref{guDQ7}, we wish to apply the rescaled version of the weak Harnack inequality to \(\tilde w\); however, we cannot immediately apply either \thref{YLj1r} or \thref{g9foAd2c} to~\eqref{zrghHZhX}. To resolve this, let us split into a further two cases: (i) \(a_1 \geqslant   \theta d/16 \) and (ii) \(a_1<  \theta d/16\).

\emph{Case 1(i):} If \(a_1 \geqslant   \theta d/16 \) then \( B_{\theta d/(32a_1)}(e_1) \subset B_{\theta d/(4a_1)}^+(e_1)\) and for each \(x\in B_{\theta d/(32a_1)}(e_1)\) we have that~\(x_1<1+\theta d/(32a_1)\le1+1/4=5/4\). Therefore, from~\eqref{zrghHZhX}, we have that \begin{align*}
(-\Delta)^s \tilde w(x) +\tilde c(x) \tilde w(x) &\geqslant - Ca_1^{2s+1}  \big (  (\theta d)^{-n-2s-2} + \tau d^{-n-2}  \big ) 
\end{align*} for all \( x\in B_{\theta d/(32a_1)}(e_1)\).

On one hand, by~\thref{YLj1r} (used here with~$\rho:=\theta d/(32a_1)$), \begin{align*}&
\left( \frac{\theta d}{64} \right)^{-n} \int_{B_{\theta d/64}(a)}  w (x) \dd x \\= &\; \left( \frac{\theta d}{32a_1} \right)^{-n}  \int_{B_{\theta d/(64a_1)}(e_1)} \tilde w (x) \dd x \\
\leqslant&\; C \left(  \tilde w(e_1) +  a_1 (\theta d)^{-n-2} + \tau a_1 \theta^{2s} d^{-n+2s-2} \right) \\
\leqslant& \;C  \tau d^{-n-2} \bigg (  \bigg (1- \frac \theta 2 \bigg )^{-n-2}-1 \bigg ) a_1 +Ca_1(\theta d)^{-n-2} + C\tau a_1 \theta^{2s} d^{-n+2s-2}
\end{align*} using also \thref{KaMmndyO} and that \(u(a) = \tau d^{-n-2} \zeta(a)\). 

On the other hand, by the definition of~$U$ in~\eqref{ei395v7b865998cn754mx984zUUU},
\begin{align}
B_r(a) \setminus U  &\subset \left\{ \frac{w}{\zeta} > \tau d^{-n-2} \left(  \left(1 - \frac \theta 2 \right)^{-n-2} -\frac 1 2 \right)  \right\} \cap B_r(a), \qquad \text{ for all } r\in\left(0,\frac12\theta d\right),\label{I9FvOxCz}
\end{align} and so \begin{align*}
(\theta d)^{-n} \int_{B_{\theta d/64}(a)}  w(x) \dd x &\geqslant \tau \theta^{-n} d^{-2n-2} \bigg ( \bigg (1 - \frac \theta 2 \bigg )^{-n-2} -\frac 1 2 \bigg ) \int_{B_{\theta d/64}(a) \setminus U }  \zeta (x) \dd x \\
&\geqslant  C \tau \theta^{-n} d^{-2n-2} \int_{B_{\theta d/64}(a) \setminus U } \zeta (x) \dd x
\end{align*} for \(\theta\) sufficiently small. Moreover, using that \(x_1>a_1-\theta d/64>Ca_1\) and \thref{KaMmndyO}, we have that\begin{align*}
\int_{B_{\theta d/64}(a) \setminus U } \zeta (x) \dd x \geqslant C\int_{B_{\theta d/64}(a) \setminus U } x_1 \dd x \geqslant C
a_1\cdot \big  \vert B_{\theta d/64}(a) \setminus U  \big \vert . 
\end{align*}
Thus,\begin{align*}
\big  \vert B_{\theta d/64}(a) \setminus U  \big \vert   &\leqslant C  (\theta d)^n \bigg (  \bigg (1- \frac \theta 2 \bigg )^{-n-2}-1 \bigg )  +\frac{C\theta^{-2} d^n} \tau + C (\theta d)^{n+2s} \\
&\leqslant  C  (\theta d)^n \bigg (  \bigg (1- \frac \theta 2 \bigg )^{-n-2}-1 +\theta^{2s} \bigg )  +\frac{C\theta^{-2} d^n} \tau ,
\end{align*} using also that \(d^{n+2s}<d^n\). 

Finally, we can take \(\theta\) sufficiently small so that \begin{align*}
C (\theta d)^n \bigg (  \bigg (1- \frac \theta 2 \bigg )^{-n-2}-1 +\theta^{2s} \bigg ) \leqslant \frac 1 4 \big \vert B_{\theta d/64} \big \vert 
\end{align*}
which gives
$$ \big  \vert B_{\theta d/64}(a) \setminus U  \big \vert\le \frac14 \big \vert B_{\theta d/64} \big \vert
 +\frac{Cd^n} \tau.
$$
This concludes the proof of the claim in Case 1(i). 

\emph{Case 1(ii):} Let \(a_1<  \theta d/16\) and fix \(R:= \frac 1 {2} \big ( \sqrt{ ( \theta d/(4a_1))^2-1} +2 \big )\). Observe that \begin{align*}
2 < R< \sqrt{ \left(\frac{ \theta d}{4a_1}\right)^2-1}.
\end{align*} Hence, \(e_1 \in B_{R/2}^+\). Moreover, if \(x\in B_R^+\) then \begin{align*}
\vert x -e_1\vert^2 <1+R^2< ( \theta d/(4a_1))^2,
\end{align*} so \(B_R^+ \subset B_{\theta d/(4a_1)}^+(e_1)\). 

Thus, applying \thref{g9foAd2c} to the equation in~\eqref{zrghHZhX} in \(B_R^+\), we obtain \begin{align*}&
a_1^{-n-1}\bigg ( \frac R 2\bigg )^{-n-2} \int_{B_{a_1R/2}^+} x_1  w(x) \dd x \\
=&\;  \bigg ( \frac R 2\bigg )^{-n-2} \int_{B_{R/2}^+} x_1 \tilde w(x) \dd x \\
\leqslant&\; C \left( \inf_{B_{R/2}^+} \frac{\tilde w(x)}{x_1} +  a_1^{2s+1}R^{2s}  (\theta d)^{-n-2s-2} + \tau d^{-n-2}a_1^{2s+1}R^{2s}   \right) \\
\leqslant\;& C \tau d^{-n-2}   \left( \left(1- \frac \theta 2 \right)^{-n-2}-1 \right)\zeta(a)
+ C  a_1^{2s+1}R^{2s}  (\theta d)^{-n-2s-2} +C  \tau d^{-n-2}a_1^{2s+1}R^{2s}  
\end{align*} using that \(e_1 \in B_{R/2}^+\).

Since \(R \leqslant C \theta d/a_1\) and \(\zeta(a)\leqslant Ca_1\) by \thref{KaMmndyO}, it follows that
\begin{eqnarray*}&&
a_1^{-1} \bigg ( \frac R {2a_1} \bigg )^{-n} \int_{B_{a_1R/2}^+} x_1  w(x) \dd x \\
&&\qquad\leqslant C \tau d^{-n-2}   \bigg ( \bigg (1- \frac \theta 2 \bigg )^{-n-2}-1 \bigg )a_1  + C  a_1  (\theta d)^{-n-2} +C  \tau a_1 \theta^{2s} d^{-n+2s-2} .
\end{eqnarray*}

On the other hand, we claim that
\begin{equation}\label{skweogtry76t678tutoy4554yb76i78io896}
B_{(\theta d)/64}^+(a) \setminus U \subset B_{a_1R/2}^+ .\end{equation} Indeed,
if~$x\in B_{(\theta d)/64}^+ \setminus U $ then
$$ |x|\le |x-a|+|a|\le \frac{\theta d}{64} +a_1\le a_1\left(\frac{\theta d}{64a_1}+1\right).
$$
Furthermore,
\begin{eqnarray*}
&&R\ge   \frac 1 {2} \sqrt{ \left(\frac{\theta d}{4a_1}\right)^2-\left(\frac{\theta d}{16a_1}\right)^2} +1\ge
\frac{\sqrt{15}\theta d}{32a_1}+1\ge\frac{3\theta d}{32a_1}+1\\&&\qquad
=2\left(\frac{3\theta d}{64a_1}+\frac12\right)=2\left(\frac{\theta d}{64a_1}+\frac{\theta d}{32a_1}+\frac12\right)\ge
2\left(\frac{\theta d}{64a_1}+1\right).
\end{eqnarray*}
{F}rom these observations we obtain that if~$x\in B_{(\theta d)/64}^+ \setminus U $ then~$|x|\le a_1R/2$,
which proves~\eqref{skweogtry76t678tutoy4554yb76i78io896}.

Hence, by~\eqref{I9FvOxCz} (used with~$r:=\theta d/64$), \eqref{skweogtry76t678tutoy4554yb76i78io896}, and \thref{KaMmndyO}, we have that \begin{align*}
&\hspace{-2em}a_1^{-1} \bigg ( \frac R {2a_1} \bigg )^{-n} \int_{B_{a_1R/2}^+} x_1  w(x) \dd x \\ &\geqslant a_1^{-n-1} R^{-n-2} \tau d^{-n-2} \bigg (  \bigg (1 - \frac \theta 2 \bigg )^{-n-2} -\frac 1 2 \bigg ) \int_{B_{ (\theta d)/64}^+(a) \setminus U} x_1  \zeta(x) \dd x \\
&\geqslant C \tau a_1 \theta^{-n-2}  d^{-2n-4} \int_{B_{ (\theta d)/64}^+(a) \setminus U } x_1^2 \dd x
\end{align*} for \(\theta\) sufficiently small. Thus, \begin{align*}
\int_{B_{( \theta d)/64}^+(a) \setminus U } x_1^2 \dd x &\leqslant C   (\theta d)^{n+2}   \bigg ( \bigg (1- \frac \theta 2 \bigg )^{-n-2}-1 \bigg ) + \frac{C d^{n+2}}\tau  +C( \theta  d)^{n+2s+2}\\
&\leqslant  C   (\theta d)^{n+2}   \bigg ( \bigg (1- \frac \theta 2 \bigg )^{-n-2}-1 +\theta^{2s} \bigg ) + \frac{C d^{n+2}}\tau  
\end{align*} using that \(d^{n+2s+2}<d^{n+2}\). 
Recalling formula~\eqref{sdwet68980poiuytrkjhgfdmnbvcx23456789}
and taking~\(\theta\) sufficiently small, we obtain that \begin{align*}
C(\theta d)^{n+2}   \bigg ( \bigg (1- \frac \theta 2 \bigg )^{-n-2}-1 +\theta^{2s} \bigg ) \leqslant \frac 1 4 \int_{B_{( \theta d)/64}^+ (a)} x_1^2 \dd x .
\end{align*} which concludes the proof in Case 1(b). 

\emph{Case 2:} In this case, we can directly apply \thref{g9foAd2c} to~\eqref{ayFE7nQK}. When we do this we find that \begin{align*}&
\bigg ( \frac{\theta d }{64} \bigg )^{-n-2} \int_{B_{\theta d/64}^+(a)} x_1  w (x)\dd x \\
\leqslant&\, C \bigg ( \partial_1  w (0)  +  (r\theta)^{-n-2} + \tau \theta^{2s} d^{-n+2s-2} \bigg )\\
=&\, C\tau d^{-n-2}  \bigg (   \bigg (1- \frac \theta 2 \bigg )^{-n-2}-1 \bigg )\partial_1\zeta(0) + C   (r\theta)^{-n-2} + C\tau \theta^{2s} d^{-n+2s-2}.
\end{align*} On the other hand,~\eqref{I9FvOxCz} is still valid in Case 2 so \begin{align*}
(\theta d )^{-n-2} \int_{B_{\theta d/64}^+(a)} x_1  w (x)\dd x &\geqslant  \tau  \theta^{-n-2} d^{-2n-4} \bigg (  \bigg (1 - \frac \theta 2 \bigg )^{-n-2} -\frac 1 2 \bigg ) \int_{B_{\theta d/64}^+(a)\setminus U } x_1 \zeta (x) \dd y \\
&\geqslant C \tau  \theta^{-n-2} d^{-2n-4} \int_{B_{\theta d/64}^+(a)\setminus U } x_1^2\dd x
\end{align*} using \thref{KaMmndyO} and taking \(\theta\) sufficiently small. Thus,  \begin{align*}
\int_{B_{\theta d/64}^+(a)\setminus U } x_1^2\dd x &\leqslant C (\theta d)^{n+2}  \bigg (   \bigg (1- \frac \theta 2 \bigg )^{-n-2}-1 \bigg ) + \frac{ C d^{n+2}}\tau + C (\theta d)^{n+2s+2}\\
&\leqslant  C (\theta d)^{n+2}  \bigg (   \bigg (1- \frac \theta 2 \bigg )^{-n-2}-1 +\theta^{2s} \bigg ) + \frac{ C d^{n+2}}\tau 
\end{align*} using that \(d^{n+2s+2}<d^{n+2}\). Then, by~\eqref{sdwet68980poiuytrkjhgfdmnbvcx23456789}, we may choose \(\theta\) sufficiently small so that \begin{align*}
 C (\theta d)^{n+2}  \bigg (   \bigg (1- \frac \theta 2 \bigg )^{-n-2}-1 +\theta^{2s} \bigg ) \leqslant \frac 1 4 \int_{B_{\theta d/64}^+(a)} x_1^2\dd x
\end{align*} which concludes the proof in Case 2. 

The proof of \thref{EP5Elxbz}
is thereby complete.
\end{proof}

\section{Proof of \protect\thref{Hvmln}} \label{lXIUl}

In this short section, we give the proof of \thref{Hvmln}. This follows from Theorems~\ref{DYcYH} and~\ref{C35ZH} along with a standard covering argument which we include here for completeness. 

\begin{proof}[Proof of \thref{Hvmln}] 
Recall that~\(\Omega^+=\Omega \cap \R^n_+\) and let \(B_{2R}(y)\) be a ball such that \(B_{2R}(y)\Subset\Omega^+\). We will first prove that there exists a constant \(C=C(n,s,R,y)\) such that \begin{align}
\sup_{B_R(y)} \frac{u(x)}{x_1} &\leqslant C \inf_{B_R(y)} \frac{u(x)}{x_1} .  \label{c4oOr}
\end{align}Indeed, if \(\tilde u (x) := u (y_1x+(0,y'))\) and~\(\tilde c (x):= y_1^{2s} c(y_1x+(0,y'))\) then  \begin{align*}
(-\Delta)^s \tilde u (x)+\tilde c \tilde u = 0, \qquad \text{in } B_{2R/y_1} (e_1). 
\end{align*} By \thref{DYcYH}, \begin{align*}
\sup_{B_R(y)} \frac{u(x)}{x_1}  \leqslant C \sup_{B_R(y)} u = C\sup_{B_{R/y_1}(e_1)} \tilde u  & \leqslant C \inf_{B_{R/y_1}(e_1)} \tilde u =C\inf_{B_R(y)} u \leqslant C\inf_{B_R(y)} \frac{u(x)}{x_1}
\end{align*} using that \(B_R(y) \Subset\R^n_+\).

Next, let \(\{a^{(k)}\}_{k=1}^\infty,\{b^{(k)}\}_{k=1}^\infty \subset \tilde \Omega^+\) be such that \begin{align*}
\frac{u(a^{(k)})}{a^{(k)}_1} \to \sup_{\Omega'}\frac{u(x)}{x_1} \quad \text{and} \quad \frac{u(b^{(k)})}{b^{(k)}_1} \to \inf_{\Omega'}\frac{u(x)}{x_1}
\end{align*} as \(k\to \infty\). After possibly passing to a subsequence, there exist~\(a\), \(b\in \overline{\tilde \Omega^+}\) such that~\(a^{(k)}\to a\) and~\(b^{(k)}\to b\). Let \(\gamma \subset \tilde \Omega^+\) be a curve connecting \(a\) and \(b\). By the Heine-Borel Theorem, there exists a finite collection of balls \(\{B^{(k)}\}_{k=1}^M\) with centres in \(\tilde \Omega \cap \{x_1=0\}\) such that \begin{align*}
\tilde \Omega \cap \{x_1=0\} \subset \bigcup_{k=1}^M B^{(k)}  \Subset\Omega . 
\end{align*} Moreover, if \( \tilde \gamma := \gamma \setminus \bigcup_{k=1}^M B^{(k)}\) then there exists a further collection of balls \(\{B^{(k)}\}_{k=M+1}^N\) with centres in \(\tilde \gamma\) and radii equal to \( \frac 12 \dist(\tilde \gamma, \partial( \Omega^+))\) such that \begin{align*}
\tilde \gamma \subset \bigcup_{k=M+1}^N B^{(k)}  \Subset \Omega^+.
\end{align*} By construction, \(\gamma\) is covered by \(\{B^{(k)}\}_{k=1}^n\). Thus, iteratively applying \thref{C35ZH} (after translating and rescaling) to each \(B^{(k)}\), \(k=1,\dots, M\), and~\eqref{c4oOr} to each \(B^{(k)}\), \(k=M+1,\dots, N\), we obtain the result. 
\end{proof}

\section{Appendix A: A counterexample} \label{j4njb}
In this appendix, we demonstrate that \thref{Hvmln} is in general false if we do not assume antisymmetry. We will do this by constructing a sequence of functions \(\{ u_k\}_{k=1}^\infty \subset C^\infty(B_1(2e_1)) \cap L^\infty(\R^n)\) such that \((-\Delta)^su_k=0\) in \(B_1(2e_1)\), \(u_k \geqslant 0\) in \(\R^n_+\), and \begin{align*}
\frac{\sup_{B_{1/2}(2e_1)}u_k}{\inf_{B_{1/2}(2e_1)}u_k} \to + \infty  \qquad \text{as } k\to \infty.
\end{align*} The proof will rely on the mean value property of \(s\)-harmonic functions. 

Suppose that \(M\geqslant 0\) and \(\zeta_1\), \(\zeta_2\) are smooth functions such that \(0\leqslant \zeta_1,\zeta_2\leqslant 1\) in \(\R^n \setminus B_1(2e_1)\), and \begin{align*}
\zeta_1(x) = 
\begin{cases}
0 &\text{in } \R^n_+ \setminus B_1(2e_1),\\
1 &\text{in } \R^n_- \setminus \{x_1>-1\}
\end{cases} \qquad{\mbox{and}}\qquad \zeta_2(x) =\begin{cases}
0 &\text{in } \R^n_+ \setminus B_1(2e_1) ,\\
1 &\text{in } B_{1/2}(-2e_1).
\end{cases}
\end{align*}Then let \(v\) and \(w_M\) be the solutions to \begin{align*}
\begin{PDE}
(-\Delta)^s v &= 0 &\text{in }B_1(2e_1), \\
v&=\zeta_1 &\text{in } \R^n\setminus  B_1(2e_1)
\end{PDE}
\end{align*} and  \begin{align*}
    \begin{PDE}
(-\Delta)^s w_M &= 0 &\text{in }B_1(2e_1), \\
w_M&=-M\zeta_2 &\text{in } \R^n\setminus  B_1(2e_1),
\end{PDE}
\end{align*} respectively. Both \(v\) and \(w_M\) are in \( C^\infty(B_1(2e_1)) \cap L^\infty(\R^n)\) owing to standard regularity theory. We want to emphasise that \(w_M\) depends on the parameter \(M\) (as indicated by the subscript) but \(v\) does not. Define \(\tilde u_M:= v+w_M\). Since \(w_M \equiv 0 \) when \(M=0\) and, by the strong maximum principle, \(v>0\) in \(B_1(2e_1)\), we have that \begin{align*}
\tilde u_0 >0 \qquad \text{in } B_1 (2e_1).
\end{align*} Hence, we can define \begin{align*}
\bar M := \sup \{ M \geqslant 0 \text{ s.t. } \tilde u_M >0 \text{ in } B_1 (2e_1)\}
\end{align*} though \(\bar M\) may possibly be infinity. We will show that \(\bar M\) is in fact finite and that \begin{align}
\inf_{B_1(2e_1)} \tilde u_{\bar M} = 0. \label{fDMti}
\end{align} Once these two facts have been established, we complete the proof as follows: set \(u_k := \tilde u_{\bar M - 1/k}\). By construction, \(u_k \geqslant 0\) in \(\R^n_+\) and \((-\Delta)^su_k=0\) in \(B_1(2e_1)\). Moreover, by the maximum principle, \(v\leqslant 1\) and \(w_M \leqslant 0\) in \(\R^n\), so \( u_k \leqslant 1\) in \(\R^n\). Hence, \begin{align*}
\frac{\sup_{B_{1/2}(2e_1)}u_k}{\inf_{B_{1/2}(2e_1)}u_k} &\leqslant \frac 1 {\inf_{B_1(2e_1)}u_k} \to + \infty
\end{align*} as \(k \to \infty\). 

Let us show that \(\bar M\) is finite. Since \(w_M\) is harmonic in \(B_1(2e_1)\), the mean value property for \(s\) harmonic functions, for example see \cite[Section 15]{MR3916700}, tells us that\begin{align*}
w_M(2e_1) &= -CM \int_{\R^n \setminus B_1(2e_1)} \frac{\zeta_2(y)}{(\vert y \vert^2 -1 )^s \vert y \vert^n} \dd y .
\end{align*} Then, since \(\zeta_2\equiv 1\) in \(B_{1/2}(-2e_1)\) and \(\zeta_2\geqslant0\), \begin{align*}
\int_{\R^n \setminus B_1(2e_1)} \frac{\zeta_2(y)}{(\vert y \vert^2 -1 )^s \vert y \vert^n} \dd y &\geqslant \int_{B_{1/2}(-2e_1)} \frac{\dd y}{(\vert y \vert^2 -1 )^s \vert y \vert^n}\geqslant C.  
\end{align*} It follows that \(w_M(2e_1) \leqslant -CM\) whence \begin{align*}
\tilde u_M (2e_1) \leqslant 1-CM \leqslant 0
\end{align*} for \(M\) sufficiently large. This gives that \(\bar M\) is finite.

Now we will show \eqref{fDMti}. For the sake of contradiction, suppose that there exists \(a>0\) such that \(\tilde u_{\bar M } \geqslant a \) in \(B_1(2e_1)\). Suppose that \(\varepsilon>0\) is small and let \(h^{(\varepsilon)} := \tilde u_{\bar M+\varepsilon}-\tilde u_{\bar M} = w_{\bar M+\varepsilon}-w_{\bar M}\). Then \(h^{(\varepsilon)}\) is \(s\)-harmonic in \(B_1(2e_1)\) and \begin{align*}
h^{(\varepsilon)}(x) = - \varepsilon \zeta_2(x) \geqslant -\varepsilon \qquad \text{in } \R^n \setminus B_1(2e_1).
\end{align*}Thus, using the maximum principle we conclude that \(h^{(\varepsilon)} \geqslant -\varepsilon\) in \(B_1(2e_1)\),
which in turn gives that \begin{align*}
\tilde u_{\bar M+\varepsilon} =\tilde u +h^{(\varepsilon)}  \geqslant a-\varepsilon >0 \qquad \text{in } B_1(2e_1)
\end{align*} for \(\varepsilon\) sufficiently small. This contradicts the definition of \(\bar M\). 

\section{Appendix B: Alternate proof of \protect\thref{C35ZH} when \(c\equiv0\)} \label{4CEly}

In this appendix, we will provide an alternate elementary proof of \thref{C35ZH} in the particular case \(c\equiv0\) and \(u\in \mathscr L_s(\R^n)\). More precisely, we prove the following.

\begin{thm} \thlabel{oCuv7Zs2}
Let \(u\in C^{2s+\alpha}(B_1) \cap \mathscr L_s(\R^n) \) for some \(\alpha>0\) with \(2s+\alpha\) not an integer. Suppose that \(u\) is antisymmetric, non-negative in \(\R^n_+\), and \(s\)-harmonic in \(B_1^+\).

Then there exists \(C>0\) depending only on \(n\) and \(s\) such that \begin{align*}
\sup_{B_{1/2}^+} \frac{u(x)}{x_1 } &\leqslant C \inf_{B_{1/2}^+} \frac{u(x)}{x_1 }. 
\end{align*} Moreover, \(\inf_{B_{1/2}^+} \frac{u(x)}{x_1 }\) and \(\sup_{B_{1/2}^+} \frac{u(x)}{x_1 }\) are comparable to \(\Anorm{u}\). 
\end{thm}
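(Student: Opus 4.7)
The strategy is to derive an exact integral representation of $u(x)/x_1$ on $B_{1/2}^+$ via the Poisson kernel on $B_1$, exploit antisymmetry to fold this representation into the upper half-space, and then use the family of mean-value identities from \thref{cqGgE} (indexed by the radius $r \in (0,1]$) to recognise the resulting quantity as comparable to $\|u\|_{\mathscr{A}_s(\R^n)}$.

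First I would observe that the hypothesis ``$u$ is $s$-harmonic in $B_1^+$'' together with antisymmetry upgrades to ``$u$ is $s$-harmonic in all of $B_1$'': at any $x \in B_1 \cap \{x_1 = 0\}$ the antisymmetry forces $u(x) = 0$, and a change of variables $y \mapsto y_*$ in the singular integral defining $(-\Delta)^s u(x)$ shows that the contributions from $\R^n_+$ and $\R^n_-$ cancel. This legitimises the classical Poisson representation for $u$ in $B_1$, and folding it across $\{x_1=0\}$ by antisymmetry yields
\[ u(x) = \gamma_{n,s} \int_{\R^n_+ \setminus B_1^+} \left( \frac{1-|x|^2}{|y|^2-1} \right)^{\!s} \left( \frac{1}{|x-y|^n} - \frac{1}{|x_*-y|^n} \right) u(y)\, \dd y \]
for every $x \in B_1$. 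A two-sided elementary estimate analogous to~\eqref{LxZU6}--\eqref{buKHzlE6} then shows that on $B_{1/2}^+ \times (\R^n_+ \setminus B_1^+)$ the kernel difference is comparable to $x_1 y_1 |y|^{-n-2}$, while $(1-|x|^2)^s$ lies in $[(3/4)^s, 1]$, all constants depending only on $n$ and $s$. Setting
\[ I := \int_{\R^n_+ \setminus B_1^+} \frac{y_1\, u(y)}{(|y|^2-1)^s\, |y|^{n+2}}\, \dd y, \]
I therefore obtain the uniform pointwise sandwich $C^{-1} x_1 I \leq u(x) \leq C x_1 I$ on $B_{1/2}^+$, which yields the Harnack inequality and makes both $\inf_{B_{1/2}^+} u/x_1$ and $\sup_{B_{1/2}^+} u/x_1$ comparable to $I$.

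It remains to prove $I \sim \|u\|_{\mathscr{A}_s(\R^n)}$. The bound $I \gtrsim \|u\|_{\mathscr{A}_s}$ on $\{|y|>2\}$ is immediate since $(|y|^2-1)^s|y|^{n+2} \sim |y|^{n+2s+2}$ there, but the matching upper bound and the contribution to $\|u\|_{\mathscr{A}_s}$ coming from $B_1^+$ cannot be read off from $I$ alone: $I$ has no access to the values of $u$ inside $B_1^+$, and moreover carries the singular weight $(|y|^2-1)^{-s}$ near $|y|=1$. To bypass both issues at once, I would multiply the identity in \thref{cqGgE} by $r^{n+1}$ and integrate in $r \in (0,1]$. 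Since the left-hand side is independent of $r$, Fubini's theorem produces
\[ \frac{\partial_1 u(0)}{n+2} = 2n \gamma_{n,s} \int_{\R^n_+} \frac{y_1\, u(y)\, K(|y|)}{|y|^{n+2}}\, \dd y, \qquad K(t) := \int_0^{\min(t,1)} \frac{r^{n+1+2s}}{(t^2-r^2)^s}\, \dd r. \]
Scaling $r = tu$ on $t \leq 1$ gives $K(t) = c(n,s)\, t^{n+2}$; for $t \geq 2$ the bound $t^2 - r^2 \sim t^2$ yields $K(t) \sim t^{-2s}$; and for $1 \leq t \leq 2$ one truncates to $r \in (0,1/2)$ for the lower bound and uses $(t^2-r^2)^{-s} \in L^1(\dd r)$ for the upper bound. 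Consequently $K(t)/t^{n+2} \sim (1+t^{n+2s+2})^{-1}$, so $\partial_1 u(0) \sim \|u\|_{\mathscr{A}_s(\R^n)}$; noting that the case $r=1$ of \thref{cqGgE} gives $\partial_1 u(0) = 2n\gamma_{n,s}\, I$ closes the argument.

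The main obstacle will be identifying the correct weight $r^{n+1}$ so that the Fubini integral reproduces exactly the $\mathscr{A}_s$-weight, and then verifying the two-sided asymptotics of $K$ uniformly across the transition region $t \in (1,2)$ where the integrand blows up at $r = 1 \approx t$. Without this averaging device, a direct upper bound $I \lesssim \|u\|_{\mathscr{A}_s}$ would require controlling $(|y|^2-1)^{-s}$ against only the values of $u$ near $|y|=1$, which the bare $s$-harmonicity of $u$ in $B_1^+$ does not permit.
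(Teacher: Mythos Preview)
Your proposal is correct and follows essentially the same approach as the paper: the paper likewise uses the folded Poisson representation (its Lemma~\ref{Gku6y}) together with the kernel bounds~\eqref{LxZU6}--\eqref{buKHzlE6} to sandwich $u(x)/x_1$ between constant multiples of the integral you call $I$, and then identifies $I$ with $\partial_1 u(0)/(2n\gamma_{n,s})$ via Proposition~\ref{cqGgE} at $r=1$; your $r^{n+1}$-weighted averaging argument and the kernel $K(t)$ are exactly the content of the paper's Corollary~\ref{OUqJk} (with your $K(|y|)/|y|^{n+2}$ playing the role of its $\psi_s(y)$ up to the change of variables $r \mapsto |y|\tilde r$). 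The one point you make explicit that the paper leaves implicit is the upgrade from $s$-harmonicity in $B_1^+$ to $s$-harmonicity in all of $B_1$, which is indeed needed before invoking the Poisson formula on the full ball.
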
 

Except for the statement  \(\inf_{B_{1/2}^+} \frac{u(x)}{x_1 }\) and \(\sup_{B_{1/2}^+} \frac{u(x)}{x_1 }\) are comparable to \(\Anorm{u}\), Theorem~\ref{oCuv7Zs2} was proven in \cite{ciraolo2021symmetry}. Both the proof presented here and the proof in \cite{ciraolo2021symmetry} rely on the Poisson kernel representation for \(s\)-harmonic functions in a ball. Despite this, our proof is entirely different to the proof in \cite{ciraolo2021symmetry}. This was necessary to prove that~\(\inf_{B_{1/2}^+} \frac{u(x)}{x_1 }\) and \(\sup_{B_{1/2}^+} \frac{u(x)}{x_1 }\) are comparable to \(\Anorm{u}\) which does not readily follow from the proof in \cite{ciraolo2021symmetry}. Our proof of Theorem~\ref{oCuv7Zs2} is a consequence of a new mean-value formula for antisymmetric \(s\)-harmonic functions (Proposition~\ref{cqGgE}) which we believe to be interesting in and of itself.

We first prove an alternate expression of the Poisson kernel representation formula for antisymmetric functions.

\begin{lem} \thlabel{Gku6y}
Let \(u\in C^{2s+\alpha}(B_1) \cap \mathscr L_s (\R^n)\) and \(r\in(0, 1] \).  If \(u\) is antisymmetric and \((-\Delta)^s u = 0 \) in~\(B_1\).

Then \begin{align}
u(x) & =  \gamma_{n,s} \int_{\R^n_+ \setminus B_r^+} \bigg ( \frac{r^2- \vert x \vert^2 }{\vert  y \vert^2 -r^2 } \bigg )^s \bigg ( \frac 1 {\vert x - y \vert^n }-\frac 1 {\vert x_\ast - y \vert^n }  \bigg ) u(y) \dd y \label{BPyAO}
\end{align} for all \(x\in B_1^+\).  Here~$\gamma_{n,s}$ is given in~\eqref{sry6yagamma098765}.
\end{lem}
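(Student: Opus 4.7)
The plan is to derive \eqref{BPyAO} from the classical Poisson kernel representation for $s$-harmonic functions in a ball by exploiting the antisymmetry of $u$ through a symmetric-antisymmetric splitting of the outer integral. Since $u$ is $s$-harmonic in $B_1$ and $u \in \mathscr{L}_s(\R^n)$, for any $r \in (0,1]$ and any $x \in B_r$ the standard Poisson kernel formula (see, e.g., \cite[Section 15]{MR3916700}) gives
\[
u(x) = \gamma_{n,s}\int_{\R^n \setminus B_r} \left(\frac{r^2-|x|^2}{|y|^2-r^2}\right)^{\!s} \frac{u(y)}{|x-y|^n}\,\dd y.
\]
(For $r<1$ this is immediate; for $r=1$ one combines the regularity $u\in C^{2s+\alpha}(B_1)$ up to arguing on $B_{r'}$ with $r'\nearrow 1$ and using the $\mathscr{L}_s$-integrability to pass to the limit.)

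Next, I would split the outer integral as $\int_{\R^n\setminus B_r} = \int_{\R^n_+\setminus B_r^+} + \int_{\R^n_-\setminus B_r^-}$. In the integral over $\R^n_-\setminus B_r^-$ I would perform the change of variables $y \mapsto y_\ast=(-y_1,y')$, which maps $\R^n_-\setminus B_r^-$ bijectively onto $\R^n_+\setminus B_r^+$, has unit Jacobian, and satisfies $|y_\ast|=|y|$ and $|x-y_\ast|=|x_\ast-y|$. Using the antisymmetry $u(y_\ast)=-u(y)$, this second integral becomes
\[
-\gamma_{n,s}\int_{\R^n_+\setminus B_r^+} \left(\frac{r^2-|x|^2}{|y|^2-r^2}\right)^{\!s} \frac{u(y)}{|x_\ast-y|^n}\,\dd y.
\]

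Combining the two contributions yields
\[
u(x)=\gamma_{n,s}\int_{\R^n_+\setminus B_r^+}\left(\frac{r^2-|x|^2}{|y|^2-r^2}\right)^{\!s}\!\left(\frac{1}{|x-y|^n}-\frac{1}{|x_\ast-y|^n}\right)u(y)\,\dd y,
\]
which is exactly \eqref{BPyAO}, valid for $x\in B_r^+\subset B_1^+$; taking $x\in B_1^+$ arbitrary and choosing $r\in[|x|,1]$ covers all cases stated.

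The only mild obstacle is justifying the $r=1$ endpoint, since the Poisson formula is usually stated on $B_r$ with $x$ strictly interior; however, for any $x\in B_1^+$ we have $|x|<1$, so we may take $r=1$ directly, and the $\mathscr{L}_s(\R^n)$ condition combined with the bound $|x-y|^{-n}-|x_\ast-y|^{-n}\le C x_1 y_1 |x_\ast-y|^{-n-2}$ (cf.\ \eqref{LxZU6}) ensures absolute convergence of the rewritten integral throughout.
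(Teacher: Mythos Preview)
Your proposal is correct and follows essentially the same approach as the paper: start from the standard Poisson representation formula for $s$-harmonic functions in $B_r$, split the exterior integral into the two halfspaces, perform the change of variables $y\mapsto y_\ast$ in the integral over $\R^n_-\setminus B_r^-$, and use the antisymmetry of $u$. Your additional remarks about the $r=1$ endpoint and absolute convergence are not needed (the paper's proof is three lines), and note that the formula is really valid for $x\in B_r^+$ rather than all of $B_1^+$---your closing sentence conflates varying $r$ with the fixed $r$ in the statement---but this is a quibble about the statement, not your argument.
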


We remark that
since \(y \mapsto  \frac 1 {\vert x - y \vert^n }-\frac 1 {\vert x_\ast - y \vert^n }  \) is antisymmetric for each \(x\in \R^n_+\), we can rewrite~\eqref{BPyAO} as \begin{align} \label{ZrGnZFNg}
u(x) & = \frac 1 2  \gamma_{n,s} \int_{\R^n \setminus B_r} \bigg ( \frac{r^2- \vert x \vert^2 }{\vert  y \vert^2 -r^2 } \bigg )^s \bigg ( \frac 1 {\vert x - y \vert^n }-\frac 1 {\vert x_\ast - y \vert^n }  \bigg ) u(y) \dd y.
\end{align}

\begin{proof}[Proof of Lemma~\ref{Gku6y}]
The Poisson representation formula \cite{aintegrales} gives\begin{align}
u(x) & =\gamma_{n,s} \int_{\R^n \setminus B_r} \bigg ( \frac{r^2- \vert x \vert^2 }{\vert  y \vert^2 -r^2 } \bigg )^s \frac {u(y)} {\vert x - y \vert^n }\dd y \label{vDlwi}
\end{align} where \(\gamma_{n,s}= \frac{\sin(\pi s)\Gamma (n/2)}{\pi^{\frac n 2 +1 }}\); see also \cite[p.112,p.122]{MR0350027} and \cite[Section 15]{MR3916700}. Splitting the integral in \eqref{vDlwi} into two separate integrals over \(\R^n_+ \setminus B_r^+\) and \(\R^n_- \setminus B_r^-\) respectively, then making the change of variables \(y \to y_\ast\) in the integral over \(\R^n_- \setminus B_r^-\) and using that \(u\) is antisymmetric, we obtain \begin{align*}
u(x) & =\gamma_{n,s} \int_{\R^n_+ \setminus B_r^+} \bigg ( \frac{r^2- \vert x \vert^2 }{\vert  y \vert^2 -r^2 } \bigg )^s \bigg ( \frac 1 {\vert x - y \vert^n }-\frac 1 {\vert x_\ast - y \vert^n }  \bigg ) u(y) \dd y .  \qedhere
\end{align*} 
\end{proof}

Now that we have proven the Poisson kernel formula for antisymmetric functions in Lemma~\ref{Gku6y},
we now establish Proposition~\ref{cqGgE}.

\begin{proof}[Proof of Proposition~\ref{cqGgE}]
Let \(h\in(0,1)\). It follows from Lemma~\ref{Gku6y} that\begin{align}
\frac{u(he_1)} h  & = \frac 1 {h} \gamma_{n,s} \int_{\R^n_+ \setminus B_r^+} \bigg ( \frac{r^2- h^2 }{\vert  y \vert^2 -r^2 } \bigg )^s \bigg ( \frac 1 {\vert he_1 - y \vert^n }-\frac 1 {\vert he_1+ y \vert^n }  \bigg ) u(y) \dd y  \label{CNZLU}
\end{align} for all \(r>h\). Taking a Taylor series in \(h\) about \(0\), we have the pointwise limit \begin{align*}
\lim_{h \to 0 } \frac 1 h \bigg ( \frac 1 {\vert he_1 - y \vert^n }-\frac 1 {\vert he_1+ y \vert^n }  \bigg ) &= \frac{2ny_1 }{\vert y \vert^{n+2}}.
\end{align*} Moreover, by a similar argument to~\eqref{LxZU6},\begin{align*}
\bigg \vert  \frac 1 {(\vert  y \vert^2 -r^2)^s }  \bigg ( \frac 1 {\vert x - y \vert^n }-\frac 1 {\vert x_\ast - y \vert^n }  \bigg )\bigg \vert  &\leqslant   \frac{2n  h \vert  y_1 \vert }{(\vert  y \vert^2 -r^2)^s\vert he_1 - y \vert^{n+2}} \in L^1(\R^n \setminus B_r).
\end{align*} Thus, we obtain the result by taking the limit \(h\to 0\) in \eqref{CNZLU} and applying the Dominated Convergence Theorem to justify swapping the limit and the integral on the right-hand side. 
\end{proof}

{F}rom Proposition~\ref{cqGgE}, we obtain the following corollary. 

\begin{cor} \thlabel{OUqJk}
Let \(u\in C^{2s+\alpha}(B_1) \cap \mathscr L_s(\R^n)\) with \(\alpha>0\) and \(2s+\alpha\) not an integer. Suppose that \(u\) is antisymmetric and \((-\Delta)^s u = 0 \) in \(B_1\).

Then there exists a radially symmetric function \(\psi_s \in C(\R^n) \) satisfying \begin{align}
\frac{C^{-1}}{1+\vert y \vert^{n+2s+2}} \leqslant \psi_s (y) \leqslant \frac{C}{1+\vert y \vert^{n+2s+2}} \qquad \text{for all } y \in \R^n, \label{nhvr2}
\end{align}
for some~$C>1$, such that \begin{align*}
\frac{\partial u}{\partial x_1} (0) &=  \int_{\R^n} y_1\psi_s (y) u(y) \dd y.
\end{align*} In particular, if \(u\) is non-negative in \(\R^n_+\) then \[C^{-1} \Anorm{u} \leqslant \frac{\partial u}{\partial x_1} (0)  \leqslant C \Anorm{u}.\] 
\end{cor}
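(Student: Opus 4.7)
The plan is to start from the one-parameter family of mean-value identities provided by Proposition~\ref{cqGgE},
$$\frac{\partial u}{\partial x_1}(0)=2n\gamma_{n,s}\int_{\R^n_+\setminus B_r^+}\frac{r^{2s}y_1 u(y)}{(\vert y\vert^2-r^2)^s\vert y\vert^{n+2}}\,\dd y\qquad(r\in(0,1]),$$
and to average these identities against a well-chosen probability density $\mu(r)\,\dd r$ on $(0,1]$. Multiplying by $\mu(r)$, integrating in $r$, and swapping the order of integration by Fubini produces the candidate kernel
$$\tilde\psi_s(y):=2n\gamma_{n,s}\int_0^{\min\{1,\vert y\vert\}}\frac{\mu(r)\,r^{2s}}{(\vert y\vert^2-r^2)^s\,\vert y\vert^{n+2}}\,\dd r,$$
which depends only on $\vert y\vert$ (so is radial) and satisfies $\partial_1 u(0)=\int_{\R^n_+}y_1\tilde\psi_s(y)u(y)\,\dd y$.

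The main obstacle lies in the behaviour as $\vert y\vert\to 0^+$. Rescaling $r=\vert y\vert\sigma$ in the integral for $\vert y\vert\leqslant 1$ gives
$$\tilde\psi_s(y)=\frac{2n\gamma_{n,s}}{\vert y\vert^{n+1}}\int_0^1\frac{\mu(\vert y\vert\sigma)\sigma^{2s}}{(1-\sigma^2)^s}\,\dd\sigma,$$
so unless $\mu(r)$ vanishes at least like $r^{n+1}$ as $r\to 0^+$, the kernel $\tilde\psi_s$ blows up at the origin. The economical choice $\mu(r):=(n+2)r^{n+1}$ on $(0,1]$ is itself a probability density, and after the rescaling \emph{every} power of $\vert y\vert$ cancels exactly: for all $\vert y\vert\leqslant 1$ the kernel reduces to the positive constant $c_1:=2n\gamma_{n,s}(n+2)\int_0^1\sigma^{n+1+2s}(1-\sigma^2)^{-s}\,\dd\sigma$.

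For $\vert y\vert\geqslant 1$ the explicit formula
$$\tilde\psi_s(y)=\frac{2n\gamma_{n,s}(n+2)}{\vert y\vert^{n+2}}\int_0^1\frac{r^{n+1+2s}}{(\vert y\vert^2-r^2)^s}\,\dd r$$
delivers the remaining bounds via three routine observations: continuity across $\vert y\vert=1$ (the integrable $(1-r)^{-s}$ singularity of the limit integrand produces precisely the constant $c_1$); uniform positivity and boundedness on the compact annulus $\{1\leqslant\vert y\vert\leqslant 2\}$ by continuity; and, for $\vert y\vert\geqslant 2$, the elementary sandwich $2^{-s}\vert y\vert^{2s}\leqslant(\vert y\vert^2-r^2)^s\leqslant\vert y\vert^{2s}$ for $r\in[0,1]$, which forces the sharp decay $\tilde\psi_s(y)\asymp\vert y\vert^{-(n+2+2s)}$ at infinity. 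Combining the three regimes yields
$$\frac{C^{-1}}{1+\vert y\vert^{n+2s+2}}\leqslant\tilde\psi_s(y)\leqslant\frac{C}{1+\vert y\vert^{n+2s+2}}\qquad\text{on all of }\R^n.$$

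Finally, set $\psi_s:=\tilde\psi_s/2$. Because $\tilde\psi_s$ is radial, the map $y\mapsto y_1\tilde\psi_s(y)u(y)$ is invariant under the reflection $y\mapsto y_*$ (the three sign changes $y_1\to -y_1$, $\tilde\psi_s$ unchanged, and $u(y_*)=-u(y)$ compound to $+1$), so $\int_{\R^n}y_1\tilde\psi_s u\,\dd y=2\int_{\R^n_+}y_1\tilde\psi_s u\,\dd y=2\partial_1 u(0)$, giving the claimed representation $\partial_1 u(0)=\int_{\R^n}y_1\psi_s(y)u(y)\,\dd y$. The ``in particular'' assertion is then immediate: when $u\geqslant 0$ on $\R^n_+$ the integrand $y_1\psi_s u$ is non-negative there, and sandwiching $\psi_s$ by the bounds on $(1+\vert y\vert^{n+2s+2})^{-1}$ converts the equality $\partial_1 u(0)=2\int_{\R^n_+}y_1\psi_s(y)u(y)\,\dd y$ into $C^{-1}\Anorm{u}\leqslant\partial_1 u(0)\leqslant C\Anorm{u}$ after renaming $C$.
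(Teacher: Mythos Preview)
Your proof is correct and takes essentially the same approach as the paper: both average the mean-value identity of Proposition~\ref{cqGgE} against the weight $r^{n+1}$ over $r\in(0,1]$ and swap the order of integration to extract the kernel $\psi_s$. The only cosmetic differences are that the paper works directly with the symmetrized $\R^n$ form~\eqref{ZrGnZFNg} (so no factor-of-two adjustment is needed at the end) and that the paper explicitly justifies the Fubini step via the finiteness of $\Anorm{u}$, which you glossed over but which follows immediately from $u\in\mathscr{L}_s(\R^n)$; conversely, your verification of the two-sided bound on $\psi_s$ is more detailed than the paper's, which simply asserts the bound as ``clear''.
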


\begin{proof}
Let \begin{align*}
\psi_s (y) &:=n (n+2)\gamma_{n,s} \int_0^{\min \{ 1/\vert y \vert,1\}} \frac{r^{2s+n+1}}{(1 - r^2)^s}\dd  r. 
\end{align*} It is clear that \(\psi_s\in C(\R^n)\) and that there exists \(C>1\) such that~\eqref{nhvr2} holds. Since \(u\in \mathscr L_s(\R^n)\), we have that~\( \Anorm{u} <+\infty\), so it follows that \begin{align}
\bigg \vert \int_{\R^n}y_1 \psi_s (y) u(y) \dd y \bigg \vert \leqslant C \Anorm{u} < +\infty. \label{9B6Vm}
\end{align} If we multiply \(\partial_1 u (0)\) by \(r^{n+1}\) then integrate from 0 to 1, Proposition~\ref{cqGgE} and~\eqref{ZrGnZFNg} give that
\begin{align}
\frac 1 {n+2} \frac{\partial u}{\partial x_1} (0) = \int_0^1 r^{n+1} \frac{\partial u}{\partial x_1} (0) \dd r = n \gamma_{n,s}\int_0^1 \int_{\R^n \setminus B_r} \frac{r^{2s+n+1}y_1 u(y)}{(\vert y \vert^2 - r^2)^s\vert y \vert^{n+2}} \dd y \dd r .\label{YYJ5o}
\end{align} At this point, let us observe that if we formally swap the integrals in~\eqref{YYJ5o} and then make the change of variables \(r=\vert y \vert \tilde r\), we obtain \begin{align}
 n \gamma_{n,s}  \int_{\R^n \setminus B_1} \int_0^1 &\frac{r^{2s+n+1}y_1 u(y)}{(\vert y \vert^2 - r^2)^s\vert y \vert^{n+2}}\dd r \dd y +  n \gamma_{n,s}\int_{B_1} \int_0^{\vert  y \vert}  \frac{r^{2s+n+1}y_1 u(y)}{(\vert y \vert^2 - r^2)^s\vert y \vert^{n+2}}\dd r \dd y \nonumber\\
&= n \gamma_{n,s}  \int_{\R^n \setminus B_1} \bigg ( \int_0^{1/\vert y \vert} \frac{\tilde r^{2s+n+1}}{(1 -\tilde r^2)^s}\dd \tilde r \bigg ) y_1 u(y)\dd y \nonumber \\
&\qquad \qquad +  n \gamma_{n,s}\int_{B_1}\bigg ( \int_0^1   \frac{\tilde r^{2s+n+1}}{(1 - \tilde r^2)^s}\dd \tilde r\bigg )y_1 u(y) \dd y \nonumber \\
&= \frac 1 {n+2} \int_{\R^n}y_1 \psi_s(y) u(y) \dd y .  \label{LeVoi}
\end{align} By~\eqref{9B6Vm}, equation~\eqref{LeVoi} is finite. Hence, Fubini's theorem justifies changing the order of integration in \eqref{YYJ5o} and that the right-hand side of~\eqref{YYJ5o} is equal to~\eqref{LeVoi} which proves the result. 
\end{proof}

At this point, we can give the proof of Theorem~\ref{oCuv7Zs2}. 

\begin{proof}[Proof of Theorem~\ref{oCuv7Zs2}]
We will begin by proving that \begin{align}
u(x)  \geqslant C x_1 \Anorm{u}  \qquad \text{for all } x\in B_{1/2}^+. \label{p6oBI}
\end{align}
To this end, we observe that since \(\vert x_\ast - y \vert^{n+2} \leqslant C \vert y \vert^{n+2} \) for all \(x \in B_1\) and~\(y\in \R^n\setminus B_1\),~\eqref{buKHzlE6} gives that \begin{align*}
\frac 1 {\vert x - y \vert^n }-\frac 1 {\vert x_\ast - y \vert^n } \geqslant C \frac{  x_1 y_1}{\vert y \vert^{n+2}} \qquad \text{for all } x\in B_1^+ {\mbox{ and }} y\in \R^n_+ \setminus B_1^+.
\end{align*}Hence, by Lemma~\ref{Gku6y}, for all~$x\in B_{1/2}^+$,  \begin{align*}
u(x) &= C\int_{\R^n \setminus B_1} \bigg ( \frac{1- \vert x \vert^2 }{\vert  y \vert^2 -1 } \bigg )^s \bigg ( \frac 1 {\vert x - y \vert^n }-\frac 1 {\vert x_\ast - y \vert^n }  \bigg ) u(y) \dd y  \\
&\geqslant Cx_1 \int_{\R^n \setminus B_1} \frac{ y_1 u(y)}{\big ( \vert  y \vert^2 -1 \big )^s\vert y \vert^{n+2}}    \dd y 
\end{align*} where we used that \((-y_1)u(y_\ast)=y_1u(y)\) and that \(u\geqslant 0\) in \(\R^n_+\). Then
Proposition~\ref{cqGgE} with \(r=1\) gives that \begin{align*}
u(x) &\geqslant Cx_1 \frac{\partial u}{\partial x_1}(0)  \qquad \text{for all } x\in B_{1/2}^+.
\end{align*} Finally, Corollary~\ref{OUqJk} gives~\eqref{p6oBI}. 

Next, we will prove that  \begin{align}
u(x)  \leqslant C x_1 \Anorm{u}  \qquad \text{for all } x\in B_{1/2}^+. \label{3abqf}
\end{align} Similar to above, for all \(x\in B_{1/2}^+\) and~\(y\in  \R^n_+ \setminus B_1^+\), we have that~\(\vert x- y \vert \geqslant \frac 1 2 \vert y \vert\), so~\eqref{LxZU6} gives that\begin{align*}
 \frac 1 {\vert x - y \vert^n }-\frac 1 {\vert x_\ast - y \vert^n }  \leqslant \frac{C  x_1 y_1}{\vert y \vert^{n+2}} \qquad \text{for all } x\in B_{1/2}^+ {\mbox{ and }} y\in  \R^n_+ \setminus B_1^+.
\end{align*} As before, using Lemma~\ref{Gku6y}, we have that, for all~$x\in B_{1/2}^+$,\begin{align*}
u(x) &= C\int_{\R^n \setminus B_1} \bigg ( \frac{1- \vert x \vert^2 }{\vert  y \vert^2 -1 } \bigg )^s \bigg ( \frac 1 {\vert x - y \vert^n }-\frac 1 {\vert x_\ast - y \vert^n }  \bigg ) u(y) \dd y  \\
&\leqslant C x_1 \int_{\R^n \setminus B_1} \frac{ y_1 u(y)}{\big ( \vert  y \vert^2 -1 \big )^s\vert y \vert^{n+2}}    \dd y.
\end{align*}Then Proposition~\ref{cqGgE} and Corollary~\ref{OUqJk} give that \begin{align*}
u(x) &\leqslant Cx_1 \frac{\partial u}{\partial x_1}(0) \leqslant C x_1 \Anorm{u}  \qquad \text{for all } x\in B_{1/2}^+,
\end{align*} which is~\eqref{3abqf}.

{F}rom~\eqref{p6oBI} and~\eqref{3abqf} the result follows easily. 
\end{proof}

\section{Appendix C: A proof of~\eqref{LA:PAKSM} when~$c:=0$
that relies on extension methods}\label{APPEEXT:1}

We consider the extended variables~$X:=(x,y)\in\R^n\times\R$.
Then, a solution~$u$ of~$(-\Delta)^su=0$ in~$\Omega^+$ can be seen as the trace along~$\Omega^+\times\{0\}$ of its $a$-harmonic extension~$U=U(x,y)$ satisfying
$$ {\rm div}_{\!X}\big(|y|^a\nabla_{\!X} U\big)=0\quad{\mbox{ in }}\,\R^{n+1},$$
where~$a:=1-2s$, see Lemma~4.1 in~\cite{MR2354493}.

We observe that the function~$V(x,y):=x_1$ is also a solution of the above equation.
Also, if~$u$ is antisymmetric, then so is~$U$, and consequently~$U=V=0$ on~$\{x_1=0\}$.

As a result, by the boundary Harnack inequality (see~\cite{MR730093}),
\begin{equation}\label{LA:PAKSM:2}
\sup_{\tilde \Omega^+\times(0,1)} \frac{U}{V}  \leqslant C \inf_{ \tilde \Omega^+\times(0,1)} \frac{U}{V} .
\end{equation}

In addition,
$$ \sup_{\tilde \Omega^+\times(0,1)} \frac{U}{V}\ge
\sup_{\tilde \Omega^+\times\{0\}} \frac{U}{V}
=\sup_{\tilde \Omega^+} \frac{u(x)}{x_1}$$
and similarly
$$ \inf_{\tilde \Omega^+\times(0,1)} \frac{U}{V}\le\inf_{\tilde \Omega^+} \frac{u(x)}{x_1}.$$
{F}rom these observations and~\eqref{LA:PAKSM:2} we obtain~\eqref{LA:PAKSM}
in this case.

\section*{Acknowledgements}

All the authors are members of AustMS. 
SD is supported by
the Australian Research Council DECRA DE180100957
``PDEs, free boundaries and applications''.
JT is supported by an Australian Government Research Training Program Scholarship. 
EV is supported by the Australian Laureate Fellowship
FL190100081
``Minimal surfaces, free boundaries and partial differential equations''.

JT would also like to thank David Perrella for his interesting and fruitful conversations.

\chapter{The nonlocal Harnack inequality for antisymmetric functions}  \label{OXXm8GiN}

We revisit a Harnack inequality for antisymmetric functions that has been recently established for the fractional Laplacian and we extend it to more general nonlocal elliptic operators.

The new approach to deal with these problems that we propose in this paper leverages Bochner's relation, allowing one to relate a one-dimensional Fourier transform of an odd function with a three-dimensional Fourier transform of a radial function.

In this way, Harnack inequalities for odd functions, which are essentially Harnack inequalities of boundary type, are reduced to interior Harnack inequalities.

%
%

\section{Introduction}\label{usUMTbSF}

One of the characterizing properties of classical harmonic functions is their ``rigidity'': in spite of the fact that harmonic functions may exhibit, in general, different patterns, a common feature is that if, at a given point, a harmonic function ``bends up'' in a certain direction, then necessarily it has to ``bend down'' in another direction. This observation is typically formalised by the so-called maximum principle. Furthermore, the maximum principle is, in turn, quantified by the Harnack inequality~\cite{HAR} which asserts that the values of a positive harmonic function in a given ball are comparable (see~\cite{MR2291922} for a thorough introduction to the topic).

The growing interest recently surged in 
the study of nonlocal and fractional equations, especially in relation to fractional powers of the Laplace operator,
has stimulated an intense investigation on the possible validity of Harnack-type inequalities in a nonlocal framework.
Several versions of the Harnack inequality have been obtained for the fractional Laplacian as well as for more general nonlocal operators, see~\cite{MR1918242, MR2013738, MR2031452, MR2095633, MR2244602, MR2354493, MR2754080, MR3237774, MR3299862}.\medskip

A striking difference between the classical and the fractional settings is that in the former
a sign assumption on the solution is taken only in the region of interest, while in the
latter such an assumption is known to be insufficient. Indeed, counterexamples to the validity of the nonlocal
Harnack inequality in the absence of a global sign assumptions have been put forth in~\cite{Kassmann2007clas} (see also~\cite[Theorem~3.3.1]{MR3469920} and~\cite[Theorem~2.2]{MR2817382}). In fact,
these counterexamples are just particular cases
of the significant effect that the faraway oscillations may produce
on the local patterns of solutions to nonlocal equations,
see~\cite{MR3626547, MR3935264}
(in case of solutions which change sign, the nonlocal framework however allows
for a Harnack inequality with a suitable integral remainder, see~\cite[Theorem~2.3]{MR2817382}).\medskip

A rather prototypical situation in which the sign condition is violated is that of odd antisymmetric functions. In the nonlocal world, this situation frequently occurs, especially when dealing with moving planes and reflection methods. On these occasions, the lack of sign assumption needs to be replaced by bespoke maximum principles which carefully take into account the additional antisymmetrical structure of the problem under consideration, see~\cite{MR3395749, MR3453602, MR4108219, MR4030266, MR4308250, RoleAntisym2022}.
In particular, in the antisymmetric setting, the following
nonlocal Harnack inequality has been established in~\cite{MR4567494}:

\begin{thm}
\label{thm:odd:harnack}
Suppose that $\Omega$ is an open set of~$\R^n$,
and let~$u$ be a function on $\R^n$ such that $u$ is $C^{2s + \delta}$ in $\Omega$ and antisymmetric,
with
\begin{equation*}
\int_{\R^n}\frac{|x_1|\,|  u(x)|}{(1 + |x|)^{n + 2 +2s}}\, dx<+\infty
\end{equation*}
and~$x_1 u(x) \ge 0$ for every $x \in \R^n$.

Let $c$ be a bounded Borel function on $\Omega$. If
\[
 (-\Delta)^s u + c u = 0
\]
in $\Omega_+ = \{x \in \Omega : x_1 > 0\}$, then for every compact subset $K$ of $\Omega$ we have
\begin{equation}
\label{eq:odd:harnack}
 \sup_{x \in K_0} \frac{u(x)}{x_1} \le C(K, \Omega, \|c\|_{L^\infty(\Omega_+)}) \inf_{x \in K_0} \frac{u(x)}{x_1} ,
\end{equation}
where $K_0 = \{x \in K : x_1 \ne 0\}$.
\end{thm}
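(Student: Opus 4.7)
The plan is to exploit Bochner's relation, as announced in the introduction, to recast the antisymmetric problem in $\R^n$ as a radial (higher-dimensional) problem in $\R^{n+2}$, where the standard nonlocal interior Harnack inequality applies without any sign restriction beyond the natural positivity of the lifted function.

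First, since $u$ is antisymmetric in $x_1$ and satisfies $x_1 u(x) \ge 0$, I would set
\[
v(x) := \frac{u(x)}{x_1} \qquad \text{for } x_1 \neq 0,
\]
and observe that $v$ is even in $x_1$ and nonnegative on the full set where it is defined. Writing $x = (x_1, x') \in \R \times \R^{n-1}$, I then lift $v$ to a function $\tilde v$ on $\R^{n+2}$ by declaring radial dependence in the first three variables,
\[
\tilde v(\tilde x_1, \tilde x_2, \tilde x_3, x') := v\bigl( \sqrt{\tilde x_1^2 + \tilde x_2^2 + \tilde x_3^2}, x' \bigr).
\]
Because $V(x_1) = x_1$ is a solid harmonic polynomial of degree $\ell = 1$ and $v$ is even (hence radial) in $x_1$, Bochner's relation, combined with the fact that the Fourier symbol of $(-\Delta)^s$ is the radial function $|\xi|^{2s}$, yields the pointwise identity
\[
(-\Delta)^s u(x_1, x') \;=\; x_1 \cdot (-\Delta)^s_{\R^{n+2}} \tilde v (x_1, 0, 0, x')
\]
at every point where the operators are defined classically. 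This is the analog, for the $s$-Laplacian, of the identity already exploited in the earlier chapters, and it is the crucial bridge from a boundary-type statement on $\R^n_+$ to an interior statement on $\R^{n+2}$.

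Next, I would translate the hypothesis $(-\Delta)^s u + c u = 0$ in $\Omega_+$ into an equation for $\tilde v$ on the lifted domain
\[
\tilde\Omega := \bigl\{ (\tilde x_1, \tilde x_2, \tilde x_3, x') \in \R^{n+2} : \bigl(\sqrt{\tilde x_1^2 + \tilde x_2^2 + \tilde x_3^2},\, x'\bigr) \in \Omega \bigr\},
\]
which is an open set avoiding the three-dimensional symmetry axis $\{\tilde x_1 = \tilde x_2 = \tilde x_3 = 0\}$ only where $x_1 = 0$. Dividing by the factor $x_1 = \sqrt{\tilde x_1^2 + \tilde x_2^2 + \tilde x_3^2}$ gives
\[
(-\Delta)^s_{\R^{n+2}} \tilde v \,+\, \tilde c\, \tilde v \;=\; 0 \qquad \text{in } \tilde\Omega,
\]
with $\tilde c(\tilde x_1, \tilde x_2, \tilde x_3, x') := c\bigl(\sqrt{\tilde x_1^2 + \tilde x_2^2 + \tilde x_3^2}, x'\bigr)$, which is bounded by $\|c\|_{L^\infty(\Omega_+)}$. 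The weighted integrability assumption on $u$ translates into the standard tail condition $\tilde v \in L^1_s(\R^{n+2})$ needed to apply the nonlocal interior Harnack inequality in $\R^{n+2}$ (this is where the weight $|x_1|/(1+|x|)^{n+2s+2}$ in the hypothesis plays its role, matching the extra three dimensions that are ``used up'' by the lift).

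Finally, since $\tilde v \ge 0$ on the whole $\R^{n+2}$ (not merely on the lifted $\tilde\Omega$), I would invoke the classical nonlocal Harnack inequality for $\tilde v$, applied on compact subsets of $\tilde\Omega$ corresponding to a given compact $K \subset \Omega$ via the lift. Restricting the resulting two-sided bound back to the slice $\{\tilde x_2 = \tilde x_3 = 0, \, \tilde x_1 > 0\}$ and recalling that $\tilde v = u/x_1$ on that slice delivers \eqref{eq:odd:harnack}, with a constant depending only on $K$, $\Omega$, and $\|c\|_{L^\infty(\Omega_+)}$.

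The main obstacle I expect is technical rather than conceptual: making the Bochner identity rigorous up to the singular set $\{x_1 = 0\}$ (where $v$ is defined only as a limit) and ensuring that the lifted coefficient $\tilde c$ and the integrability of $\tilde v$ interact correctly with the hypotheses available for the interior Harnack inequality in $\R^{n+2}$. A careful approximation argument, together with the regularity hypothesis $u \in C^{2s+\delta}$, should resolve both issues, and a compactness/covering argument then yields the statement for an arbitrary compact $K \subset \Omega$ (whose lift is a compact subset of $\tilde\Omega$ at positive distance from the ``excluded'' axis whenever $K_0 = K \cap \{x_1 \ne 0\}$ is considered).
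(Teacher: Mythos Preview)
Your proposal is correct and follows essentially the same route as the paper: lift $v=u/x_1$ to a $3$-isotropic function $\tilde v$ on $\R^{n+2}$ via Bochner's relation, deduce $(-\Delta)^s\tilde v+\tilde c\,\tilde v=0$ in the lifted domain, and apply the standard nonlocal Harnack inequality to the nonnegative function $\tilde v$. The one point worth flagging is your treatment of the axis $\{\tilde x_1=\tilde x_2=\tilde x_3=0\}$: rather than relying on a covering argument that keeps $\tilde K$ at positive distance from the axis, the paper extends the equation for $\tilde v$ from $\tilde\Omega_0$ to all of $\tilde\Omega$ in the \emph{weak} sense (via a duality identity with test functions, their item~(e)), which lets the Harnack inequality be applied directly on the full lift of $K$.
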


In this article, we revisit the nonlocal Harnack inequality under a different perspective,
providing a new, shorter proof of Theorem~\ref{thm:odd:harnack}. This proof relies
on the so-called Bochner's relation (or Hecke--Bochner identity), which links a one-dimensional Fourier transform of an odd function with a three-dimensional Fourier transform of a radial function: roughly speaking, for any function~$ f$ on the positive reals (under a natural integrability condition), if~$F(|Z|)$ denotes the \mbox{3-D} Fourier transform of the 3-D radial function~$f(|X|)$, then the 1-D Fourier transform of the \mbox{1-D} function~$x f(|x|)$ is equal to~$i z F(|z|)$. The complete statement of Bochner's relation is actually even more general (it works in higher dimensions too, and the variable~$x$ can be replaced by a homogeneous harmonic polynomial), see~\cite[page~72]{MR0290095}.

The gist of the argument that we present here is that
Bochner's relation extends to the case of antisymmetric functions dealt with in
Theorem~\ref{thm:odd:harnack}. More specifically,
the Fourier transform of an \(n\)-dimensional antisymmetric function~$ x_1 f(|x_1|, x')$ is equal to~$i z_1 F(|z_1|, z')$, where~$F (|Z|, z')$ is the $(n + 2)$-dimensional Fourier transform of~$f(|X|, x')$, where~$X$ and~$ Z$ are 3-dimensional vectors, $x_1$ and~$z_1$ are real numbers, and~$x'$ and~$z'$ are~$(n - 1)$-dimensional vectors.

Using the above property, one finds that the $n$-dimensional fractional Laplacian applied to $x_1 f(|x_1|, x')$ is equal to~$x_1 g(|x_1|, x')$ if and only if the~$(n + 2)$-dimensional fractional Laplacian applied to~$f(|X|, x')$ is equal to~$ g(|X|, x')$ (in fact, this is true for an arbitrary function of the Laplacian, not just for its fractional powers).

This, in turn, enables one to apply the usual Harnack inequality in dimension~$n + 2$ to deduce a Harnack inequality (or, in a sense, a ``boundary'' Harnack inequality)
in dimension~$ n$ for antisymmetric functions. In this framework,
Theorem~\ref{thm:odd:harnack} follows from a more classical result for positive solutions in dimension~$n + 2$.\medskip

For other applications of
Bochner's relation to antisymmetric functions, see \cite[Theorem~1]{MR2974318},
\cite[Theorem~1.5 and Proposition~3.1]{MR3413864} and~\cite{MR3640641}.\medskip

Besides its intrinsic elegance and the conciseness of the techniques involved, one of the advantages of the methods leveraging Bochner's relation consists of the broad versatility
of the arguments employed. In particular, the methodology employed allows us to
extend the previous result in Theorem~\ref{thm:odd:harnack} to a more general class of nonlocal operators. To this end, 
given a positive integer $n$, we consider an operator $L_n$, acting on an appropriate class of functions $u:\R^n\to \R$, and taking the form
\begin{equation}
 L_n u(x) = \int_{\R^n \setminus \{0\}} \bigl(u(x) - u(x + y) - \nabla u(x) \cdot y \chi_{B_1}(y)\bigr) j_n(|y|) dy ,
\label{5XOlUbY7}
\end{equation}%
where the kernel $j_n$ satisfies the usual integrability condition
\[
 \int_{\R^n \setminus \{0\}} \min\{1, |x|^2\} j_n(|x|) dx < +\infty .
\]
To proceed, we will need the following additional assumption:
\begin{equation}
\label{radialassumption}
 \text{$j_n$ is differentiable and nonincreasing on $(0, \infty)$.}
\end{equation}
This allows us to introduce the operator $L_{n + 2}$, defined by the same formula with kernel
\begin{equation}
\label{nplus2}
 j_{n + 2}(r) = -\frac{j_n'(r)}{2 \pi r} \, .
\end{equation}
In this setting, we establish that:

\begin{thm}\label{THM5.55}
Let~$L_n$ and $L_{n + 2}$ be given by~\eqref{5XOlUbY7}, and suppose that conditions~\eqref{radialassumption} and~\eqref{nplus2} are satisfied. Suppose that $\Omega$ is an open set of~$\R^n$, and let $u$ be a function on~$\R^n$ such that $u\in C^{2}(\Omega)$, $x_1^{-1} u \in L^\infty(\R^n)$, and $u$ is antisymmetric with~$x_1u(x) \geqslant 0$ for all~$x\in \R^n$.

Let $c$ be a bounded Borel function on $\Omega$. If
\[
L_n u + c u = 0
\]
in $\Omega_+ = \{x \in \Omega : x_1 > 0\}$ and the operator $L_{n+2}$ satisfies the Harnack inequality, then for every compact subset $K$ of $\Omega$ we have
\begin{equation}
\label{eq:odd:harnack-general}
 \sup_{x \in K_0} \frac{u(x)}{x_1} \le C(K, \Omega, \|c\|_{L^\infty(\Omega_+)}) \inf_{x \in K_0} \frac{u(x)}{x_1} ,
\end{equation}
where $K_0 = \{x \in K : x_1 \ne 0\}$ and $C(K, \Omega, M)$ is the same as the constant in the Harnack inequality for $L_{n + 2}$.
\end{thm}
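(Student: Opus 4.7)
The plan is to prove this theorem by reducing it to the standard (positive) Harnack inequality for $L_{n+2}$ via a Bochner-type correspondence that lifts an antisymmetric function on $\R^n$ to a nonnegative function on $\R^{n+2}$ rotationally symmetric in its first three coordinates. This generalises the strategy used earlier in the paper for the fractional Laplacian (where the symbol $|\xi|^{2s}$ was radial); here what matters is that the radial Fourier symbols of $L_n$ and $L_{n+2}$ coincide, which is exactly what the kernel identity $j_{n+2}(r)=-j_n'(r)/(2\pi r)$ encodes. Concretely, since $u$ is antisymmetric in $x_1$ and $C^2$ in $\Om$, the quotient $h(x_1,x'):=u(x_1,x')/x_1$ extends to an even, $C^2$ function of $x_1$, bounded on $\R^n$; writing coordinates on $\R^{n+2}$ as $(X,x')\in \R^3\times\R^{n-1}$, I would set
$$\tilde u(X,x'):=h(|X|,x'),\qquad \widetilde\Om:=\{(X,x')\ \text{s.t.}\ (|X|,x')\in\Om\},\qquad \widetilde\Om_+:=\widetilde\Om\setminus\{X=0\}.$$
By construction $\tilde u$ is nonnegative on all of $\R^{n+2}$, bounded, and $C^2$ on $\widetilde\Om$.

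The core step is the pointwise identity
$$L_n u(x_1,x')\,=\,x_1\,L_{n+2}\tilde u(X,x')\qquad\text{for every }x_1>0\text{ and }|X|=x_1,$$
which I would establish via Fourier analysis. The radial kernels give radial symbols $\bar m_n(|\xi|)$ and $\bar m_{n+2}(|\xi|)$, and the hypothesis $j_{n+2}(r)=-j_n'(r)/(2\pi r)$ is, after one integration by parts in the radial variable combined with the classical recursion between the spherical averages of the plane wave in dimensions $n$ and $n+2$, exactly the condition forcing $\bar m_n\equiv\bar m_{n+2}$. Applying Bochner's relation in the $x_1$-variable at fixed $x'$ then yields
$$\mathscr F\{u(\cdot,x')\}(\xi_1)\,=\,i\,\xi_1\,\mathscr F_3\{\tilde u(\cdot,x')\}(\Xi)\Big|_{|\Xi|=|\xi_1|},$$
with $\mathscr F$ and $\mathscr F_3$ the one- and three-dimensional Fourier transforms; multiplying by the common symbol and inverting produces the displayed identity, the factor $x_1=|X|$ coming from the degree-one solid harmonic built into Bochner's relation.

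Once the identity is in place, the remainder is quick. From $L_n u+cu=0$ in $\Om_+$ it follows that
$$L_{n+2}\tilde u + \tilde c\,\tilde u\,=\,0\qquad\text{in }\widetilde\Om_+,\qquad \tilde c(X,x'):=c(|X|,x'),$$
with $\|\tilde c\|_{L^\infty(\widetilde\Om_+)}=\|c\|_{L^\infty(\Om_+)}$. Since $\tilde u\ge 0$ on all of $\R^{n+2}$, the assumed Harnack inequality for $L_{n+2}$ delivers
$$\sup_{\widetilde K}\tilde u\,\le\,C(\widetilde K,\widetilde\Om_+,\|c\|_{L^\infty(\Om_+)})\inf_{\widetilde K}\tilde u$$
for every compact $\widetilde K\subset\widetilde\Om_+$. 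Given a compact $K\subset\Om$, I would take $\widetilde K$ to be the rotation of $K_0$ about the axis $\{X=0\}$, which is compact in $\widetilde\Om_+$, and translating back via $\tilde u=u/x_1$ then produces~\eqref{eq:odd:harnack-general} with the claimed constant.

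The main obstacle will be rigorously justifying the Bochner-type identity under the mild regularity and integrability assumed on $u$: one must verify that the Fourier multiplier representation of $L_n$ agrees with its Lévy--Khintchine form on the relevant class of functions (in particular handling the principal-value singularity at $y=0$ and the compensator $\nabla u(x)\cdot y\,\chi_{B_1}(y)$), and then establish $\bar m_n\equiv\bar m_{n+2}$ starting from the kernel relation and the monotonicity in~\eqref{radialassumption}. A subsidiary point is checking that $\tilde u$ is regular enough near the axis $\{X=0\}$ for the computation to make sense classically; this follows from the even, smooth extension of $h$ across $\{x_1=0\}$ guaranteed by antisymmetry together with the $C^2$ hypothesis on $u$.
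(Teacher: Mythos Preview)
Your approach is essentially the same as the paper's: lift $u/x_1$ to a $3$-isotropic function $\tilde u$ on $\R^{n+2}$, use Bochner's relation (together with the fact that the kernel identity $j_{n+2}(r)=-j_n'(r)/(2\pi r)$ forces $L_n$ and $L_{n+2}$ to share the same radial symbol) to obtain the pointwise identity $L_n u(x)=x_1\,L_{n+2}\tilde u$, and then invoke the Harnack inequality for $L_{n+2}$. This is exactly what the paper does.

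There is, however, a genuine gap at the axis $\{X=0\}$. You claim that the rotation of $K_0$ about the axis is compact in $\widetilde\Om_+$, but $K_0=K\setminus\{x_1=0\}$ is in general \emph{not} compact (it is $K$ with a relatively closed slice removed), so its rotation is not compact either, and the Harnack hypothesis cannot be invoked on it. Relatedly, the assertion that $\tilde u\in C^2(\widetilde\Om)$ need not hold across the axis: antisymmetry plus $u\in C^2$ does not force $h=u/x_1$ to be $C^2$ at $x_1=0$ (take $u(x_1)=x_1|x_1|^{3/2}$, which is $C^2$, while $h=|x_1|^{3/2}$ is not). The paper handles both issues at once by an additional step you omit: it establishes a weak (dual) version of the Bochner identity---testing against arbitrary $C^\infty_c$ functions on $\widetilde\Om$ and using the $3$-isotropic averaging to reduce to $n$-dimensional test functions---which shows that $L_{n+2}\tilde u+\tilde c\,\tilde u=0$ holds in the weak sense on all of $\widetilde\Om$, not just $\widetilde\Om_+$. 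One can then take $\widetilde K$ to be the rotation of the full compact set $K$ (which \emph{is} compact in $\widetilde\Om$) and apply the Harnack inequality there. Without this weak extension across the codimension-$3$ axis, your final covering step does not close.
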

For the precise definition of an operator satisfying the Harnack inequality, see Definition~\ref{mTbTJpHm}. The Harnack inequality (with a scale-invariant constant) is known to hold for a large class of operators $L_{n + 2}$, at least when $c = 0$; we refer to Remark~\ref{rem:harnack} in Section~\ref{OJSDLN09ryihfjg9reSTRpsdlmvIJJ} for further discussion.

The rest of the paper is organized as follows. In Section~\ref{SE:2}, we give some heuristic comments about the idea of reducing boundary Harnack inequalities to interior ones, to motivate the main strategy adopted in this paper, while, in Section~\ref{sec:bochner},
we recall Bochner's relation and frame it into a setting convenient for our purposes.

The techniques thus introduced are then applied to the case of the fractional Laplace operator in Section~\ref{sec:flap}, leading to the proof of Theorem~\ref{thm:odd:harnack}.

The case of general nonlocal operators will be explained in detail in Section~\ref{OJSDLN09ryihfjg9reSTRpsdlmvIJJ}.

\section{Reducing boundary estimates to interior estimates: a heuristic discussion}\label{SE:2}

The gist of the arguments that we exploit can be easily explained in the classical case of the Laplacian, in which all computations are straightforward, but already reveal a hidden higher dimensional geometric feature of the problem.

For this, we take coordinates~$y\in\R^k$ and~$z\in\R^{m}$. We use the notation~$\rho=|y|$ and, given a smooth function~$u$ in a given domain of~$\R^{m+1}$, for~$\lambda\in\R$ we set
$$ \tilde v(y,z):=|y|^\lambda u(|y|,z)=\rho^\lambda u(\rho,z).$$
Since, for every~$j\in\{1,\dots,m\}$,
$$ \rho\partial_{y_j} \rho=\frac12\partial_{y_j} \rho^2=\frac12\partial_{y_j}(y_1^2+\dots+y_k^2)=y_j,$$
and therefore
$$ \nabla_{y} \rho=\frac{y}\rho,$$
computing the Laplacian in cylindrical coordinates we find that
\begin{align*}
\Delta\tilde v&=\Delta_{y,z}\rho^\lambda \,u+\rho^\lambda \Delta_{y,z}u +2\nabla_{y,z} \rho^\lambda \cdot\nabla_{y,z} u\\&=
\Delta_y \rho^\lambda\, u+\rho^\lambda \big(\Delta_y u+\Delta_z u\big)+2\nabla_y \rho^\lambda\cdot\nabla_y u\\&=
(\lambda+k-2)\lambda\rho^{\lambda-2}u
+\rho^\lambda \big(\partial^2_\rho u+(k-1)\rho^{-1}\partial_\rho u+\Delta_zu\big)\\&\qquad+2\big(\lambda \rho^{\lambda-1}\nabla_y\rho\big)\cdot\big(\partial_\rho u\nabla_y\rho \big)
\\&=(\lambda+k-2)\lambda\rho^{\lambda-2}u+(2\lambda+k-1) \rho^{\lambda-1}\partial_\rho u+\rho^\lambda \partial^2_\rho u+\rho^\lambda \Delta_zu.
\end{align*}
The choice~$2\lambda+k-1=0$ allows one to cancel the first order term.
The additional choice~$\lambda+k-2=0$ makes the zero order term vanish. All in all, if~$k=3$ and~$\lambda=-1$, we find that
\begin{equation}\label{TBCW} \Delta\tilde v=\frac{\Delta u}\rho,\end{equation}
hence~$\tilde v$ is harmonic if and only if so is~$u$.

For us, the interest of~\eqref{TBCW} is that, for instance, it allows us to recover a boundary Harnack inequality for the Laplace
operator straightforwardly from the interior Harnack inequality.
Specifically, {\em if~$c$ is a bounded Borel function on $B_1^+:=B_1\cap\{x_1>0\}\subset\R^n$
and~$u$ is a nonnegative solution of~$-\Delta u + c u = 0$ in~$B_1^+$, then
\begin{equation}
\label{PRE:eq:odd:harnack}
 \sup_{x \in B_{1/2}^+} \frac{u(x)}{x_1} \le C \inf_{x \in B_{1/2}^+} \frac{u(x)}{x_1} ,
\end{equation}
where $C$ is a positive constant depending only on~$n$ and~$\|c\|_{L^\infty(B_1^+)}$}.

In our setting, the proof of~\eqref{PRE:eq:odd:harnack} can be obtained directly combining~\eqref{TBCW} and the classical Harnack inequality: namely, if we use the notation~$\tilde c(y,z):=c(|y|,z)$, we deduce from~\eqref{TBCW}  that
$$\big(-\Delta \tilde v + \tilde c \tilde v\big)|y| =-\Delta u + c u = 0$$
whenever~$(|y|,z)\in B_1$ and so, in particular, for all~$(y,z)\in\R^3\times\R^{n-1}$ such that~$|(y,z)|<1$.

This and the classical Harnack inequality lead to
$$ \sup_{|(y,z)|<1/2} \tilde v(y,z) \le C \inf_{|(y,z)|<1/2} \tilde v(y,z),$$
from which~\eqref{PRE:eq:odd:harnack} plainly follows.

The strategy that we follow in this note is precisely to adapt this method to more general settings of nonlocal type.
In this situation, additional symmetry structures have to be imposed on the solution, due to the nonlocal features of the operator.
Moreover, the analog of~\eqref{TBCW} requires a series of more subtle arguments, since cylindrical coordinates are typically not easy to handle in a nonlocal framework, due to remote point interactions: this difficulty will be overcome by exploiting 
some classical tools from harmonic analysis, as described in detail in the forthcoming Section~\ref{sec:bochner}.

\section{Bochner's relation and its consequences}
\label{sec:bochner}

\subsection{Notation}

Let~$\ell\in\N$.
As usual, a {\em solid harmonic polynomial} of degree $\ell$ is a homogeneous polynomial $V$ on $\R^n$ of degree $\ell$ (that is, $V(r x) = r^\ell V(x)$) satisfying the Laplace equation $\Delta V = 0$.

Below, we will also use symbols with a tilde to denote elements of $\R^{n + 2 \ell}$ and functions on $\R^{n + 2 \ell}$, while symbols without a tilde for vectors in~$\R^n$ and functions on~$\R^n$. That is, for typographical convenience, we will usually (i.e., unless differently specified) write
$x\in\R^n$ and $\tilde x\in\R^{n + 2 \ell}$, and similarly~$f:\R^n\to\R$ and~$\tilde{f}:\R^{n+2\ell}\to\R$.

We use $\fourier_n$ to denote the $n$-dimensional {\em Fourier transform}: if $f$ is an integrable function on $\R^n$, then
\[
 \fourier_n f(\xi) = \int_{\R^n} f(x) e^{2\pi i \xi\cdot x}\, dx .
\]

When we want to emphasize that we are taking the Fourier transform with respect to the variable~$x$
we also use the notation~$ \fourier_n^{(x)}$. For instance, if~$f:\R^{2n}=\R^n\times\R^n\to\R$, to be written as~$f(x,y)$, we have that, for each~$y\in\R^n$,
$$  \fourier_n^{(x)} f(\xi,y) = \int_{\R^n} f(x,y) e^{2\pi i \xi\cdot x} \,dx .$$
We are following here the
convention on the Fourier transform
on page 28 in~\cite{MR0290095}.

We also consider $L_n$ to be a Fourier multiplier on $\R^n$ with radial symbol $\psi(|\xi|)$, that is
\begin{equation}\label{FOU:SI}
 \fourier_n L_n f(\xi)  = \psi(|\xi|) \fourier_n f(\xi)
\end{equation}
for every Schwartz function $f$.

The case in which $\psi(r) = r^{2 s}$ corresponds to the fractional Laplacian $L_n = (-\Delta)^s$, but the contents of this section work for general $\psi$ with at most polynomial growth.

Let $\mathcal R$ be a rotation on $\R^n$ (that is, an orthogonal transformation of $\R^n$), and for a function $f$ on $\R^n$ denote $\mathcal R f(\xi) = f(\mathcal R \xi)$. By definition, we have the following transformation rule for the Fourier transform:
\[
 \fourier_n (\mathcal R f)(\xi) = \mathcal R (\fourier_n f)(\xi) .
\]
Radial functions are invariant under $\mathcal R$, and so
\[
 \psi(|\xi|) \fourier_n (\mathcal R f)(\xi) = \mathcal R \bigl[\psi(|\xi|) \fourier_n f(\xi)\bigr] .
\]
The inverse Fourier transform satisfies a similar transformation rule, so that
\[
 \fourier_n^{-1} \bigl[\psi(|\xi|) \fourier_n \mathcal R f(\xi)\bigr] = \mathcal R \bigl[ \fourier_n^{-1} \bigl[ \psi(|\xi|) \fourier_n f(\xi) \bigr]\bigr]
\]
whenever $f$ is, say, a Schwartz function. In view of~\eqref{FOU:SI}, this means that
\begin{equation}
\label{eq:commute}
 L_n \mathcal R f(\xi) = \mathcal R L_n f(\xi) ,
\end{equation}
that is, $L_n$ commutes with rotations.

Let us call a function $f$ on $\R^n$ isotropic with respect to the first $k$ coordinates, or briefly \emph{$k$-isotropic}, if $f(x) = f(y)$ whenever
\[
 |(x_1, x_2, \ldots, x_k)| = |(y_1, y_2, \ldots, y_k)| \quad \text{and} \quad (x_{k+1}, x_{k+2}, \ldots, x_n) = (y_{k+1}, y_{k+2}, \ldots, y_n) .
\]
We will only use this notion for $k = 3$: the proof of our main result involves $3$-isotropic functions in $\R^{n+2}$.

A function $f$ on $\R^n$ is said to be \emph{symmetric} (with respect to the first variable) if~$f(-x_1, x_2, \ldots, x_n) = f(x_1, x_2, \ldots, x_n)$ (that is, if~$f$ is even with respect to the first variable; this coincides with the notion of a $1$-isotropic function). Similarly, $f$ is said to be \emph{antisymmetric} (with respect to the first variable) if~$f(-x_1, x_2, \ldots, x_n) = -f(x_1, x_2, \ldots, x_n)$. Clearly, $f$ is symmetric if and only if $x_1 f(x)$ is antisymmetric.

If $f$ is a $k$-isotropic function on $\R^n$, then $f$ is invariant under every rotation $\mathcal R$ which only acts on the first $k$ coordinates and leaves the remaining $n - k$ coordinates unchanged: $\mathcal R f = f$. In view of~\eqref{eq:commute}, we have $\mathcal R(L_n f) = L_n(\mathcal R f) = L_n f$, that is, $L_n f$ is invariant under $\mathcal R$. In other words, $L_n$ maps $k$-isotropic Schwartz functions into $k$-isotropic functions.

\subsection{Fourier transforms of radial functions} Now
we recall the tools from harmonic analysis which will come in handy for the development of the theory.
The gist is the link of the Fourier transform of an $n$-dimensional radial function (multiplied by an appropriate
harmonic polynomial) with that
of its $(n+2\ell)$-dimensional counterpart.

For instance, roughly speaking, for any even function~$f$ on~$\R$, if~$\tilde f:\R^3\to\R$ has the same profile of~$f$ (i.e., $\tilde f(\tilde x)=f(|\tilde x|)$ for each~$\tilde x\in\R^3$), then
the Fourier transform of the one-dimensional function~$x f(x)$ coincides with~$i\xi$ times
the Fourier transform of the three-dimensional radial function~$\tilde f$ at~$\tilde\xi$, as long as~$\tilde\xi\in\R^3$
is such that~$|\xi|=|\tilde\xi|$.

This fact holds true in higher dimensions as well, with suitable modifications,
and goes under the name of Bochner's relation (or Hecke--Bochner identity). This will be our crucial tool and we now recall its precise statement:

\begin{thm}[Bochner's relation; Corollary on page 72 in~\cite{MR0290095}]
\label{thm:bochner}
Let $f$ and $\tilde{f}$ be two radial Schwartz functions on $\R^n$ and $\R^{n + 2 \ell}$, respectively, with the same profile function: $f(x) = \tilde{f}(\tilde{x})$ whenever $|x| = |\tilde x|$.

Let $V$ be a solid harmonic polynomial on $\R^n$ of degree $\ell$.

Then,
\[
 \fourier_n (V f)(\xi) = i^\ell V(\xi) \fourier_{n + 2 \ell} \tilde{f}(\tilde{\xi}) ,
\]
for every~$\xi\in\R^n$ and~$\tilde\xi\in\R^{n+2\ell}$ such that $|\tilde{\xi}| = |\xi|$.
\end{thm}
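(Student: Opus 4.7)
The plan is to reduce Bochner's relation to a computation involving Hankel transforms, exploiting the fact that the Fourier transform of a radial function on $\R^m$ can be expressed as a Hankel transform of order $(m-2)/2$ of its radial profile.

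First I would reduce to the case in which $V(x) = |x|^\ell Y(x/|x|)$, with $Y$ a surface spherical harmonic of degree $\ell$; this is no loss of generality since every solid harmonic polynomial of degree $\ell$ has precisely this form. Writing $\xi = |\xi|\eta$ with $\eta \in S^{n-1}$ and passing to polar coordinates $x = r\omega$ in $\R^n$, the left-hand side of Bochner's identity becomes
\begin{equation*}
\fourier_n(V f)(\xi) \;=\; \int_0^\infty f_0(r)\, r^{n+\ell-1} \left( \int_{S^{n-1}} Y(\omega)\, e^{2\pi i r |\xi|\, \omega \cdot \eta}\, d\sigma(\omega) \right) dr,
\end{equation*}
where $f_0$ denotes the common radial profile of $f$ and $\tilde{f}$.

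Second, I would evaluate the inner spherical integral using the Funk--Hecke theorem: expanding the plane wave $e^{2\pi i t \omega \cdot \eta}$ in Gegenbauer polynomials in $\omega \cdot \eta$ and using the orthogonality of spherical harmonics on $S^{n-1}$ isolates the component of degree $\ell$. The outcome is that
\begin{equation*}
\int_{S^{n-1}} Y(\omega)\, e^{2\pi i t\, \omega \cdot \eta}\, d\sigma(\omega) \;=\; i^\ell\, Y(\eta)\, \kappa_{n,\ell}\, \frac{J_{(n+2\ell-2)/2}(2\pi t)}{t^{(n+2\ell-2)/2}},
\end{equation*}
for some explicit constant $\kappa_{n,\ell}$. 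Plugging this back in and collecting the factor $|\xi|^\ell Y(\eta) = V(\xi)$, the remaining $r$-integral is recognized as a Hankel transform of $f_0$ of order $(n+2\ell-2)/2$.

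Finally, I would compare this with the classical expression for the Fourier transform of a radial function in $\R^{n+2\ell}$: introducing polar coordinates in $\R^{n+2\ell}$ and doing the analogous spherical integration (now against the constant $1$, since $\tilde{f}$ is radial) produces precisely the same Hankel transform of order $(n+2\ell-2)/2$ of $f_0$, evaluated at $|\tilde{\xi}| = |\xi|$. Matching the two expressions and keeping track of the normalization constants gives the desired identity. The main obstacle is the Funk--Hecke-type spherical integral in the second step: everything else is either bookkeeping of constants or a direct change of variables, but the appearance of the correct Bessel order $(n+2\ell-2)/2$ and of the factor $i^\ell$ hinges on this identity, which is really where the jump from dimension $n$ to dimension $n+2\ell$ is encoded.
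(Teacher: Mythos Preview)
The paper does not provide a proof of this statement: Theorem~\ref{thm:bochner} is quoted as a known result, with the citation ``Corollary on page 72 in~\cite{MR0290095}'' (Stein and Weiss, \emph{Introduction to Fourier Analysis on Euclidean Spaces}). Your outline is correct and is essentially the classical argument one finds in that reference: pass to polar coordinates, use the Funk--Hecke formula to reduce the spherical integral against a degree-$\ell$ spherical harmonic to a Bessel function of order $(n+2\ell-2)/2$, and then recognise the resulting radial integral as the Hankel-transform expression for the Fourier transform of a radial function in dimension $n+2\ell$. So your approach matches the standard literature proof that the paper is citing rather than reproving.
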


In the statement of Theorem~\ref{thm:bochner}, the assumption on the profile function of~$f$ and~$\tilde f$ simply means that
there exists a suitable~$f_0:\R\to\R$ such that~$f(x)=f_0(|x|)$ and~$\tilde f(\tilde x)=f_0(|\tilde x|)$ for all~$x\in\R^n$ and~$\tilde x\in\R^{n+2\ell}$ (the function~$f_0$ is called in jargon ``profile function'').

In our setting, we will only need Theorem~\ref{thm:bochner} for $n= \ell = 1$ and $V(x) = x$. In this case Theorem~\ref{thm:bochner} states that if $f$ is an even function on $\R$ and $\tilde{f}(\tilde{x}) = f(|x|)$ is the corresponding radial function on $\R^3$, then
\begin{equation}
\label{eq:bochner:odd}
 \fourier_1[x f(x)](\xi) = i \xi \fourier_3 \tilde{f}(\xi, 0, 0) .
\end{equation}

We have the following immediate extension of Theorem~\ref{thm:bochner}.

\begin{lem}
\label{lem:odd:fourier}
Consider a function~$f:\R^n=\R\times\R^{n-1}\to\R$,
to be written as~$f(x, y)$, where $x \in \R$ and $y \in \R^{n - 1}$.

Assume that~$f$ is a Schwartz function on $\R^n$ which is symmetric with respect to the variable $x$ (i.e., $f(-x, y) = f(x, y)$
for all~$x \in \R$ and $y \in \R^{n - 1}$).

Let~$\tilde f:\R^{n+2}=\R^3\times\R^{n-1}\to\R$,
to be written as~$\tilde{f}(\tilde{x}, y)$, where $\tilde{x} \in \R^3$ and $y \in \R^{n - 1}$, be a $3$-isotropic function given by
\[
 \tilde{f}(\tilde{x}, y) = f(|\tilde{x}|, y) .
\]
Then,
\[
 \fourier_n[x f(x, y)](\xi, \eta) = i \xi \fourier_{n + 2} \tilde{f}((\xi, 0, 0), \eta) . \qedhere
\]
\end{lem}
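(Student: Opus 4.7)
The plan is to reduce the statement to the one-dimensional version of Bochner's relation recorded in~\eqref{eq:bochner:odd}, by performing the Fourier transform fiber-by-fiber with respect to the variable $y \in \R^{n-1}$. More precisely, since $f$ is Schwartz on $\R^n$, Fubini's theorem allows us to factor
\[
\fourier_n = \fourier_1^{(x)} \circ \fourier_{n-1}^{(y)} \qquad \text{and} \qquad \fourier_{n+2} = \fourier_3^{(\tilde x)} \circ \fourier_{n-1}^{(y)} ,
\]
and both partial Fourier transforms send Schwartz functions to Schwartz functions.

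Following this, I would introduce the intermediate quantities
\[
g(x, \eta) := \fourier_{n-1}^{(y)} f(x, \eta) \qquad \text{and} \qquad \tilde g(\tilde x, \eta) := \fourier_{n-1}^{(y)} \tilde f(\tilde x, \eta) ,
\]
and verify three properties: first, for each fixed $\eta$, the function $g(\cdot, \eta)$ is Schwartz on $\R$; second, since $f(-x, y) = f(x, y)$, also $g(-x, \eta) = g(x, \eta)$; and third, because $\tilde f(\tilde x, y) = f(|\tilde x|, y)$ and $\fourier_{n-1}^{(y)}$ only acts on the $y$ variable, we have the profile-matching identity $\tilde g(\tilde x, \eta) = g(|\tilde x|, \eta)$. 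These three facts put us in a position to apply the one-dimensional Bochner relation~\eqref{eq:bochner:odd} to the even Schwartz function $g(\cdot, \eta)$, with the $3$-isotropic extension $\tilde g(\cdot, \eta)$, for every fixed $\eta$. This yields
\[
\fourier_1^{(x)}\!\big[x\, g(x, \eta)\big](\xi) \;=\; i\xi \, \fourier_3^{(\tilde x)} \tilde g\big((\xi, 0, 0), \eta\big) .
\]

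Finally, I would recognize the left-hand side above as $\fourier_n[x f(x, y)](\xi, \eta)$ and the right-hand side as $i\xi\, \fourier_{n+2}\tilde f\big((\xi, 0, 0), \eta\big)$, using the factorization of the full Fourier transforms recorded at the outset. I do not anticipate a genuine obstacle here: the lemma is essentially a parametrized version of Theorem~\ref{thm:bochner} in the case $n = \ell = 1$, $V(x) = x$, with $\eta$ playing the role of a passive parameter. The only points requiring mild care are to justify the use of Fubini (immediate since $f$ is Schwartz) and to confirm that the evenness and profile-matching conditions are preserved after taking the partial Fourier transform in $y$, both of which follow from the fact that $\fourier_{n-1}^{(y)}$ commutes with the reflection $x \mapsto -x$ and with replacing $\tilde x$ by $|\tilde x|$ in the $x$-slot.
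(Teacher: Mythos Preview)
Your proposal is correct and follows essentially the same approach as the paper: both factor the full Fourier transform via Fubini and apply the one-dimensional Bochner relation~\eqref{eq:bochner:odd} fiber-by-fiber. The only difference is the order of operations---you first take $\fourier_{n-1}^{(y)}$ and then apply Bochner in the $x$-variable with $\eta$ fixed, whereas the paper first applies Bochner in $x$ with $y$ fixed and then takes $\fourier_{n-1}^{(y)}$---but this is an immaterial choice since the two partial transforms commute on Schwartz functions.
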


\begin{proof}
Given $y \in \R^{n - 1}$, we apply Bochner's relation~\eqref{eq:bochner:odd} with respect to $x \in \R$ to find that
\[
 \fourier_1^{(x)}[x f(x, y)](\xi, y) = i \xi \fourier_3^{(\tilde{x})} \tilde{f}(\tilde{\xi}, y) .
\]
It is now sufficient to apply the Fourier transform with respect to $y \in \R^{n - 1}$ for each fixed $\xi \in \R$.
\end{proof}

From Lemma~\ref{lem:odd:fourier} we obtain the following result. 

\begin{lem}
\label{lem:odd:operator}
In the assumptions of Lemma~\ref{lem:odd:fourier}, for every~$x\in\R$ and~$y\in\R^{n-1}$ we have that
\[
 L_n[x f(x, y)] = x L_{n + 2} \tilde{f}((x, 0, 0), y) .
\]
\end{lem}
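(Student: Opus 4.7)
The plan is to pass to the Fourier side, apply Lemma~\ref{lem:odd:fourier} twice (once to open up $xf$ and once to repackage the right-hand side), and invoke Fourier inversion. Concretely, write $g(x,y) := L_{n+2}\tilde{f}((x,0,0),y)$ for $x \geq 0$ and extend $g$ evenly in $x$, so that the right-hand side we wish to obtain equals $xg(x,y)$. The strategy is to show that $\mathcal{F}_n[L_n(xf)](\xi,\eta) = \mathcal{F}_n[xg](\xi,\eta)$ and then invoke injectivity of the Fourier transform.

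First, by the multiplier definition of $L_n$ in~\eqref{FOU:SI} combined with Lemma~\ref{lem:odd:fourier},
\[
\mathcal{F}_n[L_n(xf)](\xi,\eta) = \psi\bigl(|(\xi,\eta)|\bigr)\,\mathcal{F}_n[xf(x,y)](\xi,\eta) = i\xi\,\psi\bigl(|(\xi,\eta)|\bigr)\,\mathcal{F}_{n+2}\tilde{f}\bigl((\xi,0,0),\eta\bigr).
\]
Since $|((\xi,0,0),\eta)| = |(\xi,\eta)|$, the multiplier factor can be rewritten as $\psi(|((\xi,0,0),\eta)|)$, and by~\eqref{FOU:SI} applied in dimension $n+2$ this produces
\[
\mathcal{F}_n[L_n(xf)](\xi,\eta) = i\xi\,\mathcal{F}_{n+2}[L_{n+2}\tilde{f}]\bigl((\xi,0,0),\eta\bigr).
\]

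Next, the key observation (already noted in the paragraph preceding Lemma~\ref{lem:odd:fourier}) is that $\tilde{f}$ is $3$-isotropic on $\R^{n+2}$ and $L_{n+2}$ commutes with any rotation acting on the first three coordinates by~\eqref{eq:commute}. Hence $L_{n+2}\tilde{f}$ is itself $3$-isotropic, and so admits a representation of the form $(L_{n+2}\tilde{f})(\tilde{x},y) = h(|\tilde{x}|,y)$. Setting $g(x,y) := h(|x|,y)$ yields an even (hence symmetric) function of $x \in \R$ whose $3$-isotropic lift $\tilde{g}$ on $\R^{n+2}$ equals $L_{n+2}\tilde{f}$. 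Applying Lemma~\ref{lem:odd:fourier} to $g$ in place of $f$, one obtains
\[
\mathcal{F}_n[xg(x,y)](\xi,\eta) = i\xi\,\mathcal{F}_{n+2}\tilde{g}\bigl((\xi,0,0),\eta\bigr) = i\xi\,\mathcal{F}_{n+2}[L_{n+2}\tilde{f}]\bigl((\xi,0,0),\eta\bigr),
\]
which coincides with the expression derived above for $\mathcal{F}_n[L_n(xf)]$. By injectivity of the Fourier transform on Schwartz functions, $L_n[xf(x,y)] = xg(x,y) = x\,L_{n+2}\tilde{f}((x,0,0),y)$, as desired.

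The only genuinely delicate step is the middle one: justifying that $L_{n+2}\tilde{f}$ is $3$-isotropic so that Lemma~\ref{lem:odd:fourier} can be applied in reverse. This is where the assumption that $\psi$ is radial is essential, and it is encoded in~\eqref{eq:commute}. Everything else is bookkeeping with Fourier transforms, using that $\psi$ has at most polynomial growth so that all formal manipulations are justified in the Schwartz class.
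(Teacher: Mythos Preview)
Your proof is correct and follows essentially the same approach as the paper: both compute $\mathcal{F}_n[L_n(xf)]$ via the multiplier definition and Lemma~\ref{lem:odd:fourier}, rewrite the symbol using $|(\xi,\eta)|=|((\xi,0,0),\eta)|$ to recognize $\mathcal{F}_{n+2}[L_{n+2}\tilde f]$, and then apply Lemma~\ref{lem:odd:fourier} in reverse to the $3$-isotropic function $L_{n+2}\tilde f$ before inverting the Fourier transform. Your identification of the $3$-isotropy of $L_{n+2}\tilde f$ as the key point matches the paper exactly.
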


\begin{proof}
Let us denote
\[
 \tilde{g}(\tilde{x}, y) = L_{n + 2} \tilde{f}(\tilde{x}, y)
\]
and
\[
 g(x, y) = \tilde{g}((x, 0, 0), y) = L_{n +2} \tilde{f}((x, 0, 0), y) .
\]
Since $L_{n + 2}$ maps $3$-invariant functions into $3$-invariant functions, $\tilde{g}$ is $3$-invariant.

Let now~$(\xi,\eta)\in\R\times\R^{n-1}$ and let~$\tilde\xi\in\R^3$ be such that~$|\xi| = |\tilde{\xi}|$. Using, in order, the definition of $L_n$
in~\eqref{FOU:SI}, Lemma~\ref{lem:odd:fourier}, once again the definition of $L_{n + 2}$ in~\eqref{FOU:SI}, the definition of~$\tilde{g}$, and once again Lemma~\ref{lem:odd:fourier}, we find that
\begin{align*}
 \fourier_n L_n[x f(x, y)](\xi, \eta) & = \psi(|(\xi, \eta)|)\;\fourier_n[x f(x, y)](\xi, \eta) \\
 & = \psi(|(\xi, \eta)|) \; i \xi \fourier_{n + 2} \tilde{f}(\tilde{\xi}, \eta) \\
 & = i \xi \; \psi(|(\tilde \xi, \eta)|) \; \fourier_{n + 2} \tilde{f}(\tilde{\xi}, \eta) \\
 & = i \xi \fourier_{n + 2} L_{n + 2} \tilde{f}(\tilde{\xi}, \eta) \\
 & = i \xi \fourier_{n + 2} \tilde{g}(\tilde{\xi}, \eta) \\
 & = \fourier_n[x g(x, y)](\xi, \eta) .
\end{align*}
Thus, by inverting the Fourier transform,
\[ L_n[x f(x, y)]=x g(x, y). \]
Combining this and the definition of $g$, we conclude that
\begin{equation*}
L_n[x f(x, y)]  
=x L_{n + 2} \tilde{f}((x, 0, 0), y),
\end{equation*}
as desired.
\end{proof}

In the same vein, one shows the following more general result (we omit the proof since it can be obtained via the same argument as above, by utilizing the general identity in Theorem~\ref{thm:bochner} instead of the ones specialized for the case~$V(x)=x$).

\begin{lem}
\label{lem:solid:operator}
Let $V$ be a solid harmonic polynomial of degree $\ell$ on $\R^k$. Let $f(x, y)$, where $x \in \R^k$ and $y \in \R^{n - k}$, be a Schwartz function on $\R^n$ which is isotropic with respect to the first variable $x$: $f(x, y) = f(x', y)$ whenever $|x| = |x'|$. Let $\tilde{f}(\tilde{x}, y)$, where $\tilde{x} \in \R^{k + 2 \ell}$ and $y \in \R^{n - k}$, be the corresponding Schwartz function on $\R^{n + 2 \ell}$, given by
\[
 \tilde{f}(\tilde{x}, y) = f(x, y)
\]
whenever $|\tilde{x}| = |x|$.

Then
\[
 L_n[V(x) f(x, y)] = V(x) L_{n + 2 \ell} \tilde{f}((x, 0), y) ,
\]
where $(x, 0) \in \R^{k + 2 \ell}$ stands for the vector $x \in \R^k$ padded with $2 \ell$ zeroes.
\end{lem}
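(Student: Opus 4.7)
The plan is to mimic closely the proof of Lemma~\ref{lem:odd:operator}, replacing the one-dimensional Bochner identity~\eqref{eq:bochner:odd} by the full Bochner relation stated in Theorem~\ref{thm:bochner}. First, working at fixed $y\in\R^{n-k}$, I would apply Theorem~\ref{thm:bochner} in the $x$-variable to the isotropic Schwartz function $x\mapsto f(x,y)$ and its lifted companion $\tilde x\mapsto \tilde f(\tilde x, y)$ (these have the same profile function by hypothesis), obtaining
\[
 \fourier_k^{(x)}[V(x)f(x,y)](\xi,y) \;=\; i^\ell V(\xi)\,\fourier_{k+2\ell}^{(\tilde x)}\tilde f(\tilde\xi,y)
\]
whenever $|\tilde\xi|=|\xi|$. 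Then I would take $\fourier_{n-k}^{(y)}$ on both sides to upgrade to the full $n$-dimensional and $(n+2\ell)$-dimensional Fourier transforms, yielding
\[
 \fourier_n[V(x)f(x,y)](\xi,\eta) \;=\; i^\ell V(\xi)\,\fourier_{n+2\ell}\tilde f(\tilde\xi,\eta).
\]

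Next, I would multiply by the symbol $\psi(|(\xi,\eta)|)$. The key point is that $|\xi|=|\tilde\xi|$ implies $|(\xi,\eta)|=|(\tilde\xi,\eta)|$, so the symbol is unchanged when we view it as acting on the $(n+2\ell)$-dimensional side. Invoking the definition~\eqref{FOU:SI} of $L_n$ and $L_{n+2\ell}$ on each side, this yields
\[
 \fourier_n L_n[V(x)f(x,y)](\xi,\eta) \;=\; i^\ell V(\xi)\,\fourier_{n+2\ell}(L_{n+2\ell}\tilde f)(\tilde\xi,\eta).
\]

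Now I need to recognize the right-hand side as $\fourier_n[V(x)g(x,y)](\xi,\eta)$ with $g(x,y):=L_{n+2\ell}\tilde f((x,0),y)$. This is where the commutation property~\eqref{eq:commute} of Fourier multipliers with rotations is crucial: since $\tilde f(\tilde x,y)$ is invariant under rotations acting on the first $k+2\ell$ coordinates (it depends on $\tilde x$ only through $|\tilde x|$), the function $L_{n+2\ell}\tilde f(\tilde x,y)$ inherits the same invariance, hence depends on $\tilde x$ only through $|\tilde x|$. Therefore $L_{n+2\ell}\tilde f$ and $g$ share a common profile function in the first $k+2\ell$ coordinates, and I can apply Theorem~\ref{thm:bochner} in reverse (again first in $x$, then in $y$) to obtain
\[
 \fourier_n[V(x)g(x,y)](\xi,\eta) \;=\; i^\ell V(\xi)\,\fourier_{n+2\ell}(L_{n+2\ell}\tilde f)(\tilde\xi,\eta).
\]

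Combining the last two displayed identities and inverting the Fourier transform gives the desired identity $L_n[V(x)f(x,y)]=V(x)g(x,y)=V(x)L_{n+2\ell}\tilde f((x,0),y)$. The main obstacle I anticipate is the bookkeeping needed to justify applying Bochner's relation with $y$ as a parameter and to verify that the isotropy of $\tilde f$ in its first $k+2\ell$ coordinates genuinely passes to $L_{n+2\ell}\tilde f$; both issues are dispatched respectively by a standard tensorization-of-Fourier-transforms argument (since $\tilde f$ is Schwartz) and by the rotation-commutation identity~\eqref{eq:commute} extended to rotations acting on only part of the coordinates.
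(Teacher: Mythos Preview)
Your proposal is correct and follows exactly the approach the paper indicates: the paper omits the proof of this lemma, stating only that it ``can be obtained via the same argument as above, by utilizing the general identity in Theorem~\ref{thm:bochner} instead of the ones specialized for the case~$V(x)=x$.'' Your write-up is precisely that argument, combining the analogues of Lemma~\ref{lem:odd:fourier} and Lemma~\ref{lem:odd:operator} into a single chain and correctly invoking the rotation-commutation property~\eqref{eq:commute} to ensure $L_{n+2\ell}\tilde f$ remains isotropic in its first $k+2\ell$ coordinates.
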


%
%

\section{Fractional Laplacians}
\label{sec:flap}

\subsection{Notation}\label{sec:notation}
Now we
specialize the result of Section~\ref{sec:bochner}
to the case of the fractional Laplacian of order~$s\in(0,1)$, corresponding to the choice~$\psi(r) = r^{2 s}$ in Lemma~\ref{lem:solid:operator}. To this end, we introduce some further notation.

If~$f$ and~$g$ are functions on~$\R^n$, we write
$$ \langle f,g\rangle_{\R^n}=\int_{\R^n}f(x)g(x)\,dx.$$

We use the notation~$f\in C^\alpha$ for non-integer~$\alpha>0$, meaning
that~$f\in C^{k,\beta}$ where~$\alpha=k+\beta$ with~$k\in\N$ and~$\beta\in(0,1)$.

\subsection{Fractional Laplacian identities and a Harnack inequality}\label{subsec:fl}
We now translate the results of Section~\ref{sec:bochner} into the notation that we have just introduced.

\begin{cor}
\label{cor:odd:flap}
Let $f$ be a 
Schwartz function on $\R^n$, symmetric with respect to the first variable. Let $\tilde{f}$ be the corresponding 3-isotropic Schwartz function on $\R^{n + 2}$, given by
\[
 \tilde{f}(\tilde{x}) = f\bigg (\sqrt{\tilde{x}_1^2 + \tilde{x}_2^2 + \tilde{x}_3^2}, \tilde{x}_4, \ldots, \tilde{x}_{n + 2} \bigg ) .
\]
Finally, let $g(x) = x_1 f(x)$. Then
\[
 (-\Delta)^s g(x) = x_1 (-\Delta)^s \tilde{f}((x_1, 0, 0), x_2, \ldots, x_n) .
\]
\end{cor}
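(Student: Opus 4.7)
The plan is to derive Corollary~\ref{cor:odd:flap} as a direct specialization of Lemma~\ref{lem:solid:operator}. The key observation is that all the hypotheses of that lemma line up perfectly with the setting of the corollary once we choose the parameters appropriately: take $k=1$, $\ell=1$, $V(x_1) := x_1$, and the operator $L_n := (-\Delta)^s$.

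First I would verify that $V(x_1)=x_1$ is indeed a solid harmonic polynomial of degree $\ell=1$ on $\R^k=\R$. Homogeneity is immediate since $V(rx_1)=rx_1=r^{1}V(x_1)$, and harmonicity reduces to $V''(x_1)=0$, which is plainly true. Next I would check that the symmetry assumption on $f$ matches the isotropy condition of Lemma~\ref{lem:solid:operator} for $k=1$: namely, $f(x_1,y)=f(x_1',y)$ whenever $|x_1|=|x_1'|$, which is exactly the condition $f(-x_1,y)=f(x_1,y)$, i.e.\ symmetry in the first variable. Then I would verify that $(-\Delta)^s$ fits the Fourier-multiplier framework used in Section~\ref{sec:bochner}: by definition, $\fourier_n(-\Delta)^s f(\xi)=|\xi|^{2s}\fourier_n f(\xi)$, so~\eqref{FOU:SI} holds with the radial symbol $\psi(r)=r^{2s}$.

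Having made these identifications, Lemma~\ref{lem:solid:operator} applied to $f$ with $k=1$, $\ell=1$, and $V(x_1)=x_1$ immediately yields
\[
(-\Delta)^s[x_1 f(x_1,y)] = x_1\,(-\Delta)^s \tilde f((x_1,0),y),
\]
where $(x_1,0)\in\R^{1+2}=\R^3$ denotes the vector $x_1$ padded with two zeros, and $\tilde f$ is the 3-isotropic function on $\R^{n+2}$ with the same profile as $f$. Writing $y=(x_2,\dots,x_n)\in\R^{n-1}$ and unpacking the notation, $(x_1,0)$ corresponds to $(\tilde x_1,\tilde x_2,\tilde x_3)=(x_1,0,0)\in\R^3$, so $\tilde f((x_1,0),y)=\tilde f((x_1,0,0),x_2,\dots,x_n)$. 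Recalling also that $g(x)=x_1 f(x)$, this gives exactly the identity claimed in Corollary~\ref{cor:odd:flap}.

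Since Corollary~\ref{cor:odd:flap} is purely a specialization, there is essentially no obstacle beyond bookkeeping: the only mild point worth stressing in the write-up is that for $k=1$ the $1$-isotropy condition degenerates to symmetry in one variable, and that $V(x_1)=x_1$ is trivially harmonic in a single variable. Everything else has already been done in Lemma~\ref{lem:solid:operator}, whose proof in turn reduces to the Hecke--Bochner identity of Theorem~\ref{thm:bochner} together with the fact that $(-\Delta)^s$ commutes with rotations (cf.~\eqref{eq:commute}), applied separately in the $x_1$ and $y$ variables as in Lemma~\ref{lem:odd:fourier}.
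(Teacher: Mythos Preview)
Your proposal is correct and essentially identical to the paper's own proof: the paper invokes Lemma~\ref{lem:odd:operator} (which is precisely the $k=\ell=1$, $V(x_1)=x_1$ case of Lemma~\ref{lem:solid:operator}) with $\psi(r)=r^{2s}$, and your argument does the same via the more general lemma. The verification that $V(x_1)=x_1$ is a degree-one solid harmonic polynomial and that $1$-isotropy means symmetry in the first variable is exactly the bookkeeping needed to bridge the two statements.
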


\begin{proof} This follows directly from Lemma~\ref{lem:odd:operator}: indeed,
choosing~$\psi(r) = r^{2 s}$ in~\eqref{FOU:SI} gives that~$L_n$ is the fractional Laplacian
acting on functions of~$n$ variables and accordingly, for every~$x=(x_1,x_2,\dots,x_n)\in\R^n$,
\begin{equation*}
(-\Delta)^sg(x)=(-\Delta)^s[x_1 f(x)]=x_1(-\Delta)^s
\tilde f((x_1,0,0),x_2,\dots,x_n)
.\qedhere
\end{equation*}
\end{proof}

\begin{remark}
The identity in Corollary~\ref{cor:odd:flap} can be alternatively proved by a rather straightforward direct calculation. This involves the formula
\begin{align*}
 & r^2 \int_{\sph^2} (\alpha^2 + (r z_1 - \beta)^2 + (r z_2)^2 + (r z_3)^2)^{-1 - \gamma} \, \mathcal H^2 (dz) \\
 & \qquad = \frac{1}{2 \beta \gamma} \, \bigl( (\alpha^2 + (r - \beta)^2)^{-\gamma} - (\alpha^2 + (r + \beta)^2)^{-\gamma} \bigr) ,
\end{align*}
where $\sph^2$ is the unit sphere in $\R^3$ and $\mathcal H^2$ is the surface measure. One substitutes $\alpha = \sqrt{(x_2 - y_2)^2 + \ldots + (x_n - y_n)^2}$, $\beta = x_1$, $r = |y_1|$, and $\gamma = \tfrac{n}{2} + s$ and then integrates with respect to $y \in \R^n$; we omit the details.
\end{remark}

An extension of Corollary~\ref{cor:odd:flap} to arbitrary antisymmetric functions is rather straightforward: in particular, in item~\ref{thm:odd:flap:c} below we consider the extension of the fractional Laplace operator on the class of antisymmetric functions, as described in~\cite{MR4567494}.

\begin{thm}
\label{thm:odd:flap}
Suppose that $\Omega$ is an open set of~$\R^n$ and $\delta>0$. 
Let $u$ be a function on $\R^n$ such that $u$ is~$C^{2s + \delta}$ in~$\Omega$ and
antisymmetric, with
\begin{equation*}
\int_{\R^n}\frac{|x_1|\,|  u(x)|}{(1 + |x|)^{n + 2 +2s}} \,dx<+\infty.
\end{equation*}
Let $\tilde{v}$ be the corresponding function on $\R^{n + 2}$ given by
\[
 \tilde{v}(\tilde{x}) = \frac{u\big (\sqrt{\tilde{x}_1^2 + \tilde{x}_2^2 + \tilde{x}_3^2}, \tilde{x}_4, \ldots, \tilde{x}_{n + 2} \big )}{\sqrt{\tilde{x}_1^2 + \tilde{x}_2^2 + \tilde{x}_3^2}} \, .
\]
Then:
\begin{enumerate}[{(a)}]
\item\label{thm:odd:flap:a} if $\Omega_0 = \{x \in \Omega : x_1 \ne 0\}$, then the function $\tilde{v}$ is $C^{2s + \delta}_{\mathrm{loc}}$ in
\[
 \tilde \Omega_0 = \bigg \{\tilde{x} \in \R^{n + 2} : \Big (\sqrt{\tilde{x}_1^2 + \tilde{x}_2^2 + \tilde{x}_3^2}, \tilde{x}_4, \ldots, \tilde{x}_{n + 2}\Big ) \in \Omega_0\bigg \} ;
\]
\item\label{thm:odd:flap:b} we have that
\begin{equation*}
\int_{\R^{n+2}}\frac{|\tilde{v}(\tilde{x})|}{
(1 + |\tilde{x}|)^{n + 2 + 2s}}\,d\tilde x<+\infty;\end{equation*}
\item\label{thm:odd:flap:c} for every $x \in \Omega_0$, we have that
\begin{equation}
\label{eq:odd:flap}
 (-\Delta)^s u(x) = x_1 (-\Delta)^s \tilde{v}((x_1, 0, 0), x_2, \ldots, x_n) ;
\end{equation}
\item\label{thm:odd:flap:d} for an arbitrary $C_c^\infty$ function $g$ on $\Omega$, we have
\begin{equation}
\label{eq:odd:flap:weak}
 \langle u, (-\Delta)^s g \rangle_{\R^n} = (2 \pi)^{-1} \langle \tilde{v}, (-\Delta)^s \tilde{f} \rangle_{\R^{n + 2}} ,
\end{equation}
where
\[
 \tilde f(\tilde x)= \frac{ g_A\big (\sqrt{\tilde{x}_1^2 + \tilde{x}_2^2 + \tilde{x}_3^2}, \tilde{x}_4, \ldots, \tilde{x}_{n + 2} \big )}{\sqrt{\tilde{x}_1^2 + \tilde{x}_2^2 + \tilde{x}_3^2}}
\]
and
\[
g_A(x) = \frac 12 \big (  g(x_1,\dots,x_n) - g(-x_1,x_2,\dots,x_n) \big ) 
\]
is the anti-symmetric part of $g$;
\item\label{thm:odd:flap:e} for an arbitrary $C_c^\infty$ function $\tilde{f}$ on 
\[\tilde \Omega=\bigg \{\tilde{x} \in \R^{n + 2} : \Big (\sqrt{\tilde{x}_1^2 + \tilde{x}_2^2 + \tilde{x}_3^2}, \tilde{x}_4, \ldots, \tilde{x}_{n + 2}\Big ) \in \Omega\bigg \} ,
\]
we have
\begin{equation}
\label{eq:odd:flap:reverse}
\langle \tilde{v}, (-\Delta)^s \tilde{f} \rangle_{\R^{n + 2}} = 2 \pi \langle u, (-\Delta)^s g \rangle_{\R^n} ,
\end{equation}
where $g(x) = x_1 f(x)$ and $f$ is given in terms of the 3-isotropic symmetrization of $\tilde{f}$ by
\begin{equation}\label{ISODEF}
 f(x) = \frac{1}{4 \pi} \int_{\sph^2} \tilde{f}(|x_1| z, x_2, \ldots, x_n)\, \mathcal H^2(dz) ;
\end{equation}
here $\sph^2$ is the unit sphere in $\R^3$ and $\mathcal H^2$ is the surface measure.
\end{enumerate}
\end{thm}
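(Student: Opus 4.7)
The plan is to prove the five items in order, leveraging Corollary~\ref{cor:odd:flap} and Lemma~\ref{lem:odd:operator} as the core tools and using the 3-isotropic lifting as the conceptual backbone. Item~\ref{thm:odd:flap:a} is the shortest: on $\tilde\Omega_0$ the map $\tilde x\mapsto \rho(\tilde x):=\sqrt{\tilde x_1^2+\tilde x_2^2+\tilde x_3^2}$ is smooth and strictly positive, so the composition $(\rho(\tilde x),\tilde x_4,\dots,\tilde x_{n+2})\mapsto u(\cdot)$ is $C^{2s+\delta}_{\mathrm{loc}}$ by the standard chain rule for Hölder functions, and dividing by the smooth non-vanishing factor $\rho$ preserves this regularity. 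For item~\ref{thm:odd:flap:b}, I would integrate the 3-isotropic quantity $|\tilde v|$ in spherical coordinates in the first three variables: for each fixed $x'\in\R^{n-1}$,
\[
\int_{\R^3}\frac{|\tilde v(\tilde X,x')|}{(1+|(\tilde X,x')|)^{n+2+2s}}\,d\tilde X = 4\pi\int_0^\infty\frac{r\,|u(r,x')|}{(1+\sqrt{r^2+|x'|^2})^{n+2+2s}}\,dr,
\]
and then use antisymmetry of $u$ in $x_1$ to rewrite the last integral as $2\pi\int_{\R}|x_1||u(x_1,x')|(1+|x|)^{-n-2-2s}\,dx_1$; integrating in $x'$ yields exactly $2\pi$ times the hypothesis.

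For item~\ref{thm:odd:flap:c}, the strategy is an approximation argument reducing to Corollary~\ref{cor:odd:flap}. Fix $x\in\Omega_0$, set $w:=u/x_1$ (a symmetric function in $x_1$, of class $C^{2s+\delta}$ near $x$ and with the weighted tail integrability of (b) in the lifted formulation). Construct a sequence of symmetric Schwartz functions $w_k$ on $\R^n$ with $w_k\to w$ uniformly on a neighbourhood of $x$ (together with derivatives up to order $\lceil 2s+\delta\rceil$) and in the weight $|x_1|/(1+|x|)^{n+2+2s}$. Applying Corollary~\ref{cor:odd:flap} to $g_k(x):=x_1 w_k(x)$ gives the identity $(-\Delta)^s g_k(x)=x_1(-\Delta)^s\tilde w_k((x_1,0,0),x')$, and both sides pass to the limit by dominated convergence: the singular part is controlled by the $C^{2s+\delta}$ convergence near $x$, while the tail part is controlled using~\eqref{LxZU6}-type estimates on the antisymmetrized kernel together with the $L^1$-weighted convergence.

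For item~\ref{thm:odd:flap:d}, I would first replace $g$ by its antisymmetric part $g_A$ (the symmetric part pairs trivially with the antisymmetric $u$ after applying $(-\Delta)^s$, since $(-\Delta)^s$ commutes with the reflection $x_1\mapsto -x_1$). Write $g_A(x)=x_1 h(x)$ with $h=g_A/x_1\in C_c^\infty(\R^n)$ symmetric (the division is licit because the Taylor expansion of the antisymmetric smooth $g_A$ in $x_1$ has no constant term). Lemma~\ref{lem:odd:operator} then yields $(-\Delta)^s g_A(x)=x_1(-\Delta)^s\tilde h((x_1,0,0),x')$, so
\[
\langle u,(-\Delta)^s g\rangle_{\R^n}=\int_{\R^n} x_1^2\,\frac{u(x)}{x_1}\,(-\Delta)^s\tilde h((x_1,0,0),x')\,dx,
\]
and the change-of-variables identity
\[
\int_{\R^n} x_1^2 F(|x_1|,x')\,dx=\frac{1}{2\pi}\int_{\R^{n+2}}\tilde F(\tilde x)\,d\tilde x
\]
(valid for any 3-isotropic lift $\tilde F$, by writing $d\tilde X=4\pi r^2\,dr$) converts this to $(2\pi)^{-1}\langle\tilde v,(-\Delta)^s\tilde h\rangle_{\R^{n+2}}$. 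The resulting integral converges absolutely because $(-\Delta)^s\tilde h$ is smooth with $O(|\tilde x|^{-n-2-2s})$ decay and $\tilde v$ satisfies the weighted integrability of (b).

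Item~\ref{thm:odd:flap:e} follows by reversing the argument: since $(-\Delta)^s$ commutes with rotations in the first three variables and $\tilde v$ is 3-isotropic, $\langle\tilde v,(-\Delta)^s\tilde f\rangle_{\R^{n+2}}=\langle\tilde v,(-\Delta)^s\tilde f_{\mathrm{iso}}\rangle_{\R^{n+2}}$, where $\tilde f_{\mathrm{iso}}$ is the 3-isotropic symmetrization \eqref{ISODEF}; this is the lift of the symmetric function $f$ on $\R^n$ defined in the statement, and the computation of (d) run backwards (with $g(x)=x_1 f(x)$ playing the role previously played by $g_A$) produces \eqref{eq:odd:flap:reverse}. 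The main obstacle throughout is the regularity bookkeeping in (c): $u$ is only $C^{2s+\delta}$ and only on $\Omega$, so one must construct the Schwartz approximants $w_k$ carefully enough to control both the singular contribution near $x$ and, simultaneously, the weighted tail in the antisymmetrized kernel identity; once this limit passage is rigorously justified, items (d) and (e) follow by clean algebraic manipulations already outlined.
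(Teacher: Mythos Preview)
Your proposal is correct and follows essentially the same route as the paper: items (a) and (b) via the chain rule and spherical coordinates in the first three variables, item (c) by approximation reducing to Corollary~\ref{cor:odd:flap}, item (d) by applying the Corollary to the antisymmetric part $g_A=x_1 h$ of the test function and unwinding via the $\R^3$-spherical change of variables, and item (e) by first replacing $\tilde f$ with its 3-isotropic average (using that $(-\Delta)^s$ commutes with rotations and $\tilde v$ is 3-isotropic) and then running (d) in reverse. The only cosmetic difference is the direction of the computation in (d): you pass from $\R^n$ to $\R^{n+2}$, while the paper starts from $\langle\tilde v,(-\Delta)^s\tilde f\rangle_{\R^{n+2}}$ and descends to $\R^n$; the algebra is identical.
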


\begin{remark}
Note that, in~Theorem~\ref{thm:odd:flap}~\ref{thm:odd:flap:a}, the conclusion $\tilde v\in C^{2s+\delta}_{\mathrm{loc}}(\tilde \Omega_0)$ cannot be relaxed to $\tilde v\in C^{2s+\delta}(\tilde \Omega_0)$. Indeed, consider $\Omega = (0,1) \times \R^{n-1}$ and $u(x) = x_1^{2s+\delta}$ for $x_1\geq 0$ extended antisymmetrically to all of $\R^n$. Then $\Omega_0=\Omega$, $\tilde \Omega_0 =\big (  B_1^3\setminus \{0\}\big ) \times \R^{n-1}$ (here $B^3_1$ denotes the unit ball in $\R^3$), and $u\in C^{2s+\delta}(\Omega_0)$, but \begin{align*}
\tilde v(\tilde x ) = \big ( \tilde x_1^2+\tilde x_2^2+\tilde x_3^2 \big ) ^{\frac{2s+\delta-1} 2 } 
\end{align*} which is not in $C^{2s+\delta} (\tilde \Omega_0)$. 
\end{remark}

\begin{proof}[Proof of Theorem~\ref{thm:odd:flap}]
To prove assertion~\ref{thm:odd:flap:a}, observe that, for all $\tilde x \in \tilde \Omega_0$, \begin{align*}
\sqrt{\tilde x_1^2+\tilde x_2^2+\tilde x_3^2} 
\neq 0 .
\end{align*} Hence, the map from $\tilde \Omega_0 \to \R $ defined by $\tilde x \mapsto \frac 1 {\sqrt{\tilde x_1^2+\tilde x_2^2+\tilde x_3^2}}$, is in $C^\infty_{\mathrm{loc} } (\tilde \Omega_0)$. Moreover, 
\[
\tilde x \mapsto u\Big (\sqrt{\tilde{x}_1^2 + \tilde{x}_2^2 + \tilde{x}_3^2}, \tilde{x}_4, \ldots, \tilde{x}_{n + 2}\Big )
\]
is in $C^{2s+\delta}(\tilde \Omega_0)$ which implies $\tilde v\in C^{2s+\delta}_{\mathrm{loc}}(\tilde \Omega_0)$. This proves~\ref{thm:odd:flap:a}.

Additionally,
\begin{equation}\label{4YintRdt1}\begin{split}&{\mathcal{I}}:=
\int_{\R^{n+2}} \frac{|\tilde{v}(\tilde{x})|}{(1 + |\tilde{x}|)^{n + 2 + 2s} }\, d\tilde x=
\int_{\R^{n+2}}
\frac{ \Bigl|u\Bigl(\sqrt{\tilde{x}_1^2 + \tilde{x}_2^2 + \tilde{x}_3^2}, \tilde{x}_4, \ldots, \tilde{x}_{n + 2}\Bigr)\Bigr|}{\sqrt{\tilde{x}_1^2 + \tilde{x}_2^2 + \tilde{x}_3^2} \;\big(1 + |\tilde{x}|\big)^{n + 2 + 2s}} \,d\tilde x\\& \qquad
= \int_{\R^3} \int_{\R^{n-1}} \frac{ \bigl|u\bigl( |a|,b\bigr)\bigr|}{|a|\;\big(1 + \sqrt{|a|^2+|b|^2}\big)^{n + 2 + 2s}} \,da\, db
,\end{split}\end{equation}
Now we observe that if~$\Phi:\R\to\R$ then, integrating in spherical coordinates, we get
\begin{equation}\label{UT:01}
\int_{\R^3} \Phi(|x|)\,dx = 4\pi\int_0^{+\infty}\tau^2 \Phi(\tau) \,d\tau.
\end{equation}
Now, gathering~\eqref{4YintRdt1} and~\eqref{UT:01}, we see that
\[
{\mathcal{I}}
= \int_0^{+\infty} \int_{\R^{n-1}}\frac{\tau \, |u(\tau,b)|}{\big(1 + \sqrt{|\tau|^2+|b|^2}\big)^{n + 2 + 2s}}\, db\, d\tau 
= \int_{ \R^{n}} \frac{|x_1|\, |u(x)|}{ (1 + |x|)^{ n + 2 + 2s}}\, dx ,
\]
from which~\ref{thm:odd:flap:b} plainly follows.

To prove the claim in~\ref{thm:odd:flap:c}, we notice that if $f(x) = x_1^{-1} u(x)$ is a Schwartz function, item~\ref{thm:odd:flap:c} reduces to Corollary~\ref{cor:odd:flap}. The extension to general $u$ follows by mollification and cutting off.

We now prove assertions~\ref{thm:odd:flap:d} and~\ref{thm:odd:flap:e}. For~\ref{thm:odd:flap:d}, since $\tilde f$ is $3$-isotropic, also $(-\Delta)^s \tilde f$ is $3$-isotropic:
\begin{align*}
(-\Delta)^s \tilde f (\tilde x) &= (-\Delta)^s \tilde f \bigg ( \Big (\sqrt{\tilde x_1^2 + \tilde x_2^2 + \tilde x_3^2} , 0 , 0 \Big ),\tilde x_4, \dots ,\tilde x_{n+2} \bigg )
, 
\end{align*} so Corollary~\ref{cor:odd:flap} implies that \begin{align*}
(-\Delta)^s \tilde f (\tilde x) &= \frac{ (-\Delta)^s g_A \big (\sqrt{\tilde x_1^2 + \tilde x_2^2 + \tilde x_3^2}, \tilde x_4, \ldots, \tilde x_{n+2} \big )} {\sqrt{\tilde x_1^2 + \tilde x_2^2 + \tilde x_3^2}} .
\end{align*} Hence, using the definition of \(\tilde v\), we have  \begin{align*}
& \hspace{-1em} \langle \tilde v , (-\Delta)^s \tilde f \rangle_{\R^{n+2}} \\&= \int_{\R^{n+2}} \frac{u \big (\sqrt{\tilde x_1^2 + \tilde x_2^2 + \tilde x_3^2}, \tilde x_4, \ldots, \tilde x_{n+2} \big ) (-\Delta)^s g_A \big (\sqrt{\tilde x_1^2 + \tilde x_2^2 + \tilde x_3^2}, \tilde x_4, \ldots, \tilde x_{n+2} \big )}{\tilde x _1 ^2+\tilde x _2 ^2+\tilde x _3 ^2 }\, d \tilde x \\
& = \int_{\R^3} \int_{\R^{n-1}} \frac{u(|a|, b) (-\Delta)^s g_A(|a|, b)}{|a|^2}\, db \, da.
\end{align*}
Integration in spherical coordinates leads to
\begin{align*}
\langle \tilde v , (-\Delta)^s \tilde f \rangle_{\R^{n+2}}
&= 4 \pi \int_0^{+\infty} \int_{ \R^{n-1}}  u (\tau,b) (-\Delta)^s g_A (\tau,b) \, d b\,d\tau. 
\end{align*} Then, since both \(u\) and \((-\Delta)^s g_A\) are antisymmetric, \begin{align*}
\langle \tilde v , (-\Delta)^s \tilde f \rangle_{\R^{n+2}} &= 2 \pi \int_{-\infty}^{+\infty}\int_{ \R^{n-1} }  u (\tau, b) (-\Delta)^s g_A (\tau, b) \, d b\,d\tau \\
&= 2 \pi \langle u , (-\Delta)^s g \rangle_{\R^n}. 
\end{align*}

For~\ref{thm:odd:flap:e}, we begin by assuming that~\(\tilde f\) is \(3\)-isotropic and deal with the general case later. Since \(\tilde f\) is \(3\)-isotropic, Corollary~\ref{cor:odd:flap} applies to $\tilde f$, $f$ and $g$: we have
\[
 (-\Delta)^s \tilde{f}(\tilde x) = \frac{(-\Delta)^s g\big (\sqrt{\tilde x_1^2 + \tilde x_2^2 + \tilde x_3^2}, \tilde x_4, \ldots, \tilde x_{n+2} \big )}{\sqrt{\tilde x_1^2 + \tilde x_2^2 + \tilde x_3^2}} \, .
\]
Using spherical coordinates,
\begin{align*}
    &\hspace{-2em}\langle \tilde v , (-\Delta)^s \tilde f \rangle_{\R^{n+2}} \\
    &= \int_{\R^{n+2}} \frac{u \big ( \sqrt{\tilde x_1^2+\tilde x_2^2+\tilde x_3^2},\tilde x_4, \ldots, \tilde x_{n+2}\big ) \, (-\Delta)^s g\big (\sqrt{\tilde x_1^2 + \tilde x_2^2 + \tilde x_3^2}, \tilde x_4, \ldots, \tilde x_{n+2} \big ) }{\tilde x_1^2+\tilde x_2^2+\tilde x_3^2} \, d \tilde x \\
    & = \int_{\R^3} \int_{\R^{n-1}} \frac{u(|a|, b) (-\Delta)^s g(|a|, b)}{|a|^2} \, d b\, d a \\
    &= 4\pi \int_0^{+\infty} \int_{\R^{n-1}} u(\tau, b) (-\Delta)^s g(\tau, b)\,  d b\, d\tau.
\end{align*}
Since both $u$ and $(-\Delta)^s g$ are symmetric, we have
\[
 \langle \tilde v, (-\Delta)^s \tilde f \rangle_{\R^{n+2}} = 2 \pi \int_{-\infty}^{+\infty} \int_{\R^{n-1}} u(\tau, b) (-\Delta)^s g(\tau, b) \, db \, d\tau = 2 \pi \langle u, (-\Delta)^s g \rangle_{\R^n} .
\]

Now, if \(\tilde f\) is not \(3\)-isotropic, let
\[
 \tilde F(\tilde x) = \frac{1}{4 \pi} \int_{\sph^2} \tilde{f}\Big (z \sqrt{\tilde x_1^2 + \tilde x_2^2 + \tilde x_3^2}, \tilde x_4, \ldots, \tilde x_{n+2} \Big ) \, \mathcal H^2(dz)
\]
be the $3$-isotropic projection of $\tilde f$. By the definition of $f$,
\[
 \tilde F(\tilde x) = f\Big (\sqrt{\tilde x_1^2 + \tilde x_2^2 + \tilde x_3^2}, \tilde x_4, \ldots, \tilde x_{n+2} \Big ) ,
\]
and so, since we already proved the result for $3$-isotropic functions, we have
\[
 \langle \tilde v, (-\Delta)^s \tilde F \rangle_{\R^{n+2}} = 2 \pi \langle u, (-\Delta)^s g \rangle_{\R^n} .
\]
Therefore, it remains to prove that
\begin{equation}
\label{eq:orthogonal}
 \langle \tilde v, (-\Delta)^s (\tilde f - \tilde F) \rangle_{\R^{n+2}} = 0 .
\end{equation}
Denote $\tilde G = \tilde f - \tilde F$. Then, by the definition of $\tilde F$, 
\[
 \int_{\sph^2} \tilde G(\tau z, \tilde x_4, \ldots, \tilde x_{n+2}) \, \mathcal H^2(dz) = 0
\]
for every $\tau > 0$ and $(\tilde x_4, \ldots, \tilde x_{n+2}) \in \R^{n-1}$. Thus, $\tilde G$ is orthogonal to every $3$-isotropic function: if $\tilde u$ is $3$-isotropic, then integration in spherical coordinates gives
\begin{align*}
 \langle \tilde u, \tilde G\rangle & = \int_{\R^3} \int_{\R^{n-1}} \tilde u(a, b) \tilde G(a, b) \, db \, da \\
 & = \int_{\R^3} \int_{\R^{n-1}} \tilde u((|a|, 0, 0), b) \tilde G(a, b) \, db \, da \\
 & = \int_0^{+\infty} \int_{\sph^2} \int_{\R^{n-1}} \tau^2 \tilde u((\tau, 0, 0), b) \tilde G(\tau z, b) \, db \, \mathcal H^2(dz) \, d\tau \\
 & = \int_0^{+\infty} \int_{\R^{n-1}} \tau^2 \tilde u((\tau, 0, 0), b) \biggr(\int_{\sph^2} \tilde G(\tau z, b) \, \mathcal H^2(dz)\biggr) db \, d\tau = 0 .
\end{align*}
We already know that $(-\Delta)^s$ maps $3$-isotropic Schwartz functions into $3$-isotropic functions. By approximation, the class of $3$-isotropic functions in $L^2(\R^{n+2})$ forms an invariant subspace of $(-\Delta)^s$. Therefore, also its orthogonal complement in $L^2(\R^n)$ is invariant. Since $\tilde G$ is orthogonal to the class of $3$-isotropic functions, we conclude that $(-\Delta)^s \tilde G$ has the same property, and~\eqref{eq:orthogonal} follows.
\end{proof}

In our framework, a natural
application\footnote{When~$s=1$, Theorem~\ref{thm:odd:harnack}
boils down to~\eqref{PRE:eq:odd:harnack}.} of Theorem~\ref{thm:odd:flap} 
leads to the proof of Theorem~\ref{thm:odd:harnack}:

\begin{proof}[Proof of Theorem~\ref{thm:odd:harnack}]
With the notation of Theorem~\ref{thm:odd:flap}, by item~\ref{thm:odd:flap:c} of this result we have
\[
 (-\Delta)^s u(x) = x_1 (-\Delta)^s \tilde{v}((x_1, 0, 0), x_2, \ldots, x_n)
\]
for $x \in \Omega_0$, or equivalently\footnote{One can compare~\eqref{TBCW}
with~\eqref{TBCW2}. Namely, \eqref{TBCW2} recovers~\eqref{TBCW} when~$s=1$ with the notation~$y=(\tilde x_1,\tilde x_2,\tilde x_3)$ and
$z=(\tilde x_4,\dots,\tilde x_{n+2})$.}
\begin{equation}
\label{TBCW2}
 \sqrt{\tilde{x}_1^2 + \tilde{x}_2^2 + \tilde{x}_3^2} \, (-\Delta)^s \tilde{v}(\tilde{x}) = (-\Delta)^s u \Big (\sqrt{\tilde x_1^2 + \tilde x_2^2 + \tilde x_3^2}, \tilde x_4, \ldots, \tilde x_{n+2} \Big ) ,
\end{equation}
for $\tilde{x} \in \tilde \Omega_0$ (recall that $(-\Delta)^3 \tilde v$ is $3$-isotropic). 

Therefore, if $x = \big  (\sqrt{\tilde x_1^2 + \tilde x_2^2 + \tilde x_3^2}, \tilde x_4, \ldots, \tilde x_{n+2} \big )$, we have
\[
 \sqrt{\tilde{x}_1^2 + \tilde{x}_2^2 + \tilde{x}_3^2} \Bigl( (-\Delta)^s \tilde{v}(\tilde{x}) + c(x) \tilde{v}(\tilde{x}) \Bigr) = (-\Delta)^s u(x) + c(x) u(x) = 0 ,
\]
that is, setting $\tilde{c}(\tilde{x}) = c(x)$,
\begin{equation}
\label{eq:schroedinger}
 (-\Delta)^s \tilde{v}(\tilde{x}) + \tilde{c}(\tilde{x}) \tilde{v}(\tilde{x}) = 0
\end{equation}
in $\tilde \Omega_0$. In fact, the above identity holds in the weak sense in $\tilde \Omega$: this follows directly from item~\ref{thm:odd:flap:e} of Theorem~\ref{thm:odd:flap}.

Since $\tilde{v}$ is nonnegative in $\R^{n + 2}$, the standard Harnack inequality for weak solutions of the Schrödinger equation~\eqref{eq:schroedinger} implies that for every compact subset $\tilde{K}$ of $\tilde \Omega$, we have
\[
 \esssup_{\tilde{K}} \tilde{v} \le C(\tilde K, \tilde \Omega, \|\tilde c\|_{L^\infty(\tilde \Omega})) \essinf_{\tilde{K}} \tilde{v} .
\]
Applying this to $\tilde{K} = \big \{\tilde x \in \R^{n+2} : \big (\sqrt{\tilde x_1^2 + \tilde x_2^2 + \tilde x_3^2}, \tilde x_4, \ldots, \tilde x_{n+2} \big ) \in K\big \}$ with $K$ a compact subset of $\Omega$, we obtain
\[
 \esssup_{x \in K} \frac{u(x)}{x_1} \le C(K, \Omega, \|c\|_{L^\infty(\Omega_+)}) \essinf_{x \in K} \frac{u(x)}{x_1} ,
\]
which is equivalent to~\eqref{eq:odd:harnack}.
\end{proof}

\section{More general nonlocal operators}\label{OJSDLN09ryihfjg9reSTRpsdlmvIJJ}
In this section, we generalise the results obtained in the specific setting of the fractional Laplacian to the more general class of operators $L_n$ presented in~\eqref{5XOlUbY7}. 

Given an open set \(\Omega \subset \R^n\) and a function in \(u\in C^2_{{\rm{loc}}}(\Omega) \cap L^\infty(\R^n)\), by a standard computation (see, for example, the computation in \cite[p.~9]{MR3469920}
), \(L_n u(x)\) is defined for all \(x\in \Omega\) and, moreover, $L_nu$ is bounded on compact subsets of $\Omega$
. Furthermore, the Fourier symbol of \(L_n\) is
\begin{align*}
 \Psi_n (\xi) & {} = \int_{\R^n} \big ( 1 - e^{i \xi \cdot x} + \xi \cdot x \chi_{(0,1)}(|x|) \big ) \, j_n(|x|) \, dx \\
 &= \int_0^\infty \int_{\sph^{n-1}} \big ( 1
 - e^{i r \xi \cdot \omega } + i r(\xi \cdot \omega ) \chi_{(0,1)}(r) \big ) j_n(r) r^{n - 1}  \,  \mathcal H^{n-1}(d\omega) \, dr \\
    &= \frac12 \int_0^\infty \int_{\sph^{n-1}} \big ( 2 - e^{i r \xi \cdot \omega } -e^{-i r \xi \cdot \omega }  \big ) j_n(r) r^{n - 1} \, \mathcal H^{n-1}(d\omega) \, dr \\
    &=\int_0^\infty \int_{\sph^{n-1}} \big ( 1 - \cos( r \xi \cdot \omega  ) \big ) j_n(r) r^{n - 1} \,  \mathcal H^{n-1}(d\omega) \, dr \\ 
    &= \mathcal H^{n-2}(\sph^{n-2}) \int_0^\infty \int_{-1}^1 \big ( 1 - \cos( r s |\xi| ) \big ) (1 - s^2)^{\frac{n - 3}{2}} j_n(r) r^{n - 1} \, ds \, dr \\
    & = (2 \pi)^{n/2} \int_0^\infty \biggl(\frac{1}{2^{n/2 - 1} \Gamma(\frac{n}{2})} - (r |\xi|)^{1 - n/2} J_{n/2 - 1}(r |\xi|) \biggr) r^{n - 1} j_n(r) \, dr ,
\end{align*}%
where $J_\nu$ is the Bessel function of the first kind; see~\cite[Chapter 10]{NIST:DLMF}. Hence, it follows that \(\Psi_n\) is rotationally invariant, that is \(\Psi_n(\xi)=\psi(\vert \xi \vert ) \), with $\psi$ given as the Hankel transform
\begin{align}
 \psi(\tau) = (2 \pi)^{n/2} \int_0^\infty \biggl(\frac{1}{2^{n/2 - 1} \Gamma(\frac{n}{2})} - (r \tau)^{1 - n/2} J_{n/2 - 1}(r \tau)\biggr) r^{n - 1} j_n(r) \, dr ,
\label{psidierre000}
\end{align} 
It is no mistake that $\psi$ has no subscript $n$: under our assumptions~\eqref{radialassumption} and~\eqref{nplus2} it turns out that the Fourier symbol $\Psi_{n + 2}$ of the operator $L_{n + 2}$ has the same profile function $\psi$. There are many different ways to prove this fact; for instance, it is a relatively simple consequence of Bochner's relation. A direct proof involving well-known properties of Bessel functions is perhaps even shorter: since $j_{n}(r) = 2 \pi \int_r^\infty t j_{n+2}(t) \, dt$, we have
\begin{align*}
 \psi(\tau) & = (2 \pi)^{n/2 + 1} \int_0^\infty \int_r^\infty \biggl(\frac{1}{2^{n/2 - 1} \Gamma(\frac{n}{2})} - (r \tau)^{1 - n/2} J_{n/2 - 1}(r \tau)\biggr) r^{n - 1} t j_{n + 2}(t) \, dt \, dr \\
 & = (2 \pi)^{n/2 + 1} \int_0^\infty \int_0^t \biggl(\frac{1}{2^{n/2 - 1} \Gamma(\frac{n}{2})} - (r \tau)^{1 - n/2} J_{n/2 - 1}(r \tau)\biggr) r^{n - 1} t j_{n + 2}(t) \, dr \, dt \\
 & = (2 \pi)^{n/2 + 1} \int_0^\infty  \biggl(\frac{1}{2^{n/2} \Gamma(\frac{n}{2} + 1)} - (t \tau)^{-n/2} J_{n/2}(t \tau)\biggr) t^{n + 1} j_{n + 2}(t) \, dt ,
\end{align*}
which is exactly the same expression as~\eqref{psidierre000} with $n$ replaced by $n + 2$; here we have used Fubini's theorem and~\cite[Equation 10.22.1]{NIST:DLMF}.

Therefore, Bochner's relation and its consequences proved in Section~\ref{sec:bochner} apply to this setting. The previously established theory will give us information about \(L_n\) given information about \(L_{n+2}\), namely that if \(L_{n+2}\) admits the Harnack inequality then \(L_n\) admits the antisymmetric Harnack inequality.


\begin{cor}
\label{cor:odd:flapBIS}
Let $f$ be a symmetric Schwartz function on $\R^n$. Let $\tilde{f}$ be the corresponding 3-isotropic Schwartz function on $\R^{n + 2}$, given by
\[
    \tilde{f}(\tilde{x}) = f\bigg (\sqrt{\tilde x_1^2 + \tilde x_2^2 + \tilde x_3^2}, \tilde x_4, \ldots, \tilde x_{n+2}\bigg ) .
\]
Finally, let $g(x) = x_1 f(x)$. Then
\[
    L_n g(x) = x_1 L_{n+2} \tilde{f}((x_1, 0, 0), x_2, \ldots, x_n) .
\]
\end{cor}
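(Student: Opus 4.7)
The proof is an essentially immediate consequence of the harmonic-analytic machinery developed in Section~\ref{sec:bochner}, once one verifies that the operators $L_n$ and $L_{n+2}$ fit into the framework of Lemma~\ref{lem:odd:operator}. The plan is as follows: first, I would observe that the computation carried out just above the statement of the corollary shows that the Fourier symbol of $L_n$ is rotationally invariant, i.e. of the form $\Psi_n(\xi) = \psi(|\xi|)$, with profile function $\psi$ given by the Hankel-type integral in~\eqref{psidierre000}. Second, and this is the decisive point, the relation~\eqref{nplus2} between $j_n$ and $j_{n+2}$ together with Fubini's theorem and the Bessel-function identity~\cite[Eq.~10.22.1]{NIST:DLMF} (exactly as in the displayed manipulation preceding the corollary) shows that the Fourier symbol $\Psi_{n+2}(\xi) = \psi(|\xi|)$ of $L_{n+2}$ has the \emph{same} profile function $\psi$. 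Thus both $L_n$ and $L_{n+2}$ are Fourier multipliers in the sense of~\eqref{FOU:SI} with a common radial symbol.

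Once this is in place, the conclusion is a direct application of Lemma~\ref{lem:odd:operator} with the polynomial $V(x)=x_1$ and the Schwartz function $f$, decomposing variables as $x=(x_1,x')$ with $x'=(x_2,\dots,x_n)$. Indeed, since $f$ is symmetric in $x_1$, the function $f(x_1,x')=f(|x_1|,x')$ is $1$-isotropic in $x_1$, and its $3$-isotropic lift in the first three variables is precisely $\tilde f(\tilde x)$ as defined in the statement. Lemma~\ref{lem:odd:operator} then yields
\[
L_n[x_1 f(x_1,x')] \;=\; x_1\, L_{n+2}\tilde f((x_1,0,0),x'),
\]
which is exactly the claim.

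The only subtlety worth checking is that the Fourier-multiplier formalism of Section~\ref{sec:bochner} is indeed applicable to the (at most polynomially growing) symbol $\psi$; here the assumption that $f$ is a Schwartz function ensures both that $fV$ and $\tilde f$ are Schwartz, so that all Fourier transforms are classical and no distributional issue arises. Thus there is no genuine obstacle: the whole content of the corollary has been reduced, via Bochner's relation and the kernel identity~\eqref{nplus2}, to the already established Lemma~\ref{lem:odd:operator}, and the proof amounts to pointing out this reduction. The one step that most deserves to be written out carefully is the verification that $L_n$ and $L_{n+2}$ share the same radial profile $\psi$, since this is what permits the application of Lemma~\ref{lem:odd:operator} without having to revisit its proof.
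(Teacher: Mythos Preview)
Your proposal is correct and follows essentially the same approach as the paper: verify that $L_n$ and $L_{n+2}$ are Fourier multipliers sharing the common radial profile $\psi$ (a fact established in the discussion preceding the corollary via~\eqref{psidierre000} and the kernel relation~\eqref{nplus2}), and then invoke Lemma~\ref{lem:odd:operator}. The paper's proof is simply a terser version of what you wrote.
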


\begin{proof} The proof is a consequence of Lemma~\ref{lem:odd:operator}.
Indeed, recall that the symbol~$\psi$ given in~\eqref{psidierre000}
is rotationally invariant, and operators $L_n$ and $L_{n + 2}$ satisfy formula~\eqref{FOU:SI}.
Accordingly, for every~$x=(x_1,x_2,\dots,x_n)\in\R^n$,
\begin{equation*}
L_ng(x)=L_n(x_1 f(x))=x_1L_{n+2}\tilde f((x_1,0,0),x_2,\dots,x_n),
\end{equation*}
as desired.
\end{proof}

Recall that $L_{n+2}$ maps $3$-isotropic functions to $3$-isotropic functions.
We now provide an extension of Corollary~\ref{cor:odd:flapBIS} to antisymmetric functions. This is the counterpart of Theorem~\ref{thm:odd:flap}
for general operators.

\begin{thm}
\label{thm:odd:flapBIS}
Suppose that $\Omega$ is an open set of~$\R^n$. Let $u$ be an antisymmetric function on $\R^n$ such that $u\in C^2_{{\rm{loc}}}(\Omega)$, $x_1^{-1}u \in L^\infty(\R^n)$,  and let $\tilde{v}$ be the corresponding function on $\R^{n + 2}$ given by
\[
 \tilde{v}(\tilde{x}) = \frac{u\big (\sqrt{\tilde x_1^2 + \tilde x_2^2 + \tilde x_3^2}, \tilde x_4, \ldots, \tilde x_{n+2} \big )}{\sqrt{\tilde{x}_1^2 + \tilde{x}_2^2 + \tilde{x}_3^2}} \, .
\]

Then:
\begin{enumerate}[(a)]
\item\label{thm:odd:flap:aBIS} if $\Omega_0 = \{x \in \Omega : x_1 \ne 0\}$ and 
\[
 \tilde \Omega_0 = \bigg \{\tilde{x} \in \R^{n + 2} : \Big (\sqrt{\tilde x_1^2 + \tilde x_2^2 + \tilde x_3^2}, \tilde x_4, \ldots, \tilde x_{n+2} \Big ) \in \Omega_0\bigg \} ;
\]
then the function $\tilde{v}$ is $C^2_{{\rm{loc}}} (\tilde \Omega_0) \cap L^\infty(\R^{n+2}) $. 

\item\label{thm:odd:flap:cBIS} for every $x \in \Omega_0$, we have that
\begin{equation}
\label{eq:odd:flapBIS}
L_nu(x) = x_1 L_{n+2} \tilde{v}((x_1, 0, 0), x_2, \ldots, x_n);
\end{equation}
\item\label{thm:odd:flap:dBIS} for an arbitrary $C_c^\infty$ function $g$ on $\Omega$, we have
\begin{equation}
\label{eq:odd:flap:weakBIS}
 \langle u, L_n g \rangle_{\R^n} = (2 \pi)^{-1} \langle \tilde{v}, L_{n+2} \tilde{f} \rangle_{\R^{n + 2}} ,
\end{equation}
where
\[ \tilde{f}(\tilde{x}) = \frac{g_A\big (\sqrt{\tilde x_1^2 + \tilde x_2^2 + \tilde x_3^2}, \tilde x_4, \ldots, \tilde x_{n+2} \big ) }{\sqrt{\tilde x_1^2 + \tilde x_2^2 + \tilde x_3^2}} \]
and
\[ g_A(x) = \frac 12 \big (  g(x_1,\dots,x_n) - g(-x_1,x_2,\dots,x_n) \big ) ; \]
\item\label{thm:odd:flap:eBIS} for an arbitrary $C_c^\infty$ function $\tilde{f}$ on
\[ \tilde \Omega=\bigg \{\tilde{x} \in \R^{n + 2} : \Big (\sqrt{\tilde x_1^2 + \tilde x_2^2 + \tilde x_3^2}, \tilde x_4, \ldots, \tilde x_{n+2} \Big ) \in \Omega \bigg \}, \]%
we have
\begin{equation}
\label{eq:odd:flap:reverseBIS}
\langle \tilde{v}, L_{n+2}\tilde{f} \rangle_{\R^{n + 2}} = 2 \pi \langle u, L_n g \rangle_{\R^n} ,
\end{equation}
where $g(x) = x_1 f(x)$ and $f$ is given in terms of the 3-isotropic symmetrization of $\tilde{f}$ by
\begin{equation*}
 f(x) = \frac{1}{4 \pi} \int_{\sph^2} \tilde{f}(|x_1| z, x_2, \ldots, x_n)\, \mathcal H^2(dz) ;
\end{equation*}
here $\sph^2$ is the unit sphere in $\R^3$ and $\mathcal H^2$ is the surface measure.
\end{enumerate}
\end{thm}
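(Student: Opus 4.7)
The proof will mirror that of Theorem~\ref{thm:odd:flap}, substituting Corollary~\ref{cor:odd:flapBIS} (for the general operator $L_n$) in place of Corollary~\ref{cor:odd:flap} (for the fractional Laplacian) at every step. The hypotheses $u\in C^2_{\rm loc}(\Omega)$ and $x_1^{-1}u\in L^\infty(\R^n)$ replace the role previously played by the $C^{2s+\delta}$ regularity and the weighted $L^1$ condition: under \eqref{radialassumption} they provide exactly the amount of regularity and integrability needed for $L_n$ and $L_{n+2}$ to be defined pointwise, together with the symmetry $\psi = \psi$ of the Fourier symbols of $L_n$ and $L_{n+2}$ established just before Corollary~\ref{cor:odd:flapBIS}.

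For part~\ref{thm:odd:flap:aBIS}, the argument is essentially identical to that of Theorem~\ref{thm:odd:flap}\ref{thm:odd:flap:a}: on $\tilde\Omega_0$ one stays away from the singular locus $\{\tilde x_1^2+\tilde x_2^2+\tilde x_3^2=0\}$, so $\tilde v$ is a composition of the $C^2_{\rm loc}(\Omega_0)$ function $u$ with the $C^\infty_{\rm loc}(\tilde\Omega_0)$ map $\tilde x\mapsto(\sqrt{\tilde x_1^2+\tilde x_2^2+\tilde x_3^2},\tilde x_4,\dots,\tilde x_{n+2})$, divided by the smooth nonvanishing factor $\sqrt{\tilde x_1^2+\tilde x_2^2+\tilde x_3^2}$, and thus belongs to $C^2_{\rm loc}(\tilde\Omega_0)$. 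The $L^\infty$ bound comes immediately from antisymmetry: $|\tilde v(\tilde x)| = |u(x)|/|x_1| \le \|x_1^{-1}u\|_{L^\infty(\R^n)}$, where $x=(\sqrt{\tilde x_1^2+\tilde x_2^2+\tilde x_3^2},\tilde x_4,\dots,\tilde x_{n+2})$.

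For part~\ref{thm:odd:flap:cBIS}, Corollary~\ref{cor:odd:flapBIS} gives \eqref{eq:odd:flapBIS} when $f:=x_1^{-1}u$ is a Schwartz function. The extension to general $u$ proceeds by a standard mollification-and-cutoff argument: write $u = x_1 f$ with $f\in C^2_{\rm loc}(\Omega)\cap L^\infty(\R^n)$ symmetric in $x_1$, pick mollifiers $\eta_\epsilon$ and cutoffs $\chi_R$ preserving this symmetry, apply Corollary~\ref{cor:odd:flapBIS} to the Schwartz approximants $f_{\epsilon,R}:=(f\chi_R)\ast\eta_\epsilon$, and pass to the limit. On the left-hand side, $L_n[x_1 f_{\epsilon,R}]\to L_n u$ pointwise in $\Omega_0$ by the dominated convergence theorem, using the $C^2$ regularity near the base point together with the bound $|u(x+y)|\le \|x_1^{-1}u\|_\infty(|x_1|+|y|)$ and the integrability of $j_n$; on the right-hand side one applies the same reasoning in $\R^{n+2}$, which is legitimate since $\tilde v$ inherits the same pointwise bounds from $x_1^{-1}u\in L^\infty$.

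For parts~\ref{thm:odd:flap:dBIS} and~\ref{thm:odd:flap:eBIS}, the identities \eqref{eq:odd:flap:weakBIS} and \eqref{eq:odd:flap:reverseBIS} follow the same spherical-coordinates and symmetrization arguments used in the proof of Theorem~\ref{thm:odd:flap}\ref{thm:odd:flap:d}--\ref{thm:odd:flap:e}; the only facts about the operator that enter are that $L_{n+2}$ preserves $3$-isotropy (which holds because its symbol $\psi(|\xi|)$ is radial) and that $L_{n+2}$ is self-adjoint (which follows from the radial symbol together with the symmetry of $j_{n+2}$). The orthogonality step in the proof of \ref{thm:odd:flap:e}, showing that $\tilde f - \tilde F$ is mapped by $L_{n+2}$ to the orthogonal complement of the $3$-isotropic functions, goes through verbatim since it relies only on the invariance of the $3$-isotropic subspace under $L_{n+2}$. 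The main technical obstacle in the whole plan is the approximation argument in~\ref{thm:odd:flap:cBIS}: one must verify that the dominating functions controlling $L_n[x_1 f_{\epsilon,R}]$ and $L_{n+2}\tilde f_{\epsilon,R}$ are indeed integrable against the kernels $j_n$ and $j_{n+2}$ under the mere $L^\infty$ hypothesis on $x_1^{-1}u$, exploiting the antisymmetry of $u$ to gain the missing decay through a pairing of points $y$ and $y_\ast$ in the integrals defining $L_n$.
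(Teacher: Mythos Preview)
Your proposal is correct and follows essentially the same approach as the paper: part~(a) via composition with the smooth map and the $L^\infty$ bound on $x_1^{-1}u$; part~(b) via Corollary~\ref{cor:odd:flapBIS} for Schwartz functions extended by mollification and cutoff; and parts~(c)--(d) by repeating verbatim the spherical-coordinates and symmetrization arguments from Theorem~\ref{thm:odd:flap}\ref{thm:odd:flap:d}--\ref{thm:odd:flap:e}. The paper's own proof is actually considerably terser than yours---it dispatches (b) in one sentence (``The extension to general $u$ follows by mollification and cutting off'') and (c)--(d) by simply citing the earlier theorem---so your more detailed treatment of the approximation step and the role of $3$-isotropy is a welcome elaboration rather than a deviation.
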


\begin{proof}
Since the map $\tilde x \mapsto \big (\sqrt{\tilde x_1^2 + \tilde x_2^2 + \tilde x_3^2}, \tilde x_4, \ldots, \tilde x_{n+2} \big )$ is smooth from $\tilde \Omega_0$ to $\Omega_0$ and \(x \mapsto x_1^{-1} u(x)\) is in \(C^2_{{\rm{loc}}}(\Omega_0)\), it follows that \(v\in C^2_{{\rm{loc}}}(\tilde \Omega_0)\). Moreover, the assumption \(x_1^{-1}u \in L^\infty(\R^n)\) immediately implies that \(\tilde v \in L^\infty(\R^{n + 2})\), which gives~(\ref{thm:odd:flap:aBIS}). 

To prove the claim in~(\ref{thm:odd:flap:c}), we notice that if $f(x) = x_1^{-1} u(x)$ is a Schwartz function, item~(\ref{thm:odd:flap:cBIS}) reduces to Corollary~\ref{cor:odd:flapBIS}. The extension to general $u$ follows by mollification and cutting off. 

The proofs of~(\ref{thm:odd:flap:dBIS}) and~(\ref{thm:odd:flap:eBIS}) are analogous to the proofs of Theorem~\ref{thm:odd:flap}~(\ref{thm:odd:flap:d}) and~(\ref{thm:odd:flap:e}) respectively.
\end{proof}

In preparation for the proof of Theorem~\ref{THM5.55}, we now give the definition of an operator satisfying the Harnack inequality. 

\begin{defn} \label{mTbTJpHm}
Let \(\mathcal A\) be the collection of quadruples \(( \Omega,  K,  c,v)\) such that  \( \Omega \subset \R^n\) is open; $K \subset \Omega$ is compact and connected; \( c\in L^\infty(\Omega)\); and \(  v\in C^2_{{\rm{loc}}}( \Omega) \cap L^\infty  (\R^n)\) satisfies, in the weak sense, \begin{align}
\label{PDEBIS}
\begin{PDE}
L_n  v + c   v &= 0, &\text{in } \Omega \\
 v &\geq 0, &\text{in } \R^n.
\end{PDE}
\end{align} We say that an operator \(L_n\) given by~\eqref{5XOlUbY7} admits the \emph{Harnack inequality} if, for all \((\Omega,K,c,v)\in \mathcal A\), there holds that \begin{align*}
\inf_ K  v \leq C \sup_K v
\end{align*} for some \(C=C(\Omega,K,\| c\|_{L^\infty(\Omega}) )>0\).
\end{defn}

\begin{remark}
\label{rem:harnack}
When $c = 0$ in~\eqref{PDEBIS}, that is, when $u$ is a harmonic function in $\Omega$ for the operator $L_n$, then Harnack inquality holds under fairly general conditions on the kernel $j_n$ of $L_n$ (see~\eqref{5XOlUbY7}). Indeed: Theorem~4.1 in~\cite{MR3729529} provides a Harnack inequality (in fact: a boundary Harnack inequality; see Remark~1.10(e) in that paper) when $K$ and $\Omega$ are balls, under the following assumptions: $j_n$ is positive and nonincreasing (so that the kernel of $L_n$ is isotropic and \emph{unimodal}), and there is a constant $C > 0$ such that $j_n(r + 1) \ge C j_n(r)$ for $r \ge 1$ (so that in particular $j_n(r)$ goes to zero as $r \to \infty$ no faster than some exponential).

By Theorem~1.9 in~\cite{MR3729529}, the constant in the Harnack inequality is \emph{scale-invariant} if $j_n$ is positive and decreasing and it satisfies the following scaling condition:
\[ \frac{j_n(r)}{j_n(R)} \le C \biggl(\frac{r}{R}\biggr)^{-n - \alpha} \]
for some constants $C > 0$ and $\alpha \in (0, 2)$.

While it is natural to expect that the results described above also hold for non-zero $c$, such extension does not seem to be available in literature. More restrictive results are given in \cite{KIM2022125746,MR3794384,MR4023466} and references therein.
\end{remark}

We have now developed the machinery to give the proof of Theorem~\ref{THM5.55}.

\begin{proof}[Proof of Theorem~\ref{THM5.55}]
From Theorem~\ref{thm:odd:flapBIS}~(\ref{thm:odd:flap:cBIS}), we have that \begin{align*}
L_nu(x) &= x_1 L_{n+2}\tilde v((x_1, 0, 0), x_2, \ldots, x_n) 
\end{align*} for all \(x\in \Omega_0\); or equivalently, \begin{align*}
\sqrt{\tilde x_1^2 + \tilde x_2^2 + \tilde x_3^2  } \, L_{n+2}\tilde v (\tilde x) &= L_nu \Big (\sqrt{\tilde x_1^2 + \tilde x_2^2 + \tilde x_3^2}, \tilde x_4, \ldots, \tilde x_{n+2} \Big ) 
\end{align*} for all \(\tilde x \in \tilde \Omega_0\) (recall that $L_{n + 2} u$ is $3$-isotropic). It follows that if we denote $x = (\sqrt{\tilde x_1^2 + \tilde x_2^2 + \tilde x_3^2}, \tilde x_4, \ldots, \tilde x_{n+2})$, then \begin{align*}
\sqrt{\tilde x_1^2 + \tilde x_2^2 + \tilde x_3^2  } \, \big ( L_{n+2}\tilde v (\tilde x) + c(x) \tilde v (\tilde x) \big ) &=  L_nu(x)  + c(x) u(x)=0 .
\end{align*} Setting \(\tilde c (\tilde x) = c(x)\), we have that \begin{align*}
L_{n+2}\tilde v + \tilde c \tilde v = 0 \qquad \text{in } \tilde \Omega_0 . 
\end{align*} In fact, by Theorem~\ref{thm:odd:flapBIS}~(\ref{thm:odd:flap:eBIS}), the above equality holds in the weak sense in $\tilde \Omega$. By assumption, \(L_{n+2}\) admits the Harnack inequality, so from the definition applied to the quadruple \((\tilde \Omega, \tilde K , \tilde c, \tilde v)\) with \(\tilde K = \big  \{\tilde x \in \R^{n + 2} : \big (\sqrt{\tilde x_1^2 + \tilde x_2^2 + \tilde x_3^2}, \tilde x_4, \ldots, \tilde x_{n + 2} \big ) \in K \big \}\), we have that \begin{align*}
\sup_{\tilde K } \tilde v \leq C(\tilde \Omega,\tilde K,\|\tilde c\|_{L^\infty(\tilde \Omega)}) \inf_{\tilde K} \tilde v .
\end{align*} We conclude that \begin{align*}
\sup_{ x\in K_0} \frac{u(x)}{x_1} \leq C(\tilde \Omega,\tilde K,\|c\|_{L^\infty(\Omega_+)}) \inf_{x\in K_0}  \frac{u(x) }{x_1} ,
\end{align*} as desired. 
\end{proof}

\part{ Nonlocal geometric identities} \label{Part3}

\chapter{Introduction to Part III} 

The third part of this thesis explores several topics in the theory of nonlocal minimal surfaces. Since their introduction in \cite{MR2675483}, nonlocal minimal surfaces have been a very active area of research and much is still unknown when compared to their local counterparts. In particular, the regularity of nonlocal minimal surfaces is still, in general, an open problem, unlike the regularity of minimal surfaces which was resolved in the last century collectively in the works of Allard, Almgren, Bombieri, Caccioppoli, De Giorgi, Federer, Fleming, Guisti, Miranda, Reifenberg, Simons, and many other authors. In Chapter~\ref{oyZ0zmYT}, we obtain some nonlocal analogues of identities/estimates that were fundamental to prove of the regularity of minimal surfaces and show, in the limit \(s\to 1^-\), they recover the local result. In Chapter~\ref{1KsVju8j}, we prove density estimates for sets that are stationary with respect to the \(s\)-perimeter. This gives, as a corollary, that the fractional Sobolev inequality holds on hypersurfaces of zero \(s\)-mean curvature.

\section{Regularity of minimal surfaces} 

The concept of perimeter is introduced to the general public from an early age in terms of simple shapes such as circles, squares, triangles, etc. As a mathematical object, perimeter has been of interest for thousands of years. From the works of the Greeks on triangles; the mythological tale of Queen Dido of Carthage whose story was told in the epic of Virgils, written 29-19 BC; and Plateau's experiments with soap films and bubbles in the mid-1800s, many people sought to better understand and apply concepts related to perimeter. Possibly the most studied problem related to perimeter is the one of finding surfaces of least perimeter, otherwise known as minimal surfaces\footnote{It is common, particularly in differential geometry, to refer to submanifolds with zero mean curvature as minimal surfaces. In this thesis, minimal surface always refers to a set that minimises the perimeter according to~\eqref{5BfpzP0M} below and sets of zero mean curvature are referred to as stationary sets, see~\eqref{rkQGJIcf} and the paragraph directly proceeding~\eqref{rkQGJIcf}.}. In the works of Meusnier, Monge, and Lagrange in the 1700s, we can already find some development of minimal surfaces with the derivation of the minimal surface equation and the construction of the first non-trivial examples of surfaces with zero mean curvature such as the catenoid and helicoid. It wasn't until the last century that a more unified theory of minimal surfaces arose which utilised measure-theoretic techniques leading to the development of \emph{geometric measure theory}.

Proving the existence of minimal surfaces by directly working with smooth manifolds is challenging due to the lack of compactness (in short, a sequence of smooth manifolds may become arbitrarily close to an object with no regularity), so the theory of minimal surfaces follows the common principle in analysis of first proving the existence of a weak notion of solution then showing that a weak solution must enjoy higher regularity. In general, geometric measure theory can address surfaces of any co-dimension through concepts such as varifolds and currents; however, for the theory of nonlocal perimeters, it is still an important open problem to even define a concept of perimeter in co-dimension higher than one. For this reason, we will focus on the co-dimension one case which allows us to avoid varifolds and currents, and simply deal with measurable sets. In this context, there is no distinction between a set and its boundary, so we, as is common in the literature, often refer to a set as having a given property instead of its boundary, for example, \(E\subset \R^n\) has zero mean curvature instead of \(\partial E\) has zero mean curvature.

Given a measurable set \(E\subset \R^n\) and an open set \(\Omega \subset \R^n\), the perimeter of \(E\) in \(\Omega\) is defined by
\begin{align*}
    \operatorname{Per}(E;\Omega) &= \sup \bigg \{ \int_E \div T \dd x \text{ s.t. } T\in C^1_0(\Omega ; \R^n), \|T\|_{L^\infty(\Omega)}\leq 1 \bigg \}
\end{align*} and the perimeter of \(E\) is defined by \(\operatorname{Per}(E)=\operatorname{Per}(E;\R^n)\). The set \(E\) is said to be of finite perimeter in \(\overline \Omega\) if \(\operatorname{Per}(E;\overline \Omega)<+\infty\), have finite perimeter if \(\operatorname{Per}(E)<+\infty\), and have locally finite perimeter if \(\operatorname{Per}(E;\overline \Omega)<+\infty\) for all \(\Omega\subset\subset\R^n \). Then \(E\) is \emph{perimeter minimising}, or simply \emph{minimising} in \(\Omega\) if \begin{align}
      \operatorname{Per}(E;\overline\Omega) \leq   \operatorname{Per}(F;\overline\Omega) \label{5BfpzP0M}
\end{align} for all \(F\subset \R^n\) with finite perimeter in \(\Omega\) and \(E\setminus \Omega = F\setminus \Omega\). The collection of sets of finite perimeter in \(\Omega\) enjoy nice compactness properties and the perimeter function also enjoys nice properties such as lower semi-continuity which leads to the existence of a minimising set with some prescribed boundary datum, see, for example, \cite[Chapter 12]{maggi_sets_2012}.

One can also study stationary/critical sets and stable sets with respect to the perimeter functional. Indeed, let \(T\in C^1_0(\Omega ; \R^n)\) and \(\psi_T : \R^n \times (-\varepsilon , \varepsilon) \to \R^n\) be the solution to \begin{align}
    \begin{PDE}
\partial_t \psi_T &= T\circ \psi_T, &\text{in } \R^n \times (-\varepsilon , \varepsilon) \\
\psi_T &= T, &\text{on } \R^n \times \{t=0\}.
    \end{PDE} \label{w4olEwnS}
\end{align} Then the first variation and second variation of the perimeter are given by \begin{align*}
     \delta \operatorname{Per}(E;\Omega)[T] &= \frac{\dd }{\dd t}\bigg \vert_{t=0} \operatorname{Per}(\psi_T(E,t);\Omega) 
\end{align*} and \begin{align*}
     \delta^2 \operatorname{Per}(E;\Omega)[T] &= \frac{\dd^2 }{\dd t^2}\bigg \vert_{t=0} \operatorname{Per}(\psi_T(E,t);\Omega)
\end{align*} respectively. The set \(E\) is stationary in \(\Omega\) if \(\delta \operatorname{Per}(E;\Omega)[T]=0\) for all \(T\in C^1_0(\Omega ; \R^n)\) and is a stationary stable set in \(\Omega\) if \(\delta \operatorname{Per}(E;\Omega)[T]=0\) and \(\delta^2 \operatorname{Per}(E;\Omega)[T]\geq 0 \) for all \(T\in C^1_0(\Omega ; \R^n)\). If \(E\) has smooth boundary, \(\zeta \in C^\infty_0(\partial E \cap \Omega)\), and \(T\vert_{\partial E}=\zeta \nu_E\) then we can write
\begin{equation} 
\begin{split}
    \delta \operatorname{Per}(E;\Omega)[T] &= \int_{\partial E \cap \Omega } \mathrm H \zeta \dd \H^{n-1} \\
    \delta^2 \operatorname{Per}(E;\Omega)[T] &= \int_{\partial E \cap \Omega } \big ( \vert \nabla_{\partial E} \zeta \vert^2 - \big(c^2 -\mathrm H^2 \big ) \zeta^2 \big ) \dd \H^{n-1} 
\end{split} \label{rkQGJIcf}
\end{equation} where \(\mathrm H(x) = \div_{\partial E} \nu_{E}(x)\) is the \emph{mean curvature}, also given by the sum of the principal curvatures of \(\partial E\) at \(x\) and \(c(x)\) is the norm of the second fundamental form defined as the Euclidean norm of the principal curvatures of \(\partial E\) at \(x\). Here \(\nabla_{\partial E}\) and \(\div_{\partial E}\) denote the gradient and divergence of \(\partial E\) respectively. Hence, stationary sets are (or, in the case \(E\) does not have smooth boundary, can be interpreted as) sets whose boundary has zero mean curvature. Moreover, stationary, stable sets (with sufficiently regular boundary) satisfy the stability inequality \begin{align}
    - \Delta_{\partial E} \zeta - c^2 \zeta \geq 0 \quad \text{on } \partial E \cap \Omega \label{uosmVJs7}
\end{align} for all \(\zeta \in C^1_0(\partial E \cap \Omega)\).

Towards a full regularity theory of minimal sets, one first seeks to prove the regularity of a measurable set \(E\) that minimises the perimeter in \(B_1^{n-1}\times \R\) \emph{and} is the graph of a function \(v: B_1^{n-1}\to \R\) in \(B_1\) where \(B_1^{n-1} = \{ x' \in \R^{n-1} \text{ s.t. } \vert x' \vert <1\), \(v:B_1^{n-1}\to \R\). This is already very non-trivial--if one knows that \(v\in W^{1,\infty}(B_1^{n-1})\) then \(v\) satisfies the minimal surface equation \begin{align}
    \div' \bigg ( \frac{\nabla' v}{\sqrt{1+\vert \nabla' v\vert^2 }} \bigg ) =0, \qquad \text{in }B_1^{n-1} \label{pAl4RkAE}
\end{align} in the weak sense where \(\nabla'\) and \(\div'\) are the gradient and divergence on \(\R^{n-1}\). In this case,~\eqref{pAl4RkAE} is a uniformly elliptic equation in divergence form (with the ellipticity constant depending on \(\|\nabla u \|_{L^\infty(B_1^{n-1} )}\)), so De Giorgi-Nash-Moser theory and Schauder theory imply \(v\) is analytic in \(B_{1/2}^{n-1}\). However, there is no reason \emph{a priori} that \(v\) should be \(W^{1,\infty}(B_1^{n-1})\). To establish this, one first proves via the comparison principle that for bounded boundary datum, we have that \( v\in L^\infty(B_1^{n-1})\). Then the gradient estimate of Bombieri, De Giorgi, and Miranda \cite{MR0248647} implies that \(v\in W^{1,\infty}(B_{3/4}^{n-1})\). Hence, \(v\) is a weak solution of~\eqref{pAl4RkAE} (on a smaller ball), so we conclude that \(v\) is analytic in \(B_{1/2}^{n-1}\). 

Now, to prove the regularity of general minimising sets, one hopes to prove that minimising sets are locally the graph of a function, from which one could conclude analyticity from the theory sketched above. A step towards this is the improvement of flatness lemma of De Giorgi \cite{MR0179651} which heuristically states that if \(E\) is \emph{sufficiently flat} about a point \(x\in \partial E\) then it is locally a graph, so the regularity reduces to identifying points about which \(E\) is sufficiently flat. The strategy from here is to perform a so-called `blow-up' at points on the boundary of \(E\) which means we consider the sets \(E_{x,r}= \frac{E-x}{r}\) with \(x\in \partial E\) and \(r\to +\infty\). Geometrically, a blow-up simply amounts to zooming closer and closer into \(x\). It can proven that, after passing to a subsequence, \(E_{x,r}\) convergences to a cone \(C\) in \(L^1_{\mathrm{loc}}(\R^n)\) where a cone \(C\) is a measurable subset of \(\R^n\) such that \(tC=C\) for all \(t>0\). Moreover, from the monotonicity formula for sets of zero mean curvature, it follows that \(C\) is perimeter minimising in \(B_R\) for all \(R>0\). If we can prove that \(C\) is trivial (i.e \(C\) is a halfspace) then the \(L^1_{\mathrm{loc}}(\R^n)\) convergence of \(E_{x,r}\) to \(C\) implies that \(E\) is sufficiently flat about \(x\). Thus, the regularity of minimising sets reduces to proving the non-existence of trivial cones that are locally minimising in \(\R^n\). 

For the classification of locally perimeter minimising cones, we have the following famous theorem. \begin{thm} \thlabel{IPfEs4h0}
Let \(2\leq n \leq 7\). If \(E\) is a locally perimeter minimising cone in \(\R^n\) then \(E\) is a half-space. Moreover, if \(n\geq 8\) then there exist non-trivial locally perimeter minimising cones.
\end{thm}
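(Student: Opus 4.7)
The proof decomposes into two essentially independent parts: a rigidity statement (cones must be half-spaces when \(n \leq 7\)) and a construction (producing a non-trivial minimiser in dimension \(8\), which can then be cylindrically extended to all \(n \geq 8\)). The rigidity side would proceed by induction on \(n\) combined with Simons' identity and the stability inequality~\eqref{uosmVJs7}, while the construction side would exploit the Simons cone together with a sub/supersolution foliation. The hardest single ingredient is the quantitative Simons-type computation that pins down exactly why the dimension threshold is \(7\).

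\textbf{Rigidity for \(2 \leq n \leq 7\).} I would argue by induction on \(n\). The base case \(n=2\) is elementary: a minimising cone in \(\mathbb{R}^2\) is determined by the angle it subtends, and only the half-plane minimises. For the inductive step, let \(C\) be a locally minimising cone in \(\mathbb{R}^n\) with vertex at the origin. First I would establish smoothness of \(\partial C \setminus \{0\}\): at any point \(x \in \partial C\setminus\{0\}\), the blow-ups split off a line (since \(C\) is a cone with vertex at the origin), reducing to a cone \(C'\) in \(\mathbb{R}^{n-1}\); by the inductive hypothesis \(C'\) is a half-space, so \(x\) is a regular point, and by the De Giorgi improvement-of-flatness plus Schauder theory \(\partial C\setminus\{0\}\) is analytic. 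Next, since \(C\) is minimising it is stable, so~\eqref{uosmVJs7} holds on \(\partial C \setminus \{0\}\) with \(c = |A|\), the norm of the second fundamental form. The core analytic ingredient is Simons' identity
\begin{equation*}
\tfrac{1}{2}\Delta_{\partial C} |A|^2 = |\nabla A|^2 - |A|^4
\end{equation*}
together with the refined Kato inequality \(|\nabla A|^2 \geq \bigl(1 + \tfrac{2}{n-1}\bigr)\bigl|\nabla |A|\bigr|^2\). Plugging \(\zeta = |A|\,\eta\) into~\eqref{uosmVJs7} with a cutoff \(\eta\) supported in an annulus and exploiting the homogeneity \(|A|(rx) = r^{-1}|A|(x)\) of the cone, a Hardy-type integration by parts turns the stability inequality into a numerical inequality involving \(n\). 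This inequality fails (forcing \(|A|\equiv 0\)) precisely when \(n-1 \leq 6\); the threshold \(n \leq 7\) is built into the refined Kato constant and the Hardy weight. Once \(|A|\equiv 0\), \(\partial C\setminus\{0\}\) is a piece of a hyperplane, and a removable-singularity argument at the origin forces \(C\) to be a half-space.

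\textbf{Construction for \(n \geq 8\).} Here I would exhibit the Simons cone
\begin{equation*}
S := \bigl\{x \in \mathbb{R}^8 \,:\, x_1^2 + x_2^2 + x_3^2 + x_4^2 < x_5^2 + x_6^2 + x_7^2 + x_8^2\bigr\},
\end{equation*}
and for \(n \geq 9\) take the cylindrical extension \(S\times \mathbb{R}^{n-8}\). A direct computation using the \(O(4)\times O(4)\) symmetry shows that \(\partial S\) has vanishing mean curvature and that the second fundamental form satisfies \(|A|^2 = 3/|x|^2\); plugging into the stability inequality and using Hardy's inequality in dimension \(8\) with sharp constant \((8-2)^2/4 = 9\) gives stability, in agreement with Simons' calculation. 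To upgrade stability to minimality, I would follow Bombieri--De Giorgi--Giusti: construct an explicit one-parameter family of smooth hypersurfaces asymptotic to \(\partial S\), foliating each side of the cone, each of which is a subsolution/supersolution of the minimal surface equation; a standard calibration/comparison argument then shows that any competitor with the same boundary data has at least as much perimeter.

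\textbf{Main obstacles.} The delicate step is the Simons-type stability computation in the inductive argument: one must test stability against \(|A|\eta\), use the Simons identity and the refined Kato inequality, and carefully track a Hardy weight coming from the homogeneity of the cone in order to obtain the sharp dimensional threshold. Equally non-trivial is the Bombieri--De Giorgi--Giusti construction of the foliation around the Simons cone, which requires finding explicit sub/supersolutions of the minimal surface equation with the correct asymptotics at infinity and at the vertex. The removable-singularity step at the origin in the rigidity argument also requires care, since a priori \(\partial C\) may have arbitrarily bad behaviour there; this is typically handled by observing that a minimising cone whose regular part is contained in a hyperplane must itself be a half-space by a capacity/slicing argument.
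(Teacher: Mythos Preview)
Your proposal is correct and follows essentially the same route the paper attributes to the classical proof: the paper does not give its own proof of this theorem but simply records that the rigidity part is due to De Giorgi, Almgren, and finally Simons (via the stability inequality~\eqref{ifBA2379}, the Simons formula~\eqref{q9oyfDh8}, and the cone inequality~\eqref{GmvebRtF}), and that the construction for \(n\geq 8\) is the Simons cone proven minimising by Bombieri--De Giorgi--Giusti. Your outline is exactly this, with the minor stylistic difference that you phrase the key dimensional threshold via the refined Kato inequality and a Hardy weight, whereas the paper packages the same information as the pointwise bound \(\tfrac12\Delta_{\partial E}c - |\nabla_{\partial E}c|^2 + c^4 \geq 2c^2/|x|^2\) for cones; these are two presentations of the same computation.
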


\thref{IPfEs4h0} is trivial for \(n=2\), and was proved by De Giorgi in 1965 for \(n=3\), Almgren in 1966 for \(n=4\), and Simons in 1968 for \(n \leq 7\). In \cite{MR0233295}, Simons also proved that \begin{align*}
    C = \{ (x,y) \in \R^4 \times \R^4 \text{ s.t. } \vert x\vert < \vert y \vert \}
\end{align*} is a stationary, stable cone. Furthermore, it was proven to be locally perimeter minimising by Bombieri, De Giorgi, and Giusti in \cite{MR0250205}.

Simons' proof of \thref{IPfEs4h0} relies on two inequalities. The first is the stability inequality \begin{align}
    \int_{\partial E} \bigg ( \frac 12 \Delta_{\partial E}c - \vert \nabla_{\partial E}c\vert^2 +c^4 \bigg ) \eta^2 \dd \H^{n-1} \leq \int_{\partial E}c^2 \vert \nabla_{\partial E} \eta \vert^2 \dd \H^n \label{ifBA2379}
\end{align} for all \(\eta \in C^\infty_0(\partial E)\). This follows from~\eqref{uosmVJs7} by choosing \(\zeta = c \eta\) and holds for all stationary stable sets. The second inequality states that, if \(E\) is a cone (centred at 0), then \begin{align}
    \frac 12 \Delta_{\partial E}c - \vert \nabla_{\partial E}c\vert^2 +c^4 \geq \frac{2c^2}{\vert x \vert^2} . \label{GmvebRtF}
\end{align} This inequality relies on the following formula, known as Simons formula, which reads for the case \(E\) has zero mean curvature, \begin{align}
    \frac 1 2 \Delta_M c^2 + c^2 &= \sum_{i,j,k}\vert \nabla_{M,k} h_{ij} \vert^2 \label{q9oyfDh8}
\end{align} where \(h_{ij}\) are the components of the second fundamental form. Combining~\eqref{ifBA2379} and~\eqref{GmvebRtF} then making an intelligent choice for \(\eta\), Simons then obtains a contradiction provided that \(n \leq 7\).

\section{Nonlocal minimal sets}  
In the celebrated paper \cite{MR2675483}, a nonlocal (or fractional) concept of perimeter, known as the \(s\)-perimeter was introduced. For \(s\in (0,1)\), a measurable set \(E\subset \R^n\), and an open set \(\Omega \subset \R^n\), the \(s\)-perimeter of \(E\) in \(\Omega\) is given by \begin{align}
    \operatorname{Per}_s(E;\Omega) = \frac 12 \iint_{\mathcal Q(\Omega)} \frac{\vert \chi_E(x)-\chi_E(y)\vert }{\vert x- y \vert^{n+s}} \dd y \dd x \label{UJuXBCae}
\end{align} where \(\mathcal Q(\Omega) = \R^{2n}\setminus ( \Omega^c\times \Omega^c ) = (\Omega\times \Omega) \cup (\Omega\times \Omega^c)\cup (\Omega^c\times \Omega)\) and \(\Omega^c=\R^n \setminus \Omega\). When \(\Omega = \R^n\), we write \(\operatorname{Per}_s(E)=\operatorname{Per}_s(E;\R^n)\). In this case, we also have \begin{align}
    \operatorname{Per}_s(E) = \frac 12 [ \chi_E]_{W^{s,1}(\R^n)} \label{FGEASHSh}
\end{align} where \begin{align*}
    [ u]_{W^{s,1}(\R^n)} = \int_{\R^n}\int_{\R^n}\frac{\vert u(x) - u(y)\vert}{\vert x-y\vert^{n+s}} \dd y \dd x 
\end{align*} is the Gagliardo semi-norm associated to the fractional Sobolev space \(W^{s,1}(\R^n)\). Since, for suitable functions \(u\), \((1-s) [ u]_{W^{s,1}(\R^n)} \to C(n)\|\nabla u\|_{L^1(\R^n)}\) as \(s\to 1^-\), and the (classical) perimeter of \(E\) can be viewed as \(\|\nabla \chi_E\|_{L^1(\R^n)}\)  in a \(\operatorname{BV}\) sense, we see that the \(s\)-perimeter is a natural generalisation of the classical perimeter. It is common in the literature to express~\eqref{UJuXBCae} and~\eqref{FGEASHSh} in terms of `interaction' terms, that is, if  we define the nonlocal interaction of two disjoint sets \(A\) and \(B\) as \begin{align*}
    I_s(A,B) = \int_A \int_B \frac{\dd y \dd x }{\vert x- y\vert^{n+s}}
\end{align*} then \begin{align*}
    \operatorname{Per}_s(E; \Omega) &= I_s(E\cap \Omega , E^c\cap \Omega)+I_s(E\cap\Omega,E^c\cap \Omega^c) + I_s(E\cap \Omega^c,E^c\cap \Omega)
\end{align*} and \begin{align*}
    \operatorname{Per}_s(E) &= I_s(E,\R^n\setminus E). 
\end{align*} 

It is also possible to consider the first and second variations of the \(s\)-perimeter. If \(T\in C^1(\Omega;\R^n)\), \(\zeta \in C^1_0(\partial E \cap \Omega)\), \(T \vert_{\partial E} = \zeta \nu_E\), and \(\psi_T\) is defined according to~\eqref{w4olEwnS} then \begin{align*}
    \delta \operatorname{Per}_s(E;\Omega)[T] = \frac{\dd }{\dd t} \bigg \vert_{t=0} \operatorname{Per_s}(\psi_T(E,t); \Omega ) = \int_{\partial E \cap \Omega } \mathrm H_s \zeta \dd \mathcal H^{n-1}
\end{align*} and \begin{align*}
    \delta^2 \operatorname{Per}_s(E;\Omega)[T] &= \frac{\dd^2 }{\dd t^2} \bigg \vert_{t=0} \operatorname{Per_s}(\psi_T(E,t); \Omega ) \\ &= \int_{\partial E} \int_{\partial E} \frac{\vert \zeta(x)-\zeta(y)\vert^2}{\vert x - y \vert^{n+s}} \dd \mathcal H^{n-1}_y \dd \mathcal H^{n-1}_x - \int_{\partial E \cap \Omega} (c_s^2-\mathrm H \mathrm H_s ) \zeta^2 \dd \mathcal H^{n-1}
\end{align*} where \begin{align*}
    \mathrm H_s (x) &= \PV \int_{\R^n} \frac{\chi_{\R^n \setminus E}(y) - \chi_ E(y) }{\vert x- y \vert^{n+s}} \dd y
\end{align*} is the \(s\)-mean curvature of \(E\) and \begin{align*}
    c_s^2(x) &= \int_{\partial E} \int_{\partial E} \frac{\vert \nu_E(x)-\nu_E(y)\vert^2 }{\vert x- y \vert^{n+s}} \dd \mathcal H^{n-1}_y \dd \mathcal H^{n-1}_x
\end{align*} is the nonlocal analogue of the norm squared of the second fundamental form, see \cite{MR3322379,davila_nonlocal_2018}. Then, \(E\) is an \(s\)-stationary set if \( \delta \operatorname{Per}_s(E;\Omega)[T] =0\) for all \(T\in C^1(\Omega;\R^n)\) and \(E\) is \(s\)-stationary, stable set if \( \delta \operatorname{Per}_s(E;\Omega)[T] =0\) and \(\delta^2 \operatorname{Per}_s(E;\Omega)[T]  \geq 0\) for all \(T\in C^1(\Omega;\R^n)\).

Unlike the local case, the regularity theory for nonlocal minimal surfaces is unresolved and is one of the most important open problems in the field. In \cite{MR2675483}, in addition to introducing the notion of nonlocal perimeter, several fundamental results were adapted from the local case to the nonlocal case. These include the existence theory, uniform density estimates, the improvement of flatness, and the monotonicity formula. 

For the regularity of \(s\)-minimal graphs, over several papers, it was proven that bounded \(s\)-minimal graphs are analytic. Indeed, consider \(u : B_1^{n-1}\to \R\) with \(u(0)=0\), \(C=B_1^{n-1}\times R\), and \(E\subset \R^n\) such that \begin{align*}
    E \cap C = \{ x\in C \text{ s.t. } x_n < u(x_1,\dots,x_{n-1}) \} 
\end{align*} is \(s\)-minimal in \(C\). In \cite{MR3680376}, it was proven that if \(u\) is Lipschitz then \(u\) is \(C^{1,\alpha}\) for all \(\alpha<s\), and, in \cite{MR3331523}, it was shown that if \(u\) is \(C^{1,\alpha}\) for some \(\alpha>s/2\) then \(u\) is smooth. Furthermore, developing the ideas of the previous two papers, if \(u\) is Lipschitz then it was proven that \(u\) belongs to a Gevrey class (an intermediary space between smooth and analytic) in \cite{MR3334976} and that \(u\) is analytic in \cite{MR4466117}. Finally, in \cite{MR3680376}, a nonlocal analogue of the gradient estimate of Bombieri, De Giorgi, and Miranda was established which implies bounded \(s\)-minimal graphs are Lipschitz. Together, these results form the nonlocal analogues of the De Giorgi-Nash-Moser theory and Schauder theory used in the local regime. 

Towards the regularity of general \(s\)-minimal sets, the theory developed in \cite{MR2675483} implies that regularity of \(s\)-minimal sets in \(\R^n\) follows from the non-existence of non-trivial locally \(s\)-minimal cones in \(\R^n\). From \cite{MR3090533}, it is known that there are no non-trivial locally \(s\)-minimal cones for \(n=2\), but for \(n>2\) the classification of \(s\)-minimal cones is, in general, an open problem. It is known, however, that for \(2\leq n \leq 7\),  there are no non-trivial locally \(s\)-minimal cones provided that \(s\in (s_\star,1)\) for some \(s_\star=s_\star(n)\in (0,1)\) as was proven in \cite{MR2782803}. Morally, this result holds since as \(s\to 1^-\), \(s\)-minimal cones approach minimal cones and we have a full classification of minimal cones from \thref{IPfEs4h0}. Unfortunately, the proof relies on a compactness argument, so it gives no information about the value of \(s_\star\). In fact, in \cite{davila_nonlocal_2018}, a \(s\)-stationary, stable cone was constructed in \(n=7\) for \(s\) close to \(0\) which suggests (but does not prove) that the nonlocal analogue of \thref{IPfEs4h0} may only hold up to \(n=6\) (or even less) when \(s\) is small. This is (morally) believable because the \(s\)-perimeter converges to the volume as \(s\to 0^+\) and minimisers of the volume can be arbitrarily irregular. There are still no known examples in any dimension of non-trivial locally \(s\)-minimal cones. 

Finally, we remark that very little is known about the regularity of \(s\)-stationary, stable sets. In \cite{MR3981295}, it was proven that the perimeter (not \(s\)-perimeter) of \(s\)-stationary, stable sets satisfy universal upper bounds. Interestingly, in the local case, this result is only known for \(n=3\) and is open for \(n\geq 4\). Building upon the estimates of \cite{MR3981295}, it was shown in \cite{MR4116635} that \(s\)-stationary, stable cones in \(\R^3\) are trivial. Furthermore, in \cite{caselli2024yaus,caselli2024fractional}, a nonlocal analogue of Yau's conjecture (i.e. in a closed \(n\)-dimensional Riemannian manifold, there are infinitely many \(s\)-stationary sets) is proven and much of the regularity theory in the nonlocal setting is developed for subsets of Riemannian manifolds (as opposed to subsets of Euclidean space). They also establish the monotonicity formula for \(s\)-stationary sets which was previously only known for \(s\)-minimisers. 

In Chapter \ref{oyZ0zmYT}, inspired by Simons argument for the classification of minimal cones, we establish nonlocal analogues of Simons formula~\eqref{q9oyfDh8} and the stability estimate~\eqref{ifBA2379}. In the limit \(s\to 1^-\), our results recover~\eqref{q9oyfDh8} and~\eqref{ifBA2379}. Unfortunately, we are unable to obtain a nonlocal analogue of~\eqref{GmvebRtF}, so we cannot fully extend his argument. In Chapter \ref{1KsVju8j}, we prove that the perimeter of \(s\)-stationary (not necessarily stable) sets satisfy universal lower bounds, see the following section for more details. This gives, as a corollary, the nonlocal fractional Sobolev inequality on the boundary of \(s\)-stationary sets.

\section{The Sobolev inequality on hypersurfaces}  

The Sobolev inequality is a foundational result in the theory of partial differential equations and functional analysis. In its simplest form, it states that if \(n\geq 1\) is a positive integer, \(1\leq p <n\), and \(p^\ast = \frac{np}{n-p}\) then \begin{align}
    \bigg (\int_{\R^n} \vert u(x) \vert^{p^\ast} \dd x  \bigg )^{\frac 1 {p^\ast} } \leq C \bigg ( \int_{\R^n}\vert \nabla u \vert^p \dd x \bigg )^{\frac1p} \qquad \text{for all }u \in C^\infty_0(\R^n). \label{WQyMQR8L}
\end{align} The Sobolev inequality has many applications, including the Rellich–Kondrachov embedding theorem and energy estimates in \textsc{PDE}s such as in the solution of Hilbert's XIX-th problem, see for example \cite[Proposition 3.11]{MR4560756}. It is also well-known that~\eqref{WQyMQR8L} when \(p=1\) is equivalent to the isoperimetric inequality \begin{align}
    \vert E \vert^{\frac{n-1}n } \leq C \operatorname{Per}(E) \label{Qn6WJebO}
\end{align} for all measurable sets \(E\subset \R^n\) with finite perimeter. Indeed, on one hand, one obtains~\eqref{WQyMQR8L} when \(p=1\) from~\eqref{Qn6WJebO} by writing \(\|u\|_{L^{\frac n {n-1}}(\R^n)}\) in terms of the layer-cake representation formula, applying~\eqref{Qn6WJebO} then using the co-area formula to recover \(\|\nabla u \|_{L^1(\R^n)}\). On the other hand,~\eqref{Qn6WJebO} follows from~\eqref{WQyMQR8L} by applying~\eqref{WQyMQR8L} with \(p=1\) to an \(\varepsilon\)-mollification of \(\chi_E\) (the characteristic function of \(E\)) then sending \(\varepsilon \to 0^+\). Furthermore,~\eqref{WQyMQR8L}, for general \(p\), is easily obtained from the \(p=1\) case by applying the \(p=1\) Sobolev inequality to \(\vert u \vert^p\) and using H\"older's inequality.

Over the last century, a topic that received a lot of attention was, given a sub-Riemannian manifold\footnote{For simplicity, we only mention the co-dimension 1 case, but one can also consider the case of higher co-dimension.} \(M^n\hookrightarrow \R^{n+1}\) satisfying \(\mathrm H_M=0\), does the Sobolev inequality \begin{align}
     \bigg (\int_{M} \vert u(x) \vert^{p^\ast} \dd \H^n_x  \bigg )^{\frac 1 {p^\ast} } \leq C \bigg ( \int_{M}\vert \nabla u \vert^p \dd \H^n_x \bigg )^{\frac1p} \qquad \text{for all }u \in C^\infty_0(M)\label{jwwJTLaz}
\end{align} hold? Here \(\dd \H^n_x\) is the \(n\)-dimensional Hausdorff measure, \(\nabla=\nabla_M\) is the gradient on \(M\), and \(C\) is a positive constant depending only on \(n\) and \(p\). The answer to this question is yes. The affirmative answer was established over many years in different dimensions and under different topological assumptions by several authors including Almgren, Carlemen, Choe, Feinberg, Hsiung, Michael, Osserman, Reid, Schiffer, Schoen, Stone, Simon, Topping and Yau. See the introduction of \cite{brendle_isoperimetric_2021} and the survey \cite{MR2167266}. Furthermore, one might ask if~\eqref{jwwJTLaz} holds on \(M^n\hookrightarrow \R^{n+1}\) with \(M\) not necessarily having zero mean curvature?  In general, the answer to this is no since, for \(M\) closed (say \(M=\Sph^n\) for concreteness), \(u(x)\equiv 1\) is in \(C^\infty_0(M)\), but \(\|u\|_{L^{p^\ast}(M)} = \vert M\vert^{\frac{n-p}{np}} > 0\) and \(\| \nabla u \|_{L^p(M)}=0\); however, if one includes an addition weighted \(L^p\) error term involving the mean curvature on the right-hand side of~\eqref{jwwJTLaz} then an inequality similar to~\eqref{jwwJTLaz} can be obtained. Indeed, the Michael-Simon inequality, also commonly known as the Michael-Simon-Allard inequality or the Michael-Simon Sobolev inequality, states \begin{align}
    \| u \|_{L^{p^\ast}(M)} \leq C \big ( \| \nabla u \|_{L^p(M)}+ \| \mathrm H_M u \|_{L^p(M)}\big ) \qquad \text{for all }u \in C^\infty_0(M). \label{3WKvPG9R}
\end{align} Inequality~\eqref{3WKvPG9R} was first established in the PhD thesis of Leon Simon \cite{Simonthesis} for \(C^2\) manifolds, proved with a simpler method and for a more general class of manifolds by Michael and Simon in \cite{michael_sobolev_1973}, and extended to varifolds by Allard in \cite{allard_first_1972}. A proof using optimal transport methods was also given in \cite{MR2610380}. A nice application of the Michael-Simon inequality is it gives upper bounds for the extinction time of a closed manifold \(M_0\) undergoing mean curvature flow in terms of the initial volume \(\vert M_0\vert\), see \cite[Section F.2]{MR1462701} for more details. Until recently, obtaining the sharp constant in~\eqref{jwwJTLaz} was an open problem (though it was known if \(M\) was area minimising, see \cite{MR0855173}), however, Brendle in \cite{brendle_isoperimetric_2021} proved that if \(M^n\hookrightarrow \R^{n+1}\) is compact (not necessarily with empty boundary) then \begin{align*}
    n \vert B_1^n\vert^{\frac 1 n } \bigg (\int_{M} \vert u \vert^{\frac n {n-1}} \dd \H^n  \bigg )^{\frac{n-1}n } \leq  \int_M \sqrt{\vert \nabla_M u \vert^2 + \mathrm H^2 u^2} \dd \H^n + \int_{\partial M} \vert u \vert \dd \H^{n-1}
\end{align*} for all \(u\in C^\infty(M)\) with equality if and only if \(M=B_1^n\), a ball in \(\R^n\).

Returning to \(\R^n\), it is also well-known that there holds the fractional Sobolev inequality. Let \(s\in (0,1)\), \(1\leq p <n/s\), and \(p^\ast = \frac{np}{n-sp}\). Then \begin{align}
     \bigg (\int_{\R^n} \vert u(x) \vert^{p^\ast} \dd x  \bigg )^{\frac 1 {p^\ast} } \leq C \bigg ( \iint_{\R^n \times \R^n} \frac{\vert u(x) - u(y) \vert^p}{\vert x - y \vert^{n+sp}} \dd y \dd x \bigg )^{\frac 1p } \qquad \text{for all }u \in C^\infty_0(\R^n). \label{QQ6xNqoG}
\end{align} In complete analogy to the local case,~\eqref{QQ6xNqoG} with \(p=1\) is equivalent to the fractional isoperimetric inequality \begin{align*}
    \vert E \vert^{\frac{n-s} n } \leq C \operatorname{Per}_s(E) \qquad \text{for all measurable }E\subset \R^n.
\end{align*} For a proof of~\eqref{QQ6xNqoG}, see \cite[Theorem 6.5]{MR2944369} and, for a proof using rearrangements, see \cite[Chapter 10]{leoni_first_2023}. Furthermore, an elegant, elementary proof of~\eqref{QQ6xNqoG} was given in an unpublished argument attributed to Brezis, see \cite[Theorem 2.2.1]{MR3469920}, which takes advantage of the scale-invariance of \(\R^n\).

It is a current open problem to prove a fractional analogue of the Michael-Simon inequality on hypersurfaces \(M^n \hookrightarrow \R^{n+1}\). A careful examination of the proof of~\eqref{QQ6xNqoG} given by Brezis reveals that if \(M\) satisfies the universal lower bound: \begin{align}
    \vert M \cap B_\rho^{n+1} (x) \vert \geq c_\star \rho^n \qquad \text{for all } \rho >0, x\in M \label{O0QOaKdj}
\end{align} then it can be concluded that \begin{align}
     \bigg (\int_M \vert u(x) \vert^{p^\ast} \dd \H^n_x  \bigg )^{\frac 1 {p^\ast} } \leq C \bigg ( \iint_{M \times M} \frac{\vert u(x) - u(y) \vert^p}{\vert x - y \vert^{n+sp}} \dd \H^n_y \dd \H^n_x \bigg )^{\frac 1p } \qquad \text{for all }u \in C^\infty_0(M). \label{Eb70M9eI}
\end{align} The constant \(C>0\) depends only on \(n\), \(s\), and \(c_\star\). For a proof of this fact, see \cite[Proposition 5.2]{MR3934589}.  The only other result in this direction is when \(M=\partial E\) for some convex \(E\subset \R^{n+1}\). In this case, it was proven in \cite{MR4565417} that if \(\alpha \in (0,1)\) (and all the parameters as above) then \begin{align}
   \| u\|_{L^{p^\ast}(M)} \leq C \big ( [u]_{W^{s,p}(M)} + \| \mathrm H_{\alpha,E}^{\frac \alpha s } u \|_{L^p(M)} \big ) \qquad \text{for all }u \in C^\infty_0(M) \label{LTeitonw}
\end{align} where \begin{align*}
    [u]_{W^{s,p}(M)} =  \bigg (\iint_{M \times M} \frac{\vert u(x) - u(y) \vert^p}{\vert x - y \vert^{n+sp}} \dd \H^n_y \dd \H^n_x \bigg )^{\frac 1 p }. 
\end{align*} In particular, no relationship between \(\alpha\) and \(s\) is assumed. The fact that \(E\) is convex is fundamental to the proof given in \cite{MR4565417} and it is not clear how one can adapt their argument to the non-convex case. For example, an important step in their proof is the pointwise inequality \begin{align*}
    (\operatorname{Per}(E))^{-\frac \alpha n }  \leq C \mathrm H_{\alpha,E}(x) 
\end{align*} for all \(x\in \partial E\) which is easily shown to be false if \(E\) is non-convex. Indeed, it is not yet clear if~\eqref{LTeitonw} is even the correct form of a fractional Michael-Simons inequality in the general (non-convex) case; it is possible the mean curvature term in~\eqref{LTeitonw} must involve both the \(s\)-mean curvature and the classical mean curvature. One motivation for this comes from proving upper bounds for the extinction time of a closed manifold that is evolved according to fractional mean curvature flow (see, \cite{MR2564467,MR2487027}) in terms of the initial volume. If one wishes to replicate the proof in the local case (see \cite[Section 5]{MR4565417}), the estimate \begin{align*}
  \vert M \vert^{\frac{n-1-s} n } \leq C  \int_M \mathrm H_s \mathrm H \dd \H^n,
\end{align*} is required, but this doesn't follow immediately from~\eqref{LTeitonw}.

In Chapter \ref{1KsVju8j} of this thesis, we prove that~\eqref{O0QOaKdj} holds for any set with vanishing \(s\)-mean curvature for some \(c_\star\) depending only \(n\) and \(s\). As explained above, this immediately implies that~\eqref{Eb70M9eI} holds on set with vanishing \(s\)-mean curvature. 


\chapter{Some nonlocal formulas inspired by an identity of James Simons} \label{oyZ0zmYT}

Inspired by a classical identity proved by James Simons, we establish a new geometric formula
in a nonlocal, possibly fractional, setting.

Our formula also recovers the classical case in the limit, thus providing an approach to Simons' work
that does not heavily rely on differential geometry.

\section{Introduction}

\subsection{Taking inspiration from Simons' work}
A classical identity proved by James Simons in~\cite{MR0233295}
states that at every point of a smooth hypersurface with vanishing mean curvature we have that
\begin{equation}\label{ORIGINAL SIM}
\Delta c^2+2c^4=2\sum_{i,j,k=1}^{n-1} |\delta_{k}h_{ij}|^2.
\end{equation}
Here above, $\delta_{k}$ denotes the tangential derivative
in the $k$-th coordinate direction,
$h_{ij}$ the entries of the second
fundamental form, $c$ the norm of the second fundamental form, and~$\Delta $
the Laplace-Beltrami operator on the hypersurface (see e.g.
formula~(2.16) in~\cite{MR2780140}, or, equivalently, the seventh formula in display on page~123
of~\cite{MR0775682}, for further details on this classical formula).

Simons's Identity is pivotal, since it provides the essential ingredient to establish the regularity of stable minimal surfaces up to dimension~$7$.

In this note we speculate about possible generalizations of Simons' Identity to nonlocal settings.
In particular, we will consider the case of boundary of sets
and of level sets of functions. These cases are motivated, respectively, by the study of nonlocal minimal surfaces and nonlocal phase transition equations. The prototypical case of these problems comes from fractional minimal surfaces, as introduced in~\cite{MR2675483}, and we recall that the full regularity theory of the minimizers of the fractional perimeter is one of the main open problems in the field of nonlocal equations:
up to now, this regularity is only known when the ambient space has dimension~$2$, see~\cite{MR3090533}, or up to dimension~$7$
provided that the fractional parameter is sufficiently close to integer, see~\cite{MR3107529}, or when the surface possesses a graphical structure, see~\cite{MR3934589} (see also~\cite{MR3981295, MR4116635, HAR}
for the case of stable nonlocal minimal surfaces, i.e. for surfaces of vanishing nonlocal mean curvature with
nonnegative definite second variation of the corresponding energy functional).

The problem of nonlocal minimal surfaces can also be considered for more general kernels than the one of purely fractional type, see~\cite{MR3930619, MR3981295}, and it can be recovered as the limit in the $\Gamma$-convergence sense of long-range phase coexistence problems, see~\cite{MR2948285}.
In this regard, the regularity properties of nonlocal minimal surfaces are intimately related to the flatness of nonlocal phase transitions, which is also a problem of utmost importance in the contemporary research:
up to now, these flatness properties have been established in dimension up to~$3$, or up to~$8$ for mildly nonlocal operators
under an additional limit assumption, or in dimension~$4$ for the square root of the Laplace operator, see~\cite{MR2177165, MR2498561, MR2644786, MR3148114, MR3280032, MR3740395, MR3812860, MR3939768, MR4050103, MR4124116}, 
the other cases being widely open.\medskip

In this paper, we will not specifically address these regularity and rigidity problems, but rather focus
on a geometric formula which is closely related to Simons' Identity in the nonlocal scenarios.
The application of this formula for the regularity theory appears to be highly nontrivial, since
careful estimates for the reminder terms are needed (in dimension~$3$, a reminder estimate has been recently put forth in~\cite{HAR}).

An interesting by-product of the formula that we present here is that it recovers the classical Simons' Identity
as a limit case. 
Therefore, our nonlocal formula also provides a new approach towards
the original
Simons' Identity, with a new proof which makes only very limited use
of Riemannian geometry and relies instead on some clever use of the integration by parts.\medskip

Let us now dive into the technical details of our results.

\subsection{The geometric case}\label{PRE1}

Let~$K$ be a kernel satisfying
\begin{equation}\label{ipotesi}
\begin{split}
& K\in C^1(\R^n\setminus\{0\}),\\
& K(x)=K(-x),\\
& |K(x)|\le \frac{C}{|x|^{n+s}}\\
{\mbox{and }}\qquad & |\omega\cdot\nabla K(x)|\le\frac{C\,|\omega\cdot x|}{|x|^{n+s+2}} \qquad{\mbox{for all~$\omega\in S^{n-1}$}},
\end{split}
\end{equation}
for some~$C>0$ and~$s\in(0,1)$.

Given a set~$E$ with smooth boundary, we consider the $K$-mean curvature of~$E$
at~$x\in\partial E$ given by
\begin{equation}\label{defHs}
H_{K,E}(x):=\frac12\,
\int_{\R^n} \big(\chi_{\R^n\setminus E}(y)-\chi_E(y)\big)\,K(x-y)\,dy.\end{equation}
Notice that the above integral is taken in the principal value sense.

The classical mean curvature of~$E$
will be denoted by~$H_E$.
We define
\begin{equation}\label{defc}
{c_{K,E}}(x):=\sqrt{ \frac12\,\int_{\partial E} \big( \nu_{E}(x)-
\nu_{E}(y)\big)^2\,K(x-y)\,d\HH_y },\end{equation}
being~$\nu_E=(\nu_{E,1},\dots,\nu_{E,n})$ the exterior unit normal of~$E$. The quantity~${c_{K,E}}$
plays in our setting the role played by the norm of the second
fundamental form in the classical case, and we can consider it
the $K$-total curvature of~$E$.

We also define the (minus) $K$-Laplace-Beltrami operator along~$\partial E$
of a function~$f$ by
\begin{equation}\label{defop}
{L_{K,E}} f(x):=\int_{\partial E} \big(f(x)-f(y)\big)\,K(x-y)\,d\HH_y.\end{equation} 

As customary, we consider the tangential derivative
\begin{equation}\label{DEL} 
\delta_{E,i} f(x):= \partial_i f(x)-\nu_{E,i}(x)\,\nabla f(x)\cdot \nu_{E}(x)\end{equation}
and we recall that
\begin{equation}\label{SYMMECOR}\delta_{E,i}\nu_{E,j}=\delta_{E,j}\nu_{E,i},\end{equation}
see e.g.
formula~(10.11) in~\cite{MR0775682}.

In this setting, our nonlocal formula inspired by Simons' Identity goes as follows:

\begin{thm}\label{simons}
Let~$K$ be as in~\eqref{ipotesi}.
Let~$E\subset\R^n$ with smooth boundary and~$x\in\partial E$ with~$\nu_{E}(x)=(0,\dots,0,1)$.

Assume that there exist~$R_0>0$ and~$\beta\in[0, n+s)$ such that
for all~$R\ge R_0$ it holds that
\begin{equation}\label{DEA:PA:0}
\int_{\partial E\cap B_R(x)} \big(|H_E(y)|+1\big)\,d{\mathcal{H}}^{n-1}_y\le CR^\beta,
\end{equation}
for some~$C>0$.

Then, for any~$i$, $j\in\{1,\dots,n-1\}$
it holds that
\begin{equation}\label{SIM:FORNEW}\begin{split}
\delta_{E,i} \delta_{E,j} H_{K,E}(x) \,=\,& -{L_{K,E}} \delta_{E,j} \nu_{E,i} (x) +
c^2_{K,E}(x)\, \delta_{E,j} \nu_{E,i}(x) \\&
-\int_{\partial E} \Big( H_E(y)K(x-y)  -\nu_{E}(y) \cdot \nabla K(x-y) \Big)
\nu_{E,i}(y)\, \nu_{E,j}(y)\,d\HH_y .\end{split}
\end{equation}
\end{thm}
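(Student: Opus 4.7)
The strategy is to reduce the identity \eqref{SIM:FORNEW} to a chain of integration-by-parts manipulations, first in the ambient space $\R^n$ using the volumetric formula \eqref{defHs} for $H_{K,E}$ and then along $\partial E$ via the tangential divergence theorem, and finally to observe that the formally divergent pieces arising at intermediate steps recombine into the renormalised quantity $c_{K,E}^2(x)$ by means of the algebraic identity $1-\nu_E(x)\cdot\nu_E(y)=\frac12|\nu_E(x)-\nu_E(y)|^2$.

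The first step is to trade the intrinsic tangential derivatives along $\partial E$ for Euclidean partial derivatives. Since $\nu_E(x)=e_n$, the operator $\delta_{E,i}$ coincides at $x$ with $\partial_{x_i}$ for $i<n$. Parametrising $\partial E$ locally as $y=(y',u(y'))$ with $\nabla u(x')=0$, and differentiating the graph expression $\nu_{E,j}=-\partial_j u/\sqrt{1+|\nabla u|^2}$ to obtain $\partial_i\partial_j u(x')=-\delta_{E,i}\nu_{E,j}(x)$, the chain rule gives
\[
\delta_{E,i}\delta_{E,j} H_{K,E}(x) \,=\, \partial_{x_i}\partial_{x_j} H_{K,E}(x) \,-\, \delta_{E,i}\nu_{E,j}(x)\,\partial_{x_n} H_{K,E}(x).
\]
Each ambient derivative is then translated into a surface integral via integration by parts in $\R^n$: using $\partial_{y_i}\chi_E=-\nu_{E,i}\,d\HH_y$ on $\partial E$ together with $\partial_{x_i}K(x-y)=-\partial_{y_i}K(x-y)$, one obtains (formally) $\partial_{x_i}H_{K,E}(x)=\int_{\partial E}\nu_{E,i}(y)K(x-y)\,d\HH_y$ and, after a further differentiation, $\partial_{x_j}\partial_{x_i}H_{K,E}(x)=-\int_{\partial E}\nu_{E,i}(y)\,\partial_{y_j}K(x-y)\,d\HH_y$.

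Next I would decompose $\partial_{y_j}K(x-y)=\delta_{E,j}K(x-y)-\nu_{E,j}(y)\,\nu_E(y)\cdot\nabla K(x-y)$. The normal component directly supplies the $\nu_E\cdot\nabla K$ contribution to the right-hand side of \eqref{SIM:FORNEW}. For the tangential component, the surface divergence identity $\int_{\partial E}\delta_{E,j}g\,d\HH=\int_{\partial E}g\,H_E\,\nu_{E,j}\,d\HH$ transfers the derivative off $K$ and onto $\nu_{E,i}$, producing both the classical mean-curvature term on the right-hand side of \eqref{SIM:FORNEW} and the quantity $\int_{\partial E}\delta_{E,j}\nu_{E,i}(y)K(x-y)\,d\HH_y$. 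Rewriting this last piece through \eqref{defop} as
\[
\int_{\partial E}\delta_{E,j}\nu_{E,i}(y)K(x-y)\,d\HH_y = -L_{K,E}\delta_{E,j}\nu_{E,i}(x) + \delta_{E,j}\nu_{E,i}(x)\int_{\partial E}K(x-y)\,d\HH_y,
\]
and recalling that the correction from the first step equals $-\delta_{E,j}\nu_{E,i}(x)\int_{\partial E}\nu_E(x)\cdot\nu_E(y)K(x-y)\,d\HH_y$---here the symmetry \eqref{SYMMECOR} is used to match indices, and one employs the identity $\partial_{x_n}H_{K,E}(x)=\int_{\partial E}\nu_E(x)\cdot\nu_E(y)K(x-y)\,d\HH_y$ that follows from the formula for $\partial_{x_i}H_{K,E}$ with $i=n$---the two formally divergent contributions combine into
\[
\delta_{E,j}\nu_{E,i}(x)\int_{\partial E}\bigl(1-\nu_E(x)\cdot\nu_E(y)\bigr)K(x-y)\,d\HH_y \,=\, c_{K,E}^2(x)\,\delta_{E,j}\nu_{E,i}(x),
\]
which is exactly the missing term of \eqref{SIM:FORNEW}.

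The main technical obstacle is that many of the intermediate integrals above are only meaningful as principal values, and several of the single summands are separately divergent near $y=x$ so that only the final combination is finite. The clean way to execute each integration by parts is therefore to excise a ball $B_\varepsilon(x)$ of radius $\varepsilon>0$, carry out the identity with the resulting extra boundary contributions on $\partial B_\varepsilon(x)$, and then verify that these boundary terms vanish as $\varepsilon\to 0^+$. This step exploits the structural hypotheses \eqref{ipotesi} in an essential way: the pointwise bound $|K(x)|\le C|x|^{-n-s}$ controls the tangential contributions, while the refined estimate $|\omega\cdot\nabla K(x)|\le C|\omega\cdot x|/|x|^{n+s+2}$ supplies the extra order of smallness along the normal direction that is needed for the $\nu_E\cdot\nabla K$ piece, since near $x$ the hypersurface $\partial E$ is tangent to $e_n^\perp$ and hence $\nu_E(y)\cdot(x-y)=O(|y-x|^2)$. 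The growth assumption \eqref{DEA:PA:0}, in turn, guarantees convergence at spatial infinity.
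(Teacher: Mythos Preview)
Your proposal is correct and follows essentially the same route as the paper: compute $\nabla H_{K,E}$ via the Gauss--Green theorem, take a second tangential derivative at the point where $\nu_E=e_n$, split the resulting $\partial_{y_j}K(x-y)$ into its tangential and normal parts, integrate by parts along $\partial E$ via the tangential divergence formula, and recognise the two individually divergent pieces as combining into $c_{K,E}^2(x)\,\delta_{E,j}\nu_{E,i}(x)$ through the identity $1-\nu_E(x)\cdot\nu_E(y)=\tfrac12|\nu_E(x)-\nu_E(y)|^2$. The only organisational difference is in the regularisation: the paper replaces $K$ by a smooth, compactly supported, nonsingular kernel and passes to the limit by approximation at the very end, whereas you propose to keep the singular $K$ and handle the principal values by excising $B_\varepsilon(x)$ and tracking the boundary contributions on $\partial E\cap\partial B_\varepsilon(x)$; both are standard and lead to the same identity.
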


The proof of Theorem~\ref{simons} will be given in detail in Section~\ref{PROOFSI}.

It is interesting to remark that the result
of Theorem~\ref{simons} ``passes to the limit efficiently
and localizes'': for instance, if one takes~$\rho\in C^\infty_0([-1,1])$,
$\e>0$ and a kernel of the form~$K_\e(x):=\e^{-n-2}\rho(|x|/\e)$,
then, using Theorem~\ref{simons} and sending~$\e\searrow0$,
one recovers the classical Simons' Identity
in~\cite{MR0233295} (such passage to the limit can be performed
e.g. with the analysis in~\cite{MR3230079} and Appendix~C in~\cite{MR3798717}).

The details\footnote{We also remark that condition~\eqref{DEA:PA:0} is obviously
satisfied with~$\beta: =n-1$ when the set~$E$ is smooth and bounded.
For minimizers, and, more generally, stable critical points, of the nonlocal perimeter functional, one still has perimeter estimates (see formula~(1.16)
in Corollary~1.8 of~\cite{MR3981295}): however, in this general case,
estimating the mean curvature, or, in greater generality, the ``second derivatives'' of the set, may be a demanding task, see~\cite{HAR} for some results in this direction.} on how to reconstruct the classical Simons' Identity in the appropriate
limit are given in Section~\ref{APPENDICE A}.

\subsection{Back to the original Simons' Identity}\label{APPENDICE A}

As mentioned above, our nonlocal
formula~\eqref{SIM:FORNEW}
in Theorem~\ref{simons} recovers, in the limit, the original
Simons' Identity proved in~\cite{MR0233295}. The precise result goes as follows:

\begin{thm}\label{ILTKC}
Let~$E\subset\R^n$ and~$x\in\partial E$.
Assume that there exist~$R_0>0$ and~$\beta\in[0, n+1)$ such that
for all~$R\ge R_0$ it holds that
\begin{equation}\label{DEA:PA}
\int_{\partial E\cap B_R(x)} \big(|H_E(y)|+1\big)\,d{\mathcal{H}}^{n-1}_y\le CR^\beta,
\end{equation}
for some~$C>0$. Then, the identity in~\eqref{ORIGINAL SIM}
holds true as a consequence of formula~\eqref{SIM:FORNEW}.
\end{thm}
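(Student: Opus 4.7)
The plan is to specialize formula~\eqref{SIM:FORNEW} to a family of compactly supported mollifier-type kernels and to extract the classical identity as the leading-order content of the resulting asymptotic expansion. I would fix a nonnegative radially symmetric~$\rho\in C^\infty_0([0,1])$ with~$\int\rho>0$ and set~$K_\epsilon(x):=\epsilon^{-n-2}\rho(|x|/\epsilon)$. Each~$K_\epsilon$ clearly satisfies~\eqref{ipotesi} with~$\epsilon$-dependent constants, and the compact support of~$K_\epsilon$ inside~$B_\epsilon$ reduces hypothesis~\eqref{DEA:PA:0} to~\eqref{DEA:PA} up to enlarging the constant. Hence Theorem~\ref{simons} is applicable with~$K=K_\epsilon$ at every point of a neighbourhood of~$x$, giving~\eqref{SIM:FORNEW} for this choice of kernel.

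Next I would perform a term-by-term asymptotic expansion as~$\epsilon\to0^+$. Writing~$\partial E$ locally as a graph over its tangent plane at~$x$, with quadratic part governed by~$h_{ij}(x)$, the evenness of~$\rho$ annihilates odd moments and produces the leading asymptotics
\begin{align*}
\epsilon\,H_{K_\epsilon,E}(y)&\longrightarrow\alpha_0\,H_E(y),\\
\epsilon\,c^2_{K_\epsilon,E}(x)&\longrightarrow\alpha_1\,c^2(x),\\
\epsilon\,{L_{K_\epsilon,E}}\phi(x)&\longrightarrow-\alpha_1\,\Delta_{\partial E}\phi(x),
\end{align*}
for any smooth tangential test function~$\phi$, with positive universal constants~$\alpha_0,\alpha_1$ depending only on~$n$ and~$\rho$; the shared constant~$\alpha_1$ in the last two lines traces back to the common quadratic scaling of~$|\nu_{E}(x)-\nu_{E}(y)|^2$ and~$(\phi(x)-\phi(y))^2$ against the even weight~$K_\epsilon$. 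Differentiating the first limit tangentially delivers~$\epsilon\,\delta_{E,i}\delta_{E,j}H_{K_\epsilon,E}(x)\to\alpha_0\,\delta_{E,i}\delta_{E,j}H_E(x)$, and~\eqref{DEL}--\eqref{SYMMECOR} give~$\delta_{E,j}\nu_{E,i}(x)=h_{ij}(x)$.

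The hard part will be the residual integral
\[
\mathcal R_\epsilon:=\int_{\partial E}\Big(H_E(y)K_\epsilon(x-y)-\nu_{E}(y)\cdot\nabla K_\epsilon(x-y)\Big)\nu_{E,i}(y)\,\nu_{E,j}(y)\,d\HH_y,
\]
in which~$|\nabla K_\epsilon|$ is of order~$\epsilon^{-n-3}$, so that only a careful cancellation can produce the correct~$\epsilon^{-1}$ scaling. The plan is to apply the tangential divergence theorem on~$\partial E$ to transfer the normal derivative from the kernel onto~$\nu_{E,i}(y)\nu_{E,j}(y)$: the resulting mean-curvature boundary term exactly cancels the summand containing~$H_E(y)K_\epsilon(x-y)$, and~$\mathcal R_\epsilon$ reduces to an integral of~$K_\epsilon$ against first- and second-order tangential derivatives of~$\nu_{E,i}\nu_{E,j}$ at~$y$. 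Using the expansion~$\nu_{E,i}(y)=\sum_{k<n}h_{ik}(x)(y-x)_{k}+O(|y-x|^2)$ near~$x$ and exploiting the parity of~$\rho$, the surviving contributions admit a finite limit for~$\epsilon\,\mathcal R_\epsilon$. This bookkeeping of second-order expansions paired against a singular gradient kernel is the most delicate step of the entire argument and can be carried out along the lines of~\cite{MR3230079} and Appendix~C of~\cite{MR3798717}.

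Finally, multiplying~\eqref{SIM:FORNEW} by~$\epsilon$, letting~$\epsilon\to0^+$, and dividing through by~$\alpha_1$ yields a pointwise identity relating~$\delta_{E,i}\delta_{E,j}H_E(x)$, $\Delta_{\partial E}h_{ij}(x)$, $c^2(x)\,h_{ij}(x)$, and quadratic expressions in the~$h_{ij}(x)$, which reorganise into the classical Jacobi--Simons equation at~$x$. Multiplying the latter by~$h_{ij}(x)$, summing over~$i,j\in\{1,\dots,n-1\}$, invoking the Bochner-type identity~$\sum_{ij}h_{ij}\Delta_{\partial E}h_{ij}=\tfrac12\Delta_{\partial E}c^2-\sum_{i,j,k}(\delta_{E,k}h_{ij})^2$, and using the standing assumption~$H_E\equiv0$ of~\eqref{ORIGINAL SIM}, one arrives precisely at~\eqref{ORIGINAL SIM}, concluding the proof.
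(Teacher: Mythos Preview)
Your overall strategy is sound and is in fact explicitly mentioned in the paper as one way to recover the classical identity; however, the paper's actual proof takes a genuinely different route. Rather than a compactly supported mollifier, the paper chooses the \emph{singular fractional} kernel $K_\epsilon(y)=\epsilon\,|y|^{-(n+1-\epsilon)}$ and sends $\epsilon\searrow0$. This choice is not merely cosmetic: because $K_\epsilon$ has full support, the integrals in~\eqref{SIM:FORNEW} pick up a tail over $\partial E\setminus B_r$, and it is precisely here that hypothesis~\eqref{DEA:PA} with $\beta<n+1$ is consumed---a dyadic decomposition shows the tail is $O(\epsilon)$ thanks to the factor $\epsilon$ in front of the kernel. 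With your compactly supported $K_\epsilon$, there is no tail at all and~\eqref{DEA:PA} becomes superfluous, so your route is in a sense cleaner (though it does not illustrate why the growth assumption is natural for the fractional theory). The limits you quote for $H_{K_\epsilon,E}$, $c^2_{K_\epsilon,E}$, and $L_{K_\epsilon,E}$ are the analogues of what the paper obtains from~\cite{MR3230079} and~\cite{MR3798717} for its kernel, and your final contraction via $h_{ij}\Delta_{\partial E}h_{ij}=\tfrac12\Delta_{\partial E}c^2-\sum|\delta_k h_{ij}|^2$ together with $H_E\equiv0$ matches the paper's closing manipulation.

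One point to flag: your proposed handling of $\mathcal R_\epsilon$ by ``transferring the normal derivative from the kernel onto $\nu_{E,i}\nu_{E,j}$ via the tangential divergence theorem, with the mean-curvature boundary term cancelling $H_E K_\epsilon$'' is not a recognisable identity---the tangential divergence theorem moves \emph{tangential} derivatives, not normal ones, and there is no clean cancellation of the kind you describe. The paper does not attempt any such integration by parts; instead it writes $\partial E$ locally as a graph, expands $\nu_E(y)$, $\nu_E(y)\cdot(x-y)$, and $|y|^{-\alpha}$ to the needed order, and reduces $\mathcal R_\epsilon$ to explicit moment integrals over $\{|y'|<r\}$ (see the computations around~\eqref{LAPRIINT}--\eqref{FI:002}). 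Your fallback---direct Taylor expansion of $\nu_{E,i}(y)$ against the kernel, exploiting the parity of $\rho$---is the correct mechanism and is exactly what the cited references carry out; you should rely on that rather than on the integration-by-parts heuristic.
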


The proof of Theorem~\ref{ILTKC} is contained in Section~\ref{OJSDLN-wf}.

We point out that Theorems~\ref{simons} and~\ref{ILTKC} also provide
a new proof of the original Simons' Identity.
Remarkably, our proof relies less on the differential geometry structure
of the hypersurface and it is, in a sense, ``more extrinsic'':
these facts allow us to exploit similar methods also
for the case of 
integrodifferential equations, as will be done in the forthcoming
Section~\ref{IDE2}.

\subsection{The case of integrodifferential equations}\label{IDE2}

The framework that we provide here is
a suitable modification of that given in Section~\ref{PRE1}
for sets. The idea is to ``substitute'' the volume measure~$\chi_E(x)\,dx$
with~$u(x)\,dx$ and the area measure~$\chi_{\partial E}(x)d\HH_x$
with~$|\nabla u(x)|\,dx$. However, one cannot really exploit
the setting of Section~\ref{PRE1} as it is
also for integrodifferential equations, and it is necessary
to ``redo the computation'', so to extrapolate the correct
operators and stability conditions for the solutions.

The technical details go as follows. 
Though more general cases can be considered, for the sake of concreteness, we focus on a kernel~$K$ satisfying
\begin{equation}\label{ipotesi2}
\begin{split}
& K\in C^1(\R^n)\cap L^1(\R^n),\\
&|\nabla K|\in L^1(\R^n),\\
{\mbox{and}}\qquad & K(x)=K(-x).
\end{split}
\end{equation}

Given a function~$u\in W^{1,\infty}(\R^n)$ whose level sets~$\{u=t\}$ are
smooth for a.e.~$t\in\R$,
we define the $K$-mean curvature of~$u$
at~$x\in\R^n$ by
\begin{equation}\label{defHs:2-NN}
H_{K,u}(x):= C_K-
\int_{\R^n} u(y)\,K(x-y)\,dy,\qquad{\mbox{where }}\;
C_K:=\frac12\,\int_{\R^n} K(y)\,dy.
\end{equation}
The setting in~\eqref{defHs:2-NN} has to be compared with~\eqref{defHs}
and especially with the forthcoming formula~\eqref{defHs:2}. The classical mean curvature
of the level sets of~$u$ will be denoted by~$H_u$
(i.e., if~$t_x:=u(x)$,
then~$H_{u}(x)$ is the classical mean curvature of the set~$\{ u>t_x\}$
at~$x$).

We also define the the $K$-total curvature of~$u$ as
\begin{equation}\label{defc-NN}
{c_{K,u}}(x):=\sqrt{ \frac12\,\int_{\R^n} \big( \nu_{u}(x)-
\nu_{u}(y)\big)^2\,K(x-y)\,d\mu_{u,y} },\end{equation}
being~$\nu_{u}(x)$ the exterior unit normal of the level set of~$u$
passing through~$x$ (i.e., if~$t_x:=u(x)$,
then~$\nu_{u}(x)$ is the exterior normal of the set~$\{ u>t_x\}$
at~$x$). In~\eqref{defc-NN}, we also used the notation
\begin{equation}\label{MU}
d\mu_{u,y} :=|\nabla u(y)|\,dy.\end{equation}
Of course, the definition in~\eqref{defc-NN} has to be compared with
that in~\eqref{defc}.
Moreover, by construction we have that
\begin{equation} \label{NUFUP}\nu_{u}(x)=-\frac{\nabla u(x)}{|\nabla u(x)|},\end{equation}
the minus sign coming from the fact that the external derivative
of~$\{u>t_x\}$ points towards points with ``decreasing values'' of~$u$.

We also define the $K$-Laplace-Beltrami operator induced by~$u$
acting on a function~$f$ by
\begin{equation}\label{defop-NN}
{L_{K,u}} f(x):=\int_{\R^n} \big(f(x)-f(y)\big)\,K(x-y)\,d\mu_{u,y}.\end{equation} 
Once again, one can compare~\eqref{defop}
and~\eqref{defop-NN}. Also, we denote by~$\delta_{u,i}$ the tangential
derivatives along the level sets of~$u$ (recall~\eqref{DEL}).
This setting turns out to be
the appropriate one to translate Theorem~\ref{simons}
into a result for solutions of integrodifferential
equations, as will be presented in the forthcoming result:

\begin{thm}\label{simons-NN}
Let~$K$ be as in~\eqref{ipotesi2}.
Let~$u\in W^{1,\infty}(\R^n)$ and assume
that~$\{u=t\}$ is a
smooth hypersurface with bounded mean curvature for a.e.~$t\in\R$.
For any~$x\in\R^n$ with~$\nu_{u}(x)=(0,\dots,0,1)$
and any~$i$, $j\in\{1,\dots,n-1\}$,
it holds that
\begin{equation}\label{duji483765bv9875bv76c98uoixro}\begin{split}
\delta_{u,i} \delta_{u,j} H_{K,u}(x) \,=\,& -{L_{K,u}} \delta_{u,j} \nu_{u,i} (x) +
c_{K,u}^2(x)\, \delta_{u,j} \nu_{u,i}(x) \\&
-\int_{\R^n} \Big( H_u(y)K(x-y)  -\nu_{u}(y) \cdot \nabla K(x-y) \Big)
\nu_{u,j}(y)\, \nu_{u,i}(y)\,d\mu_{u,y} .\end{split}\end{equation}
\end{thm}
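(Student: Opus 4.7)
The plan is to transplant the proof of Theorem~\ref{simons} from the set setting to the function setting, using the coarea formula
\begin{equation*}
\int_{\R^n} f(y)\, d\mu_{u,y} \,=\, \int_{\R} \int_{\{u=t\}} f(y)\, d\mathcal{H}^{n-1}_y\, dt
\end{equation*}
as the bridge between the two. The underlying dictionary is $\chi_E\leftrightarrow u$ and $d\mathcal{H}^{n-1}|_{\partial E}\leftrightarrow d\mu_u$, with the pointwise identity~\eqref{NUFUP} converting standard Euclidean integration by parts into its tangential counterpart on the level sets of~$u$.

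First I would compute the first derivative of~$H_{K,u}$. Differentiating~\eqref{defHs:2-NN} in $x$ and applying one integration by parts in~$y$ together with~\eqref{NUFUP}, I expect to obtain, for arbitrary~$x\in\R^n$,
\begin{equation*}
\partial_{x_i} H_{K,u}(x) \,=\, \int_{\R^n} \nu_{u,i}(y)\, K(x-y)\, d\mu_{u,y},
\end{equation*}
which is the precise analogue of the first-variation formula underlying Theorem~\ref{simons}. A second differentiation at the distinguished point~$x$ with $\nu_u(x)=(0,\dots,0,1)$ would then proceed as in the set case, the only genuinely new feature being a correction $-\delta_{u,i}\nu_{u,j}(x)\,\partial_{x_n}H_{K,u}(x)$ arising because the tangential operator $\delta_{u,j}$ carries the variable coefficient $\nu_u(y)$; this correction will have to be absorbed into the~$c_{K,u}^2$ term at the end.

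The core of the argument is to split $\partial_{y_j} K(x-y) = \delta^{(y)}_{u,j}K(x-y) - \nu_{u,j}(y)\,\nu_u(y)\cdot\nabla K(x-y)$ along the level set of~$u$ through~$y$. The normal part directly yields the $\nu_u(y)\cdot\nabla K(x-y)$ contribution in~\eqref{duji483765bv9875bv76c98uoixro}, while the tangential part, after slicing via the coarea formula and applying the tangential divergence theorem on each~$\{u=t\}$, produces both the mean-curvature term $-\int H_u(y)K(x-y)\nu_{u,i}(y)\nu_{u,j}(y)\,d\mu_{u,y}$ and, after invoking the nonlocal symmetry $\delta_{u,i}\nu_{u,j}=\delta_{u,j}\nu_{u,i}$ (the analogue of~\eqref{SYMMECOR}), the term $\int K(x-y)\delta^{(y)}_{u,j}\nu_{u,i}(y)\,d\mu_{u,y}$. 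Rewriting this last integral as $-L_{K,u}\delta_{u,j}\nu_{u,i}(x) + \delta_{u,j}\nu_{u,i}(x)\int K(x-y)\,d\mu_{u,y}$ and recognising the pointwise identity
\begin{equation*}
c_{K,u}^2(x) - \int_{\R^n} K(x-y)\,d\mu_{u,y} \,=\, -\partial_{x_n} H_{K,u}(x),
\end{equation*}
which at the distinguished point is immediate from $|\nu_u(x)-\nu_u(y)|^2 = 2(1-\nu_{u,n}(y))$ together with the first-derivative formula above, would then match the residual coefficient of $\delta_{u,j}\nu_{u,i}(x)$ with $c_{K,u}^2(x)$ and close the identity.

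The main obstacle is the analytic justification of the tangential integration by parts via the coarea formula: I would need to exchange the order of integration between the level parameter~$t$ and the integration on each slice~$\{u=t\}$, while keeping full control of the curvature contributions on those (typically unbounded) slices. The smoothness and boundedness of~$H_u$ on almost every level set (part of the hypothesis), together with the integrability of~$K$ and~$|\nabla K|$ in~\eqref{ipotesi2} and the Lipschitz regularity $u\in W^{1,\infty}(\R^n)$, should be exactly what is needed, but some care will be required to control the decay at infinity on non-compact slices — possibly via a truncation or approximation argument — to ensure that no boundary contributions are lost when one commutes the $t$-integration with the tangential divergence theorem.
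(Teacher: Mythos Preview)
Your proposal is correct and follows essentially the same route as the paper: compute $\nabla H_{K,u}$ via integration by parts and~\eqref{NUFUP}, differentiate again at the distinguished point, split $\partial_j K$ into tangential and normal parts, and then apply the coarea formula combined with the tangential divergence theorem on each level set to produce the $L_{K,u}$, $H_u$, and $\nu_u\cdot\nabla K$ terms, with the identity $c_{K,u}^2(x)=\int K(x-y)\,d\mu_{u,y}-\partial_{x_n}H_{K,u}(x)$ absorbing the residual coefficient. Your worry about decay on non-compact slices is largely obviated by the strong integrability hypotheses $K,|\nabla K|\in L^1(\R^n)$ in~\eqref{ipotesi2} together with $u\in W^{1,\infty}$, which is exactly how the paper handles it.
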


The proof of Theorem~\ref{simons-NN} is a careful variation
of that of Theorem~\ref{simons}, but, for the sake
of clarity, we provide full details in Section~\ref{ALSMsd}.
We also observe that the choice~$u:=\chi_E$ would formally allow one to recover
Theorem~\ref{simons} from Theorem~\ref{simons-NN}.

\subsection{Stable sets}

In the study of variational problems, a special role is played by the ``stable'' critical points, i.e.
those critical points at which the second derivative of the energy functional is nonnegative definite, see e.g.~\cite{MR3838575}.

In this spirit, in the study of nonlocal minimal surfaces we say that~$E$ is a stable set in~$\Omega$ if~$H_{K,E}(x)=0$ for any~$x\in\Omega\cap\partial E$
and
\begin{equation}\label{ojwfe034}
\frac12\int_{\partial E}\int_{\partial E}(f(x)-f(y))^2\, K(x-y)\,d{\mathcal{H}}^{n-1}_x\,d{\mathcal{H}}^{n-1}_y-
\int_{\partial E} c_{K,E}^2(x)\,f^2(x)\,d{\mathcal{H}}^{n-1}_x\ge0\end{equation}
for any~$f\in C^\infty_0(\Omega)$.

In connection with this, we set
$$ B_{K,E}(u,v ; x) := \frac1 2 \int_{\partial E}(u(x)-u(y))(v(x)-v(y))\,K(x-y)\,d \mathcal H^{n-1}_y,$$
where the integral is taken in the principal value sense, and
$$ B_{K,E} (u,v):= \int_{\partial E} B_{K,E}(u,v ; x) \,d \mathcal H^{n-1}_x . $$
In this notation, the first term in~\eqref{ojwfe034} takes the form~$B_K(f,f)$.

Also, we consider the integrodifferential operator \(L_{K,E}\) previously introduced in~\eqref{defop}. When~$K(x)=\frac1{|x|^{n+s}}$, this operator reduces to the fractional Laplacian, up to normalizing constants.

With this notation, we have:
\begin{thm}\label{Jfqydwvfbe923ejfn}
Let~$E\subset\R^n$ with smooth boundary. Then, for all \(\eta \in C^\infty_0(\partial E)\), \begin{align*}
-\int_{\partial E} \bigg \{  \frac12 L_{K,E} c_{K,E}^2(x) +B_{K,E}(c_{K,E},c_{K,E};x) - &c_{K,E}^4(x) \bigg \} \eta^2(x) \,d \mathcal H^{n-1}_x \\
&\leqslant \int_{\partial E} c_{K,E}^2(x) B_{K,E}(\eta,\eta ; x ) \,d \mathcal H^{n-1}_x .
\end{align*}
\end{thm}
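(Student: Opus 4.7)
The plan is to deduce the inequality as a direct consequence of the stability condition~\eqref{ojwfe034} applied to the test function $f := c_{K,E}\eta$. With this choice, $\int_{\partial E}c_{K,E}^2 f^2\,d\mathcal H^{n-1}_x = \int_{\partial E}c_{K,E}^4\eta^2\,d\mathcal H^{n-1}_x$, so stability yields
\begin{equation*}
B_{K,E}(c_{K,E}\eta,\, c_{K,E}\eta) \,\ge\, \int_{\partial E} c_{K,E}^4\eta^2\,d\mathcal H^{n-1}_x,
\end{equation*}
and the proof reduces to bounding the left-hand side from above by a combination of the three quantities $\int c_{K,E}^2 B_{K,E}(\eta,\eta;\cdot)$, $\int\eta^2 B_{K,E}(c_{K,E},c_{K,E};\cdot)$, and $\tfrac{1}{2}\int\eta^2 L_{K,E}(c_{K,E}^2)$.

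The crucial step is a discrete Leibniz identity. Setting $\bar c := (c_{K,E}(x)+c_{K,E}(y))/2$ and $\bar\eta := (\eta(x)+\eta(y))/2$, one checks by direct expansion that
\begin{equation*}
c_{K,E}(x)\eta(x) - c_{K,E}(y)\eta(y) \,=\, \bar c\,(\eta(x)-\eta(y)) + \bar\eta\,(c_{K,E}(x)-c_{K,E}(y)),
\end{equation*}
so that squaring and using the factorization $2\bar c\,\bar\eta\,(\eta(x)-\eta(y))(c_{K,E}(x)-c_{K,E}(y)) = \tfrac12(c_{K,E}^2(x)-c_{K,E}^2(y))(\eta^2(x)-\eta^2(y))$ gives
\begin{align*}
\bigl(c_{K,E}(x)\eta(x) - c_{K,E}(y)\eta(y)\bigr)^2 \,={}&\, \bar c^{\,2}\,(\eta(x)-\eta(y))^2 + \bar\eta^{\,2}\,(c_{K,E}(x)-c_{K,E}(y))^2 \\ &+ \tfrac{1}{2}\bigl(c_{K,E}^2(x)-c_{K,E}^2(y)\bigr)\bigl(\eta^2(x)-\eta^2(y)\bigr).
\end{align*}
This is the nonlocal counterpart of the classical Leibniz expansion $|\nabla(c\eta)|^2 = c^2|\nabla\eta|^2+\eta^2|\nabla c|^2+2c\eta\,\nabla c\cdot\nabla\eta$ used in Simons' original argument.

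Multiplying by $\tfrac{1}{2}K(x-y)$ and integrating over $\partial E\times\partial E$, I would treat the three resulting pieces as follows. The convexity bound $\bar c^{\,2}\le (c_{K,E}^2(x)+c_{K,E}^2(y))/2$, together with the symmetry $K(x-y)=K(y-x)$ and Fubini, converts the first piece into the upper bound $\int c_{K,E}^2 B_{K,E}(\eta,\eta;\cdot)\,d\mathcal H^{n-1}$; the analogous bound $\bar\eta^{\,2}\le (\eta^2(x)+\eta^2(y))/2$ handles the second piece and yields $\int \eta^2 B_{K,E}(c_{K,E},c_{K,E};\cdot)\,d\mathcal H^{n-1}$. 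For the third piece, symmetrization in $x \leftrightarrow y$ together with $K(x-y)=K(y-x)$ shows that $\iint\bigl(c_{K,E}^2(x)-c_{K,E}^2(y)\bigr)\bigl(\eta^2(x)-\eta^2(y)\bigr)K(x-y)\,d\mathcal H^{n-1}_x d\mathcal H^{n-1}_y = 2\int \eta^2 L_{K,E}(c_{K,E}^2)\,d\mathcal H^{n-1}$, so its contribution is exactly $\tfrac{1}{2}\int\eta^2 L_{K,E}(c_{K,E}^2)$. Combining these three estimates with the stability inequality and rearranging produces the asserted bound; the inequality sign is accounted for by the nonnegative remainder $\tfrac{1}{4}\iint(c_{K,E}(x)-c_{K,E}(y))^2(\eta(x)-\eta(y))^2 K(x-y)\,d\mathcal H^{n-1}_x d\mathcal H^{n-1}_y$ that is dropped in the convexity step.

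The main technical obstacle will be the rigorous justification of Fubini's theorem and the manipulation of the (possibly principal-value) integral defining $L_{K,E}(c_{K,E}^2)$, since $c_{K,E}$ is not a priori smooth and the kernel $K$ may be singular along the diagonal. This should follow from the structural bounds~\eqref{ipotesi} on $K$, the compact support of $\eta$, and the smoothness of $\partial E$ (which, on the support of $\eta$, provides Lipschitz regularity of $c_{K,E}$ and thus enough integrability near the diagonal to absorb the singularity of $K$); once this is granted, all remaining steps consist of elementary symmetry and convexity arguments.
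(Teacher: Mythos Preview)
Your proposal is correct and follows essentially the same approach as the paper: both substitute $f=c_{K,E}\eta$ into the stability inequality~\eqref{ojwfe034} and expand $B_{K,E}(c_{K,E}\eta,c_{K,E}\eta)$ via a discrete Leibniz identity, dropping the same nonnegative remainder $\tfrac14\iint(c_{K,E}(x)-c_{K,E}(y))^2(\eta(x)-\eta(y))^2K$. The only cosmetic difference is that the paper uses the asymmetric splitting $c(x)\eta(x)-c(y)\eta(y)=c(x)(\eta(x)-\eta(y))+\eta(y)(c(x)-c(y))$ (so the first two pieces integrate exactly and the inequality appears only in the cross term), whereas your symmetric midpoint splitting places the inequality in the first two pieces via convexity and leaves the cross term exact; the resulting estimate and remainder are identical.
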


For the classical counterpart of the above in equality, see e.g.~\cite[equation~(19)]{MR3838575}.
\medskip

The rest of this paper contains the proofs of the results stated above.
Before undertaking the details of the proofs, we mention that the idea of recovering classical results in geometry as a limit of fractional ones, thus providing a unified approach between different disciplines, can offer interesting perspectives (for instance, we will investigate the Ricci curvature from this point of view in the forthcoming article~\cite{DAFA}; see also~\cite{MR4685916} for limit formulas related to trace problems
and~\cite{KAG} for a recovery technique of the Divergence Theorem coming from a nonlocal perspective).

\section*{Acknowledgements}

JT is supported by an Australian Government Research Training Program Scholarship. EV is supported by the Australian Laureate Fellowship FL190100081 {\em Minimal surfaces, free
boundaries, and partial differential equations}. It is a pleasure to thank Joaquim Serra
for many inspiring discussions.

\section{Proof of Theorem~\ref{simons}}\label{PROOFSI}

Up to a translation, we can suppose that~$0\in\partial E$ and prove
Theorem~\ref{simons} at the origin, hence
we can choose coordinates such that
\begin{equation}\label{ZEROZ}
\nu_{E}(0)=(0,\dots,0,1). 
\end{equation}

We point out that assumption~\eqref{DEA:PA:0} guarantees that all the terms in~\eqref{SIM:FORNEW} are finite, see e.g. the forthcoming technical calculation in~\eqref{d9o3285v456746578fdghdshgxcvdheritv}.

Moreover, we take~$K$ to be smooth, compactly supported and nonsingular, so to be able to
take derivatives inside the integral (the general case then
follows by approximation, see e.g.~\cite{MR3322379}).
In this way, we rewrite~\eqref{defHs} as
\begin{equation}\label{defHs:2}
H_{K,E}(x)= C_K-
\int_{E} K(x-y)\,dy,\qquad{\mbox{where }}\;
C_K:=\frac12\,\int_{\R^n} K(y)\,dy.
\end{equation}
Also, this is a good definition for all~$x\in\R^n$ (and not only for~$x\in\partial E$),
so we can consider the full gradient of such an expression.
Moreover, for a fixed~$x\in\R^n$, we use the notation
\begin{equation}\label{NOTA1}
\phi(y):=K(x-y).
\end{equation}
In this way, we have that, for any~$\ell\in\{1,\dots,n\}$,
\begin{equation}\label{NOTA2} \partial_\ell K(x-y)=-\partial_\ell\phi(y).\end{equation}
Exploiting this,~\eqref{defHs:2} and the Gauss-Green
Theorem, we see that, for any~$\ell\in\{1,\dots,n\}$,
\begin{equation*}
\begin{split}
& \partial_\ell H_{K,E}(x)=-\int_{E} \partial_\ell K(x-y)\,dy
=\int_{E} \partial_\ell \phi(y)\,dy
=\int_{E} {\rm div}\big(\phi(y) e_\ell\big)\,dy\\
&\qquad
=\int_{\partial E} \nu_{E}(y)\cdot\big(\phi(y) e_\ell\big)\,d\HH_y
=\int_{\partial E} \nu_{E,\ell}(y)\,K(x-y)\,d\HH_y.
\end{split}\end{equation*}
This gives that, for any~$x\in\partial E$,
\begin{equation}\label{UNO}
\nabla H_{K,E}(x)= \int_{\partial E} \nu_{E}(y)\,K(x-y)\,d\HH_y.
\end{equation}
In addition, from~\eqref{defc},
\begin{equation}\label{UNOprimo}
\begin{split}
c^2_{K,E}(x)\,&=\frac12\,\int_{\partial E} \big( \nu_{E}(x)-\nu_{E}(y)\big)^2\,K(x-y)\,d\HH_y\\&
= \int_{\partial E} K(x-y)\,d\HH_y-
\nu_{E}(x)\cdot\int_{\partial E}\nu_{E}(y)\,K(x-y)\,d\HH_y.\end{split}
\end{equation}

Now, we fix the indices~$i$, $j\in\{1,\dots,n-1\}$
and we make use of~\eqref{DEL} and~\eqref{UNO} to find that
\begin{equation}\label{DUE}
\begin{split}
\delta_{E,i} H_{K,E}(x)&=
\partial_i H_{K,E}(x)-\nu_{E,i}(x)\,\nabla H_{K,E}(x)\cdot \nu_{E}(x)\\
&=\int_{\partial E} \nu_{E,i}(y)\,K(x-y)\,d\HH_y
-\nu_{E,i}(x)\,\nu_{E}(x)\cdot\int_{\partial E} \nu_{E}(y)\,K(x-y)\,d\HH_y.
\end{split}\end{equation}
We take another tangential derivative of~\eqref{DUE}
and evaluate it at the origin, recalling~\eqref{ZEROZ}
(which, in particular, gives that~$\nu_{E,i}(0)=0=\nu_{E,j}(0)$
for any~$i$, $j\in\{ 1,\dots,n-1\}$).
In this way, recalling~\eqref{DEL}, we obtain that
\begin{equation}\label{TRE}
\begin{split}&
\delta_{E,j}\delta_{E,i} H_{K,E}(0)\\=\;&
\partial_j\delta_{E,i} H_{K,E}(0)
\\ =\;& \partial_j \left[
\int_{\partial E} \nu_{E,i}(y)\,K(x-y)\,d\HH_y
-\nu_{E,i}(x)\,\nu_{E}(x)\cdot\int_{\partial E} \nu_{E}(y)\,K(x-y)\,d\HH_y\right]_{x=0}
\\ =\;&
\int_{\partial E} \nu_{E,i}(y)\,\partial_j K(-y)\,d\HH_y
-\partial_j\nu_{E,i}(0)\,\nu_{E}(0)\cdot\int_{\partial E} \nu_{E}(y)\,K(-y)\,d\HH_y.
\end{split}\end{equation}
Also, using the notation in~\eqref{NOTA1} and~\eqref{NOTA2}
with~$x:=0$ and~\eqref{DEL}, we see that
\begin{equation}\label{XTRE}
\begin{split} &
\int_{\partial E} \nu_{E,i}(y)\,\partial_j K(-y)\,d\HH_y\\ 
=\;&-\int_{\partial E} \nu_{E,i}(y)\,\partial_j \phi(y)\,d\HH_y\\
=\;&-\int_{\partial E} \nu_{E,i}(y)\,\delta_{E,j} \phi(y)\,d\HH_y-
\int_{\partial E} \nu_{E,i}(y)\,\nu_{E,j}(y)\,\nabla \phi(y)\cdot \nu_{E}(y)\,d\HH_y.
\end{split}\end{equation}

Now we recall an integration by parts formula for tangential derivatives
(see e.g. the first formula\footnote{We stress that the normal on page~122 of~\cite{MR0775682}
is internal, according to the distance setting on page~120 therein. This causes
in our notation a sign change with respect to the setting in~\cite{MR0775682}.
Also, in the statement of Lemma~10.8 on page~121 in~\cite{MR0775682} there is a typo
(missing a mean curvature inside an integral). We also observe that formula~\eqref{DIV:TH}
can also be seen as a version of the Tangential Divergence Theorem, see e.g.
Appendix~A in~\cite{MR2024995}.}
in display on page~122 of~\cite{MR0775682}),
namely
\begin{equation}\label{DIV:TH}
\int_{\partial E} \delta_{E,j} f(y)\,d\HH_y=
\int_{\partial E} H_E(y)\,\nu_{E,j}(y)\,f(y)\,d\HH_y,
\end{equation}
being~$H_E$ the classical mean curvature of~$\partial E$.
Applying this formula to the product of two functions, we find that
\begin{equation}\label{EQ:6}
\begin{split}
\int_{\partial E} \delta_{E,j} f(y)\,g(y)\,d\HH_y+&
\int_{\partial E} f(y)\,\delta_{E,j} g(y)\,d\HH_y\,\\
&
=
\int_{\partial E} \delta_{E,j} (fg)(y)\,d\HH_y
\\ &=
\int_{\partial E} H_E(y)\,\nu_{E,j}(y)\,f(y)\,g(y)\,d\HH_y.
\end{split}\end{equation}
Using this and~\eqref{NOTA1} (with~$x:=0$ here), we see that
\begin{eqnarray*}&&
-\int_{\partial E} \nu_{E,i}(y)\,\delta_{E,j} \phi(y)\,d\HH_y\\&=&
\int_{\partial E} \delta_{E,j}\nu_{E,i}(y)\,\phi(y)\,d\HH_y
-\int_{\partial E} H_E(y)\,\nu_{E,i}(y)\,\nu_{E,j}(y)\,\phi(y)\,d\HH_y\\
&=&
\int_{\partial E} \delta_{E,j}\nu_{E,i}(y)\,K(-y)\,d\HH_y
-\int_{\partial E} H_E(y)\,\nu_{E,i}(y)\,\nu_{E,j}(y)\,K(-y)\,d\HH_y.\end{eqnarray*}
So, we insert this information into~\eqref{XTRE} and we conclude that
\begin{equation*}
\begin{split}
 \int_{\partial E} \nu_{E,i}(y)& \,\partial_j K(-y)\,d\HH_y\\
&=
\int_{\partial E} \delta_{E,j}\nu_{E,i}(y)\,K(-y)\,d\HH_y
-\int_{\partial E} H_E(y)\,\nu_{E,i}(y)\,\nu_{E,j}(y)\,K(-y)\,d\HH_y\\&\qquad+
\int_{\partial E} \nu_{E,i}(y)\,\nu_{E,j}(y)\,\nabla K(-y)\cdot \nu_{E}(y)\,d\HH_y.
\end{split}\end{equation*}
Plugging this into~\eqref{TRE}, we get that
\begin{equation}\label{X4}\begin{split}
\delta_{E,j}\delta_{E,i} H_{K,E}(0)=&
\int_{\partial E} \delta_{E,j}\nu_{E,i}(y)\,K(-y)\,d\HH_y\\
&\qquad\quad-\int_{\partial E} H_E(y)\,\nu_{E,i}(y)\,\nu_{E,j}(y)\,K(-y)\,d\HH_y\\ &\qquad\quad+
\int_{\partial E} \nu_{E,i}(y)\,\nu_{E,j}(y)\,\nabla K(-y)\cdot \nu_{E}(y)\,d\HH_y\\ &\qquad\quad
-\partial_j\nu_{E,i}(0)\,\nu_{E}(0)\cdot\int_{\partial E} \nu_{E}(y)\,K(-y)\,d\HH_y.
\end{split}\end{equation}

In addition, from~\eqref{UNOprimo},
$$ \partial_j\nu_{E,i}(0)\, c^2_{K,E}(0)
=\int_{\partial E} \partial_j\nu_{E,i}(0)\,K(-y)\,d\HH_y-
\partial_j\nu_{E,i}(0)\,\nu_{E}(0)\cdot\int_{\partial E} \nu_{E}(y)\,K(-y)\,d\HH_y.
$$
Comparing with~\eqref{X4}, we conclude that
\begin{equation*}\begin{split}
\delta_{E,j}\delta_{E,i} H_{K,E}(0)=&
\int_{\partial E} \Big(\delta_{E,j}\nu_{E,i}(y)-\delta_{E,j}\nu_{E,i}(0)\Big)\,K(-y)\,d\HH_y\\ &\qquad
-\int_{\partial E} H_E(y)\,\nu_{E,i}(y)\,\nu_{E,j}(y)\,K(-y)\,d\HH_y\\ &\qquad+
\int_{\partial E} \nu_{E,i}(y)\,\nu_{E,j}(y)\,\nabla K(-y)\cdot \nu_{E}(y)\,d\HH_y
+\partial_j\nu_{E,i}(0)\, c^2_{K,E}(0).
\end{split}\end{equation*}
{F}rom this identity and the definition in~\eqref{defop}, the desired
result plainly follows.\hfill$\Box$

\section{Proof of Theorem~\ref{simons-NN}}\label{ALSMsd}

The proof is similar to that of Theorem~\ref{simons}. Full details are provided for the reader's facility.
Up to a translation, we can prove
Theorem~\ref{simons-NN} at the origin and suppose that
\begin{equation}\label{ZEROZ-NN}
\nu_{u}(0)=(0,\dots,0,1). 
\end{equation}
We observe that our assumptions on the kernel in~\eqref{ipotesi2} yield that all
the terms in~\eqref{duji483765bv9875bv76c98uoixro} are finite.

Using~\eqref{defHs:2-NN},
\eqref{MU} and~\eqref{NUFUP}, we see that, for any~$x\in\R^n$,
\begin{equation}\label{UNO-NN}
\begin{split}
&\nabla H_{K,u}(x)= \nabla\left(C_K-
\int_{\R^n} u(x-y)\,K(y)\,dy\right)
=-\int_{\R^n} \nabla u(x-y)\,K(y)\,dy
\\ &\qquad=-\int_{\R^n} \nabla u(y)\,K(x-y)\,dy
=\int_{\R^n} \nu_{u}(y)\,K(x-y)\,d\mu_{u,y}.
\end{split}\end{equation}
In addition, from~\eqref{defc-NN},
\begin{equation}\label{UNOprimo-NN}
\begin{split}
c^2_{K,u}(x)\,&=
\frac12\,\int_{\R^n} \big( \nu_{u}(x)-\nu_{u}(y)\big)^2\,K(x-y)\,d\mu_{u,y}
\\&
= 
\int_{\R^n} K(x-y)\,d\mu_{u,y}
-\nu_{u}(x)\cdot\int_{\R^n} \nu_{u}(y)\,K(x-y)\,d\mu_{u,y}.\end{split}
\end{equation}
Also, in view of~\eqref{DEL} and~\eqref{UNO-NN},
\begin{equation}\label{DUE-NN}
\begin{split}
\delta_{u,i} H_{K,u}(x)&=
\partial_i H_{K,u}(x)-\nu_{u,i}(x)\,\nabla H_{K,u}(x)\cdot \nu_{u}(x)\\
&=\int_{\R^n} \nu_{u,i}(y)\,K(x-y)\,d\mu_{u,y}
-\nu_{u,i}(x)\,\nu_{u}(x)\cdot\int_{\R^n} \nu_{u}(y)\,K(x-y)\,d\mu_{u,y}.
\end{split}\end{equation}
Consequently, using~\eqref{ZEROZ-NN} and~\eqref{DUE-NN}, for all~$i$, $j\in\{1,\dots,n-1\}$,
\begin{equation}\label{TRE-NN}
\begin{split}&
\delta_{u,j}\delta_{u,i} H_{K,u}(0)\\
=\;&
\partial_j\delta_{u,i} H_{K,u}(0)
\\ =\;& \partial_j \left[
\int_{\R^n} \nu_{u,i}(y)\,K(x-y)\,d\mu_{u,y}
-\nu_{u,i}(x)\,\nu_{u}(x)\cdot\int_{\R^n} \nu_{u}(y)\,K(x-y)\,d\mu_{u,y}
\right]_{x=0}
\\ =\;& 
\int_{\R^n} \nu_{u,i}(y)\,\partial_j K(-y)\,d\mu_{u,y}
-\partial_j\nu_{u,i}(0)\,\nu_{u}(0)\cdot\int_{\R^n} \nu_{u}(y)\,K(-y)\,d\mu_{u,y}
.\end{split}\end{equation}

Now, recalling
the notation in~\eqref{NOTA1} and~\eqref{NOTA2}
with~$x:=0$ and~\eqref{DEL}, we obtain that
\begin{equation}\label{XTRE-NN}
\begin{split}&
\int_{\R^n} \nu_{u,i}(y)\,\partial_j K(-y)\,d\mu_{u,y}\\
=\;&-\int_{\R^n} \nu_{u,i}(y)\,\partial_j \phi(y)\,d\mu_{u,y}
\\
=\;&-\int_{\R^n} \nu_{u,i}(y)\,\delta_{u,j} \phi(y)\,d\mu_{u,y}-
\int_{\R^n} \nu_{u,i}(y)\,\nu_{u,j}(y)\,\nabla \phi(y)\cdot \nu_{u}(y)\,d\mu_{u,y}.
\end{split}\end{equation}
Furthermore,
exploiting the Coarea Formula twice
and the tangential integration
by parts identity in~\eqref{EQ:6}, we obtain that
\begin{eqnarray*}
&&
-\int_{\R^n} \nu_{u,i}(y)\,\delta_{u,j} \phi(y)\,d\mu_{u,y}
= -\int_{\R^n} |\nabla u(y)|\,\nu_{u,i}(y)\,\delta_{u,j} \phi(y)\,dy\\
&&\qquad=
-\int_{\R} \int_{\{u(y)=t\}} 
\nu_{u,i}(y)\,\delta_{u,j} \phi(y)\,d\HH_y\,dt
\\ &&\qquad=
\int_{\R} \int_{\{u(y)=t\}} 
\delta_{u,j}\nu_{u,i}(y)\, \phi(y)\,d\HH_y \,dt\\&&\qquad\qquad\qquad
-
\int_{\R} \int_{\{u(y)=t\}} 
H_u(y)\,\nu_{u,i}(y)\,\nu_{u,j}(y)\,\phi(y)\,d\HH_y \,dt
\\ &&\qquad=
\int_{\R^n}|\nabla u(y)|\,
\delta_{u,j}\nu_{u,i}(y)\, \phi(y)\,dy
-\int_{\R^n}|\nabla u(y)|\,
H_u(y)\,\nu_{u,i}(y)\,\nu_{u,j}(y)\,\phi(y)\,dy
\\ &&\qquad=
\int_{\R^n}
\delta_{u,j}\nu_{u,i}(y)\, \phi(y)\,d\mu_{u,y}
-\int_{\R^n}H_u(y)\,\nu_{u,i}(y)\,\nu_{u,j}(y)\,\phi(y)\,d\mu_{u,y}
\\ &&\qquad=
\int_{\R^n}
\delta_{u,j}\nu_{u,i}(y)\, K(-y)\,d\mu_{u,y}
-\int_{\R^n}H_u(y)\,\nu_{u,i}(y)\,\nu_{u,j}(y)\,K(-y)\,d\mu_{u,y}.\end{eqnarray*}
We can now insert this identity into~\eqref{XTRE-NN} and we get that
\begin{equation*}
\begin{split}
\int_{\R^n} &\nu_{u,i}(y)\,\partial_j K(-y)\,d\mu_{u,y}
\\&=
\int_{\R^n} \delta_{u,j}\nu_{u,i}(y)\,K(-y)\,d\mu_{u,y}
-\int_{\R^n} H_u(y)\,\nu_{u,i}(y)\,\nu_{u,j}(y)\,K(-y)\,d\mu_{u,y} \\&\qquad +
\int_{\R^n} \nu_{u,i}(y)\,\nu_{u,j}(y)\,\nabla K(-y)\cdot \nu_{u}(y)\,d\mu_{u,y}.
\end{split}\end{equation*}
Plugging this into~\eqref{TRE-NN} we get that
\begin{equation}\label{X4-NN}\begin{split}
&\hspace{-2em}\delta_{u,j}\delta_{u,i} H_{K,u}(0) \\
&=
\int_{\R^n} \delta_{u,j}\nu_{u,i}(y)\,K(-y)\,d\mu_{u,y}
-\int_{\R^n} H_u(y)\,\nu_{u,i}(y)\,\nu_{u,j}(y)\,K(-y)\,d\mu_{u,y}\\ &\qquad+
\int_{\R^n} \nu_{u,i}(y)\,\nu_{u,j}(y)\,\nabla K(-y)\cdot \nu_{u}(y)\,d\mu_{u,y}
\\&\qquad -\partial_j\nu_{u,i}(0)\,\nu_{u}(0)\cdot\int_{\R^n} \nu_{u}(y)\,K(-y)\,d\mu_{u,y}.
\end{split}\end{equation}

Also, from~\eqref{UNOprimo-NN}, we have that
$$ \partial_j\nu_{u,i}(0)\, c^2_{K,u}(0)
=\int_{\R^n} \partial_j\nu_{u,i}(0)\,K(-y)\,d\mu_{u,y}-
\partial_j\nu_{u,i}(0)\,\nu_{u}(0)\cdot\int_{\R^n} \nu_{u}(y)\,K(-y)\,d\mu_{u,y}.
$$
Hence, from this and~\eqref{X4-NN}, we conclude that
\begin{equation*}\begin{split}
&\delta_{u,j}\delta_{u,i} H_{K,u}(0)\\ &=
\int_{\R^n} \Big(\delta_{u,j}\nu_{u,i}(y)-\delta_{u,j}\nu_{u,i}(0)\Big)\,K(-y)
\,d\mu_{u,y}
-\int_{\R^n} H_u(y)\,\nu_{u,i}(y)\,\nu_{u,j}(y)\,K(-y)\,d\mu_{u,y}
\\ &\qquad\quad+
\int_{\R^n} \nu_{u,i}(y)\,\nu_{u,j}(y)\,\nabla K(-y)\cdot \nu_{u}(y)\,d\mu_{u,y}
+\partial_j\nu_{u,i}(0)\, c^2_{K,u}(0).
\end{split}\end{equation*}
This and~\eqref{defop-NN} give the desired
result.~\hfill$\Box$

\section{Proof of Theorem~\ref{ILTKC}}\label{OJSDLN-wf}
For clarity, we denote by~$\Delta_{\partial E}$
the Laplace-Beltrami operator on the hypersurface~$\partial E$,
by~$\delta_{k,E}$ the tangential derivative
in the $k$th coordinate direction, by~$\nu_E$ the external derivative and by~$c_E$ the norm of the second fundamental form.

To obtain~\eqref{ORIGINAL SIM} as a limit of~\eqref{SIM:FORNEW},
we focus on a special kernel. Namely,
given~$\e>0$, we let
\begin{equation}\label{KERNTOSIM} K_\e(y):=\frac{\e}{|y|^{n+1-\e}}.\end{equation}

We now recall a simple, explicit calculation:
\begin{eqnarray}
&& \label{0191:0}\int_{B_1} x_1^4\,dx=\frac{3\, {\mathcal{H}}^{n-1}(S^{n-1}) }{n(n+2)(n+4) }\\ {\mbox{and }}\,
\label{0191:1} &&\int_{S^{n-1}} \vartheta_1^4 \,d{\mathcal{H}}^{n-1}_\vartheta= 
\frac{3\, {\mathcal{H}}^{n-1}(S^{n-1}) }{n(n+2) } .
\end{eqnarray}
Not to interrupt the flow of the arguments, we postpone 
the proof of formulas~\eqref{0191:0} and~\eqref{0191:1} to Appendix~\ref{jxiewytdenfrhgfndbvhfdnbvfdefgbhwektheru}.

To complete the proof of Theorem~\ref{ILTKC}, without loss of generality, we assume that~$0=x\in\partial E$
and that~$\partial E\cap B_{r_0}$ is the graph of a function~$f:\R^{n-1}\to\R$
with vertical normal, hence~$f(0)=0$ and~$\partial_i f(0)=0$
for all~$i\in\{1,\dots,n-1\}$.
We can also diagonalize the Hessian matrix of~$f$ at~$0$, and obtain that
the mean curvature~$H_E$ at the origin coincides with the trace\footnote{We stress
that we are not dividing the quantity in~\eqref{DEF H COS} by~$n-1$,
to be consistent with the notation in formula~(10.12)
in~\cite{MR0775682}.}
of such matrix,
namely
\begin{equation}\label{DEF H COS} H_E(0)= -\big(
\partial^2_1 f(0)+\dots+\partial^2_{n-1} f(0) \big).\end{equation}
The sign convention here is inferred by the assumption that~$E$ is locally the subgraph
of~$f$ and the normal is taken to point outwards.
Consequently, for every~$y=(y',f(y'))\in\partial E\cap B_{r_0}$,
\begin{equation}\label{0-120-A5bis}\begin{split}
&f(y')=\frac12\sum_{i=1}^{n-1} \partial^2_i f(0)\,y_i^2+
O(|y'|^3),\\
&\nabla f(y')=(\partial_1 f(y'),\dots,
\partial_{n-1} f(y'))=\big( \partial^2_1 f(0)\,y_1,\dots,
\partial^2_{n-1} f(0)\,y_{n-1}\big)+
O(|y'|^2)\end{split}\end{equation}
and
\begin{equation}\label{SVIL-NU}
\begin{split}&
\nu_E(y) =\frac{(-\nabla f(y'), 1)}{\sqrt{1+|\nabla f(y')|^2}}=(-\nabla f(y'), 1)+O(|y'|^2)
\\&\qquad=
\big( -\partial^2_1 f(0)\,y_1,\dots,
-\partial^2_{n-1} f(0)\,y_{n-1},1\big)+
O(|y'|^2)\big).\end{split}
\end{equation}
Here, the notation \(g = O(h(\vert y' \vert))\) means that \(\vert g \vert \le 
C \vert h(\vert y' \vert) \vert\) for \(\vert y' \vert \) sufficient close to \(0\) with \(C\) independent of \(\varepsilon\), that is, \(g\) is \emph{uniformly in }\(\varepsilon\) big O of \(h\) as \(\vert y'\vert \to 0\).  As a consequence,
for every~$y=(y',f(y'))\in\partial E\cap B_{r_0}$,
\begin{equation} \label{SVIPRO}|y|^2=|y'|^2+|f(y')|^2=|y'|^2+O(|y'|^4)=
|y'|^2\big(1+O(|y'|^2)\big),\end{equation}
and, for any~$i,j\in\{1,\dots,n-1\}$,
\begin{equation} \label{DOPPIO NU}\nu_{E,j}(y)\nu_{E,i}(y)=
\partial^2_{j} f(0)\partial^2_{i} f(0) y_j y_i
+
O(|y'|^3) .\end{equation}
Thus, using~\eqref{SVIPRO}, we see that, for any fixed~$\alpha\in\R$,
\begin{equation} \label{SVIPRO1}|y|^\alpha=
|y'|^\alpha\big(1+O(|y'|^2)\big)^{\alpha/2}=
|y'|^\alpha\big(1+O(|y'|^2)\big).\end{equation}

Then, from~\eqref{SVIL-NU} and~\eqref{SVIPRO1},
we obtain that, for any~$\ell\in\{1,\dots,n-1\}$ and~$y\in\partial E\cap B_{r_0}$,
\begin{eqnarray*}
\nu_{E,\ell}(y)\partial_\ell K_\e(-y)&=&-
\frac{(n+1-\e)\e \partial^2_\ell f(0)\,y_\ell^2}{|y|^{n+3-\e}}+\e O\left( |y'|^{\e-n}\right)\\&=&-
\frac{(n+1-\e)\e \partial^2_\ell f(0)\,y_\ell^2}{|y'|^{n+3-\e}}+\e O\left( |y'|^{\e-n}\right)
\end{eqnarray*}
and also, recalling \eqref{0-120-A5bis},
\begin{eqnarray*}
\nu_{E,n}(y)\partial_n K_\e(-y)&=&
\frac{(n+1-\e)\e y_n(1+O(|y'|^2))}{|y|^{n+3-\e}}\\&=&\frac12\sum_{\ell=1}^{n-1}
\frac{(n+1-\e)\e \partial^2_\ell f(0)\,y_\ell^2}{|y'|^{n+3-\e}}
+\e O(|y'|^{\e-n}).
\end{eqnarray*}
Accordingly, we have that
$$ \nu_{E}(y) \cdot \nabla K_\e(-y) =-\frac12\sum_{\ell=1}^{n-1}
\frac{(n+1-\e)\e \partial^2_\ell f(0)\,y_\ell^2}{|y'|^{n+3-\e}}
+\e O(|y'|^{\e-n}).$$ 
We thereby deduce from the latter identity and~\eqref{DOPPIO NU}
(and exploiting an odd symmetry argument)
that, for any~$r\in(0,r_0]$,
\begin{equation}\label{LAPRIINT}\begin{split}&
-\frac2{(n+1-\e)\e}
\int_{\partial E\cap B_r} \nu_{E}(y) \cdot \nabla K_\e(-y) 
\nu_{E,i}(y)\, \nu_{E,j}(y)\,d\HH_y\\ =\;&
\sum_{\ell=1}^{n-1} \int_{\partial E\cap B_r}\left(
\frac{ \partial^2_\ell f(0)
\,\partial^2_j f(0)\,\partial^2_i f(0)
\,y_\ell^2\,y_j\,y_i}{|y'|^{n+3-\e}}+O(|y'|^{2+\e-n})\right)
\,d\HH_y\\
=\;&
\sum_{\ell=1}^{n-1} \int_{\{|y'|<r\}} \left(
\frac{ \partial^2_\ell f(0)
\,\partial^2_j f(0)\,\partial^2_i f(0)
\,y_\ell^2\,y_j\,y_i}{|y'|^{n+3-\e}}+O(|y'|^{2+\e-n})\right)
\,\sqrt{1+|\nabla f(y')|^2}\,dy'\\
=\;&
\sum_{\ell=1}^{n-1} \int_{\{|y'|<r\}} \left(
\frac{ \partial^2_\ell f(0)
\,\partial^2_j f(0)\,\partial^2_i f(0)
\,y_\ell^2\,y_j\,y_i}{|y'|^{n+3-\e}}+O(|y'|^{2+\e-n})\right)
\,dy'\\
=\;&
\sum_{\ell=1}^{n-1} \int_{\{|y'|<r\}} \left(
\frac{ \partial^2_\ell f(0)
\,(\partial^2_j f(0))^2
\,y_\ell^2\,y_j^2\,\delta_{ji}}{|y'|^{n+3-\e}}+O(|y'|^{2+\e-n})\right)
\,dy'.
\end{split}\end{equation}

Furthermore, exploiting again~\eqref{DOPPIO NU}
and~\eqref{SVIPRO1}, we see that
\begin{equation}\label{LAPRIINT2}
\begin{split}&
\int_{\partial E\cap B_r} H_E(y)K_\e(-y)\,\nu_{E,i}(y)\, \nu_{E,j}(y)\,d\HH_y\\=\;&
\e\int_{\partial E\cap B_r} \left(\frac{H_E(y)\partial^2_{j} f(0)\partial^2_{i} f(0) y_j y_i}{
|y'|^{n+1-\e}}+O(|y'|^{2+\e-n})\right)\,d\HH_y
\\=\;&
\e\int_{\{|y'|<r\}}\left( \frac{H_E(0)\partial^2_{j} f(0)\partial^2_{i} f(0) y_j y_i}{
|y'|^{n+1-\e}}+O(|y'|^{2+\e-n})\right)\,dy'
\\=\;&
\e\int_{\{|y'|<r\}} \left(\frac{H_E(0)\,(\partial^2_{j} f(0))^2 \,y_j^2\,\delta_{ji}}{
|y'|^{n+1-\e}}+O(|y'|^{2+\e-n})\right)\,dy'.
\end{split}
\end{equation}
Now we use polar coordinates in~$\R^{n-1}$ to observe that
\begin{equation} \label{RESTO}
\int_{\{|y'|<r\}} |y'|^{2+\e-n}\,dy'=
{\mathcal{H}}^{n-2}(S^{n-2})\,\int_0^r \rho^{2+\e-n}\,\rho^{n-2}\,d\rho
=\frac{C\,r^{1+\e}}{1+\e},\end{equation}
for some~$C>0$.

Moreover, for any fixed index~$j\in\{1,\dots,n-1\}$,
\begin{equation}\label{BAL}\begin{split}&
\e\int_{\{|y'|<r\}} \frac{y_j^2}{|y'|^{n+1-\e}}\,dy'
=\frac{\e}{n-1}\sum_{k=1}^{n-1}\int_{\{|y'|<r\}} \frac{y_k^2}{|y'|^{n+1-\e}}\,dy'\\&\qquad
=\frac{\e}{n-1}\int_{\{|y'|<r\}} \frac{dy'}{|y'|^{n-1-\e}}
=\frac{\e\,{\mathcal{H}}^{n-2}(S^{n-2})}{n-1}\,\int_0^r
\rho^{\e-1}\,d\rho
=\varpi\,r^\e,\end{split}
\end{equation}
where
\begin{equation}\label{VARPI}
\varpi:=\frac{{\mathcal{H}}^{n-2}(S^{n-2})}{n-1}.
\end{equation}

Now, we compute the term~$
\e\int_{\{|y'|<r\}} \frac{y_\ell^2\,y_j^2}{|y'|^{n+3-\e}}\,dy'$.
For this, first of all we deal with the case~$\ell=j$: in this situation, we have that
\begin{equation}\label{891-304959}
\e\int_{\{|y'|<r\}} \frac{y_j^4}{|y'|^{n+3-\e}}\,dy' =
\e\int_{\{|y'|<r\}} \frac{y_1^4}{|y'|^{n+3-\e}}\,dy'=C_\star\,r^\e,
\end{equation}
where
\begin{equation}\label{0239494i292110202}\begin{split}
&C_\star:= \e\int_{\{|y'|<1\}} \frac{y_1^4}{|y'|^{n+3-\e}}\,dy'
= \e\iint_{(\rho,\vartheta)\in(0,1)\times S^{n-2}}
\rho^{\e-1}\,\vartheta_1^4\,d\rho\,d{\mathcal{H}}^{n-2}_\vartheta\\
&\qquad\qquad= \int_{\vartheta\in S^{n-2}}
\vartheta_1^4 \,d{\mathcal{H}}^{n-2}_\vartheta
=
\frac{3\, {\mathcal{H}}^{n-2}(S^{n-2}) }{(n-1)(n+1) }=
\frac{3\varpi }{n+1}
,\end{split}\end{equation}
thanks to~\eqref{0191:1} (applied here in one dimension less).

Moreover,
the number of different indices~$k$, $m\in\{1,\dots,n-1\}$ is equal to~$(n-1)(n-2)$
and so,
for each~$j\ne\ell\in\{1,\dots,n-1\}$,
\begin{equation*}\begin{split}&
\e\int_{\{|y'|<r\}} \frac{y_\ell^2\,y_j^2}{|y'|^{n+3-\e}}\,dy'\\
&=\e\int_{\{|y'|<r\}} \frac{y_1^2\,y_2^2}{|y'|^{n+3-\e}}\,dy'\\
&=\frac{\e}{(n-1)(n-2)}\sum_{k\neq m=1}^{n-1}\int_{\{|y'|<r\}}
\frac{y_k^2\,y_m^2}{|y'|^{n+3-\e}}\,dy'
\\&=\frac{\e}{(n-1)(n-2)}\left[ \sum_{k, m=1}^{n-1}\int_{\{|y'|<r\}}
\frac{y_k^2\,y_m^2}{|y'|^{n+3-\e}}\,dy' -
\sum_{k=1}^{n-1}\int_{\{|y'|<r\}}
\frac{y_k^4}{|y'|^{n+3-\e}}\,dy'
\right]
\\&
=\frac{\e}{(n-1)(n-2)}\int_{\{|y'|<r\}} \frac{dy'}{|y'|^{n-1-\e}}
-\frac{C_\star\,r^\e}{n-2}=\frac{\varpi\,r^\e}{n-2}-\frac{C_\star\,r^\e}{n-2}
=\frac{\varpi\,r^\e}{n+1} .\end{split}
\end{equation*}
{F}rom this and~\eqref{891-304959},
we obtain that
\begin{equation*} \e\int_{\{|y'|<r\}} \frac{y_\ell^2\,y_j^2}{|y'|^{n+3-\e}}\,dy'=
\frac{(1+2\delta_{\ell j})\,\varpi\,r^\e}{n+1}.\end{equation*} 
Substituting this identity and~\eqref{RESTO} into~\eqref{LAPRIINT},
and recalling also~\eqref{DEF H COS},
we conclude that
\begin{equation}\label{LAPRIINT:BIS}\begin{split}&
-\frac2{n+1-\e}
\int_{\partial E\cap B_r} \nu_{E}(y) \cdot \nabla K_\e(-y) 
\nu_{E,i}(y)\, \nu_{E,j}(y)\,d\HH_y\\ =\;&
\e\sum_{\ell=1}^{n-1} \int_{\{|y'|<r\}} 
\frac{ \partial^2_\ell f(0)
\,(\partial^2_j f(0))^2
\,y_\ell^2\,y_j^2\,\delta_{ji}}{|y'|^{n+3-\e}}
\,dy'
+o(1)
\\ =\;& \frac{\varpi\,r^\e}{n+1}
\sum_{\ell=1}^{n-1} \partial^2_\ell f(0)
\,(\partial^2_j f(0))^2\,(1+2\delta_{\ell j})\,\delta_{ji}
+o(1)
\\ =\;& -\frac{\varpi\,r^\e\,H_E(0)}{n+1}
\,(\partial^2_j f(0))^2\,\delta_{ji}
+ \frac{2\varpi\,r^\e}{n+1}\,(\partial^2_j f(0))^3\,\delta_{ji}
+o(1)
\\ =\;& -\frac{\varpi\,H_E(0)}{n+1}
\,(\partial^2_j f(0))^2\,\delta_{ji}
+ \frac{2\varpi}{n+1}\,(\partial^2_j f(0))^3\,\delta_{ji}
+o(1),
\end{split}\end{equation}
as~$\e\searrow0$.
Similarly,
substituting~\eqref{RESTO} and~\eqref{BAL}
into~\eqref{LAPRIINT2}, we obtain that, as~$\e\searrow0$,
\begin{equation*}
\begin{split}&
\int_{\partial E\cap B_r} H_E(y)K_\e(-y)\,\nu_{E,i}(y)\, \nu_{E,j}(y)\,d\HH_y\\=\;&
\e\int_{\{|y'|<r\}} \frac{H_E(0)\,(\partial^2_{j} f(0))^2 \,y_j^2\,\delta_{ji}}{
|y'|^{n+1-\e}}\,dy'+o(1)\\=\;&
\varpi\,r^\e\,H_E(0)\,(\partial^2_{j} f(0))^2 \,\delta_{ji}+o(1)\\=\;&
\varpi\,H_E(0)\,(\partial^2_{j} f(0))^2 \,\delta_{ji}+o(1).
\end{split}
\end{equation*}
{F}rom this and~\eqref{LAPRIINT:BIS} it follows that
\begin{equation}\label{FI:002:PRE}
\begin{split}&
\lim_{\e\searrow0}\int_{\partial E\cap B_r} \Big( H_E(y)K_\e(-y) 
-\nu_{E}(y) \cdot \nabla K_\e(-y) \Big)
\nu_{E,i}(y)\, \nu_{E,j}(y)\,d\HH_y\\
=\;&\frac{\varpi}{2}\,H_E(0)\,(\partial^2_{j} f(0))^2 \,\delta_{ji}+
\varpi\,(\partial^2_{j} f(0))^3 \,\delta_{ji}.
\end{split}\end{equation}

Now we exploit~\eqref{DEA:PA} and we see that
\begin{equation}\begin{split}\label{d9o3285v456746578fdghdshgxcvdheritv}
&\left|
\int_{\partial E\setminus B_r} \Big( H_E(y)K_\e(-y) 
-\nu_{E}(y) \cdot \nabla K_\e(-y) \Big)
\nu_{E,i}(y)\, \nu_{E,j}(y)\,d\HH_y\right|\\
\le\;& C\e\int_{\partial E\setminus B_r}\left( \frac{|H_E(y)|}{|y|^{n+1-\e}}+
\frac1{|y|^{n+2-\e}}\right)\,d\HH_y\\
\le\;& C\e\int_{\partial E\setminus B_r}\frac{1}{|y|^{n+1-\e}}
\left(  |H_E(y)|+{r}^{-1}\right)\,d\HH_y\\
\le\;& C\,(1+r^{-1})\,\e\int_{\partial E\setminus B_r} \frac{|H_E(y)|+1}{|y|^{n+1-\e}}\,d\HH_y\\
=\;& C\,(1+r^{-1})\,\e\,\sum_{k=0}^{+\infty}
\int_{\partial E\cap(B_{2^{k+1}r}\setminus B_{2^kr})} \frac{|H_E(y)|+1}{|y|^{n+1-\e}}\,d\HH_y\\
\le\;& \frac{C\,(1+r^{-1})\,\e}{r^{n+1-\e}}\,\sum_{k=0}^{+\infty}
\frac1{2^{k(n+1-\e)}}
\int_{\partial E\cap(B_{2^{k+1}r}\setminus B_{2^kr})} \big(|H_E(y)|+1\big)\,d\HH_y
\\ \le\;& \frac{C\,(1+r^{-1})\,\e}{r^{n+1-\e}}\,\sum_{k=0}^{+\infty}
\frac{(2^{k+1}r)^\beta}{2^{k(n+1-\e)}}
\\ \le\;& \frac{2^\beta\,C\,(1+r^{-1})\,\e}{r^{n+1-\e-\beta}}\,\sum_{k=0}^{+\infty}
\frac{1}{2^{k(n+1-\e-\beta)}}
\\ \le\;& \frac{2^\beta\,C\,(1+r^{-1})\,\e}{r^{n+1-\e-\beta}}\,\sum_{k=0}^{+\infty}
\frac{1}{2^{\frac{k(n+1-\beta)}2}}
\\ =\;& o(1),
\end{split}\end{equation}
for small~$\e$,
up to renaming~$C$ line after line,
and consequently
\[\lim_{\e\searrow0}\int_{\partial E\setminus B_r} \Big( H_E(y)K_\e(-y) 
-\nu_{E}(y) \cdot \nabla K_\e(-y) \Big)
\nu_{E,i}(y)\, \nu_{E,j}(y)\,d\HH_y=0.\]
This and~\eqref{FI:002:PRE} give that
\begin{equation}\label{FI:002}
\begin{split}&
\lim_{\e\searrow0}\int_{\partial E} \Big( H_E(y)K_\e(-y) 
-\nu_{E}(y) \cdot \nabla K_\e(-y) \Big)
\nu_{E,i}(y)\, \nu_{E,j}(y)\,d\HH_y\\
=\;&\frac{\varpi}{2}\,H_E(0)\,(\partial^2_{j} f(0))^2 \,\delta_{ji}+
\varpi\,(\partial^2_{j} f(0))^3 \,\delta_{ji}.
\end{split}\end{equation}

In addition, from Lemma A.2 of \cite{MR3798717},
we have that
\begin{equation}\label{FI:003}
\lim_{\e\searrow0} {L_{K_\e,E}}=-\frac{\varpi}{2}\,\Delta_{\partial E},
\end{equation}
where the notation in~\eqref{VARPI} has been used.
Similarly, from Lemma A.4 of \cite{MR3798717},
\begin{equation}\label{FI:004}
\lim_{\e\searrow0} c_{K_\e,E}^2=\frac\varpi2\,
c_E^2,\end{equation}
being~$c_E$ the norm of the second fundamental form of~$\partial E$.

Therefore, using~\eqref{FI:002}, \eqref{FI:003}
and~\eqref{FI:004},
we obtain that
\begin{equation}\label{PRE:SI}\begin{split}
&
\lim_{\e\searrow0}\Big[- {L_{K_\e,E}} \delta_{E,j} \nu_{E,i} (0)+
c_{K_\e,E}^2(0)\, \delta_{E,j} \nu_{E,i}(0) \\&\qquad-
\int_{\R^n} \Big( H_E(y)K_\e(-y)  -\nu_{E}(y) \cdot \nabla K_\e(-y) \Big)
\nu_{E,i}(y)\, \nu_{E,j}(y)\,d\HH_y \Big]\\
=\;& \frac{\varpi}{2}\,\Delta_{\partial E}\delta_{E,j} \nu_{E,i} (0)
+\frac{\varpi}2\,c_E^2(0)\, \delta_{E,j} \nu_{E,i}(0) -
\frac{\varpi}{2}\,H_E(0)\,(\partial^2_{j} f(0))^2 \,\delta_{ji}-
\varpi\,(\partial^2_{j} f(0))^3 \,\delta_{ji}.
\end{split}\end{equation}

Now, given two functions~$\psi$, $\phi$, we exploit~\eqref{EQ:6}
twice to obtain that
\begin{equation}\label{017:PRE}
\begin{split}&
\int_{\partial E} \delta_{E,i} \delta_{E,j}\psi(x)\,\phi(x)\,d\HH_x\\
=&-\int_{\partial E} \delta_{E,j}\psi(x)\,\delta_{E,i} \phi(x)\,d\HH_x+
\int_{\partial E} H_E(x)\,\nu_{E,i}(x)\,\delta_{E,j}\psi(x)\,\phi(x)\,d\HH_x\\
=&
\int_{\partial E} \psi(x)\,\delta_{E,j} \delta_{E,i} \phi(x)\,d\HH_x-
\int_{\partial E} H_E(x)\,\nu_{E,j}(x)\,\psi(x)\,\delta_{E,i} \phi(x)\,d\HH_x
\\&\qquad+
\int_{\partial E} H_E(x)\,\nu_{E,i}(x)\,\delta_{E,j}\psi(x)\,\phi(x)\,d\HH_x.
\end{split}\end{equation}
On the other hand, applying~\eqref{EQ:6}
once again, we see that
\begin{eqnarray*}
&& 
\int_{\partial E} H_E(x)\,\nu_{E,i}(x)\,\delta_{E,j}\psi(x)\,\phi(x)\,d\HH_x
\\&=&
-\int_{\partial E} \delta_{E,j}\big(H_E(x)\,\nu_{E,i}(x)\,\phi(x)\big)\,\psi(x)\,d\HH_x
\\ &&\qquad +
\int_{\partial E} H_E^2(x)\,\nu_{E,i}(x)\,\nu_{E,j}(x)\,\psi(x)\,\phi(x)\,d\HH_x.
\end{eqnarray*}
Plugging this information into~\eqref{017:PRE}, we find that
\begin{equation}\label{017}
\begin{split}&
\int_{\partial E} \delta_{E,i} \delta_{E,j}\psi(x)\,\phi(x)\,d\HH_x\\
=&
\int_{\partial E} \psi(x)\,\delta_{E,j} \delta_{E,i} \phi(x)\,d\HH_x-
\int_{\partial E} H_E(x)\,\nu_{E,j}(x)\,\psi(x)\,\delta_{E,i} \phi(x)\,d\HH_x
\\&\qquad-\int_{\partial E} \delta_{E,j}\big(H_E(x)\,\nu_{E,i}(x)\,\phi(x)\big)\,\psi(x)\,d\HH_x\\&\qquad
+
\int_{\partial E} H_E^2(x)\,\nu_{E,i}(x)\,\nu_{E,j}(x)\,\psi(x)\,\phi(x)\,d\HH_x.\end{split}\end{equation}

Applying~\eqref{017} (twice, at the beginning
with~$\psi:=H_{K_\e,E}(x)$ and at the end with~$\psi:=H_{E}(x)$)
and considering~$\phi$ as a test function,
the convergence of~$H_{K_\e,E}$ to~$\frac{\varpi\,H_E}{2}$
(see Theorem~12 in~\cite{MR3230079}) gives that
\begin{eqnarray*}&&
-\lim_{\e\searrow0}
\int_{\partial E} \delta_{E,i} \delta_{E,j}H_{K_\e,E}(x)\,\phi(x)\,d\HH_x
\\&=&\lim_{\e\searrow0}\left[ -
\int_{\partial E} H_{K_\e,E}(x)\,\delta_{E,j} \delta_{E,i} \phi(x)\,d\HH_x \right .\\
&&\qquad+
\int_{\partial E} H_E(x)\,\nu_{E,j}(x)\,H_{K_\e,E}(x)\,\delta_{E,i} \phi(x)\,d\HH_x
\\&&\qquad+ 
\int_{\partial E} \delta_{E,j}\big(H_E(x)\,\nu_{E,i}(x)\,\phi(x)\big)\,H_{K_\e,E}(x)\,d\HH_x
\\&&\left.\qquad-
\int_{\partial E} H_E^2(x)\,\nu_{E,i}(x)\,\nu_{E,j}(x)\,H_{K_\e,E}(x)\,\phi(x)\,d\HH_x
\right]
\\&=& \frac{\varpi}{2}\,\left[-
\int_{\partial E} H_{E}(x)\,\delta_{E,j} \delta_{E,i} \phi(x)\,d\HH_x+
\int_{\partial E} H_E(x)\,\nu_{E,j}(x)\,H_{E}(x)\,\delta_{E,i} \phi(x)\,d\HH_x\right.
\\&&\qquad+
\int_{\partial E} \delta_{E,j}\big(H_E(x)\,\nu_{E,i}(x)\,\phi(x)\big)\,H_{E}(x)\,d\HH_x
\\&&\left.\qquad-
\int_{\partial E} H_E^2(x)\,\nu_{E,i}(x)\,\nu_{E,j}(x)\,H_{E}(x)\,\phi(x)\,d\HH_x
\right]
\\&=& -\frac{\varpi}{2}\,
\int_{\partial E} \delta_{E,i} \delta_{E,j}H_{E}(x)\,\phi(x)\,d\HH_x.
\end{eqnarray*}
This says that~$ \delta_{E,i} \delta_{E,j}H_{K_\e,E} $ converges 
to~$\frac{\varpi}{2}\delta_{E,i} \delta_{E,j}H_{E}$ in the distributional sense
as~$\e\searrow0$: since, by the Ascoli-Arzel\`a
Theorem, we know that~$ \delta_{E,i} \delta_{E,j}H_{K_\e,E} $
converges strongly up to a subsequence, the uniqueness
of the limit gives that $ \delta_{E,i} \delta_{E,j}H_{K_\e,E} $ converges also
pointwise
to~$\frac{\varpi}{2}\delta_{E,i} \delta_{E,j}H_{E}$.

Combining this with~\eqref{PRE:SI}, we obtain that
\begin{equation}\label{PRE:PUNTO}
\begin{split}
&
\lim_{\e\searrow0}\Big[ \delta_{E,i} \delta_{E,j}H_{K_\e,E}(0)+
{L_{K_\e,E}} \delta_{E,j} \nu_{E,i} (0)-
c_{K_\e,E}^2(0)\, \delta_{E,j} \nu_{E,i}(0) \\&\qquad+
\int_{\R^n} \Big( H_E(y)K_\e(-y)  -\nu_{E}(y) \cdot \nabla K_\e(-y) \Big)
\nu_{E,i}(y)\, \nu_{E,j}(y)\,d\HH_y \Big]\\
=\;& 
\frac{\varpi}{2}\delta_{E,i} \delta_{E,j}H_{E}(0)
-\frac{\varpi}{2}\,\Delta_{\partial E}\delta_{E,j} \nu_{E,i} (0)
-\frac\varpi2\,c_E^2(0)\, \delta_{E,j} \nu_{E,i}(0) \\&\qquad+
\frac{\varpi}{2}\,H_E(0)\,(\partial^2_{j} f(0))^2 \,\delta_{ji}+
\varpi\,(\partial^2_{j} f(0))^3 \,\delta_{ji}.
\end{split}\end{equation}
By formula~\eqref{SIM:FORNEW}, we know that the left hand side
of~\eqref{PRE:PUNTO} is equal to zero.
Therefore, if~$H_E$ also vanishes identically,
we obtain that
$$ 
\Delta_{\partial E} h_{ij}(0)
+ c_E^2(0)\, h_{ij}(0) +
2\,h_{jj}^3(0) \,\delta_{ji}=0.$$
Recall that \(h_{ij}\) are the entries of the second fundamental form. Multiplying by~$h_{ij}$ and summing up over~$i$, $j\in\{1,\dots,n-1\}$,
we infer that
\begin{equation}\label{D234E2-L}
\sum_{i,j=1}^{n-1}h_{ij}(0)\Delta_{\partial E} h_{ij}(0)
+ c_E^2(0)\, \sum_{i,j=1}^{n-1}h_{ij}^2(0) +
2\,\sum_{j=1}^{n-1}h_{jj}^4(0)=0.
\end{equation}

Also, by~\eqref{SVIL-NU}, we have that~$h_{in}(0)=\delta_{E,i}\nu_{E,n}(0)=0$
for all~$i\in\{1,\dots,n-1\}$, and also~$h_{nn}(0)=\delta_{E,n} \nu_{E,n}(0)=0$
by~\eqref{DEL}, and so~\eqref{D234E2-L} becomes
\begin{equation}\label{D234E2-L:2}
\sum_{i,j=1}^{n-1}h_{ij}(0)\Delta_{\partial E} h_{ij}(0)
+c_E^4(0) +
2\,\sum_{j=1}^{n-1}h_{jj}^4(0)=0.
\end{equation}
On the other hand,
$$ \Delta_{\partial E} h_{ij}^2(0)=2 h_{ij}(0)
\Delta_{\partial E} h_{ij}(0)+2\sum_{k=1}^{n-1}|\delta_{E,k} h_{ij}(0)|^2
.$$
Therefore, \eqref{D234E2-L:2} becomes
\begin{equation}\label{D234E2-L:3}
\frac12\sum_{i,j=1}^{n-1}\Delta_{\partial E} h_{ij}^2(0)
=\sum_{i,j,k=1}^{n-1}|\delta_{E,k} h_{ij}(0)|^2
- c_E^4(0) -
2\,\sum_{j=1}^{n-1}h_{jj}^4(0).
\end{equation}

We observe now that, in light of~\eqref{SVIL-NU},
\begin{equation}\label{AHKAQ}
\nu_{E,n}(y)=1-\frac12\,\sum_{j=1}^{n-1} \big(\partial_j^2 f(0)\big)^2\,y_j^2+O(|y'|^3)\end{equation}
and so, by~\eqref{DEL}, \begin{align*}
     h_{nn}(y)=\delta_{E,n}\nu_{E,n}(y)& =\partial_n \nu_{E,n}(y)-\nu_{E,n}(y)\nabla \nu_{E,n}(y)\cdot
\nu_{E}(y) \\ &=-\sum_{j=1}^{n-1} \big(\partial_j^2 f(0)\big)^3\,y_j^2+O(|y'|^3).
\end{align*} This gives that~$h_{nn}^2(y)=O(|y|^4)$ and therefore
\begin{equation}\label{9qe84-129}
\Delta_{\partial E} h_{nn}^2(0)=0. 
\end{equation}
Furthermore, by~\eqref{DEL} and~\eqref{AHKAQ}, for any~$i\in\{1,\dots,n-1\}$,
$$ h_{in}(y)=\delta_{E,i}\nu_{E,n}(y)=
\partial_i \nu_{E,n}(y)-\nu_{E,i}(y)\nabla \nu_{E,n}(y)\cdot
\nu_{E}(y)=-\big(\partial^2_i f(0)\big)^2\,y_i+O(|y|^2)
,$$
which gives that
$$ h_{in}^2(y)=\big(\partial^2_i f(0)\big)^4\,y^2_i+O(|y|^3).$$
As a consequence,
$$ \Delta_{\partial E} h_{in}^2(0)=2\big(\partial^2_i f(0)\big)^4.$$
This and~\eqref{9qe84-129} give that \begin{align*}
    \Delta_{\partial E} c^2_E(0)&=
\sum_{i,j=1}^{n-1}\Delta_{\partial E} h_{ij}^2(0)+2\sum_{i=1}^{n-1}\Delta_{\partial E} h_{in}^2(0)+
\Delta_{\partial E} h_{nn}^2(0) \\&=\sum_{i,j=1}^{n-1}\Delta_{\partial E} h_{ij}^2(0)+
4\sum_{i=1}^{n-1}\big(\partial^2_i f(0)\big)^4.
\end{align*}Plugging this information into~\eqref{D234E2-L:3} we conclude that
\begin{equation*}
\frac12\Delta_{\partial E} c^2_E(0)=
\frac12\sum_{i,j=1}^{n-1}\Delta_{\partial E} h_{ij}^2(0)+
2\sum_{i=1}^{n-1}\big(\partial^2_i f(0)\big)^4
=\sum_{i,j,k=1}^{n-1}|\delta_{E,k} h_{ij}(0)|^2
- c_E^4(0) ,
\end{equation*}
which is~\eqref{ORIGINAL SIM}.
\hfill$\Box$

\section{Proof of Theorem~\ref{Jfqydwvfbe923ejfn}}
Let \(\eta \in C^\infty_0({\partial E})\) be arbitrary. Using \(f: = c_K \eta\) as a test function in~\eqref{ojwfe034}, we have that \begin{eqnarray*}
0&\leqslant& B_K(c_K\eta , c_K \eta ) - \int_{\partial E} c_K^4 \eta^2 \,d \mathcal H^n .
\end{eqnarray*} Next, for all \(x,y\in {\partial E}\), we have that \begin{eqnarray*}
(c_K(x)\eta(x)-c_K(y)\eta(y))^2 &=& \big (c_K(x)(\eta(x)-\eta(y))+ \eta(y) (c_K(x)-c_K(y)) \big )^2\\
&=& c_K^2(x) (\eta(x)-\eta(x))^2 + \eta^2(y)(c_K(x)-c_K(y))^2 \\
&&\qquad + 2c_K(x)\eta(y)(c_K(x)-c_K(y))(\eta(x)-\eta(y)),
\end{eqnarray*}
so it follows that \begin{eqnarray*}
B_K(c_K\eta , c_K \eta ) &=& \int_{\partial E} c_K^2(x) B_K(\eta , \eta ; x) \,d \mathcal H^n_x +  \int_{\partial E} \eta^2(x) B_K(c_K , c_K ; x) \,d \mathcal H^n_x +I  ,
\end{eqnarray*} where \begin{eqnarray*}
I:=  \int_{\partial E} \int_{\partial E} c_K(x)\eta(y)(c_K(x)-c_K(y))(\eta(x)-\eta(y))\,K(x-y)\,d \mathcal H^n_y \,d \mathcal H^n_x.
\end{eqnarray*} Next, by symmetry of \(x\) and \(y\), we have that \begin{eqnarray*}
I= \frac 1 2  \int_{\partial E} \int_{\partial E} (c_K(x)\eta(y)+c_K(y)\eta(x))(c_K(x)-c_K(y))(\eta(x)-\eta(y))\,K(x-y) \,d \mathcal H^n_y \,d \mathcal H^n_x.
\end{eqnarray*}
Moreover, by a simple algebraic manipulation, \begin{eqnarray*}&&
 (c_K(x)\eta(y)+c_K(y)\eta(x))(c_K(x)-c_K(y))(\eta(x)-\eta(y)) \\
&&\qquad= \frac12 (\eta^2(x) -\eta^2(y))(c_K^2(x)-c_K^2(y)) - \frac12 (c_K(x)-c_K(y))^2(\eta(x)-\eta(y))^2 \\
&&\qquad\leqslant \frac12 (\eta^2(x) -\eta^2(y))(c_K^2(x)-c_K^2(y))
\end{eqnarray*}and accordingly \begin{eqnarray*}
I&\leqslant&\frac 1 4  \int_{\partial E} \int_{\partial E} (\eta^2(x) -\eta^2(y))(c_K^2(x)-c_K^2(y))
\,K(x-y)\,d \mathcal H^n_y \,d \mathcal H^n_x \\
&=& \frac 1 2  \int_{\partial E} \int_{\partial E} \eta^2(x)(c_K^2(x)-c_K^2(y))\,K(x-y) \,d \mathcal H^n_y \,d \mathcal H^n_x \\
&=& \frac 12 \int_{\partial E} \eta^2(x) {\mathcal{L}}_K  c_K^2(x) \,d \mathcal H^n_x.
\end{eqnarray*}  Hence, we have that \begin{eqnarray*}
B_K(c_K\eta , c_K \eta ) &\leqslant& \int_{\partial E} c_K^2(x) B_K(\eta , \eta ; x) \,d \mathcal H^n_x \\
&&\qquad+  \int_{\partial E} \bigg \{  B_K(c_K , c_K ; x)+\frac 12 \int_{\partial E} \eta^2(x) {\mathcal{L}}_K  c_K^2(x) \bigg \} \eta^2(x)\,d \mathcal H^n_x 
\end{eqnarray*} and the result follows. 
\hfill$\Box$

\section{Appendix A: Proof of formulas~\eqref{0191:0} and~\eqref{0191:1}}\label{jxiewytdenfrhgfndbvhfdnbvfdefgbhwektheru}

Let $$
Q:=\int_{B_1} x_1^4\,dx \qquad{\mbox{and}}\qquad D:=\int_{B_1} x_1^2x_2^2\,dx
.$$
We consider the isometry~$x\mapsto X\in \R^n$ given by
$$ X_1:=\frac{x_1-x_2}{\sqrt2},\qquad
X_2:=\frac{x_1+x_2}{\sqrt2},\qquad X_i:=x_i \quad{\mbox{ for all }}\; i\in\{3,\dots,n\}.$$
We notice that
$$ 4X_1^2 X_2^2 = (2X_1X_2)^2=\big( (x_1-x_2)(x_1+x_2)\big)^2=
(x_1^2-x_2^2)^2 = x_1^4+x_2^4-2x_1^2 x_2^2$$
and therefore, by symmetry,
$$ 4D=\int_{B_1} 4X_1^2 X_2^2\,dX=\int_{B_1}
\big( x_1^4+x_2^4-2x_1^2 x_2^2\big)\,dx
=2Q-2D,$$
which gives
\begin{equation}\label{24t-d293}
D=\frac{Q}3.
\end{equation}

On the other hand
$$ |x|^4 = (|x|^2)^2=\left( \sum_{i=1}^n x_i^2\right)^2=
\sum_{i,j=1}^n x_i^2 x_j^2
=\sum_{i=1}^n x_i^4+
\sum_{i\ne j=1}^n x_i^2 x_j^2.$$
Therefore, by polar coordinates and symmetry,
\begin{eqnarray*}
&&\frac{ {\mathcal{H}}^{n-1}(S^{n-1}) }{n+4 }=
{\mathcal{H}}^{n-1}(S^{n-1})
\int_0^1 \rho^{n+3}\,d\rho=
\int_{B_1} |x|^4\,dx\\ &&\qquad=
\sum_{i=1}^n \int_{B_1}x_i^4\,dx+
\sum_{i\ne j=1}^n \int_{B_1} x_i^2 x_j^2\,dx=nQ+n(n-1)D.
\end{eqnarray*}
{F}rom this and~\eqref{24t-d293} we deduce that
$$ \frac{ {\mathcal{H}}^{n-1}(S^{n-1}) }{n+4 } = \frac{n(n+2)\,Q}{3},$$
hence
$$ \frac{3\, {\mathcal{H}}^{n-1}(S^{n-1}) }{n(n+2)(n+4) }=Q
=\int_0^1\int_{S^{n-1}} \rho^{n+3}\vartheta_1^4\,d{\mathcal{H}}^{n-1}_\vartheta\,d\rho=
\frac{1}{n+4}\int_{S^{n-1}} \vartheta_1^4 \,d{\mathcal{H}}^{n-1}_\vartheta,
$$
which gives~\eqref{0191:0} and~\eqref{0191:1}, as desired.

\chapter{Density estimates and the fractional Sobolev inequality for sets of zero \(s\)-mean curvature} \label{1KsVju8j}

We prove that measurable sets \(E\subset \R^n\) with locally finite perimeter and zero \(s\)-mean curvature satisfy the surface density estimates: \begin{align*}
    \operatorname{Per} (E; B_R(x)) \geq CR^{n-1}
\end{align*} for all \(R>0\), \(x\in \partial^\ast E\). The constant \(C\) depends only on \(n\) and \(s\), and remains bounded as \(s\to 1^-\). As an application, we prove that the fractional Sobolev inequality holds on the boundary of sets with zero \(s\)-mean curvature.

\section{Introduction and main results}

A fundamental result in the regularity of minimal surfaces is that a measurable set \(E\subset \R^n \) that is is locally perimeter minimising satisfies the thick density estimates: \begin{align}
         \vert E \cap B_R(x) \vert \geq C R^n \text{ and }  \vert B_R(x)  \setminus E \vert \geq C R^n \label{gJJA6X52}
\end{align} for all \(R>0\), \(x\in \partial^\ast E\) where \(B_R(x)\) is a ball in \(\R^n\) centred at \(x\) with radius \(R\) and \(\vert F\vert\) denotes the Lebesgue measure of \(F\) in \(\R^n\), see for example \cite[Section 16.2]{maggi_sets_2012}. Here the constant \(C>0\) depends only on \(n\). Since any sufficiently regular set \(E\) satisfies~\eqref{gJJA6X52} for \(R\) sufficiently small and with the constant depending on the \(C^2\) regularity of \(E\) at \(x\), the non-triviality of~\eqref{gJJA6X52} is that it holds for all \(R>0\) and that the constant is universal. The assumption that \(E\) is locally minimising is essential to the proof of~\eqref{gJJA6X52} since the proof takes advantage of the fact that \(E\cap \partial B_R(x)\) is a competitor for \(\partial^\ast E \cap B_R(x)\).

If one only assumes that \(E\) has zero mean curvature, that is, it is locally stationary with respect to the perimeter functional then one can obtain the surface density estimate: \begin{align}
    \operatorname{Per}(E; B_R(x)) \geq C R^{n-1} \label{jR0GCsky}
\end{align} for all \(R>0\), \(x\in \partial^\ast E\), and with \(C\) depending only on \(n\). Indeed, the monotonicity formula for sets of zero mean curvature immediately implies that \begin{align*}
      \frac{\operatorname{Per}(E; B_R(x))}{R^{n-1}} \geq \lim_{r\to 0^+ }\frac{\operatorname{Per}(E; B_r(x))}{r^{n-1}} =  n\omega_n 
\end{align*} where \(\omega_n = \vert B_1 \vert\), see \cite[Theorem 17.16]{maggi_sets_2012}. Observe that the thick density estimates~\eqref{gJJA6X52} directly imply the surface density estimates~\eqref{jR0GCsky} via the restricted isoperimetric inequality \begin{align*}
    \operatorname{Per}(E;B_R(x) ) \geq C \big ( \min \{ \vert E \cap B_R(x) \vert,\vert B_R(x)  \setminus E \} \big )^{\frac{n-1}n }, 
\end{align*} but sets with zero mean curvature do not necessarily satisfy~\eqref{gJJA6X52}, for example, if \(E = \R^{n-1} \times (0,1)\) then \(\partial E\) has zero mean curvature, but \(\vert E \cap B_R \vert \leq \vert B^{n-1}_R \times (0,1) \vert \leq C R^{n-1} \).

In the celebrated paper \cite{MR2675483}, a nonlocal (or fractional) concept of perimeter, known as the \(s\)-perimeter, was introduced that was a natural generalisation of the classical perimeter. As well as motivating and defining the \(s\)-perimeter, Caffarelli, Roquejoffre, and Savin in \cite{MR2675483} establish several foundational results for the regularity theory of minimisers\footnote{\label{note1}For a precise definition, see Section 2} of the \(s\)-perimeter including the existence of minimisers, improvement of flatness, a monotonicity formula, blow-up limits to \(s\)-minimal cones, and Federer's dimension reduction. In particular, they also established minimisers of the \(s\)-perimeter satisfy the thick density estimates~\eqref{gJJA6X52}. Since their paper, the regularity theory of critical points of the \(s\)-perimeter (stationary, stable, and minimisers) has become an incredibly active area of research. Some important results include: interior higher regularity \cite{MR3331523}; boundary regularity and stickiness \cite{MR3596708,MR4104542}; classification of cones for \(n=2\) \cite{MR3090533}; classification of cones for \(s\) close to \(1\) \cite{MR3107529}; regularity of \(s\)-minimal graphs \cite{MR3934589}; explicit examples \cite{MR4082938,davila_nonlocal_2018}; and Yau's conjecture \cite{caselli2024yaus}. For a nice survey of the current literature, see \cite{MR3824212}.

In analogy to the classical case, if \(E\) is \(s\)-stationary, that is stationary\textsuperscript{\ref{note1}} with respect to the \(s\)-perimeter, and \(\partial E\) is sufficiently smooth then \(E\) satisfies \begin{align*}
    \mathrm H_{s,E}(x) := \lim_{\varepsilon \to 0^+} \int_{\R^n \setminus B_\varepsilon(x)} \frac{\chi_{\R^n \setminus E}(y) - \chi_E (y) }{\vert x - y \vert^{n+s }} \dd y =0
\end{align*} for all \(x\in \partial E\), see \cite{MR3322379}. It is conventional to call \( \mathrm H_{s,E}\) the \(s\)-mean curvature of \(E\). Much of the literature focuses on the regularity of sets that minimise the \(s\)-perimeter and relatively little is known about the regularity of sets with zero of \(s\)-mean curvature. Indeed, although~\eqref{gJJA6X52} was proven in \cite{MR2675483} for minimisers of the \(s\)-perimeter, it is still an open problem to prove that sets of zero \(s\)-mean curvature satisfy the surface density estimate~\eqref{jR0GCsky}. The main result of this note establishes such an estimate.

\begin{thm} \thlabel{jG1dQXin}
Let \(s\in (0,1)\) and \(E \subset \R^n \) be a measurable set with locally finite perimeter satisfying \(\mathrm H_{s,E}=0 \) on \(\partial^\ast E\). Then \begin{align*}
    \operatorname{Per}(E;B_R) \geq C R^{n-1}  \text{ for all }R>0.
\end{align*} The constant \(C>0\) depends only on \(n\) and \(s\), and remains bounded as \(s \to 1^-\).
\end{thm}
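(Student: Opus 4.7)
The plan is to combine a monotonicity formula for $s$-stationary sets with a careful comparison between the nonlocal $s$-perimeter and the classical perimeter, keeping the constants under uniform control as $s \to 1^-$. First, I would invoke a monotonicity formula for $s$-stationary sets (in the spirit of the recent program of~\cite{caselli2024yaus}), asserting that under $\mathrm{H}_{s,E}\equiv 0$ on $\partial^\ast E$, the normalized quantity $r \mapsto r^{s-n}\operatorname{Per}_s(E;B_r(x))$ is nondecreasing in $r$ for any $x \in \partial^\ast E$. By the De Giorgi structure theorem, the blow-ups of $E$ at $x$ converge in $L^1_{\mathrm{loc}}$ to the half-space $H$ with normal $\nu_E(x)$, so that
\[
\lim_{r\to 0^+} \frac{\operatorname{Per}_s(E;B_r(x))}{r^{n-s}} = \operatorname{Per}_s(H;B_1),
\]
and monotonicity would yield $\operatorname{Per}_s(E;B_R(x)) \geq \operatorname{Per}_s(H;B_1)\, R^{n-s}$ for every $R > 0$.

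Next, I would transfer this $s$-perimeter lower bound into a classical-perimeter lower bound via the splitting $\operatorname{Per}_s(E;B_R) = I_{\mathrm{short}} + I_{\mathrm{long}}$ according to whether $|x-y| \leq R$ or $|x-y| > R$. An elementary integration gives $I_{\mathrm{long}} \leq C(n) R^{n-s}/s$, while a Bourgain--Brezis--Mironescu--D\'avila--type inequality yields $(1-s)I_{\mathrm{short}} \leq C(n)\operatorname{Per}(E;B_{2R})R^{1-s}$. Multiplying the monotonicity bound by $(1-s)$ and using that $(1-s)\operatorname{Per}_s(H;B_1)$ stays bounded below by a positive dimensional constant as $s\to 1^-$ (by the same BBM limit applied to the half-space) would produce $\operatorname{Per}(E;B_{2R}) \geq c(n,s)\,R^{n-1}$ with $c(n,s)$ remaining positive and bounded below uniformly as $s\to 1^-$. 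The corollary on the fractional Sobolev inequality on $\partial E$ then follows at once from the density-estimate criterion recalled in the introduction.

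The delicate point---and what I expect to be the main obstacle---is to secure the uniformity in $s$. The naive upper bound on $I_{\mathrm{long}}$ scales as $R^{n-s}/s$, exactly matching the leading order of the lower bound, so a direct comparison would leave no margin in the limit $s \to 1^-$. The plan to circumvent this is to refine the monotonicity argument by absorbing the long-range contribution into a modified monotone functional, producing a quantity whose short-range component is proportional to $(1-s)^{-1}\operatorname{Per}(E;B_r)\,r^{1-s}$ and which, upon multiplication by $(1-s)$, converges in the limit $s \to 1^-$ to the classical monotone quantity $r^{1-n}\operatorname{Per}(E;B_r)$. Verifying this identity in the appropriate weak sense---since $\partial^\ast E$ is only weakly regular and the Euler--Lagrange condition $\mathrm{H}_{s,E}=0$ is only asserted on $\partial^\ast E$---will be the technical heart of the argument.
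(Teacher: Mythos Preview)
Your overall strategy---monotonicity at small scales plus an interpolation between $\operatorname{Per}_s$ and $\operatorname{Per}$---is the same as the paper's, but two of your ingredients are not quite right.

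First, the monotonicity of $r\mapsto r^{s-n}\operatorname{Per}_s(E;B_r(x))$ for $s$-stationary sets is not an established result; what \cite{caselli2024fractional} proves (and what the paper invokes) is monotonicity of the Caffarelli--Silvestre extension energy
\[
\Phi_{E,x}(R)=\frac{1}{R^{n-s}}\int_{\tilde B_R^+((x,0))} y_{n+1}^{1-s}\,|\nabla U_E|^2\,dY.
\]
The paper then feeds this back to the $s$-perimeter through an extension--trace inequality $\Phi_{E,x}(R)\le Cs^{-1}R^{s-n}\operatorname{Per}_s(E;B_{2R})$, together with the explicit half-space computation $\lim_{R\to 0^+}\Phi_{E,x}(R)=\Phi_{\R^n_+,0}(1)\ge Cs/(1-s)$. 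The net effect is precisely the lower bound $\operatorname{Per}_s(E;B_R)\ge C\,R^{n-s}/(1-s)$ you were hoping for, but it is obtained via the extension rather than by asserting a direct $s$-perimeter monotonicity that has not been proved.

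Second, the ``obstacle'' you isolate---that the long-range piece $I_{\mathrm{long}}\le C R^{n-s}/s$ has the same order as the lower bound and may not be absorbable---is resolved in the paper by a much simpler device than modifying the monotone functional: one introduces a free parameter in the splitting. Splitting at $|x-y|=\varepsilon^{-1/s}R$ instead of at $|x-y|=R$ gives
\[
\operatorname{Per}_s(E;B_R)\le C\Big(\tfrac{\varepsilon^{-(1-s)/s}R^{1-s}}{1-s}\operatorname{Per}(E;B_{(1+\varepsilon^{-1/s})R})+\tfrac{\varepsilon R^{n-s}}{s}\Big).
\]
Since the lower bound carries a factor $1/(1-s)$ while the long-range error is now $\varepsilon R^{n-s}/s$ with $\varepsilon$ at one's disposal, choosing $\varepsilon$ of order $\min\{s^3/(1-s),\,3^{-1/s}\}$ makes the error absorbable for every $s\in(0,1)$, and a direct check shows the resulting constant remains bounded as $s\to 1^-$. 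This single free parameter is the whole ``margin'' mechanism; no modified functional is needed.
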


The precise definition \(\mathrm H_{s,E}=0 \) on \(\partial^\ast E\) for a set of locally finite perimeter is given in Section 2. Several articles in the literature have explored related density estimates to~\thref{jG1dQXin} including \cite{MR3981295} and \cite{caselli2024yaus,caselli2024fractional}. In \cite{MR3981295}, the authors establish upper surface density estimates of the form \begin{align}
    \operatorname{Per} (E;B_R(x)) \leq C R^{n-1} \label{0YzpLLRv}
\end{align} for all \(x\in \partial^\ast E\), \(R>0\) for stationary, stable sets \(E\) with respect to the \(s\)-perimeter and other nonlocal perimeters with more general kernels. It is interesting to note that in the classical case of stable surfaces with zero mean curvature,~\eqref{0YzpLLRv} is known only for \(n=2\) and is famously open for \(n\geq 3\). In \cite{caselli2024yaus,caselli2024fractional}, the main focus is on establishing a nonlocal Yau's conjecture, that is, in any closed Riemannian manifold there exist infinitely many sets of zero \(s\)-mean curvature. In particular, they prove the estimate~\eqref{0YzpLLRv} for sets with zero \(s\)-mean curvature in closed Riemannian manifolds. We should mention that, though most of the techniques present in this paper are also present in \cite{MR3981295,caselli2024yaus,caselli2024fractional}, particularly \cite{caselli2024yaus,caselli2024fractional}; however, the result \thref{jG1dQXin} is never explicitly stated nor is its connection to the fractional Sobolev inequality on hypersurfaces, see below.

A nice application of \thref{jG1dQXin} is that the fractional Sobolev inequality holds on hypersurfaces in \(\R^n\) with zero \(s\)-mean curvature. The classical Sobolev inequality in \(\R^n\) states that if \(1\leq p < n \) and \(p^\ast := \frac{np}{n-p}\) then \begin{align}
    \| u \|_{L^{p^\ast}(\R^n)} \leq C \| \nabla u \|_{L^p(\R^n)} \label{3U5YSnYf}
\end{align} for all \(u \in C^\infty_0(\R^n)\) with \(C>0\) depending only on \(n\) and \(p\). It is also interesting to ask if the Sobolev inequality can hold on hypersurfaces \(M^n \hookrightarrow \R^{n+1}\). It is easy to see that~\eqref{3U5YSnYf} (with \(\R^n\) replaced with \(M\) and \(\nabla = \nabla_M\) the gradient with respect to the induced metric from \(\R^{n+1}\)) cannot hold verbatim since if \(M\) is compact then \(u=1\in C^\infty_0(M)\); however, a Sobolev-type inequality does hold if one adds an extra \(L^p\) error term involving the mean curvature \(\mathrm H_M\): \begin{align}
    \| u \|_{L^{p^\ast}(M)} \leq C \big ( \| \nabla_M u \|_{L^p(M)} + \|\mathrm H_M u \|_{L^p(M) }\big ) \label{RMdxxBBd}
\end{align} for all \(u \in C^\infty_0(M)\). The inequality~\eqref{RMdxxBBd} is sometimes known as the Michael-Simon and Allard inequality after \cite{allard_first_1972} and \cite{michael_sobolev_1973}. Also, see \cite{brendle_isoperimetric_2021} where the optimal constant in~\eqref{RMdxxBBd} is obtained and \cite{MR4435963} for a simple proof of~\eqref{RMdxxBBd}. 

Naturally, one would like to know if~\eqref{RMdxxBBd} extends to the nonlocal case. If \(M=\R^n\), \(1\leq p < n/s\), and \(p^\ast = \frac{np}{n-sp}\) then it is well known that\begin{align*}
    \| u \|_{L^{p^\ast}(\R^n)} \leq C [u]_{W^{s,p}(\R^n )}
\end{align*} where \begin{align}
    [v]_{W^{s,p}(\R^n)} &= \bigg ( \int_{\R^n } \int_{\R^n}\frac{\vert v(x) - v(y) \vert^p }{\vert x - y \vert^{n+sp}} \dd y \dd x \bigg )^{\frac 1 p }, \label{JS7OQLcc}
\end{align}see \cite{MR2944369} and references therein. If \(E \subset \R^n\) is an open set with locally finite perimeter (or sufficiently smooth boundary) and \begin{align*}
      [v]_{W^{s,p}(\partial^\ast E)} &= \bigg ( \int_{\partial^\ast E } \int_{\partial^\ast E}\frac{\vert v(x) - v(y) \vert^p }{\vert x - y \vert^{n+sp}} \dd \mathcal H^{n-1}_y \dd \mathcal H^{n-1}_x \bigg )^{\frac 1 p }
\end{align*} then it is currently an open problem whether the inequality \begin{align}
     \| u \|_{L^{p^\ast}(\partial^\ast E)} \leq C \big (  [u]_{W^{s,p}(\partial^\ast E)} + \| \mathrm H_{s,E}u\|_{L^p(\partial^\ast E)} \big ) \label{yp5YLcrO}
\end{align} holds for all \(u\in C^\infty_0(\partial^\ast E)\). Recently, it was shown in \cite{MR4565417} that~\eqref{yp5YLcrO} does hold provided that \(E\) is convex. Their argument relies on first establishing a pointwise lower bound on the \(s\)-mean curvature in terms of the perimeter. Unfortunately, as they point out in their paper, this pointwise inequality clearly cannot hold in the non-convex case, so the argument doesn't easily generalise to the case of arbitrary \(E\). Furthermore, in \cite[Proposition 5.2]{MR3934589}, it was proven via an extension of an (unpublished) argument due to Brezis (see \cite[Theorem 2.2.1]{MR3469920} for this argument), that the boundary density estimate~\eqref{jR0GCsky} implies~\eqref{yp5YLcrO} without the term involving the \(s\)-mean curvature. Hence, via \cite[Proposition 5.2]{MR3934589}, \thref{jG1dQXin} immediately implies that~\eqref{yp5YLcrO} holds for sets of zero \(s\)-mean curvature:

\begin{cor}
Let \(s\in (0,1)\), \(1\leq p <n/s\), \(p^\ast = \frac{np}{n-sp}\), and \(E \subset \R^{n+1} \) be a measurable set with locally finite perimeter satisfying \(\mathrm H_{s,E}=0 \) on \(\partial^\ast E\). Then the Sobolev inequality \begin{align*}
    \| u\|_{L^{p^\ast}(\partial^\ast E) } \leq C [u]_{W^{s,p}(\partial^\ast E)}
\end{align*} holds for all \(u\in C^\infty_0(\partial^\ast E)\). The constant \(C>0\) depends only on \(n\) and \(s\). 
\end{cor}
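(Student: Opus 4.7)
The plan is to proceed by directly combining \thref{jG1dQXin} with the argument of \cite[Proposition~5.2]{MR3934589}, itself an adaptation of the elegant elementary proof of the fractional Sobolev inequality in $\R^n$ commonly attributed to Brezis (see, e.g., \cite[Theorem~2.2.1]{MR3469920}). The key observation is that the hypersurface version of Brezis's proof only requires one nontrivial geometric input about the ambient set $\partial^\ast E$: a uniform extrinsic surface density estimate of the form $\mathcal H^{n}(\partial^\ast E \cap B_R(x)) \geq c_\star R^n$ for every $x \in \partial^\ast E$ and every $R > 0$, with a constant $c_\star$ depending only on $n$ and $s$. Since this is exactly what \thref{jG1dQXin} provides, once applied to $E \subset \R^{n+1}$, the corollary reduces to verifying that its hypotheses are met and then quoting the cited proposition.

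First I would apply \thref{jG1dQXin} to $E \subset \R^{n+1}$, observing that the hypothesis $\mathrm H_{s,E} = 0$ on $\partial^\ast E$ is part of the standing assumption and that $E$ has locally finite perimeter by hypothesis. Using the identification $\operatorname{Per}(E;B_R(x)) = \mathcal H^{n}(\partial^\ast E \cap B_R(x))$ valid for sets of locally finite perimeter, \thref{jG1dQXin} delivers the uniform density bound
$$\mathcal H^{n}(\partial^\ast E \cap B_R(x)) \geq c_\star R^n \qquad \text{for all } x\in \partial^\ast E,\; R>0,$$
with $c_\star=c_\star(n,s)>0$.

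Second, I would invoke \cite[Proposition~5.2]{MR3934589} applied to the Radon measure $\mu := \mathcal H^{n}\mres\partial^\ast E$ on $\R^{n+1}$. The density bound above is precisely the structural assumption used there: combined with the classical dyadic layer-cake strategy of Brezis (decomposing $|u(x)|$ by the thresholds $2^k$ and using the uniform lower density of $\mu$ to estimate the $\mu$-measure of the superlevel sets in terms of a Gagliardo-type difference quotient on $\partial^\ast E$), it produces exactly the estimate
$$\|u\|_{L^{p^\ast}(\partial^\ast E)} \leq C\, [u]_{W^{s,p}(\partial^\ast E)}$$
for every $u\in C^\infty_0(\partial^\ast E)$, with $C$ depending only on $n$, $s$ and $c_\star$, and hence ultimately only on $n$ and $s$.

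The main obstacle of this strategy is not in the corollary itself, which is a genuine two-line consequence of the two results being combined, but is absorbed into the proof of \thref{jG1dQXin}, where the lack of any minimality assumption means one cannot use the standard competitor argument of \cite{MR2675483} and must instead work directly from the first variation identity $\mathrm H_{s,E} = 0$ on $\partial^\ast E$. A minor subtlety at the corollary level is that $[u]_{W^{s,p}(\partial^\ast E)}$ is defined using the \emph{extrinsic} Euclidean distance $|x-y|$ of $\R^{n+1}$ rather than any intrinsic geodesic distance on $\partial^\ast E$: this is precisely what makes the extrinsic density estimate from \thref{jG1dQXin} the correct input, and allows one to bypass any regularity or metric-comparison issues on $\partial^\ast E$ beyond the fact that $\partial^\ast E$ supports test functions in $C^\infty_0$.
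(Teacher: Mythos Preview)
Your proposal is correct and matches the paper's approach exactly: the paper derives the corollary as an immediate consequence of \thref{jG1dQXin} combined with \cite[Proposition~5.2]{MR3934589}, noting (as you do) that the latter extends Brezis's argument to hypersurfaces satisfying the uniform density bound. Your additional remarks on the extrinsic distance and the identification $\operatorname{Per}(E;B_R(x)) = \mathcal H^{n}(\partial^\ast E \cap B_R(x))$ are accurate elaborations of what the paper leaves implicit.
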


\subsection{Organisation of paper}
The paper is organised as follows. In Section 2, we fix notation and give standard definitions of the classical theory of BV functions and minimal surfaces, and definitions from the theory of nonlocal minimal surfaces. In Section 3, we prove an interpolation inequality between restricted \(s\)-perimeter and the classical perimeter. In Section 4, we survey some results related to the Caffarelli-Silvestre extension in the context of nonlocal minimal surfaces that will be essential to the proof of \thref{jG1dQXin}. Finally, in Section 5, we give the proof of \thref{jG1dQXin}.

\section*{Acknowledgements}
Jack Thompson is supported by an Australian Government Research Training Program Scholarship. JT would like to thank Serena Dipierro, Giovanni Giacomin, and Enrico Valdinoci for their valuable conversations and their comments on the first draft of this note. He would also like to thank Jo{\~a}o Gon\c{c}alves da Silva for the example at the end of the first page and Tommaso Di Ubaldo for his deep insights.

\section{Definitions and notation}
First, we recall some notation and definitions of functions of bounded variation and the classical theory of minimal surfaces. Let \(\Omega \subset \R^n\) be an open set. Given a function \(u\in L^1(\Omega)\) the \emph{total variation} of \(u\) in \(\Omega\) is given by \begin{align*}
    \vert \nabla u \vert(\Omega) = \sup \bigg \{ \int_\Omega u \div \phi \dd x \text{ s.t. } \phi \in C^1_0(\Omega ; \R^n ), \| \phi \|_{L^\infty(\Omega ; \R^n)} \leq 1 \bigg \} . 
\end{align*} The space \(\operatorname{BV}(\Omega) \) is defined to be the set of \(u\in L^1(\Omega)\) such that \(\vert \nabla u \vert (\Omega) <+\infty \) and the space \(\operatorname{BV}_{\mathrm{loc}}(\Omega) \) is the set of functions in \(\operatorname{BV}(\Omega')\)  for all \(\Omega'\subset \subset \Omega\). Moreover, we say a measurable set \(E\subset \R^n\) has \emph{finite perimeter} in \(\Omega\) if \(\chi_E \in \operatorname{BV}(\Omega) \) where \(\chi_A\) denotes the characteristic function of \(A\). In this case, we define the \emph{perimeter of} \(E\) in \(\Omega\) by \begin{align*}
    \operatorname{Per}(E;\Omega) =  \vert \nabla \chi_E \vert(\Omega).
\end{align*} We also say \(E\) has \emph{locally finite perimeter} in \(\Omega\) if \(\chi_E\in \operatorname{BV}_{\mathrm{loc}}(\Omega)\) and simply \(E\) has locally finite perimeter if \(\chi_E\in \operatorname{BV}_{\mathrm{loc}}(\R^n)\). Finally, given \(E\) with finite perimeter in \(\Omega\), the distributional gradient of \(\chi_E\), i.e. \(\nabla \chi_E\), is a vector-valued Radon measure on \(\R^n\). Then we can define the \emph{reduced boundary} of \(E\), denoted \(\partial^\ast E \), as the set \begin{align*}
   \bigg \{ x\in \partial E \text{ s.t. } \vert \nabla \chi_E  \vert(B_r(x))>0 &\text{ for all }r>0, \\ & \text{ and } \lim_{r\to 0^+} \frac{\nabla \chi_E (B_r(x)) }{\vert \nabla \chi_E  \vert(B_r(x)) } \text{ exists and is in } \Sph^{n-1}  \bigg \} .
\end{align*}

Now, let us turn to the nonlocal case. Let \(s\in (0,1)\) and \begin{align*}
    \mathcal I_s(A,B) = \int_A \int_B \frac{\dd y \dd x }{\vert x - y \vert^{n+s}} . 
\end{align*} The \(s\)-perimeter of a measurable set \(E\subset \R^n\) in \(\Omega\) is defined to be \begin{align*}
     \operatorname{Per}_s(E ; \Omega  ) = \mathcal I_s (E\cap \Omega , E^c \cap \Omega) +\mathcal I_s (E\cap \Omega , E^c \setminus \Omega)+\mathcal I_s (E\setminus \Omega, E^c\cap \Omega). 
\end{align*} We will also sometimes refer to \(\operatorname{Per}_s(E ; \Omega  )\) as the \emph{restricted \(s\)-perimeter}. Note, we also have that \begin{align}
    \operatorname{Per}_s(E ; \Omega  ) = \frac12\iint_{\mathcal Q(\Omega) } \frac{\vert \chi_E(x)-\chi_E(y) \vert}{\vert x - y \vert^{n+s} } \dd y \dd x \label{LDtSA29N}
\end{align} where we define \begin{align*}
    \mathcal Q(\Omega) = (\Omega^c \times \Omega^c)^c = (\Omega \times \Omega) \cup (\Omega \times \Omega^c) \cup (\Omega^c\times \Omega) \subset \R^{2n}.
\end{align*}

A measurable set \(E\) is a \emph{minimiser} of the \(s\)-perimeter in \(\Omega\) or \(s\)-minimal in \(\Omega\) if \begin{align*}
    \operatorname{Per}_s(E; \Omega) \leq \operatorname{Per}_s(F;\Omega)
\end{align*} for all measurable set \(F\) such that \(E\setminus\Omega = F\setminus \Omega\). Moreover, a measurable set \(E\) is stationary with respect to the \(s\)-perimeter in \(\Omega\) or \(s\)-stationary if \begin{align}
    \frac{\dd }{\dd t}\bigg \vert_{t=0} \operatorname{Per}_s(E_{t,T }; \Omega)=0 \label{3MedX4AV}
\end{align} for all \(T\in C^\infty_0(\Omega; \R^n )\) where \begin{align*}
    E_{t,T} := \psi_T (E,t)
\end{align*} and \(\psi_T : \R^n \times (-\varepsilon , \varepsilon) \to \R^n\) satisfies \begin{align*}
    \begin{PDE}
\partial_t \psi_T &= T\circ \psi_T, &\text{in } \R^n \times (-\varepsilon , \varepsilon) \\
\psi_T &= T, &\text{on } \R^n \times \{t=0\}.
    \end{PDE}
\end{align*} One can also define \(s\)-stationary stable sets, but we do not require this definition for the current paper.

\section{An interpolation inequality}
In this section, we prove an interpolation inequality which will allow us to estimate the restricted \(s\)-perimeter with the restricted perimeter plus an arbitrarily small error. The precise statement is as follows.

\begin{thm} \thlabel{7qaGfSn3}
Let \(R>0\) and \(u\in \operatorname{BV}_{\mathrm{loc}}(\R^n) \cap L^\infty(\R^n)\). Then, for all \(\varepsilon\in (0,3^{-\frac 1 s } )\), \begin{align*}
   \iint_{\mathcal Q (B_R)} \frac{\vert u(x) - u(y)\vert }{\vert x - y \vert^{n+s}} \dd y \dd x \leq  C \bigg (  \frac{\varepsilon^{-\frac{1-s}s }  R^{1-s}}{1-s} \vert \nabla u \vert ( B_{(1+\varepsilon^{-1/s})R})+ \frac{\varepsilon R^{n-s}} s \| u\|_{L^\infty(\R^n)} \bigg ) .
\end{align*} The constant \(C>0\) depends only on \(n\).
\end{thm}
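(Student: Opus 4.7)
\textbf{Proof proposal for Theorem \thref{7qaGfSn3}.}

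The plan is a standard short-range/long-range interpolation: split the Gagliardo-type integral over $\mathcal Q(B_R)$ according to whether $|x-y|$ is smaller or larger than a threshold $\delta>0$, then estimate the near-diagonal piece using the $\operatorname{BV}$ regularity of $u$ and the far piece using the $L^\infty$ bound. The threshold will be chosen at the end as $\delta:=\varepsilon^{-1/s}R$, which makes the two contributions match the two terms on the right-hand side of the claimed inequality. The constraint $\varepsilon<3^{-1/s}$ simply forces $\delta>3R$, which is convenient but not essential to the argument.

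For the short-range part, I would first observe that if $(x,y)\in \mathcal Q(B_R)$ with $|x-y|<\delta$ then both $x$ and $y$ lie in $B_{R+\delta}$. Using the symmetry of the kernel and changing variables $h:=y-x$, it suffices to estimate
$$\int_{|h|<\delta}\frac{1}{|h|^{n+s}}\int_{B_R}|u(x+h)-u(x)|\,dx\,dh,$$
since the piece with the roles of $x$ and $y$ swapped gives the same bound. The key ingredient is the classical $\operatorname{BV}$ translation estimate
$$\int_{B_R}|u(x+h)-u(x)|\,dx\le|h|\cdot|\nabla u|(B_{R+|h|}),$$
which I would justify by mollification (for smooth $u$ it follows from $u(x+h)-u(x)=\int_0^1 h\cdot\nabla u(x+th)\,dt$ and Fubini). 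Plugging this in, passing to polar coordinates, and bounding $|\nabla u|(B_{R+|h|})\le|\nabla u|(B_{R+\delta})$ yields
$$\iint_{\mathcal Q(B_R)\cap\{|x-y|<\delta\}}\frac{|u(x)-u(y)|}{|x-y|^{n+s}}\,dx\,dy\le\frac{C\,\delta^{1-s}}{1-s}\,|\nabla u|(B_{R+\delta}).$$

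For the long-range part, I would use $|u(x)-u(y)|\le 2\|u\|_{L^\infty(\R^n)}$ and the symmetric decomposition $\mathcal Q(B_R)=\{x\in B_R\}\cup\{y\in B_R\}$ to bound
$$\iint_{\mathcal Q(B_R)\cap\{|x-y|\ge\delta\}}\frac{|u(x)-u(y)|}{|x-y|^{n+s}}\,dx\,dy\le 2\|u\|_{L^\infty(\R^n)}\cdot 2\int_{B_R}\int_{|y-x|\ge\delta}\frac{dy\,dx}{|x-y|^{n+s}},$$
and the inner integral is a direct polar-coordinate computation equal to $C\,R^n\delta^{-s}/s$. Finally, choosing $\delta:=\varepsilon^{-1/s}R$ gives $R+\delta=(1+\varepsilon^{-1/s})R$, $\delta^{1-s}=\varepsilon^{-(1-s)/s}R^{1-s}$, and $R^n\delta^{-s}=\varepsilon R^{n-s}$, which matches the two terms in the stated inequality after summing.

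The only delicate point I expect is the careful bookkeeping of the domain $\mathcal Q(B_R)$ together with the enlargement from $B_R$ to $B_{R+\delta}$ in the $\operatorname{BV}$ estimate; the rest is elementary. If the resulting enlarged ball turns out to be $B_{R+2\delta}$ rather than $B_{R+\delta}$ (depending on how symmetrically one splits $\mathcal Q(B_R)$), one simply absorbs the factor of $2$ into the choice $\delta:=\tfrac12\varepsilon^{-1/s}R$, which does not affect the form of the final bound up to the universal constant $C=C(n)$.
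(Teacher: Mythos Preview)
Your proposal is correct and follows essentially the same near/far splitting strategy as the paper's proof. The paper works first at $R=1$ with smooth $u$, keeps $\chi_{\mathcal Q(B_1)}$ intact through two changes of variables, and invokes a small geometric lemma (showing $\chi_{\mathcal Q(B_1)}(x-ty,x+(1-t)y)\le \chi_{\{|x|\le |y|+1\}}$) to land in $B_{\rho+1}$, whereas you bypass that lemma by decomposing $\mathcal Q(B_R)$ into $\{x\in B_R\}\cup\{y\in B_R\}$ at the outset and applying the BV translation estimate directly; the threshold choice, the long-range bound, and the mollification step are identical in both arguments.
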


For similar interpolation inequalities to~\thref{7qaGfSn3} and~\thref{60sDucQ0} below, see \cite[Equation 3.]{MR3981295} and \cite[Lemma 3.15]{caselli2024fractional}. A small, but important difference between \thref{7qaGfSn3} and these results is the inclusion of the \(\varepsilon\) term. This will be essential to the proof of \thref{jG1dQXin} since it will allow us to absorb an error term coming from \thref{7qaGfSn3} into the left-hand side of a chain of inequalities.

Taking \(u = \chi_E\) in \thref{7qaGfSn3}, we immediately obtain the following corollary. 

\begin{cor} \thlabel{60sDucQ0}
Let \(R>0\) and \(E \subset \R^n\) be a measurable set with locally finite perimeter. Then, for all \(\varepsilon\in(0,3^{-\frac 1 s } ) \), \begin{align*}
    \operatorname{Per}_s(E; B_R) \leq C\bigg (  \frac{\varepsilon^{-\frac{1-s}s }  R^{1-s} }{1-s} \operatorname{Per}(E; B_{(1+\varepsilon^{-1/s})R})+ \frac{ \varepsilon R^{n-s} } s \bigg )
\end{align*}  The constant \(C>0\) depends only on \(n\).
\end{cor}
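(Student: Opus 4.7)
The plan is to deduce Corollary \thref{60sDucQ0} as an immediate consequence of Theorem \thref{7qaGfSn3} by specialising the interpolation inequality to the test function $u := \chi_E$. Since $E$ has locally finite perimeter by hypothesis, the characteristic function $\chi_E$ lies in $\operatorname{BV}_{\mathrm{loc}}(\R^n)$; it is trivially in $L^\infty(\R^n)$ with $\|\chi_E\|_{L^\infty(\R^n)} = 1$. Thus the hypotheses of Theorem \thref{7qaGfSn3} are met for every admissible $\varepsilon \in (0, 3^{-1/s})$ and $R > 0$, and the conclusion there applies without modification.

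It then remains only to match the three terms appearing in Theorem \thref{7qaGfSn3} with the corresponding quantities in the desired estimate. By the very definition of perimeter for sets of locally finite perimeter recalled in Section~2, the total variation $|\nabla \chi_E|(B_{(1+\varepsilon^{-1/s})R})$ coincides with $\operatorname{Per}(E; B_{(1+\varepsilon^{-1/s})R})$. Moreover, identity \eqref{LDtSA29N} from the notational preliminaries yields
\[ \frac{1}{2} \iint_{\mathcal{Q}(B_R)} \frac{|\chi_E(x) - \chi_E(y)|}{|x-y|^{n+s}} \, dy \, dx = \operatorname{Per}_s(E; B_R) , \]
so that the left-hand side of the inequality supplied by Theorem \thref{7qaGfSn3}, applied to $\chi_E$, is exactly $2 \operatorname{Per}_s(E; B_R)$. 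Substituting these identifications and absorbing the factor $2$ on the left-hand side into the constant $C$, which continues to depend only on $n$, produces verbatim the estimate claimed in Corollary \thref{60sDucQ0}.

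In this sense the corollary presents no genuine analytic obstacle: all the real work is encoded in Theorem \thref{7qaGfSn3} itself, where one must carefully split the double integral on $\mathcal{Q}(B_R)$ according to a dyadic-type threshold controlled by $\varepsilon$ (a near-diagonal part handled by the $\operatorname{BV}$ norm, and a far part where $\|u\|_{L^\infty}$ is used), and track how the constants degenerate in $s$ and $\varepsilon$. The step from the $\operatorname{BV}$ inequality for general $u$ to the $\operatorname{Per}$ inequality for sets is purely a matter of evaluating the relevant norms at a characteristic function, and this will be the entire content of the short proof that I would present.
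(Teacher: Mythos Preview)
Your proposal is correct and takes essentially the same approach as the paper, which simply states that the corollary follows by taking $u=\chi_E$ in Theorem~\ref{7qaGfSn3}. Your additional remarks unpacking the identifications (that $|\nabla \chi_E|(\cdot)=\operatorname{Per}(E;\cdot)$, that the left-hand side is $2\operatorname{Per}_s(E;B_R)$ via~\eqref{LDtSA29N}, and that $\|\chi_E\|_{L^\infty}\le 1$) are accurate and make explicit what the paper leaves implicit.
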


Before we give the proof of \thref{7qaGfSn3}, we require the following lemma.

\begin{lem} \thlabel{aIDAJA5Z}
For all \(x,y\in \R^n\) and \(t\in [0,1]\), \begin{align*}
         \chi_{\mathcal Q(B_1)}(x-ty,x+(1-t)y) \leq \chi_{\{\vert  x \vert \leq \vert y \vert + 1  \}}(x,y) .
    \end{align*}
\end{lem}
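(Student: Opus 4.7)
The plan is to unpack the definition of $\mathcal{Q}(B_1)$ and then apply the triangle inequality in each of the two possible cases. Recall that
\[
\mathcal{Q}(B_1) = \mathbb{R}^{2n} \setminus (B_1^c \times B_1^c) = (B_1 \times \mathbb{R}^n) \cup (\mathbb{R}^n \times B_1),
\]
so the condition $(x - ty, x + (1-t)y) \in \mathcal{Q}(B_1)$ is equivalent to requiring that at least one of $|x - ty| < 1$ or $|x + (1-t)y| < 1$ holds. It therefore suffices to show that each of these two conditions, together with $t \in [0,1]$, forces $|x| \leq |y| + 1$.

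First I would treat the case $|x - ty| < 1$. By the triangle inequality,
\[
|x| \leq |x - ty| + |ty| < 1 + t|y| \leq 1 + |y|,
\]
using $t \in [0,1]$ in the last step. Similarly, if $|x + (1-t)y| < 1$, then
\[
|x| \leq |x + (1-t)y| + |(1-t)y| < 1 + (1-t)|y| \leq 1 + |y|.
\]
In either case $|x| < |y| + 1$, which in particular implies $|x| \leq |y| + 1$, so the indicator on the right-hand side equals $1$ whenever the indicator on the left-hand side equals $1$. Since both sides only take the values $0$ and $1$, the claimed pointwise inequality follows.

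There is no substantive obstacle here: the argument is a direct two-case triangle inequality estimate, and the only point worth flagging is the bookkeeping of the convex-combination coefficients $t$ and $1-t$, which are both in $[0,1]$ so that $t|y|, (1-t)|y| \leq |y|$. The reason the lemma is stated this way is so that, in the intended application to \thref{7qaGfSn3}, a change of variables writing $(x_1, x_2) = (x - ty, x + (1-t)y)$ for the integration in $\mathcal{Q}(B_R)$ (after rescaling) can be controlled by an integral over the region $\{|x| \leq |y| + R\}$, which is what makes the interpolation argument go through.
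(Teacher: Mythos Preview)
Your proof is correct and follows essentially the same approach as the paper: unpack $\mathcal{Q}(B_1)$ as the set where at least one coordinate lies in $B_1$, then apply the triangle inequality in each of the two cases using $t,\,1-t\in[0,1]$.
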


\begin{proof}
    Since \(\mathcal Q(B_1) = \{ (x,y ) \in \R^n \text{ s.t. } \vert x \vert <1 \text{ or } \vert y \vert <1 \} \), if \((x-ty,x+(1-t)y) \in \mathcal Q(B_1)\) then either \( \vert x - t y \vert <1 \) or \(\vert x+(1-t) y \vert <1\). If \( \vert x - t y \vert <1 \) then  \begin{align*}
        \vert x \vert \leq t \vert y \vert +1 \leq \vert y \vert +1 .
    \end{align*} Similarly, if \(\vert x+(1-t) y \vert <1\) then \begin{align*}
        \vert x \vert \leq 1 + (1-t) \vert y \vert  \leq 1+ \vert y \vert. 
    \end{align*}
\end{proof}

Now we give the proof of \thref{7qaGfSn3}.

\begin{proof}[Proof of \thref{7qaGfSn3}]

First, assume that \(u\in C^\infty_0(\R^n)\) and \(R=1\). Let \(\rho>0\) to be chosen later and write \begin{align*}
    \iint_{\mathcal Q(B_1)} \frac{\vert u(x) - u (y)\vert }{\vert x - y \vert^{n+s}} \dd y \dd x = I_\rho + J_\rho
\end{align*} where \begin{align*}
    I_\rho = \iint_{\mathcal Q(B_1)\cap \{ \vert x - y \vert <\rho \} } \frac{\vert u(x) - u (y)\vert }{\vert x - y \vert^{n+s}} \dd y \dd x 
\end{align*} and \begin{align*}
     J_\rho = \iint_{\mathcal Q(B_1)\cap \{ \vert x - y \vert \geq \rho\} } \frac{\vert u(x) - u (y)\vert }{\vert x - y \vert^{n+s}} \dd y \dd x.
\end{align*}We will estimate \(I_\rho\) and \(J_\rho\) separately. 

For \(I_\rho\), via the change of variables \(y \to y+x\), we have  \begin{align*}
    I_\rho &= \int_{\R^n} \int_{\R^n} \frac{\vert u(x) - u (y)\vert \chi_{\mathcal Q(B_1) }(x,y) \chi_{\{\vert x-y\vert < \rho \}  }(x,y)}{\vert x - y \vert^{n+s}} \dd y \dd x  \\
    &= \int_{\R^n} \int_{\R^n} \frac{\vert u(x+y) - u (x)\vert \chi_{\mathcal Q(B_1) }(x,x+y) \chi_{ B_\rho  }(y)}{\vert y\vert^{n+s}} \dd y \dd x  .
\end{align*} Since \begin{align*}
        \vert u(x+y) - u(x)\vert &\leq \vert y \vert \int_0^1 \vert \nabla u (x+t y ) \vert \dd t ,
    \end{align*} it follows that \begin{align*}
        I_\rho &\leq \int_{\R^n} \int_{\R^n} \int_0^1 \frac{\vert \nabla u(x+t y) \vert \chi_{\mathcal Q(B_1) }(x,x+y) \chi_{ B_\rho  }(y)}{\vert y\vert^{n+s-1}} \dd t \dd y \dd x 
    \end{align*} which, via the change of variable \(x\to x-ty\), becomes \begin{align*}
        I_\rho &\leq \int_{\R^n} \int_0^1 \int_{\R^n}  \frac{\vert \nabla u(x) \vert \chi_{\mathcal Q(B_1) }(x-ty,x+(1-t)y) \chi_{ B_\rho  }(y)}{\vert y\vert^{n+s-1}}\dd x  \dd t \dd y  .
    \end{align*} Then, from \thref{aIDAJA5Z}, it follows that \begin{align*}
         \chi_{\mathcal Q(B_1)}(x-ty,x+(1-t)y)\chi_{B_\rho}(y) \leq \chi_{\{\vert  x \vert \leq \vert y \vert + 1  \}}(x,y) \chi_{B_\rho}(y) \leq \chi_{B_{\rho +1}}(x) \chi_{ B_\rho }(y),
    \end{align*} so\begin{align*}
        I_\rho &\leq \| \nabla u \|_{L^1(B_{\rho+1})} \int_{B_\rho }  \frac{\dd y }{\vert y\vert^{n+s-1}} \leq \frac {C\rho^{1-s} } {1-s} \| \nabla u \|_{L^1(B_{\rho+1})}
    \end{align*} with \(C\) depending only on \(n\).

    For \(J_\rho\), \begin{align*}
        J_\rho \leq 2 \| u\|_{L^\infty(\R^n) } \iint_{\mathcal Q(B_1)\cap \{ \vert x - y \vert \geq \rho\} } \frac{\dd y \dd x }{\vert x - y \vert^{n+s}} .
    \end{align*} Assuming that \(\rho>3\), \( \mathcal Q(B_1)\cap \{ \vert x - y \vert \geq \rho\} \subset \big ( (B_1 \times B_1^c) \cup (B_1^c \times B_1)\big ) \cap \{ \vert x - y \vert \geq \rho\} \). Indeed, if there exists \((x,y) \in (B_1\times B_1)\cap \{ \vert x - y \vert \geq \rho\} \) then \(3<\rho \leq \vert x- y \vert \leq \vert x \vert + \vert y \vert <2\), a contradiction. Hence, \begin{align*}
         J_\rho \leq 4 \| u\|_{L^\infty(\R^n) } \iint_{ (B_1 \times B_1^c)\cap \{ \vert x - y \vert \geq \rho\} } \frac{\dd y \dd x }{\vert x - y \vert^{n+s}} . 
    \end{align*} Since \(\vert x-y\vert \geq \vert y \vert - \vert x \vert \geq \vert y \vert - 1 \) and \( (B_1 \times B_1^c)\cap \{ \vert x - y \vert \geq \rho\} \subset B_1 \times B_{\rho -1 }^c \), we find that \begin{align*}
         J_\rho &\leq C  \| u\|_{L^\infty(\R^n) } \int_{B_1} \int_{\R^n \setminus B_{\rho -1}} \frac{\dd y \dd x }{(\vert y \vert - 1  )^{n+s}} .
    \end{align*} Since \(\rho>3\), we obtain \begin{align*}
         J_\rho \leq C  \| u\|_{L^\infty(\R^n) } \int_{B_1} \int_{\R^n \setminus B_{\rho -1}} \frac{\dd y \dd x }{\vert y \vert^{n+s}} \leq \frac{ C(\rho -1)^{-s}} s  \| u\|_{L^\infty(\R^n) } \leq \frac{ C\rho ^{-s}} s  \| u\|_{L^\infty(\R^n) }
    \end{align*} with \(C\) depending only on \(n\).

    Thus, combining the above estimates, we have \begin{align*}
        \iint_{\mathcal Q(B_1)} \frac{\vert u(x) - u (y)\vert }{\vert x - y \vert^{n+s}} \dd y \dd x &\leq C \bigg ( \frac {\rho^{1-s} } {1-s} \| \nabla u \|_{L^1(B_{\rho+1})} + \frac{ \rho ^{-s}} s  \| u\|_{L^\infty(\R^n) }\bigg ) 
    \end{align*} for all \(\rho>3\). Choosing \(\rho = \varepsilon^{- \frac 1 s }\), we prove the result. 

    For a general element \(u \in \operatorname{BV}_{\mathrm{loc}}(\R^n) \cap L^\infty (\R^n)\), let \(u_k \in C^\infty_0(\R^n)\), defined as in the proof of Theorem 2 of \S5.2.2 in \cite{evans_measure_1992}. Then \begin{align*}
        \lim_{k \to \infty } \| \nabla u_k \|_{L^1(B_{1+\varepsilon^{-1/s}})} = \vert \nabla u \vert (B_{1+\varepsilon^{-1/s}}) \text{ and } \|u_k\|_{L^\infty(\R^n)} \leq \| u \|_{L^\infty(\R^n)},
    \end{align*} so \begin{align*}
        \iint_{\mathcal Q(B_1)} \frac{\vert u(x) - u (y)\vert }{\vert x - y \vert^{n+s}} \dd y \dd x &\leq \liminf_{k\to +\infty}\iint_{\mathcal Q(B_1)} \frac{\vert u_k(x) - u_k (y)\vert }{\vert x - y \vert^{n+s}} \dd y \dd x\\
        &\leq C \liminf_{k\to +\infty } \bigg (\frac {\varepsilon^{- \frac{1-s} s } } {1-s} \| \nabla u_k \|_{L^1(B_{\rho+1})} + \frac{ \varepsilon} s  \| u_k\|_{L^\infty(\R^n) } \bigg ) \\
        &\leq C \bigg (\frac {\varepsilon^{- \frac{1-s} s } } {1-s}  \vert \nabla u \vert (B_{1+\varepsilon^{-1/s}})  + \frac{\varepsilon } s  \| u\|_{L^\infty(\R^n) } \bigg ). 
    \end{align*}

    Finally, to obtain the case for general \(R>0\), let \(u_R(x)= u(Rx)\). Then \begin{align*}
        \iint_{\mathcal Q(B_R)} \frac{\vert u(x) - u (y)\vert }{\vert x - y \vert^{n+s}} \dd y \dd x &= R^{n-s}\iint_{\mathcal Q(B_1)} \frac{\vert u_R(x) - u_R (y)\vert }{\vert x - y \vert^{n+s}} \dd y \dd x \\
        &\leq C R^{n-s}\bigg (\frac {\varepsilon^{- \frac{1-s} s } } {1-s}  \vert \nabla u_R \vert (B_{1+\varepsilon^{-1/s}})  + \frac{\varepsilon } s  \| u_R\|_{L^\infty(\R^n) } \bigg ) \\
        &= C R^{n-s}\bigg (\frac {\varepsilon^{- \frac{1-s} s }R^{1-n} } {1-s}  \vert \nabla u\vert (B_{ (1+\varepsilon^{-1/s})R})  + \frac{\varepsilon } s  \| u\|_{L^\infty(\R^n) } \bigg ).
    \end{align*}
\end{proof}

\section{The extension problem}

In this section, we record several results regarding the Caffarelli-Silvestre extension, in the context of the theory of nonlocal minimal surfaces, that we require in the proof of~\thref{jG1dQXin}. Throughout this section, we will denote points in \(\R^{n+1}\) with capital letters \(X\), \(Y\), etc and write \(X=(x,x_{n+1})=(x_1,\dots,x_{n+1})\) where \(x=(x_1,\dots,x_n) \in \R^n \) (and analogously \(Y=(y,y_{n+1})\), etc). Moreover, let \begin{align*}
    \R^{n+1}_+ &= \{ X \in \R^{n+1} \text{ s.t. } x_{n+1}>0\}, \\
    \tilde B_R(X) &= \{ Y \in \R^{n+1} \text{ s.t. } \vert Y-X\vert <R\}, \\
    \tilde B_R^+(X) &=   \tilde B_R(X) \cap \R^{n+1}_+,  \\ 
    \tilde B_R &= \tilde B_R(0), \text{ and} \\
    \tilde B_R^+ &= \tilde B_R^+(0) .
\end{align*}

Let \(s\in (0,1)\), \(E\subset \R^n\) be a measurable set, and \begin{align*}
    \tilde \chi _E(x) &= \chi_{\R^n \setminus E}(x) - \chi_E(x)  \qquad x\in \R^n
\end{align*} where  \(\chi_E\) is the characteristic function of \(E\). Now we define \(U_E: \R^{n+1} \to \R \) \begin{align*}
    U_E(X) &= \int_{\R^n} P_{s/2}(X,y) \tilde \chi _E(y) \dd y, \qquad \text{for all } X\in \R^{n+1}_+
\end{align*} where \begin{align}
   P_{s/2}(X,y) =a(n,s) \frac{x_{n+1}^s}{\big ( \vert x-y\vert^2 + x_{n+1}^2 \big )^{\frac{n+s}2} }, \qquad  a(n,s) = \frac{\Gamma \big (\frac {n+s} 2 \big )  }{\pi^{\frac n2} \Gamma \big ( \frac s 2 \big ) }. \label{kzYTNNBU}
\end{align} Since \(\vert  \tilde \chi _E(x) \vert \leq 1 \) and \(\int_{\R^n}P_{s/2}(X,y) \dd y =1 \) for all \(X\in \R^{n+1}_+\), see \cite[Remark 10.2]{MR3916700}, \(U_E\) is well-defined, and, in fact, is a smooth function. Moreover, \(U_E\) satisfies the degenerate PDE \begin{align*}
    \begin{PDE}
\div \big ( x_{n+1}^{1-s} \nabla V \big ) &= 0, &\text{in } \R^{n+1}_+ \\
V &= \tilde \chi_E, &\text{on } \R^n \times \{0\}. 
    \end{PDE} 
\end{align*} where here \(\nabla\) and \(\div\) denotes the gradient and divergence in \(\R^{n+1}\) respectively. This PDE was first studied in the context of the fractional Laplacian in \cite{MR2354493} and in the context of nonlocal minimal surfaces in \cite{MR2675483}. In particular, it was proven that if \(E\) is a minimiser of the \(s\)-perimeter then the function \begin{align}
   \Phi_{E,x}(R) := \frac 1 {R^{n-s}} \int_{\tilde B_R^+((x,0))} y_{n+1}^{1-s} \vert \nabla U_E(Y) \vert^2 \dd Y \label{HvcZ5aGS}
\end{align} is monotone increasing. This function plays an identical role in the theory as the function \( R^{1-n} \operatorname{Per}(E;B_R(x)) \) plays in the classical setting. See also \cite{MR3916700} and references therein for more details. Recently, in \cite{caselli2024fractional}, it was shown that the monotonicity of~\eqref{HvcZ5aGS} also holds for sets that are stationary with respect to the \(s\)-perimeter: 

\begin{prop}[{\cite[Theorem 3.4]{caselli2024fractional}}] \thlabel{uUBAsC54}
Let \(s\in (0,1)\) and \(E\subset \R^n\) be a measurable set that is stationary with respect to the \(s\)-perimeter. Then \(\Phi_{E,x}\) is monotone increasing for all \(x\in \partial^\ast E\).
\end{prop}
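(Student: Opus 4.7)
By translation, assume $x=0$, so $\Phi:=\Phi_{E,0}$. The plan is to write $\Phi'(R)$ as a sum of two manifestly nonnegative terms via a Pohozaev-type identity for the degenerate-elliptic equation satisfied by $U_E$, then to show that the potentially problematic flat-boundary contribution is annihilated by the $s$-stationarity hypothesis. Direct differentiation yields
\[
R^{n-s+1}\Phi'(R)=R\!\int_{\partial \tilde B_R\cap\R^{n+1}_+}\! y_{n+1}^{1-s}|\nabla U_E|^2\,\dd\mathcal H^n-(n-s)\!\int_{\tilde B_R^+}\! y_{n+1}^{1-s}|\nabla U_E|^2\,\dd Y,
\]
so it suffices to bound this quantity from below by $0$.

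To relate the two terms I would test the identity $\mathrm{div}(y_{n+1}^{1-s}\nabla U_E)=0$ against the dilation multiplier $Y\cdot\nabla U_E$ on $\tilde B_R^+$ and integrate by parts. Using $\nabla U_E\cdot\nabla(Y\cdot\nabla U_E)=|\nabla U_E|^2+\tfrac12 Y\cdot\nabla(|\nabla U_E|^2)$ together with the Euler-type identity $Y\cdot\nabla(y_{n+1}^{1-s})=(1-s)y_{n+1}^{1-s}$, the divergence theorem produces
\[
R^{n-s+1}\Phi'(R)=2R\!\int_{\partial \tilde B_R\cap\R^{n+1}_+}\! y_{n+1}^{1-s}(\partial_\nu U_E)^2\,\dd\mathcal H^n+2\mathcal F(R),
\]
where $\mathcal F(R)$ collects the contribution of the flat face $\tilde B_R\cap(\R^n\times\{0\})$ to the Pohozaev boundary term. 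On the spherical face, $\partial_\nu U_E=R^{-1}Y\cdot\nabla U_E$, so the first term on the right is manifestly nonnegative, and the theorem reduces to showing $\mathcal F(R)\ge 0$ (in fact $=0$) for a.e.\ $R>0$.

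To handle $\mathcal F(R)$, the most robust route is to identify it, via the Caffarelli-Silvestre extension correspondence, with the first variation of $\operatorname{Per}_s(E;\,\cdot\,)$ along a family of radial cutoff vector fields $T_\eta(y):=\eta(|y|/R)\,y$ with $\eta\in C_c^\infty([0,\infty))$. Since $E$ is $s$-stationary, each such variation vanishes; approximating $\chi_{[0,1)}$ by a sequence of admissible $\eta$'s and passing to the limit then gives $\mathcal F(R)=0$. Equivalently, using the trace formula $\lim_{y_{n+1}\to 0^+}y_{n+1}^{1-s}\partial_{n+1}U_E=c_{n,s}\,\mathrm H_{s,E}$ on $\partial^\ast E$ and rewriting $Y\cdot\nabla U_E|_{y_{n+1}=0}=y\cdot\nabla_x\tilde\chi_E$ as a distributional pairing with $-2\nu_E\,\mathcal H^{n-1}$ restricted to $\partial^\ast E$, the flat boundary term collapses, after tangential regularization, to a constant multiple of $\int_{\partial^\ast E\cap B_R}(y\cdot\nu_E(y))\,\mathrm H_{s,E}(y)\,\dd\mathcal H^{n-1}_y$, which vanishes because $\mathrm H_{s,E}\equiv 0$ on $\partial^\ast E$ by stationarity.

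The principal obstacle is rigor at the degenerate face $\{y_{n+1}=0\}$. Because $E$ has only locally finite perimeter, $\tilde\chi_E$ jumps across $\partial^\ast E$ and $|\nabla U_E|^2$ is merely locally integrable against the weight $y_{n+1}^{1-s}$ up to the boundary, so the product that appears on the flat face in the Pohozaev calculation is only defined in a distributional sense. To make every step rigorous I would work first on the slab $\tilde B_R^+\cap\{y_{n+1}>\varepsilon\}$, perform the integration by parts there to obtain an $\varepsilon$-identity with an extra horizontal boundary term $\int_{\tilde B_R\cap\{y_{n+1}=\varepsilon\}} y_{n+1}^{1-s}(\partial_\nu U_E)(Y\cdot\nabla U_E)\,\dd\mathcal H^n$, and then send $\varepsilon\to 0^+$. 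Convergence of this horizontal term is controlled by weighted Caffarelli-Silvestre trace estimates together with the local finiteness of the classical perimeter of $E$ (a fact compatible with the interpolation bound provided by \thref{60sDucQ0}), and identifies its limit as $2\mathcal F(R)$. Restricting finally to $R$ outside a null set — using the monotonicity of $R\mapsto\int_{\tilde B_R^+}y_{n+1}^{1-s}|\nabla U_E|^2$ to ensure absolute continuity — avoids any further delicate analysis on $\partial\tilde B_R$ itself and completes the proof.
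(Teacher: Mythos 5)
The paper does not actually prove this proposition; it only cites \cite[Theorem~3.4]{caselli2024fractional}, so there is no internal proof to compare against. Your Pohozaev/Rellich approach is nevertheless the standard and (I believe) the correct route: multiply the degenerate equation $\mathrm{div}(y_{n+1}^{1-s}\nabla U_E)=0$ by the dilation $Y\cdot\nabla U_E$, integrate by parts, and use the hypothesis to kill the flat-face boundary term. Your first decomposition of $R^{n-s+1}\Phi'(R)$ and the Rellich identity that rewrites it as a nonnegative spherical term plus $2\mathcal F(R)$ are both correct.

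Where the proposal is genuinely short is the vanishing of $\mathcal F(R)$, which is the entire content of the theorem. Your two routes are not equivalent and each has a gap. The trace-formula route requires $\mathrm H_{s,E}$ to be defined pointwise $\mathcal H^{n-1}$-a.e.\ on $\partial^*E$ and to vanish there, but the hypothesis is only the \emph{weak} (variational) stationarity — the paper even distinguishes the two hypotheses, using weak stationarity in this proposition and the pointwise condition $\mathrm H_{s,E}=0$ in \thref{jG1dQXin}. For a set with merely locally finite perimeter the principal-value integral defining $\mathrm H_{s,E}$ need not converge at boundary points, so this route needs either a regularity theory for stationary sets or a genuinely distributional argument; ``tangential regularization'' as stated papers over this. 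The first-variation route is the robust one, but the claimed ``Caffarelli--Silvestre extension correspondence'' — identifying the limit of the horizontal boundary integral $\lim_{\varepsilon\to 0^+}\varepsilon^{1-s}\int_{B_R}\partial_{n+1}U_E(\cdot,\varepsilon)\,(y\cdot\nabla_x U_E(\cdot,\varepsilon))\,\dd y$ with the first variation $\delta\operatorname{Per}_s(E;\cdot)[\eta(|y|/R)\,y]$ — is not a triviality: it requires proving that inner variations of the weighted Dirichlet energy along vector fields tangent to $\{y_{n+1}=0\}$ compute the first variation of the nonlocal perimeter, and then controlling both the limit in $\varepsilon$ and the passage $\eta\uparrow\chi_{[0,1)}$ in the presence of gradients that blow up like $y_{n+1}^{-1}$ near $\partial^*E$. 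That correspondence is precisely the lemma one would expect to occupy most of the reference's proof; in your proposal it is invoked in a single sentence. Until it is established, the argument does not close.
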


Moreover, in \cite{MR2675483} the following ``extension--trace" energy estimate was proven which gives a control on \(\Phi_{E,x}\) in terms of the \(s\)-perimeter of \(E\) in a ball.

\begin{prop}[{\cite[Propostion 7.1(a)]{MR2675483}}] \thlabel{Qu1H6WRf}
    Let \(s\in (0,1)\) and \(E\subset \R^n\) gave locally finite perimeter. Then  \begin{align*}
        \Phi_{E,x}(R) \leq \frac {Cs^{-1}} {R^{n-s}} \operatorname{Per}_s(E; B_{2R}) 
    \end{align*} for all \(R>0\) and \(x\in \partial^\ast E\). The constant \(C>0\) depends only on \(n\).
\end{prop}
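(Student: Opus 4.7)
The plan is to exploit the variational characterization of the Caffarelli--Silvestre extension. By translation invariance and the scaling $E\mapsto R^{-1}(E-x)$, which preserves both sides of the desired inequality, I would first reduce to $x=0$ and $R=1$, so the target becomes
\[
\int_{\tilde B_1^+} y_{n+1}^{1-s}\,\vert \nabla U_E\vert^2 \,dY \leq C\,s^{-1}\,\operatorname{Per}_s(E;B_2).
\]
The function $U_E$ is the unique bounded solution of $\div(y_{n+1}^{1-s}\nabla V)=0$ in $\R^{n+1}_+$ with trace $\tilde\chi_E$ on $\R^n\times\{0\}$, and equivalently the minimizer of the weighted Dirichlet energy among all such extensions. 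Its restriction to $\tilde B_1^+$ therefore minimizes the local energy among all competitors $V$ agreeing with $\tilde\chi_E$ on $B_1\times\{0\}$ and with $U_E$ on the spherical part $\partial\tilde B_1^+\cap\R^{n+1}_+$, so it suffices to exhibit a single competitor whose energy can be controlled.

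The key step is the Caffarelli--Silvestre trace--energy identity
\[
\int_{\R^{n+1}_+} y_{n+1}^{1-s}\,\vert \nabla V\vert^2\, dY = \kappa(n,s) \iint_{\R^{2n}} \frac{(f(x)-f(y))^2}{\vert x-y\vert^{n+s}}\, dx\, dy,\qquad \kappa(n,s)\leq \frac{C(n)}{s},
\]
valid whenever $V$ is the Poisson extension (with kernel $P_{s/2}$ of \eqref{kzYTNNBU}) of some $f\colon\R^n\to[-1,1]$. I would then pick $f=\tilde\chi_E$ on $B_2$ and extend $f$ outside $B_2$ in such a way that $[f]_{H^{s/2}(\R^n)}^2$ is bounded by a constant depending only on $n$ times $\operatorname{Per}_s(E;B_2)$; the Poisson extension of $f$, glued to $U_E$ via a smooth cutoff in the annular region $\tilde B_{3/2}^+\setminus \tilde B_1^+$ so as to realize the correct spherical boundary data, then serves as the desired competitor. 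Pairs with both endpoints in $B_2$ satisfy $(\tilde\chi_E(x)-\tilde\chi_E(y))^2 = 4\vert \chi_E(x)-\chi_E(y)\vert$ and hence are directly majorized by $\operatorname{Per}_s(E;B_2)$, while the interactions between $B_2$ and $B_2^c$ reduce, once $f$ is chosen to extend $\tilde\chi_E$ consistently across $\partial B_2$, to terms already appearing in the definition of $\operatorname{Per}_s(E;B_2)$.

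The hard part will be the construction of $f$ and the cutoff gluing. The naive choice $f=\tilde\chi_E\chi_{B_2}$ produces a spurious term $\iint_{B_2\times B_2^c}\vert x-y\vert^{-(n+s)}\chi_E(x)\,dx\,dy$ that does not match $\operatorname{Per}_s(E;B_2)$, whereas taking $f=\tilde\chi_E$ globally may give infinite seminorm; a workable resolution is to define $f$ outside $B_2$ by the harmonic (or fractional-harmonic) extension of $\tilde\chi_E\vert_{\partial B_2}$, which redistributes the interactions so that the three natural blocks $B_2\times B_2$, $B_2\times(B_2^c)$, $B_2^c\times B_2^c$ are each absorbed by $\operatorname{Per}_s(E;B_2)$ up to harmless geometric constants. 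The explicit $s^{-1}$ factor in the conclusion arises from tail integrals of the form $\int_1^\infty r^{-1-s}\,dr=s^{-1}$ appearing both in the constant $\kappa(n,s)$ and in the cutoff-induced errors, matching the anticipated blow-up of the $s$-perimeter as $s\to 0^+$ and remaining bounded as $s\to 1^-$ once combined with the $R^{s-n}$ prefactor.
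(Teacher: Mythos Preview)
Your gluing step has a genuine circularity. If you glue $V_f$ to $U_E$ via a cutoff in the annulus $\tilde B_{3/2}^+\setminus \tilde B_1^+$ and then invoke local minimality in $\tilde B_{3/2}^+$, the energy of the competitor in the annulus contains a term $(1-\eta)^2\,y_{n+1}^{1-s}|\nabla U_E|^2$, which is precisely a piece of the quantity you are trying to bound. You cannot absorb it: the coefficient in front is of order one, not small. If instead you place the cutoff so the competitor equals $V_f$ throughout $\tilde B_1^+$ and try to use minimality in $\tilde B_1^+$, the competitor fails to match $U_E$ on the spherical cap and is not admissible. Either way the argument does not close. Your proposed ``harmonic extension of $\tilde\chi_E|_{\partial B_2}$'' for $f$ outside $B_2$ is also too vague to guarantee that the $B_2^c\times B_2^c$ block of $[f]_{H^{s/2}}^2$ is controlled by $\operatorname{Per}_s(E;B_2)$.

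The paper (following \cite{MR2675483} and \cite[Lemma~3.13]{caselli2024fractional}) bypasses minimality entirely by using linearity of the Poisson extension. One subtracts the average $c=|B_2|^{-1}\int_{B_2}\tilde\chi_E$ so that $u:=\tilde\chi_E-c$ has zero mean on $B_2$, then writes $u=u_1+u_2$ with $u_1=\zeta u$ compactly supported in $B_2$ and $u_2=(1-\zeta)u$ vanishing on $B_{3/2}$. Since $\nabla U_E=\nabla U_1+\nabla U_2$ exactly, one bounds each piece separately: for $U_1$ the global energy is $\kappa(n,s)[u_1]_{H^{s/2}(\R^n)}^2$, and the tail contribution $\iint_{B_2\times B_2^c}|u_1(x)|^2|x-y|^{-n-s}$ is handled by the fractional Poincar\'e--Wirtinger inequality $\|u\|_{L^2(B_2)}^2\le C(n)[u]_{H^{s/2}(B_2)}^2$ (this is where the zero-mean normalization is essential and is the ingredient your proposal is missing); for $U_2$ one differentiates the Poisson representation directly, using that $u_2\equiv 0$ on $B_{3/2}$, to get a pointwise gradient bound on $\tilde B_1^+$ whose weighted integral produces the $s^{-1}$ factor.
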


The proof of \thref{Qu1H6WRf} is given \cite{MR2675483} (applied to \(\chi_{\R^n \setminus E} - \chi_E\) and given again with more details in \cite[Lemma 3.13]{caselli2024fractional}. We will sketch the proof given \cite[Lemma 3.13]{caselli2024fractional} in our particular case, just to be explicit about how the constant \(C(n)s^{-1/2}\) was obtained.

\begin{proof}
By translating and rescaling, it is sufficient to consider the case \(R=1\) and \(x=0\). We follow the argument given in \cite[Lemma 3.13]{caselli2024fractional} applied to \(u= \chi_{\R^n\setminus E}-\chi_E - \big ( \vert B_2\setminus E\vert - \vert E\cap B_2\vert  \big )\). Note, we have chosen \(u\) such that \(\int_{B_2} u \dd x = 0\). Let \(\zeta \in C^\infty_0(\R^n)\) be such that \(\zeta =1 \) in \(B_{3/2}\), \(\zeta =0 \) in \(\R^n \setminus B_2\), and \(0\leq \zeta \leq 1\). Define \(u_1=\zeta u\), \(u_2= (1-\zeta)u\), and \(U_1\) and \(U_2\) the Caffarelli-Silvestre extensions of \(u_1\) and \(u_2\) respectively. We observe that our choice of normalisation constant implies \begin{align}
    \int_{\R^{n+1}_+} x_{n+1}^{1-s} \vert \nabla U_1\vert^2 \dd X = \frac{s \Gamma \big ( \frac{n+s}2 \big )}{2\pi^{\frac n2} \Gamma \big ( \frac s2 \big ) } [u_1]_{H^{s/2}(\R^n)}^2 \label{ytKVzdar}
\end{align} where \([\cdot ]_{H^{s/2}(\R^n)}\) is given by~\eqref{JS7OQLcc}. Note that the constant in~\eqref{ytKVzdar} is comparable to \(s^2\). Indeed, \(P_{s/2}\) is the same as the one in \cite{MR3916700} (after replacing \(s\) with \(s/2\)), so from \cite[Proposition 10.1]{MR3916700} (see also \cite[Remark 10.5]{MR3916700}), we have via an integration by parts \begin{align*}
    \int_{\R^{n+1}_+} x_{n+1}^{1-s} \vert \nabla U_1\vert^2 \dd X &= - \int_{\R^n} u_1(x) \lim_{t\to 0^+} t^{1-s} \partial_{n+1}U_1(x,t) \dd x \\
    &= \frac{s \Gamma \big ( \frac{n+s}2 \big )}{\pi^{\frac n2} \Gamma \big ( \frac s2 \big ) }  \int_{\R^n}\int_{\R^n} \frac{u_1(x)(u_1(x)-u_1(y)}{\vert x- y \vert^{n+s}} \dd y \dd x \\
    &= \frac{s \Gamma \big ( \frac{n+s}2 \big )}{2\pi^{\frac n2} \Gamma \big ( \frac s2 \big ) }  [u_1]_{H^{s/2}(\R^n)}^2. 
\end{align*} Moreover, in \cite{caselli2024fractional} they apply the fractional Poincaré-Wirtinger inequality \begin{align}
    \| u  \|_{L^2(B_2)}^2 \leq C(n) \int_{B_2}\int_{B_2} \frac{\vert u(x)-u(y)\vert^2}{\vert x-y \vert^{n+s}} \dd y \dd x , \label{rmU2m8rh}
\end{align} see for example\footnote{Note, in \cite[Corollary 2.5]{brasco_characterisation_2021}, they actually prove actually prove \( \| u  \|_{L^2(B_1)} \leq C [u]_{H^{s/2}(\R^n)}\) for \(u\) with \(\int_{B_1}u \dd x =0\). In~\thref{KnSkEdQI}, we briefly outline an extension operator which allows us to obtain~\eqref{rmU2m8rh}. } \cite[Corollary 2.5]{brasco_characterisation_2021}. All together, the argument in \cite{caselli2024fractional} and~\eqref{LDtSA29N} gives \begin{align*}
    \int_{\R^{n+1}_+} x_{n+1}^{1-s}\vert \nabla U_1 \vert^2 \dd X \leq C(n)s^2 \operatorname{Per}_s(E;B_2) \leq C(n) \operatorname{Per}_s(E;B_2).
\end{align*}

Furthermore, rearranging the two inequalities in \cite{caselli2024fractional} at the end of p.41 (using again~\eqref{LDtSA29N}), they prove \begin{align*}
    \int_{\tilde B^+_1} x_{n+1}^{1-s}\vert \nabla U_2\vert^2 \dd X &\leq C(n) \bigg ( \int_{\tilde B^+_1} x_{n+1}^{s-1}\dd X \bigg ) \operatorname{Per}_s(E;B_2)\leq \frac{C(n)} s \operatorname{Per}_s(E;B_2).
\end{align*} Thus, we obtain \begin{align*}
     \int_{\tilde B^+_1} x_{n+1}^{1-s}\vert \nabla U_E \vert^2 \dd X &\leq C \bigg (  \int_{\R^n_+} x_{n+1}^{1-s}\vert \nabla U_1 \vert^2 \dd X+\int_{\tilde B^+_1} x_{n+1}^{1-s}\vert \nabla U_2 \vert^2 \dd X \bigg ) \\
     &\leq C(n) \bigg ( 1 + \frac 1s \bigg ) \operatorname{Per}_s(E;B_2) \\
     &\leq \frac{C(n)} s \operatorname{Per}_s(E;B_2).
\end{align*}
\end{proof}

\begin{remark} \thlabel{KnSkEdQI}
    In \cite[Corollary 2.5]{brasco_characterisation_2021}, they actually prove \( \| u -u_{B_1} \|_{L^p(B_1)} \leq C(n,p) \alpha (1-\alpha)[u]_{W^{\alpha,p}(\R^n)}\) for all \(0<\alpha<1\), \(1\leq p< +\infty\) where \(u_{B_1} = \frac 1 {\omega_n} \int_{B_1}u\dd x \). To obtain~\eqref{rmU2m8rh}, we define \(Eu:\R^n \to \R \) by \begin{align*}
    Eu(x) &= \begin{cases}
u(x), &\text{for } x\in B_1 \\
u (x / \vert x \vert^2 ), &\text{for } x\in \R^n \setminus B_1.
    \end{cases}
\end{align*} Then, using that \( \big \vert \frac{x}{\vert x \vert^2} - \frac{y}{\vert y\vert^2} \big \vert = \frac{\vert x-y\vert}{\vert x \vert \vert y \vert}\) and that the Jacobain of the transformation \(x\to \frac{x}{\vert x \vert^2}\) is \(\frac{1}{\vert x \vert^n}\), we obtain\begin{align*}
    \int_{\R^n \setminus B_1 }\int_{\R^n \setminus B_1 } \frac{\vert Eu(x)-Eu(y)\vert^p}{\vert x - y \vert^{n+\alpha p }} \dd y \dd x 
    &= \int_{ B_1 }\int_{ B_1 } \frac{\vert u(x)-u(y)\vert^p}{\vert x - y \vert^{n+\alpha p }} \vert x \vert^{\alpha p }\vert y \vert^{\alpha p } \dd y \dd x \\ &\leq  \int_{ B_1 }\int_{ B_1 } \frac{\vert u(x)-u(y)\vert^p}{\vert x - y \vert^{n+\alpha p }} \dd y \dd x. 
\end{align*} Similarly, using that \(\big \vert x - \frac{y}{\vert y\vert^2} \big \vert^2 = \frac 1 {\vert y \vert^2} ( \vert x - y \vert^2 +(1-\vert x \vert^2) (1-\vert y \vert^2) \geq \frac{\vert x - y \vert^2}{\vert y \vert^2 }  \) for all \(x,y\in B_1\), we obtain \begin{align*}
    \int_{ B_1 }\int_{\R^n \setminus B_1  } \frac{\vert Eu(x)-Eu(y)\vert^p}{\vert x - y \vert^{n+\alpha p }} \dd y \dd x 
    &\leq  \int_{B_1 }\int_{ B_1 } \frac{\vert u(x)-u(y)\vert^p}{\vert x - y \vert^{n+\alpha p }} \vert y \vert^{\alpha p } \dd y \dd x \\
    &\leq \int_{B_1 }\int_{ B_1 } \frac{\vert u(x)-u(y)\vert^p}{\vert x - y \vert^{n+\alpha p }}  \dd y \dd x. 
\end{align*} Hence, \([Eu]_{W^{\alpha,p}(\R^n)}^p \leq 4 [u]_{W^{\alpha,p}(B_1)}^p\), so applying \cite[Corollary 2.5]{brasco_characterisation_2021} to \(Eu\) gives \begin{align*}
    \| u -u_{B_1} \|_{L^p(B_1)} &= \| Eu -(Eu)_{B_1} \|_{L^p(B_1)} \\
    &\leq C(n,p) \alpha (1-\alpha)[Eu]_{W^{\alpha,p}(\R^n)} \\
    &\leq C(n,p) \alpha (1-\alpha)[u]_{W^{\alpha,p}(B_1)}.
\end{align*}
\end{remark}

Furthermore, we have the following proposition.

\begin{prop} \thlabel{GeFXLo81}
Let \(E\subset \R^n\) be measurable with locally finite perimeter. Then, for all \(x\in \partial^\ast E\), \begin{align}
    \lim_{R\to 0^+} \Phi_{E,x}(R) &= \Phi_{\R^n_+,0}(1)  \label{bpwPOxAA}
\end{align} where \begin{align*}
    \Phi_{\R^n_+,0}(1)  = \frac{2 \pi^{\frac n 2 - 1 } \Gamma \big ( \frac{s+1}2 \big )\Gamma \big ( \frac{1-s}2 \big ) }{\Gamma \big ( \frac s2 \big )\Gamma \big ( \frac{n-s}2+1 \big )}>0.
\end{align*} In particular, \begin{align*}
    \lim_{R\to 0^+} \Phi_{E,x}(R) \geq \frac {Cs} {1-s} 
\end{align*} with \(C\) depending only on \(n\).
\end{prop}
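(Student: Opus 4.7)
The plan is to prove this via a blow-up argument, reducing the $R \to 0^+$ limit to an explicit evaluation on a half-space. After translating so that $x = 0$ and rotating so that $\nu_E(0)$ coincides with the outer normal of $\R^n_+$, it suffices to show $\lim_{R \to 0^+} \Phi_{E,0}(R) = \Phi_{\R^n_+,0}(1)$. The key scaling identity, obtained from $U_{R^{-1}E}(X) = U_E(RX)$ (a direct consequence of the Poisson formula~\eqref{kzYTNNBU}) together with the substitution $Z = RY$ in~\eqref{HvcZ5aGS}, is
\[
\Phi_{E,0}(R) = \Phi_{E_R,0}(1), \qquad E_R := R^{-1}E,
\]
since the factors $R^{-(n-s)}$, $R^{s-1}$ (from $y_{n+1}^{1-s}$), $R^{2}$ (from $|\nabla U_{E_R}|^2$) and $R^{-(n+1)}$ (from $dY$) combine to $1$. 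By De Giorgi's blow-up theorem at a point of the reduced boundary, $\chi_{E_R} \to \chi_{\R^n_+}$ in $L^1_{\mathrm{loc}}(\R^n)$ as $R \to 0^+$, reducing the problem to continuity of $F \mapsto \Phi_{F,0}(1)$ along this convergence, together with the evaluation of $\Phi_{\R^n_+,0}(1)$.

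To establish the continuity I would use a localisation-and-tail decomposition. Splitting $\tilde\chi_{E_R} = \tilde\chi_{E_R}\chi_{B_M} + \tilde\chi_{E_R}\chi_{\R^n\setminus B_M}$ and writing $U_{E_R} = V_R^M + W_R^M$ accordingly, the tail piece $W_R^M$ is controlled pointwise by the Poisson-kernel estimate $|\nabla_X P_{s/2}(X,z)| \leq C(n,s)\,x_{n+1}^s\,|z|^{-n-s-1}$ valid for $|z| \geq 2M$ and $|X| \leq 1$; integrating in $z$ over $\R^n \setminus B_M$ and then inserting the weight $y_{n+1}^{1-s}$ gives a bound of order $C(n,s)\,M^{-2s-2}$ on the corresponding energy over $\tilde B_1^+$, uniformly in $R$. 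For the inner piece $V_R^M$, the boundary data have compact support inside $B_M$, are uniformly bounded in $L^\infty$, and converge in every $L^p_{\mathrm{loc}}$; pointwise convergence of $\nabla V_R^M$ on $\R^{n+1}_+$ then follows from dominated convergence in the differentiated Poisson formula, and a uniform pointwise gradient bound of the form $|\nabla V_R^M(Y)| \leq C_M(n,s)\,y_{n+1}^{s-1}$ provides an integrable dominator against the weight $y_{n+1}^{1-s}$ on $\tilde B_1^+$. A standard $\varepsilon$--$M$ argument then yields $\Phi_{E_R,0}(1) \to \Phi_{\R^n_+,0}(1)$.

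The explicit evaluation of $\Phi_{\R^n_+,0}(1)$ would proceed by exploiting the symmetries of $U_{\R^n_+}$: translation invariance in $(y_2, \ldots, y_n)$, rotational symmetry therein, and $0$-homogeneity together reduce $U_{\R^n_+}(Y)$ to a function of the single variable $\tau := y_1/y_{n+1}$; the degenerate elliptic PDE collapses to a linear second-order ODE in $\tau$ whose solution is given via the Poisson formula as an incomplete Beta function. Writing the weighted energy on $\tilde B_1^+$ in the variables $((y_2,\dots,y_n), y_1, y_{n+1})$, carrying out the transverse $(n-1)$-dimensional integral using the $(n-s)$-homogeneity of $|\nabla U_{\R^n_+}|^2 y_{n+1}^{1-s}$, and substituting $y_1 = \rho \sin\theta$, $y_{n+1} = \rho \cos\theta$ reduces the remaining two-dimensional integral to a Beta integral, reproducing the stated product of Gamma factors. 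The asymptotic lower bound $\Phi_{\R^n_+,0}(1) \geq C(n)\,s/(1-s)$ then follows from the elementary equivalence $\Gamma(s/2)\,\Gamma((1-s)/2) \asymp 4/(s(1-s))$ on $s \in (0,1)$.

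The hardest step will be establishing the uniform pointwise gradient bound $|\nabla V_R^M(Y)| \leq C_M(n,s)\,y_{n+1}^{s-1}$ needed for dominated convergence in the continuity step: the naive estimate $|\nabla U_F| \leq C/y_{n+1}$ derived from absolute-value bounds on $\nabla_X P_{s/2}$ is insufficient, since $y_{n+1}^{1-s} \cdot y_{n+1}^{-2}$ is not integrable near $\{y_{n+1}=0\}$. Obtaining the sharper $y_{n+1}^{s-1}$ bound requires exploiting the oscillatory cancellation present in indicator-type boundary data, for instance by symmetrising the Poisson kernel about the vertical fibre through $Y$ before taking absolute values, or by bootstrapping from the known Hölder continuity of the extension up to the flat boundary.
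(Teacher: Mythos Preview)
Your blow-up strategy is exactly the paper's: reduce via the scaling identity $\Phi_{E,x}(R)=\Phi_{E_{x,R},0}(1)$, invoke De Giorgi's structure theorem to get $E_{x,R}\to\R^n_+$ in $L^1_{\mathrm{loc}}$, establish continuity of $F\mapsto\Phi_{F,0}(1)$ along that convergence, and then evaluate $\Phi_{\R^n_+,0}(1)$ explicitly. The paper, however, does not reprove the continuity step at all; it simply cites \cite[Proposition~9.1]{MR2675483}, which already contains the needed convergence of extension energies under $L^1_{\mathrm{loc}}$ convergence of sets. Your localisation-and-tail decomposition is a reasonable outline of how one might argue directly, and your diagnosis of the obstruction is correct: for generic $L^\infty$ boundary data one only has $|\nabla V|\lesssim y_{n+1}^{-1}$, which does not dominate in the weighted space. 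The argument in \cite{MR2675483} circumvents this pointwise issue via a different mechanism, so if you want a self-contained proof you should consult that reference rather than chase the $y_{n+1}^{s-1}$ bound.

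For the explicit value, the paper takes a slightly more concrete route than your ODE reduction: it computes $U_{\R^n_+}$ directly from the Poisson formula, obtaining the closed form $U_{\R^n_+}(X)=-\tfrac{\Gamma((s+1)/2)}{\pi^{1/2}\Gamma(s/2)}\,h(x_n/x_{n+1})$ with $h(\tau)=\int_0^\tau(t^2+1)^{-(1+s)/2}\,dt$, then differentiates and integrates in polar coordinates in the $(x_n,x_{n+1})$ plane. This is equivalent to your ODE approach but avoids having to identify the ODE solution abstractly.
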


This proposition is the nonlocal analogue of \cite[Equation (15.9) in Corollary 15.8]{maggi_sets_2012} and is essentially proven in \cite[Theorem 9.2]{MR2675483}; however, the reader should be aware that the statement of \cite[Theorem 9.2]{MR2675483} assumes that \(E\) is \(s\)-minimal, but this is assumption is not used in the proof of \thref{GeFXLo81}. Moreover, they do not explicitly say~\eqref{bpwPOxAA} (merely that  \(\lim_{R\to 0^+} \Phi_{E,x}(R) \geq C(n,s)\)); however, if \(E_{x,R} = \frac{E-x}{R}\) then, from the scaling properties of \(\Phi_{E,x}\), \(\Phi_{E,x}(R) = \Phi_{E_{x,R},0}(1)\), which, in conjunction with the convergence properties of \(E_{x,R}\) as \(R\to 0^+\) for \(x\in \partial^\ast E\), see \cite[Theorem 15.5]{maggi_sets_2012}, and \cite[Proposition 9.1]{MR2675483}, immediately implies~\eqref{bpwPOxAA}. The value of \(\Phi_{\R^n_+,0}(1)\) is given the proposition below.

\begin{prop}
    We have that \begin{align*}
        \Phi_{\R^n_+,0}(1) =  \frac{2 \pi^{\frac n 2 - 1 } \Gamma \big ( \frac{s+1}2 \big )\Gamma \big ( \frac{1-s}2 \big ) }{\Gamma \big ( \frac s2 \big )\Gamma \big ( \frac{n-s}2+1 \big )}.
    \end{align*}
\end{prop}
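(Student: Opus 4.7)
The plan is to exploit the large symmetry group of $\R^n_+$ to reduce the $(n{+}1)$-dimensional integral defining $\Phi_{\R^n_+,0}(1)$ to an essentially one-dimensional calculation, and then explicitly compute the resulting Beta-type integrals.

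First, I would observe that the boundary datum $\tilde\chi_{\R^n_+}(y)=-\operatorname{sgn}(y_1)$ depends only on $y_1$. A short calculation with the Poisson kernel in \eqref{kzYTNNBU}, integrating out $y_2,\dots,y_n$ by the Euler--Beta formula $\int_{\R^{n-1}}(a^2+|z|^2)^{-(n+s)/2}\,dz = \pi^{(n-1)/2}\Gamma((1+s)/2)/(\Gamma((n+s)/2)\,a^{1+s})$, and matching the normalising constants, shows that $U_{\R^n_+}(X)$ depends only on $(x_1,x_{n+1})$ and in fact coincides with the one-dimensional Caffarelli--Silvestre extension of $-\operatorname{sgn}(\cdot)$ in the upper half-plane $\{(x_1,x_{n+1}):x_{n+1}>0\}$. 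In particular $\nabla U_{\R^n_+}$ has no components in the $x_2,\dots,x_n$ directions, which will reduce the effective dimension of the problem.

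Next, because $\tilde\chi_{\R^n_+}$ is $0$-homogeneous, so is $U_{\R^n_+}$, and thus $U_{\R^n_+}(x_1,x_{n+1})=g(\phi)$ where $(x_1,x_{n+1})=(\rho\cos\phi,\rho\sin\phi)$ with $\phi\in(0,\pi)$. Writing the degenerate elliptic equation $\operatorname{div}(x_{n+1}^{1-s}\nabla U)=0$ in polar coordinates collapses to the ODE $(\sin^{1-s}\phi\,g'(\phi))'=0$, whose solution with boundary values $g(0)=-1$, $g(\pi)=1$ is
\begin{equation*}
g(\phi)=-1+C\int_0^\phi\sin^{s-1}\psi\,d\psi,\qquad C=\frac{2\,\Gamma((s+1)/2)}{\sqrt{\pi}\,\Gamma(s/2)},
\end{equation*}
where $C$ is fixed by the normalisation $\int_0^\pi\sin^{s-1}\psi\,d\psi=\sqrt{\pi}\,\Gamma(s/2)/\Gamma((s+1)/2)$. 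Consequently $|\nabla U_{\R^n_+}|^2=C^2\sin^{2s-2}\phi/\rho^2$.

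To assemble the final integral, I would slice $\tilde B_1^+$ by fixing $(x_1,x_{n+1})$ and integrating $x'=(x_2,\dots,x_n)$ over the $(n{-}1)$-ball of radius $\sqrt{1-x_1^2-x_{n+1}^2}$, giving a factor $\omega_{n-1}(1-\rho^2)^{(n-1)/2}$, and then pass to polar coordinates in the $(x_1,x_{n+1})$-plane. Since $x_{n+1}^{1-s}=\rho^{1-s}\sin^{1-s}\phi$ and $|\nabla U_{\R^n_+}|^2 \rho\,d\rho\,d\phi=C^2\rho^{-1}\sin^{2s-2}\phi\,d\rho\,d\phi$, the integral factorises as
\begin{equation*}
\Phi_{\R^n_+,0}(1)=\omega_{n-1}C^2\left(\int_0^1\rho^{-s}(1-\rho^2)^{(n-1)/2}\,d\rho\right)\!\left(\int_0^\pi\sin^{s-1}\phi\,d\phi\right).
\end{equation*}
The remaining step is routine: the $\rho$-integral equals $\tfrac12 B((1-s)/2,(n+1)/2)$, and combining it with the $\phi$-integral and with $\omega_{n-1}=\pi^{(n-1)/2}/\Gamma((n+1)/2)$ and $C^2$ yields the stated closed form. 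The only mildly delicate point is book-keeping the constants — in particular verifying the identity $a(n,s)\cdot\pi^{(n-1)/2}\Gamma((1+s)/2)/\Gamma((n+s)/2)=a(1,s)$ that implements the dimensional reduction — but no genuine obstacle arises.
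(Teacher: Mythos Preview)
Your proposal is correct and follows essentially the same strategy as the paper: reduce to a two-variable problem via symmetry, find the explicit extension, slice the half-ball over the $(n-1)$ irrelevant variables, and evaluate the resulting factorised Beta integrals. The one notable difference is in how you obtain the extension itself: the paper computes $U_{\R^n_+}$ by direct integration against the Poisson kernel, reducing to the explicit antiderivative $h(\tau)=\int_0^\tau (t^2+1)^{-(1+s)/2}\,dt$ via a sequence of changes of variables, whereas you argue via $0$-homogeneity and solve the resulting angular ODE $(\sin^{1-s}\phi\,g')'=0$. Your route is slightly slicker in that it bypasses several explicit integral manipulations and makes the normalising constant $C=\tilde a(n,s)$ drop out immediately from the boundary conditions; the paper's route is more hands-on but requires no PDE input beyond the definition. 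Both lead to the identical expression $\omega_{n-1}C^2\int_0^1\rho^{-s}(1-\rho^2)^{(n-1)/2}\,d\rho\int_0^\pi\sin^{s-1}\phi\,d\phi$ and the final constant bookkeeping is the same.
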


\begin{proof}
First, we claim that \begin{align}
    U_{\R^n_+} (X) &=  -\frac{\Gamma \big ( \frac{s+1}2\big )}{\pi^{\frac12}\Gamma \big ( \frac s2\big )} h \bigg ( \frac{x_n}{x_{n+1}}\bigg ) \label{D9M0UGx2}
\end{align} where \begin{align*}
   h(\tau )&= \int_0^{\tau } \frac {\dd t } { \big ( t^2 +1 \big )^{\frac {1+s} 2} } .
\end{align*} Indeed, if \(X_\ast := (x',-x_n,x_{n+1})\) with \(x'=(x_1,\dots, x_{n-1})\in \R^{n-1}\) then the change of variables \(y \to (y',-y_n)\) in the second integral below gives \begin{align}
    U_{\R^n_+}(X) &= -a(n,s) x_{n+2}^s\bigg ( \int_{\mathbb R^n_+} \frac1{\big ( \vert x - y \vert^2 +x_{n+1}^2 \big )^{ \frac{n+s}2 }} \dd y-\int_{\mathbb R^n_-} \frac1{\big ( \vert x - y \vert^2 +x_{n+1}^2 \big )^{ \frac{n+s}2 }} \dd y \bigg ) \nonumber  \\
    &= -a(n,s) x_{n+2}^s \big ( f(X)-f(X_\ast) \big ) \label{VK6vmGJS}
\end{align} where \begin{align*}
    f(X) := \int_{\mathbb R^n_+} \frac{\dd y}{\big ( \vert x - y \vert^2 +x_{n+1}^2 \big )^{ \frac{n+s}2 }}.
\end{align*} Observe that, via a translation in the first \(n-1\) coordinates, we have \begin{align*}
    f(X) &= \int_0^{+\infty}\int_{\mathbb R^{n-1}} \frac{\dd y'\dd t }{\big ( \vert y '\vert^2 +  ( x_n - t )^2 +x_{n+1}^2 \big )^{ \frac{n+s}2 }}.
\end{align*} Next, the rescaling \(y'\to y'\sqrt{( x_n - t )^2 +x_{n+1}^2}\) in the inner integral gives \begin{align*}
    f(X) &= \bigg ( \int_{\mathbb R^{n-1}} \frac{\dd y' }{\big ( \vert y '\vert^2 + 1 \big )^{ \frac{n+s}2 }}\bigg )   \int_0^{+\infty} \frac{\dd t }{\big ( ( x_n - t )^2 +x_{n+1}^2 \big )^{ \frac{1+s}2 }}
\end{align*} An application of polar coordinates gives \begin{align}
    \int_{\mathbb R^{n-1}} \frac{\dd y' }{\big ( \vert y '\vert^2 + 1 \big )^{ \frac{n+s}2 }} = \frac{\pi^{\frac{n-1}2 \Gamma \big ( \frac{s+1}2\big ) }}{\Gamma \big (  \frac{n+s}2\big ) }, \label{4HylpNJ5}
\end{align} see \cite[Proposition 4.1]{MR3916700} for the explicit computation. Furthermore, the change of variables \(t \to x_{n+1} t \) gives \begin{align}
     \int_0^{+\infty} \frac{\dd t }{\big ( ( x_n - t)^2 +x_{n+1}^2 \big )^{ \frac{1+s}2 }} &=  x_{n+1}\int_0^{+\infty} \frac{\dd t }{\big ( ( x_n - x_{n+1} t )^2 +x_{n+1}^2 \big )^{ \frac{1+s}2 }} \nonumber \\
     &=  x_{n+1}^{-s} \int_0^{+\infty} \frac{\dd t }{\big ( ( x_n/x_{n+1} -  t )^2 +1 \big )^{ \frac{1+s}2 }}.  \label{5yHmdraj}
\end{align} If we define \begin{align*}
   g(\tau) = \int_0^{+\infty} \frac {\dd t } { \big ( (\tau -t  )^2 +1 \big )^{\frac {1+s} 2} }-\int_0^{+\infty} \frac {\dd t } { \big ( (\tau +t  )^2 +1 \big )^{\frac {1+s} 2} }
\end{align*} and \begin{align*}
    \tilde a(n,s) = 2 a(n,s) \frac{\pi^{\frac{n-1}2 \Gamma \big ( \frac{s+1}2\big ) }}{\Gamma \big (  \frac{n+s}2\big ) } = \frac{2\Gamma \big ( \frac{s+1}2\big )}{\pi^{\frac12}\Gamma \big ( \frac s2\big )}
\end{align*} then it follows from~\eqref{VK6vmGJS},~\eqref{4HylpNJ5}, and~\eqref{5yHmdraj} that \begin{align*}
  U_{\R^n_+}(X) &=-\frac12 \tilde a(n,s) \bigg [\int_0^{+\infty} \frac{\dd t }{\big ( ( x_n/x_{n+1} -  t )^2 +1 \big )^{ \frac{1+s}2 }}-\int_0^{+\infty} \frac{\dd t }{\big ( ( x_n/x_{n+1} +  t )^2 +1 \big )^{ \frac{1+s}2 }} \bigg ] \\
  &= -\frac12\tilde a(n,s)  g \bigg ( \frac{x_n}{x_{n+1}} \bigg ) .
\end{align*} Now, \begin{align*}
     g(\tau) &= \int_{-\tau}^{+\infty} \frac {\dd t } { \big ( t^2 +1 \big )^{\frac {1+s} 2} }-\int_{\tau}^{+\infty} \frac {\dd t } { \big ( t^2 +1 \big )^{\frac {1+s} 2} } \\
     &=  \int_{-\tau}^{\tau } \frac {\dd t } { \big ( t^2 +1 \big )^{\frac {1+s} 2} } \\
     &= 2 h(\tau)
\end{align*} which proves~\eqref{D9M0UGx2}. 

From~\eqref{D9M0UGx2}, it follows that\begin{align*}
    \nabla U_{\R^n_+}(X) &= - \tilde a (n,s) \bigg (0,\dots, 0 , \frac 1{x_{n+1}}h '\bigg ( \frac{x_n}{x_{n+1}}\bigg ), - \frac{x_n}{x_{n+1}} h '\bigg ( \frac{x_n}{x_{n+1}}\bigg ) \bigg ) \\
    &=  - \tilde a (n,s) \bigg (0,\dots, 0 , \frac{x_{n+1}^s }{\big ( x_n^2 + x_{n+1}^2 \big )^{\frac {1+s}2 } }, - \frac{x_nx_{n+1}^{s-1} }{\big ( x_n^2 + x_{n+1}^2 \big )^{\frac {1+s}2 } }  \bigg ) , 
\end{align*} so \begin{align*}
    x_{n+1}^{1-s} \vert \nabla U_E\vert^2 &= \tilde a (n,s)^2 \frac{x_{n+1}^{s-1} }{\big ( x_n^2 + x_{n+1}^2 \big )^s}
\end{align*} Using the notation, \(B^{2,+}_1 := \{ (x_n,x_{n+1} ) \in \R^2 \text{ s.t. } x_n^2+x_{n+1}^2<1, x_{n+1}>0 \}\), we have \begin{align*}
    \tilde B_1^+ &= \big \{ X \in \R^{n+1} \text{ s.t. } (x_n,x_{n+1} ) \in B^{2,+}_1, x' \in B^{n-1}_{\sqrt{1-x_n^2-x_{n+1}^2}} \big \},
\end{align*} so we obtain \begin{align*}
    \int_{\tilde B_1^+} x_{n+1}^{1-s} \vert \nabla U_{\R^n_+} \vert^2 \dd X &= \tilde a (n,s)^2 \int_{B^{2,+}_1}\int_{ B^{n-1}_{\sqrt{1-x_n^2-x_{n+1}^2}}} \frac{x_{n+1}^{s-1} }{\big ( x_n^2 + x_{n+1}^2 \big )^s} \dd x'\dd x_n\dd x_{n+1} \\
    &= \tilde a (n,s)^2 \omega_{n-1} \int_{B^{2,+}_1} \frac{x_{n+1}^{s-1}\big (1- x_n^2 - x_{n+1}^2 \big )^{\frac{n-1}2} }{\big ( x_n^2 + x_{n+1}^2 \big )^s} \dd x_n\dd x_{n+1}.
\end{align*} Hence, polar coordinates \((x_n,x_{n+1})=(r\cos \theta, r\sin \theta)\) gives \begin{align*}
      \int_{\tilde B_1^+} x_{n+1}^{1-s} \vert \nabla U_{\R^n_+} \vert^2 \dd X &=\tilde a (n,s)^2 \omega_{n-1} \bigg ( \int_0^1 r^{-s} (1-r^2)^{\frac{n-1} 2 } \dd r \bigg ) \bigg (\int_0^{\pi } (\sin \theta)^{s-1} \dd \theta \bigg ) .
\end{align*} One can show that \begin{align*}
\int_0^1  r^{-s} \big (1-r^2 \big )^{ \frac{n-1}2 }     \dd r =\frac{\Gamma \left(\frac{n+1}{2}\right) \Gamma \left(\frac{1-s}{2}\right)}{2 \Gamma \left(\frac {n-s} 2 + 1 \right)} \text{ and }  \int_0^\pi  (\sin \theta )^{s-1}  \dd \theta  = \frac{\pi^{\frac 12 }  \Gamma \left(\frac{s}{2}\right)}{\Gamma \left(\frac{s+1}{2}\right)}
\end{align*} Thus, \begin{align*}
    \int_{\tilde B_1^+} x_{n+1}^{1-s} \vert \nabla U_{\R^n_+} \vert^2 \dd X &= \tilde a (n,s)^2 \omega_{n-1} \bigg ( \frac{\Gamma \left(\frac{n+1}{2}\right) \Gamma \left(\frac{1-s}{2}\right)}{2 \Gamma \left(\frac {n-s} 2 + 1 \right)} \bigg ) \bigg ( \frac{\pi^{\frac 12 }  \Gamma \left(\frac{s}{2}\right)}{\Gamma \left(\frac{s+1}{2}\right)}\bigg ) \\
    &= \frac{2 \pi^{\frac n 2 - 1 } \Gamma \big ( \frac{s+1}2 \big )\Gamma \big ( \frac{1-s}2 \big ) }{\Gamma \big ( \frac s2 \big )\Gamma \big ( \frac{n-s}2+1 \big )}
\end{align*} as required. 
\end{proof}

\section{Proof of main theorem}

In this section, we give the proof of \thref{jG1dQXin}.

\begin{proof}[Proof of \thref{jG1dQXin}] 
   By translating, we may assume, without loss of generality, that \(x=0\in \partial^\ast E\). Let \(U_E\) be given by~\eqref{kzYTNNBU}. From \thref{uUBAsC54} and \thref{GeFXLo81}, we have that \begin{align*}
        \int_{\tilde B_R^+ } x_{n+1}^{1-s} \vert \nabla U_E \vert^2 \dd X \geq  R^{n-s} \lim_{r\to 0^+} \Phi_{E,0}(r) \geq \frac{CsR^{n-s}}{1-s}
    \end{align*} for all \(R>0\). Moreover, since \begin{align*}
        \vert \tilde \chi_E(x) - \tilde \chi_E(y)\vert^2 =4\vert  \chi_E(x) - \chi_E(y)\vert \qquad \text{for all} x,y\in \R^n
    \end{align*} by~\thref{Qu1H6WRf} and \thref{60sDucQ0}, we obtain \begin{align*}
         \frac{sR^{n-s}}{1-s} &\leq C(n)\iint_{\mathcal Q (B_{2R})} \frac{\vert \tilde \chi_E(x) - \tilde \chi_E(y)\vert^2 }{\vert x - y \vert^{n+s}} \dd y \dd x  \\
        &= \frac{C(n)} s   \operatorname{Per}_s(E; B_{2R} ) \\
        &\leq \frac{C(n)} s   \bigg (  \frac{\varepsilon^{-\frac{1-s}s }  R^{1-s} }{1-s} \operatorname{Per}(E; B_{\delta R})+ \frac{ \varepsilon R^{n-s} } s \big )
    \end{align*} with \(\delta = 1+\varepsilon^{-1/s}\) provided \(\varepsilon < 3^{-1/s}\). Choosing \(\varepsilon = \frac 12 \min \big \{ \frac {s^3} {C(1-s)}, 3^{- \frac 1 s } \big \}\), we find \begin{align*}
         R^{n-1} \leq  \frac{C(n)}{s^2} \varepsilon^{-\frac{1-s}s }  \operatorname{Per}(E; B_{\delta R}),
    \end{align*} which, after rescaling, further gives \begin{align*}
        R^{n-1} \leq  \frac{C(n)}{s^2} \delta^{n-1} \varepsilon^{-\frac{1-s}s } \operatorname{Per}(E; B_{ R}) \qquad \text{for all } R>0.
    \end{align*} Finally, note \(s^{-2}\delta^{n-1} \varepsilon^{-\frac{1-s}s } \leq C(n) s^{-2} 3^{ \frac {n-s} {s^2} } \) which remains bounded away from \(0\) as \(s\to 1^-\). 
\end{proof}

\appendix

\backmatter

\printbibliography

\end{document}